\newcommand{\Ind}{\mathrm{Ind}}
\newcommand{\Core}{\mathrm{Core}}
\newcommand{\MHS}{\mathrm{MHS}}
\newcommand{\Vect}{\mathrm{Vect}}
\newcommand{\MHM}{\mathrm{MHM}}
\newcommand{\MTM}{\mathrm{MTM}}
\newcommand{\MHW}{\mathrm{MHW}}
\newcommand{\mon}{\mathrm{mon}}
\title{Hodge microsheaves on cotangent bundles and plumbings}
\author{Tatsuki Kuwagaki and Takahiro Saito}
\begin{document}
\maketitle

\begin{abstract}
We introduce and study the category of Hodge microsheaves which is a Hodge-theoretic version of the category of microsheaves for a certain class of holomorphic exact symplectic manifolds. We then study a Hodge-theoretic version of wrapped sheaves and discuss its applications in topology and representation theory. Namely, we study (1) Hain's Hodge structures on the cohomology of based loop spaces of algebraic varieties, and (2) the Koszul duality of Ginzburg algebras by Etg\"u--Lekili from a mixed geometric perspective.
\end{abstract}

\tableofcontents

\section{Introduction}\label{section:MotivationandIntroduction}
In this paper, we study a Hodge-version of the theory of microsheaves. We start with a lengthy explanation of our motivation coming from several area of mathematics.

\subsection{Motivation: Microsheaves}
A Hodge structure on a vector space is a certain decoration of the vector space. It is, for example, associated to a compact K\"ahler manifold, whose cohomology has a canonical Hodge structure, which is useful to study geometry.

To compute global something, it is always useful to localize. Hodge structure also has this feature, and the localized theory known as the theory of Hodge modules by Morihiko Saito~\cite{MHM} is quite strong, and has many applications in various areas of mathematics.

A Hodge module is a decoration of a constructible sheaf, and by the microlocal sheaf theory of Kashiwara--Schapira~\cite{KS}, such sheaves admit further localization called \emph{microlocalization}. Namely, one can view a sheaf on a manifold $M$ as an object living on its cotangent bundle $T^*M$. The recent developments of microlocal sheaf theory (e.g.~\cite{Guillermou, Jin,nadlershende}) strengthen the point of view: constructible sheaves have been understood as the simplest examples of \emph{microsheaves} on Lagrangian submanifolds in symplectic manifolds. A microsheaf over a Lagrangian (micro-)locally looks like the microlocalization of a constructible sheaf.

As localization of Hodge structure (=Hodge module) is useful, we expect that further localization of Hodge structure (= Hodge version of microsheaf) is useful. This is our first motivation.

\subsection{Motivation: Fukaya category}
Fukaya category is a category associated to a symplectic manifold. By the work of Ganatra--Pardon--Shende~\cite{GPSmicro}, a version of Fukaya category called a \emph{partially wrapped Fukaya category} of a Weinstein manifold is known to be equivalent to a category of microsheaves. So, it is also natural to discuss a Hodge version of Fukaya category as well.

An object of a Fukaya category is called a Lagrangian brane. Typically, a Lagrangian brane consists of a Lagrangian submanifold $L$, a local system on $L$, a grading structure, and a Spin-like structure. So, for a Hodge version, it is natural to consider the notion of \emph{Hodge brane}: A Hodge brane consists of a Lagrangian submanifold $L$, a VHS (= the Hodge version of a local system) on $L$, a grading structure, and a Spin-like structure.

If one tries to define a Fukaya-like category using Hodge branes, then they face an obvious difficulty: In the definition of Fukaya category, one uses parallel transport along the boundaries of holomorphic disks of sections of local systems in the brane data. However, we do not have any reasonable definition of parallel transport of sections of VHS. 

If one works with a hyperK\"ahler manifold and complex Lagrangian submanifolds, the situation changes. By the result of Solomon--Verbitsky~\cite{Solomon--Verbitsky}, there are almost no holomorphic disks in the situation. So one does not have to worry about the above difficulty (see also \cite{Joyce,gunningham2024deformationquantizationperversesheaves}), then one still has a hope to define ``Hodge--Fukaya category".

Anyway, in this paper, we will not directly treat Fukaya category, but we bypass it by using sheaf theory under the umbrella of Ganatra--Pardon--Shende~\cite{GPSmicro}. This gives another motivation to consider the Hodge version of microsheaves.

Another aspect to consider from the Fukaya-categorical point of view is that (wrapped) Fukaya category is not proper (i.e., hom-spaces are finite-dimensional) in general, while the category of constructible sheaves is. Such a non-proper version of constructible sheaves is introduced by Nadler~\cite{nadler2016wrappedmicrolocalsheavespairs}, as \emph{wrapped microlocal sheaves}. We then also are motivated to consider a wrapped version of 
Hodge modules.

\subsection{Motivation: Geometric representation theory}
In the last two subsection, we claim that one is naturally led to consider Hodge version of microsheaf/Fukaya category. In this subsection, we would like to say that the answer to the question should be useful, at least to study geometric representation theory.

Geometric representation theory replaces representation theory of some algebras with some geometry. A famous class of such geometry is known as \emph{conical symplectic resolutions}. After some works (e.g.~\cite{KashiwaraRouquier, Kuwabara}), Braden--Licata--Proudfoot--Webster~\cite{BLPW,BLPW2} initiated a unified treatment, in particular, defined a representation category \emph{category $\cO$} for each such geometry. An interesting conjecture surrounding the topic is \emph{symplectic duality}, which asserts that there exist many pairs of symplectic resolutions such that various dualities hold, such as \emph{Koszul duality} of category $\cO$. 

The classical example of such Koszul duality is for category $\cO$ of flag varieties, which is a certain subcategory of $\cD$-modules over flag varieties, and is equivalent to category $\cO$ of Lie algebras. The Koszul duality in this case was established by Beilinson--Ginzburg--Soergel~\cite{BGS}, who used mixed Hodge modules as a Hodge version of category $\cO$ to prove it.

Interpretations of category $\cO$ as microsheaves/Fukaya category have been anticipated and checked for some examples (e.g. \cite{Gammage}). So, it is natural to try to extend Beilinson--Ginzburg--Soergel's story to category $\cO$ of symplectic resolutions: Namely, define Hodge version of category $\cO$ and prove the Koszul duality by using it.

\subsection{Summary}
In this paper, we discuss a certain generalization of the theory of Hodge modules.

As we have mentioned, the theory of Hodge modules is a decoration of the theory of constructible sheaves. The point of view from wrapped Fukaya category enhances the theory of constructible sheaves in two-fold:
\begin{enumerate}
    \item It is defined for more general symplectic manifolds than cotangent bundles.
    \item It is not necessarily finite-dimensional due to wrapping.
\end{enumerate}
In this paper, we consider these kinds of generalizations.

In the following, we will explain the contents of this paper. In \S \ref{section:PreliminariesonHodge modules}, we first briefly explain Hodge theory. Then we discuss several related basic things. One important notion we introduce in the section is \emph{saturation}. This is a condition for a subcategory of a ``mixed version" of a given category, and is important to discuss the Koszul duality later.

In \S \ref{section:FourierTransform}, we recall Fourier transforms of $\cD$-modules, sheaves, and Hodge modules.

In \S \ref{section:Hodgemicrosheaves}, we first recall the theory of microsheaves, which is a generalization of the theory of constructible sheaves to more general symplectic manifolds. For some classes of symplectic manifold, we can express the category of microsheaves by using the gluing via Fourier transformation. This approach was initiated by Bezrukavnikov--Kapranov~\cite{BezrukavnikovKapranov} (see also Karabas--Lee~\cite{karabas2024wrappedfukayacategoryplumbings}). For the case of holomorphic exact symplectic manifolds, the gluing approach was largely generalized by C\^ot\'e--Kuo--Nadler--Shende~\cite{CKNSmicroRH} in the context of microlocal Riemann--Hilbert correspondence. Following their approach, we glue up mixed Hodge modules to obtain the category of Hodge microsheaves, which is closely related to McBreen--Webster's work~\cite{McBreenWebster} on abelian categories of Hodge microsheaves for multiplicative hypertoric manifolds.

In section~\ref{section:Hodgewrapping}, we introduce \emph{Hodge wrapping}, which is a Hodge-theoretic counterpart of wrapping of sheaves. Although, we do not have a general existence result, we give an effective way to compute them. One form of the theorem is roughly the following:
\begin{theorem}\label{thm:main0}
For a mixed Hodge module $\cE$ whose microsupport is in a complex analytic conic Lagrangian $\Lambda$, suppose that 
\begin{enumerate}
    \item there exists $t_0=0<t_1<t_2<...\to \infty$ such that the time $t_i$ Reeb flow image $\Phi_{t_i}(\frakF(\cE))$ of the underlying sheaf $\frakF(\cE)$ of $\cE$ is $\bC$-constructible for any $i$,
    \item there exists an inductive system of mixed Hodge modules $\cE=\cE_0\to \cE_1\to \cE_2\to \cdots $ such that the underlying sheaf $\frakF(\cE_i)$ of $\cE_i$ is $\Phi_{t_i}(\frakF(\cE))$.  
\end{enumerate}
Then $\colim_i \cE_i$ is the Hodge wrapping of $\cE$.
\end{theorem}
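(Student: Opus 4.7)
The plan is to reduce the assertion to the already-known sheaf-theoretic wrapping formula via the forgetful functor $\frakF$, and then promote the sheaf-level universal property to the Hodge level using the gluing construction from \S\ref{section:Hodgemicrosheaves} together with the saturation machinery of \S\ref{section:PreliminariesonHodge modules}.

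First, I would recall that, at the level of underlying sheaves, wrapping of $\frakF(\cE)$ along the positive Reeb flow is defined as the colimit of the continuation-map diagram $\{\Phi_t(\frakF(\cE))\}_{t\ge 0}$. Under hypothesis (1) the sequence $\{t_i\}$ is cofinal in $\mathbb{R}_{\ge 0}$ and every term is $\mathbb{C}$-constructible, so by a standard cofinality argument the sheaf wrapping is already computed by the discrete colimit $\colim_i \Phi_{t_i}(\frakF(\cE))$.

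Second, I would argue that the forgetful functor from Hodge microsheaves to microsheaves preserves filtered colimits. This should be essentially automatic from the gluing construction of \S\ref{section:Hodgemicrosheaves} via Fourier transforms, since $\frakF$ acts componentwise on each chart and filtered colimits commute with the finite gluing limits involved. Consequently
\[
\frakF\bigl(\colim_i \cE_i\bigr) \simeq \colim_i \frakF(\cE_i) \simeq \colim_i \Phi_{t_i}(\frakF(\cE)),
\]
so the underlying sheaf of $\colim_i \cE_i$ is exactly the sheaf wrapping of $\frakF(\cE)$.

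Third, I would check the Hodge-level universal property that characterizes the Hodge wrapping introduced earlier in \S\ref{section:Hodgewrapping}. For any target Hodge microsheaf $\cG$ on the wrapped side, the natural restriction map
\[
\mathrm{Hom}(\colim_i \cE_i, \cG) \longrightarrow \mathrm{Hom}(\cE_0, \cG) = \mathrm{Hom}(\cE, \cG)
\]
must be shown to be an isomorphism. Given a morphism $\cE \to \cG$, its underlying sheaf morphism factors through the wrapping by the first step; the saturation condition then lets one lift this factorization to a compatible family $\cE_i \to \cG$ in the Hodge category, and passing to the colimit gives the required inverse.

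The main obstacle is this third step: the universal property must hold in the Hodge microsheaf category, not merely after forgetting to sheaves. Equivalently, one must rule out obstructions to lifting continuation maps at the Hodge level, which amounts to controlling certain $\mathrm{Ext}^1$ groups of mixed Hodge modules that encode the variation of Hodge structure along the Reeb flow. The saturation condition from \S\ref{section:PreliminariesonHodge modules} is precisely the tool designed to kill such obstructions, and the bulk of the proof should amount to a careful application of it to the inductive system provided by hypothesis (2), together with a verification that the continuation maps $\cE_i \to \cE_{i+1}$ are the correct Hodge-theoretic avatars of the sheaf continuation maps.
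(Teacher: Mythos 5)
Your overall plan is the right one and mirrors the paper's strategy: push the sheaf-level wrapping formula through $\frakF$, use that $\frakF$ commutes with colimits, and then use saturation to upgrade the sheaf-level universal property to the Hodge level. The paper proves this as Corollary~\ref{cor:Hodgewrapping}, which reduces to Theorem~\ref{theorem:Hodgewrapping}, which in turn rests on Lemmas~\ref{lem:microstalkrep}, \ref{lem:limitsaturation}, and \ref{colimitcompact}.

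However, there is a genuine gap in your third step, and it is precisely the one the machinery of \S\ref{section:PreliminariesonHodge modules} is built to close. You want to say that saturation ``lets one lift the factorization'' of a sheaf morphism $\frakF(\cE)\to\frakF(\cG)$ through $\rho^l(\frakF(\cE))$ to a Hodge morphism $\colim_i\cE_i\to\cG$. This presupposes that $\colim_i\cE_i$ is itself saturated, which is not given: saturation is a property assumed for the building blocks $\cE_i\in M_2^c$, not something that automatically passes to filtered colimits. The paper's Lemma~\ref{lem:limitsaturation} shows that a colimit of saturated compact objects is again saturated \emph{provided the colimit is compact}, and so the whole argument hinges on first proving compactness of $\colim_i\cE_i$ in $M_1$. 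This is Lemma~\ref{colimitcompact}, and the hypothesis that makes it work (condition (3) in Theorem~\ref{theorem:Hodgewrapping}: $\Hom(\Cone(\frakF(\cE)\to\frakF(\cE_i)),-)|_{\Sh_{\Lambda_1}(X)}=0$) is exactly the assertion that the cones of the sheaf continuation maps are invisible on the wrapped side; this is a nontrivial geometric input coming from Kuo's theorem and is not addressed in your proposal. Without the compactness step your appeal to saturation is circular.

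A secondary point: once compactness and saturation of $\colim_i\cE_i$ are in hand, the paper's version of your universal-property argument (Lemma~\ref{lem:microstalkrep}) is cleaner than the ``lift a factorization'' sketch you give. It does not lift anything; it transports the sheaf-level adjunction isomorphism $\Hom(\rho^l(\frakF(\cE)),\frakF(\cG))\cong\Hom(\frakF(\cE),\frakF(\cG))$ through the two saturation isomorphisms $\bigoplus_j\Hom_{M_i}(-,\cG(j))\cong\Hom_{\Sh}(\frakF(-),\frakF(\cG))$ via a single commutative square, then reads off the Tate-degree-zero component. You should replace the hand-waved lifting step with this diagrammatic argument, and supply the compactness lemma before invoking saturation of the colimit.
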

See the body of the paper for the precise/microsheaf version of the statement.

In section \S \ref{section:loopspace}, we study Hodge microsheaves on cotangent bundles. 
The category of microsheaves of a cotangent bundle supported on the zero section is generated by an object whose endomorphism is the chains of the based loop space~\cite{Abouzaid,GPSmicro}. This object corresponds to a cotangent fiber under the Ganatra--Pardon--Shende equivalence~\cite{GPSmicro}. If the object carries a Hodge structure, we can equip the chains of the based loop space with a Hodge structure. On the other hand, Hain~\cite{Hain} gives a Hodge structure on the chains of the based loop space via bar construction. So, it is natural to expect:
\begin{conjecture}
    Hain's Hodge structure is induced by a Hodge microsheaf whose underlying microsheaf corresponds to a cotangent fiber.
\end{conjecture}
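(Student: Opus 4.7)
The plan is to construct the Hodge microsheaf as the Hodge wrapping of the skyscraper mixed Hodge module at the basepoint, then to compute its endomorphism algebra via a Hodge-enhanced bar duality and match it with Hain's construction.

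Let $i_x\colon \{x\} \hookrightarrow M$ be the inclusion of the basepoint and set $\cE := i_{x,*}\bQ^H$. Its underlying sheaf is the skyscraper $\bQ_{\{x\}}$, whose microsupport is the cotangent fiber $T^*_x M$, so $\cE$ is a Hodge decoration of the constructible-sheaf avatar of the cotangent fiber brane under the Ganatra--Pardon--Shende equivalence. I would then apply Theorem~\ref{thm:main0} to $\cE$. Fixing an algebraic Riemannian metric so that the Reeb flow is the geodesic flow, one has $\Phi_t(T^*_x M)$ being $\bC$-constructible away from a discrete set of times (conjugate loci of $x$); picking an increasing sequence $t_1 < t_2 < \cdots \to \infty$ avoiding these loci gives the first hypothesis. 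The iterated wrapped sheaves $\Phi_{t_i}(\bQ_{\{x\}})$ should then be upgraded to mixed Hodge modules by pushing the trivial Hodge structure forward along the (piecewise-algebraic) geodesic-exponential map, producing the inductive system whose colimit $\cE^w$ is the desired Hodge wrapping.

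Next, I would identify $\mathrm{End}(\cE^w)$ as a mixed Hodge dg-algebra via a Hodge-enhanced Koszul duality between the cotangent fiber microsheaf and the constant sheaf on $T^*M$. Under the (non-Hodge) GPS/Abouzaid equivalence, $\bQ_M$ and the wrapped cotangent fiber are Koszul-dual generators, so that $\mathrm{End}(\cE^w) \simeq C_{-*}(\Omega_x M)$ is identified with the bar construction $\mathrm{Bar}(C^*(M))$. Performing this bar construction in the symmetric monoidal $\infty$-category of mixed Hodge complexes, using Deligne's canonical mixed Hodge structure on $C^*(M)$, produces a mixed Hodge structure on $C_{-*}(\Omega_x M)$. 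On the other hand, Hain's construction is itself (a completion of) the bar construction on a mixed Hodge model of $C^*(M,x)$, with Chen's iterated integrals providing the de Rham realization. Once both sides are expressed as $\mathrm{Bar}(C^*(M))$ in mixed Hodge complexes, they agree by uniqueness of bar constructions on quasi-isomorphic mixed Hodge $E_\infty$-algebras.

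The main obstacle will be making the Hodge-enhanced Koszul/bar duality precise. One must verify that the $E_\infty$-algebra structure on $C^*(M)$ produced on the microsheaf side matches Deligne's mixed Hodge $E_\infty$-structure, and that the bar construction in the Hodge setting is realized as the endomorphism object of the wrapped Hodge microsheaf built in the previous step (this is where saturation, as introduced in \S\ref{section:PreliminariesonHodge modules}, will likely play a role in ensuring the bar-level construction does not leave the Hodge microsheaf category). Once these compatibilities are established, the identification with Hain's Hodge structure becomes a formal consequence of the functoriality of bar constructions on mixed Hodge $E_\infty$-algebras.
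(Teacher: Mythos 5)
Your proposal aims at a general proof of the conjecture, but the paper itself does \emph{not} prove the conjecture in general: it proves it only for $\bP^n$ (Theorem~\ref{thm:main1}, via Arai's explicit computation), and in fact the remark after Conjecture~\ref{conj:LoopHodgeconj} states that the conjecture is \emph{probably false} for varieties with non-Hodge--Tate cohomology unless the category or the notion of saturation is modified. So a general argument of the kind you sketch would, if correct, prove more than the authors believe is true, which is already a red flag.

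Beyond that mismatch, there are concrete gaps in the individual steps. First, the claim that $\Phi_t(T^*_xM)$ is $\bC$-constructible away from a discrete set of conjugate times is not correct in general: $\bC$-constructibility requires the microsupport to be a complex conic Lagrangian, and the image of a complex Lagrangian under a real geodesic flow is almost never complex. The Fubini--Study metric on $\bP^n$ is special in that Arai's computation exhibits an explicit discrete sequence of times where the wrapped sheaf becomes an iterated extension of shifted constant sheaves; this is precisely the content of the (open) Conjecture~\ref{conj:1}, which the paper only verifies for $\bP^n$. Second, the proposed lifting of $\Phi_{t_i}(\bQ_x)$ to a mixed Hodge module ``by pushing the trivial Hodge structure forward along the geodesic-exponential map'' does not work: that map is neither algebraic nor holomorphic, so there is no pushforward in $\mathrm{MHM}$. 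The paper instead lifts the explicit twisted complex $\bK_{\bP^n}, \bK_{\bP^n}[1], \bK_{\bP^n}[2n],\dots$ using the facts that the extension classes are the hyperplane class (weight $2$) and the fundamental class (weight $2n$); this is what fixes the Tate twists $\bK_{\bP^n}[1](1), \bK_{\bP^n}[2n](n+1),\dots$. Third, the Hodge-enhanced Koszul/bar duality step that would give the matching with Hain's structure conceptually is exactly the route the authors flag as currently not clear to them; the paper bypasses it by computing both sides explicitly for $\bP^n$ (the microsheaf side via the lifted twisted complex, the Hain side via the reduced bar complex of $\bC[x]/x^{n+1}$) and checking the bigraded pieces agree. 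Your proposal is a reasonable roadmap for a future general statement, but as a proof it rests on the unproven Conjecture~\ref{conj:1}, an unavailable lifting mechanism, and a bar-duality identification the authors explicitly leave open.
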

Precisely speaking, we have to compute the endomorphism of Hodge microsheaves in a saturated category. See the 
body of the paper for details.

\begin{theorem}\label{thm:main1}
    The conjecture holds true for $\bP^n$.
\end{theorem}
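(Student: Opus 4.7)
The plan is to compute the Hodge-enriched endomorphism of the Hodge lift of a cotangent fiber using the wrapping formula of Theorem~\ref{thm:main0}, and then identify the resulting mixed Hodge structure with Hain's by exploiting the formality of $\bP^n$. Fix a point $p\in\bP^n$. Before wrapping, the category of Hodge microsheaves on $T^*\bP^n$ supported on the zero section coincides with $\MHM(\bP^n)$, so take $\cE$ to be the Hodge lift of the microstalk corepresenting object at $p$; its underlying microsheaf is the one corresponding under the Ganatra--Pardon--Shende equivalence to the cotangent fiber $T^*_p\bP^n$.

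Next I would verify the hypotheses of Theorem~\ref{thm:main0} by a concrete geodesic-flow computation. Equip $\bP^n$ with the Fubini--Study metric, so that the Reeb flow on $ST^*\bP^n$ is the geodesic flow and is periodic. Along a cofinal sequence of times $0=t_0<t_1<t_2<\cdots$ at which $\Phi_{t_i}$ sweeps the conormal $T^*_p\bP^n$ into the conormal associated with a chain of complex projective subspaces through $p$, the pushed-forward sheaf $\Phi_{t_i}(\frakF(\cE))$ is $\bC$-constructible and admits a canonical mixed Hodge module lift $\cE_i$---namely a shifted constant Hodge sheaf on the relevant $\bP^k$---with transition maps induced by standard adjunction. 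By Theorem~\ref{thm:main0}, $\colim_i \cE_i$ is then the Hodge wrapping of $\cE$.

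From this one extracts the endomorphism algebra of the Hodge wrapping as a colimit of $\operatorname{Hom}$'s between constant mixed Hodge modules on projective subvarieties of $\bP^n$, which is pure of Tate type with Hodge weights dictated by the James/Bott filtration of $\Omega\bP^n$. To match this with Hain's Hodge structure, I would use the fact that Hain's bar construction on the Hodge-filtered de Rham complex of $\bP^n$ is quasi-isomorphic, via the formality of $\bP^n$ as an $E_\infty$-algebra in mixed Hodge complexes, to the bar construction on $H^*(\bP^n,\bQ)=\bQ[x]/x^{n+1}$ with $x$ of type $(1,1)$. The microsheaf computation yields the Koszul $A_\infty$-dual of the same graded mixed Hodge algebra, since prior to wrapping $H^*(\bP^n)=\operatorname{End}^*_{\MHM(\bP^n)}(\bQ^H_{\bP^n})$ and wrapping realizes bar/cobar duality on the Koszul pair $(H^*(\bP^n),\,C_*(\Omega_p\bP^n))$.

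The main obstacle is this last identification at the level of $A_\infty$-algebras enriched in mixed Hodge complexes, not merely as graded mixed Hodge structures: it requires tracking the mixed Hodge data through the entire inductive system $\{\cE_i\}$ and matching it with the iterated-integral model, with Hodge formality of $\bP^n$ as the key input. The Reeb-flow verification of the hypotheses of Theorem~\ref{thm:main0}, though it is the geometric heart of the construction, is relatively tractable thanks to the periodicity of the geodesic flow and the algebraicity of all strata involved, and the saturation condition from \S\ref{section:PreliminariesonHodge modules} is automatic here because every $\cE_i$ is pure Tate.
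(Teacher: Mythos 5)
Your proposal correctly identifies the ingredients the paper uses at a high level (Fubini--Study geodesic flow, Arai's input, Corollary~\ref{cor:Hodgewrapping}, and formality of $\bP^n$ to compute Hain's side), but it goes off the rails at the precise geometric step that drives the weight computation, and then invokes unnecessary abstract machinery where a direct comparison suffices.

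The key problem is your description of the approximating objects $\Phi_{t_i}(\frakF(\cE))$. You claim each is ``a shifted constant Hodge sheaf on the relevant $\bP^k$'' associated to a chain of projective subspaces through $p$. That is not what Arai's result gives, and it is not what the paper uses. The content of \cite{Arai}, as invoked in the paper, is that each $\Phi_{t_i}(\bC_x)$ admits a twisted-complex presentation whose terms are \emph{shifted copies of the constant sheaf on all of $\bP^n$}:
\begin{equation*}
\bK_{\bP^n},\ \bK_{\bP^n}[1],\ \bK_{\bP^n}[2n],\ \bK_{\bP^n}[2n+1],\ \bK_{\bP^n}[4n],\ \bK_{\bP^n}[4n+1],\dots
\end{equation*}
with twisted differentials given by the hyperplane class when the degree difference is $1$ and by the fundamental class when the degree difference is $2n-1$. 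This is not a single constant sheaf on a $\bP^k$, and the distinction matters: the Tate twists on the Hodge lift are pinned down precisely by the fact that the hyperplane class is pure of weight $2$ and the fundamental class pure of weight $2n$, yielding twists $(0),(1),(n+1),(n+2),(2n+2),(2n+3),\dots$. If the building blocks were constant sheaves on smaller $\bP^k$'s, you would get a different, incorrect, bigrading. Your subsequent weight bookkeeping therefore cannot be carried out from the structure you describe.

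Second, the route through ``formality of $\bP^n$ as an $E_\infty$-algebra in mixed Hodge complexes'' and ``bar/cobar duality on the Koszul pair $(H^*(\bP^n),\,C_*(\Omega_p\bP^n))$'' is more than what the conjecture asks for. Conjecture~\ref{conj:LoopHodgeconj} is a statement about the bigraded pieces $(i,n-i)$, not an $A_\infty$-equivalence of mixed Hodge algebras. The paper proves it by simply listing the sequence of $(\text{degree},\text{weight})$ pairs
\begin{equation*}
(0,0),\ (1,2),\ (2n,2n+2),\ (2n+1,2n+4),\ (4n,4n+4),\ (4n+1,4n+6),\dots
\end{equation*}
coming from the lifted twisted complex, and then directly computing Hain's reduced bar complex on $\bQ[x]/x^{n+1}$ (using formality only to reduce to this algebra), whose cohomology is represented by $1,\,x,\,x\otimes x^n,\,x\otimes x^n\otimes x,\dots$ with exactly the same bigrading. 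The ``main obstacle'' you flag at the end—tracking the full $A_\infty$-enriched Hodge structure—is self-imposed; the direct match of bigraded dimensions closes the argument. Similarly, saturation is not automatic from Tate purity alone: the paper works inside the explicit non-full saturated subcategory of Example~\ref{ex:Hodge-Tate}~(\ref{ex:Hodge-Tate3}); you should make that choice explicit rather than asserting automaticity.
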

The proof of Theorem~\ref{thm:main1} is based on Arai's result~\cite{Arai} and Theorem~\ref{thm:main0}.

In \S~\ref{section:Koszuldualityreview}, we first recall some basics of Koszul duality. Then we recall two philosophies for Koszul duality. The first one is due to Beilinson--Ginzburg--Sorgel~\cite{BGS}: The Koszul duality arises from mixed geometry. The second one is due to Ekholm--Etg\"u--Lekili~\cite{EtguLekili, EkholmLekili}: Core and cocore in Fukaya category often form a Koszul dual pair.

In \S~\ref{section:Koszuldualityforplumbing}, we explain how we can combine the above two philosophies. Namely, in the case when the symplectic manifold $X$ is the $A_n$-plumbing of $T^*\bP^1$. We obtain the following:
\begin{theorem}\label{thm: main2}
    The cocores of $X$ can be lifted to Hodge microsheaves. The (saturated) endomorphism algebra of the Hodge microsheaf exhibits the Koszul duality. 
\end{theorem}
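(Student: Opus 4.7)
\emph{Plan.} The plan is to combine the cocore lifting of Theorem~\ref{thm:main1}, the computational Hodge-wrapping machinery of Theorem~\ref{thm:main0}, and Beilinson--Ginzburg--Soergel's mixed-geometry philosophy, taking as topological input the Etg\"u--Lekili Koszul duality between the Ginzburg algebra and the path algebra of $A_n$. Write $X$ for the plumbing and $\bP^1_{(1)},\dots,\bP^1_{(n)}$ for its components, and let $F_i$ be a cotangent fiber at a generic point of $\bP^1_{(i)}$, which represents a cocore and lies entirely inside the single chart $T^*\bP^1_{(i)} \subset X$. By Theorem~\ref{thm:main1} applied to this chart, $F_i$ lifts to a Hodge microsheaf $\tilde F_i$ whose Hain Hodge structure on $\mathrm{End}(\tilde F_i)$ is the expected one on the based-loop chains. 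Since the Fourier gluing data defining Hodge microsheaves on $X$ are trivial in a neighborhood of each $F_i$, these local lifts extend across the plumbing interfaces and produce global Hodge microsheaves $\tilde F_1,\dots,\tilde F_n$ on $X$, which yields the first assertion.

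To compute the saturated endomorphism algebra of $\bigoplus_i \tilde F_i$, I would apply Theorem~\ref{thm:main0} cocore-by-cocore. Choose a cofinal sequence of positive Reeb times $0 = t_0 < t_1 < t_2 < \cdots$ so that each $\Phi_{t_k}(\frakF(\tilde F_i))$ is $\bC$-constructible; geometrically each elementary step corresponds to pushing a cotangent fiber through one $\bP^1$-neck. The Hodge lift of this single-neck passage is already provided, inside a single $T^*\bP^1$, by the argument underlying Theorem~\ref{thm:main1} and Arai's computation. Iterating neck-by-neck along the $A_n$ chain therefore assembles the inductive system $\tilde F_i = \cE_0 \to \cE_1 \to \cdots$ of mixed Hodge lifts required by Theorem~\ref{thm:main0}, whose colimit computes the Hodge wrapping of $\tilde F_i$. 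Forgetting the Hodge structure recovers the Ginzburg dg algebra $\mathcal{G}(A_n)$ identified by Ekholm--Lekili and Etg\"u--Lekili.

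For the Koszul duality, observe that each single-neck Reeb passage contributes a prescribed Tate twist coming from the Hodge lift of the previous step, and these accumulate so that the weight filtration on the wrapped Hodge endomorphism algebra coincides with the path-length grading of $\mathcal{G}(A_n)$. The algebra is therefore mixed and pure in the BGS sense, and the saturation condition of \S\ref{section:PreliminariesonHodge modules} is met once one checks that weights separate path-length degrees --- which again reduces to the single-neck computation. Applying the BGS dictionary to this mixed pure algebra yields a Koszul dual algebra which one identifies with the extension algebra of the cores, i.e.\ of the compact constant Hodge microsheaves on the $\bP^1_{(i)}$, namely the $A_n$ path algebra; this establishes the Koszul duality on the Hodge-enriched side. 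The main obstacle will be precisely this weight bookkeeping: each accumulated Tate twist must match path length exactly, for otherwise purity fails and the BGS dictionary is inapplicable. This reduces to the explicit Hodge computation for a single $\bP^1$-neck inherited from Theorem~\ref{thm:main1} and Arai's result, and that is where the bulk of the technical effort will lie.
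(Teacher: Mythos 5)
Your high-level picture --- lift the skyscraper to a Hodge module, use Hodge wrapping to produce the cocore Hodge microsheaf, then read off the Adams grading from the Tate twists --- is essentially the paper's plan. But there are several genuine gaps in how you propose to execute it.

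First, you treat the lifting as a local problem: you claim that each cotangent fiber ``lies entirely inside the single chart $T^*\bP^1_{(i)}$'' so that Theorem~\ref{thm:main1} applies, and that the local Hodge lifts ``extend across the plumbing interfaces'' because the Fourier gluing is ``trivial in a neighborhood of each $F_i$.'' This is where the argument breaks. What has to be lifted is not the skyscraper $\bK_{m_j}$ (that is trivially a Hodge module) but the \emph{wrapping} $\cH_j^\infty$, which is the microlocal skyscraper, and this object is globally supported on all of $C$, not in a single chart. In the paper's construction (Definition-Lemma~\ref{jl12-3}), $\cH_j^\infty$ is a filtered colimit of iterated extensions of the building block $\block{j}$, which spans all $n$ copies of $\bP^1$ and whose components are the nontrivial sheaves $A_s$, $B_s$, $P_s$, $Q_s$, $\ove{A_s}$, $\und{P_s}$, etc. Proving that this colimit is in fact the wrapping requires the vanishing Lemma~\ref{m29-2}, whose proof takes up most of the Appendix: one must show that $\Hom_{\mu \mathrm{sh}_C(X_\Gamma)}(\und{\scH_j^\infty}, H)=0$ for all $H$, which is the analogue of condition~3 in Theorem~\ref{theorem:microHodgewrapping}. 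Your proposal neither constructs the inductive system nor verifies this vanishing --- it silently assumes that pushing through ``one $\bP^1$-neck at a time'' is straightforward, but that is the step that actually fails to be local.

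Second, Theorem~\ref{thm:main1} is not the right input even as a base case. It concerns Hain's Hodge structure on $H^\bullet(\Omega_x\bP^n)$, and the wrapping there lives in a single compact $\bP^n$. In the plumbing, the endomorphism algebra of the wrapped fiber is the Ginzburg dga $\cG_{A_n}$, not a based-loop algebra of $\bP^1$, so there is no sense in which Theorem~\ref{thm:main1} applied chart-by-chart identifies $\End(\tilde F_i)$.

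Third, the Koszul duality step is underdetermined. You invoke ``the BGS dictionary'' and a purity/saturation condition, but BGS is the paper's \emph{motivation}, not its engine. The actual argument (Lemmas~\ref{jan8-8}, \ref{jan9-1}, \ref{jan7-4} and the subsequent corollaries) is: set $B:=\bigoplus_{a,b}\Hom^{a}_{\mu\mathrm{M}_C(X_\Gamma)}(\cH^{\infty,H}, \cH^{\infty,H}(b/2))$, show by explicit weight computation that $B$ is Adams-connected, establish a Morita equivalence $\mu\mathrm{M}_C(X_\Gamma)\simeq\Mod(B)$ under which the augmentation module is $\bigoplus_i\bC_{\bP^1_i}$, compute $\bigoplus_s\Hom_{\Mod(B)}(k,k\lb s\rb)\cong A_\Gamma$ and check that $A_\Gamma$ is Adams Koszul, then apply He--Wu's dg Koszul duality theorem to conclude $\End_B(k)\cong A_\Gamma$ and hence $B\simeq\cG_\Gamma$ via Etg\"u--Lekili. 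Your ``weight bookkeeping'' remark gestures at the right phenomenon (Tate twists accumulate to give the Adams grading), but the missing content --- Adams-connectedness, identification of the augmentation module with the cores, and the He--Wu theorem replacing the classical abelian Koszul duality --- is precisely where the proof lives.
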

This gives a mixed geometric proof of the Koszul duality of Etg\"u--Lekili~\cite{EtguLekili}. This is closely related to the $\widehat A_n$-plumbing case considered by \cite{McBreenWebster}, which is  explained in Appendix.

Note that the core of $X$ we use for Theorem~\ref{thm: main2} is intrinsic for $X$, and is different from the relative core used in the context of symplectic duality~\cite{BLPW,BLPWhypertoric}. We also discuss how we can recover the Koszul duality in \cite{BLPWhypertoric} from our formalism.

The proof of Theorem~\ref{thm: main2} is based on a very explicit description of the microsheaves corresponding to the cocores, and occupies Appendix. Although the proof is very long, we believe that such an explicit description of wrappings is not previously known and worth recording here.

\subsection*{Acknowledgment}

T.K. thanks Takumi Arai, Tatsuyuki Hikita, Dogancan Karabas, Michael McBreen, and Vivek Shende for related discussions. T.K. also thanks Toshiro Kuwabara and Yoshihisa Saito for introducing him some basic geometric representation theory at the very early stage of this project. T.K. is supported by JSPS KAKENHI  Grant Numbers 22K13912, 23K25765, and 20H01794.
T.S. thanks Tomohiro Asano and Yuichi Ike for helpful discussions on constructible sheaves.
T.S. thanks Genki Sato for his explanation of several facts in category theory.
T.S. also thanks Takuro Mochizuki, Claude Sabbah and Yota Shamoto for their insightful comments and encouragement.
T.S. is supported by JSPS KAKENHI Grant Numbers JP23K19012 and JP23K25765.

\subsection*{Notation}
\begin{itemize}
    \item $\Sh(X,\bK)$: the unbounded derived dg category of $\bK$-sheaves. For $\bK=\bC$, we denote it by $\Sh(X)$.
    \item We set $\Mod(\bK):=\Sh(\mathrm{pt},\bK)$: the unbounded derived dg category of $\bK$-modules.
    \item $\Sh(X, S)$: the unbounded derived dg category of weak constructible $\bC$-sheaves with the possible singularities in $S$.
    \item Six operations (and compositions of them) in this paper are derived functors unless specified.
    \item We use the following notational interpretation freely interchangeably:
    \begin{equation}
        H^0\Hom(-[i], -[k])=\Ext^{k-i}(-,-).
    \end{equation}
\end{itemize}

\section{Hodge modules}\label{section:PreliminariesonHodge modules}
\subsection{Mixed structure}
When understanding about mixed Hodge modules formally, it is convenient to use the terminology ``mixed structure on a given category".

\begin{definition}
Let $\cD$ be a category. A \emph{mixed structure} on $\cD$ consists of the following:
\begin{enumerate}
    \item An autoequivalence $\lb d\rb \colon \widehat{\cD}\rightarrow \widehat{\cD}$ of a category $\widehat \cD$. This is called ($d$-) Tate twist functor. We set $\lb d\rb(\cE):=\cE\lb d\rb$ for any $\cE\in \widehat\cD$.
    \item A functor $\frakF\colon \widehat \cD\rightarrow \cD$ and a natural isomorphism $\epsilon\colon \frakF\circ \lb d\rb \xrightarrow{\cong}\frakF$.
\end{enumerate}
When $\cD$ is a triangulated category, we further suppose that
\begin{enumerate}
    \item $\widehat\cD$ is triangulated, and
    \item $\frakF$ and $(d)$ is exact.
\end{enumerate}

A mixed structure on a dg category is one on the homotopy category.
\end{definition}
\begin{remark}
    This is a rather weaker version of the notions of ``mixed version" appeared in the literature~\cite{BGS, AcharKitchen, Rider}. In fact, this definition does not have much contents. The point of mixed geometry is to find a nontrivial and useful mixed structures.
\end{remark}

\subsection{A lightning introduction to mixed Hodge modules}
Here we would like to provide a rapid introduction to Hodge modules. For details, we refer to \cite{MHM,MHMproject}. 

We first start with the notion of Hodge structures.
\begin{definition}
Let $V$ be a $\bC$-vector space. A \emph{Hodge structure} of weight $n$ on $V$ consists of
\begin{enumerate}
    \item a $\bQ$-vector space with a specified isomorphism $V_\bQ\otimes_\bQ\bC\cong V$,
    \item a decreasing filtration $F^\bullet V$ on $V$
\end{enumerate}
such that 
\begin{equation}
    F^pV\oplus \overline{F^qV}=V
\end{equation}
for any $p, q$ with $p+q=n+1$. Here the overline is an automorphism of $V=V\otimes_\bQ\bC$ induced by the complex conjugation of $\bC$.
\end{definition}

\begin{example}[Tate Hodge structure]
    We set $V_\bQ=2\pi\sqrt{-1}\bQ\subset \bC=V$, $F^1V=V, F^0V=0$, and consider it as a Hodge structure of weight 2. We denote it by $\bQ(1)$.
\end{example}

Sometimes different weights are mixed in objects of our interests. In that case, we use the notion of mixed Hodges structures:
\begin{definition}
    Let $V$ be a $\bC$-vector space. A \emph{mixed Hodge structure} on $V$ consists of
    \begin{enumerate}
    \item a $\bQ$-vector space with a specified isomorphism $V_\bQ\otimes_\bQ\bC\cong V$,
    \item a decreasing filtration $F^\bullet V$ on $V$, and
    \item an increasing filtration $W_\bullet V_\bQ$ on $V_\bQ$
\end{enumerate}
such that each graded quotient $\Gr_i^WV$ is a Hodge structure of weight $i$ with the induced $\bQ$-structure and filtration.
\end{definition}
We denote the category of mixed Hodge structures by $\MHS^\heartsuit$.
We can easily see the following.
\begin{proposition}
    The following data form a mixed structure on the category $\Vect_\bQ$ of $\bQ$-vector spaces:
    \begin{enumerate}
        \item Tate twist functor $\lb 1\rb :=(-)\otimes \bQ(1)$.
        \item The forgetful functor $\frakF\colon \MHS^\heartsuit \to \Vect_\bQ$.
    \end{enumerate}
\end{proposition}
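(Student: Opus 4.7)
The plan is to verify the two items in the definition of mixed structure directly, taking $\widehat{\cD} = \MHS^\heartsuit$ and $\cD = \Vect_\bQ$.

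First I would recall the standard tensor product on $\MHS^\heartsuit$: for $V, W \in \MHS^\heartsuit$ the object $V \otimes W$ has underlying $\bQ$-vector space $V_\bQ \otimes_\bQ W_\bQ$, Hodge filtration $F^p(V \otimes W) = \sum_{i+j=p} F^i V \otimes F^j W$, and weight filtration $W_n(V \otimes W)_\bQ = \sum_{i+j=n} W_i V_\bQ \otimes W_j W_\bQ$. That this indeed lies in $\MHS^\heartsuit$ — i.e.\ that each weight-graded piece is pure of the correct weight — is Deligne's classical compatibility theorem, which I would simply cite rather than reprove. Setting $\lb 1\rb := (-)\otimes \bQ(1)$ then gives a well-defined additive endofunctor of $\MHS^\heartsuit$.

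To see that $\lb 1\rb$ is an autoequivalence, I would introduce $\bQ(-1)$ as the rank-one mixed Hodge structure with $V_\bQ = (2\pi\sqrt{-1})^{-1}\bQ \subset \bC = V$, $F^{-1}V = V$, $F^0 V = 0$, and verify directly that the multiplication map $\bQ(1)\otimes \bQ(-1) \xrightarrow{\cong} \bQ(0)$ is an isomorphism of mixed Hodge structures. This exhibits $(-)\otimes \bQ(-1)$ as a quasi-inverse of $\lb 1\rb$.

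For the second item, the forgetful functor $\frakF\colon \MHS^\heartsuit \to \Vect_\bQ$ sending $V$ to $V_\bQ$ is manifestly well-defined. For the natural isomorphism $\epsilon\colon \frakF\circ \lb 1\rb \xrightarrow{\cong}\frakF$, the component at $V$ is the $\bQ$-linear map
\[
\epsilon_V \colon V_\bQ \otimes_\bQ (2\pi\sqrt{-1}\,\bQ) \longrightarrow V_\bQ, \qquad v \otimes (2\pi\sqrt{-1}\, q) \mapsto q\cdot v,
\]
which is an isomorphism, and which is natural in $V$ because it is induced from a fixed identification of the underlying $\bQ$-vector space of $\bQ(1)$ with $\bQ$ independent of $V$. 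This exhausts the axioms, as the non-triangulated version of the definition requires nothing further.

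The only substantive input is Deligne's closure of $\MHS^\heartsuit$ under tensor product; every other step is a direct unwinding of definitions, so no real obstacle is expected.
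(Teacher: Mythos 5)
Your proof is correct and is precisely the direct verification the paper glosses as ``We can easily see the following'': you check that the Tate twist is an autoequivalence (via $\bQ(-1)$), that $\frakF$ is a functor, and you supply the natural isomorphism $\epsilon$ explicitly. The only citation needed is Deligne's closure of $\MHS^\heartsuit$ under $\otimes$, exactly as you say; the rest is unwinding definitions, so nothing is missing.
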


We next got to the smooth family version of Hodge structure, which is called variation of Hodges structures:
\begin{definition}
    Let $X$ be a complex manifold. Let $(\cM, \nabla)$ be a holomorphic flat connection on $X$. A \emph{variation of Hodge structures} of weight $n$ on $\cM$ consists of
\begin{enumerate}
    \item a $\bQ$-local system $\cL$ with a specified isomorphism $\cL\otimes_\bQ\bC\cong \mathrm{Sol}(\cM,\nabla)$ where $\mathrm{Sol}(\cM,\nabla)$ is the local system of flat sections, and
    \item a decreasing filtration $F^\bullet \cM$ of $\cM$
\end{enumerate}
with the following hold:
\begin{enumerate}
    \item $F^p\cM\oplus \overline{F^q\cM}=\cM$ for any $p, q$ with $p+q=n+1$ as $C^\infty$-bundles, and
    \item (Griffhts transversality) $\nabla_v F^pV\subset F^{p-1}V$ for any $v\in TX$.
\end{enumerate}
\end{definition}

Similarly, we have the notion of variation of mixed Hodge structures.
\begin{definition}
    Let $X$ be a complex manifold. Let $(\cM, \nabla)$ be a holomorphic flat connection on $X$. A \emph{variation of Hodge structures} (VHS for short) on $\cM$ consists of
    \begin{enumerate}
    \item a $\bQ$-local system with a specified isomorphism $\cL\otimes_\bQ\bC\cong \mathrm{Sol}(\cM,\nabla)$,
    \item a decreasing filtration $F^\bullet \cM$ of $\cM$, and
    \item an increasing filtration $W_\bullet \cL$ of $\cL$
\end{enumerate}
such that each graded quotient $\Gr_i^W\cM$ is a variation of Hodge structures of weight $i$ with the induced $\bQ$-structure and filtration.
\end{definition}
We denote the category of VHS on $X$ by $\mathrm{VHS}^\heartsuit(X)$.
We can similarly have the following.
\begin{proposition}
    The following data form a mixed structure on the category $\mathrm{Loc}_\bQ(X)$ of $\bQ$-local systems on $X$:
    \begin{enumerate}
        \item Tate twist functor $\lb 1\rb :=(-)\otimes \bQ(1)$.
        \item The forgetful functor $\frakF\colon \mathrm{VHS}^\heartsuit(X) \to \mathrm{Loc}_\bQ(X)$.
    \end{enumerate}
\end{proposition}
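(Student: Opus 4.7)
The plan is to verify directly the two axioms in the definition of a mixed structure: (i) that $\lb 1\rb := (-)\otimes \bQ(1)$ is an autoequivalence of $\mathrm{VHS}^\heartsuit(X)$, and (ii) that the forgetful functor $\frakF$ admits a natural isomorphism $\epsilon\colon \frakF\circ \lb 1\rb \xrightarrow{\cong} \frakF$. Since the statement concerns abelian (not triangulated) categories, no extra exactness axioms are needed beyond functoriality, and the content is essentially bookkeeping with the definitions of $\bQ(1)$ and of tensor product of VHS.

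First, I would recall that the Tate object $\bQ(1)$, originally defined over a point, has a unique structure of a constant VHS on any complex manifold $X$: take the constant local system $\cL_{\bQ(1)} = 2\pi\sqrt{-1}\bQ_X\subset \bC_X$, the trivial flat connection on $\cO_X$, with $F^1 = \cO_X$, $F^0 = 0$ and weight filtration concentrated in degree $2$. Next, for a VHS $\cM = (\cM,\nabla,\cL,F^\bullet,W_\bullet)$ on $X$, define $\cM\lb 1\rb := \cM\otimes \bQ(1)$ with the tensor product flat connection, the convolved filtrations $F^p(\cM\otimes\bQ(1)) = F^{p-1}\cM$ and $W_{k}(\cL\otimes_\bQ 2\pi\sqrt{-1}\bQ) = W_{k-2}\cL\otimes 2\pi\sqrt{-1}\bQ$. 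I would check that Griffiths transversality, the Hodge decomposition on graded pieces of $W$, and the rational structure are preserved; this is immediate from the definitions, since $\bQ(1)$ is pure of weight $2$ and its Hodge filtration jumps at $p=1$.

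Second, the inverse autoequivalence is given by $(-)\otimes \bQ(-1)$, where $\bQ(-1) := \bQ(1)^\vee$ in $\mathrm{VHS}^\heartsuit(X)$ (again constant on $X$). The duality is strict in each ingredient, so the natural isomorphism $\cM\otimes\bQ(1)\otimes\bQ(-1)\xrightarrow{\cong}\cM$ is inherited from the evaluation pairing. Functoriality of $\otimes\bQ(1)$ on morphisms of VHS follows because a morphism of VHS is by definition a morphism of flat connections compatible with $F^\bullet$, $W_\bullet$ and the $\bQ$-structure, and each of these conditions is preserved under $(-)\otimes \bQ(1)$.

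Third, for the natural isomorphism $\epsilon$, I would define $\epsilon_{\cM}\colon \frakF(\cM\otimes\bQ(1))\to \frakF(\cM)$ at the level of underlying $\bQ$-local systems by
\[
\cL\otimes_\bQ 2\pi\sqrt{-1}\bQ_X \xrightarrow{\ \mathrm{id}\otimes(2\pi\sqrt{-1})^{-1}\ } \cL\otimes_\bQ\bQ_X = \cL,
\]
which is a canonical isomorphism of $\bQ$-local systems. Naturality in $\cM$ is clear since every morphism of VHS is in particular $\bQ$-linear on $\cL$, so it commutes with the above rescaling. This supplies the required datum $\epsilon\colon \frakF\circ \lb 1\rb \xrightarrow{\cong}\frakF$.

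There is no real obstacle: the statement is a formal consequence of the way $\bQ(1)$ is defined and of the tensor structure on VHS. The only minor subtlety worth stating carefully is that $\epsilon$ is a genuine natural isomorphism of functors $\mathrm{VHS}^\heartsuit(X) \to \mathrm{Loc}_\bQ(X)$ and not merely a pointwise isomorphism — this is what justifies later invoking it as part of the mixed-structure data when discussing saturation in the body of the paper.
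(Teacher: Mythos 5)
Your proof is correct. The paper states this proposition without proof, introducing it with the remark ``We can similarly have the following,'' so the intended argument is exactly the direct verification you carry out: equip $\bQ(1)$ with its constant VHS structure on $X$, check that $(-)\otimes\bQ(1)$ preserves the $\bQ$-structure, Griffiths transversality, and the $F$- and $W$-filtrations, exhibit $(-)\otimes\bQ(-1)$ as a quasi-inverse, and define $\epsilon$ by the canonical rescaling $2\pi\sqrt{-1}\bQ\xrightarrow{\cong}\bQ$. Nothing more is required since $\mathrm{Loc}_\bQ(X)$ is abelian rather than triangulated, so the extra exactness axioms in the definition of a mixed structure do not apply.
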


We finally come to the stage of Hodge modules. Roughly speaking, the notion of Hodge module is a singular family version of Hodge structures. Since a singular version of a flat connection is a $\cD$-module, a Hodge module is a certain structure on a $\cD$-module. The first approximation is the following:
\begin{definition}
Let $X$ be a complex manifold. An object of the category $\MHW^{c,\heartsuit}(X)$ is a tuple $(\cM, \cE,F, W)$ such that
\begin{enumerate}
    \item $\cM$ is a regular holonomic $\cD_X$-module,
    \item $\cE$ is a $\bQ$-perverse sheaf with a specified isomorphism $\cE\otimes_\bQ\bC\cong \mathrm{DR}(\cM)$, the de Rham image of $\cM$,
    \item $W$ is an increasing filtration on $\cE$,
    \item $F$ is a decreasing $\cO_X$-module filtration on $\cM$.
\end{enumerate}
A morphism in $\MHW^{c,\heartsuit}(X)$ is a morphism between $\bQ$-perverse sheaves preserving the filtrations.
\end{definition}

\begin{proposition}
    The following data form a mixed structure on the category $\Sh^\heartsuit(X,\bQ)$ of $\bQ$-sheaves on $X$:
    \begin{enumerate}
        \item Tate twist functor $\lb 1\rb :=(-)\otimes \bQ(1)$.
        \item The forgetful functor $\frakF\colon \mathrm{MHW}^{c,\heartsuit}(X) \to \Sh^\heartsuit(X,\bQ)$.
    \end{enumerate}
\end{proposition}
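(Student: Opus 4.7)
The plan is to verify directly the two axioms that define a mixed structure, i.e.\ to exhibit a Tate twist autoequivalence $\lb 1\rb$ of $\MHW^{c,\heartsuit}(X)$, together with the forgetful functor $\frakF$ and a natural isomorphism $\epsilon\colon \frakF\circ\lb 1\rb \xrightarrow{\cong}\frakF$. I would define $\lb 1\rb$ by sending $(\cM,\cE,F,W)$ to $(\cM,\cE\otimes_\bQ\bQ(1),F',W')$ where the new filtrations are shifted exactly as in the previous two propositions, namely $F'^{p}\cM:=F^{p-1}\cM$ and $W'_{k}(\cE\otimes_\bQ\bQ(1)):=W_{k-2}\cE\otimes_\bQ\bQ(1)$. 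The compatibility isomorphism $\cE\otimes_\bQ\bC\cong \mathrm{DR}(\cM)$ is preserved using the canonical identification $\bQ(1)\otimes_\bQ\bC\cong \bC$, and the inverse autoequivalence $\lb -1\rb$ is built symmetrically using $\bQ(-1)$; their compositions are canonically the identity via the pairing $\bQ(1)\otimes_\bQ\bQ(-1)\cong \bQ$.

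For the second piece, $\frakF(\cM,\cE,F,W):=\cE$ is evidently a functor to $\Sh^\heartsuit(X,\bQ)$, since morphisms in $\MHW^{c,\heartsuit}(X)$ are by definition morphisms of the underlying $\bQ$-perverse sheaves compatible with the extra data. The natural isomorphism $\epsilon$ is, object-wise, the canonical isomorphism $\cE\otimes_\bQ\bQ(1)\xrightarrow{\cong}\cE$ obtained from any fixed $\bQ$-basis of $\bQ(1)=2\pi\sqrt{-1}\bQ$; naturality in $\cE$ is immediate because tensoring by a fixed one-dimensional $\bQ$-vector space is naturally isomorphic to the identity functor on $\Sh^\heartsuit(X,\bQ)$.

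I do not foresee any substantive obstacle: the argument is entirely parallel to the two preceding propositions for $\MHS^\heartsuit$ and $\mathrm{VHS}^\heartsuit(X)$, simply adding verifications in the coherent layer (the $\cD_X$-module and its Hodge filtration $F$) alongside those in the constructible layer (the perverse sheaf and weight filtration $W$). The only minor subtlety worth flagging is that $\epsilon$ depends on a choice of trivialization $\bQ(1)\cong \bQ$ (e.g.\ the generator $2\pi\sqrt{-1}$), but since the definition of a mixed structure only requires the existence of such a natural isomorphism, any fixed choice suffices.
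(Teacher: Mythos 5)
Your proposal is correct and takes the obvious direct route that the paper leaves implicit (the paper gives no proof for this proposition at all, treating it as routine): you unwind the definition of a mixed structure from \S2.1, construct the Tate autoequivalence on $\MHW^{c,\heartsuit}(X)$ by twisting $\cE$ by $\bQ(1)$ and shifting the $F$- and $W$-filtrations accordingly, observe that $\frakF$ is a functor by the definition of morphisms in $\MHW^{c,\heartsuit}(X)$, and supply the natural isomorphism $\epsilon$ from the trivialization $\bQ(1)\cong\bQ$ given by the generator $2\pi\sqrt{-1}$. Your remark that the $\cD$-module $\cM$ itself is untouched (only its filtration $F$ is reindexed) and that the comparison isomorphism $\cE\otimes_\bQ\bC\cong\mathrm{DR}(\cM)$ survives the twist via $\bQ(1)\otimes_\bQ\bC\cong\bC$ are exactly the points one must check, and your observation that $\epsilon$ depends on a noncanonical choice but that the definition only requires existence is also apt. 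This matches what the authors evidently intended.
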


The category $\MHW^{c,\heartsuit}(X)$ is too big, and the work of Morihiko Saito~\cite{MHM} gives a certain nice subcategory of $\MHW^{c,\heartsuit}(X)$, \emph{the category of mixed Hodge modules $\MHM^{c,\heartsuit}(X)$}. Here is a rough description of it:
\begin{theorem}[\cite{MHM,tubach2024norihodgerealisationsvoevodsky}]
    There exists a full abelian subcategory $\MHM^{c,\heartsuit}(X)$ of $\MHW^{c,\heartsuit}(X)$ such that its derived dg category $\MHM^c(X)$ satisfies that
    \begin{enumerate}
        \item $\MHM^c(\{*\})$ is the bounded derived dg category of graded polarizable mixed Hodge structures,
        \item $\MHM^c(X)$ contains all the admissible graded polarizable variations of mixed Hodge structures,
        \item several important operations (e.g. six operations, nearby/vanishing cycle) work, and
        \item some nice theorems (e.g. the decomposition theorem) hold.
    \end{enumerate}
\end{theorem}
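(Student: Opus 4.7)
The plan is to follow Saito's inductive construction. First I would define $\MHM^{c,\heartsuit}(X)$ by induction on the dimension of support, with pure Hodge modules of weight $w$ built in as the weight-graded pieces. At a point, the category should reduce to graded polarizable mixed Hodge structures; on a smooth dense open of an irreducible variety, pure objects should be polarizable variations of Hodge structure of the appropriate weight. The inductive condition is that for every locally defined holomorphic function $f$, the nearby and vanishing cycle functors $\psi_f, \varphi_f$ preserve the class of pure/mixed Hodge modules, with the monodromy-weight filtration matching the shift of $W$; this is the mechanism that propagates the structure from lower-dimensional strata up to $X$. Verifying that $\MHM^{c,\heartsuit}(X)$ is an abelian subcategory then reduces to strictness of morphisms with respect to $F$ and $W$, which follows from polarizability on the pure pieces together with the strict-support decomposition.

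For (1) and (2), both statements are essentially built into the definition at the base of the induction: a pure object on a point is a polarizable Hodge structure by fiat, and admissible graded polarizable variations of mixed Hodge structures on $X$ are recovered as those mixed objects whose underlying $\cD$-module is a flat bundle. For (3), I would construct the six operations and nearby/vanishing cycles step by step: $\psi_f, \varphi_f$ are the core of the definition and so are immediate; $f^*[d]$ along a smooth $f$ of relative dimension $d$ is transparent from the $\cD$-module side; $f_*$ for a proper $f$ is built using a decomposition theorem for pure objects and extended to mixed ones using $W$; Verdier duality on the $\cD$-module side governs how $F$ transforms, and the remaining cases are assembled from closed/open immersions and smooth maps. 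For (4), the decomposition theorem for $f_*$ of a pure polarizable object is deduced from the semisimplicity of pure Hodge modules plus preservation of purity under proper direct image, itself proved by an induction on dimension using the Hodge index theorem.

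The main obstacle is the strictness of the Hodge filtration $F$ under $f_*$ for a proper but non-smooth map, which is equivalent to an $E_1$-degeneration of a Hodge-to-de Rham spectral sequence in families and is the technical heart of Saito's work. I would address this by reducing to normal crossing situations via resolution of singularities and running an involved induction involving $V$-filtrations and polarizations. As an alternative framework that sidesteps much of this bookkeeping, the approach of \cite{tubach2024norihodgerealisationsvoevodsky} realizes $\MHM^c(X)$ through Nori's construction applied to a Hodge realization of Voevodsky motives, giving the derived dg enhancement in (1) and the functoriality in (3) rather directly, at the cost of invoking substantial motivic machinery.
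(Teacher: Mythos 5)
Your outline faithfully reconstructs the content of the two cited references, which is exactly what this statement appeals to: the paper does not prove the theorem but imports it, taking the abelian category $\MHM^{c,\heartsuit}(X)$, its six operations, strictness, and the decomposition theorem from Saito \cite{MHM}, and a coherent derived dg enhancement from \cite{tubach2024norihodgerealisationsvoevodsky}. Your sketch of Saito's induction on the dimension of support via $\psi_f,\varphi_f$ compatibility, strictness from polarizability and the strict-support decomposition, and the identification of $E_1$-degeneration under proper pushforward as the technical heart, all match the literature. One small correction to the last paragraph: the Tubach approach does not \emph{sidestep} Saito's analytic input; it presupposes Saito's abelian category and the underived six-functor formalism, and supplies the higher-categorical coherence (the dg/$\infty$-categorical enhancement of the six operations) that the present paper needs. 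So the two citations are complementary rather than alternative routes.
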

We do not define several adjectives (graded polarizable, admissible) here, see \cite{MHMproject}. 
Similarly, we have the following:
\begin{proposition}\label{prop:mhmismix}
    The following data induce a mixed structure on the category $\Sh_{\mathrm{constr}}(X, \bQ)$ of cohomologically constructible sheaves on $X$:
    \begin{enumerate}
        \item Tate twist functor $\lb 1\rb :=(-)\otimes \bQ(1)$.
        \item The forgetful functor $\frakF\colon \MHM^c(X) \to \Sh_{\mathrm{constr}}(X, \bQ)$.
    \end{enumerate}
    The restriction of data to $\MHM^{c, \heartsuit}(X)$ gives a mixed structure on the  category of perverse sheaves on $X$.
\end{proposition}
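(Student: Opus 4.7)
The statement is essentially a bookkeeping verification: I need to check the three pieces of data required by the definition of mixed structure on $\Sh_{\mathrm{constr}}(X,\bQ)$ (and its perverse heart), namely (i) that $\lb 1\rb = (-)\otimes \bQ(1)$ is an exact autoequivalence of $\MHM^c(X)$, (ii) that the forgetful functor $\frakF$ is exact, and (iii) that there is a natural isomorphism $\epsilon\colon \frakF\circ\lb 1\rb \xrightarrow{\cong}\frakF$. The plan is to address these three points in order, drawing only on structural properties of Saito's category already stated in the preceding theorem.

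For (i), I would note that tensoring with $\bQ(1)\in \MHM^c(\mathrm{pt})$ is an operation on $\MHM^c(X)$ obtained by external tensor product with the constant mixed Hodge module, followed by the projection formula (equivalently, by pullback along $X\to \mathrm{pt}$ and internal tensor). Since $\bQ(1)$ is invertible in $\MHM^c(\mathrm{pt})$ with inverse $\bQ(-1)$ (with $\bQ(-1)$ defined by shifting both $F$ and $W$ oppositely), and since tensoring with an invertible object is an exact autoequivalence on a triangulated category, $\lb 1\rb$ is an exact autoequivalence on $\MHM^c(X)$. The restriction to the abelian heart $\MHM^{c,\heartsuit}(X)$ is also an autoequivalence because Tate twist shifts $W$ by $-2$ and $F$ by $-1$ without disturbing the axioms of a mixed Hodge module.

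For (ii), the forgetful functor sending $(\cM,\cE,F,W)$ to $\cE$ (and then to the underlying $\bQ$-complex) is built into Saito's construction as the perverse-sheaf realization; it is exact and $t$-exact with respect to the perverse $t$-structure by definition of $\MHM^{c,\heartsuit}(X)$, and so descends to an exact functor of derived dg categories $\MHM^c(X)\to \Sh_{\mathrm{constr}}(X,\bQ)$. For (iii), the Tate object $\bQ(1)$ has underlying $\bQ$-vector space $2\pi\sqrt{-1}\,\bQ\subset \bC$, which is canonically isomorphic to $\bQ$ via the choice of generator $2\pi\sqrt{-1}$; this isomorphism is a morphism of $\bQ$-vector spaces (but not of Hodge structures). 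Applying external tensor, the induced isomorphism of perverse sheaves $\frakF(\cE\otimes \bQ(1))\cong \frakF(\cE)\otimes \bQ \cong \frakF(\cE)$ is natural in $\cE$ and compatible with shift and triangles, giving $\epsilon$.

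None of the three steps is truly hard; the only mild subtlety is conceptual rather than technical, namely to make sure that the trivialization in (iii) is natural in $\cE$ and compatible with the triangulated structure, which follows because the generator $2\pi\sqrt{-1}$ is fixed once and for all and the isomorphism $2\pi\sqrt{-1}\,\bQ\cong \bQ$ is a morphism in $\Vect_\bQ$. The statement for perverse sheaves is then obtained by restriction to the heart, since both $\lb 1\rb$ and $\frakF$ preserve the perverse $t$-structure. This completes the verification.
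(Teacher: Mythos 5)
Your verification is correct and is the natural way to check what the paper treats as immediate: the paper offers no proof of Proposition~\ref{prop:mhmismix}, stating it as a "Similarly\dots" observation in the same spirit as the two preceding propositions for $\Vect_\bQ$ and $\mathrm{Loc}_\bQ(X)$. You correctly enumerate the three pieces of data in the definition of mixed structure -- exactness of the Tate twist autoequivalence on $\MHM^c(X)$, exactness of $\frakF$, and the natural isomorphism $\epsilon \colon \frakF \circ \lb 1 \rb \xrightarrow{\cong} \frakF$ -- and your discussion of $\epsilon$ is the one place where there is genuine content. The observation that the trivialization $2\pi\sqrt{-1}\,\bQ \cong \bQ$ is a morphism in $\Vect_\bQ$ but not a morphism of Hodge structures is exactly the right thing to notice: $\epsilon$ lives in the base category $\Sh_{\mathrm{constr}}(X,\bQ)$, not in $\MHM^c(X)$, so it is allowed to forget the weight. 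Fixing the generator $2\pi\sqrt{-1}$ once and for all makes $\epsilon$ natural and compatible with shifts and cones, as you say. The descent to the perverse heart then follows because both $\lb 1 \rb$ and $\frakF$ are $t$-exact. No gap.
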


There is a further enlarged version $\MTM(X)$, the derived category of mixed twistor modules studied by Simpson, Sabbah, Mochizuki~\cite{mochizukiMTM}. We would like to package as follows:
\begin{theorem}
    \begin{enumerate}
        \item There exists a category $\MTM(X)$ which gives a mixed structure on the derived category of holonomic $\cD$-modules. 
        \item $\MHM^c(X)$ is fully faithfully embedded into $\MTM(X)$.
        \item There exists an autoequivalence $\lb\frac{1}{2}\rb$ on $\MTM(X)$ satisfying $\lb \frac{1}{2}\rb^2|_{\MHM^c(X)}=\lb 1\rb$.
    \end{enumerate}
\end{theorem}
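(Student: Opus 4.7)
The plan is to invoke the theory of mixed twistor $\cD$-modules of Sabbah and Mochizuki~\cite{mochizukiMTM} and repackage their results in the mixed-structure formalism from the previous subsection; the work here is organizational rather than original.

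For (1), I would take $\MTM(X)$ to be the derived dg category of the abelian category of mixed twistor $\cD$-modules on $X$ from loc.\ cit. The forgetful functor $\frakF$ is the specialization of the underlying $R_F\cD$-module at a generic point $\lambda = 1$ of the twistor parameter $\bP^1_\lambda$, which produces a holonomic $\cD$-module. The Tate twist $\lb 1 \rb$ is the tensor product with the twistor analog of the Tate object, and the natural isomorphism $\frakF \circ \lb 1 \rb \cong \frakF$ is automatic since the underlying holonomic $\cD$-module of the twistor Tate object is canonically trivial. The required mixed-structure axioms then reduce to Mochizuki's basic properties of the specialization functor.

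For (2), the embedding $\MHM^c(X) \hookrightarrow \MTM(X)$ is realized by the Rees construction: a mixed Hodge module with underlying filtered $\cD$-module $(\cM, F^\bullet)$ is sent to the graded $R_F\cD_X$-module $R_F(\cM) = \bigoplus_p F^p(\cM)\,\lambda^{-p}$, with the weight filtration $W_\bullet$ and the rational structure transported verbatim. Full faithfulness and exactness at the heart are established in~\cite{mochizukiMTM}, and the promotion to the derived dg level is formal given t-exactness. Compatibility with the Tate twists on both sides amounts to identifying the Rees module of the Tate Hodge structure with the twistor Tate object, which is a direct unwinding of definitions.

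For (3), the half Tate twist $\lb \tfrac{1}{2} \rb$ is constructed as the elementary twistor shift that implements a half-integer shift of the twistor weight grading; such a shift is available on $\MTM(X)$ but does not descend to $\MHM^c(X)$, precisely because Hodge weights are integral. The identity $\lb \tfrac{1}{2} \rb^2|_{\MHM^c(X)} = \lb 1 \rb$ is then checked by a direct computation on Rees modules: two half-shifts compose to a full weight-two shift, which matches the image of the Hodge-theoretic Tate twist under the embedding in (2). The main obstacle, and the reason this packaging is delicate rather than trivial, is the canonicity of $\lb \tfrac{1}{2} \rb$: it implicitly involves a choice of square root in the twistor variable $\lambda$, and one has to fix sign conventions so that squaring recovers the Hodge Tate twist compatibly with Rees modules. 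All substantive analytic content (admissibility, Stokes filtrations, polarizations) is supplied by~\cite{mochizukiMTM}; our task is merely to verify that these conventions align with the mixed-structure framework adopted here.
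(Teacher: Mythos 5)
The paper gives no proof of this theorem at all: it is stated as a ``packaging'' of known results from the theory of mixed twistor $\cD$-modules, with the whole content delegated to the citation to Simpson--Sabbah--Mochizuki. Your unpacking is correct and aligns with what the paper implicitly relies on: $\MTM(X)$ as the derived category of mixed twistor $\cD$-modules, the forgetful functor via specialization at $\lambda=1$ (though calling this a ``generic'' point is a slight misnomer — it is the canonical fiber), the embedding via the Rees construction $R_F\cM$, and the half Tate twist coming from the fact that twistor Tate objects $\mathbb{T}(p)$ exist for $p\in\tfrac12\bZ$ while Hodge Tate twists require $p\in\bZ$. The one caution I would raise is in your last paragraph: the half-integer twistor Tate twist is usually defined canonically (weight $-2p$ is still an integer for half-integer $p$) without choosing a square root of $\lambda$; the relevant subtlety is the one the authors flag in their Remark on the twistor Fourier transform, namely that the integrable structure $\lambda^2\partial_\lambda$ of the underlying $R$-module must be tracked to ensure compatibility with the Hodge embedding.
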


\subsection{Infinite-dimensional Hodge modules}\label{section:Infinite-dimensionalHodge}
Let $X$ be a complex manifold.
For our purpose, we need an infinite-dimensional version of Hodge modules. For this purpose, we simply set as follows:
\begin{definition}
    \begin{enumerate}
        \item We set
        \begin{equation}
    \MHM(X):=\Ind(\MHM^c(X)).
\end{equation}
where $\Ind$ is the category of Ind-objects.
\item We define $\frakF\colon \MHM(X)\rightarrow \Sh(X,\bQ)$ by the composition of $\Ind(\frakF)\colon \Ind(\MHM^c(X))\rightarrow \Ind(\Sh(X,\bQ))$ and the colimit realization functor $\Ind(\Sh(X,\bQ))\rightarrow \Sh(X,\bQ)$.
\item For a conic Lagrangian $\Lambda\subset T^*X$, the category $\Sh_{\Lambda}(X,\bQ)$ is the subcategory of $\Sh(X,\bQ)$ spanned by the objects satisfying $\mathrm{SS}\subset \Lambda$. We set
\begin{equation}
    \MHM_\Lambda(X):=\frakF^{-1}(\Sh_{\Lambda}(X, \bQ)).
\end{equation}
    \end{enumerate}
\end{definition}

Let $\Lambda$ be a complex analytic conic Lagrangian. We denote the full subcategory of $\MHM^c(X)$ spanned by the objects whose microsupports are contained in $\Lambda$ by $\MHM^c_\Lambda(X)$. Then obviously, $\Ind (\MHM^c_{\Lambda}(X))\subset \MHM_\Lambda(X)$.

\subsection{Saturation}
\begin{definition}
Let $(\frakF\colon \widehat \cD\to \cD, \lb d\rb , \epsilon)$ be a mixed structure on abelian or triangulated category $\cD$. 
\begin{enumerate}
    \item We say an object $M\in \widehat\cD$ is \emph{saturated} if the morphism
    \begin{equation}
    \bigoplus_{n\in \bZ}\Ext^i_{\widehat \cD}(M, N\lb nd\rb )\xrightarrow{\frakF} \Ext^i_{\cD}(\frakF (M), \frakF (N))
\end{equation}
is isomorphic for any $N$ and $i\in \bZ$.
\item We say $\widehat \cD$ is \emph{saturated} if any object in $\widehat \cD$ is saturated.
\end{enumerate}
\end{definition}

As discussed in \cite{BGS}, the saturation property is crucial in the discussion of Koszul duality. However, the whole category of mixed Hodge modules do not satisfy the saturatedness. Here are easy counterexamples.
\begin{example}
\begin{enumerate}
    \item Suppose $V_1=\bC$ is the (unique) Hodge structure of weight $0$. Let $V_2=\bC^2$ be the (unique) Hodge structure of weight 1. Then 
    \begin{equation}
        0=\bigoplus_i\Hom_{\mathrm{MHS}}(V_1, V_2\lb i \rb)\neq \Hom_{\mathrm{Vect}_\bQ}(\frakF(V_1), \frakF(V_2))=\bQ^2.
    \end{equation}
    Hence $\mathrm{MHS}$ is not a saturated mixed structure of $\mathrm{Vect}_\bQ$.
    \item 
    We demonstrate another example explaining the non-saturatedness of $\MHS$.
    Suppose $V_1=\bC$ is the (unique) Hodge structure of weight $0$. Then $\bigoplus_i \Ext^1_{\MHS}(V,V(i))\neq 0$ because there are several nontrivial mixed Hodge structures of rank 2, whereas $\Ext^1_{\Vect_\bQ}(V,V)=0$.
\end{enumerate}
\end{example}

We first give the following lemma.
\begin{lemma}\label{lem:finite_generating_saturated} Let $(\frakF\colon \widehat \cD\to \cD, \lb d\rb , \epsilon)$ be a mixed structure on a triangulated category $\cD$. Let $\{\cE_i\}_{i\in I}$ be a set of objects of $\widehat\cD$. If the subcategory spanned by $\lc \cE_i[j]\relmid i\in I, j\in \bZ\rc$ is saturated, then the subcategory generated by $\lc \cE_i[j]\relmid i\in I, j\in \bZ\rc$ under the shifts, Tate twists, and taking cones is also saturated as well.
\end{lemma}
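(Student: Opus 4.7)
The strategy is to fix an arbitrary $N \in \widehat{\cD}$ and consider the full subcategory $\cS_N \subset \widehat{\cD}$ consisting of those objects $M$ for which the comparison map
\begin{equation}
\bigoplus_{n \in \bZ} \Ext^i_{\widehat{\cD}}(M, N\lb nd \rb) \xrightarrow{\frakF} \Ext^i_{\cD}(\frakF(M), \frakF(N))
\end{equation}
is an isomorphism for every $i \in \bZ$. By hypothesis, $\cS_N$ contains every $\cE_i[j]$. It will suffice to show that $\cS_N$ is closed under shifts, under Tate twists $\lb \pm d \rb$, and under cones; then $\cS_N$ contains the subcategory generated by the $\cE_i[j]$ under these operations, and since $N$ is arbitrary, this gives saturatedness of that subcategory.

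Closure under shifts is immediate from the definition. For Tate twists, the autoequivalence property of $\lb d\rb$ gives natural identifications $\Ext^i_{\widehat{\cD}}(M\lb d\rb, N\lb nd\rb) \cong \Ext^i_{\widehat{\cD}}(M, N\lb (n-1)d\rb)$, and reindexing the sum $n \mapsto n-1$ identifies the source of the comparison map for $M\lb d\rb$ with that for $M$. On the target side, the natural isomorphism $\epsilon$ provides $\frakF(M\lb d\rb) \cong \frakF(M)$, so the target is unchanged. The symmetric argument handles $\lb -d \rb$.

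For cones, let $M_1 \to M_2 \to M_3 \to M_1[1]$ be a distinguished triangle with $M_1, M_2 \in \cS_N$. Applying $\Hom_{\widehat{\cD}}(-, N\lb nd \rb)$ and taking direct sum over $n \in \bZ$ yields a long exact sequence, since $\bZ$-indexed direct sums of abelian groups preserve exactness. Exactness of $\frakF$ produces the triangle $\frakF(M_1) \to \frakF(M_2) \to \frakF(M_3)$ in $\cD$ and the corresponding long exact sequence in $\Ext^\bullet_{\cD}(-, \frakF(N))$. The comparison maps $\frakF$ assemble into a morphism of these long exact sequences, and a degree-wise five-lemma argument delivers $M_3 \in \cS_N$.

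I do not anticipate any serious obstacle. The argument is a formal closure computation relying only on exactness of $\frakF$, the autoequivalence property of $\lb d\rb$, and exactness of $\bZ$-indexed direct sums in the category of abelian groups. The only point that deserves any care is commuting the direct sum over $n$ with the long exact sequence coming from a distinguished triangle, which is automatic because Ext groups are valued in abelian groups.
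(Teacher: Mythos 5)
Your closure computations for shifts, Tate twists, and cones are individually correct, but there is a genuine gap in the surrounding logic. You fix an arbitrary $N \in \widehat{\cD}$ and then assert that ``by hypothesis, $\cS_N$ contains every $\cE_i[j]$.'' That is not what the hypothesis gives you. When the lemma says the subcategory spanned by the $\cE_i[j]$ is saturated, the quantifier over $N$ in the definition of saturation runs over \emph{that subcategory}, not over the ambient $\widehat\cD$: one restricts the mixed structure to the subcategory and asks that every object be saturated in the restricted structure. The Hodge--Tate example (Example~\ref{ex:Hodge-Tate}(1)) makes this explicit: $\cT^\heartsuit$ is saturated, yet taking $M = \bQ$ (weight $0$) and $N$ a rank-two pure Hodge structure of weight $1$ --- an object of $\MHS^\heartsuit$ not lying in $\cT^\heartsuit$ --- gives $\bigoplus_n \Hom(M, N(n)) = 0$ by weight reasons while $\Hom_{\Vect_\bQ}(\frakF(M), \frakF(N)) = \bQ^2$. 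So the hypothesis only guarantees the comparison isomorphism when both $M$ and $N$ lie in the spanned subcategory; nothing is claimed for a general $N$, and the base case of your closure argument is unjustified.

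The fix is a two-stage argument, one closure step in each variable of $\Hom$. Stage one: for $N$ fixed in the spanned subcategory, your closure computations show that $\cS_N$ contains the generated subcategory, giving the comparison isomorphism for $M$ in the generated subcategory and $N$ in the spanned one. Stage two: fix $M$ in the generated subcategory and consider the collection of $N$ for which the comparison map is an isomorphism; by stage one this contains the spanned subcategory, and the analogous closure computations in the $N$-variable (reindexing the direct sum over $n$ for a Tate twist of $N$, applying $\epsilon$ to identify $\frakF(N(d))$ with $\frakF(N)$, and a five-lemma for cones in $N$) show it contains the generated subcategory as well. Only after both stages does one obtain the comparison isomorphism for $M$ and $N$ both in the generated subcategory, which is the actual content of the lemma. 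This is in substance what the paper's proof does: it works inside a saturated subcategory $\widehat\cD'$ and verifies the comparison isomorphism directly for twisted pairs $\cE(j)$, $\cF(k)$ and for cones, keeping both $\Hom$-arguments inside $\widehat\cD'$ throughout.
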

\begin{proof}
Since the generating set is closed under shifts, the closedness under the shifts is obvious. 

    Let $\widehat\cD'$ be a maximal saturated subcategory of $\widehat\cD$ containing $\cE_i$. For $\cE, \cF\in \widehat\cD'$. Then  We have that
\begin{equation}
    \bigoplus_{i}\Hom_{\widehat\cD}(\cE(j),\cF(k)(i))=\bigoplus_{i}\Hom_{\widehat\cD}(\cE,\cF(i+j-k))=\Hom_{\cD}(\frakF(\cE),\frakF(\cF)).
\end{equation}
Hence $\cE(j), \cF(k)\in \widehat \cD'$.

For $\cE,\cF,\cG\in \widehat\cD'$, we have
\begin{equation}
\begin{split}
    \bigoplus_{i}\Hom_{\widehat \cD}(\cG, \Cone(\cE\to \cF)(i))&\cong \Cone( \bigoplus_{i}\Hom_{\widehat \cD}(\cG, \cE(i))\to \bigoplus_i\Hom_{\widehat \cD}(\cG, \cF(i)))\\
    &\cong  \Cone( \Hom_{\cD}(\frakF(\cG), \frakF(\cE))\to \Hom_{\cD}(\frakF(\cG), \frakF(\cF)))\\
    &\cong \Hom_{\cD}(\frakF(\cG), \Cone(\frakF(\cE)\rightarrow \frakF(\cF)))\\
    &\cong \Hom_{\cD}(\frakF(\cG), \frakF(\Cone(\cE\to\cF))).
\end{split}
\end{equation}
In a similar way, we can prove the remaining equalities to prove $\Cone(\cE\to \cF)\in \widehat\cD'$. This completes the proof.
\end{proof}

\begin{example}[{\cite[\S4, Example (3)]{BGS}}]\label{ex:Hodge-Tate}
\begin{enumerate}
    \item 
Consider the subcategory $\cT^\heartsuit$ of $\MHS^\heartsuit$ spanned by the finite direct sums of  $\bQ(1)^{\otimes n}$ for any $n$. It forms an abelian category. It is obvious that $\cT^\heartsuit$ is a saturated mixed structure on $\Vect_\bQ$. We call an object in $\cT$ is Hodge--Tate. By Lemma~\ref{lem:finite_generating_saturated}, the derived category $\cT$ of $\cT^\heartsuit$ is also saturated.

\item\label{ex:Hodge-Tate2}
For $\bP^1$, take $3$ (or 2) different points $\lc l,m, r\rc$ (or $m,r$) on it. 

We consider the sub dg category $M'$ spanned by $\bC_{\bP^1}[1], (\bC_{\bP^1\bs\lc i\rc}\xrightarrow{\iota} \bC_{\bP^1})\in \MHM(\bP^1)$ where $i=l,m,r$ where $\iota$ is the canonical injection. Note that $(\bC_{\bP^1\bs\lc i\rc}\xrightarrow{\iota} \bC_{\bP^1})\cong \bC_i$. 

In each hom-space, we consider the following subspaces:
\begin{equation}
    \begin{split}
        \bQ \cdot \id\oplus \bQ\cdot [\bP^1] &\subset\bigoplus_j \Hom_{\MHM(\bP^1)}(\bC_{\bP^1}[1],\bC_{\bP^1}[1](j))\\
            \bQ \cdot u_i&\subset \bigoplus_j\Hom_{\MHM(\bP^1)}(\bC_{\bP^1}[1],(\bC_{\bP^1\bs\lc i\rc}\xrightarrow{\iota} \bC_{\bP^1})(j))\\
               \bQ\cdot \iota_i &\subset \bigoplus_j\Hom_{\MHM(\bP^1)}((\bC_{\bP^1\bs\lc i\rc}\xrightarrow{\iota} \bC_{\bP^1}),\bC_{\bP^1}[1](j))\\
                \bQ\cdot \id &\subset \bigoplus_j\Hom_{\MHM(\bP^1)}((\bC_{\bP^1\bs\lc i\rc}\xrightarrow{\iota} \bC_{\bP^1}),(\bC_{\bP^1\bs\lc i\rc}\xrightarrow{\iota} \bC_{\bP^1})(j))  
    \end{split}
\end{equation}
where $[\bP^1]$ is the fundamental class, $u_i$ is the morphism corresponding to the identity under $\Hom_{\MHM(\bP^1)}(\bC_{\bP^1}[1],\bC_{\bP^1}[1])\cong \Hom_{\MHM(\bP^1)}(\bC_{i}[1],\bC_i[1])$, and $\iota_i$ is the morphism induced by $\iota$. One can check that the left hand sides defines a wide subcategory $M^{\mathrm{pre
}}$ of $M'$. We denote the pretriangulated dg category of finite twisted complexes (i.e., iterated cones of $M^{\mathrm{pre}}$) by $M$. Since $M$ is generated by shifts, Tate twists, and taking cones by $M^{\mathrm{pre}}$, Lemma~\ref{lem:finite_generating_saturated} implies that $M$ is saturated. Also, by the construction, $M$ is a non-full subcategory of $\MHM^c(\bP^1)$.

\item \label{ex:Hodge-Tate3}For $\bP^n$, we take a hyperplane $H$ and a point $x\not\in H$. The we consider the objects $\bC_{\bP^n}[n], \bC_H[n-1], \bC_x$. Similarly, we can similarly take consider the subspaces of morphisms. One can observe that the category spanned by them is a wide saturated sub dg category. Then, by taking the dg category $M$ of finite twisted complexes of $M'$, we obtain a saturated mixed structure by Lemma~\ref{lem:finite_generating_saturated}. Again, this is a (non-full) subcategory of $\MHM^c(\bP^n)$.
\end{enumerate}
\end{example}

\subsection{Linear algebraic description of Hodge modules}\label{section:linearalgebraicHodge}
For the later purpose, we recall linear algebraic description of Hodge modules from \cite{SaitoMonodromic}, which is a mixed version of the classical results for perverse sheaves/D-modules.

\begin{definition}
    Let $\cC$ be a category. Let $\widehat\cC$ be an abelian category and a mixed structure on $\cC$. A \emph{mixed monodromic object} $C=(C_{(-1,0]}, T_s, N, C_{-1}, c,v)$ of $\widehat\cC$ consists of the following:
    \begin{enumerate}
        \item $C_{(-1,0]}, C_{-1}\in \widehat\cC$. 
        \item $T_s\in \Aut(C_{(-1,0]}), N\in \Hom(C_{(-1,0]}, C_{(-1,0]}(-1))$ 
        \item $c\in \Hom(\Ker(T_s-1), C_{-1}), v\in \Hom(C_{-1},\Ker(T_s-1)(-1))$
    \end{enumerate}
    such that
    \begin{enumerate}
        \item $T_s\circ N=N\circ T_s$,
        \item $C_{(-1,0]}=\bigoplus_{\alpha\in (-1,0]\cap \bQ}\Ker(T_s-\exp(-2\pi\sqrt{-1}\alpha))=:C_\alpha$,
        \item $vc=N|_{\Ker (T_s-1)}$.
    \end{enumerate}
    Morphisms between mixed monodromic objects are defined to be obvious compatible morphisms. We denote the derived category of mixed monodromic objects by $\mathrm{Mon}(\widehat{\cC})$.
\end{definition}

\begin{theorem}[{\cite{SaitoMonodromic}}]\label{thm:MHMlinearalgebra}
Let $X$ be a complex manifold. The category $\MHM^c_{\mon}(X\times \bC_t)$ of monodromic mixed Hodge modules on $X$ is equivalent to $\mathrm{Mon}(\MHM^c(X))$.
\end{theorem}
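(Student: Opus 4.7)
The plan is to follow Saito's strategy in the unmixed case and upgrade it by keeping track of Hodge- and weight-filtrations. I would define a functor
\[
\Phi\colon \MHM^c_{\mon}(X\times \bC_t)\to \mathrm{Mon}(\MHM^c(X))
\]
by sending a monodromic mixed Hodge module $\cM$ to the tuple $(C_{(-1,0]}, T_s, N, C_{-1}, c, v)$ read off from nearby and vanishing cycles along $\{t=0\}$: set $C_{(-1,0]}:=\psi_t\cM$ with its semisimple monodromy $T_s$; set $C_{-1}:=\phi_t\cM$; define $N:=-\frac{1}{2\pi\sqrt{-1}}\log T_u$ for $T_u$ the unipotent part of monodromy; and let $c$ and $v$ be the canonical and variation morphisms in Saito's formalism. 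The relation $vc=N|_{\Ker(T_s-1)}$ is a standard identity in the nearby/vanishing cycle formalism, and the Tate twist in the codomain of $N$ reflects the convention that $N$ is a morphism $\psi_t\cM\to \psi_t\cM(-1)$ of mixed Hodge modules.

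For the inverse $\Psi$, I would perform a gluing construction internal to $\MHM$. Starting from mixed monodromic data and the eigenspace decomposition $C_{(-1,0]}=\bigoplus_{\alpha\in(-1,0]\cap \bQ}C_\alpha$, a Deligne-type extension produces on $X\times \bC_t^*$ an admissible, polarizable monodromic VMHS whose semisimple monodromy on the $\alpha$-eigenspace is $\exp(-2\pi\sqrt{-1}\alpha)$ and whose unipotent part is $\exp(-2\pi\sqrt{-1}N)$; this extends to a monodromic MHM on $X\times \bC_t$ supported away from the origin. The vanishing cycle piece is inserted via Beilinson's maximal-extension / gluing construction transported into Saito's category using $(C_{-1},c,v)$. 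By construction $\psi_t$ and $\phi_t$ of the resulting object reproduce the original data, giving $\Phi\circ \Psi\cong \id$; the reverse composition follows because a monodromic MHM is reconstructed from its restriction to a punctured neighborhood of $\{t=0\}$ together with the gluing data.

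The main obstacle is to verify that this gluing is well-defined at the level of mixed Hodge modules: the Hodge filtration $F$, the weight filtration $W$, and the admissibility/polarizability conditions must survive the construction. In Saito's formalism this reduces to strictness of the V-filtration along $t$ and to the compatibility of the V-filtration with $F$ and $W$, which is the essential content of \cite{SaitoMonodromic}. Fully-faithfulness and essential surjectivity at the underlying perverse level is classical (Verdier's gluing, MacPherson--Vilonen), so after applying the conservative functor $\frakF$ the remaining task is to check that the additional filtration data glue correctly, which is precisely what Saito's strictness provides.

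To pass from the equivalence on abelian hearts to the derived equivalence as stated, I would invoke that $\MHM^c(X\times \bC_t)$ is the bounded derived dg category of $\MHM^{c,\heartsuit}(X\times \bC_t)$ and that $\mathrm{Mon}(\MHM^c(X))$ is, by its construction, the derived category of mixed monodromic objects in $\MHM^{c,\heartsuit}(X)$. Since $\psi_t$ and $\phi_t$ are t-exact for the MHM t-structure (up to the usual perverse shift), $\Phi$ is t-exact and the abelian equivalence promotes to the claimed derived equivalence by standard homological arguments.
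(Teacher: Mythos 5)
The paper does not itself prove this theorem; Theorem~\ref{thm:MHMlinearalgebra} is stated as a citation to \cite{SaitoMonodromic}, so there is no in-text argument to compare against. Judging your proposal on its own terms: the outline is plausible and is almost certainly the strategy of the cited reference (read off nearby/vanishing cycle data, reconstruct by gluing, deduce the mixed case from strictness of the $V$-filtration). Two points need tightening. First, in the definition of a mixed monodromic object the morphisms $c$ and $v$ only involve $\Ker(T_s-1)$, which is $\psi_{t,1}\cM$; accordingly $C_{-1}$ must be the \emph{unipotent} vanishing cycle $\phi_{t,1}\cM$, not the full $\phi_t\cM$ you wrote (the non-unipotent parts of $\phi_t$ are canonically identified with those of $\psi_t$ and carry no new information, so including them would not match the target category, only a version of it with a canonical isomorphism imposed). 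Second, the passage from the abelian equivalence to the derived one requires more than $t$-exactness of $\psi_t,\phi_t$: you need the realization functor $D^b(\MHM^{c,\heartsuit}_{\mon}(X\times\bC_t))\to \MHM^c_{\mon}(X\times\bC_t)$ to be an equivalence, i.e.\ that the monodromic full subcategory of $\MHM^c(X\times\bC_t)$ is generated by its heart and that Ext-groups agree. This is true but is a separate verification, not an automatic consequence of $t$-exactness, and should be cited or argued.

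A stylistic remark rather than an error: in the monodromic setting the Deligne-extension step is overkill. Monodromicity means the restriction to $X\times\bC^*$ is $\bC^*$-equivariant, hence determined by its restriction to $X\times\{1\}$ together with the monodromy, and the $V$-filtration along $t$ is a grading. So the inverse functor is more directly a ``specialization is an equivalence'' statement than a genuine Beilinson gluing; the gluing machinery only reappears when relating $\psi$ to $\phi$ via $c$, $v$. Making that explicit would both simplify the argument and make clear where the essential Hodge-theoretic input (strictness of $F$ and $W$ along the $V$-filtration, admissibility and polarizability of the reconstructed object) actually enters.
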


We denote the colimit closure of $\MHM^c_{\mon}(X\times \bC_t)$ in $\MHM(X\times \bC_t)$ by $\MHM_{\mon}(X\times \bC_t)$.
\subsection{Half-Tate twist}
Let $\widehat\cD$ be a category with an autoequivalence $(1)\colon \widehat\cD\rightarrow \widehat\cD$. In this setup, we define the square root $(\frac{1}{2})$ of $(1)$ as follows: We consider the direct product category $\sqrt{\widehat\cD}:=\widehat\cD\times\widehat \cD$. Then we set
\begin{equation}
    \lb \frac{1}{2}\rb \colon \sqrt{\widehat\cD}\rightarrow \sqrt{\widehat\cD}; (c,c')\mapsto (c'(1), c).
\end{equation}
Note that $(\frac{1}{2})^2=(1)\times (1)$. 

We have a fully faithful embedding $\widehat\cD\hookrightarrow \sqrt{\widehat\cD}; c\mapsto (c, 0)$.

\begin{example}\label{example:half-tateTwistor}
We consider the case when $\widehat\cD=\MHM^c(X)$. Note that $\MTM(X)$ contains $\MHM^c(X)$ and already has the square root of $(1)$. Indeed, we have
\begin{equation}
    \sqrt{\MHM^c(X)}\hookrightarrow \MTM(X),
\end{equation}
compatible with the half-Tate twist action.
\end{example}

Suppose $(\frakF\colon \widehat \cD\rightarrow \cD, (1), \epsilon)$ is a saturated mixed structure. We then have the composition
\begin{equation}
    \sqrt{\frakF}\colon \sqrt{\widehat\cD}=\widehat\cD\times \widehat\cD\xrightarrow{\oplus} \widehat\cD\xrightarrow{\frakF}\cD.
\end{equation}
\begin{lemma}
  The pair $(\sqrt{\frakF}\colon \sqrt{\widehat\cD}\rightarrow \cD, \lb\frac{1}{2}\rb)$ induces  a saturated mixed structure.   
\end{lemma}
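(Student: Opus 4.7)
The plan is to verify the two required properties of a saturated mixed structure in turn. For the mixed structure axioms, I first need a natural isomorphism $\sqrt{\frakF}\circ\lb\tfrac{1}{2}\rb \xRightarrow{\cong}\sqrt{\frakF}$. Unwinding the definitions,
\begin{equation}
\sqrt{\frakF}(\lb\tfrac{1}{2}\rb(c,c'))=\frakF(c'(1))\oplus \frakF(c),\qquad \sqrt{\frakF}(c,c')=\frakF(c)\oplus \frakF(c'),
\end{equation}
so I would build the required isomorphism from the swap of summands together with $\epsilon\colon \frakF\circ (1)\xRightarrow{\cong}\frakF$. Exactness (in the triangulated case) is immediate since both $\lb\tfrac{1}{2}\rb$ and $\sqrt{\frakF}=\frakF\circ \oplus$ are assembled from exact functors.

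For saturation, fix objects $M=(c_1,c_1')$ and $N=(c_2,c_2')$ in $\sqrt{\widehat\cD}$. The first step is a direct bookkeeping of the Tate twist orbit. A straightforward induction on parity gives
\begin{equation}
N\lb \tfrac{n}{2}\rb =
\begin{cases}
(c_2(k),\,c_2'(k)) & n=2k,\\
(c_2'(k{+}1),\,c_2(k)) & n=2k+1,
\end{cases}
\end{equation}
so, using that $\Hom$ in a product category splits as a direct sum,
\begin{equation}
\bigoplus_{n\in\bZ}\Ext^i_{\sqrt{\widehat\cD}}(M,N\lb \tfrac{n}{2}\rb) \;=\; \bigoplus_{a,b\in\{c_1,c_1'\}\times\{c_2,c_2'\},\,k\in\bZ}\Ext^i_{\widehat\cD}(a,b(k)).
\end{equation}

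The second step is to apply the saturation of the original mixed structure termwise: for each of the four pairs $(a,b)$, the hypothesis identifies $\bigoplus_k\Ext^i_{\widehat\cD}(a,b(k))$ with $\Ext^i_\cD(\frakF(a),\frakF(b))$. Recombining the four summands yields
\begin{equation}
\Ext^i_\cD\bigl(\frakF(c_1)\oplus\frakF(c_1'),\;\frakF(c_2)\oplus\frakF(c_2')\bigr)=\Ext^i_\cD(\sqrt{\frakF}(M),\sqrt{\frakF}(N)).
\end{equation}
The final step is to check that the natural map induced by $\sqrt{\frakF}$ is exactly the isomorphism just identified; this is immediate from the construction of $\sqrt{\frakF}$ as $\frakF\circ\oplus$ and of the swap isomorphism from $\epsilon$.

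There is no substantial obstacle: the statement reduces to a parity-sorted reindexing of the Tate orbit combined with termwise saturation of $\frakF$. The only thing to be slightly careful about is that the identification of $\bigoplus_{n}\Ext^i(M,N\lb \tfrac{n}{2}\rb)$ with the four-term direct sum uses both values of parity, which is precisely why the square-root construction produces a mixed structure with the half-twist rather than merely doubling the original one.
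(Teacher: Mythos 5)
Your proof is correct and takes essentially the same approach as the paper: reduce to the $\Hom$-splitting in the product category, sort the half-twist orbit by parity, and apply the saturation hypothesis termwise. The paper checks the cases $c=(c_1,0),c'=(c_2,0)$ and $c=(c_1,0),c'=(0,c_2)$ and declares the remaining two pure cases ``similar,'' whereas you treat general $(c_1,c_1')$, $(c_2,c_2')$ at once and also spell out the natural isomorphism needed for the mixed-structure axioms, which the paper silently dismisses with ``it is enough to show the saturatedness.''
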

\begin{proof}
    It is enough to show the saturatedness: For $c=(c_1, 0), c'=(c_2,0)\in \sqrt{\widehat\cD}$,
    \begin{equation}
    \begin{split}
        \bigoplus_{j}\Hom_{\sqrt{\widehat\cD}}(c, c'\lb \frac{j}{2}\rb)&\cong \bigoplus_{j}\Hom_{\sqrt{\widehat\cD}}(c, c'\lb j\rb)\\
        &\cong \bigoplus_{j}\Hom_{\widehat\cD}(c_1, c_2\lb j\rb)\\
        &\cong \Hom_{\cD}(\frakF(c_1), \frakF(c_2))\\
        &\cong \Hom_{\cD}(\sqrt{\frakF}(c), \sqrt{\frakF}(c')).
    \end{split}
    \end{equation}
    For $c=(c_1, 0), c'=(0,c_2)\in \sqrt{\widehat\cD}$,
    \begin{equation}
    \begin{split}
        \bigoplus_{j}\Hom_{\sqrt{\widehat\cD}}(c, c'\lb \frac{j}{2}\rb)&\cong \bigoplus_{j}\Hom_{\sqrt{\widehat\cD}}(c, c'\lb j+\frac{1}{2}\rb)\\
        &\cong \bigoplus_{j}\Hom_{\widehat\cD}(c_1, c_2\lb j\rb)\\
        &\cong \Hom_{\cD}(\frakF(c_1), \frakF(c_2))\\
         &\cong \Hom_{\cD}(\sqrt{\frakF}(c), \sqrt{\frakF}(c')).
    \end{split}
    \end{equation}
    The remaining cases are similar.
\end{proof}

\section{Fourier transformation}\label{section:FourierTransform}
\subsection{Fourier--Laplace transformation on $\cD$-modules}
We first define the Weyl algebra of $n$ variables. Let $z_1,...,z_n, \zeta_1,...,\zeta_n$ be indeterminates. We denote the free algebra generated by these elements by $\bC\la z_1,...,z_n, \zeta_1,...,\zeta_n\ra$. We consider the bi-sided ideal generated by 
\begin{equation}
    z_iz_j-z_jz_i, \zeta_j z_i-z_i\zeta_j-\delta_{ij}
\end{equation}
where $\delta_{ij}$ is Kronecker's delta. The quotient of the free algebra by this ideal is denoted by $\cD_n$. The derived category $\Mod(\cD_{\bC^n})$ of algebraic $\cD$-modules on $\bC^n$ is equivalent to the category of modules over $\cD_n$.

\begin{definition-lemma}
    We consider the map $\mathrm{FL}\colon \cD_n\rightarrow \cD_n$ defined by 
    \begin{equation}
        z_i\mapsto \zeta_i, \zeta_i\mapsto -z_i.
    \end{equation}
    Then this defines an automorphism of $\cD_n$. Hence it induces a derived category equivalence $\Mod(\cD_{\bC^n})\rightarrow \Mod(\cD_{\bC^n})$. We also denote it by $\mathrm{FL}$, and call it the Fourier(--Laplace) transform.
\end{definition-lemma}
We also have the relative version: Let $V\to X$ be a holomorphic vector bundle. Let $V^*$ be the dual bundle. Then we similarly have the relative version of Fourier transform
\begin{equation}
   \mathrm{FL}\colon \Mod(\cD_V)\xrightarrow{\sim} \Mod(\cD_{V^*}).
\end{equation}

Although Fourier transform does not preserve the regularity in general, we have the following :
\begin{theorem}[{Brylinski~\cite{Brylinski}}]
    Let $\cM$ be a regular holonomic $\cD_{V}$-module whose characteristic variety is invariant under the scaling $\bC^*$-action on the fibers of $V$ (``monodromic"). Then $\frakF(\cM)$ is again a regular monodromic $\cD_{V^*}$-module. In other words, $\mathrm{FL}$ induces an equivalence between the derived category of regular monodromic $\cD$-modules $\Mod^c_{\mon}(\cD_V)\rightarrow \Mod^c_{\mon}(\cD_{V^*})$.
\end{theorem}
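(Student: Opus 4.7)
The plan is to show that $\mathrm{FL}$ preserves both the monodromic condition and the regularity condition; the fact that it is then an equivalence is automatic, since $\mathrm{FL}^2$ is the pullback along $-\id$ and therefore already an equivalence on all of $\Mod(\cD_V)$. I would first reduce to the absolute case: by standard base change arguments for Fourier transform along a vector bundle (the relative $\mathrm{FL}$ is the pushforward-pullback composition along $V\times_X V^*\to V,V^*$ twisted by $e^{\la z,\zeta\ra}$, which commutes with pullback from $X$), it suffices to handle $V=\bC^n$. Factoring $\mathrm{FL}$ one coordinate at a time reduces further to $n=1$.

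For preservation of monodromicity, I would argue at the level of the Euler operator. Monodromicity of $\cM$ is equivalent to local finiteness of the action of the Euler operator $E=\sum_i z_i\zeta_i$ on $\cM$ (or equivalently, conicity of $\mathrm{Char}(\cM)$ under the $\bC^\ast$-action on fibers). A direct computation using $\zeta_i z_i=z_i\zeta_i+1$ gives
\begin{equation}
\mathrm{FL}(E)=\sum_i \zeta_i(-z_i)=-E-n,
\end{equation}
so $E$ acts locally finitely on $\mathrm{FL}(\cM)$ if and only if it does on $\cM$. Hence $\mathrm{FL}$ preserves the subcategory of monodromic modules.

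For preservation of regularity, the strategy is to pass through Riemann--Hilbert and the topological Fourier--Sato transform. The Fourier--Sato transform sends the derived category of ($\bC$-)constructible monodromic sheaves on $V$ to that on $V^*$ and is an equivalence (Kashiwara--Schapira). The crucial input is the theorem (Brylinski, Kashiwara, Malgrange) that Fourier--Laplace and Fourier--Sato are intertwined by the de Rham/solution functor on the monodromic subcategory: for a regular monodromic holonomic $\cM$,
\begin{equation}
\mathrm{DR}(\mathrm{FL}(\cM))\simeq \mathrm{FS}(\mathrm{DR}(\cM)).
\end{equation}
Since the right-hand side is $\bC$-constructible, $\mathrm{FL}(\cM)$ has $\bC$-constructible de Rham complex; combined with the fact that $\mathrm{FL}(\cM)$ is still holonomic and monodromic (characteristic variety is conic), Kashiwara's theorem characterizing regular holonomicity by constructibility of the de Rham complex in the monodromic setting gives regularity. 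Invertibility on the regular monodromic subcategory follows from the same statement applied to $V^*\to V$.

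The hard part is the compatibility $\mathrm{DR}\circ\mathrm{FL}\simeq \mathrm{FS}\circ\mathrm{DR}$ on monodromic objects. In general Fourier--Laplace creates irregular singularities at infinity that have no topological counterpart, and the monodromic hypothesis is precisely the device which tames this: one can check the intertwining by reducing to generalized eigenspaces of the Euler operator $E$ (with eigenvalue $\alpha$), on which both transforms act, up to the shift $\alpha\mapsto -\alpha-n$, by an equivalence of specialization data between $V$ and $V^*$. Once this compatibility is established, preservation of regularity and the resulting equivalence follow formally.
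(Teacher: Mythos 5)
The paper does not supply a proof: it cites Brylinski directly, so you are not competing with a written argument. Your reduction to the absolute one-dimensional case and the Euler-operator computation $\mathrm{FL}(E)=-E-n$ are both correct, and together they do show that $\mathrm{FL}$ preserves the monodromic subcategory (the $\bC^*$-conicity of the characteristic variety is exactly local finiteness of $E$). The invertibility statement is also handled correctly: since $\mathrm{FL}^2$ is pullback by $-\mathrm{id}$, equivalence is automatic once preservation of the subcategory is known.

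The gap is in the regularity argument. You write that $\mathrm{DR}(\mathrm{FL}(\cM))$ is constructible (via the compatibility with $\mathrm{FS}$), and that ``Kashiwara's theorem characterizing regular holonomicity by constructibility of the de Rham complex in the monodromic setting gives regularity.'' There is no such characterization: Kashiwara's constructibility theorem asserts that $\mathrm{DR}$ of \emph{every} holonomic $\cD$-module, regular or irregular, is constructible. Constructibility of $\mathrm{DR}(\mathrm{FL}(\cM))$ is therefore not additional information, even after restricting to the monodromic subcategory, and it cannot by itself single out regularity. A related circularity: even granting the compatibility $\mathrm{DR}\circ\mathrm{FL}\simeq\mathrm{FS}\circ\mathrm{DR}$, you cannot invert $\mathrm{DR}$ to conclude $\mathrm{FL}(\cM)$ is regular unless you already know $\mathrm{DR}$ is fully faithful on the image, which is precisely what regularity would give. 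So the step ``compatibility with $\mathrm{FS}$ $\Rightarrow$ regularity'' is unjustified as stated. The standard repair, and what Brylinski actually does (and what the present paper reflects in its linear-algebraic description of monodromic objects in Section~\ref{section:linearalgebraicHodge} and Definition~\ref{fev5-1}), is to encode a monodromic holonomic module on $V\to X$ by its finite collection of Euler-eigenvalue/specialization data, each piece being a holonomic $\cD_X$-module together with nilpotent and can/var maps. Regularity of the monodromic module on $V$ is equivalent to regularity of these $\cD_X$-modules, and the Fourier transform acts on this data by an explicit permutation and twist of the pieces (as in Definition~\ref{fev5-1}) that manifestly does not change regularity along $X$. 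Once this is in place, the $\mathrm{DR}$--$\mathrm{FS}$ compatibility follows as a consequence rather than being needed as an input, which resolves the circularity.
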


Although this statement is finite-dimensional, we are interested in infinite-dimensional version. Let $\Mod_{\mon}(\cD_V)$ be the colimit-closure (which is equivalent to the category of ind-objects) of $\Mod^c_{\mon}(\cD_V)$. Since $\mathrm{FL}$ is an equivalence, it preserves the colimits. Hence we have 
\begin{corollary}
    \begin{equation}
        \Mod_{\mon}(\cD_V)\xrightarrow{\cong} \Mod_{\mon}(\cD_{V^*}).
    \end{equation}
\end{corollary}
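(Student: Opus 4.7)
The plan is to obtain this extended equivalence as a formal consequence of Brylinski's theorem together with the functoriality of Ind-completion. Concretely, the strategy is: take the equivalence $\mathrm{FL}\colon \Mod^c_{\mon}(\cD_V)\xrightarrow{\cong}\Mod^c_{\mon}(\cD_{V^*})$ of Brylinski on the compact subcategories, apply the $\Ind(-)$ construction to both sides, and then identify the resulting ind-categories with the colimit-closures $\Mod_{\mon}(\cD_V)$, $\Mod_{\mon}(\cD_{V^*})$ as asserted in the paragraph introducing those categories.

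First I would note that $\Ind$ is a 2-functor: any equivalence $F\colon \cA\xrightarrow{\cong} \cB$ of essentially small (dg) categories induces an equivalence $\Ind(F)\colon \Ind(\cA)\xrightarrow{\cong}\Ind(\cB)$, with inverse $\Ind(F^{-1})$. This is the unique extension of $F$ along the Yoneda embedding $\cA\hookrightarrow \Ind(\cA)$ that preserves small filtered colimits, and its inverse restricts on compact generators to $F^{-1}$. Applying this to Brylinski's equivalence yields an equivalence $\Ind(\mathrm{FL})\colon \Ind(\Mod^c_{\mon}(\cD_V))\xrightarrow{\cong}\Ind(\Mod^c_{\mon}(\cD_{V^*}))$.

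Second, I would invoke the identification (stated parenthetically just before the corollary) of the colimit-closure $\Mod_{\mon}(\cD_V)\subset \Mod(\cD_V)$ with $\Ind(\Mod^c_{\mon}(\cD_V))$, and analogously on the dual side; under this identification the functor $\Ind(\mathrm{FL})$ is the unique colimit-preserving extension of Brylinski's equivalence, which by construction agrees with the Fourier--Laplace transform on the compact monodromic subcategory. It therefore deserves to be called $\mathrm{FL}$ on the ind-categories, and is by the previous paragraph an equivalence.

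The only potentially nontrivial point is the identification of the colimit-closure inside $\Mod(\cD_V)$ with the abstract ind-category; this requires that $\Mod^c_{\mon}(\cD_V)$ is idempotent complete and that its compact generators remain compact after passage to the colimit-closure, i.e.\ no new relations appear. Since the excerpt takes this identification as granted, there is no genuine obstacle; the proof reduces to the two-line argument above.
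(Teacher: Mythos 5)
Your proposal is correct and takes essentially the same approach as the paper: the paper's argument is the one-line remark preceding the corollary that $\mathrm{FL}$ is an equivalence and therefore preserves colimits, which is exactly the content of applying $\Ind(-)$ to Brylinski's equivalence and identifying the ind-categories with the colimit-closures. Your write-up is a slightly more formal unpacking of the same step.
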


\subsection{Fourier--Sato transformation on monodromic constructible sheaves}
In the situation of Brylinski's theorem, by the Riemann--Hilbert correspondence, we can induce an equivalence on the categories of monodromic $\bC$-constructible sheaves. One can formulate this induced autoequivalence without referring to $\cD$-side, which is called the Fourier--Sato transform. 

Let $V$ be a (real) vector bundle over $X$ and $V^*$ be its dual bundle. We set
\begin{equation}
    S:=\lc (x, x^*)\in V\times V^*\relmid x^*(x)\geq 0\rc.
\end{equation}
We denote the constant sheaf on $S$ by $\bK_S$. We denote the $i$-th projection of $V\times V^*$ by $p_i$. We set
\begin{equation}
    \mathrm{FS}\colon \Sh(V,\bK)\rightarrow \Sh(V^*, \bK); \cE\mapsto p_{2!}(p_1^{-1}\cE\otimes \bK_S)
\end{equation}
We denote the subcategory of constructible sheaves in $\Sh(V, \bK)$ whose microsupport is invariant under the $\bC^*$-scaling action on the fibers of $V$ by $\Sh^c_{\mon}(V, \bK)$.

\begin{theorem}[{\cite{KS, Brylinski}}]
    \begin{enumerate}
        \item $\mathrm{FS}$ induces an equivalence
        \begin{equation}
           \mathrm{FS}\colon  \Sh^c_{\mon}(V, \bK)\rightarrow \Sh^c_{\mon}(V^*, \bK).
        \end{equation}
        \item The de Rham functor of the regular RH-correspondence induces an equivalence
        \begin{equation}
            \mathrm{DR}\colon \Mod_{\mon}^c(\cD_V)\xrightarrow{\cong} \Sh^c_{\mon}(V, \bC).
        \end{equation}
        \item The following diagram is commutative:
        \begin{equation}
            \xymatrix{
            \Mod_{\mon}^c(\cD_V)\ar[r]_{\cong}^{\mathrm{FL}}\ar[d]_{\cong}^{\mathrm{DR}}&\Mod^c_{\mon}(\cD_{V^*})\ar[d]_{\cong}^{\mathrm{DR}}\\
            \Sh^c_{\mon}(V, \bC)\ar[r]_{\cong}^{\mathrm{FS}}&\Sh^c_{\mon}(V^*, \bC)
            }
        \end{equation}
    \end{enumerate}
\end{theorem}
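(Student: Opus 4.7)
The plan is to prove the three parts separately, using standard microlocal and Riemann--Hilbert techniques, with the third (the commutative diagram) being the deepest.

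Part~(1) follows the strategy of Kashiwara--Schapira~\cite{KS}: introduce the antipodal incidence correspondence $S^a:=\lc (x,x^*)\relmid x^*(x)\leq 0\rc$, define the ``reverse'' transform $\mathrm{FS}^a(\cF):=p_{1!}(p_2^{-1}\cF\otimes \bK_{S^a})$, and verify by base change along $V\times V^*\times V$ that the compositions $\mathrm{FS}^a\circ \mathrm{FS}$ and $\mathrm{FS}\circ \mathrm{FS}^a$ are the identity (up to a shift by $\dim V$) on the monodromic subcategories. The convolution of the kernels $\bK_S$ and $\bK_{S^a}$ along $V^*$ collapses to a sheaf supported on the diagonal of $V\times V$ once one uses monodromicity of the input to contract along the $\bR_{>0}$-orbits in the fibers of $V^*$. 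Part~(2) is a direct consequence of the regular Riemann--Hilbert correspondence, which identifies the characteristic variety of a regular holonomic $\cD_V$-module with the microsupport of its de Rham complex; since monodromicity on both sides is by definition the invariance of these loci under the fiberwise $\bC^*$-scaling, $\mathrm{DR}$ restricts to the desired equivalence between the two monodromic subcategories.

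For (3), I would realize both transforms as kernel transforms with respect to the pairing $V\times V^*$: on the $\cD$-side the kernel is the exponential module $\cE^{\langle x,\xi\rangle}$ with proper direct image along $p_2$, and on the sheaf side the kernel is $\bK_S$ with proper direct image along $p_2$. I would then compare $\mathrm{DR}(\cE^{\langle x,\xi\rangle})$ with $\bK_S$ after tensoring with a monodromic input and pushing forward to $V^*$; using the compatibility of $\mathrm{DR}$ with tensor products, proper push-forward, and pull-back of regular holonomic $\cD$-modules, this reduces to a one-dimensional fibrewise calculation on a single $\bR_{>0}$-orbit, where one recovers the classical identification of the Fourier--Sato transform of the constant sheaf on a half-line with the de Rham complex of the corresponding rank-one exponential connection.

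The main obstacle is precisely (3): the exponential $\cD$-module $\cE^{\langle x,\xi\rangle}$ is irregular, so its global de Rham complex is not literally $\bK_S$ and $\mathrm{DR}$ is not directly compatible with $\mathrm{FL}$ outside the monodromic subcategory. The monodromicity hypothesis is exactly what suppresses the irregular contributions at infinity: after push-forward along $p_2$, the rapidly-oscillating directions of $\cE^{\langle x,\xi\rangle}$ do not meet the microsupport of a monodromic input. Verifying this microsupport control carefully, and then converting it into the required equivalence of kernel transforms, is the technical heart of Brylinski's theorem and the step where I would spend the most effort.
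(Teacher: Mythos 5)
The paper does not prove this theorem; it cites it to \cite{KS, Brylinski}, so there is no ``paper's proof'' to compare against. Your proposal is a sound outline of the classical route to all three parts, and it correctly isolates where the real difficulty lies, so let me evaluate it on its own terms.

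Parts (1) and (2) are fine as sketched. For (1), this is exactly Kashiwara--Schapira's treatment of the Fourier--Sato transform on conic sheaves via the antipodal kernel $S^a$ and convolution; the only thing to add is that monodromic sheaves (in the sense of $\bC^*$-invariant microsupport used in the paper) are in particular $\bR_{>0}$-conic, so the KS inversion formula applies directly without any extra contraction argument. For (2), the regular Riemann--Hilbert correspondence identifies the characteristic variety with the microsupport, so $\bC^*$-invariance passes across $\mathrm{DR}$; correct.

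For (3), your plan --- realize both sides as kernel transforms, on the $\cD$-side by the irregular exponential module $\cE^{\langle x,\xi\rangle}$, on the sheaf side by $\bK_S$, and compare via compatibility of $\mathrm{DR}$ with the six operations --- has a genuine gap exactly where you flag one. The functor $\mathrm{DR}$ is compatible with tensor, pull-back, and \emph{proper} push-forward \emph{for regular holonomic modules}; the kernel $\cE^{\langle x,\xi\rangle}$ is irregular, and the push-forward along $p_2$ of $\cE^{\langle x,\xi\rangle}\otimes p_1^*\cM$ is not a priori regular, so you cannot invoke those compatibilities step by step. Your heuristic that ``the rapidly-oscillating directions of $\cE^{\langle x,\xi\rangle}$ do not meet the microsupport of a monodromic input'' is the right intuition but does not by itself produce the comparison: $\mathrm{DR}$ applied to an irregular module drops the Stokes data, so the ``fibrewise calculation'' on a single $\bR_{>0}$-orbit detects the correct answer but there is no naive global functor that transports it. The two standard ways to close the gap are: (a) Brylinski's original argument, which avoids the kernel description entirely and works with the eigenvalue decomposition under the Euler vector field / monodromy, showing directly that $\mathrm{FL}$ of a regular monodromic module is again regular monodromic and then comparing vanishing-cycle data on both sides; or (b) the modern approach via enhanced ind-sheaves and the irregular Riemann--Hilbert correspondence, where $\mathrm{DR}$ \emph{is} compatible with push-forward of irregular modules, and the kernel comparison you describe becomes legitimate. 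If you want to pursue your kernel strategy, you should commit to (b) and prove that the enhanced solution of $\cE^{\langle x,\xi\rangle}$ is the enhancement of $\bK_S$; otherwise switch to (a).
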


We again consider the colimit closure of $\Sh^c_{\mon}(V, \bK)$ and denote it by $\Sh_{\mon}(V, \bK)$. Then we obtain the following:
\begin{theorem}
    \begin{enumerate}
        \item $\mathrm{FS}$ induces an equivalence
        \begin{equation}
           \mathrm{FS}\colon  \Sh_{\mon}(V, \bK)\xrightarrow{\cong}\Sh_{\mon}(V^*, \bK).
        \end{equation}
        \item The de Rham functor induces a functor
        \begin{equation}
            \mathrm{DR}\colon \Mod_{\mon}(\cD_V)\xrightarrow{} \Sh_{\mon}(V, \bC).
        \end{equation}
        \item The following diagram is commutative:
        \begin{equation}
            \xymatrix{
            \Mod_{\mon}(\cD_V)\ar[r]_{\cong}^{\mathrm{FL}}\ar[d]^{\mathrm{DR}}&\Mod_{\mon}(\cD_{V^*})\ar[d]^{\mathrm{DR}}\\
            \Sh_{\mon}(V, \bC)\ar[r]_{\cong}^{\mathrm{FS}}&\Sh_{\mon}(V^*, \bC)
            }
        \end{equation}
    \end{enumerate}
\end{theorem}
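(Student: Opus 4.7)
The plan is to derive all three assertions from the compact version (the preceding theorem) by invoking the universal property of Ind-completion. Each of $\Sh_{\mon}(V,\bK)$, $\Sh_{\mon}(V^*,\bK)$, $\Mod_{\mon}(\cD_V)$, and $\Mod_{\mon}(\cD_{V^*})$ is by definition the colimit closure of its compact counterpart; under the usual identification this is the Ind-completion. Hence any colimit-preserving functor between the compact categories extends uniquely to the ambient ones, and the extension inherits being fully faithful, essentially surjective, or an equivalence from the original.

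For part (1), I would note that $\mathrm{FS}(\cE) = p_{2!}(p_1^{-1}\cE \otimes \bK_S)$ is a composition of three colimit-preserving operations: the inverse image $p_1^{-1}$ (a left adjoint to $p_{1*}$), the tensor $(-) \otimes \bK_S$ (a left adjoint to internal $\mathrm{Hom}$), and the proper-support pushforward $p_{2!}$ (left adjoint to $p_2^!$). Hence $\mathrm{FS}$ preserves small colimits, and its restriction to compact monodromic objects is an equivalence by the preceding theorem; therefore the induced $\mathrm{Ind}$-extension is an equivalence on the colimit closures, which is exactly the assertion (1).

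For part (2), the compact de Rham functor is an equivalence $\mathrm{DR}\colon \Mod^c_{\mon}(\cD_V) \xrightarrow{\cong} \Sh^c_{\mon}(V,\bC)$ by the preceding theorem, so one \emph{defines} $\mathrm{DR}\colon \Mod_{\mon}(\cD_V) \to \Sh_{\mon}(V,\bC)$ as its $\mathrm{Ind}$-extension; by construction this is again an equivalence. For part (3), the compact square commutes by the preceding theorem, and both $\mathrm{DR}\circ\mathrm{FL}$ and $\mathrm{FS}\circ\mathrm{DR}$ are colimit-preserving functors $\Mod_{\mon}(\cD_V)\to \Sh_{\mon}(V^*,\bC)$ (using colimit-preservation of $\mathrm{FL}$, which is an equivalence of ind-completions by the previous corollary, and of the just-constructed $\mathrm{DR}$) that agree on the compact generators; two colimit-preserving functors agreeing on a set of generators coincide, so the square commutes. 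The whole argument is formal; the only thing to verify is the colimit-preservation of each constituent functor, which is a routine check of adjoint properties, and so there is essentially no substantive obstacle beyond what has already been settled in the compact case.
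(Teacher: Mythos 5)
The central premise of your argument — that each of the ambient categories is ``by definition... under the usual identification'' the Ind-completion of its compact counterpart — is false for the sheaf-theoretic side, and this is exactly where your proof of part (2) breaks. The paper defines $\Sh_{\mon}(V,\bK)$ as the colimit closure of $\Sh^c_{\mon}(V,\bK)$ \emph{inside the ambient category} $\Sh(V,\bK)$, and this colimit closure receives a colimit-realization functor $\Ind(\Sh^c_{\mon}(V,\bK)) \to \Sh_{\mon}(V,\bK)$ that is essentially surjective but not fully faithful in general (compare the paper's discussion of $\frakF\colon \MHM(X)\to\Sh(X,\bQ)$, which is explicitly built as $\Ind(\frakF)$ followed by realization). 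On the $\cD$-module side the paper does identify $\Mod_{\mon}(\cD_V)$ with $\Ind(\Mod^c_{\mon}(\cD_V))$. So the de Rham functor of part (2) is the composite
\[
\Mod_{\mon}(\cD_V) \simeq \Ind(\Mod^c_{\mon}(\cD_V)) \xrightarrow{\Ind(\mathrm{DR})} \Ind(\Sh^c_{\mon}(V,\bC)) \longrightarrow \Sh_{\mon}(V,\bC),
\]
where the middle arrow is an equivalence by the compact theorem but the last arrow — the colimit realization — is not. This is precisely why the paper asserts only that $\mathrm{DR}$ \emph{induces a functor}, not an equivalence. Your claim that ``by construction this is again an equivalence'' overshoots the statement and is not established by your argument.

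Your treatment of parts (1) and (3) is essentially sound once the framing is corrected. For (1), the right argument is not the universal property of Ind-completion but the fact that $\mathrm{FS}$ on the ambient category $\Sh(V,\bK)$ is colimit-preserving (your adjoint check is fine) and restricts to an equivalence on the compact monodromic subcategories; a colimit-preserving functor that is an equivalence on a set of generating compact objects carries the colimit closure to the colimit closure and is fully faithful and essentially surjective there (the fully-faithfulness uses that the compact generators remain compact in $\Sh_{\mon}$, which holds since compactness is inherited along the inclusion). For (3), once $\mathrm{DR}$ on $\Mod_{\mon}(\cD_V)$ is read as the composite above, both $\mathrm{DR}\circ\mathrm{FL}$ and $\mathrm{FS}\circ\mathrm{DR}$ are colimit-preserving out of $\Ind(\Mod^c_{\mon}(\cD_V))$ and agree on compact objects, so they agree; you should just be explicit that this is the sense in which the square commutes, rather than invoking an identification of $\Sh_{\mon}$ with its Ind-category. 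Fix the overclaim in (2), replace the appeal to ``the usual identification'' with the colimit-realization picture, and the proposal is salvageable.
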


\subsection{Fourier transformation on monodromic Hodge modules} \label{jan7-1}
Reichelt--Walther~\cite{ReicheltWalther} introduced a notion of Fourier transform of monodromic Hodge modules by lifting that of monodromic $\cD$-modules. The second-named author~\cite{SaitoMonodromic} provided a different explicit definition of Fourier transform. Later, Chen--Dirks~\cite{chen2023vfiltrationhodgefiltrationfourier} proved that these definitions are equivalent.

We now give the definition of Fourier transform of mixed Hodge modules based on \cite{SaitoMonodromic, saito2023hodgefiltrationmonodromicmixed}.

\begin{definition}\label{fev5-1}
In the setup of Section~\ref{section:linearalgebraicHodge}, the Fourier transform is the following functor:
    \begin{align}
        &\mathrm{Mon}(\widehat \cC) \rightarrow \mathrm{Mon}(\widehat \cC)\\
        &(C_{(-1,0]}, T_s, N, C_{-1}, c,v) \mapsto (C_{-1}\oplus \bigoplus_{(-1,0)}C_{\alpha},1\oplus T_s^{-1}, c\circ v\oplus N, C_{1}(-1), -v,c).
    \end{align}
\end{definition}

\begin{theorem}[\cite{SaitoMonodromic}]
    When $\widehat{\cC}$ is $\MHM^c(X)$, the Fourier transform defined above is an equivalence $\MHM^c_{\mon}(X\times \bC_t)\xrightarrow{\simeq}\MHM^c_{\mon}(X\times\bC_t)$ lifting the Fourier transform of the category of monodromic $\cD$-modules on $X\times \bC_t$.
\end{theorem}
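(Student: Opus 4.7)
The plan is to translate the statement via Theorem \ref{thm:MHMlinearalgebra} into a purely linear-algebraic assertion about $\mathrm{Mon}(\MHM^c(X))$, and then verify in turn that the assignment of Definition \ref{fev5-1} is (i) well-defined as a functor, (ii) an autoequivalence, and (iii) lifts the Fourier--Laplace transform $\mathrm{FL}$ on the underlying regular holonomic $\cD$-modules. Given Brylinski's theorem, step (iii) automatically places the output in $\mathrm{Mod}^c_{\mon}(\cD_{X\times \bC_t})$, so the three steps together give the result.

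For (i), I would check that the output tuple again satisfies the three axioms of a mixed monodromic object. The new semisimple operator $1 \oplus T_s^{-1}$ induces the required generalized-eigenspace decomposition on $C_{-1} \oplus \bigoplus_{\alpha \in (-1,0)} C_\alpha$ after the relabelling $\alpha \mapsto -\alpha - 1$ modulo $\bZ$; commutation with the new nilpotent $cv \oplus N$ is immediate from $T_sN = NT_s$ and the trivial monodromy on the $C_{-1}$ summand; and the defining relation between the new canonical/variation maps and the restriction of $N$ to the new $\Ker(T_s-1) = C_{-1}$ summand reduces to the original relation $vc = N|_{\Ker(T_s-1)}$ together with a sign-tracking check of the Tate twist $C_1(-1)$. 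For (ii), I would write down an explicit quasi-inverse of exactly the same shape, swapping the roles of $c$ and $v$ back and compensating by the appropriate Tate twist so that iterating the formula recovers the standard involutivity of $\mathrm{FL}^2$ (up to the usual sign). Full faithfulness then reduces to observing that morphisms of mixed monodromic objects are determined by their action on each piece, and the formula is plainly natural in all components.

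The main obstacle, and the heart of the proof, is step (iii): matching the formula to $\mathrm{FL}$ on the underlying $\cD$-module and then lifting that match to Hodge and weight data. The approach is to use the $V$-filtration along $t=0$, which identifies $C_\alpha$ with $\mathrm{Gr}_\alpha^V \cM$ for $\alpha \in (-1,0)$, the $\Ker(T_s-1)$-summand of $C_{(-1,0]}$ with the unipotent nearby cycles, and $C_{-1}$ with the vanishing cycles, while $c, v$ play the roles of $\mathrm{can}, \mathrm{var}$. Malgrange's classical computation of the $V$-filtration of $\mathrm{FL}(\cM)$ along the dual direction $\tau = 0$ proceeds by the substitution $t\partial_t \leftrightarrow -\tau\partial_\tau - 1$, which forces precisely the relabelling $\alpha \mapsto -\alpha - 1$: the old vanishing cycles become the new unipotent nearby part (with new $N$ equal to $cv$ as required), and the old unipotent nearby cycles become the new vanishing cycles, up to a Malgrange shift absorbed into the Tate twist on $C_1(-1)$. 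The delicate final ingredient is to verify that the Hodge and weight filtrations transported by the formula agree with those carried by $\mathrm{FL}(\cM)$ in the sense of Reichelt--Walther; this is where one invokes Saito's strictness of the $V$-filtration with respect to the Hodge filtration, applied summand by summand, the isolation of the $\Ker(T_s-1)$-piece by a Tate twist being exactly what aligns the Hodge filtration on the new vanishing cycles with Saito's $F$ on $\phi_\tau\mathrm{FL}(\cM)$. Equivalence then follows from the explicit inverse already obtained in step (ii).
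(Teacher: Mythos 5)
The paper itself does not prove this theorem --- it is imported from \cite{SaitoMonodromic}, so there is no proof in the source to compare against. Judged on its own terms, your outline matches the expected strategy: convert to linear algebra via Theorem~\ref{thm:MHMlinearalgebra}, verify the axioms of a mixed monodromic object, invert the formula (which is essentially forced by the involutivity $(\mathrm{FL}^{\mathrm{pre}})^2 = (-1)$ recorded in the paper), and match against Malgrange's $V$-filtration description of $\mathrm{FL}$ on the $\cD$-module side via the relabelling $\alpha\mapsto -\alpha-1$.

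One inaccuracy is worth flagging in your step (iii). The theorem asserts only that the formula \emph{lifts $\mathrm{FL}$ on the underlying $\cD$-modules}; it does not claim agreement of the resulting Hodge filtration with Reichelt--Walther's construction (that comparison is Chen--Dirks, cited separately in the same subsection of the paper). What actually requires argument is (a) that the output of the abstract formula, read back through Theorem~\ref{thm:MHMlinearalgebra}, still lies in the subcategory $\MHM^{c,\heartsuit}$ inside $\MHW^{c,\heartsuit}$, i.e.\ the monodromic-linear-algebra conditions cutting out mixed Hodge modules are stable under the formula --- this is where strictness of the $V$-filtration relative to Saito's $F$ is invoked, summand by summand; and (b) that the underlying $\cD$-module of the output coincides with $\mathrm{FL}(\cM)$, which reduces to Malgrange's $t\partial_t \leftrightarrow -\tau\partial_\tau-1$ computation. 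Your phrasing conflates (a) with an unnecessary comparison to Reichelt--Walther, but both genuine ingredients are present, so the proposal is correct in substance. (You also correctly read $C_1$ as $C_0$ up to the $\bmod\,\bZ$ periodicity of the eigenvalue decomposition.)
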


Let $\Lambda$ be a complex analytic conic Lagrangian in $T^*(X\times \bC_t)$ which is invariant under the $\bC^*$-action. 

We set $\MHM_{\mathrm{mon}, \Lambda}(X):=\Ind(\MHM^c_{\mathrm{mon},\Lambda}(X))$ where the subscript $\Lambda$ is about the restriction of the microsupport. Then we obtain the infinite-dimensional version
\begin{equation}
    \mathrm{FL}^{\mathrm{pre}}\colon \MHM_{\mon,\Lambda}(X\times \bC_t)\xrightarrow{\simeq}\MHM_{\mon,\Lambda}(X\times\bC_t).
\end{equation}
 By the construction, we have $(\mathrm{FL}^{\mathrm{pre}})^2=(-1)$. To make the functor unipotent, we set
\begin{equation}
    \mathrm{FL}:=\mathrm{FL}^{\mathrm{pre}}\lb\frac{1}{2}\rb\colon  \sqrt{\MHM_{\mon,\Lambda}(X\times \bC_t)}\xrightarrow{\simeq}\sqrt{\MHM_{\mon,\Lambda}(X\times\bC_t)}.
\end{equation}
This satisfies $\mathrm{FL}^2=\id$. The underlying functor on $\cD$-modules and sheaves are $\mathrm{FL}$ and $\mathrm{FS}$.

\begin{remark}[Twistor version]
We can similarly consider the Fourier transform for twisor $\cD$-modules. From the viewpoint of Example~\ref{example:half-tateTwistor}, it is natural to ask the relation between the twistor version and the one here: 
    There is a subtle difference between them.
Since any ``exponential $D$-module" is the underlying $D$-module of a mixed twistor $D$-module,
we can naturally define the Fourier transform of a mixed twistor $D$-module.
On the other hand,    
the category of mixed Hodge module is a full subcategory of the category of mixed twistor $D$-module.
So, for a monodromic mixed Hodge module
we regard it as a mixed twistor $D$-module and get the Fourier transform of it as a mixed twistor $D$-module.
However, this object may not be in the (sub)category of mixed Hodge modules, because the integrable structure ``$\lambda^2\partial_\lambda$" of its underlying $R$-module may have an eigenvalue $\alpha\lambda$ for a non-integer complex number $\alpha\in \CC$ in general, as seen in Lemma~5.20 of \cite{saito2023hodgefiltrationmonodromicmixed}.
If we change the integrable structure of the $R$-module,
it coincides with what we have defined above.
\end{remark}

\section{Hodge microsheaves}\label{section:Hodgemicrosheaves}
\subsection{Preliminaries on microsheaves}
In this section, we review the definition of the category of microsheaves of Shende~\cite{h-principle}, Nadler--Shende~\cite{nadlershende}, and related materials.

For a while, we will consider the case when the sheaf coefficient is $\bZ$. Let $V$ be a contact manifold. The contact distribution $\xi$ of $V$ is a symplectic bundle, so it has the classifying map $V\rightarrow BU$. The universal Kashiwara--Schapira sheaf introduced in \cite{h-principle,nadlershende} is classified by $U/O\rightarrow B\bZ\times B^2\bZ/2\bZ$. Hence we obtain a sequence of maps
\begin{equation}
    V\rightarrow BU\rightarrow B(U/O)\rightarrow B^2\bZ\times B^3\bZ/2\bZ.
\end{equation}
Each null homotopy $\frakp$ of the composed map is called Maslov data. It gives a sheaf of categories on $V$, which is denoted by $\mu \mathrm{sh}_{\frakp}:=\mu \mathrm{sh}_{\frakp,V}$. Note that the second component $V\rightarrow B^3\bZ/2\bZ$ is factored as $V\rightarrow BU\rightarrow B(U/O)\rightarrow B^2O\rightarrow B^3\bZ/2\bZ$, hence this has a canonical null homotopy~\cite{CKNSmicroPerverse}.

\begin{example}
    Let $(X,\lambda)$ be an exact symplectic manifold i.e., $\lambda$ is a 1-form such that $d\lambda$ is a symplectic form on $X$. Consider the contactization $X\times \bR$. When the context is clear, we denote the restriction of $\mu \mathrm{sh}_\frakp$ to $X\times \{0\}\cong X$ by $\mu \mathrm{sh}_\frakp$. For example, a section of the Lagrangian Grassmannian bundle of the stable symplectic tangent bundle of $X$ gives a Maslov data.
\end{example}

Let $(X,\lambda)$ be an exact symplectic manifold.
Fix a Maslov data $\frakp$. The dg category of microsheaves $\mu \mathrm{sh}_\frakp(X)$ is defined as the global section of $\mu \mathrm{sh}_{\frakp}$, so each object has the notion of support in $X$. We denote the subsheaf of $\mu \mathrm{sh}_\frakp$ spanned by the objects supported on $L$ by $\mu \mathrm{sh}_{\frakp,L}$.
One can recover the usual sheaves as follows:
\begin{theorem}[\cite{nadlershende}] For $X=T^*M$, there exists a Malsov data $\frakp_{f}$ such that $\mu \mathrm{sh}_{\frakp_{f}}(X)\cong \Sh(X,\bZ)$.
\end{theorem}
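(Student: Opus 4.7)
The plan is to exhibit the Maslov data $\frakp_f$ as coming from the vertical (cotangent-fiber) polarization of $T^*M$, construct a microlocalization functor from $\Sh(M,\bZ)$ into global sections of the Kashiwara--Schapira stack, and check that it is an equivalence by a local-to-global argument (presumably what is intended on the right-hand side is $\Sh(M,\bZ)$, since $X=T^*M$).

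First I would produce $\frakp_f$. The cotangent bundle $X=T^*M$ carries a canonical Lagrangian subbundle $\ker(d\pi)\subset T(T^*M)$ where $\pi\colon T^*M\to M$ is the projection. This is a global section of the Lagrangian Grassmannian bundle of the symplectic tangent bundle of $X$, hence a lift of the classifying map $X\to BU$ through $U/O\to BU$. Composing with the canonical null homotopy of $U/O\to B\bZ\times B^2\bZ/2\bZ$ yields the desired null homotopy $\frakp_f$ of the composition $X\to BU\to B(U/O)\to B^2\bZ\times B^3\bZ/2\bZ$. The example right after the definition of Maslov data reminds us that such a Lagrangian section suffices.

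Next I would build the comparison functor. Kashiwara--Schapira's microlocalization assigns to each open $\Omega\subset T^*M$ the Verdier quotient $\Sh(M,\bZ)/\Sh_{T^*M\setminus\Omega}(M,\bZ)$. This presheaf of dg categories on $T^*M$ is precisely the model for the Kashiwara--Schapira stack, and for the fiber polarization it glues to a sheaf isomorphic to $\mu\mathrm{sh}_{\frakp_f}$. Global sections then produce a comparison functor $\mu\colon \Sh(M,\bZ)\to \mu\mathrm{sh}_{\frakp_f}(T^*M)$. For fully faithfulness on the level of presheaves, I would use that the quotient is $0$ when $\Omega=T^*M$ (so the full category is recovered on all of $X$), together with Kashiwara--Schapira's microlocal cut-off estimates for Hom.

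The essential issue, and what I expect to be the main obstacle, is checking that the locally defined Kashiwara--Schapira quotients really assemble to the Nadler--Shende stack $\mu\mathrm{sh}_{\frakp_f}$ rather than some twisted form of it. The twisting is controlled by a gerbe classified by $V\to B(U/O)\to B^2\bZ\times B^3\bZ/2\bZ$, and showing it is trivial boils down to showing that the null homotopy coming from the vertical polarization agrees with the one implicitly used by Kashiwara--Schapira when they choose local Lagrangian transversals. Both are determined by the fiber polarization of $T^*M$, but verifying the compatibility coherently on overlaps requires tracking the Maslov class and the $B^3\bZ/2\bZ$-component along changes of transversal. Once this is pinned down, Zariski/analytic descent for the sheaf-of-categories $\mu\mathrm{sh}_{\frakp_f}$ upgrades the local equivalences to the global one and finishes the proof.
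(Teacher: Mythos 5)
This theorem is cited from Nadler--Shende without an in-paper proof, so there is no internal argument to compare against. Your sketch reconstructs the cited argument in the right way: the fiber polarization supplies the Maslov data (this is exactly the paper's Example given right after the definition of Maslov data), the Kashiwara--Schapira presheaf $\Omega\mapsto\Sh(M,\bZ)/\Sh_{T^*M\setminus\Omega}(M,\bZ)$ on conic opens is the local model, and the substantive work is the coherence of those local identifications, controlled by the Maslov and Stiefel--Whitney twists that the fiber polarization trivializes. You are also right that the displayed statement should read $\Sh(M,\bZ)$ on the right, not $\Sh(X,\bZ)$: global sections of the Kashiwara--Schapira stack over $T^*M$ recover sheaves on the base $M$, not on the total space.

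Two small slips in the write-up, neither a gap in the plan. A section of the Lagrangian Grassmannian bundle of the stable symplectic tangent bundle is a lift of $X\to BU$ through $BO\to BU$ (whose fiber is $U/O$), equivalently a nullhomotopy of the composite $X\to BU\to B(U/O)$; you wrote ``a lift through $U/O\to BU$,'' which is not a map. And when $\Omega=T^*M$, it is the subcategory $\Sh_{T^*M\setminus\Omega}(M,\bZ)=\Sh_\emptyset(M,\bZ)$ being quotiented out that vanishes, so the presheaf's value on all of $T^*M$ is the full $\Sh(M,\bZ)$; saying ``the quotient is $0$'' is backwards.
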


\subsection{Relation to Fukaya category}

Let $(X,\lambda)$ be an exact symplectic manifold. The Liouville vector field $v$ is defined by $d\lambda(v,-)=\lambda$. The \emph{core} $\Core(X)$ is defined by the set of the points in $X$ not escaping to infinity under the 
 Liouville flow ($=:$ the flow of the Liouville vector field).

If one further assumes that $(X,\lambda)$ is Liouville, then it means that there exists a compact submanifold with boundary $X_0$ such the complement of $X_0$ has a cylindrical form $\partial X_0 \times \bR_{>0}$ under the Liouville flow. A subset of $\partial X$ is called stop. For a fixed stop $\Lambda$, we define the \emph{relative core} $\Core(X,\Lambda)$ as the set of the points not escaping to the complement of $\Lambda$ under Liouville flow.

If one further assumes $X$ to be Weinstein, it implies that $\Core(X)$ is isotropic. If $\Lambda$ is a Legendrian, $\Core(X,\Lambda)$ is isotropic as well.

Fix a Maslov data $\frakp$.
We consider the subsheaf of $\mu \mathrm{sh}_{\frakp}$ whose objects are supported in $\Core(X,\Lambda)$, and denote it by $\mu \mathrm{sh}_{ \frakp,\Core(X,\Lambda)}$. This sheaf is supported on $\Core(X,\Lambda)$. The compact objects of the global sections is denoted by $ \mu \mathrm{sh}^w_{\frakp, \Core(X,\Lambda)}(\Core(X,\Lambda))$.

On the other hand, for the choice $\frakp$ and a stop $\Lambda$, we can define the partially wrapped Fukaya category of $X$ stopped at $\Lambda$ which is an infinity category. We denote it by $\cW(X, \frakp, \Lambda)$.

\begin{theorem}[Ganatra--Pardon--Shende~\cite{GPSmicro}]
Let $X$ be a Weinstein manifold and $\Lambda$ be a Legendrian in $\Lambda\subset \partial X$. Fix a Maslov data $\frakp$.
We have an equivalence of $\infty$-categories:
    \begin{equation}
        \mathrm{GPS}\colon \Mod(\cW(X, \frakp,\Lambda)^{op}) \xrightarrow{\cong} \mu \mathrm{sh}_{\frakp, \Core(X,\Lambda)}(X) .
    \end{equation}
\end{theorem}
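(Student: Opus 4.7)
The plan is to establish the equivalence by a descent-and-local-computation strategy: both sides are shown to form sheaves (resp.\ cosheaves) of categories on $X$ with respect to a suitable covering by Weinstein sectors, the two values are shown to agree on standard sectors where the correspondence is classical, and the transition/gluing data are matched. First, I would exploit the fact that by construction $\mu\mathrm{sh}_{\frakp,\Core(X,\Lambda)}$ is literally the global sections of a sheaf of categories on $X$; restriction to an open Weinstein subsector gives the corresponding microsheaf category of that subsector. The nontrivial structural input is that the partially wrapped Fukaya category $\cW(X,\frakp,\Lambda)$ enjoys a matching cosheaf (Mayer--Vietoris) property with respect to decompositions of $X$ into Weinstein sectors; this is the main geometric theorem of the GPS program, proved via stop-removal, generation by cocores, and an acceleration argument for wrapping Hamiltonians.

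Second, I would reduce the comparison to the case of a standard Weinstein sector modeled on a thickening of a Weinstein handle. On such a local piece $X_0 \simeq T^*\mathbb{R}^n$ with a prescribed boundary stop, the wrapped Fukaya category is generated by a cotangent fiber and its endomorphism algebra computes to $C_{-*}(\Omega_* \mathbb{R}^n) \simeq \mathbb{Z}$ (or the appropriate Morse/loop algebra), while the microsheaf category is generated by the constant sheaf or a microstalk functor with the same endomorphism algebra by the Nadler--Zaslow equivalence (in the form enhanced by Ganatra--Pardon--Shende and Nadler--Shende). The comparison functor $\mathrm{GPS}$ is then defined by sending a Lagrangian cocore (or a general Lagrangian brane) to the object whose microstalk along the core at the cocore's endpoint is the generator — concretely, the cocore disk ending at a critical point $p$ of the Weinstein function is sent to the microsheaf with one-dimensional microstalk at $p$.

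Third, to prove fully faithfulness I would check the formula on generators: wrapped Floer cohomology of two cocores $D_p, D_q$ is computed by counting Reeb chords of their boundary Legendrians, and this must be identified with $\mathrm{Hom}$ in the microsheaf category between the corresponding microstalk generators. This is done by comparing both to the Chekanov--Eliashberg dg algebra, or more directly by using the Nadler--Zaslow local model after cutting $X$ into sectors around each cocore. Essential surjectivity follows because the cocores split-generate both sides (on the Fukaya side by the generation theorem of GPS, on the microsheaf side because the microstalk functors at cocore endpoints are jointly conservative on sheaves supported in $\Core(X,\Lambda)$).

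The main obstacle, and the content of GPS, is the cosheaf property of $\cW(X,\frakp,\Lambda)$: one must show that restriction functors to subsectors are compatible with wrapping and that the resulting Mayer--Vietoris square is cocartesian in the relevant $\infty$-categorical sense. This requires the stop-removal exact triangle and a careful control of holomorphic disks near the boundary of subsectors, neither of which is formal. Once that descent is in hand, the local identification on standard sectors, together with the explicit matching of generators described above, globalizes to the desired equivalence, with the $\Lambda$-dependence on the Fukaya side matching the support condition on the microsheaf side via the cofinality of wrapping Hamiltonians avoiding $\Lambda$.
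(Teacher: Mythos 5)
This theorem is quoted in the paper as a citation to Ganatra--Pardon--Shende \cite{GPSmicro}; the paper gives no proof of it, so there is nothing internal to compare your sketch against. What you have written is a reconstruction of the argument in the cited reference, and I will assess it on that basis.

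As a high-level summary of the GPS strategy, your sketch is essentially correct: establish sectorial descent for both the partially wrapped Fukaya category and the wrapped microsheaf category, verify the equivalence on standard cotangent-bundle sectors (where it reduces to Nadler--Zaslow/Abouzaid-type statements), match the generators (cocores on the Fukaya side, microstalk corepresentatives on the sheaf side), and globalize. You correctly flag the cosheaf property of $\cW$ as the hard geometric input and correctly name its ingredients (stop removal, generation by cocores, acceleration of wrapping). One inaccuracy worth noting: the detour through the Chekanov--Eliashberg dg algebra is not how GPS establish fully faithfulness on generators; that comparison belongs to the Ekholm--Lekili and Bourgeois--Ekholm--Eliashberg line of work. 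GPS argue directly via sectorial cutting and the cotangent-bundle model, which is the ``more directly'' alternative you also mention, so this is a distraction rather than a gap. Finally, the stated equivalence is at the level of cocomplete module categories $\Mod(\cW^{\mathrm{op}})$ versus the full microsheaf category; your argument naturally produces an equivalence of categories of perfect modules versus wrapped (compact) microsheaves, and one should say explicitly that the claimed statement follows by passing to Ind-completions, using that the constructed functor sends a set of compact generators to a set of compact generators.

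In short: correct in outline, appropriately honest about what is nontrivial, with one minor misattribution and one formal step (Ind-completion) left implicit. Since the paper offers no proof here, there is no divergence from ``the paper's approach'' to report.
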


\subsection{Complex exact symplectic manifold and complex Lagrangians}
As we have seen, to define microsheaves, we have to choose a Maslov data. In this section, we consider the holomorphic setup, where the choice is canonical. We follow the explanation of \cite{CKNSmicroPerverse, CKNSmicroRH}.

Let $V$ be a complex contact manifold. We denote the complex symplectization $\pi\colon \widetilde V\rightarrow V$, which is a $\bC^*$-fibration. It is observed in \cite{CKNSmicroPerverse} that $\widetilde V$ carries a canonical null homotopy of $\widetilde V\rightarrow B^2\bZ$. As a result, we obtain a canonical choice of microsheaf category $\mu \mathrm{sh}_{\frakp_{can}}$ on $\widetilde V$. We denote the subsheaf of $\pi_*\mu \mathrm{sh}_{\frakp_{can}}$ consisting of pointwise representable by sheaves by $\bP\mu \mathrm{sh}_V$ (see \cite{CKNSmicroPerverse} for the definition). Here and after we omit $\frakp_{can}$ from the notations.

In \cite{CKNSmicroRH}, this unique canonical Maslov data is presented as a concrete gluing data as follows: Fix a Darboux covering $V=\bigcup U_\alpha$ with a contact embedding $U_\alpha\hookrightarrow \bP X_\alpha$ for some complex projectized tangent bundle $\bP X_\alpha$ for each $\alpha$. For each cover $U_\alpha$, we consider the twisted microsheaf $\bP\mu \mathrm{sh}^{w_2(U_\alpha)}_{U_\alpha}$ twisted by the second Stiefel--Whitney class $w_2(U_\alpha)$. On each overlapping region, we have a complex contact morphism, which carries a canonical quantization. By gluing up along these quantized transformation, we obtain our (complex) microsheaf category. 

By the above reason, when we write a microsheaf category for some complex geometric setup, we omit the Maslov data from the notation.

Now we consider a holomorphic exact symplectic manifold $X$. Namely, it is a complex manifold equipped with a holomorphic $1$-form $\lambda$ such that $d\lambda$ is a holomorphic symplectic form. In this situation, $V=X\times \bC$ is canonically a complex contact manifold. Then we can define $\bP\mu \mathrm{sh}_X:=\bP\mu \mathrm{sh}_V|_{X\times \{0\}}$ canonically.

If one further assume that the Liouville vector field of $X$ is integrated into a $\bC^*$-action, then $\bP\mu \mathrm{sh}_X$ admits a $\bZ[t^\pm]$-action and we can specialize it to $t=1$. We denote the specialization by $\mu_\bC \mathrm{sh}_X$, which has an embedding into $\mu \mathrm{sh}_X$.

\begin{remark}
    Although the gluing description of the canonical microsheaf category involves twisting by $w_2$, our actual computation happens for (some combinations of) complex projective spaces where $w_2$ are zero. For this reason, we tacitly suppress $w_2$ from our notation.
\end{remark}

\subsection{Hodge microsheaves} \label{jan7-2}
Here we give a definition of the category of microsheaves by using gluing via Fourier transforms. A similar approach can be found in~\cite{McBreenWebster}.

Let $X$ be an $2n$-dimensional holomorphic exact symplectic manifold.
Suppose $L\subset X$ is a holomorphic Lagrangian subvariety with $\lambda|_L=0$. Then $L\times 0\subset X\times \bC$ is a holomorphic Legendrian. Then we denote the subsheaf of $\mu_\bC \mathrm{sh}_X$ consisting of objects supported on $\pi^{-1}(L\times 0)$ by $\mu_\bC \mathrm{sh}_L$. We similarly define $\bP \mu \mathrm{sh}_L$.

To describe $\bP\mu \mathrm{sh}_{L}$ and $\mu_\bC \mathrm{sh}_{L}$ in terms of Fourier transform, we consider the following condition.

\begin{definition}
     Let $L$ be a holomorphic Lagrangian subvariety in $X$ with $\lambda|_L=0$ of the form $\bigcup_{i\in I} L_i$ where 
    \begin{enumerate}
        \item  $I$ is a finite set and $L_i$ is a smooth Lagrangian submanifold,
        \item the intersections are clean, and
        \item $\bigcap_{i\in J}L_i$ is codimension $|J|$ in $L$ for any $J\subset I$.
    \end{enumerate}
    We say $L$ is a \emph{Lagrangian core of Fourier type}.
\end{definition}

Let $L=\bigcup_{i\in I} L_i$ be a Lagrangian core of Fourier type. We consider the following category $\cC_I$: An object is a pair $(i, J)$ with $J\subset I$ and $i\in J$. The morphisms are generated by the following:
\begin{enumerate}
    \item For $i, i'\in J$ with $i\neq i'$, mutually inverse isomorphisms $f_{ii'}\colon (i, J)\leftrightarrow (i', J)\colon f_{i'i}$.
    \item For $J\subset J'\subset I$, a morphism $r_{JJ'}\colon (i,J) \rightarrow (i, J')$. 
\end{enumerate}
Note that $\bigcap_{j\in J}L_j$ is a smooth $(n-|J|)$-dimensional submanifold of $L_i$ for any $i\in J\subset I$ by the assumption. For each $L_i$ $(i\in I)$, we consider the following stratification $\cS_i$: We define it inductively from low dimensional strata. The $\ell$-dimensional strata is defined by the complement of $(\ell-1)$-dimensional strata in $\bigcap_{i\in J} L_i$ with $|J|=n-\ell$. For $i\in J$, we also denote the induced stratification on $\bigcap_{i\in J}L_i$ by $\cS_J$.

For each $J$ and $j\in J$, we consider the normal bundle $\pi_{(j,J)}\colon T_{\bigcap_{i\in J}L_i}L_j\rightarrow \bigcap_{i\in J}L_i$. We also have a stratification $\cS_{(j,J)}:=\pi_{(j,J)}^{-1}(\cS_J)$ on $T_{\bigcap_{i\in J}L_i}L_j$. 
We consider the following assignment: For each $(j, J)$, we assign the category of sheaf $\Sh(T_{\bigcap_{i\in J}L_i}L_j, \cS_{(j,J)})$. For $f_{jj'}$, we assign the Fourier transform $\Sh(T_{\bigcap_{i\in J}L_i}L_j,\cS_{(j,J)})\rightarrow \Sh(T_{\bigcap_{i\in J}L_i}L_{j'},\cS_{(j',J)})$. For $r_{JJ'}$, we assign the specialization functor $\Sh(T_{\bigcap_{i\in J}L_i}L_j, \cS_{(j,J)})\rightarrow \Sh(T_{\bigcap_{i\in J'}L_i}L_{j}, \cS_{(j,J')})$. These together define a diagram in the category of categories. Taking the limit, we obtain a category.

One can see the obtained category is equivalent to $\mu_\bC \mathrm{sh}_{L}$ as follows: Each $\bigcap_{i\in J}L_i$ admits a tubular neighborhood $U_J$ such that $\mu \mathrm{sh}_L(U_J)\cong \Sh(T_{\bigcap_{i\in J}L_i}L_j, \cS_{(j,J)})$. If one shrinks more, it becomes a Darboux coordinate whose position variables are on $L_j$.

Each such Darboux coordinate $U$ induces a contact Darboux coordinate $U\times \bC$ on $X\times \bC$. Moreover, $\mu_\bC \mathrm{sh}_{\pi^{-1}(L)} (U\times \bC)\cong \mu \mathrm{sh}_L(U)$. Also, each Fourier transform induces a quantized contact transform. Hence it recovers the gluing description of \cite{CKNSmicroRH} explained in the last section. In particular, we have $\mu_\bC \mathrm{sh}_L(X)\cong \mu \mathrm{sh}_L(X)$.

By replacing $\Sh$ with $\sqrt{\MHM}$, we define $\mu \MHM_L(X)$. This is the category of \emph{Hodge microsheaves}.
Like Proposition~\ref{prop:mhmismix}, we obtain the following.
\begin{proposition}
    The following data induce a mixed structure on the category $\mu\mathrm{sh}_L(X,\bQ)$:
    \begin{enumerate}
        \item Tate twist functor $\lb \frac{1}{2}\rb :=(-)\otimes \bQ\lb\frac{1}{2} \rb$.
        \item The forgetful functor $\frakF\colon \mu\MHM_L(X) \to \mu\mathrm{sh}_L(X, \bQ)$.
    \end{enumerate}
\end{proposition}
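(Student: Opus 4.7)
The plan is to transport the mixed structure of Proposition~\ref{prop:mhmismix}, together with its half-Tate enhancement from Section~\ref{section:Infinite-dimensionalHodge}, through the limit definitions of $\mu\MHM_L(X)$ and $\mu\mathrm{sh}_L(X,\bQ)$ set up in Section~\ref{jan7-2}. The key observation is that both categories are limits over the same indexing category $\cC_I$, with local pieces $\sqrt{\MHM_{\mon}}(T_{\bigcap_{i\in J}L_i}L_j,\cS_{(j,J)})$ on the Hodge side and $\Sh_{\mon}(T_{\bigcap_{i\in J}L_i}L_j,\cS_{(j,J)},\bQ)$ on the constructible side, glued by Fourier-type equivalences and specialization functors.

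First I would check, at the level of each vertex $(j,J)$ of $\cC_I$, that the pair $(\sqrt{\frakF},\lb\frac{1}{2}\rb)$ furnishes a mixed structure on the local constructible category. This is exactly Proposition~\ref{prop:mhmismix} applied stratum-by-stratum, combined with the half-Tate construction of Section~\ref{section:Infinite-dimensionalHodge}, which supplies both the autoequivalence $\lb\frac{1}{2}\rb$ on $\sqrt{\MHM}$ and the required natural isomorphism $\sqrt{\frakF}\circ \lb\frac{1}{2}\rb\xrightarrow{\cong}\sqrt{\frakF}$.

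Next I would verify that both kinds of edges in $\cC_I$ commute with $\frakF$ and $\lb\frac{1}{2}\rb$. For the swap edges $f_{jj'}$, the assigned functor on the sheaf side is the Fourier--Sato transform $\mathrm{FS}$, and on the Hodge side the normalized Fourier transform $\mathrm{FL}=\mathrm{FL}^{\mathrm{pre}}\lb\frac{1}{2}\rb$ of Section~\ref{jan7-1}; compatibility with $\frakF$ is the commutative diagram recorded there (extended from $\mathrm{FL}^{\mathrm{pre}}$ to $\mathrm{FL}$ using that $\frakF\circ \lb\frac{1}{2}\rb\cong \frakF$), and compatibility with $\lb\frac{1}{2}\rb$ is formal since the half-Tate twist is central. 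For the inclusion edges $r_{JJ'}$, the specialization functor on the sheaf side lifts to the specialization of monodromic mixed Hodge modules through Saito's nearby-cycle machinery, and this lift is strictly compatible with the forgetful functor and with Tate twists. Once these compatibilities are in hand, passing to the limit yields an autoequivalence $\lb\frac{1}{2}\rb$ on $\mu\MHM_L(X)$, a functor $\frakF\colon \mu\MHM_L(X)\to \mu\mathrm{sh}_L(X,\bQ)$, and a natural isomorphism $\frakF\circ \lb\frac{1}{2}\rb\xrightarrow{\cong}\frakF$ assembled vertexwise; exactness of both is inherited from exactness on each vertex.

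The main obstacle I foresee is bookkeeping the half-Tate normalization across the gluing. One must verify that the extra factor $\lb\frac{1}{2}\rb$ built into $\mathrm{FL}$ (and the passage to $\sqrt{\MHM_{\mon}}$, which makes $\mathrm{FL}^2=\id$) is exactly what is needed so that each Fourier edge in $\cC_I$ is strictly equivariant for the mixed-structure data, matching the involutive Fourier--Sato transform on the constructible side under $\frakF$ rather than only up to an ambient Tate twist. Once this normalization matches on the nose, the remainder of the argument is purely formal from the functoriality of limits in the category of dg categories.
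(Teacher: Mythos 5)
The paper states this proposition with only the remark ``Like Proposition~\ref{prop:mhmismix}, we obtain the following'' and supplies no further argument, so there is nothing in the text to compare against line by line; your outline is precisely the argument the authors are implicitly invoking, namely: establish the mixed structure vertexwise on each $\sqrt{\MHM}$-piece via Proposition~\ref{prop:mhmismix} together with the half-Tate construction, check that the two kinds of gluing edges (the normalized Fourier transform $\mathrm{FL}=\mathrm{FL}^{\mathrm{pre}}\lb\frac{1}{2}\rb$ and the specialization functor) are equivariant for both $\lb\frac{1}{2}\rb$ and $\frakF$, and pass to the limit of dg categories. The normalization concern you flag is indeed where the content lives: the passage to $\sqrt{\MHM_{\mon}}$ and the built-in half twist make $\mathrm{FL}$ involutive and commuting with $\frakF$ \emph{on the nose} against the involutive $\mathrm{FS}$, as recorded in the commutative diagram of Section~\ref{jan7-1}. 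Two points worth spelling out to close the argument completely: first, you should record that the limit over $\cC_I$ of the $\bQ$-coefficient constructible pieces really recovers $\mu\mathrm{sh}_L(X,\bQ)$ — the paper establishes this identification for the microsheaf category via the tubular-neighborhood Darboux charts, and the argument is coefficient-independent; second, the claimed strict compatibility of specialization with $\frakF$ and $\lb\frac{1}{2}\rb$ on the Hodge side is exactly what the monodromic linear-algebra description (Theorem~\ref{thm:MHMlinearalgebra} and Definition~\ref{fev5-1}) delivers, since $\nu$ is read off the tuple $(\psi,\phi,\mathrm{can},\mathrm{Var})$ on which the half-Tate twist acts diagonally. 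With those made explicit, your proof is correct and matches the intended reasoning.
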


As we have remarked in section~\ref{section:Infinite-dimensionalHodge}, we sometimes use a smaller model satisfying saturatedness.
We use the following convention:
\begin{definition}[Saturated system]
For a Lagrangian core of Fourier type $L$ of $X$, a \emph{saturated system} is an assignment of saturated (non-full) subcategories $M_{(j,J)}^c\subset\sqrt{\MHM^c(T_{\bigcap_{i\in J}L_i}L_j, \cS_{(j,J)})}$ satisfying the following: the morphisms appeared in the diagram of the definition of $\mu\MHM_L(X)$ can be restricted to the subcategories $M_{(j,J)}^c$'s.
\end{definition}
For a saturated system, we can glue similarly. We denote such a glued category $\mu \mathrm{M}^c_L(X)$ (for specified $M_{(j,J)}^c$).
\begin{lemma}\label{lem:saturatedmicro}
For a saturated system, the category is $\mu \mathrm{M}^c_L(X)$ is also saturated.
\end{lemma}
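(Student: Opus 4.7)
The plan is to reduce saturation of $\mu\mathrm{M}^c_L(X)$ to the pointwise saturation of each vertex category $M^c_{(j,J)}$ via the hom-limit formula, and then to control the direct sum over Tate twists by commuting it past the gluing limit. First I would observe that, by the construction of $\mu\mathrm{M}^c_L(X)$ recalled just before the lemma, the category is (tautologically) the limit of the diagram $\cC_I \to \mathrm{dgCat}$ sending $(j,J)$ to $M^c_{(j,J)}$, with structural functors the Fourier transforms $f_{jj'}$ and the specializations $r_{JJ'}$; the category $\mu\mathrm{sh}_L(X)$ is defined by the analogous limit with each $M^c_{(j,J)}$ replaced by $\Sh(T_{\bigcap_{i\in J}L_i}L_j, \cS_{(j,J)})$, and $\frakF$ is induced componentwise by the pointwise forgetful functors. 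For compatible families $\cE = (\cE_{(j,J)})$, $\cF = (\cF_{(j,J)})$ this gives
\begin{equation}
    \Hom_{\mu\mathrm{M}^c_L(X)}(\cE,\cF) \simeq \lim_{(j,J)\in\cC_I} \Hom_{M^c_{(j,J)}}(\cE_{(j,J)},\cF_{(j,J)}),
\end{equation}
and likewise for $\mu\mathrm{sh}_L(X)$, with the map on homs induced by the forgetful functor being the limit of the componentwise maps.

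Next I would check that the Tate twist $\lb\frac{1}{2}\rb$ on $\sqrt{\MHM^c}$ is compatible with the structure functors: Fourier transformation is arranged in Section~\ref{jan7-1} to commute with $\lb\frac{1}{2}\rb$ up to canonical isomorphism (this is the whole point of the normalization $\mathrm{FL}=\mathrm{FL}^{\mathrm{pre}}\lb\frac{1}{2}\rb$), and specialization is a pullback-type operation that trivially commutes with Tate twists. Hence $\lb\frac{1}{2}\rb$ lifts to an autoequivalence of the limit $\mu\mathrm{M}^c_L(X)$ and the displayed hom-complex formula is equivariant for it. Moreover, because $I$ is finite the diagram $\cC_I$ is finite, so $\lim_{\cC_I}$ is a finite homotopy limit; in the stable/dg setting this finite limit coincides (up to shift) with a finite colimit, and therefore commutes with arbitrary coproducts, in particular with $\bigoplus_{n\in\bZ}$.

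Putting these together, for any $\cE,\cF\in\mu\mathrm{M}^c_L(X)$ one computes
\begin{equation}
\begin{split}
    \bigoplus_{n\in\bZ}\Hom_{\mu\mathrm{M}^c_L(X)}(\cE,\cF\lb\tfrac{n}{2}\rb)
    &\simeq \bigoplus_{n} \lim_{(j,J)} \Hom_{M^c_{(j,J)}}(\cE_{(j,J)},\cF_{(j,J)}\lb\tfrac{n}{2}\rb) \\
    &\simeq \lim_{(j,J)} \bigoplus_{n} \Hom_{M^c_{(j,J)}}(\cE_{(j,J)},\cF_{(j,J)}\lb\tfrac{n}{2}\rb) \\
    &\xrightarrow{\sim} \lim_{(j,J)} \Hom(\frakF\cE_{(j,J)},\frakF\cF_{(j,J)}) \\
    &\simeq \Hom_{\mu\mathrm{sh}_L(X)}(\frakF\cE,\frakF\cF),
\end{split}
\end{equation}
where the third arrow uses the pointwise saturation of each $M^c_{(j,J)}$, and the outer identifications use the hom-limit formulas. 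This establishes saturation of $\mu\mathrm{M}^c_L(X)$.

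The main technical step, and the place where one must be most careful, is the interchange of $\bigoplus_n$ with the gluing limit in the second isomorphism above; this is exactly where finiteness of $\cC_I$ (hence of $I$) is used, and the rigorous justification depends on the dg/$\infty$-categorical model chosen for the gluing. A secondary verification is the Tate-twist compatibility of the Fourier transforms and specializations, which should follow directly from the constructions of Section~\ref{jan7-1}.
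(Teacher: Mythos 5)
Your argument is sound and is essentially the argument the paper is implicitly relying on (the paper states this lemma without proof). You correctly identify the three ingredients: (a) the mapping-space formula $\Hom_{\lim_i\cC_i}(X,Y)\simeq\lim_i\Hom_{\cC_i}(X_i,Y_i)$ for a limit of dg-categories, (b) compatibility of the half-Tate twist with the gluing functors (Fourier transform and specialization), and (c) the interchange of $\bigoplus_{n}$ with the gluing limit. Two small clarifications you should tighten up. First, the compatibility in (b) is not ``the whole point'' of the normalization $\mathrm{FL}=\mathrm{FL}^{\mathrm{pre}}\lb\frac{1}{2}\rb$ — that normalization is there to make $\mathrm{FL}$ an involution; the commutation of $\mathrm{FL}$ with $\lb\frac12\rb$ instead follows because $\mathrm{FL}^{\mathrm{pre}}$ commutes with the integral Tate twist and its lift to $\sqrt{\MHM}$ is taken diagonally, hence commutes with $\lb\frac12\rb$, and the two factors in $\mathrm{FL}=\mathrm{FL}^{\mathrm{pre}}\lb\frac12\rb$ each commute with $\lb\frac12\rb$. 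Second, for the interchange in (c), what you actually need is that $\cC_I$ is a \emph{finite category} (finitely many objects \emph{and} finitely many morphisms), not merely that $I$ is finite; this does hold here because the $f_{ii'}$ are mutually inverse isomorphisms and the $r_{JJ'}$ are monotone in $J$, so there are no nontrivial endomorphism monoids, but it is worth saying explicitly since then the limit is a genuinely finite homotopy limit of mapping complexes, and in the stable setting a finite limit is a finite colimit up to shift and therefore commutes with arbitrary direct sums (filtered colimits of finite sums). With these two points made precise, the proof is complete.
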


\subsection{Example: $A_n$-plumbing of $T^*\bP^1$}
Here we exemplify the construction in the last section of the case of $A_n$-plumbing of $T^*\bP^1$. This is the derived version of \cite{BezrukavnikovKapranov}. See also \cite{karabas2024wrappedfukayacategoryplumbings}.

Let $X$ be the $A_n$-plumbing of $T^*\bP^1$. For its definition, we refer to \cite{EtguLekili} and the references therein. It is also realized as the minimal resolution of $A_n$-singularity, so it carries a structure of holomorphic exact symplectic manifold.

The core $C$ of it is described as follows. We have $n$ copies $\{\bP_i\}_{i=1,...,n}$ of $\bP^1$ with three different marked points $l_i, r_i,m_i$ for each $\bP_i$. We prepare the set $\{p_i\}_{i=1,...,n-1}$ and consider the pushout
\begin{equation}
    C=\bP_1\cup_{p_1}\bP_2\cup_{p_2}\cdots \cup_{p_{n-1}}\bP_n
\end{equation}
with the maps $p_i\rightarrow n_i$ and $p_i\rightarrow s_{i+1}$. This is the core. We also consider the relative core
\begin{equation}
    C_{\lc \boldsymbol{m}\rc}:=C\cup \bigcup_i T^*_{m_i}\bP^1_i.
\end{equation}

We then consider the homotopy pull-back diagram
\begin{equation}
\begin{split}
    &\mu \mathrm{sh}_C(X)\\
    &=\Sh(\bP_1, \{n_1\})\times_{\Sh(\bP_1\bs\{s_1\}, \{n_1\})}\Sh(\bP_2,\{n_2, s_2\})\times_{\Sh(\bP_2\bs \{s_2\}, \{n_2\})}\cdots \times_{\Sh(\bP_{n-1}\bs\{s_{n-1}\},\{n_{n-1}\} )}\Sh(\bP^1_n, \{s_{n}\})\\
    &=: \cB_1\times_{\cC_1}\times \cB_2\times_{\cC_2}\cdots \times_{\cC_{n-1}}\cB_n
\end{split}
\end{equation}
with respect to the Dwyer--Kan model structure of the category of dg-categories.
Here $\Sh(\bP_i, \{n_i,s_i\})\rightarrow\Sh(\bP_i\bs\{s_i\}, \{n_i\})$ is given by the specialization, and  $\Sh(\bP_{i+1}, \{n_{i+1}, s_{i+1}\})\rightarrow\Sh(\bP_i\bs\{s_i\}, \{n_i\})$ is given by the composition of the restriction and Fourier transform.
Recall that any object (i.e., any dg category) is fibrant in this model structure. So, all the categories involved here are fibrant. To compute the homotopy pull-back, we need to replace $\cB_i\rightarrow \cC_i$ by a fibration $\cA_i\rightarrow \cC_i$ with a quasi-equivalence $\cB_i\rightarrow \cA_i$ for each $i<n$. We also set $\cA_n=\cB_n$.
Hence the desired homotopy pull-back is the genuine pull-back
\begin{equation}\label{eq:genuinepullback}
    \cA_1\times_{\cC_1}\cA_2\times_{\cC_2}\cdots \times_{\cC_{n-1}}\cA_n.
\end{equation}

Now let us describe an object in the homotopy pull-back. We will see that a set of objects $\{b_i\in \cB_i\}$ with $b_i|_{\cC_i}\simeq b_{i+1}|_{\cC_i}$ defines an object of the homotopy pull-back. In fact, by the functor $\cB_i\rightarrow \cA_i$, we denote the image of $b_i$ by $a_i$. By the definition of the fibration in this model structure, we have a lift of an isomorphism $a_2|_{\cC_1}\rightarrow a_1|_{\cC_1}$ to $a_2\rightarrow a_2'$ in $\cA_2$. We next consider an isomorphism $a_3|_{\cC_2}\rightarrow a_2'|_{\cC_2}$ and lift it to $a_3\rightarrow a_3'$. Repeating this, we obtain objects $a_1, a_2', a_3',...$ which defines an object of the genuine pull-back (\ref{eq:genuinepullback}). Similarly, we can compute $\mu\mathrm{sh}_{C_{\lc\boldsymbol{m}\rc}}(\bP^1)$.

The above argument also works well for the category of microsheaves.
\begin{example}\label{ex:saturatedHodgesheafforplumbing}
    By using saturated model $M$ of \ref{ex:Hodge-Tate2} of Example~\ref{ex:Hodge-Tate}, we obtain a saturated mixed structure on $\mu \mathrm{sh}_{C_{\lc \boldsymbol{m}\rc}}(X)$. We denote it by $\mu \mathrm{M}^c_{C_{\lc \boldsymbol{m}\rc}}(X)$.
\end{example}

\section{Hodge wrapping}\label{section:Hodgewrapping}
\subsection{Wrappings}
Let $M$ be a manifold.
Let $\Lambda_1\subset \Lambda_2$ be conic Lagrangians in $\pi_M\colon T^*M\rightarrow M$. For a compact object $\cE_2\in \Sh_{\Lambda_2}(M)$, we can consider the functor $\Hom_{\Sh(M)}(\cE_2, -)\colon \Sh_{\Lambda_1}(M)\rightarrow \Mod(\bC)$.
Note that $\cE_2\not\in \Sh_{\Lambda_1}(X)$ in general. However,  by the representability theorem, the functor is corepresented by a compact object $\cE_1$ in $\Sh_{\Lambda_1}(X)$. We say $\cE_1$ is the \emph{wrapping of $\cE_2$}. This point of view was first introduced by Nadler~\cite{nadler2016wrappedmicrolocalsheavespairs}. Later, it is identified with geometric wrapping by \cite{Kuo, GPSmicro}.

Since we have an obvious inclusion $\rho\colon \Sh_{\Lambda_1}(X)\subset \Sh_{\Lambda_2}(X)$ and its left adjoint by the adjoint functor theorem, we can also say that $\cE_1$ is the image under the left adjoint $\rho^l$. This left adjoint is called \emph{stop removal functor} in \cite{GPSmicro}.

For a point $p\in \Lambda$, one can define the microstalk functor $\Sh_{\Lambda}(X)\rightarrow \Mod(\bC)$, which estimates the microsupport of the objects at $p$. For example, we can construct as follows: Take a small open subset $U$ of $\pi_M(p)$ and a smooth function $f$ on it with $f(\pi(p))=0$ and $df$ intersects $\Lambda$ transversely. Then the functor is $\Hom(\bC_{\lc x\in U\relmid f(x)\geq 0 \rc},-)$.

Again, $\bC_{\lc x\in U\relmid f(x)\geq 0 \rc}\not\in \Sh_{\Lambda}(X)$ in general. The corresponding wrapping in $\Sh_{\Lambda}(X)$ is called a microlocal skyscraper sheaf (or microstalk, for short). This was first introduced by Nadler~\cite{nadler2016wrappedmicrolocalsheavespairs}. The following explains the importance of microlocal skyscraper sheaves:
\begin{proposition}[\cite{nadler2016wrappedmicrolocalsheavespairs}]
    The set of microskysraper sheaves in $\Sh_{\Lambda}(X)$ is a set of compact generators.
\end{proposition}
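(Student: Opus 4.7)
The plan is to verify compactness and generation separately. For compactness, I would use that the microlocal skyscraper $G_p$ at $p \in \Lambda$ is by construction the image, under the stop-removal left adjoint $\rho^l \colon \Sh(X) \to \Sh_\Lambda(X)$, of a half-space sheaf $\bC_{\{f \geq 0\}}$ on a small chart around $\pi_M(p)$, with $f$ smooth and $df$ meeting $\Lambda$ transversely at $p$. This half-space sheaf is weakly constructible, hence compact in $\Sh(X)$. Since the inclusion $\Sh_\Lambda(X) \hookrightarrow \Sh(X)$ is cut out by a microsupport condition stable under filtered colimits, it commutes with filtered colimits, so its left adjoint $\rho^l$ carries compact objects to compact objects, and $G_p$ is compact in $\Sh_\Lambda(X)$.

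For generation, the key input is joint conservativity of microstalks at smooth Lagrangian points on $\Sh_\Lambda(X)$. Suppose $F \in \Sh_\Lambda(X)$ satisfies $\Hom_{\Sh_\Lambda(X)}(G_p, F[n]) = 0$ for every smooth $p \in \Lambda$ and every $n \in \bZ$. Unwinding the corepresenting adjunction, this says that every microstalk of $F$ at a smooth point of $\Lambda$ vanishes. By the Kashiwara--Schapira local model for $\mu\mathrm{sh}_\Lambda$ near such a point -- where a smooth germ of $\Lambda$ is identified with the zero section of a cotangent bundle of a disk and microstalk becomes an ordinary shifted stalk -- this vanishing forces $\SS(F)$ to avoid the smooth locus of $\Lambda$. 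Since $\SS(F)\subset \Lambda$ is closed conic and the singular locus of $\Lambda$ has strictly smaller dimension than $\Lambda$, no nonempty closed conic Lagrangian fits inside it; we conclude $\SS(F) = \emptyset$ and hence $F = 0$.

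The hard part, if any, will be the conservativity claim: the microstalk at a smooth Lagrangian point is, with correct Maslov and shift conventions, an equivalence between the local piece of $\mu\mathrm{sh}_\Lambda$ and $\Mod(\bC)$. This is standard Kashiwara--Schapira, so the difficulty is more notational than substantive. Some additional care is needed to arrange the cutting functions $f$ so that the associated family of microstalk functors reaches every smooth point of $\Lambda$, and to pass from the constructible setting to general $F \in \Sh_\Lambda(X)$, where one may have to present $F$ as a filtered colimit of constructible objects and track microsupports through the colimit. Once these ingredients are in place, the argument is essentially formal from the adjunction between stop-removal and inclusion.
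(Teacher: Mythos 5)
The paper does not prove this proposition; it is taken as a black box from Nadler's wrapped microlocal sheaves paper, so there is no internal proof to compare against. Judged on its own merits, your generation argument is on the right track (though you should invoke the Kashiwara--Schapira involutivity theorem to know $\SS(F)$ is coisotropic, hence Lagrangian of dimension $\dim X$ if nonempty, rather than just claiming ``no nonempty closed conic Lagrangian fits inside'' the singular locus without saying why $\SS(F)$ has to be full-dimensional). The genuine gap is in the compactness step.

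The assertion that $\bC_{\{f\geq 0\}}$ is ``weakly constructible, hence compact in $\Sh(X)$'' is false. Weakly constructible sheaves are in general not compact in the unbounded derived category of all sheaves: already $\bC_U \in \Sh(S^1)$ for $U \subset S^1$ an open arc is not compact, since $\Hom(\bC_U, -)$ sends the filtered system $\{\bC_{U_n}\}$ exhausting $U$ to $\bC[-1]$ while its colimit $\bC_U$ has $\Hom(\bC_U, \bC_U) \cong \bC$. So the input object to your stop-removal argument is not known to be compact in $\Sh(X)$, and the argument collapses at the first step. The correct framing, which the paper itself adopts, is to place $\bC_{\{f\geq 0\}}$ inside $\Sh_{\Lambda_2}(X)$ for some enlarged conic Lagrangian $\Lambda_2 \supset \Lambda \cup \SS(\bC_{\{f\geq 0\}})$, observe that the inclusion $\Sh_\Lambda(X) \hookrightarrow \Sh_{\Lambda_2}(X)$ preserves colimits and hence its left adjoint preserves compacts, and then \emph{separately} prove that $\bC_{\{f\geq 0\}}$ is compact in $\Sh_{\Lambda_2}(X)$. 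This last compactness is not a formality: it is a theorem about microsheaf categories (established via wrapping/sheaf quantization, and the content of the ``representability'' claim in the paper's own description), not a consequence of constructibility alone. Your colimit-preservation argument for $\rho^l$ is correct; it is the identification of an ambient category in which the starting object is already compact that is missing.
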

In the context of wrapped Fukaya category~\cite{GanatraSectorial}, microlocal skyscrper sheaves correspond to cocores/linking disks.

Since the microlocal skyscraper sheaves together generate the whole category $\Sh_{\Lambda}(X)$, considering Hodge version should be important as well:
\begin{definition}
Let $X$ be a complex manifold.
    Let $M_1\subset M_2\subset \MHM(X)$ be subcategories. An object $\cE_1\in M_1$ is called the $(M_1,M_2)$-\emph{Hodge wrapping} of $\cE_2\in M_2$ if there exists a functor isomorphism
    \begin{equation}
        \Hom_{M_1}(\cE_1,-)\cong \Hom_{M_2}(\cE_2,-)|_{M_1}.
    \end{equation}
\end{definition}
Then it is natural to ask the relationship between Hodge wrapping and usual wrapping. In this section, we explore this question.

\subsection{Conjectures on complex wrappings}
Here we recall some ideas considering wrapping using sheaf quantizations of Hamiltonian isotopies.

Let $M$ be a differentiable manifold. We fix a Riemannian metric $g$ on $M$. We denote the geodesic flow by $\phi_t^g$, which is a contact isotopy on the cosphere bundle $S^*M$. Equivalently, $\phi_t^g$ is a conic Hamiltonian isotopy on $T^*M\bs T^*_MM$ where $T^*_MM$ is the zero section.

Recall that the derived category of $\bC$-valued sheaves is denoted by $\Sh(M)$. By the work of Guillermou--Kashiwara--Schapira, one can ``quantize" $\phi_t^g$. Namely, the following holds:
\begin{theorem}[\cite{GKS}]
Let $\phi_t$ be any conic Hamiltonian isotopy on $T^*M\bs T^*_MM$.
    There exists an autoequivalence $\Phi_t$ of $\Sh(M)$ such that $\SSr(\Phi_t(\cE))\bs T^*_MM=\phi_t(\SSr(\cE)\bs T^*_MM)$ holds for any $\cE\in \Sh(M)$ where $\SSr$ is microsupport.
\end{theorem}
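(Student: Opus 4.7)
The plan is to follow the kernel-quantization strategy: construct a single sheaf $K \in \Sh(M \times M \times I)$ (with $I$ the time interval of the isotopy) whose microsupport lifts $\phi_t$ and whose value at $t=0$ is $\bC_{\Delta_M}$, then define $\Phi_t$ by convolution with the time-$t$ slice $K_t := K|_{M \times M \times \{t\}}$. All the analytic content is packed into the existence, uniqueness, and invertibility of this kernel; the statement about microsupport is then a formal consequence of the Kashiwara--Schapira estimates for sheaf operations.

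The first step is to encode the isotopy as a single conic Lagrangian. Since $\phi_t$ is conic and Hamiltonian, it is generated by a time-dependent Hamiltonian $H_t$ which is fiberwise homogeneous of degree one, and the ``suspended graph''
$$\Lambda_\phi := \bigl\{\bigl(\phi_t(y,\eta),\, (y,-\eta),\, (t,-H_t(\phi_t(y,\eta)))\bigr) : (y,\eta) \in T^*M \setminus T^*_M M,\ t\in I\bigr\}$$
is a conic Lagrangian submanifold of $T^*(M \times M \times I) \setminus 0$. It meets $\{t=0\}$ along the conormal of the diagonal $\Delta_M$, so $\bC_{\Delta_M}$ is a natural Cauchy datum.

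Next I would construct $K \in \Sh(M \times M \times I)$ with $\SSr(K) \setminus 0 = \Lambda_\phi$ and $K|_{t=0} = \bC_{\Delta_M}$. Uniqueness is cheap: any two such sheaves differ by a sheaf with microsupport in $\Lambda_\phi$ and vanishing Cauchy datum, which by the propagation/non-characteristic deformation theorem (applied to the projection $M \times M \times I \to I$, which is non-characteristic for $\Lambda_\phi$) must vanish. Existence is the main obstacle. The strategy is local-to-global: locally on $\Lambda_\phi$, a contact transformation brings it into the normal form of a conormal bundle, where an explicit sheaf with the required microsupport exists; one then glues these local models using the uniqueness statement. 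Consistency of the gluing reduces to the triviality of an obstruction class that vanishes because $\Lambda_\phi$ is, by construction, a Lagrangian isotopy starting at the conormal of the diagonal, hence comes with a distinguished homotopy to a trivial model.

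Given $K$, define $\Phi_t(\cE) := K_t \circ \cE$, the sheaf-theoretic convolution along the middle copy of $M$. The microsupport transformation $\SSr(\Phi_t(\cE)) \setminus 0 = \phi_t(\SSr(\cE) \setminus 0)$ is then read off from the Kashiwara--Schapira estimate for microsupports of sheaf compositions, using that $\SSr(K_t) \setminus 0$ is the graph of $\phi_t$ and that the composition is proper in the relevant microlocal sense because $\phi_t$ is a diffeomorphism of $T^*M \setminus T^*_M M$. Invertibility of $\Phi_t$ is obtained by repeating the construction for the reverse isotopy $\phi_{-t}$ and invoking uniqueness to identify the composition of the two kernels with $\bC_{\Delta_M}$; this upgrades $\Phi_t$ to the required autoequivalence of $\Sh(M)$.
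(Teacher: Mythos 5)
The paper does not prove this statement; it is cited to Guillermou--Kashiwara--Schapira (\cite{GKS}) without proof, so there is no in-paper argument to compare against. Your sketch does capture the GKS kernel-quantization framework: encode the isotopy as the suspended conic Lagrangian $\Lambda_\phi \subset T^*(M\times M\times I)$, produce a kernel $K$ with $\SSr(K)\setminus 0 = \Lambda_\phi$ and $K|_{t=0} \cong \bC_{\Delta_M}$, define $\Phi_t$ by convolution with the slice $K_t$, and read off the microsupport statement and invertibility from the Kashiwara--Schapira composition estimates.

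Two places where your sketch departs from, or glosses over, the actual GKS argument. For uniqueness, the phrase ``any two such sheaves differ by a sheaf with microsupport in $\Lambda_\phi$ and vanishing Cauchy datum'' presupposes a comparison morphism $K_1\to K_2$ whose cone you can analyze, which is itself part of what must be produced; GKS instead control the internal Hom $\mathcal{H}om(K_1,K_2)$, show its microsupport is non-characteristic for the projection to $I$, and deduce by propagation that restriction to $t=0$ is an isomorphism on Hom-complexes --- this simultaneously furnishes the map and shows it is an isomorphism. For existence, the GKS proof is a direct construction exploiting the time-parameter structure (propagation in $t$ together with a short-time/local argument near $t=0$, then composition of kernels over a subdivision of $I$); it does not glue local microsheaf models against an obstruction class. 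The obstruction-class framing you invoke belongs to the later microsheaf/Lagrangian-brane existence theory (Guillermou, Jin, Nadler--Shende, Shende), which itself uses GKS as an ingredient for Darboux charts and quantized contact transformations, so appealing to it here to prove GKS is at least potentially circular. Your observation that $\Lambda_\phi$ comes with a distinguished homotopy to the diagonal conormal is exactly what trivializes the Maslov-type obstruction in that later framework, but it is not the mechanism GKS use.
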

In the following, we only consider the case when $\phi_t=\phi_t^g$ for some $g$.
In the case of $\phi_t^g$, we have a canonical map $\Phi_t\rightarrow \Phi_{t'}$ for any $t\leq t'$, called the continuation map (see, e.g. \cite{Kuo}). Hence $\{\Phi_t\}$ forms an inductive system.
\begin{theorem}[\cite{Kuo}]
For conic Lagrangians $\Lambda_1=T^*_MM\subset \Lambda=\Lambda_2$, the wrapping $\cE_1\in \Sh_{\Lambda_1}(M)$ of $\cE_2\in \Sh_{\Lambda_2}(M)$ is isomorphic to $\colim_t\Phi_t(\cE_2)$. 
\end{theorem}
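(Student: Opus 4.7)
The plan is to verify directly that $\cE_1:=\colim_t\Phi_t(\cE_2)$ satisfies the universal property of the wrapping: (a) it lies in $\Sh_{\Lambda_1}(M)$, and (b) the canonical map $\cE_2\cong\Phi_0(\cE_2)\to\cE_1$, coming from the continuation at $t=0$, induces an isomorphism $\Hom_{\Sh(M)}(\cE_1,\cF)\xrightarrow{\sim}\Hom_{\Sh(M)}(\cE_2,\cF)$ for every $\cF\in \Sh_{\Lambda_1}(M)$. Once both are established, the universal property of the stop-removal left adjoint $\rho^l\colon \Sh_{\Lambda_2}(M)\to \Sh_{\Lambda_1}(M)$ and the uniqueness of representing objects identify $\cE_1$ with the wrapping. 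I would dispose of (b) first because it is essentially formal, and then tackle (a), which is the technical heart.

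For (b), the point is that although $\Phi_t$ is a nontrivial autoequivalence of $\Sh(M)$, it restricts to the identity on the subcategory of locally constant sheaves. Indeed, $\phi_t$ is a conic Hamiltonian isotopy on $T^*M\setminus T^*_MM$ which extends by the identity to the zero section, so the GKS kernel of $\Phi_t$ restricts to the identity kernel on the diagonal above $T^*_MM$. Consequently, for $\cF$ with $\SSr(\cF)\subseteq T^*_MM$, one has $\Phi_t(\cF)\simeq \cF$ canonically and the continuation morphisms $\Phi_t\to\Phi_{t'}$ act as identity under this identification. Combining with the adjunction provided by the equivalence $\Phi_t$,
\begin{equation*}
\Hom(\Phi_t(\cE_2),\cF)\simeq \Hom(\cE_2,\Phi_{-t}(\cF))\simeq \Hom(\cE_2,\cF),
\end{equation*}
and passing to the limit in $t$ yields $\Hom(\cE_1,\cF)=\lim_t\Hom(\Phi_t(\cE_2),\cF)=\Hom(\cE_2,\cF)$, as desired.

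For (a), I would combine the GKS identity $\SSr(\Phi_t(\cE_2))\setminus T^*_MM=\phi_t(\SSr(\cE_2)\setminus T^*_MM)$ with the standard upper semicontinuity estimate for the microsupport of a filtered colimit, $\SSr(\colim_t\cF_t)\subseteq \bigcap_{t_0}\overline{\bigcup_{t\geq t_0}\SSr(\cF_t)}$. One then needs to show that for any $(x_0,\xi_0)$ with $\xi_0\neq 0$, the point $(x_0,\xi_0)$ eventually leaves $\phi_t(\SSr(\cE_2))$. In the non-compact complete case this follows from dispersion of geodesics to infinity. In the compact recurrent case, the replacement is to probe the microsupport by a microlocal cutoff supported near $(x_0,\xi_0)$ and show that the continuation morphisms annihilate that contribution at the level of the colimit, exploiting the subanalyticity of $\SSr(\cE_2)$ and the positive-isotopy trick familiar from \cite{GPSmicro,Kuo}.

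The principal obstacle is step (a): microsupport control of the filtered colimit when the geodesic flow is recurrent, so that one cannot merely push $(x_0,\xi_0)$ out of every compact region. The resolution requires local testing against microlocal probes rather than a naive set-theoretic displacement argument. Once this is secured, (b) is essentially formal from the GKS adjunction, and the theorem follows from the universal property characterizing $\rho^l(\cE_2)$.
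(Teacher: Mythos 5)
The paper cites this result from \cite{Kuo} without giving a proof, so there is no proof in the paper to compare against; what follows is an assessment of your plan against the argument one actually needs.

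Your overall framework --- verify corepresentability of $\Hom_{\Sh_{\Lambda_2}}(\cE_2,-)|_{\Sh_{\Lambda_1}}$ directly by the colimit --- is the right one, and step (b) is salvageable, but it is stated too loosely. The GKS kernel $K_t$ is not literally ``the identity kernel near the zero section''; the correct statement is that if $\SSr(\cF)\subseteq T^*_MM$ then the family $t\mapsto\Phi_t(\cF)$, viewed as an object on $M\times I$, has microsupport in the zero section (by the GKS microsupport estimate applied to composition with a local system), hence is a locally constant family, and therefore each $\Phi_t(\cF)$ is canonically isomorphic to $\cF$ via restriction to $t=0$. You also need to check that these identifications intertwine the continuation morphisms, which is what actually turns the tower $\lim_t\Hom(\Phi_t(\cE_2),\cF)$ into the constant tower $\Hom(\cE_2,\cF)$; this is not automatic from the adjunction $\Hom(\Phi_t(\cE_2),\cF)\simeq\Hom(\cE_2,\Phi_{-t}(\cF))$ alone and needs to be spelled out.

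Step (a) is where the proposal has a genuine gap, and it is not a small one: it is the entire content of Kuo's theorem. The microsupport upper bound $\SSr(\colim_t\cF_t)\subseteq\bigcap_{t_0}\overline{\bigcup_{t\ge t_0}\SSr(\cF_t)}$ does not give what you want in the compact case, because for $M$ compact (e.g.\ $\bP^n$, which is exactly the case this paper uses) the geodesic flow on $S^*M$ is recurrent and the right-hand side can be all of $\dot T^*M$. The ``dispersion to infinity'' argument you invoke in the noncompact case does not apply here at all. Your fallback --- probing by microlocal cutoffs --- is the correct instinct, but the actual argument in \cite{Kuo} is structurally different: one does not estimate the microsupport of the colimit directly. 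Instead, one introduces the poset of positive wrappings $W(\Lambda)^+$, proves that the colimit over $W(\Lambda)^+$ computes the left adjoint $\rho^l$ (which lands in $\Sh_{\Lambda_1}(M)$ by the adjunction, sidestepping the microsupport estimate for the colimit), and then proves that the one-parameter family $\{\Phi_t\}_{t\ge0}$ of GKS autoequivalences for a Reeb/geodesic flow is cofinal in $W(\Lambda)^+$. The cofinality statement is what replaces your ``eventual displacement of $(x_0,\xi_0)$,'' and it requires contact-geometric input (positivity of the isotopy, and that iterating the flow dominates any given positive wrapping), not a pointwise escape-to-infinity argument. As written, your proposal does not supply that, and the escape argument it does supply is false in the compact setting the paper needs.
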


Now let us consider the case when $M$ carries a complex structure. In this setup, inside $\Sh(M)$, we have the derived category of $\bC$-constructible sheaves $\Sh_{\bC-c}(M)$. Now we can formulate our conjecture. 

\begin{conjecture}\label{conj:1}
Let $M$ be a compact complex manifold. Let $\cE$ be an object of $\Sh_{\bC-c}(M)$. There exists an inductive system $\{\cE_i\}$ in $\Sh_{\bC-c}(M)$ such that $\lim_{t\rightarrow +\infty}\Phi_{t}(\cE)=\lim_{i\rightarrow +\infty}\cE_i$. 
\end{conjecture}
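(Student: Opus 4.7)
The plan is to exhibit a cofinal sequence of times $0 = t_0 < t_1 < t_2 < \cdots \to \infty$ in $\bR_{\geq 0}$ for which $\Phi_{t_i}(\cE)$ remains $\bC$-constructible, and to assemble them into the desired inductive system via the continuation maps. Concretely, set $\cE_i := \Phi_{t_i}(\cE)$ and take as structure morphisms the canonical continuation maps $\Phi_{t_i}(\cE) \to \Phi_{t_{i+1}}(\cE)$ supplied by the GKS--Kuo framework; this produces an inductive system in $\Sh_{\bC-c}(M)$ as soon as each $\cE_i$ is $\bC$-constructible. Since $\{t_i\}$ is cofinal in the filtered poset $(\bR_{\geq 0},\leq)$ that governs the wrapping colimit (by Kuo's theorem), the induced comparison map $\colim_i \cE_i \to \colim_t \Phi_t(\cE)$ is an isomorphism and the conjecture follows.

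The key step is therefore producing such a sequence of ``good times''. Because $\SSr(\Phi_{t_i}(\cE)) \setminus T^*_M M = \phi_{t_i}^g\bigl(\SSr(\cE) \setminus T^*_M M\bigr)$ and $\SSr(\cE) \setminus T^*_M M$ is $\bC$-analytic, it suffices that each $\phi_{t_i}^g$ be a complex-analytic conic contactomorphism of $S^*M$, for then its image of a $\bC$-analytic conic Lagrangian is again $\bC$-analytic. On a compact rank-one Hermitian symmetric space such as $\bP^n$ with the Fubini--Study metric, all geodesics close up with common period $T$, so $\phi_T^g = \mathrm{id}$ and one may take $t_i = iT$; the resulting system $\cE \to \cE \to \cE \to \cdots$ has non-trivial endomorphisms as continuation maps, and its colimit realizes the wrapping. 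This is exactly the pattern underlying Arai's computation and hence Theorem~\ref{thm:main1}. More generally, any real-analytic Zoll metric compatible with the complex structure would suffice.

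The main obstacle is that for a generic compact complex manifold no Riemannian metric has its geodesic flow complex-analytic at any positive time, so the direct argument above fails. Two complementary workarounds seem plausible. First, one may replace $\phi_t^g$ by a different positive Reeb flow $\phi_t^H$ on $T^*M \setminus T^*_M M$ generated by a fibrewise-homogeneous Hamiltonian $H$ whose time-$t_i$ maps are complex-algebraic (for instance the lifts of $S^1$-actions associated to projective embeddings of $M$, or moment-map Hamiltonians on $T^*M$ coming from auxiliary torus actions), and invoke the independence of the wrapping colimit on the particular positive flow. Second, one may allow $\cE_i$ to live a priori in a larger real-analytic constructible ambient category and then show by a microsupport convergence and finiteness argument that the colimit automatically lands in $\Sh_{\bC-c}(M)$. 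Either route reduces to producing enough complex-analytic contactomorphisms in the positive isotopy class of $\phi_t^g$, which is the hard geometric content of the conjecture.
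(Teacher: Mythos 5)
You should note at the outset that the statement you are asked to prove is labeled \emph{Conjecture} in the paper, and the paper itself does not prove it: it only sketches a strategy (find a sequence of ``good times'' $t_i\to\infty$ with $\Phi_{t_i}(\cE)\in\Sh_{\bC-c}(M)$ and pass to the colimit) and then cites Arai~\cite{Arai} for the case $M=\bP^n$. Your proposal reproduces exactly this strategy --- choose a cofinal sequence of good times, set $\cE_i:=\Phi_{t_i}(\cE)$, equip them with the GKS/Kuo continuation maps, and use cofinality to identify $\colim_i\cE_i$ with $\colim_t\Phi_t(\cE)$ --- and then, entirely correctly, flags in your last paragraph that producing enough good times for a general compact complex manifold is an open problem. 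So you have not proved the conjecture (nor does the paper), but you have faithfully reproduced the paper's intended line of attack, and you are honest that it is incomplete. That much is fine.

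There is, however, a concrete error in your discussion of $\bP^n$ that should be corrected, since this is the one case where the argument is supposed to be made rigorous. You assert that because all geodesics of Fubini--Study close up with common period $T$, one has $\phi_T^g=\mathrm{id}$ and may take $\cE_i=\cE$ for all $i$, so that the system is the constant system $\cE\to\cE\to\cdots$ with nontrivial continuation maps. This is not what happens. The GKS quantization $\Phi_T$ is not the identity even when $\phi_T^g=\mathrm{id}$ on $S^*M$ (it differs by at least a shift and a monodromy twist), and Arai's actual computation shows that the sheaves $\Phi_{t_i}(\bC_x)$ are a strictly growing sequence of twisted complexes built from shifted copies $\bK_{\bP^n},\bK_{\bP^n}[1],\bK_{\bP^n}[2n],\bK_{\bP^n}[2n+1],\dots$ rather than a single object repeated. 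Relatedly, the ``good times'' for $\bC_x$ are not multiples of the full geodesic period; one sees new constructible strata at fractions of the period coming from geodesics hitting the cut locus. Your general framework (cofinal good times $\Rightarrow$ conclusion) is sound, but the claim that the inductive system is constant for $\bP^n$ is false and would invalidate the downstream weight computation in Theorem~\ref{thm:main1}, which relies precisely on the system being a non-constant tower whose graded pieces carry distinct Tate twists.

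One further small caveat: you argue that if $\phi_{t_i}^g$ is a complex-analytic conic contactomorphism then $\SSr(\Phi_{t_i}(\cE))$ is a $\bC$-analytic Lagrangian, hence $\Phi_{t_i}(\cE)$ is $\bC$-constructible. For this to close one also needs that $\Phi_{t_i}$ preserves perfectness/finiteness of stalks, which is automatic here since $\Phi_{t_i}$ is an equivalence of $\Sh(M)$ and $\cE$ is constructible to begin with; worth saying explicitly, but not a gap.
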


In particular, we are interested in the following case:
\begin{conjecture}
Let $M$ be a compact complex manifold and $x$ be a point in $M$. There exists an inductive system $\{\cE_i\}$ in $\Sh_{\bC-c}(M)$ such that $\lim_{t\rightarrow +\infty}\Phi_{t}(\bC_x)=\lim_{i\rightarrow +\infty}\cE_i$. 
\end{conjecture}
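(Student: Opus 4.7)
The plan is to combine the GKS/Kuo presentation of the wrapping as the filtered colimit $\colim_t \Phi_t(\bC_x)$ with a mechanism for replacing $\Phi_{t_i}(\bC_x)$, at a discrete unbounded sequence of times $t_0<t_1<\cdots$, by a $\bC$-constructible representative $\cE_i$. The cleanest implementation is to choose the $t_i$ so that $\Phi_{t_i}(\bC_x)$ is itself $\bC$-constructible and to set $\cE_i:=\Phi_{t_i}(\bC_x)$ equipped with the GKS continuation maps $\cE_i\to\cE_{i+1}$; this is exactly the sheaf-level shadow of Theorem~\ref{thm:main0} and reduces the conjecture to identifying an unbounded set of times at which the Lagrangian $\phi^g_t(T^*_xM)$ lies in a complex analytic conic Lagrangian.

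To produce such times in the projective case, I would fix a Kähler Hodge metric $g$ and exploit the closed-geodesic structure on $M$. In the flagship case $M=\bP^n$ with $g$ the Fubini--Study metric, all geodesics are closed of a common period, and one checks directly that at the natural sequence $\{t_i\}$ the Lagrangian $\phi^g_{t_i}(T^*_xM)$ is a complex analytic conormal attached to the projective subspaces through $x$. This is the input Arai uses and the geometric backbone of Theorem~\ref{thm:main1}. The same idea should adapt to any rank-one compact Hermitian symmetric space, and, more speculatively, to any compact manifold all of whose geodesics are closed.

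The principal obstacle, and the reason this is still only a conjecture, is that for a generic compact complex manifold there is no reason to expect $\phi^g_t(T^*_xM)$ to be complex analytic at any unbounded sequence of times. A more flexible attack would decouple $\cE_i$ from $\Phi_{t_i}(\bC_x)$: find $\bC$-constructible $\cE_i$ together with morphisms $\cE_i\to\Phi_{t_i}(\bC_x)$ whose cones are annihilated by further wrapping, and show that the induced arrow $\colim_i\cE_i\to\colim_t\Phi_t(\bC_x)$ is an isomorphism. Making this precise seems to require either a complex analytic approximation of the Reeb flow by piecewise complex analytic Hamiltonian isotopies, or an intrinsic compactness statement to the effect that $\colim_t\Phi_t(\bC_x)$ lies in the essential image of $\Ind(\Sh_{\bC-c}(M))\to\Sh(M)$. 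I expect this last point, rather than any individual geodesic calculation, to be the decisive difficulty.
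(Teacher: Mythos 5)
This statement is labelled a \emph{conjecture} in the paper and is not proved there; the only accompanying remark is the strategy you describe — choose a sequence $t_0<t_1<\cdots\to\infty$ at which $\Phi_{t_i}(\bC_x)$ is $\bC$-constructible and pass to the colimit along the continuation maps, as Arai carries out for $\bP^n$ with the Fubini--Study metric. Your proposal reproduces exactly that strategy, correctly identifies that the serious obstacle to proving the conjecture in general is the absence of any reason for $\phi^g_t(T^*_xM)$ to become complex analytic at an unbounded set of times, and honestly flags the remaining steps (approximation by complex analytic isotopies, or an intrinsic compactness statement placing the wrapping inside $\Ind(\Sh_{\bC-c}(M))$) as open; this matches the paper's own assessment, and your additional speculations about Hermitian symmetric spaces and about decoupling $\cE_i$ from $\Phi_{t_i}(\bC_x)$ go beyond anything in the paper but are clearly presented as speculative rather than as a completed argument.
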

One possible approach to this conjecture is the following: Describe $\{\Phi_t(\bC_x)\}$ explicitly by using an explicit metric, and find a sequence $t_1,t_2, \cdots \rightarrow +\infty$ such that $\Phi_{t_i}(\bC_x)\in \Sh_{\bC-c}(M)$. This approach is taken in \cite{Arai} by Takumi Arai.

We can also consider a microsheaf version of the conjecture. When $X$ is a holomorphic exact symplectic manifold, C\^ot\'e--Kuo--Nadler--Shende~\cite{CKNSmicroPerverse} introduced a t-structure ``perverse microsheaves" on $\mu \mathrm{sh} (X)$. We denote the finite microstalk part of the triangulated closure of it by $\mu \mathrm{sh}_{\bC-c}(X)$.

\begin{conjecture}
Suppose $X$ is a holomorphic exact symplectic manifold with compact core. 
    Let $L$ be a complex Lagrangian submanifold. Then there exists an inductive system $\{\cE_i\}$ in $\mu \mathrm{sh}_{\bC-c}(X)$ such that $\lim_{i\rightarrow +\infty}\cE_i\cong \mathrm{GPS}(L)$.
\end{conjecture}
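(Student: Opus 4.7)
The approach I would take mirrors the strategy suggested by the two preceding conjectures: realize $\mathrm{GPS}(L)$ as a colimit of images of an initial microsheaf attached to $L$ under a quantized positive Reeb flow, and then exhibit a cofinal subsequence of times at which this image lies in $\mu\mathrm{sh}_{\bC-c}(X)$.

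First I would set up the global Reeb flow and its quantization. Since $X$ is a holomorphic exact symplectic manifold with compact core, the contactization $X\times\bR$ admits a contact form whose Reeb flow generates positive wrapping. By the microsheaf version of Guillermou--Kashiwara--Schapira (as used in \cite{Kuo, GPSmicro}), this flow admits a sheaf-theoretic quantization $\Phi_t$ giving autoequivalences of $\mu\mathrm{sh}(X)$, and the continuation maps assemble $\{\Phi_t(\cE_L)\}_{t\geq 0}$ into an inductive system starting from the canonical microsheaf $\cE_L\in\mu\mathrm{sh}_L(X)$ associated to $L$. Kuo's wrapping theorem, transported to the microsheaf setting via the GPS equivalence, then identifies $\mathrm{GPS}(L)$ with $\colim_{t\to\infty}\Phi_t(\cE_L)$.

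Next I would attack the $\bC$-constructibility. On a holomorphic Darboux chart $U\subset X$ biholomorphic to an open piece of some $T^*V$, the object $\Phi_t(\cE_L)$ restricts to an ordinary sheaf-theoretic wrapping, and the Reeb flow becomes the geodesic flow for some K\"ahler metric. In this cotangent-bundle setting the question reduces to the preceding Conjecture, and for explicit metrics (e.g.\ Fubini--Study on $\bP^n$) one can follow Arai's method~\cite{Arai} to find a discrete sequence of times at which the geodesic-flowed image of a complex Lagrangian returns to a $\bC$-analytic configuration, producing local inductive sequences $\{t_i^U\}$ with the required $\bC$-constructibility.

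The main obstacle will be globalizing these local sequences. On different Darboux charts the natural good times $\{t_i^U\}$ need not coincide, so one needs either (a) a preferred global contact form under which the Reeb flow is simultaneously periodic enough to make a single sequence $\{t_i\}$ work everywhere, or (b) a descent argument that builds the global inductive system from local ones indexed by a suitable cofinal diagram. Option (a) is natural for Weinstein manifolds carrying a good $\bC^*$-action, including the $A_n$-plumbings appearing in Theorem~\ref{thm: main2}, and I expect this to be the model case; option (b) should then reduce the general conjecture to a compatibility check exploiting the sheaf-of-categories nature of $\mu\mathrm{sh}$ on $X$.
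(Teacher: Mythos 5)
This statement is a \emph{conjecture} in the paper, not a theorem, and the paper does not offer a proof of it. Worse for your proposal: the remark immediately following the conjecture in the paper states that \emph{``the above conjecture seems to be false in general, see Appendix. One can consider the conjecture as a guiding principle.''} So you are attempting to prove something the authors themselves believe is false in that generality. Your proposal should at minimum flag that only special cases are expected to hold and explain what structural hypothesis (on $X$, on $L$, or on the chosen contact form) you intend to add.

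There is also a substantive mismatch with what the paper actually does in the one case it verifies. For the $A_n$-plumbings, the paper does \emph{not} take a quantized Reeb flow $\Phi_t$, look for good times $t_i$, and pass to a colimit. Instead (Appendix, \S 8.4) it constructs the inductive system $\scH_j^0 \to \scH_j^1 \to \scH_j^2 \to \cdots$ by hand as iterated cones of explicit building blocks $\block{j}, \block{j}^\circ$ defined on each $\bP^1$ via Fourier-transform gluing, with the connecting maps coming from the morphisms $\mathfrak{f}_s \colon \und{A_s} \to \ove{A_s}$, and then proves directly (Lemma~\ref{m29-2}, Theorem~\ref{dec5-8}) that the colimit corepresents the microstalk. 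At no point is a contact form chosen globally on $X\times\bR$ nor a Reeb flow quantized on $\mu\mathrm{sh}(X)$; indeed, the $\bC^*$-scaling flow on the plumbing is used only implicitly through the monodromic/Fourier formalism. Your Step (a)/(b) globalization discussion is exactly the place where the general argument breaks: you have no mechanism to pick a single cofinal sequence of times valid across overlapping Darboux charts, and there is no reason to expect one. The paper sidesteps this entirely by abandoning the geodesic-flow picture in favor of an explicit Čech-type construction adapted to the Lagrangian core decomposition $L = \bigcup_i L_i$, which is why the result is only claimed for a specific class of $X$ rather than as a theorem covering the conjecture.

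Two further technical points. First, even locally your reduction to Conjecture~\ref{conj:1} requires knowing that a complex Lagrangian, once flowed, returns to a \emph{globally} $\bC$-analytic position; Arai's computation is for a single cotangent fiber in $T^*\bP^n$ with the Fubini--Study metric, and the $\bC$-constructibility there is established by brute-force identification of $\Phi_{t_i}(\bC_x)$ with an explicit twisted complex, not by a soft argument. Second, the target category $\mu\mathrm{sh}_{\bC\text{-}c}(X)$ is defined via the CKNS perverse $t$-structure, and it is not obvious that the geodesic-flow wrappings $\Phi_{t_i}(\cE_L)$ land there even at good times without additional compatibility of the chosen contact form with the complex structure. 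Your proposal should either restrict to the Liouville/Weinstein setting where the core–cocore machinery applies and verify this, or make the conditional nature of the argument explicit.
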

\begin{remark}
    The above conjecture seems to be false in general, see Appendix. One can consider the conjecture as a guiding principle.
\end{remark}

When $X=T^*M$, $L=T^*_NM$, we have $\mathrm{GPS}(L)=\lim_{t\rightarrow +\infty}\Phi_t(\bK_N)$ by the results of \cite{GPSmicro} and Kuo~\cite{Kuo}. Hence this conjecture is a generalization of Conjecture~\ref{conj:1}.

We now explain our motivation for the above conjectures from Hodge theory.

\begin{conjecture}[A naive (non-mathematical) conjecture]\label{conj:Hodge}
For any complex Lagrangian $L$, the object $\mathrm{GPS}(L)$ has a lift to a Hodge microsheaf.
\end{conjecture}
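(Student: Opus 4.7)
The plan is to attack this conjecture by running the wrapping-computation template provided by Theorem~\ref{thm:main0}. Under the Ganatra--Pardon--Shende equivalence, $\mathrm{GPS}(L)$ is the wrapping of a cocore/microstalk-type generator whose microsupport meets $L$ transversely at a generic smooth point $p\in L$. The proof therefore naturally splits into (i) producing a Hodge-microsheaf lift $\cE_0$ of this initial generator, and (ii) transporting $\cE_0$ along a suitable Reeb flow at the Hodge level until the colimit recovers $\mathrm{GPS}(L)$.

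For (i), I would choose a Darboux chart around $p$ in which $L$ looks like a cotangent fibre, so that the cocore is represented by the constant sheaf on a closed half-space; this object admits a tautological lift to a constant mixed Hodge module, and the Fourier-transform gluing recipe of \S\ref{section:Hodgemicrosheaves} (using the monodromic Hodge Fourier transform recalled in \S\ref{jan7-1}) propagates the lift across overlapping Darboux charts to yield a global object $\cE_0\in \mu\MHM_L(X)$. For (ii), I would invoke Theorem~\ref{thm:main0}: find times $0=t_0<t_1<\cdots\to\infty$ such that each Reeb image $\Phi_{t_i}(\frakF(\cE_0))$ is $\bC$-constructible, upgrade each to a mixed Hodge module, and lift the continuation maps $\Phi_{t_i}\to \Phi_{t_{i+1}}$ by interpreting them through specialization/nearby-cycle morphisms, which are morphisms in $\MHM$. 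The colimit then supplies the desired Hodge lift of $\mathrm{GPS}(L)$.

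The principal obstacle is geometric: producing the sequence $\{t_i\}$ along which Reeb flow preserves $\bC$-constructibility is precisely the complex-wrapping conjecture just stated, and it is known only in special cases such as $T^*\bP^n$, where it rests on a delicate metric/geodesic analysis carried out by Arai~\cite{Arai}. A secondary obstacle is verifying that the sheaf-level continuation maps actually descend from genuine morphisms of mixed Hodge modules; this should follow from the compatibility of wrapping with the nearby-cycle formalism used in Saito's construction, but must be checked carefully at each time when the transversality type of $\Phi_t(\frakF(\cE_0))$ changes. Theorems~\ref{thm:main1} and \ref{thm: main2} of the paper are exactly what one obtains by running this plan in two concrete settings (cotangent bundles of $\bP^n$ and $A_n$-plumbings of $T^*\bP^1$), so the present conjecture should be read as the synthesis of those two results, with the open geometric input being the complex-wrapping conjecture for a general complex Lagrangian $L$.
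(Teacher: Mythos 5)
The statement you set out to prove is explicitly flagged in the paper as ``A naive (non-mathematical) conjecture,'' and the paper offers no proof of it. The paragraph immediately following the conjecture sketches exactly the chain of reductions you describe---take Jin's object, invoke Conjecture~\ref{conj:1}, lift the resulting inductive system to $\MHM$, and take a colimit---and the paper explicitly calls the lifting step ``nontrivial and conjectural.'' So your submission is a strategy, not a proof, and to your credit it identifies itself as such: you name the two genuine gaps (the complex-wrapping conjecture~\ref{conj:1} and the lift of the continuation maps to genuine $\MHM$-morphisms), and both remain open in general. In effect your proposal faithfully reproduces the authors' own heuristic discussion rather than supplying any missing content. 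One technical quibble: for a general closed complex Lagrangian $L$ the object $\mathrm{GPS}(L)$ is the image of the brane $L$ itself, obtained by first passing through Jin's construction and then wrapping; it is not literally ``the wrapping of a cocore/microstalk generator transverse to $L$ at a point.'' Those coincide for $L$ a cotangent fibre, which is the only case the paper actually treats in detail.

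A further caution you should build into your account: a Remark in \S\ref{section:Hodgewrapping} notes that the microsheaf version of the complex-wrapping conjecture ``seems to be false in general'' and is offered only as a guiding principle. So the first of your two obstacles is likely not merely a hard open problem but an obstruction that forces a reformulation of the conjecture (restricting $L$, or $X$, or the notion of ``lift'') before a proof can even be attempted; your two-step template will not close up unconditionally. What the paper actually delivers---and what your final paragraph rightly identifies---is this template run to completion in two settings where the needed geometric input is available by hand: Arai's geodesic analysis for $\bP^n$ (Theorem~\ref{thm:main1}) and the explicit wrapping computations of the appendix for $A_n$-plumbings of $T^*\bP^1$ (Theorem~\ref{thm: main2}). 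Those are \emph{evidence for} the conjecture, not a proof of it, and no proof is claimed or given.
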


Conjecture \ref{conj:Hodge} is related to Conjecture \ref{conj:1} as follows. By the result of Jin~\cite{Jin}, a complex Lagrangian in $T^*M$ defines an object $\cE$ of $\Sh_{\bC-c}(M)$. Suppose Conjecture~\ref{conj:1} holds for $\cE$. Then we have an inductive system $\{\cE_i\}\in \Sh_{\bC-c}(M)$. We then find/lift the sequence to $\mathrm{MHM}(M)$ (of course this step is also nontrivial and conjectural). Then take the colimit in the ind-category of a certain subcategory of $\mathrm{MHM}(M)$, which will give us a desired object conjectured in Conjecture \ref{conj:Hodge}. In this sense, confirming Conjecture \ref{conj:1} will give a strong evidence for Conjecture \ref{conj:Hodge}.

In the following sections, we explain the case such a Hodge lift of the wrapping is actually the Hodge wrapping.

\subsection{General machinery}
We introduce several preparatory lemmas for the next section.
We first discuss the saturatedness in infinite-dimensional setup.
\begin{lemma}\label{lem:indsaturation}
Let $(\frakF\colon \widehat\cD\to \cD, (d),\epsilon)$ be a mixed structure on a triangulated category. Let $\widehat\cD^c$ be a triangulated subcategoryof $\widehat \cD$. We suppose the following:
    \begin{enumerate}
    \item Both $\cD$ and $\widehat \cD$ are cocomplete.
        \item $\frakF$ commutes with coproducts.
        \item Any object $E\in \widehat\cD^c$ is saturated and compact in $\widehat\cD$, and $\frakF(E)$ is also compact in $\cD$.
    \end{enumerate}
Then any $E\in \widehat\cD^c$ is saturated in the colimit closure of $\widehat\cD^c$.
\end{lemma}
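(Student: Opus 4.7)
The plan is to reduce saturation of $E$ against an arbitrary object $N$ in the colimit closure of $\widehat\cD^c$ to the given saturation of $E$ against objects of $\widehat\cD^c$, by interchanging all relevant functors with a colimit presentation of $N$.

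First, I would write $N\cong\colim_\alpha N_\alpha$ with each $N_\alpha\in\widehat\cD^c$. Since $\widehat\cD^c$ is triangulated (hence already closed under shifts, cofibers, and finite coproducts), this can be arranged to be a filtered diagram, which is the regime in which compactness applies. Using compactness of $E$ in $\widehat\cD$ together with the fact that each Tate twist $\lb nd\rb$ is an autoequivalence (and so commutes with colimits), we obtain, for every $n$,
\[
\Ext^i_{\widehat\cD}(E,N\lb nd\rb)\cong\colim_\alpha\Ext^i_{\widehat\cD}(E,N_\alpha\lb nd\rb).
\]
Summing over $n\in\bZ$, and using that direct sums commute with filtered colimits in a presentable stable target, we get
\[
\bigoplus_n\Ext^i_{\widehat\cD}(E,N\lb nd\rb)\cong\colim_\alpha\bigoplus_n\Ext^i_{\widehat\cD}(E,N_\alpha\lb nd\rb)\cong\colim_\alpha\Ext^i_{\cD}(\frakF E,\frakF N_\alpha),
\]
the last isomorphism being the saturation hypothesis applied to each $N_\alpha\in\widehat\cD^c$.

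For the other side, $\frakF$ is exact triangulated and commutes with coproducts by hypothesis, and in any cocomplete triangulated setting every small colimit is built from coproducts and cofibers; hence $\frakF$ commutes with all small colimits, so $\frakF N\cong\colim_\alpha\frakF N_\alpha$. Combined with compactness of $\frakF E$ in $\cD$ this gives
\[
\Ext^i_{\cD}(\frakF E,\frakF N)\cong\colim_\alpha\Ext^i_{\cD}(\frakF E,\frakF N_\alpha),
\]
and chaining these natural identifications yields the desired saturation isomorphism.

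The main obstacle is not calculational but bookkeeping: one must check that the chain of isomorphisms above is compatible with the canonical comparison morphism $\bigoplus_n\Ext^i_{\widehat\cD}(E,(-)\lb nd\rb)\to\Ext^i_{\cD}(\frakF E,\frakF(-))$, and that a filtered presentation $N\cong\colim_\alpha N_\alpha$ with $N_\alpha\in\widehat\cD^c$ actually exists. The latter is standard once $\widehat\cD^c$ is triangulated, as the colimit closure is then the essential image of the Ind-completion of $\widehat\cD^c$; the former is routine from the functoriality of compactness and of $\frakF$ with respect to colimits.
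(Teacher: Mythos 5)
Your proposal is correct and takes essentially the same approach as the paper: present $N$ as a filtered colimit of objects in $\widehat\cD^c$, use compactness of $E$ to commute $\Hom$ past the colimit, interchange $\bigoplus_n$ with the filtered colimit, apply saturation of $E$ against each $N_\alpha$, and then reassemble on the $\cD$-side using compactness of $\frakF E$ and the fact that $\frakF$ preserves coproducts. The only difference is that you spell out the bookkeeping points (filteredness, functoriality of the comparison map) which the paper leaves implicit.
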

\begin{proof}
For $E\in \widehat\cD^c$ and an object $\colim_i G_i\in \widehat \cD$ with $G_i\in \widehat\cD^c$ in the colimit closure, 
we have 
    \begin{equation}
    \begin{split}
        \bigoplus_j\Hom_{\widehat\cD}(E, \colim_i G_i(j))&\cong\bigoplus_j\colim_i\Hom_{\widehat\cD}(E,  G_i(j)) \\
        &\cong \colim_i\bigoplus_j\Hom_{\widehat\cD}(E,  G_i(j)) \\
        &\cong\Hom_{\cD}(\frakF(E), \colim_i\frakF(G_i))
    \end{split}
    \end{equation}
    where the first equality follows from 3, the second equality is obvious, and the third equality follows from 2 and 3.
\end{proof}

\begin{lemma}\label{lem:limitsaturation}
Let $(\frakF\colon \widehat\cD\to \cD, (d),\epsilon)$ be a mixed structure on a triangulated category. 
    Suppose 
    \begin{enumerate}
    \item Both $\cD$ and $\widehat \cD$ are cocomplete.
        \item $E_i\in \widehat\cD$ is saturated and compact,
        \item the colimit $E=\colim_i E_i$ is compact in $\widehat \cD$, and
        \item $\frakF$ commutes with colimit.
    \end{enumerate}
    Then $E=\colim_i E_i$ is saturated as well.
\end{lemma}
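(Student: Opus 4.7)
The goal is to show that for every $N\in \widehat\cD$ and every $k\in \bZ$ the natural comparison map
\[
\bigoplus_{n\in\bZ}\Hom_{\widehat\cD}(E, N\lb nd\rb[k])\xrightarrow{\frakF} \Hom_{\cD}(\frakF(E), \frakF(N)[k])
\]
is an isomorphism. My strategy is to chain together four standard manipulations: absorb the external sum over Tate twists into a single Hom using compactness of $E$; turn the first argument into a limit using $E=\colim_i E_i$; push the sum back out termwise using compactness of each $E_i$; then apply saturation of $E_i$ and the fact that $\frakF$ preserves colimits to reassemble everything on the $\cD$-side.

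Concretely, I would first use compactness of $E$ in $\widehat\cD$ (hypothesis~3) to rewrite
\[
\bigoplus_n \Hom_{\widehat\cD}(E, N\lb nd\rb[k]) \;\cong\; \Hom_{\widehat\cD}\Bigl(E,\; \bigoplus_n N\lb nd\rb[k]\Bigr).
\]
Next, applying $E=\colim_i E_i$ together with the general fact that Hom turns colimits in the first variable into limits, the right-hand side becomes $\lim_i \Hom_{\widehat\cD}(E_i, \bigoplus_n N\lb nd\rb[k])$. Compactness of each $E_i$ (hypothesis~2) lets me commute the sum back past the Hom inside the limit, producing $\lim_i \bigoplus_n \Hom_{\widehat\cD}(E_i, N\lb nd\rb[k])$. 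Saturation of each $E_i$ (also hypothesis~2) now identifies this with $\lim_i \Hom_{\cD}(\frakF(E_i), \frakF(N)[k])$. Finally, hypothesis~4 gives $\frakF(E)\cong \colim_i \frakF(E_i)$, so one more application of the Hom-turns-colimits-into-limits principle converts this last expression into $\Hom_{\cD}(\frakF(E), \frakF(N)[k])$, which is the desired target. Naturality of each step with respect to $N$ shows that the composite isomorphism is exactly the map induced by $\frakF$.

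The only subtlety I foresee is bookkeeping: one must not try to directly interchange the infinite direct sum over $n\in \bZ$ with the limit over the index set of $\{E_i\}$, since these operations do not commute in general. The correct order of operations is dictated by which compactness hypothesis is available at each stage. Compactness of $E$ itself is what traps the external sum inside a single Hom before one passes to the colimit presentation, and compactness of $E_i$ is what permits unfolding the sum only after one has restricted to a fixed index. Apart from respecting this ordering, the proof is a formal chain of canonical isomorphisms, essentially parallel to the one in Lemma~\ref{lem:indsaturation}, and I do not anticipate a real obstacle.
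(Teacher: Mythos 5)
Your proof is correct and follows essentially the same chain of isomorphisms as the paper's own proof, in the same order, with each step justified by the same hypothesis (compactness of $E$, the colimit presentation of $E$, compactness of $E_i$, saturation of $E_i$, and commutation of $\frakF$ with colimits); your remark on why the interchanges must be performed in this particular order is also consistent with the paper's reasoning.
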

\begin{proof}
For $G\in \widehat\cD$, we have
        \begin{equation}
        \begin{split}
        \bigoplus_j\Hom_{\widehat\cD}(E, G(j))
        &\cong \Hom_{\widehat\cD}(E, \bigoplus_jG(j))\\
        &\cong \lim_i\Hom_{\widehat\cD}(E_i, \bigoplus_jG(j))\\
        &\cong \lim_i\bigoplus_j\Hom_{\widehat\cD}(E_i, G(j))\\
        &\cong \lim_i\Hom_{\widehat\cD}(\frakF(E_i), \frakF(G))\\
        &\cong \Hom_{\widehat\cD}(\colim_i\frakF(E_i), \frakF(G))\\
        &\cong \Hom_{\cD}(\frakF(E), \frakF(G))
        \end{split}
    \end{equation}
    where the first equality follows from the compactness of $E$, the second follows from the definition of $E_i$, the third follows from the compactness of $E_i$, the fourth follows from the saturatedness of $E_i$, the fifth is obvious, and the sixth follows from 4.
    This completes the proof.
\end{proof}
To verify 2 in Lemma~\ref{lem:limitsaturation} in practice, we need the following lemmas.

\begin{lemma}\label{colimitcompact}
Let $(\frakF\colon \widehat\cD_2\to \cD, (d),\epsilon)$ be a mixed structure on a triangulated category. Let $\widehat \cD_1$ to be a triangulated full subcategory of $\widehat\cD_2$. For $E\in \widehat \cD_1$, suppose that
    \begin{enumerate}
        \item $E$ is represented by $E=\Cone(E''\rightarrow E')$ in $\widehat{\cD} _2$ such that $E'$ is compact.
        \item $E''=\colim_i E_i''$ with $\Hom_{\cD}(\frakF(E_i''), -)|_{\frakF(\widehat \cD_1)}=0$ and $E_i''$ is saturated. 
    \end{enumerate}
    Then $E$ is compact in $\widehat \cD_1$.
\end{lemma}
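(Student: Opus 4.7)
The plan is to use the distinguished triangle $E'' \to E' \to E \to E''[1]$ given by hypothesis (1) to reduce the compactness of $E$ in $\widehat\cD_1$ to the already-assumed compactness of $E'$ in $\widehat\cD_2$. Since $\widehat\cD_1$ is a full subcategory of $\widehat\cD_2$, every Hom-group below can be computed in $\widehat\cD_2$. Applying $\Hom(-, G)$ to the triangle for $G \in \widehat\cD_1$, my aim is to show that the $\Hom(E'', G)$ term vanishes, which will force $\Hom(E, G) \cong \Hom(E', G)$; compactness of $E$ in $\widehat\cD_1$ is then inherited from that of $E'$ in $\widehat\cD_2$.

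The heart of the argument is the vanishing $\Hom_{\widehat\cD_2}(E'', G) = 0$ for every $G \in \widehat\cD_1$. Writing $E'' = \colim_i E_i''$ and using the universal property of the colimit,
\begin{equation}
    \Hom_{\widehat\cD_2}(E'', G) \cong \lim_i \Hom_{\widehat\cD_2}(E_i'', G).
\end{equation}
Each $E_i''$ is saturated by hypothesis (2), so the sum of all Tate-twisted Hom-groups decomposes as
\begin{equation}
    \bigoplus_j \Hom_{\widehat\cD_2}(E_i'', G(j)) \cong \Hom_{\cD}(\frakF(E_i''), \frakF(G)) = 0,
\end{equation}
where the final equality is exactly the second half of hypothesis (2) applied to $G \in \widehat\cD_1$. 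The $j = 0$ summand therefore vanishes for each $i$, and the limit over $i$ vanishes as well.

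Feeding this back into the long exact sequence of Homs attached to $E'' \to E' \to E$, I obtain $\Hom(E, F) \cong \Hom(E', F)$ for every $F \in \widehat\cD_1$, and identically for each $F_k$ in any family $\{F_k\}$ in $\widehat\cD_1$ whose coproduct $F = \bigoplus_k F_k$ also lies in $\widehat\cD_1$. Since $E'$ is compact in $\widehat\cD_2$,
\begin{equation}
    \Hom(E, F) \cong \Hom(E', F) \cong \bigoplus_k \Hom(E', F_k) \cong \bigoplus_k \Hom(E, F_k),
\end{equation}
establishing the compactness of $E$ in $\widehat\cD_1$. I do not anticipate a serious obstacle; the only delicate point is the interchange $\Hom(\colim_i E_i'', -) \cong \lim_i \Hom(E_i'', -)$, which is the universal property of the colimit once one confirms that the indicated colimit in $\widehat\cD_2$ really agrees with the homotopy colimit in the dg/triangulated sense.
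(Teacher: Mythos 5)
Your proof is correct and follows essentially the same route as the paper: use the triangle $E''\to E'\to E$, prove $\Hom_{\widehat\cD_2}(E'',-)|_{\widehat\cD_1}=0$ by combining saturatedness of each $E_i''$ (which makes $\Hom(E_i'',G)$ a direct summand of $\Hom_\cD(\frakF(E_i''),\frakF(G))$) with the assumed vanishing, and then transfer compactness from $E'$. The paper compresses the saturation step into a single inclusion $\Hom_{\widehat\cD_2}(E_i'',-)|_{\widehat\cD_1}\subset\Hom_\cD(\frakF(E_i''),-)|_{\frakF(\widehat\cD_1)}$, whereas you spell out the $\bigoplus_j$ decomposition and extract the $j=0$ summand — the content is identical. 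One small point worth keeping in mind, which you already flag: $E''=\colim_iE_i''$ must be interpreted as a homotopy colimit in the triangulated/dg sense, so the passage from vanishing of $\Hom(E_i'',G[k])$ for all $i,k$ to vanishing of $\Hom(E'',G[k])$ is a Milnor-sequence argument rather than a literal categorical universal property; since saturation kills all Ext degrees simultaneously, the $\lim^1$ term also dies and the conclusion holds.
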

\begin{proof}
We first note that
\begin{equation}
    \Hom_{\widehat\cD_2}(E''_i, -)|_{\widehat\cD_1}\subset \Hom_{\cD}(\frakF(E''_i), -)|_{\frakF(\widehat\cD_1)}=0
\end{equation}
by 2. Hence $\Hom_{\widehat\cD_2}(E'', -)|_{\widehat\cD_1}=0$ as well. Then we have $\Hom_{\widehat\cD_1}(E,-)=\Hom_{\widehat\cD_2}(E',-)|_{\widehat \cD_1}$. The compactness of $E'$ completes the proof.
\end{proof}

\subsection{Hodge wrapping versus wrapping}
Now let's go back to the sheaf-theoretic setup.

Let $\Lambda_1\subset \Lambda_2$ be conic Lagrangians in $T^*X$. We take a saturated (non-full) subcategory $M_2^c\subset \MHM^c_{\Lambda_2}(X)$ and set $M_2:=\Ind (M_2^c)$. We take $M_1:=M_2\cap \MHM_{\Lambda_1}(X)$. We then have two mixed structures $\frakF_1\colon M_1\rightarrow \Sh_{\Lambda_1}(X)$ and $\frakF_2\colon M_2\rightarrow \Sh_{\Lambda_2}(X)$. We record the following.
\begin{lemma}
    \begin{enumerate}
        \item $M_1$ and $M_2$ are cocomplete.
        \item $M_2^c$ is saturated in $M_2$.
        \item The functors $\frakF_1$ and $\frakF_2$ are both exact.
    \end{enumerate}
\end{lemma}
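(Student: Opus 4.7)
The plan is to handle the three items in order, each reducing to formal features of Ind-categories together with the basic properties of the forgetful functor $\frakF$ from mixed Hodge modules to $\bQ$-sheaves.

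For (1), I would first observe that $M_2 = \Ind(M_2^c)$ is cocomplete by the standard fact that the Ind-completion of an (essentially small) stable dg category admits all small colimits. For $M_1 = M_2 \cap \MHM_{\Lambda_1}(X)$, I need to check that colimits in $M_2$ of objects whose underlying sheaf has microsupport in $\Lambda_1$ produce an object of the same form. This follows because $\frakF_2$ is constructed as the composition of $\Ind(\frakF)$ with the colimit-realization functor, so it commutes with colimits; and the microsupport estimate $\SS \subset \Lambda_1$ is preserved under arbitrary colimits in $\Sh(X,\bQ)$ by upper semi-continuity of singular support.

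For (2), I would apply Lemma~\ref{lem:indsaturation} with $\widehat{\cD} = M_2$, $\widehat{\cD}^c = M_2^c$, $\cD = \Sh_{\Lambda_2}(X,\bQ)$, and $\frakF = \frakF_2$. The three hypotheses to verify are: (a) cocompleteness of both categories, which follows from (1) and the classical cocompleteness of the sheaf category; (b) that $\frakF_2$ commutes with coproducts, which is built into the definition via $\Ind$ and colimit realization; and (c) that every $E \in M_2^c$ is saturated (by assumption on $M_2^c$), compact in $M_2$ (by the universal property of Ind-completion), and has $\frakF_2(E) = \frakF(E)$ compact in $\Sh_{\Lambda_2}(X,\bQ)$. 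The last point needs a small argument: since $E$ is a bounded complex of mixed Hodge modules whose underlying perverse components are algebraically constructible, $\frakF(E)$ is a cohomologically constructible sheaf, hence compact in $\Sh(X,\bQ)$; compactness then descends to the colimit-closed full subcategory $\Sh_{\Lambda_2}(X,\bQ)$.

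For (3), exactness of $\frakF_2$ reduces to the exactness of the forgetful functor $\frakF\colon \MHM^c(X) \to \Sh_{\mathrm{constr}}(X,\bQ)$ from Proposition~\ref{prop:mhmismix}, combined with the fact that both $\Ind$ and colimit-realization preserve distinguished triangles, which holds because colimits are exact in stable dg categories. Exactness of $\frakF_1$ then follows as a restriction of $\frakF_2$ to the triangulated subcategory $M_1 \subset M_2$. The main point requiring care is the compactness claim in (2)(c), namely that the underlying constructible sheaf of an object of $M_2^c$ remains compact after restricting the ambient sheaf category by the microsupport condition $\SS \subset \Lambda_2$; modulo this verification, the entire lemma is a formal consequence of the definitions and of Lemma~\ref{lem:indsaturation}.
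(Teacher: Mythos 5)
Your proof takes the same route as the paper's: the authors simply state that items 1 and 3 are obvious and that item 2 follows from Lemma~\ref{lem:indsaturation}, which is exactly what you spell out in detail. One step in your verification of item 2 is worth tightening: rather than arguing that $\frakF(E)$ is compact in the full unbounded category $\Sh(X,\bQ)$ and then descending to $\Sh_{\Lambda_2}(X,\bQ)$ — the first claim requires $R\Gamma$ over noncompact pieces of $X$ to commute with filtered colimits, which is more delicate than the phrasing ``cohomologically constructible, hence compact'' suggests — the standard and shorter route is to cite directly that for a closed conic Lagrangian $\Lambda$ the category $\Sh_\Lambda(X,\bQ)$ is compactly generated by the bounded constructible complexes with $\SSr \subset \Lambda$; this gives $\frakF(E)$ compact in $\Sh_{\Lambda_2}(X,\bQ)$ in one step, which is exactly what hypothesis (3) of Lemma~\ref{lem:indsaturation} asks for. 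The rest of the verification, including the observation that $\Sh_{\Lambda_1}(X,\bQ)$ is closed under colimits (so that $M_1$ inherits cocompleteness from $M_2$ and compactness descends along a colimit-closed full subcategory), matches the paper's intent.
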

\begin{proof}
    1 and 3 are obvious. 2 follows from Lemma~\ref{lem:indsaturation}.
\end{proof}

As noted in the above, we have the left adjoint $\rho^l$ of the obvious inclusion $\rho\colon \Sh_{\Lambda_1}(X)\rightarrow \Sh_{\Lambda_2}(X)$. By the adjoint, we have
\begin{equation}
    \id \in \Hom_{\Sh_{\Lambda_1}(X)}(\rho^l(\cE), \rho^l(\cE))\cong \Hom_{\Sh_{\Lambda_2}(X)}(\cE, \rho^l(\cE))\ni u_\cE
\end{equation}
for any $\cE\in \Sh_{\Lambda_2}(X)$.

\begin{lemma}\label{lem:microstalkrep}
Let $E_1\in M_1$ and $E_2\in M_2$. Suppose
\begin{enumerate}
    \item $\frakF_1(E_1)=\rho^l(\frakF_2(E_2))$.
    \item There exists a morphism $f\colon E_2\rightarrow E_1\in M_2$ such that $\frakF_2(f)=u:=u_{\frakF_2(E_2)}$.
    \item $E_1$ (resp. $E_2$) is saturated in $M_1$ (resp. $M_2$).
\end{enumerate}
Then we have
\begin{equation}
    \Hom_{M_1}(E_1,-)\cong \Hom_{M_2}(E_2,-)|_{M_1}
\end{equation}
i.e., $E_1$ is the $(M_1,M_2)$-Hodge wrapping of $E_2$.
\end{lemma}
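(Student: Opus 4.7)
The plan is to exhibit the required natural isomorphism as precomposition with $f$ itself. Since $M_1$ sits fully in $M_2$, the map $f\colon E_2\to E_1$ induces a natural transformation
\[ f^*\colon \Hom_{M_1}(E_1,-) \to \Hom_{M_2}(E_2,-)|_{M_1}, \]
and it suffices to check that $f^*_G$ is an isomorphism for every $G\in M_1$.

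Fix such a $G$. First I would sum over Tate twists and apply the saturatedness hypothesis to obtain isomorphisms
\[ \alpha\colon\bigoplus_n \Hom_{M_1}(E_1, G\lb n\rb)\xrightarrow{\sim} \Hom_{\Sh_{\Lambda_1}(X)}(\frakF_1(E_1), \frakF_1(G)), \]
\[ \beta\colon\bigoplus_n \Hom_{M_2}(E_2, G\lb n\rb)\xrightarrow{\sim} \Hom_{\Sh_{\Lambda_2}(X)}(\frakF_2(E_2), \frakF_2(G)). \]
The adjunction $\rho^l\dashv\rho$, combined with hypothesis 1 giving $\frakF_1(E_1)=\rho^l\frakF_2(E_2)$ and the identification $\frakF_2(G)=\rho\frakF_1(G)$ valid because $G\in M_1$, supplies an isomorphism
\[ \gamma\colon \Hom_{\Sh_{\Lambda_1}(X)}(\frakF_1(E_1),\frakF_1(G))\xrightarrow{\sim} \Hom_{\Sh_{\Lambda_2}(X)}(\frakF_2(E_2),\frakF_2(G)),\quad \psi\longmapsto \rho(\psi)\circ u, \]
where $u=u_{\frakF_2(E_2)}$ denotes the unit.

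Next I would verify the diagram $\beta\circ f^* = \gamma\circ\alpha$ on elements. For a morphism $\phi\colon E_1\to G\lb n\rb$ in $M_1$, one computes
\[ \beta(f^*\phi)=\frakF_2(\phi\circ f)=\frakF_2(\phi)\circ\frakF_2(f)=\rho\frakF_1(\phi)\circ u = \gamma(\alpha(\phi)), \]
using the compatibility $\frakF_2|_{M_1}=\rho\circ\frakF_1$ and, crucially, hypothesis 2 ($\frakF_2(f)=u$) in the middle equality. Since $\alpha,\beta,\gamma$ are isomorphisms, so is $\bigoplus_n f^*_{G\lb n\rb}$. Because precomposition with $f$ preserves the Tate-twist index $n$, this graded map is a direct sum of its homogeneous components, so each individual $f^*_{G\lb n\rb}$ is an isomorphism; specializing to $n=0$ gives the desired conclusion, and naturality in $G$ is automatic from the construction of $f^*$.

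The main obstacle is essentially absent: the proof reduces to the single diagram chase above. Its only real content is the hypothesis that the lift $f$ of the adjunction unit is genuinely a morphism in $M_2$ and that its $\frakF_2$-image is $u$ on the nose, which is exactly what makes the two saturation isomorphisms intertwine via the adjunction.
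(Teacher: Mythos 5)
Your argument is correct and is essentially the paper's proof, written out in slightly more detail: you exhibit the same commutative square whose vertical arrows are the two saturation isomorphisms, whose top row is $\bigoplus_n\Hom(f,G\lb n\rb)$, and whose bottom row is the adjunction isomorphism induced by $u$; commutativity comes from hypothesis~2 exactly as in the paper. The one step the paper leaves tacit — that the graded isomorphism $\bigoplus_n f^*_{G\lb n\rb}$ splits into isomorphisms in each Tate-twist degree because precomposition with $f$ preserves that degree — you spell out explicitly, which is a small but genuine improvement in rigor; otherwise the two arguments coincide.
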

\begin{proof}
For any $G\in M_1$, we have the following diagram:
\begin{equation}
    \xymatrix{
    \bigoplus_j\Hom_{M_{1}}(E_1, G(j)) \ar[r]^{\bigoplus_j \Hom(f,G(j))}\ar[d]^{\cong}&\bigoplus_j\Hom_{M_{2}}(E_2, G(j))\ar^{\cong}[d]\\
    \Hom_{\Sh_{\Lambda_1}(X)}(\rho^l(\frakF(E_2)), \frakF(G)) \ar[r]^{\cong}_{\Hom(u, \frakF(G))}&\Hom_{\Sh_{\Lambda_2}(X)}(\frakF(E_2), \frakF(G))\\
    }
\end{equation}
The right vertical arrow is an isomorphism by the saturatedness 3 of $E_2$. The left vertical arrow is an isomorphism by the saturatedness 3 of $E_1$ and 1. The diagram is commutative by 2. This yields the isomorphism
\begin{equation}
    \Hom_{M_{1}}(E_1, G)\xrightarrow{\circ f}\Hom_{M_{2}}(E_2, G).
\end{equation}
This completes the proof.
\end{proof}

\begin{corollary}
Suppose the setting of Lemma~\ref{lem:microstalkrep}.
If we have a left adjoint $\rho^l_M\colon M_2\rightarrow M_1$ of the obvious inclusion $\rho_M\colon M_1\subset M_2$, then $E_1\cong \rho^l_{M}(E_2)$.
\end{corollary}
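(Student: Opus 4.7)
The plan is to chain together the corepresenting formula from Lemma~\ref{lem:microstalkrep} with the adjunction $\rho^l_M \dashv \rho_M$, and then conclude by Yoneda.

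First I would invoke Lemma~\ref{lem:microstalkrep}, which gives, under the standing hypotheses, a natural isomorphism of functors $M_1 \to \Mod(\bQ)$
\begin{equation}
    \Hom_{M_1}(E_1, -) \cong \Hom_{M_2}(E_2, -)|_{M_1}.
\end{equation}
Next I would rewrite the restriction to $M_1$ as precomposition with the inclusion functor, that is, $\Hom_{M_2}(E_2, -)|_{M_1} = \Hom_{M_2}(E_2, \rho_M(-))$, which is just unwinding the definition of $\rho_M$ as the inclusion $M_1 \hookrightarrow M_2$.

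Then I would apply the adjunction $\rho^l_M \dashv \rho_M$ supplied by the hypothesis, yielding a natural isomorphism
\begin{equation}
    \Hom_{M_2}(E_2, \rho_M(-)) \cong \Hom_{M_1}(\rho^l_M(E_2), -).
\end{equation}
Concatenating the three isomorphisms produces a natural isomorphism $\Hom_{M_1}(E_1, -) \cong \Hom_{M_1}(\rho^l_M(E_2), -)$ of functors $M_1 \to \Mod(\bQ)$.

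Finally I would apply the Yoneda lemma in $M_1$ to conclude $E_1 \cong \rho^l_M(E_2)$. There is no real obstacle here; the only thing to be mildly careful about is that the isomorphism of Lemma~\ref{lem:microstalkrep} is natural in the second variable (which is clear from its proof, since it is realized as composition with the specific morphism $f\colon E_2 \to E_1$), so that the resulting composite is a genuine natural isomorphism of corepresentable functors and Yoneda applies.
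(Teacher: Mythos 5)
Your proof is correct and is the natural argument: the paper states this corollary without proof, and the intended reasoning is exactly the chain you give — corepresentability from Lemma~\ref{lem:microstalkrep}, the adjunction $\rho^l_M \dashv \rho_M$, and Yoneda. Your remark on naturality in the second variable (via postcomposition with $f$) is the right point to flag and is indeed supplied by the construction in the lemma's proof.
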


Now we give a more concrete theorem:
\begin{theorem}\label{theorem:Hodgewrapping}
Take $\cE'\in M_2^c$.
Suppose there exists an inductive system of objects $\cE_i$ in $M_2^c$ such that
\begin{enumerate}
    \item  $\cE_0=\cE'$,
    \item  $\frakF(\colim_i \cE_i)=\rho^l(\frakF(\cE'))$, and 
    \item $\Hom(\Cone(\frakF(\cE')\rightarrow \frakF(\cE_i)),-)|_{\Sh_{\Lambda_1}(X)}=0$.
\end{enumerate}
Then $\cE:=\colim_i \cE_i\in M_1$ is the $(M_1,M_2)$-Hodge wrapping of $\cE'$.
\end{theorem}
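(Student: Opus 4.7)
The proof is a direct application of Lemma~\ref{colimitcompact} with $\widehat{\cD}_2 = M_2$, $\widehat{\cD}_1 = M_1$, $\cD = \Sh_{\Lambda_2}(X)$, $E' = \cE'$, and $E = \cE$. Inspecting the proof of that lemma, one sees that it actually yields not merely the compactness of $E$ in $\widehat{\cD}_1$ but the sharper identity $\Hom_{\widehat{\cD}_1}(E,-) \cong \Hom_{\widehat{\cD}_2}(E',-)|_{\widehat{\cD}_1}$, which is precisely the defining property of the Hodge wrapping.

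Concretely, I would first verify $\cE \in M_1$: hypothesis 2 gives $\frakF(\cE) = \rho^l(\frakF(\cE')) \in \Sh_{\Lambda_1}(X)$, so $\cE \in \frakF^{-1}(\Sh_{\Lambda_1}(X)) \cap M_2 = M_1$. Next, the structure map $\cE' = \cE_0 \to \colim_i \cE_i = \cE$ in $M_2$ extends to an exact triangle $E'' \to \cE' \to \cE$ with $E'' \simeq \colim_i E''_i$, where $E''_i := \Cone(\cE_0 \to \cE_i)[-1]$. Each $E''_i$ lies in $M_2^c$ (closed under shifts and cones) and is therefore saturated in $M_2$, while $\cE' \in M_2^c$ is compact in $M_2 = \Ind(M_2^c)$.

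Finally, the exactness of $\frakF$ together with hypothesis 3 gives $\frakF(E''_i) \simeq \Cone(\frakF(\cE') \to \frakF(\cE_i))[-1]$ with $\Hom_{\Sh_{\Lambda_2}(X)}(\frakF(E''_i),-)|_{\Sh_{\Lambda_1}(X)} = 0$, which is exactly the vanishing hypothesis of Lemma~\ref{colimitcompact}. The short argument in its proof then gives $\Hom_{M_2}(E'',G) = 0$ for every $G \in M_1$; applying $\Hom_{M_2}(-,G)$ to the triangle $E'' \to \cE' \to \cE$ collapses it to $\Hom_{M_1}(\cE,G) \cong \Hom_{M_2}(\cE',G)$, where the left-hand side equals $\Hom_{M_2}(\cE,G)$ by fullness of $M_1 \hookrightarrow M_2$. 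This is the Hodge wrapping identity.

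The main obstacle is not in the proof itself but in producing the inductive system $\{\cE_i\}$ verifying hypotheses 2 and 3 in practice: hypothesis 2 says the colimit of the underlying sheaves realizes the stop removal, while hypothesis 3 encodes that the transition cones ``point out of $\Lambda_1$''. This is precisely the geometric content of the continuation-style construction of Theorem~\ref{thm:main0}; once such a system is provided, the theorem reduces to the bookkeeping above.
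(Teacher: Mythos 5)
Your proof is correct, and it takes a genuinely tighter route than the paper. The paper's proof proceeds in three steps: it first shows $\cE$ is compact in $M_1$ by applying Lemma~\ref{colimitcompact}, then deduces saturation of $\cE$ from Lemma~\ref{lem:limitsaturation}, and finally invokes Lemma~\ref{lem:microstalkrep} to obtain the Hodge wrapping identity. You observe, correctly, that the proof of Lemma~\ref{colimitcompact} already establishes the natural isomorphism $\Hom_{M_1}(\cE,-)\cong\Hom_{M_2}(\cE',-)|_{M_1}$ \emph{en route} to compactness, and that this intermediate statement is precisely the defining property of the Hodge wrapping; passing through the lemma's stated conclusion (mere compactness) and then round-tripping through saturation and Lemma~\ref{lem:microstalkrep} discards information only to recover it. Your shortcut also sidesteps condition 2 of Lemma~\ref{lem:microstalkrep} (that the structure map $\cE'\to\cE$ realizes the unit $u$ of the stop-removal adjunction after applying $\frakF$), which the paper's proof does not explicitly verify. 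What the paper's longer route buys is that compactness and saturation of $\cE$ in $M_1$ are recorded as byproducts, which are used implicitly later (e.g.\ in the Morita-type argument of Lemma~\ref{jan9-1}); your route proves the theorem as stated more economically but does not record these facts. Both arguments are valid, and you have correctly identified that the only real work lies in producing the inductive system satisfying hypotheses 2 and 3.
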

\begin{proof}
We first prove that $\cE$ is compact in $M_1$. 
We set $\cE_i'':=\Cone(\cE'\to \cE_i)[-1]$ and $\cE'':=\colim_i\cE_i''$. Then we have $\cE=\Cone(\cE''\to \cE')$. By the assumptions $\cE'$ is compact in $M_2$. By 3, $\Hom(\Cone(\frakF(\cE')\rightarrow \frakF(\cE_i)),-)|_{\frakF_1(M_1)}=0$.
Also, $\cE_i''\in M_2^c$ means it is saturated. Hence, all the conditions of Lemma~\ref{colimitcompact} are satisfied, we conclude that $\cE$ is compact.

Then it follows from Lemma~\ref{lem:limitsaturation}, $\cE$ is saturated. Then by Lemma~\ref{lem:microstalkrep}, we conclude that $\cE$ is the Hodge wrapping of $\cE''$. 
\end{proof}

The theorem above together with a strong version of Conjecture~\ref{conj:1} gives an effective way to find Hodge wrappings as follows:
\begin{corollary}\label{cor:Hodgewrapping}
Suppose $\Lambda_1=\varnothing$.
Fix a metric $g$ on $X$. 
For an object $\cE'\in M_2^c$, suppose that 
\begin{enumerate}
    \item there exists $t_0=0<t_1<t_2<...\to \infty$ such that $\Phi_{t_i}(\frakF(\cE'))\in \Sh_{\bC-c}(X)$ for any $i$,
    \item there exists an inductive system $\cE'=\cE_0\to \cE_1\to \cE_2\to \cdots \in M_2$ such that $\frakF(\cE_i)=\Phi_{t_i}(\cE')$ for any $i$.
\end{enumerate}
Then $\colim_i \cE_i$ is the $(M_1,M_2)$-Hodge wrapping of $\cE'$.
\end{corollary}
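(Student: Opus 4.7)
The strategy is to reduce the corollary directly to Theorem~\ref{theorem:Hodgewrapping} applied to the inductive system $\{\cE_i\}_{i\geq 0}$ with $\cE' := \cE = \cE_0$. Hypothesis (1) of that theorem ($\cE_0 = \cE'$) is built into our assumption (2), so only hypotheses (2) and (3) need checking.

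For hypothesis (2) of the theorem, namely $\frakF(\colim_i \cE_i) = \rho^l(\frakF(\cE))$: by construction $M_2 = \Ind(M_2^c)$, and the forgetful functor $\frakF$ is defined using the colimit realization functor out of the ind-category, hence commutes with filtered colimits. Combined with our assumption $\frakF(\cE_i) = \Phi_{t_i}(\frakF(\cE))$, this yields
\[
\frakF(\colim_i \cE_i) \;=\; \colim_i \frakF(\cE_i) \;=\; \colim_i \Phi_{t_i}(\frakF(\cE)).
\]
On the other hand, by Kuo's theorem recalled just after Conjecture~\ref{conj:1}, the stop removal functor is given by the continuous Reeb-flow colimit $\rho^l(\frakF(\cE)) = \colim_{t\to\infty} \Phi_t(\frakF(\cE))$, assembled from the continuation maps. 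Since $t_i \to \infty$, the subsequence $\{t_i\}$ is cofinal in $\bR_{\geq 0}$, and cofinality of colimits identifies the two objects.

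For hypothesis (3), namely the vanishing $\Hom(\Cone(\frakF(\cE) \to \frakF(\cE_i)), -)|_{\Sh_{\Lambda_1}(X)} = 0$: because we assume $\Lambda_1 = \varnothing$, the subcategory $\Sh_{\Lambda_1}(X) \subset \Sh(X, \bQ)$ consists of sheaves with empty singular support and is therefore the zero subcategory, so the functor in question is automatically zero. With all three hypotheses of Theorem~\ref{theorem:Hodgewrapping} verified, the theorem concludes that $\colim_i \cE_i$ is the $(M_1, M_2)$-Hodge wrapping of $\cE$.

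The main technical point, rather than a real obstacle, is the identification in step (2): it rests on three separate ingredients, namely (a) the definition of $\frakF$ on $M_2 = \Ind(M_2^c)$ via colimit realization, (b) Kuo's description of stop removal as a Reeb-flow colimit, and (c) cofinality of the chosen times $t_i \to \infty$ inside the continuous inductive system of continuation maps. All three are either built into the formal setup or established in the literature, so the corollary follows essentially formally from Theorem~\ref{theorem:Hodgewrapping} once one has the inductive lift of assumption (2), which is where the genuine geometric input lives (and which is the subject of the applications in later sections).
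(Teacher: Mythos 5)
Your proof takes the same overall route as the paper's: reduce to Theorem~\ref{theorem:Hodgewrapping}, verify its three hypotheses, with the essential content being the identification $\frakF(\colim_i \cE_i) = \rho^l(\frakF(\cE))$ via Kuo's colimit formula and cofinality of the times $t_i$. Up to that point you are more careful than the paper, whose proof of this corollary is actually truncated (the sentence ``Also, we have'' is left hanging before the conclusion).

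However, there is a genuine issue in your treatment of hypothesis~(3). You read $\Lambda_1 = \varnothing$ literally and conclude $\Sh_{\Lambda_1}(X) = 0$, so the vanishing is automatic. But if $\Sh_{\Lambda_1}(X) = 0$ then also $M_1 = M_2 \cap \MHM_{\Lambda_1}(X) = 0$, which would force the $(M_1, M_2)$-Hodge wrapping to be the zero object and render the corollary vacuous. The intended reading (and the one actually used in the application to $\bP^n$ in the following section, where $M_1 = M_2 \cap \MHM_{T^*_X X}(X)$) is that ``$\Lambda_1 = \varnothing$'' abbreviates ``empty Legendrian at infinity,'' i.e.\ the conic Lagrangian $\Lambda_1 = T^*_X X$ is the zero section and $\Sh_{\Lambda_1}(X)$ is the (nonzero) category of local systems. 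Under that reading, hypothesis~(3) of Theorem~\ref{theorem:Hodgewrapping} is no longer free: one must show that $\Hom\bigl(\Cone(\frakF(\cE)\to\Phi_{t_i}(\frakF(\cE))),G\bigr)=0$ for every $G\in\Sh_{T^*_X X}(X)$. This does hold, but needs the argument that $\Phi_{t_i}$ is an autoequivalence which restricts to the identity on $\Sh_{T^*_X X}(X)$ (the GKS kernel acts trivially on sheaves with microsupport in the zero section, since the isotopy lives on $T^*X\smallsetminus T^*_X X$), and that by naturality of the continuation map $\id\Rightarrow\Phi_{t_i}$ the induced map $\Hom(\Phi_{t_i}(\frakF(\cE)),G)\to\Hom(\frakF(\cE),G)$ is precisely the inverse of the isomorphism supplied by the autoequivalence, hence an isomorphism. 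This step is absent from your proof; you should either supply it or flag that the hypothesis of the corollary cannot mean what it literally says.
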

\begin{proof}
We check the conditions in Theorem~\ref{theorem:Hodgewrapping}. 1. 
The object $\cE'$ is assumed to be compact in $M_2$. 2. Since the functor $\frakF$ preserves colimits, we have
\begin{equation}
    \frakF(\colim_i \cE_i)=\colim_i \Phi_{t_i}(\frakF(\cE_i))=\rho^l(\frakF(\cE'))
\end{equation}
where the last equality is by \cite{Kuo}. 3. This is a general property of the wrapping. Hence the conditions in Theorem~\ref{theorem:Hodgewrapping} are satisfied, and the proof is complete.
\end{proof}

\subsection{The case of Hodge microsheaves}

Let $X$ be a holomorphic exact symplectic manifold and $L_1\subset L_2$ be Lagrangian cores of Fourier type. 
We fix a saturated system for $L_2$. Then we obtain the category $\mu \mathrm{M}_{L_2}^c(X)$. We set $\mu \mathrm{M}_2:=\Ind(\mu \mathrm{M}^c_{L_2}(X))$.

We similarly set 
\begin{equation}
    \mu\mathrm{M}_1:=\lc \cE\in \mu \mathrm{M}_2\relmid \supp(\cE)\subset L_1\rc.
\end{equation}
We similarly define as follows:
\begin{definition}
    An object $\cE_1\in  M_1$ is the \emph{wrapping} of $\cE_2\in M_2$ if there exists and isomorphism $\Hom_{M_1}(\cE_1,-)\cong \Hom_{M_2}(\cE_2,-)|_{M_1}.$
\end{definition}
We can similarly prove the following:
\begin{theorem}\label{theorem:microHodgewrapping}
Take $\cE'\in M_2^c$. 
Suppose there exists a sequence of objects $\cE_i$ in $M_2^c$ such that
\begin{enumerate}
    \item  $\cE_0=\cE'$,
    \item  $\frakF(\colim_i \cE_i)=\rho^l(\frakF(\cE'))$, and 
    \item $\Hom(\Cone(\frakF(\cE')\rightarrow \frakF(\cE_i)),-)|_{\mu\mathrm{sh}_{L_1}(X)}=0$.
\end{enumerate}
Then $\cE:=\colim_i \cE_i\in M_1$ is the $(M_1, M_2)$-Hodge wrapping of $\cE''$.
\end{theorem}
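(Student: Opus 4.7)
The plan is to follow the proof of Theorem~\ref{theorem:Hodgewrapping} essentially verbatim, replacing every occurrence of $\MHM$ by the glued category $\mu\mathrm{M}$ and $\Sh$ by $\mu\mathrm{sh}$. The abstract categorical lemmas used there, namely Lemmas~\ref{colimitcompact}, \ref{lem:limitsaturation} and \ref{lem:microstalkrep}, are insensitive to the replacement, so the work is to verify that their hypotheses remain valid in the microsheaf setting.

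First I would record the basic compatibilities for the glued categories. By Lemma~\ref{lem:saturatedmicro} the subcategory $\mu\mathrm{M}^c_{L_2}(X)$ is saturated, and its objects are compact in $\mu\mathrm{M}_2=\Ind(\mu\mathrm{M}^c_{L_2}(X))$ by construction of the Ind-completion; Lemma~\ref{lem:indsaturation} then promotes saturation into $\mu\mathrm{M}_2$, and exactly the same argument works inside $\mu\mathrm{M}_1$. The forgetful functor $\frakF\colon \mu\mathrm{M}_2\to \mu\mathrm{sh}_{L_2}(X)$ commutes with colimits: the description of \S\ref{section:Hodgemicrosheaves} builds both sides as limits of the same diagram shape, connected by Fourier–Sato transforms and specialization functors, each of which is an exact colimit-preserving equivalence between stable cocomplete categories, and the Ind-completion and limit construction respect colimits chart by chart.

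With these preparations in hand, the proof proceeds as in Theorem~\ref{theorem:Hodgewrapping}. Set $\cE_i'':=\Cone(\cE'\to \cE_i)[-1]$ and $\cE'':=\colim_i \cE_i''$, so $\cE=\Cone(\cE''\to \cE')$. Each $\cE_i''$ lies in $\mu\mathrm{M}^c_{L_2}(X)$, hence is saturated and compact in $\mu\mathrm{M}_2$. Assumption (3) says $\Hom(\frakF(\cE_i''[1]),-)|_{\mu\mathrm{sh}_{L_1}(X)}=0$, so the hypotheses of Lemma~\ref{colimitcompact} are met with $\widehat\cD_2=\mu\mathrm{M}_2$, $\widehat\cD_1=\mu\mathrm{M}_1$, and we conclude that $\cE$ is compact in $\mu\mathrm{M}_1$. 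Lemma~\ref{lem:limitsaturation} then upgrades termwise saturation of the $\cE_i$ together with compactness of $\cE$ to saturatedness of $\cE$ itself. Finally, assumption (2) gives $\frakF(\cE)=\rho^l(\frakF(\cE'))$, and the structural colimit map $\cE'\to \cE$ is a Hodge lift of the unit $u_{\frakF(\cE')}\colon \frakF(\cE')\to \rho^l(\frakF(\cE'))$, so Lemma~\ref{lem:microstalkrep} delivers the desired isomorphism $\Hom_{\mu\mathrm{M}_1}(\cE,-)\cong \Hom_{\mu\mathrm{M}_2}(\cE',-)|_{\mu\mathrm{M}_1}$.

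The only substantive new content beyond the sheaf case is the verification that compactness, saturation, and colimit-preservation of $\frakF$ all survive the Fourier gluing used to define $\mu\mathrm{M}_2$; this is the step I expect to be the main obstacle to write rigorously. It should however be handled uniformly by working chart by chart $(j,J)$, where the claims reduce to the analogous properties for $\sqrt{\MHM}$ over a vector bundle that were established in \S\ref{jan7-1} and \S\ref{jan7-2}, together with the elementary fact that homotopy limits of stable presentable categories along colimit-preserving equivalences remain stable presentable.
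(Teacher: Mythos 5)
Your proposal is correct and follows exactly the route the paper intends: the paper's own "proof" of Theorem~\ref{theorem:microHodgewrapping} is simply the phrase "We can similarly prove the following," i.e.\ repeat the argument of Theorem~\ref{theorem:Hodgewrapping} with $\MHM$ replaced by $\mu\mathrm{M}$ and $\Sh$ by $\mu\mathrm{sh}$, checking that Lemmas~\ref{colimitcompact}, \ref{lem:limitsaturation}, and \ref{lem:microstalkrep} still apply. You even correctly identify the one substantive point the paper leaves tacit — that compactness, saturation, and colimit-preservation of $\frakF$ survive the Fourier-glued construction of $\mu\mathrm{M}_{L_2}(X)$ — and note (as does the paper's typo) that the conclusion should read "Hodge wrapping of $\cE'$," not $\cE''$.
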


\section{Hodge microsheaves and Hodge structure on loops}\label{section:loopspace}
In this section, we discuss a relationship between Hain's Hodge structure~\cite{Hain} on loop spaces with our Hodge modules.
\subsection{Hain's loop Hodge structure}
We first recall the bar construction. 

Let $X$ be a complex manifold. We denote the Sullivan model of $X$ over $\bK$ by $A^\bullet_\bK X$. 
\begin{theorem}[Chen~\cite{Chen}]
    If $X$ is simply connected, the bar construction $B(A^\bullet_\bK X)$ is quasi-isomorphic to the de Rham algebra of the based loop space $\Omega X$.
\end{theorem}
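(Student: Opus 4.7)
The plan is to prove Chen's theorem via iterated integrals, following Chen's original strategy, and then apply an Eilenberg--Moore-type spectral sequence comparison to upgrade the map to a quasi-isomorphism under the simple-connectivity hypothesis.

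First, I would construct Chen's iterated integral map. Given forms $\omega_1,\ldots,\omega_k \in A^\bullet_\bK X$, consider the evaluation map
\begin{equation}
    \mathrm{ev}_k \colon \Delta^k \times PX \longrightarrow X^k, \qquad (t_1,\ldots,t_k,\gamma)\longmapsto (\gamma(t_1),\ldots,\gamma(t_k)),
\end{equation}
where $PX$ is the space of paths in $X$ starting at a fixed basepoint. Pulling back $\omega_1 \boxtimes \cdots \boxtimes \omega_k$ along $\mathrm{ev}_k$ and integrating along the simplex fibers gives an element $\int \omega_1 \cdots \omega_k \in A^{\bullet-k}_\bK(PX)$. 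Assembling these over all $k$, I obtain a linear map
\begin{equation}
    \rho \colon B(A^\bullet_\bK X) \longrightarrow A^\bullet_\bK(PX).
\end{equation}

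Second, I would verify that $\rho$ is a chain map. The boundary of $\Delta^k$ decomposes into interior faces (where $t_i = t_{i+1}$) and the two extremal faces (where $t_1=0$ or $t_k=1$); Stokes' theorem applied to the pulled-back form produces exactly the combinatorics of the bar differential: interior faces yield the wedge-product terms $\omega_i \wedge \omega_{i+1}$, and extremal faces yield the terms involving the restrictions to the endpoints. Composing with the inclusion $\Omega X \hookrightarrow PX$ (which forces the extremal evaluations to the basepoint, sending augmentation-ideal forms to zero), I obtain the asserted comparison map
\begin{equation}
    \rho_\Omega \colon B(A^\bullet_\bK X) \longrightarrow A^\bullet_\bK(\Omega X).
\end{equation}

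Third, to show $\rho_\Omega$ is a quasi-isomorphism under the assumption that $X$ is simply connected, I would exploit the path-loop fibration $\Omega X \to PX \to X$ with $PX$ contractible, together with the standard Eilenberg--Moore comparison. The bar complex $B(A^\bullet_\bK X)$ carries the bar filtration (by tensor-length), whose associated graded gives the $E_1$-page $\mathrm{Tor}^{H^\ast(X)}(\bK,\bK)$; this is isomorphic to the $E_2$-page of the Eilenberg--Moore spectral sequence converging to $H^\ast(\Omega X)$. Simple-connectedness of $X$ is precisely what is needed to ensure convergence of the Eilenberg--Moore spectral sequence to $H^\ast(\Omega X)$. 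Comparing the two filtered complexes via $\rho_\Omega$ and invoking the classical Eilenberg--Moore comparison theorem yields the quasi-isomorphism.

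The main obstacle is the convergence step: without $\pi_1(X)=0$, both the Eilenberg--Moore spectral sequence and Chen's iterated-integral construction (which implicitly uses the Sullivan model, whose good behavior requires a nilpotence hypothesis) can fail to converge to the homology of $\Omega X$. All earlier steps -- the construction of $\rho$ and the verification that it is a chain map -- are essentially formal manipulations with simplices and Stokes' theorem; the genuine input is the spectral-sequence convergence, where the simply-connected hypothesis is indispensable.
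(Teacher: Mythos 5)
The paper does not prove this theorem; it is cited directly from Chen's work, so there is no internal proof to compare against. That said, your sketch is the standard route to Chen's theorem (iterated integrals, Stokes on the simplex to verify the chain-map property, then Eilenberg--Moore comparison under $\pi_1 = 0$), and it is essentially sound. Two small points worth tightening: (i) the associated graded of the bar filtration on $B(A^\bullet X)$ computes the bar complex of $H^\ast(X)$, and it is its cohomology, the $E_2$-page, that gives $\mathrm{Tor}^{H^\ast(X)}(\bK,\bK)$ -- your identification of $\mathrm{Tor}$ with the $E_1$-page is off by one under the usual indexing; (ii) your $\rho$ is built from the smooth de Rham algebra, whereas the statement is phrased for the Sullivan model $A^\bullet_\bK X$, so one must insert the comparison quasi-isomorphism between the two and invoke that the bar construction preserves quasi-isomorphisms of connected coaugmented dg algebras (harmless here given simple-connectivity, but it should be said).
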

Based on this, Hain gives the following construction: 
\begin{theorem}[Hain~\cite{Hain}]
We assume that $X$ is a simply connected algebraic variety.
\begin{enumerate}
    \item There exists a mixed Hodge complex $M_\bK^\bullet X$ which is also a differential graded algebra (=multiplicative mixed Hodge complex) such that the cohomology recovers the mixed Hodge--de Rham algebra of $X$.
    \item There exists an extension of bar construction to any multiplicative mixed Hodge complex such that the result gives a mixed Hodge complex.
\end{enumerate}
\end{theorem}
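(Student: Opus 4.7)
The proof plan splits cleanly along the two claims of the theorem. For part (1), the construction proceeds in three cases of increasing generality. When $X$ is smooth and proper, the multiplicative mixed Hodge complex is the triple consisting of the rational singular cochains $C^\bullet(X;\bQ)$ with cup product, the holomorphic de Rham complex $(\Omega^\bullet_X, F^\bullet)$ with Hodge filtration $F^p=\Omega^{\geq p}_X$ and the trivial weight filtration, and standard quasi-isomorphisms between them, all of which are strictly multiplicative. When $X$ is smooth but open, I would choose a smooth compactification $\bar X$ with normal crossings divisor $D=\bar X\setminus X$ and replace $\Omega^\bullet_X$ by the logarithmic de Rham complex $\Omega^\bullet_{\bar X}(\log D)$ carrying Deligne's weight filtration by pole order and the Hodge filtration by holomorphic degree; the wedge product remains bistrict. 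When $X$ is singular, one uses a smooth proper hyperresolution $X_\bullet\to X$ and passes to the Thom--Whitney totalization of the cosimplicial diagram of logarithmic de Rham complexes, following Navarro Aznar: the Thom--Whitney construction is engineered precisely to yield a strict (rather than merely $E_\infty$) commutative differential graded algebra equipped with genuine bifiltrations compatible with the multiplication.

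For part (2), set $A:=M^\bullet_\bK X$ and form the reduced bar complex $B(A)=\bigoplus_{n\geq 0}(s\bar A)^{\otimes n}$, equipped with the standard bar differential built from the internal differential and the multiplication of $A$, and with the shuffle product that makes $B(A)$ itself a dga. The Hodge and weight filtrations on $A$ extend to $B(A)$ via the convolution rules
\begin{equation}
    W_k (\bar A^{\otimes n}) = \sum_{\sum k_i=k-n} W_{k_1}\bar A\otimes\cdots\otimes W_{k_n}\bar A, \qquad F^p (\bar A^{\otimes n})=\sum_{\sum p_i=p} F^{p_1}\bar A\otimes\cdots\otimes F^{p_n}\bar A,
\end{equation}
where the shift by $n$ in $W$ encodes the degree shift $s$ and the Tate twist accounting appropriate to the bar construction. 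Since multiplication on $A$ is assumed bifiltered, both the bar differential and the shuffle product preserve these filtrations, so $B(A)$ becomes a bifiltered multiplicative complex over $\bK$, naturally paired with its rational counterpart built from $C^\bullet(X;\bQ)$ via Chen's iterated integral comparison map.

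The main obstacle is to verify the mixed Hodge axioms on $B(A)$: that on cohomology each $\mathrm{Gr}^W_k H^\ast(B(A))$ underlies a pure Hodge structure of weight $k$ induced by $F^\bullet$. My approach is a two-stage spectral sequence argument: first, filter $B(A)$ by bar length, producing a trivially degenerate spectral sequence whose associated graded pieces are finite tensor powers $\bar A^{\otimes n}$; second, on each such tensor power, run the weight spectral sequence, where purity of $\mathrm{Gr}^W A$ and the fact that tensor products of pure Hodge structures are pure of the summed weight give the required weight decomposition. The delicate technical point is showing $E_1$-degeneration of the weight spectral sequence at the level of the full bar complex, which reduces to proving that the multiplication map $A\otimes A\to A$ is strict for both $W$ and $F$; this is where the Thom--Whitney model of Navarro Aznar is essential, because it guarantees bistrictness on the nose rather than only up to coherent homotopy. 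Once bistrictness is propagated through iterated tensor products by a Deligne--Zucker style descending induction on the length filtration, the identification of $\mathrm{Gr}^W_\bullet H^\ast(B(A))$ as a pure Hodge structure is formal, and the theorem follows by combining this with Chen's quasi-isomorphism $B(A)\simeq \Omega^\bullet(\Omega X)$.
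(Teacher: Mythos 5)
The paper does not prove this statement; it simply cites it as Hain's theorem, so there is no in-paper argument to compare against. Evaluating your proposal on its own terms, the overall shape (log de Rham complexes and hyperresolutions for part~(1), length filtration plus convolution weight filtration and a spectral sequence argument for part~(2)) matches Hain's strategy, but there is a genuine gap in part~(1) that undermines part~(2) as written.

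Your rational model in the smooth cases is singular cochains $C^\bullet(X;\bQ)$ with cup product, which is \emph{not} a commutative dga; it is only $E_\infty$. Both the shuffle product on $B(A)$ that you invoke and, more importantly, the definition of a multiplicative mixed Hodge complex require the rational side and the complex side to carry \emph{strictly} commutative dga structures linked by dga quasi-isomorphisms, so that the comparison data is compatible with the multiplication; on the $\bC$-side you correctly use wedge product on $\Omega^\bullet_{\bar X}(\log D)$, which is commutative, but this cannot be matched up, as a bifiltered dga map, with a noncommutative rational model. This is precisely why Hain works over Sullivan's polynomial de Rham forms (and, more generally, the Thom--Whitney totalization of Navarro Aznar) for the $\bQ$-structure even in the smooth case. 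You invoke the Thom--Whitney functor only for the singular case, but it is needed throughout to get commutativity on the nose for the rational part. As a smaller issue in part~(2), your reduction of $E_1$-degeneration to strictness of the multiplication is too optimistic: what actually drives the argument is that simple connectivity makes the bar-length filtration bounded in each total degree and that the associated graded pieces $\bar A^{\otimes n}$ are tensor powers of mixed Hodge complexes, so the weight spectral sequence of each graded piece degenerates by the usual axioms; one then runs a bifiltered-complex argument across the length filtration. Strictness of the multiplication on cohomology is automatic for any morphism of mixed Hodge structures, so it is not the bottleneck you identify.
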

As a result, we get a mixed Hodge structure on $H^\bullet(\Omega X)$. 

\subsection{Hodge structure from wrapping}
Now let us further argue Conjecture~\ref{conj:Hodge} when $L=T^*_xX$. In this case, $\mathrm{GPS}(L)=\lim_{t\rightarrow+\infty}\Phi_t(\bC_x)$. Combining the results of Kuo~\cite{Kuo} and Abouzaid~\cite{Abouzaid} (or Ganatra--Pardon--Shende~\cite{GPSmicro}), we have
\begin{equation}
    \End_{\Sh(X)}(\lim_{t\rightarrow +\infty}\Phi_t(\cE))\cong \End_{\cW(T^*X)}(T^*_xX)\cong C_{-*}(\Omega_xX)
\end{equation}
where the rightmost term is the dga of chains of the based loop space. 

For $\MHM_{T^*_XX\cup T^*_xX}(X)$, we fix a saturated model $\mathrm{M}(X)$.
Let $\widetilde \cE$ be the Hodge wrapping of $\bC_x$. Then $\End_{\Sh(X)}(\lim_{t\rightarrow +\infty}\Phi_t(\cE))$ also has a saturated Hodge version $\bigoplus_i\End_{\mathrm{M}(X)}(\widetilde\cE, \widetilde\cE(i))$ where $(i)$ is Tate twist. On the other hand, Hain~\cite{Hain} constructed a Hodge structure on $C^*(\Omega_xX)$ via bar construction when $X$ is simply connected. In this situation, we conjecture the following:
\begin{conjecture}\label{conj:LoopHodgeconj}
    \begin{equation}
        \End^{-n}_{\mathrm{M}(X)}(\cE, \cE(i))^*\cong C^n(\Omega_xX)_i
    \end{equation}
    where $ C^n(\Omega_xX)_i$ is $(i, n-i)$-piece of Hain's Hodge structure.
\end{conjecture}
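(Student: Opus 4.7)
The plan is to produce the Hodge wrapping $\widetilde\cE$ as an explicit colimit in the saturated model $\mathrm{M}(X)$, identify its Hodge-enriched endomorphism complex with a bar-type complex naturally attached to the mixed Hodge structure on $X$, and then compare with Hain's construction. First I would apply Corollary~\ref{cor:Hodgewrapping} to $\cE'=\bC_x$ (equipped with its tautological Hodge lift): choose a real-analytic Riemannian metric so that there is a discrete increasing sequence $0=t_0<t_1<\cdots\to\infty$ at which the Reeb/geodesic flow image $\Phi_{t_i}(\bC_x)$ is $\bC$-constructible, and lift these to mixed Hodge modules $\cE_i\in\mathrm{M}(X)$. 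This represents $\widetilde\cE=\colim_i\cE_i$. Combining Lemma~\ref{lem:indsaturation}, compactness of $\bC_x$, and the compactness argument in the proof of Theorem~\ref{theorem:Hodgewrapping}, one reduces the computation of $\bigoplus_i\End^{-n}_{\mathrm{M}(X)}(\widetilde\cE,\widetilde\cE(i))$ to a colimit
\begin{equation}
\colim_i \bigoplus_j \Ext^{-n}_{\MHM(X)}\!\bigl(\bC_x,\cE_i(j)\bigr),
\end{equation}
each summand of which is a mixed Hodge structure on a relative cohomology group of a sublevel set of the distance function from $x$.

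The second step is to recognise this inductive system as a bar-type model for $\Omega_x X$ in mixed Hodge structures. A geodesic based loop of length $\le t_i$ with $k$ marked breakpoints is swept out by $k$ geodesic arcs meeting at auxiliary points in $X$; filtering by $k$ should equip the limit with an increasing filtration whose associated graded is the $k$-fold tensor power of a mixed Hodge complex computing $H^*(X,x)$, with differential given by convolution. This is precisely the input to Hain's bar construction after replacing $X$ by a multiplicative mixed Hodge complex $M^\bullet_{\bK}X$ quasi-isomorphic to the Sullivan algebra, so Chen's iterated integral map produces a candidate quasi-isomorphism from the Hodge endomorphism complex to $B(M^\bullet_{\bK}X)$ that recovers the Abouzaid/Ganatra--Pardon--Shende isomorphism $C_{-*}(\Omega_xX)\cong \End_{\Sh(X)}(\widetilde\cE)^{\mathrm{op}}$ on underlying dg algebras.

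The hard part will be to upgrade this comparison to an isomorphism of mixed Hodge structures, since the two sides have quite different origins: Hain's filtrations come from de Rham forms and length of iterated integrals, whereas the microsheaf side routes the weight filtration through Saito's nearby/vanishing cycles on each patch of the Darboux cover and the Fourier--Laplace gluing of Section~\ref{jan7-1}. I would attack this in stages. First, verify the Hodge--Tate case, where both filtrations are determined by a weight grading; this already subsumes $\bP^n$ via Theorem~\ref{thm:main1} and should extend to products of flag varieties by Künneth. Second, establish naturality in algebraic maps $Y\to X$ of both constructions, so that the candidate comparison map is functorial. Third, bootstrap from generators (simply connected projective varieties of Hodge--Tate type) to general simply connected smooth projective $X$ via a Mayer--Vietoris/de Jong descent argument tracking weights through the Fourier--Sato transforms. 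The most delicate point throughout, and the place where I expect genuinely new input to be required, is matching the weight filtrations on the two bar-type complexes beyond the Hodge--Tate range.
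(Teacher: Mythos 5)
The statement you were asked to prove is a conjecture that the paper itself leaves open in general: it is proved only for $X=\bP^n$, and the paper remarks explicitly that the conjecture is expected to \emph{fail} for $X$ with non--Hodge--Tate cohomology, and that the bar-construction comparison you outline as your second step ``is currently not clear for the present authors.'' Your first step --- realising $\widetilde\cE$ as the colimit of Hodge lifts of $\Phi_{t_i}(\bC_x)$ via Corollary~\ref{cor:Hodgewrapping} --- agrees with the paper's method, and the $\bP^n$ case is indeed handled by a direct computation: Arai's explicit twisted-complex presentation of $\Phi_{t_i}(\bC_x)$ yields the (cohomological degree, weight) pairs $(0,0)$, $(1,2)$, $(2n,2n+2)$, $(2n+1,2n+4),\ldots$, which are then matched against a hands-on computation of the reduced bar complex of the formal algebra $\bC[x]/x^{n+1}$. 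But your steps two and three are precisely what the paper presents as the open content of the conjecture, not as theorems.

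Concretely, step two posits a filtered quasi-isomorphism between the endomorphism complex (filtered geometrically by numbers of geodesic breakpoints) and $B(M^\bullet_\bK X)$, but no comparison of filtrations is actually produced; the weight on your side is built from Saito's nearby/vanishing cycles and the monodromic Fourier--Laplace transform, while Hain's weight filtration comes from iterated-integral length and de Rham data, and bridging these is exactly the difficulty the paper says is unclear. Step three is worse: the saturated models $\mathrm{M}(X)$ of Example~\ref{ex:Hodge-Tate} are constructed so that hom-spaces only detect Tate twists, so $\bigoplus_i\End^{-n}_{\mathrm{M}(X)}(\widetilde\cE,\widetilde\cE(i))$ cannot see the non--Hodge--Tate extension classes that $B(M^\bullet_\bK X)$ does see. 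This is an obstruction at the level of an individual $X$, and a Mayer--Vietoris or de Jong descent argument cannot repair a mismatch that already occurs objectwise; as the paper notes, one would first have to enlarge $\mathrm{M}(X)$ or change the notion of saturation. So the proposal is a sensible blueprint that correctly locates the difficulty, but it does not constitute a proof, and step three as stated would not succeed.
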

\begin{remark}
    Probably, this conjecture does not hold for varieties with non--Hodge--Tate cohomology. To remedy this, we have to modify the category (or the notion of saturation) so that it is sensitive to non--Hodge--Tate cohomology.
\end{remark}

One possible way to attack this conjecture is to relate $ \End_{\Sh(X)}(\lim_{t\rightarrow +\infty}\Phi_t(\cE))$ to bar construction, which is currently not clear for the present authors.

\subsection{Examples}
In this section, we use the saturated mixed structure $M_2^c$ of \ref{ex:Hodge-Tate3} in Example~\ref{ex:Hodge-Tate} on $\Sh(\bP^n, \lc H,x\rc)$. We set $M_2:=\Ind(M_2^c)$ and set $M_1:=M_2\cap \MHM_{T^*_XX}(X)$.

\begin{theorem}
    There exists a $(M_1,M_2)$-Hodge wrapping of $\bC_x\in \MHM(\bP^n)$.
\end{theorem}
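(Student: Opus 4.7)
The plan is to apply Corollary~\ref{cor:Hodgewrapping} to $\cE = \bC_x \in M_2^c$, with $\Lambda_1 = T^*_{\bP^n}\bP^n$ the zero section and $\Lambda_2$ the conormal variety of the stratification $\{x,\, H,\, \bP^n \setminus (\{x\} \cup H)\}$. Concretely, we need (a) an unbounded sequence $0 = t_0 < t_1 < t_2 < \cdots$ of times such that $\Phi_{t_i}(\bC_x)$ is $\bC$-constructible with respect to this stratification, and (b) an inductive system $\bC_x = \cE_0 \to \cE_1 \to \cdots$ in $M_2$ whose underlying sheaves realize the Reeb-flow images.

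For (a) we invoke Arai's result \cite{Arai}: for the Fubini--Study metric on $\bP^n$ with $H$ chosen to be the cut locus of $x$, the geodesic-flow images $\Phi_t(\bC_x)$ become $\bC$-constructible at an explicit discrete sequence of times $t_i \to \infty$, and Arai in fact exhibits each $\Phi_{t_i}(\bC_x)$ as an iterated cone (with specified shifts) of the three sheaves $\bC_x$, $\bC_H[n-1]$, $\bC_{\bP^n}[n]$. For (b), we exploit that these three sheaves are precisely the generators underlying the saturated system $M_2^c$ fixed in Example~\ref{ex:Hodge-Tate}(\ref{ex:Hodge-Tate3}). Because $M_2^c$ is saturated, Lemma~\ref{lem:finite_generating_saturated} tells us that, up to taking a direct sum over Tate twists, every morphism at the sheaf level between iterated cones of the generators lifts to a morphism in $M_2^c$. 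We use this to lift each $\Phi_{t_i}(\bC_x)$ to an object $\cE_i \in M_2^c$ and then inductively lift the continuation maps $\Phi_{t_i}(\bC_x) \to \Phi_{t_{i+1}}(\bC_x)$ to morphisms $\cE_i \to \cE_{i+1}$, yielding the required inductive system. Once the system is in place, Corollary~\ref{cor:Hodgewrapping} directly delivers $\colim_i \cE_i$ as the desired $(M_1, M_2)$-Hodge wrapping, using Kuo's identification \cite{Kuo} of the sheaf-level colimit with $\rho^l(\bC_x)$.

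The main obstacle lies in carrying out step (b) coherently. The lifts of individual sheaves and morphisms are determined only up to Tate twist, and one must fix these choices consistently so that the family genuinely assembles into an inductive system in $M_2$, not merely a collection of objects with pairwise morphisms. Arai's explicit presentation of $\Phi_{t_i}(\bC_x)$ as an iterated extension with prescribed shifts is what makes this bookkeeping tractable: the weight filtration on the Hodge lift is forced by the specific cone structure, and saturatedness of $M_2^c$ (combined with the Hodge--Tate nature of the generators) then makes the lifts essentially unique at each step. A secondary verification is that the resulting $\cE_i$ all land in the saturated subcategory $M_2^c$ rather than merely in $\sqrt{\MHM^c(\bP^n)}$; this is automatic from the inductive construction because $M_2^c$ is closed under shifts, Tate twists, and cones by definition.
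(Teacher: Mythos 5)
Your proposal takes essentially the same route as the paper: both invoke Arai's computation of the Reeb-flow images $\Phi_{t_i}(\bC_x)$ at discrete times for the Fubini--Study metric, lift the resulting finite-step presentations to the saturated system $M_2^c$ of Example~\ref{ex:Hodge-Tate}(\ref{ex:Hodge-Tate3}), and conclude by Corollary~\ref{cor:Hodgewrapping}. The one point where your account diverges from the paper's wording is in the description of the twisted complexes: the paper states that each $\Phi_{t_i}(\bC_x)$ is built only from shifted copies of $\bK_{\bP^n}$ (with the hyperplane and fundamental classes as twisted differentials), whereas you describe the building blocks as all three generators $\bC_x$, $\bC_H[n-1]$, $\bC_{\bP^n}[n]$. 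Since the intermediate images $\Phi_{t_i}(\bC_x)$ for finite $t_i$ still have microsupport along the conormals to $\{x\}$ and $H$, while iterated cones of $\bK_{\bP^n}$ alone would stay in the zero section, your account is arguably the more careful reading of what Arai's presentation must actually produce at the intermediate stages; the paper's description is really of the colimit, not of each $\cE_i$. This is a presentational discrepancy rather than a mathematical gap, and your explicit attention to the coherence of the inductive system of lifts (rather than merely a collection of objects) is a useful point the paper leaves implicit.
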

\begin{proof}
For $\bP^n$, the case $n=1$ can be seen from the computation in the later section. For $n>1$, one can adapt our argument to the result of \cite{Arai}. The result of \cite{Arai} is a stronger version of Conjecture~\ref{conj:1} summarized as follows: For the Fubini--Study metric $g$ on $\bP^n$, there exists a sequence $0=t_0<t_1<t_2<\cdots \to \infty$ such that $\Phi_{t_i}(\bC_x)\in \Sh_{\bC-c}(X)$. Moreover each $\Phi_{t_i}(\cE_i)$ admits the twisted complex presentation formed by
\begin{equation}\label{eq: sequence1}
    \bK_{\bP^n}, \bK_{\bP^n}[1], \bK_{\bP^n}[2n], \bK_{\bP^n}[2n+1], \bK_{\bP^n}[4n], \bK_{\bP^n}[4n+1],...
\end{equation}
with a twisted differential. The corresponding iterated extension is given by (1) the hyperplane class when the degree difference is 1 and, (2) the fundamental class when the degree difference is $2n-1$. Hence one can lift them to the corresponding map between mixed Hodge modules. In other words, one obtains a lift indicated in 2 of Corollary~\ref{cor:Hodgewrapping}, which completes the proof.
\end{proof}

By investigating the above proof furhter, we obtain the following.
\begin{theorem}
    Conjecture~\ref{conj:LoopHodgeconj} holds true for $X=\bP^n$.
\end{theorem}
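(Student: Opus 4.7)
The plan is to refine the proof of the previous theorem and read off the Hodge--Tate decomposition of the saturated endomorphism of the Hodge wrapping $\widetilde{\cE}$ of $\bQ_x\in \MHM(\bP^n)$ from its explicit twisted-complex description, then match this decomposition against Hain's bar-construction Hodge structure on $H^\bullet(\Omega_x\bP^n)$. Throughout I write $n$ for $\dim \bP^n$ and $m$ for the cohomological degree appearing in the conjecture.

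First I will pin down the Tate twists on Arai's generators. The underlying twisted complex is built on $\bQ_{\bP^n}[a_k]$ with $a_k\in \{0,1,2n,2n+1,4n,4n+1,\dots\}$, joined by the hyperplane class and the fundamental class. In $\MHM$ these lift uniquely to morphisms $h\colon \bQ_{\bP^n}\to \bQ_{\bP^n}[2](-1)$ and $h^n\colon \bQ_{\bP^n}\to \bQ_{\bP^n}[2n](-n)$. Requiring each twisted differential to be a morphism in $\MHM$ forces a Tate twist $(c_k)$ on every generator, obtained by solving the resulting recursion; up to convention one finds $c_{2j}=-j(n+1)$ and $c_{2j+1}=-j(n+1)-1$, so the shifts on even--odd consecutive pairs record the weights of the hyperplane and fundamental classes.

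Next I will compute $\bigoplus_i \End^{-m}_{\mathrm{M}(\bP^n)}(\widetilde{\cE},\widetilde{\cE}(i))$ directly from this data. Each $(k,l)$-summand $\Hom^{-m}_{\MHM}(\bQ_{\bP^n}[a_k](c_k),\bQ_{\bP^n}[a_l](c_l+i))$ is realized by a class in $H^{-m+a_l-a_k}(\bP^n)$ and is nonzero precisely when $-m+a_l-a_k$ is an even integer in $[0,2n]$ and the Tate-twist matching condition $i=c_l-c_k+(-m+a_l-a_k)/2$ holds. Assembled with the twisted differential from Arai, these contributions form a $\bZ$-bigraded dg model whose underlying dga (forgetting $i$) is $C_{-*}(\Omega_x\bP^n)$ by Kuo and Abouzaid--Ganatra--Pardon--Shende, and whose cohomology in bidegree $(-m,i)$ gives the predicted pieces.

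Finally I will compare this bigrading with Hain's. Hain's MHS on $H^\bullet(\Omega_x\bP^n)$ is produced by the bar construction on a multiplicative mixed Hodge complex representing $H^\bullet(\bP^n)=\bQ[h]/(h^{n+1})$ with $h$ pure of weight $2$. In the bar complex the single bar $[h]$ gives a generator of degree $1$ and weight $2$, while the shortest cycle $[h|\cdots|h]$ of length $n+1$ gives a generator of degree $2n$ and weight $2(n+1)$; every class in $H^\bullet(\Omega_x\bP^n)$ is a product of these and the weight is additive. A direct comparison of the bigradings shows that the Tate twist produced by our model agrees with Hain's weight, yielding the desired identification $\End^{-m}_{\mathrm{M}(\bP^n)}(\widetilde{\cE},\widetilde{\cE}(i))^*\cong C^m(\Omega_x\bP^n)_i$. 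The main technical obstacle is promoting the underlying dga quasi-isomorphism $\End_{\Sh}(\widetilde{\cE})\simeq C_{-*}(\Omega_x\bP^n)$ to a mixed Hodge complex quasi-isomorphism intertwining our Tate-twist decomposition with Hain's weight filtration; I expect this to follow from a length filtration on $\widetilde{\cE}$ whose associated graded realises the reduced bar complex of $H^\bullet(\bP^n)$ equipped with its natural Hodge-theoretic weight filtration.
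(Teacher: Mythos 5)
Your proposal follows essentially the same route as the paper: fix the Tate twists on the generators of Arai's twisted complex from the fact that the hyperplane and fundamental classes are pure of weights $2$ and $2n$, compute Hain's Hodge structure on $H^\bullet(\Omega_x\bP^n)$ by the reduced bar construction on $H^\bullet(\bP^n)\cong\bQ[x]/(x^{n+1})$ with $x$ of weight $2$, and compare the resulting (degree, weight) bigradings; the only cosmetic difference is your sign convention on $\bQ(1)$, which produces the list $c_{2j}=-j(n+1),\,c_{2j+1}=-j(n+1)-1$ as the negatives of the paper's twists. The one point worth flagging is your closing concern about upgrading the dga quasi-isomorphism $\End_{\Sh}(\widetilde{\cE})\simeq C_{-*}(\Omega_x\bP^n)$ to a quasi-isomorphism of multiplicative mixed Hodge complexes: that is a strictly stronger statement than the conjecture as formulated, which only asks for an isomorphism of bigraded pieces $\End^{-m}_{\mathrm{M}(X)}(\cE,\cE(i))^*\cong C^m(\Omega_x X)_i$; since for $\bP^n$ each bigraded piece on either side is at most one-dimensional and Tate, matching degree and twist already determines everything, so the dimension count in the paper suffices and the extra step you envision is not needed for the statement at hand.
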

\begin{proof}
Since (1) the hyperplane class has pure weight $2$ and (2) the fundamental class has pure weight $2n$, the lift of (\ref{eq: sequence1}) is formed by 
\begin{equation}
    \bK_{\bP^n}, \bK_{\bP^n}[1](1), \bK_{\bP^n}[2n](n+1), \bK_{\bP^n}[2n+1](n+2), \bK_{\bP^n}[4n](2n+2), \bK_{\bP^n}[4n+1](2n+3),....
\end{equation}    
Hence (degree, weight) is 
\begin{equation}
    (0,0), (1,2), (2n, 2n+2), (2n+1, 2n+4), (4n, 4n+4), (4n+1, 4n+6),....
\end{equation}

On the other hand, Hain's bar construction can be computed as follows: The de Rham algebra of $\bP^n$ is formal, hence we can consider it as $\bC[x]/x^{n+1}$ with $\deg x=2$ and the weight of $x=2$. We denote the $i$-th graded part by $A^i$. We set $\overline{A}^i=A^{i+1}$ ($i\geq 1$) and $\overline{A}^0=0$. We define the reduced bar complex by $\bigoplus_{i\geq 0}(\bigoplus \overline{A}^j)^{\otimes i}$. The grading is induced by the tensor grading and the differential is the usual bar differential. One can directly see that the cohomology class is represented by
\begin{equation}
    1, x, x\otimes x^n, x\otimes x^n\otimes x, x\otimes x^n\otimes x\otimes x^n ,x\otimes x^n\otimes x\otimes x^n\otimes x,....  
\end{equation}
Note that $x$ has $(\text{degree},\text{weight})=(1,2)$ and $x^n$ has $(\text{degree},\text{weight})=(2n-1,2n)$ in $\overline{A}$. Hence (degree, weight) of the bar construction is read as 
\begin{equation}
    (0,0), (1,2), (2n, 2n+2), (2n+1, 2n+4), (4n, 4n+4), (4n+1, 4n+6),....
\end{equation}
This agrees with the above computation, hence completes the proof.
\end{proof}

\section{Koszul duality in mixed geometry and symplectic geometry}\label{section:Koszuldualityreview}
\subsection{A quick review of Koszul duality}
Let $\mathbf{k}$ be a semisimple ring and $A=\bigoplus_{i\geq 0}A_i$ a graded $\mathbf{k}$-algebra with $A_0=\mathbf{k}$.
We say that $A$ is \emph{Koszul} in the classical sense if there exists a graded projective resolution of $\mathbf{k}$ (as a graded $A$-module)
\[\dots \to P_2\to P_1\to P_0\to \mathbf{k}\to 0\]
such that $P_i=AP_{i,i}$ for the degree $i$ part $P_{i,i}$ of $P_i$.
This condition is equivalent to the vanishing:
\[\mathrm{Ext}_A^k(\mathbf{k},\mathbf{k}(-s))=0\quad (k\neq s),\]
where $(-s)$ stands for the shift of the grading.
For a Koszul algebra,
we define the Koszul dual algebra of $A$ as
\begin{equation}
    A^!:=\Ext_{A}^*(\mathbf{k},\mathbf{k}).
\end{equation}
Then, it is known that $A^!$ is again Koszul and we have an isomorphism $(A^{!})^!\simeq A$. 
If $A$ is Koszul, we obtain the important conclusion:
there exists an equivalence of categories between the bouded derived categories of finitely generated $A$-modules and that of $A^!$-modules.

If one tries to generalize the notion of graded algebra to the dg-setup, there are two ways of thinking of it:
\begin{enumerate}
    \item One can consider the notion of differential graded algebra itself is a generalization of the notion of graded algebra.
    \item As a graded algebra is an algebra with a grading, one can consider a dga with another grading.
\end{enumerate}
According to these two ways, there are two ways to generalize the Koszul duality to the dg-setup.

In He-Wu~\cite{HeWu}, the generalization along the first line is described as follows:
For a (connected) dga $B=\bigoplus_{i\geq 0}B_i$ with $B_0=\mathbf{k}$,
we say that $B$ is \emph{Koszul} if 
\[\mathrm{Ext}_{B}^i(\mathbf{k},\mathbf{k})=0\quad (i\neq 0),\]
where $\mathrm{Ext}^i_{B}$ is the derived functor of the Hom-functor $\Hom_{B}$ in the category of dg $B$-modules, not in the category of graded $B$-modules.
Then, the same conclusion can be obtained for $B$ as above for $A$.
Remark that 
for a Koszul algebra $A$ if we regard it as a dga with a trivial differential,
then one can check that $A$ is a Koszul dga.

The generalization along the second line is also described in \cite{HeWu}, which will be recalled in the next section. In \cite{Luoetal}, this way is further generalized to $A_\infty$-algebras,
and they obtained similar consequences in this case.

In this paper, we use the second one: Koszulity for ``a dga with another grading (Adams grading)", explained in the next subsection.
Note that a Koszul algebra in the classical sense is also Koszul in the sense of the next section.

\subsection{He--Wu's dg Koszul duality}
For our application, we will use the result of \cite{HeWu}. We briefly recall their result here. Let $A=\bigoplus_{i,j\in \bZ}A^{i,j}$ be a graded dga with degree $(1,0)$ differential $d$. We call $j$ \emph{Adams grading}.

\begin{definition}
    We say $A$ is \emph{Adams connected} if the followings are satisfied:
    \begin{enumerate}
        \item $A^{i,j}=0$ for $i<0$,
        \item $A^{i,j}=0$ for $j<0$,
        \item $A^{i,0}=0$ for $i>0$, 
        \item $A^{0,j}=0$ for $j>0$, 
        \item $A^{0,0}=\mathbf{k}$.
    \end{enumerate}
    Note that we have the projection $A\rightarrow A^{0,0}=\mathbf{k}$ i.e., $A$ is augmented.
\end{definition}
We note the following simple lemma:
\begin{lemma}\label{lem:adams-connected}
    Let $A=\bigoplus_{i,j\in \bZ}A^{i,j}$ be a graded dga such that its cohomology algebra is Adams connected. Then there exists an Adams-connected dga quasi-isomorphic to $A$.
\end{lemma}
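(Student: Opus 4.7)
The plan is to construct the desired Adams-connected dga in two stages: a straightforward truncation that kills negative Adams degrees, followed by a cell-by-cell cofibrant-type resolution handling the remaining four conditions.

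First I would set $A_1 := \bigoplus_{j \geq 0} A^{*,j}$. Since the differential has bidegree $(1,0)$ and multiplication adds Adams degrees, $A_1 \subset A$ is a sub-dga; the complementary complex $\bigoplus_{j<0} A^{*,j}$ has cohomology $\bigoplus_{j<0} H^{*,j}(A)$, which vanishes by the Adams-connectedness of $H^*(A)$. Hence $A_1 \hookrightarrow A$ is a quasi-isomorphism, securing condition (2).

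Second, I would build a cofibrant-style replacement $B \xrightarrow{\sim} A_1$ as an iterated cell dga $k = B_0 \subset B_1 \subset \cdots$ equipped with maps $\varphi_n : B_n \to A_1$. At each stage one attaches free generators of two types: \emph{(a)-cells} $x$ with $dx = 0$ lifting cohomology classes of $A_1$ not yet in the image of $\varphi_n$, and \emph{(b)-cells} $y$ with $dy$ prescribed to kill spurious classes in $\ker(H^*(B_n) \to H^*(A_1))$. The key inductive claim is that every cell can be placed in the region $R := \{(i,j) : i \geq 1,\, j \geq 1\}$. Adams-connectedness of $H^*(A_1)$ immediately forces all (a)-cells into $R$. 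For (b)-cells, if $B_n$ is free on generators in $R$ together with the unit $1 \in (0,0)$, then every nonempty monomial in $B_n$ has bidegree in $R$, so $B_n^{*,0} = B_n^{0,*} = k \cdot 1$. Consequently $H^{i,0}(B_n) = 0$ for $i > 0$ and $H^{0,j}(B_n) = 0$ for $j > 0$, while $H^{1,j}(B_n) \to H^{1,j}(A_1)$ is already an isomorphism because $B_n^{1,j}$ is spanned by the (a)-generators in $(1,j)$ (cocycles) with no boundaries coming from the vanishing $B_n^{0,j}$. Hence no killing cell in bidegree $(0,j)$ or $(i,0)$ is ever required, and (b)-cells likewise remain in $R$.

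Taking $B := \operatorname{colim}_n B_n$ gives a free dga on generators in $R$ together with the unit, and a quasi-isomorphism $B \xrightarrow{\sim} A_1 \hookleftarrow A$. As a vector space $B$ has a monomial basis: the unit lies in $(0,0)$, and nonempty monomials $x_1 \cdots x_\ell$ have bidegree $\bigl(\sum_m i_m,\, \sum_m j_m\bigr)$ with each $i_m, j_m \geq 1$, hence in $R$. This yields $B^{0,0} = k$, $B^{i,0} = 0$ for $i > 0$, $B^{0,j} = 0$ for $j > 0$, and $B^{i,j} = 0$ whenever $i < 0$ or $j < 0$, i.e.\ the five conditions defining Adams-connectedness.

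The main technical hurdle will be making the cell-attachment argument rigorous in the Adams-graded dg setting---in particular the convergence of the small-object construction and the precise induction on $n$ maintaining $H^{1,j}(B_n) \cong H^{1,j}(A_1)$ at every stage. Once a standard model structure on Adams-graded dgas over $k$ is fixed, this is routine; the sole nontrivial point---where Adams-connectedness of $H^*(A)$ actually does the work---is the propagation claim ensuring that neither type of cell ever slips outside $R$.
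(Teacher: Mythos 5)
Your plan is structurally the same as the paper's---both combine a free (cell) model with Adams-degree truncation---but with a different division of labour. The paper builds the cell model with respect to the \emph{cohomological} grading alone (yielding $B^{0,*}=k$ and $B^{i,*}=0$ for $i<0$, i.e.\ conditions (1), (4), (5)) and then does two truncations, first killing Adams degree $<0$ and then replacing the Adams-degree-$0$ slice by $k$; you truncate once (for condition (2)) and then do the cell attachment Adams-homogeneously so that the remaining four conditions come out automatically. Your order is cleaner: it avoids the paper's second truncation and its somewhat cryptic final ideal quotient (which is vacuous once condition (1) holds). One imprecision in your write-up: $B_n^{1,j}$ is \emph{not} spanned by (a)-generators alone; (b)-cells in bidegree $(1,j)$ with $j\geq 2$ do occur, e.g.\ to kill spurious products of degree-$1$ (a)-cells inside $H^{2,j}$. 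What is true, and what your argument actually needs, is that the \emph{cocycles} in $B_n^{1,j}$ are spanned by (a)-generators: the (b)-cells attached in $(1,j)$ have nonzero, pairwise linearly independent differentials in $B_{n-1}^{2,j}$ and therefore contribute no cocycles. Together with the vanishing of $B_n^{0,j}$ (so there are no boundaries) this gives $H^{1,j}(B_n)=\,$span of the degree-$1$ (a)-cells, mapping isomorphically to $H^{1,j}(A_1)$ once all degree-$1$ classes have been hit, and the claim that no (b)-cell ever lands in $(0,j)$ follows. With that correction your proof is sound, modulo the model-categorical bookkeeping you already flag.
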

\begin{proof}
We first remark that $A$ is quasi-isomorphic to a dga satisfying 1 and 4 of the conditions. This follows from the fact that any cohomologically connected dga is quasi-isomorphic to a connected dga. We provide a sketch of the proof for the reader's convenience. We focus on the first grading and omit the second grading for a while. We first take homogeneous cocycle representative of the basis of the cohomology algebra $H^\bullet(A)$; $e_1^1, e_1^2,...., e_2^1,...., e_3^1,...$ where the subscript express its degree. We consider a free dga $A_1$ generated by the formal symbol $\{\widetilde e_i^j\}$ corresponding to $e_{i}^j$ with the trivial differential. Then there exists an obvious map $F_1\colon A_1\rightarrow A$ which is a quasi-isomorphism in degree 0 and 1. The map $F_1$ factors through the cocycle $C^\bullet(A)$, hence induces a map $[F_1]\colon H^2(A_1^\bullet)=A_1^2\rightarrow H^2(A)$. The kernel of $[F_1]$ is generated by elements of the form $\sum a_{ij}\widetilde e_1^i\widetilde e_1^j$ and satisfies $\sum a_{ij}e_1^ie_1^j=dh$ for some $h\in A^1$. We now consider the free dga $A_2$ generated by $\widetilde e_i^j$ and $\widetilde h$'s with the differential $d\widetilde h=\sum a_{ij}\widetilde e_1^i\widetilde e_1^j$. Then we again obtain $F_2\colon A_2\rightarrow A$ which is a quasi-isomorphism in $\deg =0,1,2$. We then consider the obvious surjective map $F_2\colon H^3(A_1)\rightarrow H^3(A)$. We again consider the cocycle representative of the kernel of $F_2$ and adjoin some elements. By repeating these procedures, we obtain a graded dga satisfying 1 and 4. This graded dga is quasi-isomorphic to the original one by the construction.

Now we assume that 1 and 4 are satisfied by $A$.
    We first note that $A$ is quasi-isomorphic to $\bigoplus_{j\geq 0, i}A^{i,j}$. We also have a quasi-isomorphism 
    \begin{equation}
        \mathbf{k}\cdot 1\oplus \bigoplus_{j>0,i}A^{i,j}\hookrightarrow \bigoplus_{j\geq 0, i}A^{i,j}. 
    \end{equation}
    Then the left hand side satisfies the conditions for Adams-connectedness except for 1 and 4. We reset $A$ by the left hand side. We consider the bi-sided ideal $I$ generated by $d(\bigoplus_j A^{-1,j})$. Then this is Adams connected.
\end{proof}

\begin{definition}\label{def:AdamsKoszul}
    Let $A$ be an Adams connected dga. We say $A$ is \emph{Adams Koszul} if $\Ext^{i,j}_A(\mathbf{k},\mathbf{k})=0$ for $i\neq 0$.
\end{definition}

\begin{theorem}[{Adams version of \cite[Theorem 3.8]{HeWu}, see also \S 6 of loc.cit.}]
    Let $A$ be an Adams Koszul dga. Then we have
    \begin{equation}
        \End_B(\mathbf{k})\cong A
    \end{equation}
    for $B=\End_A(\mathbf{k})$.
\end{theorem}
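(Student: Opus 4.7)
The plan is to prove this via the classical two-sided bar construction argument, with the Adams grading providing the crucial convergence/finiteness that makes the double-dual collapse cleanly back to $A$.

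First, I would build an explicit semi-free resolution $P \twoheadrightarrow k$ of $k$ as a right $A$-module using the bar construction: set $P = \mathrm{Bar}(A) \otimes_k A$ with the standard differential. Since $A$ is Adams connected, Adams grading is concentrated in non-negative degrees and each bi-graded piece is finite, so the bar construction is well-behaved piece by piece. Computing $\End_A(k)$ through $P$ gives $B \simeq \mathrm{Hom}_A(P,k) \cong \mathrm{Bar}(A)^\vee$ (the linear dual) as an augmented dga. Adams-connectedness ensures that this dual also sits in non-negative Adams and cohomological degrees, so $B$ is itself Adams connected (after the natural re-indexing that swaps the roles of cohomological and bar-length grading).

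Next, the Adams Koszul hypothesis $\Ext^{i,j}_A(k,k) = 0$ for $i \neq 0$ implies that the cohomology of $B = \mathrm{Bar}(A)^\vee$ is concentrated on a single diagonal. By the standard collapse lemma (an analog of Lemma~\ref{lem:adams-connected} applied to $B$), this lets me replace $B$ by a minimal model in which cohomological and Adams gradings are locked together. In particular $B$ is itself Adams Koszul in the symmetric sense, so I can iterate: form a semi-free resolution of $k$ over $B$ via $\mathrm{Bar}(B)\otimes_k B$ and identify $\End_B(k) \simeq \mathrm{Bar}(B)^\vee$.

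The third step is to produce the comparison map $A \to \End_B(k)$. The canonical evaluation $A \otimes k \to k$ makes $k$ an $A$-$B$-bimodule (with $B = \End_A(k)$), so there is a tautological dga map $A \to \End_B(k)$. I would show this map is a quasi-isomorphism by filtering by Adams degree: in each fixed Adams degree $j$, both sides are finite-dimensional (by Adams-connectedness of $A$ and the Koszul shape of $B$), and the double bar spectral sequence identifies $H^{*,j}(\End_B(k))$ with $H^{*,j}(A)$ because the Koszul property forces degeneration at $E_2$. This is where I expect the \emph{main obstacle} to live: ruling out completion issues in the iterated bar construction. For a general augmented dga the double bar construction recovers only a completion of $A$, and it is precisely Adams-connectedness together with the finiteness of each Adams slice that guarantees the formal series reduce to finite sums, so the map $A \to \End_B(k)$ lands in and is an isomorphism onto the honest algebra rather than its completion.

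Finally, a small bookkeeping step is needed to check dga-compatibility: the product on $\End_B(k)$ corresponds under the bar-cobar identification to the Yoneda composition, and one verifies that the tautological map $A \to \End_B(k)$ sends $a \in A$ to left-multiplication by $a$ on $k$ viewed through the resolution, which is a dga homomorphism by construction. Combined with the quasi-isomorphism established above, this yields $\End_B(k) \cong A$ as Adams-graded dgas.
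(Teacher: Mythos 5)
Note first that the paper does not prove this statement itself---it is imported from He--Wu, so there is no in-paper argument to compare against. Your outline, namely resolving $k$ by the bar construction, identifying $B\simeq\mathrm{Bar}(A)^\vee$, using $\Ext^{i,*}_A(k,k)=0$ for $i\neq 0$ to conclude that $B$ is formal, iterating once more, and checking the tautological map $A\to\End_B(k)$ is a quasi-isomorphism one Adams degree at a time, is the standard ``double-centralizer via bar--cobar'' argument and is, to my knowledge, essentially how He--Wu prove their Theorem 3.8. So the route is the right one.

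Two points, however, need more care than you give them. First, Adams-connectedness alone does \emph{not} dispose of the completion issue. What Adams-connectedness buys you is that each factor of $\bar A$ carries Adams degree $\geq 1$, so in each fixed Adams slice only finitely many bar words contribute and the direct product in $\mathrm{Bar}(A)^\vee$ collapses to a direct sum; but for the \emph{second} dualization $\mathrm{Bar}\bigl(\mathrm{Bar}(A)^\vee\bigr)^\vee$ to return $A$ rather than a bidual of $A$, you also need the individual bidegree pieces (or at least each Adams slice) of $A$ to be finite-dimensional. That local finiteness is not part of the definition of ``Adams connected'' as stated in the paper, and without it the theorem is simply false (already for a polynomial ring on an infinite-dimensional space of generators in Adams degree $1$), so it must be added as an explicit hypothesis. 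Your claim that ``Adams-connectedness together with the finiteness of each Adams slice'' settles this is fine only if you actually \emph{have} the finiteness of each Adams slice, which you never secured. Second, ``the Koszul property forces degeneration at $E_2$'' is where the real content lies, and as written it is an assertion, not an argument: you have not identified the spectral sequence or its $E_2$ page. The cleaner route, available to you precisely because you have already established formality of $B$, is to replace $B$ by the genuine graded Koszul algebra $H(B)=\Ext^{0,*}_A(k,k)$; then $\End_B(k)\simeq\Ext^*_{H(B)}(k,k)$, and the degeneration you want is just the linearity of the minimal resolution of $k$ over a Koszul graded algebra, after which you are left with the purely quadratic double-dual $H(B)^!\cong A$ checked Adams degree by Adams degree, again using local finiteness.
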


\subsection{Beilinson--Ginzburg--Soergel's mixed geometry philosophy}
In this section, we explain Beilinson--Ginzburg--Soergel's philosophy~\cite{BGS}. In one sentence, it says that
\begin{center}
    \emph{Adams grading comes from mixed geometry.}
\end{center}
We explain a historic example related to geometric representation theory. Let $\frakg$ be a complex semisimple Lie algebra. Let $\frakb$ be a Borel subalgebra, and $\frakh$ be a Cartan subalgebra. Let $U(\frakg)$ be the universal enveloping algebra. Let $\cO$ be the full subcategory of $U(\frakg)$-modules which are (1) semisimple over $\frakh$,
and (2) locally finite over $\frakb$.

Let us fix a dominant integral weight $\lambda\in \frakh^*$. It gives a central character $Z(U(\frakg))\rightarrow \bC$. Then we can define $\cO_\lambda$ as the full subcategory of $\cO$ spanned by the objects with the above central action. Then the only simples of $\cO_\lambda$ are of the form $L(x\cdot \lambda)$ for $x\in W$ where $W$ is the Weyl group of $\frakg$. We denote the projective cover of $L(x\cdot \lambda)$ by $P(x\cdot \lambda)$.

Let $\frakq$ be a parabolic subalgebra containing $\frakb$. We can consider $\cO^\frakq$ consisting of $\frakq$-locally finite modules. Let $S_\frakq\subset W$ be the simple reflections corresponding to $\frakq$. We consider $L_x^\frakq$ be the module $L(x^{-1}w_0\cdot 0)$ for $x\in S_\frakq$. We also denote the projective cover of it by $P_x^\frakq$. Then we have
\begin{theorem}[{Beilinson--Ginzburg--Soergel Koszul duality theorem~\cite{BGS}}]\label{thorem:BGS}
\begin{equation}
\begin{split}
    \End_{\cO_\lambda}(\bigoplus P(x\cdot \lambda))&\cong \Ext^\bullet_{\cO^\frakq}(\bigoplus L_x^\frakq,\bigoplus L_x^\frakq)\\
    \End_{\cO^\frakq}(\bigoplus P_x^\frakq)&\cong \Ext_{\cO_\lambda}^\bullet(\bigoplus L(x\cdot \lambda),\bigoplus L(x\cdot \lambda))
\end{split}
\end{equation}
Moreover the right hand sides are Koszul dual to each other i.e., For $k:=\Ext^0_{\cO^\frakq}(\bigoplus L_x^\frakq)$, we have
\begin{equation}
    \begin{split}
        \Ext^\bullet_{\Ext^\bullet_{\cO^\frakq}(\bigoplus L_x^\frakq,\bigoplus L_x^\frakq)}(\mathbf{k},\mathbf{k})&\cong\Ext_{\cO_\lambda}^\bullet(\bigoplus L(x\cdot \lambda),\bigoplus L(x\cdot \lambda)),  \\
        \Ext^\bullet_{\Ext_{\cO_\lambda}^\bullet(\bigoplus L(x\cdot \lambda),\bigoplus L(x\cdot \lambda))}(\mathbf{k},\mathbf{k})&\cong\Ext^\bullet_{\cO^\frakq}(\bigoplus L_x^\frakq,\bigoplus L_x^\frakq). \\
    \end{split}
\end{equation}
\end{theorem}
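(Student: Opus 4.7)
The plan is to realize both sides of the statement geometrically and then extract Koszulity from the weight structure on mixed perverse sheaves, following the BGS philosophy already alluded to in the section. First I would apply Beilinson--Bernstein localization together with the Riemann--Hilbert correspondence to pass from $\cO_\lambda$ and $\cO^\frakq$ to perverse sheaves on flag varieties. The regular block $\cO_\lambda$ becomes the category of $B$-constructible perverse sheaves on the full flag variety $G/B$, while the parabolic category $\cO^\frakq$, via Langlands duality, becomes the category of $B^\vee$-constructible perverse sheaves on the partial flag variety $G^\vee/Q^\vee$ of the dual group. Under this dictionary the indecomposable projectives $P(x\cdot\lambda)$ correspond to direct summands of Bott--Samelson pushforwards, while the simples $L_x^\frakq$ correspond to IC sheaves on Schubert varieties in $G^\vee/Q^\vee$.

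Second, I would lift both geometric categories to mixed Hodge modules (alternatively, to mixed $\ell$-adic sheaves by base changing to $\overline{\bF}_q$). Flag varieties are defined over $\bZ$, the Schubert stratification is an affine cell decomposition, and so the IC sheaves carry canonical pure mixed Hodge module structures. The weight filtration then supplies the Adams grading needed for Definition~\ref{def:AdamsKoszul}, while the cohomological degree gives the internal grading on $\Ext$.

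Third, I would establish the Koszul (Adams) vanishing $\Ext^{i,j}=0$ for $i\neq j$ from pointwise purity of the Kazhdan--Lusztig IC sheaves: the stalks and costalks of $IC_w$ are pure of predictable weights, so Ext groups between simples are forced to concentrate on the diagonal. Combined with Soergel's endomorphismensatz, which identifies $\End_{\cO_\lambda}(\bigoplus_x P(x\cdot\lambda))$ with a graded algebra of Soergel bimodules over the coinvariants $H^\bullet(G/B)$, this gives the quadratic presentations of both endomorphism algebras, together with the identifications of the two endomorphism algebras with the two Ext algebras asserted in the first two displayed equations.

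Finally, to match the two Ext algebras as Koszul duals, I would invoke the geometric duality (a Radon transform, or equivalently the Koszul duality between constructible sheaves on $G/B$ and the Koszul-dual dg algebra computing cohomology of the dual flag variety) that intertwines the $\frakq$-parabolic stratification on one side with the $\lambda$-singular stratification on the other. The main obstacle is Koszulity itself: once one has Adams-connectedness (Lemma~\ref{lem:adams-connected}) and the pure-weight Ext vanishing, the self-duality $A^{!!}\cong A$ follows from the formal machinery of He--Wu, so the real content is the purity of Kazhdan--Lusztig IC sheaves together with the Langlands-dual identification of the two Schubert stratifications.
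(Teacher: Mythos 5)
This theorem is \emph{quoted} in the paper from \cite{BGS} and no proof is offered there, so there is no internal argument against which to compare your sketch. Measured against the actual argument in \cite{BGS}, your outline captures the broad contours (Beilinson--Bernstein localization, lifting to a mixed/weight-graded setting, pointwise purity of Kazhdan--Lusztig IC sheaves forcing the diagonal-Ext vanishing, Soergel's Endomorphismensatz to present the endomorphism algebras), but two steps as written are not how BGS proceed and would need repair.

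First, you place $\cO^\frakq$ on the Langlands-dual partial flag variety $G^\vee/Q^\vee$. The BGS duality is a duality internal to a single group $G$: $\cO^\frakq_0$ localizes to Schubert-constructible perverse sheaves on $G/Q$ for the \emph{same} $G$, and the singular block $\cO_\lambda$ is realized via twisted $\cD$-modules on $G/B$. The matching between the parabolic $\frakq$ and the singular parameter $\lambda$ is through equality of the finite parabolic Weyl group $W_\frakq$ with the integral stabilizer $W_\lambda$, not through Langlands duality; appeals to $G^\vee$ enter only in later refinements (Bezrukavnikov--Yun, Achar--Riche) and are not needed here. Second, the final identification of the two $\Ext$-algebras as Koszul duals of one another is not obtained in BGS by a Radon-type geometric transform; rather they construct an explicit graded Koszul-duality functor on derived categories of graded modules, once the quadratic presentations and the parity vanishing are in hand, and verify it matches simples to projectives via BGG reciprocity and Soergel's combinatorial control of the endomorphism ring. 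Your appeal to Lemma~\ref{lem:adams-connected} and He--Wu is an anachronism: those are tools the present paper uses for its own arguments, not part of the BGS proof, though they play the same formal role (Adams connectedness plus diagonal vanishing implies double-Koszul-duality).

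In short: the purity input and the Soergel-theoretic identification of $\End(\bigoplus P)$ are correct and essential; you should drop the Langlands dual group and replace the Radon-transform step by the BGS graded Koszul-duality functor together with the $W_\frakq = W_\lambda$ matching.
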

This kind of Koszul duality gives a numerous applications, for example, in multiplicity calculations. So, it's worth having the Koszul duality in representation theory.

Since the algebra appeared in the theorem is Koszul, they carry an additional Adams grading. Where does it come from? Beilinson--Ginzbrug--Soergel's philosophy says that \emph{it comes from mixed geometry}.

Namely, consider a setup where we have some representation category $\cO$ with a geometric realization. Namely, there exists a variety $X$ and a stratification $\cW$ such that $\cO\cong \mathrm{Perv}(X, \cW)$, where the latter is the category of perverse sheaves stratified by $\cW$. Suppose that $\cO$ has a projective generator $P$, hence has an equivalence. Suppose also that $\End(P)$ has a Koszul grading. In the setup of Theorem~\ref{thorem:BGS}, for example, one can think as $\cO=\cO_\lambda$, $P=\bigoplus P(x\cdot \lambda)$, the grading is given by $\Ext^\bullet_{\cO^\frakq}(\bigoplus L_x^\frakq)$, and the geometric realization is given by the Beilinson--Bernstein localization. Then what is the graded version on the geometric (perverse sheaf) side?

Now we have a graded version of $\mathrm{Perv}(X, \cW)$, namely, the category of mixed Hodge modules $\MHM^c(X, \cW)$. Suppose an object $\cP$ corresponding to $P$ can be upgraded into an object $\widetilde\cP$ of $\MHM^c(X, \cW)$. Suppose moreover that we are in a nice situation such that 
\begin{equation}
    \End_{\mathrm{Perv}(X, \cW)}(\widetilde\cP,\widetilde\cP)\cong \bigoplus_i\Hom_{\MHM^c(X, \cW)}(\widetilde\cP, \widetilde \cP(i))
\end{equation}
holds.
This gives an additional grading structure on $\End(\cP)$. BGS philosophy says that this additional grading is a good candidate of the Adams grading in general, and is actually so in the setup of Theorem~\ref{thorem:BGS}.

\begin{example}
    [sl(2)] Consider the case when $\frakg= sl(2)$.
In this case, the corresponding variety is $\bP^1$ and $\cW$ is the Schubert stratification is given by $\bP^1=\bC\cup \{\infty\}$. The category $\cO_0$ has two simples $L(0)$ and $L(-2)$. The corresponding perverse sheaves are $\cL(0)=\bC_{\bP^1}[1]$ and $\cL(-2)=\bC_\infty$ where the latter is the skyscraper sheaf at $\infty$. The projective covers are $\cP(0)=\bC_{\bP^1\bs \infty}$ and $\cP(-2)$. The latter has no convenient name.
\end{example}

\subsection{Core--Cocore duality}
In this section, we explain Etg\"u--Ekholm--Lekili's philosophy. In one sentence, it says that
\begin{center}
    \emph{A pair of core and cocore often forms a Koszul dual pair.}
\end{center}

Let $X$ be a Weinstein manifold with the contact boundary $Y$. 
Let $\Lambda$ be a Legendrian subset of $Y$. In the setup, Ekholm--Lekili~\cite{EkholmLekili} define two algebras from the Legendrian. Here we follow the Lagrangian perspective.

Let $L$ be the relative skeleton. We assume that $L$ is decomposed into smooth Lagrangians $\bigcup_{i\in I} L_i$. For a point $x\in L_i$, one can consider cocore/linking disk at $x$ and denote it by $C_i$, which is a Lagrangian intersecting $L$ only at $x$ (see \cite{GPSmicro} for the precise definition). In general, the duality we are interested in is a relation between $\End_{Fuk}(\oplus L_i)$ and $\End_{\cW}(\bigoplus C_i)$ where the first one is taken in the infinitesimally wrapped Fukaya category and the latter is taken in the (partially) wrapped Fukaya category.

The following is a false conjecture:
\begin{conjecture}[]
$\End_{Fuk}(\oplus L_i)$ and $\End_{\cW}(\bigoplus C_i)$ are Koszul dual. Namely, there is a duality between core (i.e., skeleton) and cocore (or linking disk).
\end{conjecture}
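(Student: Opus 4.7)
The plan is to refute this conjecture by exhibiting an explicit Weinstein manifold for which both algebras can be computed from standard results cited in the paper and seen not to be Koszul dual. The natural candidate is the simplest cotangent bundle with non-simply-connected base, namely $X = T^*S^1$ with its standard Liouville form: its core is $L = S^1$ (a single smooth Lagrangian, so the index set $I$ is a singleton and no stops are needed), and the cocore at any point $x_0 \in S^1$ is the cotangent fiber $C = T^*_{x_0}S^1$.

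First I would compute the two endomorphism algebras from results already cited in the paper. The core algebra is $\End_{Fuk}(L) \simeq C^*(S^1;\bZ) = \bZ[\epsilon]/(\epsilon^2)$ with $|\epsilon|=1$, i.e., the exterior algebra on one degree-one generator. The cocore algebra is, by Abouzaid's theorem recalled in \S\ref{section:loopspace}, $\End_{\cW}(C) \simeq C_{-*}(\Omega_{x_0}S^1)$; since $\Omega S^1$ is homotopy equivalent as a topological monoid to $\pi_1(S^1) = \bZ$, this is the group ring $\bZ[t,t^{-1}]$ concentrated in cohomological degree $0$.

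Next I would compare with the prediction of Koszul duality. Using the standard minimal free resolution $\cdots \to A[-2] \xrightarrow{\cdot\epsilon} A[-1] \xrightarrow{\cdot\epsilon} A \to \bZ$ of $\bZ$ over $A = \bZ[\epsilon]/(\epsilon^2)$, one computes $\Ext^*_A(\bZ,\bZ) = \bZ[\tau]$ with $\tau$ in cohomological degree $1$. This algebra is Adams-connected and supported in non-negative cohomological degree, whereas $\bZ[t,t^{-1}]$ lives entirely in cohomological degree $0$ and contains non-trivial elements of arbitrarily negative Adams (tensor-length) degree. The two algebras cannot be isomorphic as (bi-)graded algebras, so the conjecture fails already for $X = T^*S^1$.

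The main point, and the intrinsic obstruction to any uniform statement of this form, is that non-triviality of $\pi_1(L)$ forces $\Omega L$ to be disconnected and thereby produces non-trivial elements in both signs of the Adams grading of the cocore algebra; by contrast, any dg Koszul dual of an Adams-connected dga is itself Adams-connected, in the sense of Definition~\ref{def:AdamsKoszul} and \cite{HeWu}. The hard part is not the computation itself but identifying the simplest clean counterexample and explaining why the failure is structural rather than numerical: this is consistent with the ``often'' qualifier in Ekholm--Etg\"u--Lekili's philosophy and motivates the restrictive hypotheses (simply connected Lagrangian components of Fourier type, as for $T^*\bP^1$) imposed in the plumbing case treated in \S\ref{section:Koszuldualityforplumbing}.
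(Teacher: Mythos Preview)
Your counterexample is correct and well-chosen. The computations are standard: $\End_{Fuk}(S^1)\simeq C^*(S^1)\simeq\bZ[\epsilon]/(\epsilon^2)$ with $|\epsilon|=1$, and by Abouzaid $\End_{\cW}(T^*_{x_0}S^1)\simeq C_{-*}(\Omega S^1)\simeq\bZ[t,t^{-1}]$ in degree $0$; the Koszul dual of the former is the polynomial ring $\bZ[\tau]$, not the Laurent ring, so the duality fails. Your structural explanation is also the right one: non-simply-connected $L$ forces $\Omega L$ to be disconnected, producing units of nonzero Adams degree in the cocore algebra, which is incompatible with the connectedness required for dg Koszul duality in the sense of \cite{HeWu}.

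The comparison with the paper is simple: the paper does \emph{not} prove the conjecture is false. It merely labels it a ``false conjecture'' and asserts ``This conjecture is not true, but in many examples, the statement holds true,'' then moves on to the Ekholm--Lekili theorem and the Etg\"u--Lekili examples where it does hold. So you are supplying content the paper omits. Your example is in fact the minimal one where the simple-connectivity hypothesis of Ekholm--Lekili fails, which makes it the natural choice; it also explains why the plumbing examples in \S\ref{section:Koszuldualityforplumbing} are built from $T^*\bP^1$ rather than $T^*S^1$.

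One small remark: you might note that the duality is only \emph{half}-broken here, since $\Ext^*_{\bZ[t,t^{-1}]}(\bZ,\bZ)\cong\bZ[\epsilon]/(\epsilon^2)$ does recover the core algebra; it is the other direction $A\mapsto A^!$ that fails, which pinpoints the asymmetry.
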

This conjecture is not true, but in many examples, the statement holds true.

We first review the case of Ekholm--Lekili~\cite{EkholmLekili}. 

Let $X$ be a Liouville domain with the boundary $Y$. Let $L$ be an immersed exact Lagrangian submanifold with the boundary Legendrian $\Lambda$. We assume $L$ can be divided into embedded Lagrangians $L=\bigcup_{\nu\in \Gamma}L_\nu$ intersecting transversely. Accordingly, $\Lambda$ is also decomposed as  $\Lambda=\bigcup_{\nu\in \Gamma}\Lambda_\nu$. We fix a partition $\Gamma=\Gamma^+\cup \Gamma^-$. We consider $\Gamma^+$ as the stop and $\Gamma^-$ as the surgery component.

Make the surgery along $\Gamma^-$. We obtain a closed Lagrangian $S_\nu$ for each $L_\nu$. We set $L_\Lambda:=\bigcup_{\nu\in \Gamma^+}L_\nu\cup \bigcup_{\nu\in \Gamma^-}S_\nu$. The cocore is denoted by $C_\Lambda:=\bigcup_{\nu\in \Gamma}C_\nu$. We set $LA^*:=\Hom_{Fuk}(L_\Lambda, L_\Lambda)$ and $CE^*:=\Hom_{\cW}(C_\Lambda, C_\Lambda)$. There is another algebra $CE_{||}^*$, which is quasi-isomorphic to $CE^*$ when $\Lambda_\nu$ is simply connected for all $\nu\in \Gamma^+$ (see \cite{EkholmLekili} for the definition).

\begin{theorem}[Ekholm--Lekili~\cite{EkholmLekili}]
Suppose $\Lambda_\nu$ is simply connected for all $\nu\in \Gamma^+$.
Suppose $BLA^*$ is locally finite, simply--connected as a $\mathbf{k}=\bigoplus_{\nu\in \Gamma^+} \bC e_\nu$-bimodule. Then $LA^*$ and $CE^*$ are Koszul dual.
\end{theorem}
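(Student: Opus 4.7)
The plan is to establish the Koszul duality by interpreting $CE^*$ as the $A_\infty$-Koszul dual of $LA^*$ via a Lagrangian surgery / cobar formula, and then invoking the He--Wu dg Koszul duality machinery recalled earlier in the paper. Concretely, the first step is to exhibit a zig-zag of quasi-isomorphisms $CE^* \simeq \mathrm{RHom}_{LA^*}(k, k)$, where $k = \bigoplus_{\nu \in \Gamma^+} \bC e_\nu$ is the semisimple ring indexing the stop components. The geometric input for this is a surgery/linking formula: each cocore $C_\nu$ is linked once to $L_\nu$ and to no other component, so $C_\nu$ represents the simple $e_\nu k$ in the module category over $LA^*$; holomorphic disks with boundary on $C_\Lambda$ decompose (after stretching along a contact neighborhood of $\Lambda$) into chords on $\Lambda_\nu$ recorded by the Chekanov--Eliashberg differential and boundary pieces on $L_\Lambda$ that assemble into the bar complex of $LA^*$. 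In the simply connected case for each $\Lambda_\nu$, chord recollections stay non-degenerate, which lets one identify $CE^*$ with the cobar algebra $\Omega(B LA^*)$.

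Second, I would reduce the hypothesis "$BLA^*$ locally finite and simply connected as a $k$-bimodule" to the Adams-connectedness set up in Definition~\ref{def:AdamsKoszul}. Namely, the length filtration on the bar construction provides the Adams grading ($j =$ bar length), while the internal cohomological degree of $LA^*$ provides the first grading $i$. Local finiteness ensures $B LA^*$ has finite-dimensional Adams pieces, and simple connectivity as a $k$-bimodule (i.e.\ vanishing of the bar complex in bar-length zero beyond $k$ itself) gives condition (5) of Adams connectedness. Combined with Lemma~\ref{lem:adams-connected}, one can replace $LA^*$ by an Adams-connected model without losing $A_\infty$-information.

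Third, with $LA^*$ Adams-connected and $CE^* \simeq \mathrm{RHom}_{LA^*}(k,k)$ computed above, the Adams Koszul property in the sense of Definition~\ref{def:AdamsKoszul} becomes the statement that the Ext-bigrading of $k$ over $LA^*$ is concentrated on the diagonal $i = $ length, which is built into the cobar identification since each bar length contributes a single cohomological degree via simple connectedness. Then He--Wu's theorem applies, yielding $\End_{CE^*}(k) \simeq LA^*$, which together with the first step gives the Koszul duality statement.

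The main obstacle I expect is step one: rigorously identifying $CE^*$ with $\mathrm{RHom}_{LA^*}(k,k)$. This requires a neck-stretching / SFT-type argument to decompose holomorphic strips on cocores into bar trees on $L_\Lambda$ with chord leaves on $\Lambda_\nu$, and to check that the $A_\infty$-structure matches the bar--cobar differential, including signs, orientations, and the grading conventions that produce the correct Adams degree on both sides. Auxiliary difficulties are that the Fukaya-categorical framework for immersed exact Lagrangians requires bounding cochains, and that the surgery formula must be applied coherently so that the surgered $S_\nu$ interacts correctly with the stops in $\Gamma^+$ to produce the $k$-bimodule structure used in the Koszul statement.
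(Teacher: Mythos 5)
The theorem you are trying to prove is \emph{cited} from Ekholm--Lekili in the paper, not proved there; the paper uses it as background for the core--cocore duality philosophy, and its actual Koszul-duality arguments (Theorem~\ref{thm: main2} and the surrounding lemmas) concern the different statement that the Hodge-theoretic dga $B$ and $A_\Gamma$ are Adams--Koszul dual, where the Adams grading comes from half-Tate twists. So there is no in-paper proof of the Ekholm--Lekili theorem against which to compare your route.

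On the merits of your sketch: the first step (exhibit a zig-zag $CE^* \simeq \mathrm{RHom}_{LA^*}(k,k)$ via neck-stretching and the surgery formula, reading off chords on $\Lambda_\nu$ as differentials and boundary pieces on $L_\Lambda$ as bar inputs) is essentially the content of Ekholm--Lekili's argument, and you are right that it is the substantive part. Your second and third steps, however, misapply the He--Wu framework recalled in the paper. He--Wu's theorem requires $A$ to be \emph{Adams Koszul}, meaning $\mathrm{Ext}^{i,j}_A(k,k)=0$ for $i\neq 0$; you claim this follows because ``each bar length contributes a single cohomological degree via simple connectedness,'' but that is false. Simple connectedness of $BLA^*$ only gives that a length-$n$ bar word has internal degree $\geq n$; it does not concentrate the $\mathrm{Ext}$ bigrading on the diagonal unless $LA^*$ is formal and generated in a single degree, i.e.\ a classical Koszul algebra, which is not among the hypotheses. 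Ekholm--Lekili's theorem holds in the general $A_\infty$ setting by bar--cobar duality ($\Omega B$ and $B\Omega$ are inverse up to quasi-isomorphism under local finiteness and simple connectedness), with no Adams grading at all; the relevant algebraic input is the $A^{!!}\simeq A$ statement of the kind recalled from~\cite{Luoetal}, not the He--Wu Adams-Koszul criterion. In the present paper, the He--Wu machinery is invoked only in Section~\ref{section:Koszuldualityforplumbing} because there the Adams grading genuinely exists (it is the Tate-twist grading), and the Adams--Koszulness of $B$ is \emph{verified by direct computation} in Lemma~\ref{jan7-4}, not deduced from bar-length considerations. If you want to repair your argument, drop step three's appeal to He--Wu and instead invoke the unenhanced $A_\infty$-Koszul double-dual theorem under local finiteness and simple connectedness, with the Adams grading replaced by the wordlength filtration.
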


\begin{example}[sl(2)]
Let us go back to the case when $X=T^*\bP^1$ with $L=T^*_\infty \bP^1\cup \bP^1$. We set $L_+:=T^*_{\infty}\bP^1$ and $L_-:=\bP^1$. We also set $\Gamma^+:=\{+\}$ and $\Gamma^-:=\{-\}$. Then $\Lambda_+=S^1$ is not simply connected. So we cannot apply the above theorem. But the conclusion holds: $LA^*$ and $CE^*$ are isomorphic to $\bC[x]/x^2$ with $\deg x=1$ and the latter algebra is self-Koszul dual.

So, this case has two explanations of Koszul duality; mixed geometry and core-cocore duality.
\end{example}

The earlier result by Etg\"u--Lekili~\cite{EtguLekili} is also along the line of core-cocore duality, but missing an explanation from mixed geometry. We will explain this example in the next section.

\section{Hodge microsheaves on plumbings of $T^*\bP^1$}\label{section:Koszuldualityforplumbing}

In this section, we study Hodge microsheaves on $A_n$-plumbings of $T^*\bP^1$. Depending on the chocie of core, we argue two kinds of results: (1) If the core is a nodal chain of $\bP^1$'s, we recover the Koszul duality result by Etg\"u--Lekili~\cite{EtguLekili}. (2) If the core is the union of a nodal chain of $\bP^1$'s and one $\bC$, we recover the Koszul dulaity result in the context of symplectic duality~\cite{BLPW,BLPW2}.

\subsection{Etg\"u--Lekili's result} \label{dec27-5}
Let $\Gamma$ be a tree (In the next subsection, we focus on the case when $\Gamma=A_n$). Let $X_\Gamma$ be the $\Gamma$-plumbing of $T^*\bP^1$. As described in \cite{EtguLekili}, the space can be made into a Weinstein manifold. The core of $X_\Gamma$ is the nodal curve consisting of $\{\bP^1_v\}_{v\in V(\Gamma)}$ with the intersection complex $\Gamma$ where each $\bP^1_v$ is $\bP^1$ corresponding to a vertex $v\in V(\Gamma)$ of $\Gamma$.

By the result of \cite{GanatraSectorial, Chantraineetal}, the wrapped Fukaya category $\cW(X_\Gamma)$ of $X_\Gamma$ is generated by the cocores. Each cocore can be identified with a cotangent fiber $T^*_v$ of each $\bP^1_v$ in the core. On the other hand, the compact Fukaya category $F(X_\Gamma)$ of $X_\Gamma$ is generated by $\bP^1_v$'s in the core.

The first main result of Etg\"u--Lekili is the identification of the endomorphism ring of these categories. For this purpose, we introduce two dga associated to $\Gamma$.

We set $\mathbf{k}$ to be the semisimple ring given by $\bigoplus_{v\in V(\Gamma)} \bK e_v$. The ring $A_\Gamma$ is a $\bZ$-graded $\mathbf{k}$-algebra generated by degree 1 elements $e_{v_1v_2}$ associated to each adjacent pair $(v_1, v_2)\in V(\Gamma)\times V(\Gamma)$ and a degree 2 element $w_v$ associated to each $v\in V(\Gamma)$. The relations are given by
\begin{enumerate}
    \item $e_{vw}e_{wv}=w_v$,
    \item the other products are zero.
   \end{enumerate}

\begin{proposition}[\cite{EtguLekili}]
    Suppose $\Gamma$ is of type AD. Then $\End_{F(X_\Gamma)}(\bigoplus_{v\in V(\Gamma)}\bP^1_v)\cong A_\Gamma$.
\end{proposition}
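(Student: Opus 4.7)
The plan is a direct Floer-theoretic computation: identify each hom-space $HF^*(\bP^1_v, \bP^1_w)$ and then the composition map, matching generators, degrees, and relations against those of $A_\Gamma$. The key simplification is that the Lagrangians $\bP^1_v$ interact only at the single plumbing node on each edge of $\Gamma$, so Floer theory is essentially local along the core.

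For the self-homs, each $\bP^1_v$ is a closed exact graded Lagrangian $2$-sphere sitting in a Weinstein neighborhood $T^*\bP^1_v \subset X_\Gamma$; perturbing by the Hamiltonian flow of a Morse function and applying the Floer--Hofer--Salamon (or Po\'zniak Morse--Bott) cotangent-bundle computation yields
\begin{equation}
    HF^*(\bP^1_v, \bP^1_v) \cong H^*(\bP^1;\bK) = \bK[w_v]/(w_v^2), \qquad \deg w_v = 2.
\end{equation}
For adjacent $v,w$ the spheres meet transversely at a unique plumbing point, so $CF^*(\bP^1_v, \bP^1_w)$ is one-dimensional with vanishing differential, and the generator $e_{vw}$ sits in degree $1$: this Maslov index is pinned by the standard grading convention for vanishing cycles in an $A_2$-Milnor fiber, consistent with the Poincar\'e duality $HF^d(\bP^1_v,\bP^1_w) \cong HF^{2-d}(\bP^1_w,\bP^1_v)^\vee$. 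For non-adjacent $v,w$ in the tree, the spheres are disjoint, and a compactly supported Hamiltonian displacement near one of them forces $HF^*(\bP^1_v, \bP^1_w) = 0$. Together, these recover the underlying graded $k$-bimodule of $A_\Gamma$.

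It remains to identify the $\mu^2$ structure. Most products vanish for bookkeeping reasons: a composition $e_{uv} \cdot e_{vw}$ with $u \neq w$ lands in $HF^2(\bP^1_u, \bP^1_w)$, but in a tree $u$ and $w$ are never adjacent when both border $v$, so this target is zero; products involving a $w_v$ land in degree $\geq 3$ where all relevant Floer groups vanish; and $w_v \cdot w_v = 0$ by the self-hom computation. The only product requiring genuine input is $e_{vw} \cdot e_{wv} \in HF^2(\bP^1_v, \bP^1_v) = \bK \cdot w_v$. The strategy is a local model: near the plumbing node $p_{vw}$, the pair $(\bP^1_v, \bP^1_w)$ is modeled on the two vanishing cycles in the Milnor fiber of an $A_2$-singularity, and Seidel's Picard--Lefschetz triangle count between these two vanishing cycles (and a small Morse perturbation of one of them) gives $e_{vw} \cdot e_{wv} = w_v$, up to a sign absorbed into the normalization of the generator.

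The main obstacle is the \emph{nonvanishing} of this triangle count: degree constraints pin the product only up to a scalar, and ruling out zero requires either an explicit holomorphic-disk count in the $A_2$ local model or an indirect algebraic argument. For $\Gamma = A_n$ a clean alternative is to identify $X_{A_n}$ with the minimal resolution of the $A_n$-surface singularity and transport the computation to the derived category of coherent sheaves via Kapranov--Vasserot/Bridgeland--King--Reid, where the analog of $e_{vw} \cdot e_{wv} = w_v$ is classical. The restriction to type AD presumably enters only to guarantee global coherence of the grading and of signs across the trivalent vertex in $D_n$; since all pairwise interactions remain localized at individual edges, the type-$D$ case should reduce to gluing the type-$A$ local picture together.
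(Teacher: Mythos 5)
Your computation of the cohomology-level algebra is essentially correct, and the one point you flag as a genuine obstacle (nonvanishing of $e_{vw}\cdot e_{wv}$) is actually the easy part: the compact Fukaya category of a Liouville manifold carries a nondegenerate Poincar\'e duality pairing
\begin{equation}
HF^1(\bP^1_v,\bP^1_w)\otimes HF^1(\bP^1_w,\bP^1_v)\longrightarrow HF^2(\bP^1_v,\bP^1_v)\longrightarrow \bK,
\end{equation}
so with both factors one-dimensional and nonzero the product is forced to be nonzero, no triangle count or detour through coherent sheaves needed. The real gap is elsewhere.

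You have shown $HF^*\bigl(\bigoplus_v \bP^1_v\bigr)\cong A_\Gamma$ as a graded algebra, i.e.\ you have identified $\mu^2$. But $\End_{F(X_\Gamma)}$ is an $A_\infty$-algebra, and the proposition asserts a quasi-isomorphism with the \emph{plain} graded algebra $A_\Gamma$ --- that is, it asserts \emph{formality} of the Floer $A_\infty$-structure. Nothing in your argument controls the higher products $\mu^k$, $k\geq 3$. This is precisely where the hypothesis ``$\Gamma$ of type AD'' (more generally ADE) enters: Etg\"u--Lekili prove that the zigzag algebra $A_\Gamma$ is \emph{intrinsically formal} --- any $A_\infty$-algebra with that cohomology is quasi-isomorphic to it --- exactly when $\Gamma$ is a Dynkin tree, via a Hochschild cohomology vanishing computation; for non-Dynkin trees intrinsic formality fails and the Floer $A_\infty$-structure can genuinely be nonformal. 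Your closing guess that the type restriction ``enters only to guarantee global coherence of the grading and of signs'' is therefore off: grading consistency holds for any tree (there are no cycles to obstruct it); the Dynkin condition is there to kill the higher $\mu^k$. Without a formality argument --- either the intrinsic-formality route or a transport along an equivalence to a side where formality is manifest --- the statement is not proved.
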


Next, choose an orientation of $\Gamma$ and regard it as a quiver. Let $\widehat \Gamma$ be the extended quiver consisting of:
\begin{enumerate}
    \item The same vertices as $\Gamma$,
    \item \begin{enumerate}
        \item the arrows of the double of $\Gamma$ of degree $(1,-1)$, namely, $g$ and $g^*$ for each edge $g$ of $\Gamma$, and
        \item loop $h_v$ at each vertex $v\in V(\Gamma)$ of degree $(1,-2)$.
    \end{enumerate}
\end{enumerate}
The Ginzburg dga $\cG_\Gamma$ is a doubly graded dga defined by the path algebra of $\widehat \Gamma$ over $\mathbf{k}$ together with differential of degree $(1,0)$ defined by $dg=dg^*=0, dh_v=\sum_{\text{$g^*$ starts from $v$}}gg^*-\sum_{\text{$g$ starts from $v$}}g^*g$. When we view $\cG_\Gamma$ as a single-graded dga (usual dga), we take the total degree.

\begin{proposition}[\cite{EtguLekili}]
    Suppose $\Gamma$ is of type AD. Then $\End_{W(X_\Gamma)}(\bigoplus_{v\in V(\Gamma)}T^*_v)\simeq \cG_\Gamma$ as $A_\infty$-algebras.
\end{proposition}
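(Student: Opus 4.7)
The plan is to work on the microsheaf side via the Ganatra--Pardon--Shende equivalence, which gives
\[
\End_{\cW(X_\Gamma)}\Bigl(\bigoplus_{v\in V(\Gamma)} T^*_v\Bigr) \;\simeq\; \End_{\mu\mathrm{sh}_{\Core(X_\Gamma)}(X_\Gamma)}\Bigl(\bigoplus_{v\in V(\Gamma)} K_v\Bigr),
\]
where $K_v$ is the wrapped microsheaf corresponding to the cocore at a generic point $m_v\in\bP^1_v$, i.e., the object corepresenting the microstalk at $m_v$. First I would extend the gluing description given in Section~\ref{section:Hodgemicrosheaves} for $A_n$-plumbings to an arbitrary tree $\Gamma$: the category $\mu\mathrm{sh}_{\Core(X_\Gamma)}(X_\Gamma)$ is a homotopy limit of a diagram whose vertices are $\mu\mathrm{sh}_{C_{\{\boldsymbol m\}_v}}(T^*\bP^1_v)\cong \Sh(\bP^1_v,\{m_v\})$ and whose edges at each plumbing node $v\!-\!w$ are Fourier transforms identifying a punctured neighborhood of $l_v=r_w$ on one side with the dual on the other. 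The object $K_v$ is described locally by a wrapping of $\bC_{m_v}\in\Sh(\bP^1_v)$, computed explicitly via Theorem~\ref{theorem:microHodgewrapping} (applied on the underlying constructible side, without the Hodge decoration).

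Next I would compute the three types of pieces of $\bigoplus\End(K_v)$ in the gluing model. For the diagonal piece $\End(K_v)$, the wrapping of $\bC_{m_v}$ on $\bP^1_v$ alone would be chains on $\Omega_{m_v}\bP^1_v\simeq k\oplus k[-1]$ (generator in degree $1$), but the presence of plumbing stops at $l_v,r_v$ modifies this: each adjacent direction contributes a cocycle which, together with the boundary coming from $\bP^1_v$'s fundamental class in degree $2$, assembles into the loop $h_v$ of bidegree $(1,-2)$ together with the relation $dh_v=\sum_{g^*\colon v\to} gg^* - \sum_{g\colon v\to}g^*g$. For a pair of adjacent vertices $v,w$, the gluing across the plumbing point under the Fourier transform produces two morphisms of degree $1$ between $K_v$ and $K_w$ (from the two sides of the identified cotangent fiber), which I would identify with $g$ and $g^*$. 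For non-adjacent $v,w$, no cross-term is produced, which matches $\cG_\Gamma$. After assembling, one obtains a map $\cG_\Gamma\to \End(\bigoplus K_v)$ of dgas whose cohomology can be compared degree by degree using the above local descriptions.

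The hardest step will be matching the $A_\infty$-structure, in particular verifying the Ginzburg differential $dh_v=\sum gg^*-g^*g$ sheaf-theoretically and ruling out higher corrections to the quasi-isomorphism. A cleaner route, which I would pursue in parallel as a sanity check, is to invoke the Legendrian surgery presentation of $\cW(X_\Gamma)$ of Ekholm--Lekili~\cite{EkholmLekili} for the attaching Legendrian obtained by Reeb-perturbing the cores of the plumbing: this expresses $\End_{\cW}(\bigoplus T^*_v)$ directly in terms of the Chekanov--Eliashberg dga of the attaching sphere link, which in the AD case is known to be quasi-isomorphic to $\cG_\Gamma$. Once the generators are identified as above, applying the uniqueness of the Koszul-dual dga (under the Adams-connectedness hypothesis of Lemma~\ref{lem:adams-connected}) pins down the $A_\infty$-structure up to quasi-isomorphism, provided one verifies that both sides are Adams Koszul in the sense of Definition~\ref{def:AdamsKoszul}. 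The restriction to type AD enters precisely to guarantee that the core components $\bP^1_v$ have simply connected intersection pattern so that the local Hodge-Tate saturated models of Example~\ref{ex:Hodge-Tate} glue cleanly.
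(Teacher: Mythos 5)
This proposition is cited by the paper from \cite{EtguLekili}; the paper itself supplies no proof of it, and in fact the corollary in \S\ref{section:Koszuldualityforplumbing} \emph{uses} it as a black box (together with the GPS equivalence) to obtain the microsheaf formulation. So your task here was to give a proof that the paper does not attempt, which makes the comparison one of feasibility rather than fidelity.

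There is a genuine gap in your argument, and it sits exactly where you flag it: pinning down the $A_\infty$-structure. Your proposed fix — invoke Adams connectedness and the He--Wu uniqueness of Adams Koszul duals — is not available at this stage, because the Adams grading you would need on $\End_{\cW(X_\Gamma)}(\bigoplus T^*_v)$ does not exist a priori. On the Fukaya/microsheaf side one has only a cohomological $\bZ$-grading; the second (Adams) grading is precisely the extra structure that the paper manufactures via the Hodge/mixed enhancement $\mu\mathrm{M}_C(X_\Gamma)$ (Corollary~\ref{jan8-1} and the proof of Lemma~\ref{jan8-8} are the places where $e^{i,j}_{-k}$ is shown to produce an Adams-connected bigrading). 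So using Adams Koszulity to prove the present proposition reverses the logical order: the proposition must be known first, and then the Hodge theory upgrades it to a bigraded statement. Concretely, Definition~\ref{def:AdamsKoszul} requires a dga $\bigoplus_{i,j}A^{i,j}$, and you have not constructed the second index $j$ from Fukaya data alone. Your ``sanity check'' route — the Chekanov--Eliashberg dga of the Reeb-perturbed attaching Legendrian — is not an independent cross-check, since that computation is exactly the content of Etg\"u--Lekili's own proof; citing it collapses your argument to citing the theorem being proved.

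Two smaller points. The diagonal computation ``$k\oplus k[-1]$ with a generator in degree $1$'' does not line up with the degrees in $\cG_\Gamma$: the loop $h_v$ has bidegree $(1,-2)$, hence total cohomological degree $-1$, consistent with $C_{-*}(\Omega_{m_v}\bP^1_v)\simeq\bC[u]$ with $|u|=-1$; so either your degree convention or your sign is off, and the subsequent matching of $h_v$ with a degree-$1$ class is suspect. Finally, your explanation of the AD restriction (``core components have simply connected intersection pattern so the Hodge--Tate models glue'') misidentifies its role: in Etg\"u--Lekili the AD hypothesis controls the structure of the Legendrian contact homology dga of the attaching spheres and the resulting formality/grading of $A_\Gamma$, not any Hodge-theoretic gluing, which is not part of their argument at all.

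Your first paragraph — the GPS transfer, the gluing description of $\mu\mathrm{sh}_{\Core(X_\Gamma)}$ via Fourier transforms, and the explicit identification of $K_v$ as the wrapping of $\bC_{m_v}$ — is a reasonable sketch, but be aware that the paper spends its entire appendix carrying out exactly this explicit wrapping in the $A_n$ case (the objects $\block{j}$, $\scH_j^k$, $\scH_j^\infty$), which should temper your confidence that ``computing the three types of pieces'' is a short step. The extension to general trees, and in particular to $D$-type, would require handling trivalent vertices in the gluing diagram, which the appendix does not cover.
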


The main Koszul duality statement of \cite{EtguLekili} is the following:
\begin{theorem}[\cite{EtguLekili}]
Suppose $\Gamma$ is of type $AD$.
    Viewing the second grading of $\cG_\Gamma$ as Adams grading, $A_\Gamma$ is Koszul dual to $\cG_\Gamma$.
\end{theorem}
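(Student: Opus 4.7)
My plan is to establish the Koszul duality by Hodge-theoretically enhancing both sides and then appealing to the Adams Koszul duality theorem of \cite{HeWu}. Concretely, for each vertex $v\in V(\Gamma)$ I would construct a Hodge microsheaf $\widetilde{T^*_v}$ in the saturated subcategory $\mu\mathrm{M}^c(X_\Gamma)$ of Example~\ref{ex:saturatedHodgesheafforplumbing} lifting the cocore $T^*_v$. The construction proceeds via the wrapping machinery of Section~\ref{section:Hodgewrapping}: starting from the skyscraper Hodge module at the marked point $m_v\in \bP^1_v$, one builds an inductive system of mixed Hodge microsheaves whose underlying sheaves realize the Reeb-flow wrapping of $\bC_{m_v}$, and whose colimit produces $\widetilde{T^*_v}$ via Theorem~\ref{theorem:microHodgewrapping}. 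This is precisely where the explicit computation of the Appendix is needed.

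The core part of the argument is the computation of the bigraded saturated endomorphism algebra
\begin{equation*}
    \widetilde{\cG}_\Gamma := \bigoplus_{i,j\in\bZ}\Hom^{i}_{\mu\mathrm{M}^c(X_\Gamma)}\Bigl(\bigoplus_v \widetilde{T^*_v},\; \bigoplus_v \widetilde{T^*_v}\lb j\rb\Bigr),
\end{equation*}
where $i$ is the cohomological grading and $j$, coming from Tate twists, is the Adams grading. I would show that $\widetilde{\cG}_\Gamma$ is quasi-isomorphic, as a bigraded $A_\infty$-algebra, to $\cG_\Gamma$ with its natural bigrading: the arrows $g, g^*$ of $\widehat{\Gamma}$ arise as the Hodge lifts of the morphisms between adjacent cocores implementing the plumbing nodes, the loop $h_v$ arises from the geodesic loop around $\bP^1_v$, and the Ginzburg differential $dh_v=\sum gg^*-\sum g^*g$ is read off from the cone structure in the explicit wrapping presentation. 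By Lemma~\ref{lem:saturatedmicro} together with the identification $\frakF(\widetilde{T^*_v})\simeq T^*_v$, the forgetful functor recovers the single-graded algebra $\cG_\Gamma=\End_{\cW(X_\Gamma)}(\bigoplus_v T^*_v)$, so the new content is the upgrade to a bigrading.

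Dually, the Hodge lifts of the core components $\bigoplus_v \bP^1_v$ already exist as canonical objects of $\mu\mathrm{M}^c(X_\Gamma)$, and I would argue that their bigraded endomorphism algebra is concentrated in cohomological degree zero and coincides with $A_\Gamma$ equipped with its natural Adams grading (degree $1$ for the generators $e_{vw}$ corresponding to the node morphisms, and degree $2$ for $w_v=[\bP^1_v]$, which is a Tate class of weight $2$). Identifying the cores with the ``simple modules'' over $\widetilde{\cG}_\Gamma$ via the module/microsheaf dictionary at the plumbing stops then yields $\End_{\cG_\Gamma}(k)\simeq A_\Gamma$ as bigraded $A_\infty$-algebras. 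Since $A_\Gamma$ sits in cohomological degree zero, this verifies the Adams Koszul condition of Definition~\ref{def:AdamsKoszul} for $\cG_\Gamma$, and the He--Wu theorem then produces the dual identification $\End_{A_\Gamma}(k)\simeq \cG_\Gamma$, completing the Koszul duality.

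The main obstacle, and the content of the Appendix, is the fully explicit construction of the Hodge wrapping: producing for each cocore an inductive system of Hodge microsheaves whose sheaf-theoretic images are $\bC$-constructible at every step (as required by Corollary~\ref{cor:Hodgewrapping}), and then verifying that the resulting bigraded $A_\infty$-structure on $\widetilde{\cG}_\Gamma$ reproduces the Ginzburg differential with the correct Tate weights. The Hodge--Tate nature of the wrappings is essential---it is what guarantees that the saturated mixed structure of Example~\ref{ex:Hodge-Tate}~(\ref{ex:Hodge-Tate2}) actually contains every intermediate object and that the Adams grading is integral---which is ultimately why the argument is carried out for the $A_n$-plumbing of $T^*\bP^1$ rather than for an arbitrary tree.
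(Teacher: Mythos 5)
Your proposal does not match the paper's treatment of this statement, and it contains a genuine gap.

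First, note that the statement you are proving is cited to \cite{EtguLekili}; the paper does not re-prove it, and the mixed-geometric argument in \S\ref{section:Koszuldualityforplumbing} is explicitly restricted to $\Gamma=A_n$, not arbitrary type $AD$. You acknowledge the restriction to $A_n$ at the end, but you do not explain how your proposal could reach type $D$, so at best you would be proving a special case of the cited theorem. More importantly, the paper's \S\ref{section:Koszuldualityforplumbing} does \emph{not} constitute a logically independent proof of Etg\"u--Lekili's Koszul duality: after showing via the Appendix and Lemma~\ref{jan7-4} that the Hodge endomorphism algebra $B$ has $\End_B(k)\cong A_\Gamma$ and that $B$ is Adams Koszul, the paper applies He--Wu to get $\End_{A_\Gamma}(k)\simeq B$, and then \emph{invokes the Etg\"u--Lekili theorem itself} (``the Koszul dual of $A_\Gamma$ is $\cG_\Gamma$'') to conclude $B\simeq\cG_\Gamma$ as Adams-graded dgas. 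The paper never independently matches the Tate-twist bigrading on $B$ with the intrinsic bigrading on $\cG_\Gamma$; it only knows the ungraded quasi-isomorphism $\frakF(B)\simeq\cG_\Gamma$ (from GPS plus Etg\"u--Lekili's identification of cocore endomorphisms with the Ginzburg dga) and lets the uniqueness of Koszul duals do the rest.

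Your proposal, by contrast, asks for a direct bigraded $A_\infty$-quasi-isomorphism $\widetilde{\cG}_\Gamma\simeq\cG_\Gamma$ ``read off from the cone structure in the explicit wrapping presentation,'' and this is the step that is both essential for your argument to be an independent proof of the cited theorem and entirely absent from the paper. You claim this is ``the content of the Appendix,'' but the Appendix proves Theorem~\ref{thm:comparisonGinzburng} (the Hodge lift $\cH_j^{\infty,H}$ exists and is the Hodge wrapping of $\bC_{m_j}$), computes the $\Hom$-space sizes in Corollary~\ref{jan8-1}, establishes Adams-connectedness and Adams Koszulness (Lemmas~\ref{jan8-8}, \ref{jan7-4}), and identifies $\End_B(k)$ with $A_\Gamma$ (\S\ref{jan8-10}); it does \emph{not} compute the higher $A_\infty$-operations of $B$ or verify that they reproduce the Ginzburg differential $dh_v=\sum gg^*-\sum g^*g$ with the claimed Tate weights. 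Matching those $A_\infty$-operations is precisely the hard computation that Etg\"u--Lekili carry out on the Floer side and that the paper deliberately avoids redoing by citing them. So either you keep the dependence on \cite{EtguLekili} (as the paper does, in which case you are proving nothing new about this statement) or you must actually supply the bigraded $A_\infty$-comparison, which your sketch does not do. As written, the plan has a circularity or a gap at exactly this point.
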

Going back to the interpretation in terms of Fukaya categories, one can consider this theorem as a manifestation of core--cocore Koszul duality. However, in this story, the Adams grading on $\cG_\Gamma$ lacks its geometric origin. In the next section, we give a conjectural mixed geometry interpretation of this grading and provide some evidences.

\subsection{Koszul duality of $\cG_{A_n}$ from Hodge microsheaves}\label{mar24-10}
For $\Gamma=A_n$, we will consider $\mu \mathrm{sh}_C(X_\Gamma)$ where $C$ is the core, which is a nodal chain of $\bP^1$. By Ganatra--Pardon--Shende's theorem~\cite{GPSmicro}, we have an object $\cH^\infty_j\in \mu \mathrm{sh}_C(X_\Gamma)$ corresponding to the cocore $T_j^*\in \cW(X_\Gamma)$ for $j\in \Gamma$. In other words, this is the wrapping of a skyscraper sheaf on a non-nodal point of $\bP^1$ of $j$.

By definition and Etg\"u--Lekili's result~\cite{EtguLekili,EtguLekili2}, we have 
\begin{corollary}We have a quasi-isomorphism 
      \begin{equation}
        \End_{\mu \mathrm{sh}_C(X_\Gamma)}(\bigoplus_j \cH_j^\infty)\simeq \cG_\Gamma
    \end{equation}
    as $A_\infty$-algebras.  
\end{corollary}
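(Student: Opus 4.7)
The plan is to assemble the statement as a direct consequence of the Ganatra--Pardon--Shende equivalence and the computation of Etg\"u--Lekili, the only real work being to verify that the object $\cH_j^\infty$ singled out on the microsheaf side is indeed the image of the cocore $T^*_j$ under the GPS equivalence.

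First I would invoke the GPS equivalence
\begin{equation}
\mathrm{GPS}\colon \Mod(\cW(X_\Gamma)^{op}) \xrightarrow{\cong} \mu\mathrm{sh}_C(X_\Gamma),
\end{equation}
recalling that $X_\Gamma$ is Weinstein with compact core $C$, so no stop is needed. Under this equivalence, representable modules correspond to Lagrangian branes in $\cW(X_\Gamma)$. In particular, the Yoneda module associated to the cocore $T^*_j$ maps to a canonical microsheaf supported on $C$. By the Yoneda lemma, the endomorphism algebra of $\bigoplus_j \mathrm{GPS}(T^*_j)$ in $\mu\mathrm{sh}_C(X_\Gamma)$ equals $\End_{\cW(X_\Gamma)}(\bigoplus_j T^*_j)$ as $A_\infty$-algebras.

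Next I would identify $\mathrm{GPS}(T^*_j)$ with $\cH_j^\infty$. By definition $\cH_j^\infty$ is the wrapping of the skyscraper microsheaf $\cH_j$ at a non-nodal point $p_j\in \bP^1_j\subset C$, and the skyscraper $\cH_j$ is the microlocal skyscraper at a cotangent fiber's basepoint. On the Fukaya side, the cotangent fiber $T^*_j$ is the (fully wrapped) Lagrangian obtained by wrapping the linking disk through the Reeb flow, which under GPS corresponds precisely to this microsheaf-theoretic wrapping of the microstalk at $p_j$ (see the discussion of microlocal skyscrapers and cocores in \cite{nadler2016wrappedmicrolocalsheavespairs,GanatraSectorial,GPSmicro}). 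Thus $\mathrm{GPS}(T^*_j)\simeq \cH_j^\infty$.

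Combining these two steps gives
\begin{equation}
\End_{\mu\mathrm{sh}_C(X_\Gamma)}\lb \bigoplus_j \cH_j^\infty\rb \simeq \End_{\cW(X_\Gamma)}\lb \bigoplus_j T^*_j\rb,
\end{equation}
and the right-hand side is quasi-isomorphic to $\cG_\Gamma$ as an $A_\infty$-algebra by the Etg\"u--Lekili proposition recalled in \S\ref{dec27-5}. The main point one must be careful about is the identification in the second step: matching the $A_\infty$-structures (not just the cohomology) on the two sides requires that the GPS equivalence indeed carries linking disks to microlocal skyscrapers, which is part of the content of \cite{GPSmicro}. Granting this, the corollary is purely formal.
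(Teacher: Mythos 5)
Your proposal is correct and follows essentially the same route as the paper. The paper treats the identification $\mathrm{GPS}(T^*_j)\simeq \cH_j^\infty$ as definitional (the sentence immediately preceding the corollary defines $\cH_j^\infty$ as the object corresponding to the cocore under GPS, and then rephrases it as the wrapping of a skyscraper at a non-nodal point); you instead unpack why these two descriptions agree by recalling that GPS carries linking disks to wrappings of microstalks, and then, as in the paper, conclude by full faithfulness of GPS and Etg\"u--Lekili's computation. The one place you are slightly more informal than the paper's surrounding material is the assertion that the object constructed as a colimit $\cH_j^\infty$ actually corepresents the microstalk at $m_j$ in $\mu\mathrm{sh}_C(X_\Gamma)$; the paper verifies this separately and at length in Theorem~\ref{dec5-8} of the appendix (needed anyway for the Hodge enhancement), whereas for the present corollary one can, as you do, simply quote the GPS identification of cocores with microlocal skyscrapers. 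Given that, your argument is the paper's argument, just spelled out.
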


We use the saturated mixed structure explained in Example~\ref{ex:saturatedHodgesheafforplumbing}. We put $\mu \mathrm{M}_{C_{\lc \boldsymbol{m}\rc}}(X_\Gamma):=\Ind(\mu \mathrm{M}^c_{C_{\lc \boldsymbol{m}\rc}}(X_\Gamma))$ and $\mu \mathrm{M}_{C}(X_\Gamma):=\mu\mathrm{M}_{C_{\lc \boldsymbol{m}\rc}}(X_\Gamma)\cap \mu\MHM_{C}(X_\Gamma)$.

Under this situation, we have the following:
\begin{theorem}\label{thm:comparisonGinzburng}
There exists an object $\cH_j^{\infty, H}\in \mu\mathrm{M}_C(X_\Gamma)$ such that $\frakF(\cH_j^{\infty, H})=\cH_j^\infty$. Moreover, $\cH_j^{\infty, H}$ is the $(\mu \mathrm{M}_{C}(X_\Gamma),\mu \mathrm{M}_{C_{\lc \boldsymbol{m}\rc}}(X_\Gamma))$-Hodge wrapping of $\bK_{m_j}$.
\end{theorem}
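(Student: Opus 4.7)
The plan is to apply Theorem~\ref{theorem:microHodgewrapping} (the microsheaf version of the wrapping criterion) to $\cE' = \bK_{m_j}$ viewed as an object of $\mu\mathrm{M}^c_{C_{\lc \boldsymbol{m}\rc}}(X_\Gamma)$. Concretely, I would produce an inductive system $\cE_0 = \bK_{m_j} \to \cE_1 \to \cE_2 \to \cdots$ in $\mu\mathrm{M}^c_{C_{\lc \boldsymbol{m}\rc}}(X_\Gamma)$ whose underlying microsheaves realise the Reeb-flowed sequence $\Phi_{t_i}(\cH_j^0)$ for a suitable increasing sequence of times $t_i \to \infty$ along a good metric. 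Granting this, the first assertion of the theorem follows by setting $\cH_j^{\infty,H} := \colim_i \cE_i$, and the second assertion (that this is the Hodge wrapping) follows directly from the conclusion of Theorem~\ref{theorem:microHodgewrapping}, once one checks that the cones $\Cone(\frakF(\bK_{m_j}) \to \frakF(\cE_i))$ are annihilated by all test objects of $\mu\mathrm{sh}_C(X_\Gamma)$, which holds because those cones are supported away from $C$.

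The construction of the inductive system is local-to-global with respect to the Lagrangian cover $C_{\lc \boldsymbol{m}\rc} = \bigcup_i \bP^1_i \cup \bigcup_i T^*_{m_i}\bP^1_i$. On each component $\bP^1_i$, I would use the $n=1$ case of Arai's description from the proof right after Corollary~\ref{cor:Hodgewrapping}: the wrappings $\Phi_{t_i}(\bK_{m_j})$ are presented as iterated cones built from $\bK_{\bP^1}$ and $\bK_{\bP^1}[1]$ with twisted differentials given by the hyperplane class in degree $1$ and the fundamental class in degree $2n-1 = 1$. As in the $\bP^n$ argument, these two classes have canonical mixed Hodge module lifts with weights $2$ and $2n = 2$ respectively, so each local piece lifts canonically into the saturated subcategory $M$ from \ref{ex:Hodge-Tate2} of Example~\ref{ex:Hodge-Tate}, and the continuation maps lift to maps in $M$.

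The core technical step is to propagate this lift across the nodes of $C$, so that the local sequences glue into a single sequence in $\mu\mathrm{M}^c_{C_{\lc \boldsymbol{m}\rc}}(X_\Gamma)$. This is precisely where the Fourier-gluing description of $\mu\mathrm{sh}_{C}(X_\Gamma)$ as the limit of the Fourier diagram attached to the Lagrangian core of Fourier type is used: near each node $p_i$, the local transition functor is the Fourier transform on the tangent line $T_{p_i}\bP^1_i$, and by Section~\ref{jan7-1} this transform is lifted canonically to an equivalence of monodromic mixed Hodge modules, compatibly with the saturated subcategory $M$ by the half-Tate-twist compatibility of Section~\ref{jan7-2}. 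Thus the Hodge-theoretic extension classes one selects on $\bP^1_i$ restrict to the correct extension classes on the Fourier-transformed side at the node, giving a coherent lift of the whole Reeb-iterated diagram to the saturated Hodge microsheaf category.

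The main obstacle I anticipate is bookkeeping rather than existence: one must track simultaneously the combinatorial data of which $\bP^1_i$ the wrapping has already entered, the Reeb times $t_i$ at which the wrapping becomes $\bC$-constructible, and the monodromic structure along each node so that the gluing matches the Fourier transform prescription. This is where the very explicit description announced in the Appendix enters, and is also where one verifies the two remaining hypotheses of Theorem~\ref{theorem:microHodgewrapping}: that $\frakF(\colim_i \cE_i) \cong \rho^l(\frakF(\bK_{m_j})) = \cH_j^\infty$ (which follows from Kuo's identification of wrapping with the Reeb colimit combined with the Ganatra--Pardon--Shende identification of cocores), and that the cones $\Cone(\frakF(\bK_{m_j}) \to \frakF(\cE_i))$ vanish on $\mu\mathrm{sh}_C(X_\Gamma)$ because each intermediate $\cE_i$ differs from $\bK_{m_j}$ only by summands supported in the symplectic complement of $C$.
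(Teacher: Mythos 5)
Your proposal follows the paper in invoking Theorem~\ref{theorem:microHodgewrapping} as the criterion, but the route you take to produce the inductive system and to verify the hypotheses is different from the paper's, and several of your steps have genuine gaps.

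The paper does \emph{not} build $\scH_j^\infty$ as a Reeb-flow colimit. It constructs a specific microsheaf $\block{j}$ out of a rather intricate dictionary of constructible building blocks $A_s, B_s, P_s, Q_s, \ove{A_s}, \und{A_s}, \ove{\und{A_s}}, \ove{P_s}, \und{P_s}, \ove{\und{P_s}}, \wt{\und{P_1}}$, then defines $\scH_j^k$ as iterated cones of $\block{j}$ and $\block{j}^\circ$ along the morphism $\mathfrak{f}_j$, and takes $\scH_j^\infty$ to be the homotopy colimit. The decisive input is the vanishing Lemma~\ref{m29-2}, $\Hom_{\mu\mathrm{sh}_C(X_\Gamma)}(\block{j}, H)=0$ for all $H\in\mu\mathrm{sh}_C(X_\Gamma)$, which is established by a long, node-by-node computation occupying most of the Appendix. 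Nothing in this construction references geodesic or Reeb flow.

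The gaps in your proposal are concrete. First, you call on ``Arai's description'' to handle each $\bP^1$ factor, but Arai's theorem (as cited in the paper) is about the geodesic flow of the Fubini--Study metric on $T^*\bP^n$, not about the Reeb flow on the contact boundary of the plumbing $X_\Gamma$. There is no analogue of Arai's theorem for the plumbing established anywhere, and the paper's own Remark after the microsheaf version of Conjecture~\ref{conj:1} explicitly cautions that the relevant conjecture ``seems to be false in general, see Appendix'' --- i.e., there is reason to doubt that the Reeb flow on $X_\Gamma$ produces a $\bC$-constructible filtration. So the very existence of the inductive system you want to lift is unsupported. Second, your verification of hypothesis~3 of Theorem~\ref{theorem:microHodgewrapping} --- that the cones are killed on $\mu\mathrm{sh}_C(X_\Gamma)$ ``because those cones are supported away from $C$'' --- is simply false of the objects appearing in the construction: the cones $\Cone(\scH_j^0\to \scH_j^k)$ are iterated extensions of $\block{j}^\ast[\ell]$, all of which are supported on $C_{\lc\boldsymbol{m}\rc}\supset C$. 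The vanishing is not a support phenomenon; it is the content of Lemma~\ref{m29-2} and requires the detailed case analysis (Subsections~\ref{dec20-2}--\ref{dec20-3}) exploiting the precise interaction of $\block{j}$ with the $(N_s)$-conditions on monodromy at each node. Third, the propagation across nodes via Fourier gluing is not simply a matter of compatibility of the monodromic Fourier transform with the half-Tate twist: the paper must pick, for each $(j,J)$ near a node, specific perverse sheaves whose Fourier transforms and specializations match each other and whose Hodge filtrations carry the precise Tate twists recorded in Lemma~\ref{jan29-7}, Corollary~\ref{dec5-5}, and Lemma~\ref{dec5-6}. That bookkeeping is exactly what your proposal defers, but it is the substance of the proof.

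If one \emph{could} show that the plumbing admits a Reeb flow whose Kuo-style wrapping is $\bC$-constructible at discrete times and that the resulting intermediate objects lift to the saturated category $\mu\mathrm{M}^c_{C_{\lc\boldsymbol{m}\rc}}(X_\Gamma)$, your argument would give a conceptually cleaner derivation of the theorem than the paper's direct construction, closer in spirit to the $\bP^n$ case. But at present that is a conjecture, and your proposal stands or falls on it.
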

This will be proved in appendix. Here we would like to show the Koszul duality of $\End_{\mu \mathrm{sh}(X_\Gamma)}(\cH_j^\infty)$ by using the mixed grading. 

We set $\cH^{\infty, H}:=\bigoplus_j\cH_j^{\infty, H}$ and $\cH^{\infty}:=\bigoplus_j\cH_j^{\infty}$.
Recall that we have
\begin{equation}
     \Hom_{\mu \mathrm{M}_C(X_\Gamma)}(\cH^{\infty, H},\bigoplus_{s\in \bZ}\cH^{\infty, H}(s/2))\simeq  \End_{\mu \mathrm{sh}_C(X_\Gamma)}(\cH^\infty)
\end{equation}
by the saturatedness. The left hand side has a doubly graded decomposition
\begin{equation}
    \Hom_{\mu \mathrm{M}_C(X_\Gamma)}(\cH^{\infty, H}, \bigoplus_{s\in \bZ}\cH^{\infty, H}(s/2))=\bigoplus_{k\in\ZZ}\Hom^k_{\mu \mathrm{M}_C(X_\Gamma)}(\cH^{\infty, H}, \bigoplus_{s\in \bZ}\cH^{\infty, H}(s/2))=:\bigoplus_{a,b}B^{a,b}
\end{equation}
where $a=k-s$ and $b=-s$. We consider $B$ as an Adams-graded dga where the Adams grading is $b$.

\begin{lemma}\label{jan8-8}
    One can take $B=\bigoplus_{a,b}B^{a,b}$ to be an Adams-connected dga among the quasi-isomorphism class.
\end{lemma}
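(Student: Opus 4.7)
The plan is to invoke Lemma~\ref{lem:adams-connected}: it suffices to show that the cohomology algebra $H^{\bullet,\bullet}(B)$ is Adams-connected as a bigraded algebra. The saturatedness identification
\[
\bigoplus_{s\in\bZ}H^{k}\Hom_{\mu\mathrm{M}_C(X_\Gamma)}\bigl(\cH^{\infty,H}, \cH^{\infty,H}(s/2)\bigr)\;\cong\;H^k\End_{\mu\mathrm{sh}_C(X_\Gamma)}(\cH^\infty)\;\simeq\;H^k(\cG_{A_n})
\]
noted just before the lemma, together with the bijection $(a,b)=(k-s,-s)$, presents $H^{\bullet,\bullet}(B)$ as a bigraded refinement of Etg\"u--Lekili's algebra. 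The problem thus reduces to verifying the five vanishing conditions of Adams-connectedness.

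I would then read off the bidegrees of the generators of $\cG_{A_n}$ recalled in \S\ref{dec27-5}. After translating through the conventions $k=a-b$ and $s=-b$, I expect each double-quiver arrow $g,g^*$ of $\cG_{A_n}$ to lift to a class of bidegree $(a,b)=(1,1)$ in $H^{\bullet,\bullet}(B)$, each loop $h_v$ to lift to bidegree $(1,2)$, and the semisimple idempotents $e_v$ to span $H^{0,0}(B)=k$. Since all $A_\infty$-products of these generators lie in bidegrees with $a,b\geq 1$, the cohomology is supported in $a\geq 0,\;b\geq 0$, equals $k$ at the origin, and vanishes on both positive axes away from the origin. Lemma~\ref{lem:adams-connected} then delivers the desired Adams-connected dga quasi-isomorphic to $B$.

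The main obstacle is the weight bookkeeping needed to justify the bidegrees of the generators, and this is what forces the appeal to the Appendix's explicit construction of $\cH_j^{\infty,H}$ as a colimit of mixed Hodge modules obtained by iteratively wrapping $\bK_{m_j}$ and gluing via Fourier transforms along the plumbing nodes as in \S\ref{jan7-2}. Concretely, one must verify that every nontrivial extension class contributing to $\Hom_{\mu\mathrm{M}_C(X_\Gamma)}\bigl(\cH_j^{\infty,H}, \cH_{j'}^{\infty,H}(s/2)\bigr)$ outside the idempotent block has $s\leq -1$, i.e.\ positive Adams grading $b=-s\geq 1$. This is a direct weight computation inside the saturated Hodge--Tate model~\ref{ex:Hodge-Tate2} of Example~\ref{ex:Hodge-Tate}, once the generators $g,g^*,h_v$ are identified with explicit cocycles in the twisted complex built in the Appendix.
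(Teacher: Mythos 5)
Your overall plan—reduce to showing $H^{\bullet,\bullet}(B)$ is Adams-connected and then invoke Lemma~\ref{lem:adams-connected}—is exactly the paper's strategy, and the observation that the content is a weight/Tate-twist computation is correct. But the route you propose to verify Adams-connectedness is not the paper's and has two genuine problems.

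The first is a circularity. You want to read off bidegrees of the Ginzburg generators $g,g^*,h_v$ and then argue that the bigraded cohomology algebra of $B$ is supported in the cone they generate. However, the identification of $B$ with $\cG_{A_n}$ \emph{as Adams-graded dgas} (the final Corollary of \S\ref{section:Koszuldualityforplumbing}) is obtained from He--Wu's theorem, whose hypothesis is precisely Lemma~\ref{jan8-8} together with Lemma~\ref{jan7-4}. At the point where Lemma~\ref{jan8-8} is proved, one only knows $H^\bullet(B)\cong H^\bullet(\cG_{A_n})$ as \emph{singly}-graded algebras (by saturatedness and Etg\"u--Lekili); the Adams grading on $B$ is exactly what must be pinned down. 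So asserting that $g,g^*$ lift to bidegree $(1,1)$ and $h_v$ to $(1,2)$ requires the very weight bookkeeping you defer, and even granting the bidegrees of lifts of the generators, you would additionally need the bigraded cohomology algebra to be generated by them, which is not a formal consequence of the single-graded comparison.

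The second is that your concrete verification only targets one of the five conditions in the definition of Adams-connectedness, namely that nontrivial extension classes outside the idempotent block have $b=-s\geq 1$. You must also show $C^{a,b}=0$ for $a<0$, and $C^{0,b}=0$ for $b>0$; these do not follow from the $b$-bound alone. The paper sidesteps the Ginzburg presentation entirely: it uses Corollary~\ref{jan8-1}, which gives a precise combinatorial criterion (in terms of the quantities $e^{i,j}_{-k}$) for when $H^0\Hom_{\mu\mathrm{M}_C(X_\Gamma)}(\cH_j^{\infty,H},\cH_i^{\infty,H}(s/2)[k])$ is nonzero, together with the elementary inequality $e^{i,j}_{-k}\geq -k$ (a consequence of the integer from Lemma~\ref{jan8-5} being $\geq 1$), to check all the vanishing conditions by a short case analysis; the fact that $C^{0,0}=k$ is established in the proof of Lemma~\ref{jan9-1}. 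If you want to repair your argument, replace the detour through Ginzburg generators by that direct case analysis.
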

\begin{proof}
  This lemma will be proved in Subsection~\ref{dec20-5} by direct computations.  
\end{proof}

We denote the derived category of Adams-graded $B$-modules by $\Mod(B)$.
Then $\Mod(B)$ carries an autoequivalence $\la 1\ra :=\lb -1\rb[-1]$, where $\lb -1\rb$ (resp. $[-1]$) is the shift of the Adams-grading (cohomological degree) by $-1$.
For $\cE\in \mu \mathrm{M}_C(X_\Gamma)$,
we define the bidegree of the complex
$\bigoplus_{s\in \bZ}\Hom_{\mu \mathrm{M}_C(X_\Gamma)}(\cH^{\infty, H}, \cE(s/2))$
so that the cohomological (resp. Adams) degree of $\Hom^k_{\mu \mathrm{M}_C(X_\Gamma)}(\cH^{\infty, H}, \cE(s/2))$ is $k-s$ (resp. $-s$).
Then, $\bigoplus_{s\in \bZ}\Hom_{\mu \mathrm{M}_C(X_\Gamma)}(\cH^{\infty, H}, \cE(s/2))$ is an Adams-graded $B$-module.

\begin{lemma}\label{jan9-1}
    We have an equivalence
    \begin{equation}\label{jan9-3}
        \mu \mathrm{M}_C(X_\Gamma)\simeq \Mod(B); \cE\mapsto  
        \bigoplus_{s\in \bZ}\Hom_{\mu \mathrm{M}_C(X_\Gamma)}(\cH^{\infty, H}, \cE(s/2)).
    \end{equation}
    The equivalence sends the half-Tate twist to the shift $\la 1\ra$ defined above.
    Moreover, the augmentation module on the right hand side corresponds to the direct sum of the rank 1 constant sheaves on each sphere under this equivalence.
\end{lemma}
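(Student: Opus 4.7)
The plan is to prove Lemma~\ref{jan9-1} by the standard Morita/Barr--Beck style recognition: a presentable stable dg category with a compact generator $G$ is equivalent to the dg category of modules over $\End(G)$, with the equivalence given by $\cE\mapsto \Hom(G,\cE)$. Here we take for ``$G$'' the object $\cH^{\infty, H}$ together with all its half-Tate twists, so that $\End(G)$ is naturally an Adams-graded dga; by construction this is $B$, and the Adams grading coming from the Tate twist index matches the $b$-grading defined just above the lemma. The compatibility of the equivalence with $\lb 1\rb$ on both sides is then automatic from the construction.

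The three things to check are therefore: (i) $\mu\mathrm{M}_C(X_\Gamma)$ is compactly generated and $\cH^{\infty,H}$ is a compact object; (ii) the collection $\{\cH^{\infty, H}\lb s/2\rb\}_{s\in \bZ}$ generates $\mu\mathrm{M}_C(X_\Gamma)$; (iii) the rank-one constant sheaf on each sphere $\bP_v^1$ lifts to an object corresponding, via the equivalence, to the augmentation module. For (i), $\cH^{\infty, H}$ was built in Theorem~\ref{thm:comparisonGinzburng} as $\colim_i \cE_i$ from objects in the small saturated category $\mu\mathrm{M}^c_{C_{\{\boldsymbol{m}\}}}(X_\Gamma)$ via the cone recipe of Lemma~\ref{colimitcompact}; feeding the same data into Lemma~\ref{colimitcompact} (with $\widehat{\cD}_2=\mu\mathrm{M}_{C_{\{\boldsymbol{m}\}}}(X_\Gamma)$ and $\widehat{\cD}_1=\mu\mathrm{M}_C(X_\Gamma)$) gives compactness inside $\mu\mathrm{M}_C(X_\Gamma)$ directly. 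Compact generation of $\mu\mathrm{M}_C(X_\Gamma)$ then follows since it is the ind-completion of the small stable subcategory generated by $\cH^{\infty,H}$ and its twists, once (ii) is established.

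For (ii), the key input is that $\cH^\infty$ is a compact generator of $\mu\mathrm{sh}_C(X_\Gamma)$: this is exactly Ganatra--Pardon--Shende \cite{GPSmicro} combined with the Etg\"u--Lekili identification of cocores with cotangent fibers recalled in \S\ref{dec27-5}. Given $\cE\in \mu\mathrm{M}_C(X_\Gamma)$ with $\Hom(\cH^{\infty,H}, \cE\lb s/2\rb)=0$ for every $s$, saturatedness of $\cH^{\infty, H}$ (Lemma~\ref{lem:saturatedmicro} applied via Lemma~\ref{lem:indsaturation}) yields $\Hom_{\mu\mathrm{sh}_C(X_\Gamma)}(\cH^\infty,\frakF(\cE))=0$, hence $\frakF(\cE)=0$ by generation on the sheaf side; conservativity of $\frakF$, which reduces locally on the Darboux cover defining the gluing of $\mu\mathrm{M}_C(X_\Gamma)$ to the well-known conservativity of the forgetful functor $\MHM\to\Sh_{\mathrm{constr}}$, then forces $\cE=0$. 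The resulting equivalence (\ref{jan9-3}) intertwines $\lb 1/2\rb$ on the left with the Adams shift $\lb 1\rb$ on the right by the very definition of $B$ and the indexing of the summation over $s$.

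Finally, for the augmentation claim, note that $B$ is Adams-connected by Lemma~\ref{jan8-8}, so its augmentation module $k=\bigoplus_v \bK e_v$ decomposes as a sum indexed by vertices $v\in V(\Gamma)$. The constant sheaf $\bK_{\bP^1_v}$ (with its canonical Hodge lift, i.e.\ the trivial variation) is pure of weight zero, and the Hodge wrapping formula of Theorem~\ref{thm:comparisonGinzburng} identifies $\cH_v^{\infty, H}$ with a Hodge lift of the cocore at $m_v$; a direct calculation of the microstalk pairing between a cocore and the core sphere in Hodge-theoretic form gives a one-dimensional contribution concentrated in Adams degree $0$, as required. I expect the main obstacle to be verifying conservativity of $\frakF$ at the level of the glued Hodge microsheaf category, because it requires unwinding the Fourier-type limit presentation and checking that vanishing of the underlying sheaf can be detected on each Darboux chart; once that is granted, the rest of the argument is formal.
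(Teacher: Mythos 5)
Your overall strategy — Morita recognition with $\cH^{\infty,H}$ and its half-Tate twists as compact generator, combined with a direct computation for the augmentation module — matches the paper's approach. The paper likewise invokes generation of the underlying microsheaf category by $\cH^\infty$ (Ganatra--Pardon--Shende plus the explicit cocore identification from the appendix), asserts the lift generates $\mu\mathrm{M}_C(X_\Gamma)$, notes continuity, and cites ``the usual Morita-theorem like argument''; the augmentation claim is established in \S\ref{dec20-5} by the computations in Corollary~\ref{jan8-1} and Theorem~\ref{jan9-5}.

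There is, however, a concern with the specific route you take through the generation step. You deduce $\frakF(\cE)=0$ from $\Hom(\cH^{\infty,H},\cE(s/2))=0$ (all $s$) via saturation, and then want $\cE=0$ by conservativity of $\frakF$, which you propose to check locally on Darboux charts by reducing to the forgetful functor $\MHM\to\Sh_{\mathrm{constr}}$. The issue is not local: $\frakF$ on $\mu\mathrm{M}_C(X_\Gamma)$ is defined as the composition of $\Ind(\frakF)$ with a colimit realization $\Ind(\Sh)\to\Sh$, and the colimit realization step is where conservativity can fail — two distinct ind-objects can have isomorphic realizations. Restricting to Darboux charts does not touch that step, so the local reduction does not close the gap you flag. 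What actually underlies the generation claim is not conservativity of $\frakF$ on the whole ind-category but rather the fact that the compact objects of $\mu\mathrm{M}_C(X_\Gamma)$ are (retracts of) objects in the small saturated category $\mu\mathrm{M}^c_{C_{\{\boldsymbol{m}\}}}$ supported on $C$, that $\frakF$ is faithful on that small category by construction, and that the thick subcategory generated by $\cH^{\infty,H}$ and its twists already contains these compacts; the paper is admittedly terse here as well, but the argument you propose is one that, as written, would need a different repair than the one you suggest.

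For the augmentation claim your outline is correct in spirit but does not carry the weight of a proof: the paper's actual verification is the concrete computation that $\bigoplus_{s,k}H^0\Hom(\bigoplus_j\cH^\infty_j,\bigoplus_i\bC_{\bP^1_i}(s/2)[k])$ is $n$-dimensional and concentrated in bidegree $(0,0)$ (via Theorem~\ref{jan9-5}), matching the zero-degree piece $\bigoplus_j\bC\cdot 1_{\cH^\infty_j}$ of $B$ computed from Corollary~\ref{jan8-1}. ``A direct calculation of the microstalk pairing'' is the right idea, but the nontrivial content is precisely the bidegree bookkeeping from $e_u^{j,i}$ and the $(F^0\cap W_1)$ condition, which your sketch does not reproduce.
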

\begin{proof}
Since $\bigoplus_j \cH_j^\infty$ generates $\mu \mathrm{sh}(X_\Gamma)$, the lift $\bigoplus_s\bigoplus_j\cH_j^{\infty,H}(s/2)$ generates $\mu \mathrm{M}_C(X_\Gamma)$ as well. The continuity of the functor is also evident. By the usual Morita-theorem like argument, we get an equivalence.

The second assertion will be proved by direct computations in Subsection~\ref{dec20-5}.
\end{proof}

As a corollary of this lemma, we have
\begin{equation} \label{jan7-3}
   \bigoplus_{s\in \ZZ}\Hom_{\mu \mathrm{M}_C(X_\Gamma)}(\bigoplus_{i=1}^n\bK_{\bP_i^1},\bigoplus_{i=1}^n\bK_{\bP_i^1}(s/2))\simeq
   \bigoplus_{s\in \ZZ}\Hom_{B}(\mathbf{k}, \mathbf{k}\la s\ra).
\end{equation}
where $\mathbf{k}$ is the augmentation module and $\la s\ra:=\la 1\ra^s$.
In particular, we have
\begin{equation} \label{mar19-1}
   \bigoplus_{s\in \ZZ}\mathrm{Ext}^k_{\mu \mathrm{M}_C(X_\Gamma)}(\bigoplus_{i=1}^n\bK_{\bP_i^1},\bigoplus_{i=1}^n\bK_{\bP_i^1}(s/2))
   \simeq
   \bigoplus_{s\in \ZZ}\mathrm{Ext}^{k-s}_{B}(\mathbf{k}, \mathbf{k}\lb -s\rb).
\end{equation}
We regard the left hand side of (\ref{jan7-3}) as
an Adams graded dga by setting the degree of $\Hom^k_{\mu \mathrm{M}_C(X_\Gamma)}(\bigoplus_{i=1}^n\bK_{\bP_i^1},\bigoplus_{i=1}^n\bK_{\bP_i^1}(s/2))$
as $(k,s)$.
By explicitly computing the left hand side of (\ref{mar19-1}),
we will prove the following corollary in Subsection~\ref{jan8-10}.

\begin{corollary}\label{apr-1}
    The algebra $B$ is an Adams Koszul dga in the sense of \cite{HeWu}.
\end{corollary}

Finally, we discuss the algebra structure of endomorphism algebras of $\scH^{\infty,H}$ and
$\bigoplus_{i=1}^n\bK_{\bP_i^1}$,
and their Koszul duality.
The following lemma will also be proved in Subsection~\ref{jan8-10}.

\begin{lemma}\label{jan7-4-2}
The left hand side of (\ref{jan7-3}) is isomorphic to the formal dga $A_\Gamma$ in \cite{EtguLekili}.
\end{lemma}

Then, applying the theory of \cite{HeWu}:
the Adams-version of \cite[Theorem 3.8]{HeWu}, we conclude
\begin{corollary}
    We have a quasi-isomorphism between Adams-graded dgas:
    \begin{equation}
        \bigoplus_{s\in \ZZ}\Hom_{A_\Gamma}(\mathbf{k}, \mathbf{k}\lb s\rb)\simeq B.
    \end{equation}
\end{corollary}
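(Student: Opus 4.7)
The plan is to invoke He--Wu's double centralizer theorem directly on $B$, then identify the resulting derived endomorphism dga with $A_\Gamma$ using the equivalence of Lemma~\ref{jan9-1}. Concretely, Lemma~\ref{jan7-4}(2) tells us that $B$ is Adams Koszul, and Lemma~\ref{jan8-8} lets us replace $B$ by an Adams-connected model in its quasi-isomorphism class. Hence the hypotheses of \cite[Theorem 3.8]{HeWu} are met and we obtain a quasi-isomorphism of Adams-graded dgas
\begin{equation}
    \End_{B'}(k)\simeq B, \qquad B':=\End_B(k),
\end{equation}
where $B'$ is computed in the derived category of Adams-graded $B$-modules.

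To identify $B'$ with $A_\Gamma$ as an Adams-graded dga, I would combine Lemma~\ref{jan9-1} with Lemma~\ref{jan7-4}(1). The equivalence $\mu\mathrm{M}_C(X_\Gamma)\simeq \Mod(B)$ of (\ref{jan9-3}) sends $\bigoplus_i \bK_{\bP^1_i}$ to the augmentation module $k$ and the half-Tate twist $(1/2)$ to the Adams shift $\lb 1\rb$, so
\begin{equation}
    B' \;=\; \bigoplus_{s\in\ZZ}\Hom_{\Mod(B)}(k,k\lb s\rb) \;\simeq\; \bigoplus_{s\in\ZZ}\Hom_{\mu\mathrm{M}_C(X_\Gamma)}\Bigl(\bigoplus_i \bK_{\bP^1_i},\bigoplus_i \bK_{\bP^1_i}(s/2)\Bigr) \;\simeq\; A_\Gamma,
\end{equation}
where the last quasi-isomorphism is Lemma~\ref{jan7-4}(1). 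Substituting this identification into the He--Wu output $\End_{B'}(k)\simeq B$ yields the desired quasi-isomorphism $\bigoplus_s\Hom_{A_\Gamma}(k,k\lb s\rb)\simeq B$.

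The one delicate point, though not really an obstacle given the lemmas already in hand, is bookkeeping of the two bigradings: one must check that the Adams grading on $B$ induced by the half-Tate twist (the variable $b=-s$ in the definition of $B^{a,b}$) matches, under the equivalence of Lemma~\ref{jan9-1}, the intrinsic Adams grading on $A_\Gamma$ used in \cite{EtguLekili}, and that the Adams-connected replacement supplied by Lemma~\ref{jan8-8} is compatible with the hypotheses of \cite[Theorem~3.8]{HeWu}. Once these compatibilities are verified, the corollary is a purely formal consequence of the preceding lemmas and He--Wu's double centralizer theorem.
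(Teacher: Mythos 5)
Your argument is correct and is exactly what the paper's one-line proof (``This is the Adams-version of \cite[Theorem 3.8]{HeWu}'') compresses: you apply He--Wu's double centralizer theorem to $B$ (using Lemma~\ref{jan7-4}(2) for Adams Koszulity and Lemma~\ref{jan8-8} for the Adams-connected model), then use Lemma~\ref{jan9-1} and Lemma~\ref{jan7-4}(1) to identify $\End_B(k)$ with $A_\Gamma$. Your care about the grading bookkeeping is reasonable but not a genuine gap, since Lemma~\ref{jan9-1} explicitly provides the translation between the half-Tate twist and the Adams shift.
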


On the other hand, by the result of \cite{EtguLekili}, the Koszul dual of $A_\Gamma$ is $\cG_\Gamma$. So, $\cG_\Gamma$ and $B$ are quasi-isomorphic. In other words, we have:
\begin{corollary}
    There exists a quasi-isomorphism between Adams-graded dgas:
    \begin{equation}
    \cG_\Gamma\simeq \Hom_{\mu \mathrm{M}_C(X_\Gamma)}(\cH^{\infty,H}, \bigoplus_{s\in \bZ}\cH^{\infty, H}(s/2)).
\end{equation}
\end{corollary}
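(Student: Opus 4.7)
The plan is to identify both $\cG_\Gamma$ and $B := \Hom_{\mu \mathrm{M}_C(X_\Gamma)}(\cH^{\infty,H}, \bigoplus_{s\in \bZ}\cH^{\infty, H}(s/2))$ as the Koszul dual of the same Adams-graded algebra $A_\Gamma$, and then conclude by the uniqueness of Koszul duals. The two inputs are already on the table: the preceding corollary gives, as Adams-graded dgas,
\begin{equation}
    B \;\simeq\; \bigoplus_{s\in \bZ}\Hom_{A_\Gamma}(k, k\lb s\rb),
\end{equation}
while Etg\"u--Lekili~\cite{EtguLekili} prove that, equipping $\cG_\Gamma$ with its second grading as Adams grading, $\cG_\Gamma$ is Koszul dual to $A_\Gamma$.

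First I would unpack the Etg\"u--Lekili statement in the language of \cite{HeWu}: since $A_\Gamma$ is a connected graded algebra concentrated in nonnegative internal degrees with finite-dimensional pieces, it is Adams-connected in the sense of the preceding discussion; and the Koszul dual, in their conventions, is computed as the derived endomorphism algebra of the augmentation module $k = \bigoplus_v \bK e_v$. Thus Etg\"u--Lekili's theorem amounts to a quasi-isomorphism of Adams-graded dgas
\begin{equation}
    \cG_\Gamma \;\simeq\; \bigoplus_{s\in \bZ}\Hom_{A_\Gamma}(k, k\lb s\rb).
\end{equation}
Combining with the displayed equivalence for $B$ gives $\cG_\Gamma \simeq B$, which is precisely the desired statement.

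The one thing that deserves checking, and is what I would regard as the main (but still routine) obstacle, is the matching of gradings on both sides. On the $B$-side, Lemma~\ref{jan9-1} identifies the half-Tate twist $\lb 1/2\rb$ with the Adams shift $\lb 1\rb$; on the $\cG_\Gamma$-side, the Adams grading comes from the $(-1)$ weight on the arrows $g, g^*$ and the $(-2)$ weight on the loops $h_v$ of the extended quiver $\widehat\Gamma$. I would verify that under the equivalence of Lemma~\ref{jan9-1} the generators of $\cG_\Gamma$ land in the expected bidegrees $(|g|, |g|_{\mathrm{Adams}}) = (1,-1)$ and $(|h_v|, |h_v|_{\mathrm{Adams}}) = (1,-2)$, and that the internal differential $d h_v = \sum gg^* - \sum g^*g$ of the Ginzburg dga matches the $A_\infty$-structure on $B$. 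Since both $B$ and $\cG_\Gamma$ are determined up to quasi-isomorphism by being the Koszul dual of the \emph{same} Adams-graded algebra $A_\Gamma$ (via the second part of Lemma~\ref{jan7-4} and the Adams-version of \cite[Theorem~3.8]{HeWu}), this compatibility of gradings is forced once one checks it on generators, and the corollary follows.
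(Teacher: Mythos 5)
Your proposal is correct and follows the paper's own argument essentially verbatim: both $\cG_\Gamma$ (by Etg\"u--Lekili) and $B$ (by the preceding corollary, via the Adams version of He--Wu's Theorem~3.8) are identified as the Koszul dual of $A_\Gamma$, and one concludes $\cG_\Gamma\simeq B$. The paper states this in one line and leaves the grading compatibility implicit, so your additional discussion of matching bidegrees on the generators $g$, $g^*$, $h_v$ is a harmless elaboration rather than a deviation.
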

This corollary explains the mixed-geometric origin of the additional grading on the Ginzburg dga.

\subsection{Category $\cO$ of $A_n$-plumbings of $T^*\bP^1$}\label{section:BLPW}
Recall that the core $C$ in the last subsection is a nodal chain of $\bP^1$'s of $A_n$-type. Let $v$ be one of one-valent vertices of $\Gamma$. We put 
\begin{equation}
    C'=C\cup T^*_{x}\bP^1_v
\end{equation}
where $x$ is a non-nodal point on $\bP^1_v$. In this subsection, we use $C'$ to define Hodge microsheaves.

Since $X_\Gamma$ is a symplectic model of crepant resolution of $\bC^2/(\bZ/(n+1)\bZ)$, this is a conical symplectic resolution. Moreover $C'$ here is the relative core in the context~\cite[Example 3.4]{BLPW2}. The category $\mu \mathrm{sh}_{C'}(X_\Gamma)$ is identified with a block of the category $\cO$ of $X_\Gamma$, one can see this from comparing the explicit description given in \S~\ref{appendix:BLPW} and \cite[Example 4.11]{BLPWhypertoric} (or one may also use the microlocal Riemann--Hilbert~\cite{CKNSmicroRH} to deduce it). Moreover, $\mu \mathrm{sh}_{C'}(X_\Gamma)$ (more precisely, a t-structure of it) is Koszul again by \cite[Example 4.11]{BLPWhypertoric}. We give a proof of this fact by using Hodge microsheaves in Appendix~\ref{appendix:BLPW}. Parallel to Theorem~\ref{thm:comparisonGinzburng} and Corollary|\ref{apr-1}, we have the following
\begin{theorem}
\begin{enumerate}
    \item There exists an object $\cH_j^{\infty, H}\in \mu\mathrm{M}_{C'}(X_\Gamma)$ such that $\frakF(\cH_j^{\infty, H})=\cH_j^\infty$. Moreover, $\cH_j^{\infty, H}$ is the $(\mu \mathrm{M}_{C'}(X_\Gamma),\mu \mathrm{M}_{C'_{\lc \boldsymbol{m}\rc}}(X_\Gamma))$-Hodge wrapping of $\bK_{m_j}$.
    \item The algebra
    \begin{equation}
        \bigoplus_{j,k,s}\Hom_{\mathrm{M}_{C'}(X_\Gamma)}(\cH_j^{\infty, H}, \cH_k^{\infty, H}(s/2))
    \end{equation}
    is a Koszul algebra.
\end{enumerate}
\end{theorem}
The details will be provided in \S~\ref{appendix:BLPW}.

\section{Appendix: Proof of Theorem~\ref{thm:comparisonGinzburng}}
This appendix is for the proof of Theorem~\ref{thm:comparisonGinzburng}. First, we explicitly compute microlocal skyscrapers (=Ganatra--Pardon--Shende counterpart of cocores, or corepresentatives of microstalks). The construction is quite lengthy, but we believe that it is worth recording here, since there are not much written explicit construction of microlocal skyscrapers in the Weinstein context.
Next, based on the explicit construction, we lift it to the Hodge setup. We then complete the proof of Theorem~\ref{thm:comparisonGinzburng}.

In Subsection~\ref{dec20-1}, we organize the notations related to $\mu \mathrm{sh}_C(X_\Gamma)$.
In Subsection~\ref{dec20-2}, we define a building block $\block{k}$ to construct the microlocal skyscrapers and state a vanishing result: Lemma~\ref{m29-2} for it.
In Subsection~\ref{nov23-1}, we prove Lemma~\ref{m29-2} in four cases, using the facts placed in Subsection~\ref{dec20-3}.
In Subsection~\ref{dec20-4}, we construct the object $\scH_j^\infty\in \mu \mathrm{sh}_C(X_\Gamma)$ and show that it is in fact the microlocal skyscraper.
In Subsection~\ref{dec20-5}, we discuss the ``mixed Hodge structure" on $\scH_j^\infty$.

\subsection{Notations and some remarks on $\mu \mathrm{sh}_C(X_\Gamma)$}
\label{dec20-1}
Recall the notation for the $A_n$-plumbing of $T^*\PR$.
We fix three different points $l,r,m$ of the complex projective line $\PR$.
We set $V_r:=\PR\setminus \{l\}(\simeq \bC)$, $V_l:=\PR\setminus \{r\}(\simeq \bC)$, $W:=\PR\setminus \{l,r\}(\simeq \CS)$ and
$T:=\PR\setminus \{m\}(\simeq \bC)$.
Let $\PR_1,\dots,\PR_n$ be the $n$ copies of $\PR$ with $l,r,m$.
By gluing, we construct the $A_n$-plumbing $X_\Gamma$ of $T^*\PR_1,\dots,T^*\PR_n$.
We use the following convention of the gluing: the core is a nodal curve $\Ancore:=\mathrm{Core}(X_\Gamma)$ with $n-1$ nodal points consisting of the intersection between $\PR_i$ and $\PR_{i+1}$ ($1\leq i\leq n-1$) at the points $r\in \PR_i$ and $l\in \PR_{i+1}$.
Then, by the definition of plumbing, for $1\leq i\leq n-1$ we have a natural identification:
\begin{align}\label{m27-2}
    T_r\PR_{i}\simeq T_{l}^*\PR_{i+1} 
\end{align}
where $T_{r}\PR_{i}$ (resp. $T_l^*\PR_{i+1}$) is the tangent fiber at $r\in \PR_i$ (resp. cotangent fiber at $l\in \PR_{i+1}$). 
Recall that the category of microlocal sheaves 
$\mu \mathrm{sh}_C(X_\Gamma)$ can be identified with the homotopy limit of dg-categories
\begin{align*}
    \mu \mathrm{sh}_C(X_\Gamma)=\Sh(\PR_1,\{r\})\times_{\Sh(T_{r}\PR_1,0)}\Sh(\PR_2,\{l,r\})\times_{\Sh(T_{r}\PR_2,0)}\dots \times_{\Sh(T_r\PR_{n-1},0)} \Sh(\PR_n,\{l\}),
\end{align*}
where $\Sh(\PR_1,\{r\})\to \Sh(T_{r}\PR_1,0)$ is defined as the specialization functor at the point $r\in \PR_1$,
and $\Sh(\PR_{2},\{l,r\})\to \Sh(T_{r}\PR_1,0)$ is defined as the composition of the specialization functor at $l$ and the Fourier transformation, i.e. the microlocalization functor,
and other $\times_{\Sh(T_{r}\PR_i,0)}$ are defined similarly.
Similarly, we consider the category of microlocal sheaves $\mu \mathrm{sh}_{C_{\{\boldsymbol{m}\}}}(X_\Gamma)$ where $C_{\{\boldsymbol{m}\}}=\mathrm{Core}(X_\Gamma, \bigcup_i\partial T^*_m \bP^1_i)$ which has the form 
\begin{align*}
    \mu \mathrm{sh}_{C_{\{\boldsymbol{m}\}}}(X_\Gamma)=\Sh(\PR_1,\{m,r\})\times_{\Sh(T_{r}\PR_1,0)}\Sh(\PR_2,\{l,m,r\})\times_{\Sh(T_{r}\PR_2,0)}\dots \times_{\Sh(T_r\PR_{n-1},0)} \Sh(\PR_n,\{l,m\}).
\end{align*}

In this subsection, we see the basic properties of $\mu \mathrm{sh}_{C_{\{\boldsymbol{m}\}}}(X_\Gamma)$.

We first prepare a notation for objects in $\Hzsh{\PR}$.

\begin{definition}\label{m22-1-2}
    Let $F_l$ (resp. $F_r$, $F_m$) be an object of $\Sh(V_l)$ (resp. $\Sh(V_r)$, $\Sh(W)$).
    Assume that the isomorphisms $F_m\simto F_l|_{W}$ and $F_m\simto F_r|_{W}$ on $W$ are given.
    We regard these objects and morphisms as the ones on $\PR$ extended by zero.
    Then, we take $(F_l,F_r)\in \Sh(\PR)$ as a cone of the morphism 
    \[F_m\to F_l\oplus F_r.\]
\end{definition} 
Note that we have the isomorphisms
    \begin{align*}
        (F_l,F_r)|_{V_l}&\simeq F_l\\
        (F_l,F_r)|_{V_r}&\simeq F_r,\ \mbox{and}\\
        (F_l,F_r)|_{W}&\simeq F_m.
    \end{align*}

The objects of $ \mu \mathrm{sh}_{C_{\{\boldsymbol{m}\}}}(X_\Gamma)$ is just a fiber product:
\begin{align}\label{s10-1-2}
    \mathrm{Ob}(\Sh(\PR_1,\{m,r\}))\times_{\mathrm{Ob}(\Sh(T_{r}\PR_1,0))}\dots \times_{\mathrm{Ob}(\Sh(T_r\PR_{n-1},0))} \mathrm{Ob}(\Sh(\PR_n,\{l,m\}))
\end{align}
where $\mathrm{Ob}$ expresses the set of isomorphism classes in the homotopy category.
More concretely,
an object in $\mu \mathrm{sh}_{C_{\{\boldsymbol{m}\}}}(X_\Gamma)$ can be expressed as a tuple:
\begin{align}\label{m27-3n-2}
    H:=((F^1,G^1),(F^2,G^2),\dots,(F^n,G^n)),
\end{align}
with the following conditions:
\begin{enumerate}
    \item $F^i$ (resp. $G^i$) is an object in $\Sh(V_l,\{l,m\})$ (resp. $\Sh(V_r,\{r,m\})$)
    and $(F^i,G^i)\in \Sh(\PR_i,\{l,r,m\})$ is an expression under Definition~\ref{m22-1-2}.
    \item Under the identification (\ref{m27-2}), we have an isomorphism in $\Sh(T_0V_r,0)$
    \begin{align}\label{m27-4n-2}
\mathrm{FL}(\nu_r(G^i))\simeq \nu_l(F^{i+1}),
\end{align}
where $\nu_r$ (resp. $\nu_l$) is the specialization functor at $r$ (resp. $l$) and $\mathrm{FL}(-)$ is the Fourier transformation.
\end{enumerate}

For the object $H$ expressed as (\ref{m27-3n-2}),
we define $H^{\circ}$ as the ``flipped" object: 
\begin{align}\label{m28-3}
    H^{\circ}=((G^n,F^n),(G^{n-1},F^{n-1}),\dots,(G^1,F^1)).
\end{align}

Next, we consider morphisms between
two objects $H=((F^1,G^1),\dots,(F^n,G^n))$ and $H'=(({F'}^{1},{G'}^{1}),\dots,({F'}^n,{G'}^n))$ in $\mu \mathrm{sh}_{C_{\lc \boldsymbol{m}\rc}}(X_\Gamma)$.
In general, the space of morphisms $H^0\HOM_{\Ancore}(H,H')$ cannot be expressed as a naive fiber product.
Let $U_i$ be the union of $V_r\subset \PR_{i-1}$ and $V_l\subset \PR_{i}$ for $2\leq i\leq n$, $U_1$ is $V_l\subset \PR_1$ and $U_{n+1}$ is $V_r\subset \PR_n$.
Note that $U_i\cap U_{i+1}=W\subset \PR_{i}$.
We define 
\begin{align*}
\mu \mathrm{sh}_{C_{\lc\boldsymbol{m}\rc}}(U_i):=\left\{\begin{array}{cc}
     \Sh(V_r, \lc r,m\rc)\times_{\Sh(T_r\PR,0)}\Sh(V_l, \lc m,l\rc)&2\leq i\leq n \\
     \Sh(V_l,\lc m\rc)& i=1\\
     \Sh(V_r, \lc m\rc)&i=n+1.
\end{array}\right.
\end{align*}

We note that 
\[\mu \mathrm{sh}_{C_{\lc \boldsymbol{m}\rc}}(X_\Gamma)=\mu \mathrm{sh}_{C_{\lc \boldsymbol{m}\rc}}(U_1)\times_{\Sh(W)} \mu \mathrm{sh}_{C_{\lc \boldsymbol{m}\rc}}(U_2)\times_{\Sh(W)}\cdots \times_{\Sh(W)}\mu \mathrm{sh}_{C_{\lc \boldsymbol{m}\rc}}(U_{n+1}).\]
Therefore, we have a long exact sequence
\begin{align}\label{s11-2-2}
    \dots \to &H^k\HOM_{\Ancore}(H,H')\to \bigoplus_{i=1}^{n+1}H^k\mathrm{Hom}_{U_i}(H,H')
    \to 
    \bigoplus_{i=1}^{n}H^k\mathrm{Hom}_{U_{i}\cap U_{i+1}}(H,H')\\
    \to &H^{k+1}\HOM_{\Ancore}(H,H')\to \dots.\notag
\end{align}
Here and the below, we omit $\Sh$, $\mu \mathrm{sh}$, and $\boldsymbol{m}$ from the subscript of hom-spaces. 

In the above sequence, hom-spaces over $U_{i}\cap U_{i+1}$ are easy to understand, since they are usual sheaves. Moreover, in our case, 
$H^k\mathrm{Hom}_{U_i}(H,H')$ can be also expressed in a simple form as follows.
We note the following elementary fact.
\begin{lemma}\label{s11-6-2}
    Consider a Cartesian diagram of complexes $A^\bullet,B^\bullet,C^\bullet$ and $D^\bullet$ of vector spaces:
\[\xymatrix{A^\bullet\ar[r] \ar[d]&B^\bullet \ar[d]\\ C^\bullet\ar[r] &D^\bullet }.\]
Then, if $H^k(B^\bullet)\to H^k(D^\bullet)$ is surjective for any $k\in \ZZ$,
we have an exact sequence:
\[0\to H^k(A^\bullet)\to H^k(B^\bullet)\oplus H^k(C^\bullet)\to H^k(D^\bullet)\to 0.\]
\end{lemma}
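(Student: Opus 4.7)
The plan is to derive the claimed short exact sequence from the long exact sequence in cohomology associated to a short exact sequence of complexes extracted from the Cartesian square.

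Interpreting the Cartesian square in its homotopy-theoretic sense (as is natural in the paper's dg-categorical context, where fibrant replacements have been explicitly introduced for the plumbing computation), one obtains a distinguished triangle
\[
A^\bullet \to B^\bullet \oplus C^\bullet \xrightarrow{(f,-g)} D^\bullet \xrightarrow{+1}
\]
in the derived category of vector spaces, where $f\colon B^\bullet\to D^\bullet$ and $g\colon C^\bullet \to D^\bullet$ are the maps in the square. Concretely, at the level of complexes I would first replace $f$ with a quasi-isomorphic termwise surjection, for instance via the standard mapping-cylinder construction applied to $D^\bullet$. This preserves the hypothesis (only cohomology is at stake) and, under the cohomological surjectivity assumption, the strict pullback of the replaced square agrees with the homotopy pullback, so $H^\bullet(A^\bullet)$ is unaffected. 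With $(f,-g)$ termwise surjective, one obtains the short exact sequence of complexes $0 \to A^\bullet \to B^\bullet \oplus C^\bullet \to D^\bullet \to 0$.

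Applying cohomology yields the long exact sequence
\[
\cdots \to H^{k-1}(D^\bullet) \xrightarrow{\partial} H^k(A^\bullet) \to H^k(B^\bullet) \oplus H^k(C^\bullet) \to H^k(D^\bullet) \to \cdots.
\]
It remains to verify that the connecting homomorphism $\partial$ vanishes for every $k$. By exactness, $\ker(\partial)$ coincides with the image of $H^{k-1}(B^\bullet) \oplus H^{k-1}(C^\bullet) \to H^{k-1}(D^\bullet)$; the hypothesis that $H^{k-1}(B^\bullet) \to H^{k-1}(D^\bullet)$ is already surjective forces this image to be all of $H^{k-1}(D^\bullet)$, so $\partial = 0$. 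The long exact sequence thereby decomposes into the claimed short exact sequences.

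The only real subtlety is justifying the reduction step, namely that passing from $B^\bullet$ to a termwise surjective quasi-isomorphic model leaves $H^\bullet(A^\bullet)$ unchanged. This is precisely what the cohomological surjectivity hypothesis is designed to secure, matching the fibrant-replacement framework already used in the paper's treatment of the plumbing computation; aside from this reduction, the argument is a straightforward application of the long exact sequence.
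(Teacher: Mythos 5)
The paper states this lemma without a proof, so I am evaluating your argument on its own terms. Your final step is correct: given a short exact sequence of complexes $0 \to A^\bullet \to B^\bullet \oplus C^\bullet \to D^\bullet \to 0$, the long exact sequence in cohomology together with the surjectivity of $H^{k-1}(B^\bullet) \to H^{k-1}(D^\bullet)$ forces the connecting maps to vanish, and the long exact sequence decomposes into the claimed short exact sequences.

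The gap is in the reduction step and the claim you use to justify it. You assert that replacing $f\colon B^\bullet \to D^\bullet$ by a termwise-surjective quasi-isomorphic model leaves $H^\bullet(A^\bullet)$ --- the cohomology of the \emph{strict} $1$-categorical pullback --- unchanged, and that the cohomological surjectivity hypothesis is what secures this. It does not. Take $B^\bullet$ to be $\bC$ concentrated in degree $1$, $D^\bullet = [\,\bC \xrightarrow{\ \mathrm{id}\ } \bC\,]$ in degrees $0,1$, $C^\bullet = 0$, and $f$ the identity in degree $1$. Then $H^\bullet(D^\bullet) = 0$, so $H^k(B^\bullet) \to H^k(D^\bullet)$ is surjective for every $k$, yet the strict pullback is $A^\bullet = \ker f = 0$ while the homotopy pullback has $H^1 \cong \bC$; the replacement therefore does change the cohomology of the pullback. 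The same example shows the lemma, read for strict pullbacks, is actually false under the stated hypothesis alone (at $k=1$ one would need $0 \to 0 \to \bC \to 0 \to 0$ to be exact), so no reduction argument can rescue it. What is really needed --- and what holds silently in the application --- is that one of the two maps to $D^\bullet$ be termwise surjective from the outset; in the paper this comes from the fibrant replacement in the Dwyer--Kan model structure, under which the relevant dg-functor induces termwise surjections on hom-complexes. With that hypothesis added explicitly (or with ``Cartesian'' understood as ``homotopy Cartesian''), the short exact sequence of complexes is immediate, your long exact sequence argument finishes the proof, and the mapping-cylinder reduction should simply be dropped.
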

Applying this lemma to $\mu \mathrm{sh}_C(U_i)$, we obtain the following corollary:
\begin{corollary}\label{jan24-1cor}
    There exists a short exact sequence:
\begin{align*}
    0\to H^k\mathrm{Hom}_{U_i}(H,H')\to& 
H^k\mathrm{Hom}_{V_r}(G^i,{G'}^i)\oplus H^k\mathrm{Hom}_{V_l}(F^{i+1},{F'}^{i+1})\\
\to &H^k\mathrm{Hom}_{T_rV_r}(\nu_r(G^i),\nu_r({G'}^i))\to 0.
\end{align*}
for $1\leq i\leq n-1$ 
if one of the following conditions holds:
\begin{enumerate}
    \item \label{oct30-1} $G^i$ and ${G'}^i$ on $V_r$ have singular points at most at $0$,
    \item \label{oct30-2} $F^{i+1}$ and ${F'}^{i+1}$ on $V_l$ have singular points at most at $0$.
\end{enumerate}
\end{corollary}

This means that a morphism in $H^k\mathrm{Hom}_{U_i}(H,H')$ can be expressed as a pair 
\begin{align}\label{s11-5}
    (f_l,f_r)\in H^k\mathrm{Hom}_{V_r}(G^i,{G'}^i)\oplus H^k\mathrm{Hom}_{V_l}(F^{i+1},{F'}^{i+1}), 
\end{align} 
with $\nu_r(f_l)=\mathrm{FL}(\nu_l(f_r))$.
Although the assumption in the above corollary does not hold in general, we have the same consequence in our situation (see Lemma~\ref{dec20-6}).

So,
the morphism 
$\bigoplus_{i=1}^{n+1}H^k\mathrm{Hom}_{U_i}(H,H')
    \to 
    \bigoplus_{i=1}^{n}H^k\mathrm{Hom}_{U_{i}\cap U_{i+1}}(G^i|_{W},{G'}^i|_{W})$ in the exact sequence (\ref{s11-2-2})
    is expressed as
    \begin{align}\label{s11-7-2}
\xymatrix{
\bigoplus_{i=1}^{n+1}H^k\mathrm{Hom}_{U_i}(H,H')
    \ar[r]&
    \bigoplus_{i=1}^{n}H^k\mathrm{Hom}_{U_{i}\cap U_{i+1}}(H,H')\\
(b_1,(b_{2,l},b_{2,r}),\dots,(b_{n,l},b_{n,r}),b_{n+1})\ar@{|->}[r]\ar@{}[u]|-{\rotatebox{90}{$\in$}}& 
(b_1|_{W}-b_{2,l}|_{W},b_{2,r}|_{W}-b_{3,r}|_{W},\dots, b_{n,r}|_{W}-b_{n+1}|_{W})\ar@{}[u]|-{\rotatebox{90}{$\in$}}.
}        
    \end{align}

In order to study the morphisms in a (classical) derived category of sheaves,
we prepare some notations.
For a distinguished triangle $A'\to A\to A''\to A'[1]$ and an object $B$ in $\Sh(X)$ for a complex manifold $X$,
for any $k\in \bZ$ we have the long exact sequence:
\[\dots \to \HOM(A[1],B[k])\to \HOM(A'',B[k])\to \HOM(A,B[k])\to \HOM(A',B[k])\to  \dots,\]
    where $\HOM$ means $\HOM_{\Hzsh{X}}$.

\begin{definition}\label{s10-3-2}

\begin{enumerate}
    \item 
For $i,j\in \ZZ$ ($i\neq j$), 
if there exist subspaces $L'_k\subset \HOM(A',B[k])$ and $L''_k\subset \HOM(A'',B[k])$ with an exact sequence
\[0\to L''_k\to \HOM(A,B[k])\to L'_k\to 0\] for $k= i,j$, and $\HOM(A,B[k])=0$ for $k\neq i,j$,
then we write
\[{{\xymatrix{A' \ar[d]\ar[rd]^{\overbrace{i,\dots,i}^{d_i'},\ \overbrace{ j,\dots,j}^{d_j}}& &\\ A\ar[d]\ar[r]& B,&\mbox{or}\qquad \\ A''\ar[ur]_{\underbrace{i,\dots,i}_{d''_i},\ \underbrace{j,\dots,j}_{d''_j}}&&}}}
{\xyhidaridt{A'}{A}{A''}{B,}{i\times d'_i,\ j\times d_j'}{i\times d''_i,\ j\times d''_j}}\] 
where 
$d'_k=\dim{L'_k}$ and $d''_k=\dim{L''_k}$.

\item \label{j25-2-2}
For $i, j\in \ZZ$ ($i\neq j$),
if $\HOM(A,B[k])=0$ for $k\neq i,j$ (for example, in the situation above),
we also write
\[\xymatrix@C=50pt{A\ar[r]^{i\times d_i,\ j\times d_j}&B},\]
where $d_i=\dim \HOM(A,B[i])$ and $d_j=\dim \HOM(A,B[j])$.

\end{enumerate}

\end{definition}

We adopt the same definition also for 
similar situations (see Example~\ref{jan28-103}),
in particular, for a distinguished triangle $B'\to B\to B''\to B'[1]$.
Remark that we only use the above diagrams only for distinguished triangles;
whenever we use the above diagrams, the vertical sequence $A'\to A\to A''$ is always a distinguished triangle.

\begin{example}\label{jan28-103}
The diagram:
\[{\xyhidaridt{A'}{A}{A''}{B}{0,1,1}{2}}\]
means the following.
\begin{enumerate}
    \item There is one degree $0$ morphism $f\colon A\to B[0]$ up to multiplicative constant which corresponds to $A'\to B[0]$, i.e. $f\circ (A'\to A)\neq 0$.
\item There are two degree $1$ morphisms $A\to B[1]$ which form a basis of $\HOM(A,B[1])$ and each morphism corresponds to $A'\to B[1]$.  
\item There is one degree $2$ morphism $g\colon A\to B[2]$ up to multiplicative constant which corresponds to $A''\to B[2]$, i.e. there is a morphism $h\colon A''\to B[2]$ such that $g=h\circ (A\to A'')$.
\item There is no non-zero morphism $A\to B[k]$ for $k\neq 0,1,2$.

\item Remark that there may be more morphisms 
from $A'$ or $A''$ to $B[k]$ for some $k$.
For example, if there is a non-zero morphism $f\in \HOM(A'',B[1])$,
then the diagram above implies that
there is a morphism $g\colon A'[1]\to B[1]$ such that the following diagram commutes:
\[\xymatrix{{A''}\ar[r]^{f}\ar[d]&{B[1]}.\\ A'[1]\ar[ur]_{g}&}\]

\end{enumerate}
\end{example}

\begin{remark}
In this paper, the ambiguity of constant multiplication of a morphism is not important in most cases.
So, for $A,B\in \sh{X}$, 
if $\dim\HOM(A,B)=1$,
we always take and fix a non-zero morphism $A\to B$ and refer to it as ``the morphism $A\to B$" without giving any specific symbols.
Once we confirm $\dim\HOM(A,B)=1$,
the symbol $A\to B$ appearing in an equation or a diagram means ``the morphism $A\to B$". 
For example, for $C\in \sh{X}$ with $\dim\HOM(B,C)=1$,
when we consider the composition of
``the morphism $A\to B$" and ``the morphism $B\to C$",
we just write it as $(B\to C)\circ (A\to B)$.
Even if $\dim\HOM(A,B)>1$,
after fixing one morphism $A\to B$ (typically with some properties), we use a similar convention.
\end{remark}

\subsection{Basic objects in $\Sh(\CC)$ and the object $\block{k}$}
\label{dec20-2}

In this subsection,
we define some basic sheaves on $\CC$.
Then we define the object $\block{k}$ in $\mu \mathrm{sh}_C(X_\Gamma)$,
which will be a ``building block" to construct the microlocal skyscrapers. 
Let $V$ be a complex plane $\bC$ with two different points $0,m$,
i.e. $V_l$ or $V_r$.
We set $W:=V\setminus \{0\}$ and $T:=V\setminus \{m\}$.
The sheaf $\CV$ is the constant sheaf on $V$, $\CW$ is the zero-extension of the constant sheaf on $W$ to $V$ and $\CC_0$ is the skyscraper sheaf supported at $0$.

\begin{definition}\label{oct31-1}
\begin{enumerate}
    \item For $s\geq 1$, we denote by $\scL_s$ the $\bC$-local system whose monodromy matrix is the unipotent Jordan block of size $s$ on $W$.
    \item We denote by $A_s$ the zero extension of $\scL_s$ to $V$. We regard it as an object in $\Sh(V,0)$.
    \item We set $B_s:=\rR\Gamma_{W}A_s\in \Sh(V,0)$.
\end{enumerate}
\end{definition}
The followings are standard.
\begin{lemma}
\begin{enumerate}
    \item For $s\geq 2$, we have the following distinguished triangles in $\Sh(V,0)$:
    \begin{align}
        \CW\to A_{s}\to A_{s-1}\to  \CW[1], \label{m16-3}\\
        A_{s-1}\to A_{s}\to \CW\to  A_{s-1}[1]. \label{jan27-101}
    \end{align}
    \item For $A_s$ (resp. $\CV$) on $V$ and $B_s$ (resp. $\CC_0[-1]$) on the dual vector space $V^{*}$,
    two objects are transformed into each other by the Fourier transformation.
    
        \item \label{m16-1} We have the following diagrams:
        \begin{align}\label{m16-2-2}
            \xyhidaridt{\CW}{A_s}{A_{s-1}}{\CV,}{1}{0}\qquad \xyhidaridt{\RG{W}\CV}{B_s}{B_{s-1}}{\bC_0,}{0}{-1}
        \end{align}
    where the vertical sequence in the first diagram is the distinguished triangle (\ref{m16-3})
    and the one in the second diagram is obtained by applying the functor $\RG{W}(-)$ to (\ref{m16-3}).
    Two diagrams are transformed into each other (up to shifts) by the Fourier transformation.
\end{enumerate}
\end{lemma}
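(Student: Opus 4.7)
The plan is to establish the three parts in order, leveraging standard properties of the unipotent Jordan-block local system together with Brylinski's theorem for monodromic Fourier transforms.

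For part (1), the rank-$s$ unipotent local system $\scL_s$ on $W$ admits a natural sub $\bC_W$ (the kernel of the nilpotent part $T_s - I$) with quotient isomorphic to $\scL_{s-1}$, and dually a natural quotient $\bC_W$ (the cokernel of $T_s - I$) with kernel $\scL_{s-1}$. Since the extension-by-zero functor $j_!$ along the open inclusion $j \colon W \hookrightarrow V$ is exact, applying it to these two short exact sequences of local systems produces the two desired distinguished triangles in $\Sh(V, 0)$.

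For part (2), I would appeal to the compatibility between $\mathrm{FS}$ and $\mathrm{FL}$ under Riemann--Hilbert (recalled in Section~\ref{section:FourierTransform}) together with Brylinski's theorem. In the complex/perverse normalization used here, $\bC_V$ corresponds to $\cO_V$, whose Fourier--Laplace transform is $\delta_0$, hence $\mathrm{FS}(\bC_V) = \bC_0[-1]$. The identification $\mathrm{FS}(A_s) = B_s$ can then be verified using the linear-algebraic description of monodromic sheaves (Theorem~\ref{thm:MHMlinearalgebra} and Definition~\ref{fev5-1}) applied to the tuple encoding the single-Jordan-block unipotent data, or directly via the integral formula $\mathrm{FS}(F) = p_{2!}(p_1^{-1}F \otimes \bC_S)$, identifying the output with $\bR j_* \, j^{-1} A_s = B_s$.

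For part (3), I would compute $\mathrm{Hom}(A_s, \bC_V[k])$ inductively using the long exact sequence associated to the first triangle of (1). The base case $A_1 = \bC_W$ uses the triangle $\bC_W \to \bC_V \to \bC_0 \to \bC_W[1]$ and gives $\mathrm{Hom}(\bC_W, \bC_V[k]) = \bC$ for $k = 0, 1$ and zero otherwise. The inductive step shows that $\mathrm{Hom}(A_s, \bC_V)$ is one-dimensional (also verifiable by adjunction via $\Gamma(W, \scL_s^\vee) = \bC$) and that the unique nonzero degree-$0$ morphism factors through $A_s \to A_{s-1}$, since the composition $\bC_W \hookrightarrow A_s \to \bC_V$ vanishes on global-section generators for $s \geq 2$; this matches the ``$0$'' on the bottom of the diagram. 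The degree-$1$ class pulls back to the generator of $\mathrm{Hom}(\bC_W, \bC_V[1])$, matching the ``$1$'' on top. The second diagram follows by applying the exact functor $\bR j_* \, j^{-1}$ to the first triangle of (1) and then Fourier-transforming via part (2); the natural isomorphism $\bR j_* \, j^{-1} \bC_W \cong \bR j_* \, j^{-1} \bC_V$ explains why the fiber object is written as $\bR\Gamma_W \bC_V$.

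The main technical obstacle I expect is bookkeeping of shift conventions in the Fourier--Sato transform (real versus complex dimension, perverse shift), so that $\mathrm{FS}(\bC_V) = \bC_0[-1]$ and $\mathrm{FS}(A_s) = B_s$ hold on the nose rather than up to a shift; once the convention is pinned down, the remaining identifications follow mechanically from the triangles in (1) and the adjunctions.
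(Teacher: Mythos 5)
Your proposal is correct, and since the paper simply declares these facts ``standard'' without giving any argument, you are supplying details the paper leaves implicit rather than reproving something.

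Part (1) via the two short exact sequences $0\to\bC_W\to\scL_s\to\scL_{s-1}\to 0$ and $0\to\scL_{s-1}\to\scL_s\to\bC_W\to 0$ of local systems, pushed forward by the exact functor $j_!$, is exactly right. Part (3) is also fine: adjunction $\Hom(j_!\scL_s,\bC_V[k])\cong\Ext^k_{\bZ}(\scL_s,\bC)$ gives one-dimensionality in degrees $0,1$ and vanishing elsewhere; the degree-$0$ class factors through $A_{s-1}$ because for $s\geq 2$ the kernel line $\Ker(T_s-1)$ lies inside $\mathrm{Im}(T_s-1)$, which is precisely the kernel of the surjection to $\bC_W$; and the degree-$1$ class restricts isomorphically to $\Hom(\bC_W,\bC_V[1])$ since $\Hom(A_{s-1},\bC_V[2])=0$ forces surjectivity in the long exact sequence. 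The second diagram follows by applying $R\Gamma_W(-)=Rj_*j^{-1}$ and shifting by Fourier duality, as you say.

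On part (2), your caution about shift conventions is well placed, and you should be more definite: with the raw Fourier--Sato transform $p_{2!}(p_1^{-1}\cE\otimes\bK_S)$ as literally written in \S\ref{section:FourierTransform}, one has $\mathrm{FS}(\bC_V)=\bC_0[-2]$ (real dimension $2$), not $\bC_0[-1]$. The lemma's claim matches the \emph{monodromic} normalization given in Definition~\ref{fev5-1} / Theorem~\ref{thm:MHMlinearalgebra}, where the transform is defined on the linear-algebraic data $(\psi,\phi,\mathrm{can},\mathrm{var})$: for $\bC_V[1]$ this is $(\bC,0,0,0)$, Fourier-transforming to $(0,\bC(-1),0,0)$, i.e.\ $\bC_0$ (up to Tate twist), hence $\bC_V\mapsto\bC_0[-1]$; and $A_s[1]=j_!\scL_s[1]$ has $\mathrm{can}$ an isomorphism, exchanging under the transform with $\mathrm{var}$ an isomorphism, i.e.\ $Rj_*\scL_s[1]=B_s[1]$. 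So the route you flag as a backup (``or directly via the linear-algebraic description'') is actually the one that matches the paper's conventions on the nose; the integral-formula route picks up an extra shift by $\dim_{\bC}V$ that you would then need to normalize away. This is consistent with the explicit ``up to shifts'' caveat in part (3) of the lemma.
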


\begin{definition}  \label{nov6-6}
\begin{enumerate}    
   \item 
    We set $P_1:=\CV$.
    For $s\in \ZZ_{\geq 2}$, we take $P_s\in \Sh(V,0)$ which fits into the distinguished triangle:
   \[\bC_V\to P_s\to A_{s-1}\to \bC_V[1],\]
   where $A_{s-1}\to \CV[1]$ is a nonzero morphism defined by the diagram~(\ref{m16-2-2}).
\item For $s\in \ZZ_{\geq 2}$, we take $Q_s\in \sh{V,0}$ which fits into the distinguished triangle:
   \[\bC_0[-1]\to Q_s\to B_{s-1}\to \bC_0,\]
    where $B_{s-1}\to \bC_0$ is a nonzero morphism in the diagram~(\ref{m16-2-2}).
    \end{enumerate}
\end{definition}

\begin{lemma}
\begin{enumerate}
\item \label{j24-3}
 For $P_s$ on $V$ and $Q_s$ on the dual vector space $V^{*}$,
    two objects are transformed into each other by the Fourier transformation.

    \item \label{j24-4} We have distinguished triangles:
    \begin{align}
        \label{j24-5}\CC_0[-1]\to A_s\to P_s\to \CC_0,\\
        \label{j24-6}\CV\to B_s\to Q_s\to \CV[1].
    \end{align}
    
\end{enumerate}
\end{lemma}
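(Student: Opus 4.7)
The plan is to first establish the two distinguished triangles in part~\ref{j24-4} via the octahedral axiom, and then derive the Fourier-duality statement in part~\ref{j24-3} by applying the Fourier transform to (\ref{j24-5}) and comparing the result with the defining triangle of $Q_s$.

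For (\ref{j24-5}): by diagram~(\ref{m16-2-2}), the connecting morphism $\gamma\colon A_{s-1}\to \CV[1]$ in the defining triangle $\CV\to P_s\to A_{s-1}\to \CV[1]$ of $P_s$ factors as $A_{s-1}\to \CW[1]\to \CV[1]$, where the first arrow is the connecting morphism of (\ref{m16-3}) and the second is the shifted standard inclusion $\CW\to \CV$. Consequently the composition $A_s\to A_{s-1}\xrightarrow{\gamma}\CV[1]$ vanishes, because $A_s\to A_{s-1}\to \CW[1]$ is already zero as consecutive arrows in (\ref{m16-3}). Hence $A_s\to A_{s-1}$ lifts along $P_s\to A_{s-1}$ to a morphism $\phi\colon A_s\to P_s$, and the octahedral axiom applied to the factorization $A_s\xrightarrow{\phi}P_s\to A_{s-1}$ produces a distinguished triangle
\[ \mathrm{Cone}(\phi)\to \mathrm{Cone}(A_s\to A_{s-1})\to \mathrm{Cone}(P_s\to A_{s-1})\to \mathrm{Cone}(\phi)[1]. \]
Under $\mathrm{Cone}(A_s\to A_{s-1})\simeq \CW[1]$ and $\mathrm{Cone}(P_s\to A_{s-1})\simeq \CV[1]$, the induced map coincides with the shifted inclusion; its fiber is $\CC_0$, giving $\mathrm{Cone}(\phi)\simeq \CC_0$, which is (\ref{j24-5}).

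The argument for (\ref{j24-6}) is entirely parallel. By the second diagram of (\ref{m16-2-2}), the morphism $B_{s-1}\to \CC_0$ defining $Q_s$ factors as $B_{s-1}\to \rR\Gamma_W\CV[1]\to \CC_0$, where the first arrow is the connecting morphism of the triangle $\rR\Gamma_W\CV\to B_s\to B_{s-1}$ (obtained by applying $\rR\Gamma_W$ to (\ref{m16-3})) and the second is rotated from the canonical triangle $\CV\to \rR\Gamma_W\CV\to \CC_0[-1]\to \CV[1]$. Therefore the composition $B_s\to B_{s-1}\to \CC_0$ vanishes, yielding a lift $\psi\colon B_s\to Q_s$. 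The octahedral axiom then produces a triangle $\mathrm{Cone}(\psi)\to \rR\Gamma_W\CV[1]\to \CC_0$, whose second arrow has fiber $\CV[1]$, so $\mathrm{Cone}(\psi)\simeq \CV[1]$, establishing (\ref{j24-6}).

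For part~\ref{j24-3}: applying the Fourier transform to (\ref{j24-5}) on $V^*$, and using $\mathrm{FL}(A_s)\simeq B_s$ and $\mathrm{FL}(\CC_0[-1])\simeq \CV$ from the basic Fourier correspondence recalled at the start of this subsection, we obtain a distinguished triangle $\CV\to B_s\to \mathrm{FL}(P_s)\to \CV[1]$ on $V$. Since $\HOM(\CV,B_s)$ is one-dimensional (as is easily checked using adjunction), the first arrow agrees up to non-zero scalar with the first arrow of (\ref{j24-6}), so by the axioms of a triangulated category $\mathrm{FL}(P_s)\simeq Q_s$; swapping $V$ and $V^*$ yields $\mathrm{FL}(Q_s)\simeq P_s$. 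The main obstacle is ensuring that the maps $\CW[1]\to \CV[1]$ and $\rR\Gamma_W\CV[1]\to \CC_0$ produced by the octahedral axiom are indeed the standard ones described above; this is automatic from chasing the octahedral diagram but requires careful bookkeeping of the cone identifications coming from (\ref{m16-3}), the defining triangle of $P_s$, and the canonical triangle around $\rR\Gamma_W\CV$.
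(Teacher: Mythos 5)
Your argument is correct and rests on the same two tools the paper uses: the octahedral axiom for the triangles in part~\ref{j24-4}, and the compatibility of $\mathrm{FL}$ with the distinguished triangles and with diagram~(\ref{m16-2-2}) for part~\ref{j24-3}. The only real divergence is in the order of deduction: the paper obtains part~\ref{j24-3} directly ``from the definition,'' meaning one applies $\mathrm{FL}$ to the \emph{defining} triangle $\CV\to P_s\to A_{s-1}\to\CV[1]$ and reads off the defining triangle of $Q_s$ (using $\mathrm{FL}(\CV)\simeq\CC_0[-1]$, $\mathrm{FL}(A_{s-1})\simeq B_{s-1}$, and the fact that the two diagrams in~(\ref{m16-2-2}) are Fourier-dual), whereas you instead apply $\mathrm{FL}$ to~(\ref{j24-5}) and match the result against~(\ref{j24-6}) via the one-dimensionality of $\HOM(\CV,B_s)$. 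Your route works, but it implicitly needs the two first arrows $\CV\to B_s$ to both be nonzero (or both zero) for the scalar-matching step to identify the cones; this is easy (it follows from indecomposability of $P_s$ and $Q_s$, or simply from $\mathrm{FL}$ being an equivalence together with the nonvanishing of $\CC_0[-1]\to A_s$), but it is a small check the paper's more direct route avoids. Likewise, your octahedron is entered through $A_s\xrightarrow{\phi}P_s\to A_{s-1}$ after first constructing the lift $\phi$, while the paper enters the same octahedron through $A_{s-1}\to\CW[1]\to\CV[1]$, which requires no preliminary lifting; the two are just different faces of the same diagram.
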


\begin{proof}
The first statement \ref{j24-3} follows from the definition.

For \ref{j24-4}, 
Applying the octahedral axiom for the commutative diagram:
\[\xymigisita{A_{s-1}}{\CW[1]}{\CV[1]},\]
we obtain the distinguished triangle (\ref{j24-5}).
We also get (\ref{j24-6}) in the same way (or by applying the Fourier transformation to (\ref{j24-5})).

\end{proof}

\begin{remark}\label{nov5-3}
\begin{enumerate}
    \item 
    The object $P_s$ is isomorphic to the underived push-forward of $A_s$  along the inclusion $W\hookrightarrow V$.
    \item \label{nov5-2}
    The objects $A_s[1]$, $B_s[1]$, $P_s[1]$ ($s\geq 1$), $Q_s[1]$ ($s\geq 2$)
    and $\CC_0$ are perverse sheaves on $V$.
    It is known that any perverse sheaf on $V$ with possible singular points at $0$ can be decomposed into a direct sum of several perverse sheaves of these types.
    This fact is a special case of Lemma~\ref{nov5-1} below.
\end{enumerate}
\end{remark}

Next, we define some objects in $\sh{V,0}$ which has singular points also at $m$.

\begin{definition-lemma}\label{oct31-4}
    \begin{enumerate}
    \item We define objects $\ovA{1}, \undA{1}\in \shVzm$ as
    \[\ovA{1}:=\CWT,\quad \undA{1}:=\RG{T}\CW.\]
\item\label{s10-5} For $s\in \ZZ_{\geq 2}$, we take $\ovA{s}\in \shVzm$ (resp. $\undA{s}\in \shVzm$) inductively so that it fits into the distinguished triangle:
        \[\ovA{s-1}\to \ovA{s}\to \CW\to \ovA{s-1}[1]\]
        \[(\mbox{resp.}\  \CW\to \undA{s}\to \undA{s-1}\to \CW[1]),\]
where $\CW\to \ovA{s-1}[1]$ (resp. $\undA{s-1}\to \CW[1]$) is defined by the following diagrams
    \[\xymigidt{\CW}{\ovA{s-2}}{\ovA{s-1},}{\CW}{\emptyset}{1}\quad  \xyhidaridt{\CW}{\undA{s-1}}{\undA{s-2}}{\CW,}{1}{\emptyset}\]
where $\ovA{0}:=\CC_m[-1]$, $\undA{0}:=\RG{m}\CW[1]$ if $s=2$.

\item \label{s10-6}For $s\geq 1$, 
there is a morphism $\RG{T}\CW\to \ovA{s-1}[1]$ such that 
the diagram below commutes:
\[\xymatrix{\RG{T}\CW\ar[r]\ar[rd]& \ovA{s-1}[1] \ar[d]\\&\CW[1]},\]
where $\RG{T}\CW\to \CW[1]$ is the morphism $\und{A_1}\to \CW[1]$ appeared in \ref{s10-5} for $s=2$. 
Remark that the dimension of the space of such morphisms is grater than $1$.   
We fix one such morphism and take $\ovunA{s}\in \shVzm$ so that it fits into the distinguished triangle:
\[\ovA{s-1}\to \ovunA{s}\to \RG{T}\CW\to \ovA{s-1}[1].\]

\item We define $\ove{B_s}\in \shVzm$ as
\[\ove{B_s}:=\RG{W}\ove{A_s}.\]

\item We set $\ove{P_1}:=\bC_T\in \shVzm$.
For $s\geq 2$, we take
$\ove{P_s}\in \shVzm$ inductively so that it fits into the distinguished triangle:
\[\ove{P_{s-1}}\to \ove{P_s}\to \CW\to \ove{P_{s-1}}[1],\]
where $\CW\to \ove{P_{s-1}}[1]$ is the one corresponding to $\CW\to \ove{A_{s-1}}[1](=(\ove{P_{s-1}})_{W}[1])$.

\item 
We set $\und{P_1}:=\RG{T}\CV\in \shVzm$.
For $s\geq 2$, we take $\und{P_s}\in \shVzm$ so that it fits into the distinguished triangle:
\[\CV\to \und{P_s}\to \undA{s-1}\to \CV[1],\]
where the morphism $\undA{s-1}\to \CV[1]$ is the one corresponding to
$\undA{s-1}\to \CW[1]$.

\item 
For $s\geq 2$, by using the morphism $\RG{T}\CW\to \ove{P_{s-1}}[1]$ (corresponding to 
$\RG{T}\CW\to\ove{A_{s-1}}[1]$),
we take $\und{\ove{P_s}}\in \shV$ so that it fits into
\[\ove{P_{s-1}}\to \und{\ove{P_s}}\to \RG{T}\CW\to \ove{P_{s-1}}[1].\]

\item We take $\wt{\und{P_1}}\in \shVzm$ or $\sh{\PR,\{0,m,\infty\}}$ so that it fits into the distinguished triangle:
\begin{align*}
    \CC_T\to \wt{\und{P_1}}\to \CC_m[-1]\to  \CC_T[1]
\end{align*}
where $\CC_m[-1]\to  \CC_T[1]$ is a nontrivial morphism unique up to scaling.

\item We set
\begin{align*}
    \ove{\calL_s}:=\ove{A_s}|_{W}, \quad   \und{\calL_s}:=\und{A_s}|_{W},\ \mbox{and}\quad \und{\ove{\calL_s}}:=\ovunA{s}|_{W}. 
\end{align*}

\item \label{oct31-2}We have
\begin{align*}
    A_s|_{W}\simeq B_s|_{W}&\simeq P_{s}|_{W}\simeq \calL_s,\\
     Q_s|_{W}&\simeq \calL_{s-1},\\ 
\ove{A_s}|_{W}\simeq \ove{B_s}&|_{W}\simeq \ove{P_{s}}|_{W}  \simeq \ove{\calL_s},\quad\mbox{and} \\
\und{A_s}|_{W}\simeq &\und{P_{s}}|_{W} \simeq \und{\calL_s}.
\end{align*}

    \end{enumerate}
\end{definition-lemma}

\begin{remark}
\begin{enumerate}
\item 
All the objects $\ove{A_s}[1]$, $\und{A_s}[1]$, $\ove{\und{A_s}}[1]$, $\ove{B_s}[1]$, 
$\ove{P_s}[1]$, $\und{P_s}[1]$, $\ove{\und{P_s}}[1]$ are perverse sheaves on $V$.

\item
     As remarked, the space of morphism $\RG{T}\CW\to \ovA{s-1}[1]$ with the property in \ref{s10-6} is not $1$-dimensional and there is an ambiguity in the definition of $\ovunA{s}$ in this sense. 
     Accordingly, the definition of $\mathcal{H}_k^i$ which appears later also contains an ambiguity.
Nevertheless, the ``limit object" $\mathcal{H}^\infty_k$ is no longer so.
\end{enumerate}
 \end{remark}

Let us determine the specialization at $0$ of the objects defined above. 
Let $t$ be a coordinate of $V(=\CC)$ such that $t=0$ corresponds to the point $0$.
Recall that a perverse sheaf $\scF$ on $V$ with singular points at most at $0$ is determined by the tuple $(\psi_t\scF,\phi_{t,1}\scF,\mathrm{can},\Var)$, where $\psi_t\scF$ is the nearby cycle of $\scF$ (with the monodromy automorphism),
$\phi_{t,1}\scF$ is (resp. $\psi_{t,1}\scF$) the unipotent vanishing (resp. nearby) cycle of $\scF$, $\can$ is the morphism $\can \colon \psi_{t,1}\scF\to \phi_{t,1}\scF$
and
$\Var$ the morphism $\Var \colon \phi_{t,1}\scF\to \psi_{t,1}\scF$.

\begin{lemma}[Proposition.8.6.3 of \cite{KS}]
For a perverse sheaf $\scF$ on $V$ (which may have singular points at points other than $0$),
the specialization $\nu_0(\scF)$ (the object on a tangent fiber $T_0V$) at $0$
is again a perverse sheaf on $T_0V$ with the only singular point at $0$.
Moreover, under the isomorphism $T_0V\simeq V$ by the coordinate $t$,
$\nu_0(\scF)$ corresponds to the tuple 
$(\psi_{t}\scF, \phi_{t,1}\scF, \can, \Var)$.
\end{lemma}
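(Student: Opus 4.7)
The plan is to reduce the statement to the standard dictionary between monodromic perverse sheaves on a line and quiver data $(\psi,\phi,\mathrm{can},\mathrm{Var})$, and then to identify the data produced by specialization with the unipotent nearby/vanishing cycles of $\scF$ at $0$.

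First, I would verify that $\nu_0(\scF)$ is a monodromic perverse sheaf on $T_0V$ with a unique singular point at the origin. Recall that specialization is defined via the deformation to the normal cone $\widetilde{V}\to \bC$, whose special fibre is $T_0V$ and whose generic fibre is $V$; pulling $\scF$ back to $\widetilde{V}$ and taking the nearby cycles along the parameter direction yields $\nu_0(\scF)$. Since the construction intertwines the natural $\bC^*$-scaling on $T_0V$ with the scaling on the normal bundle, $\nu_0(\scF)$ is monodromic. Outside $0\in T_0V$, the specialization is computed by restricting $\scF$ to a punctured neighbourhood of $0$ in $V$ and transferring via the exponential/identification map; since we are using the coordinate $t$ to identify $T_0V$ with $V$ and $\scF$ is a local system on a punctured neighbourhood of $0$, $\nu_0(\scF)|_{T_0V\setminus 0}$ is a local system. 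That $\nu_0$ preserves perversity is standard (for instance, specialization is $t$-exact for the perverse $t$-structure on monodromic sheaves), giving the first assertion.

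Next, I would invoke the monodromic description: a perverse sheaf on a line with singular point at $0$ is equivalent to the datum $(\Psi,\Phi,c,v)$ where $\Psi$ is the stalk at a nearby point with its monodromy, $\Phi$ the vanishing cycles, and $c,v$ the canonical and variation maps. So it suffices to compute these four pieces for $\nu_0(\scF)$ and match them with $(\psi_t\scF,\phi_{t,1}\scF,\mathrm{can},\mathrm{Var})$.  The nearby cycle of $\nu_0(\scF)$ along $t$ on $T_0V$ computes the restriction of $\nu_0(\scF)$ to a punctured disk, which by the first step equals the nearby cycle of $\scF$ at $0\in V$; this gives $\psi(\nu_0\scF)\simeq \psi_t\scF$ compatibly with monodromy. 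For the vanishing part, one uses that the stalk of $\nu_0(\scF)$ at $0$ is computed by a cone involving the nearby cycle, which is precisely how $\phi_{t,1}\scF$ arises from $\scF$ near $0$; the maps $c$ and $v$ arise from the same cone distinguished triangle, so they are identified with $\mathrm{can}$ and $\mathrm{Var}$ by naturality.

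The main obstacle is bookkeeping the various sign/shift conventions so that the maps $c,v$ really match $\mathrm{can},\mathrm{Var}$ and not their negatives or shifts thereof. In practice one fixes a trivialization of the normal bundle via $t$, traces through the definitions of $\psi_t,\phi_{t,1}$ as cones of restriction/corestriction along $\{t=0\}$, and observes that these cones are exactly the ones defining the local structure of $\nu_0(\scF)$ near $0$. Once the conventions are aligned, the isomorphism is tautological. Since the whole statement is Proposition~8.6.3 of \cite{KS}, one can alternatively simply cite it; I include the sketch above as an aide to the reader who wants to see why the singularities of $\scF$ away from $0$ do not interfere — they lie in the complement of a small disk around $0$ and hence are invisible to $\nu_0$.
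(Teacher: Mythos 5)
The paper does not give a proof of this lemma at all; it simply cites it as Proposition~8.6.3 of Kashiwara--Schapira and uses it as a black box. So there is no ``paper's own proof'' to compare against, and your closing observation that one may simply cite \cite{KS} is exactly the route the paper takes. Your sketch is nevertheless a reasonable summary of the classical argument behind the cited result.

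One caution: the two nontrivial steps in your outline --- that $\nu_0$ carries perverse sheaves to (monodromic) perverse sheaves, and that the quiver data $(\Psi,\Phi,c,v)$ of $\nu_0(\scF)$ is identified with $(\psi_t\scF,\phi_{t,1}\scF,\mathrm{can},\mathrm{Var})$ --- are essentially \emph{the content} of the proposition being cited rather than facts that can be invoked to establish it, so your ``by the first step'' in the identification of $\psi(\nu_0\scF)$ with $\psi_t\scF$ is partly circular as written. A genuinely self-contained argument would have to unwind the deformation-to-the-normal-cone construction: one checks directly that restricting $\nu_0(\scF)$ to the punctured tangent line gives the punctured-disk restriction of $\scF$ pulled back along the exponential (hence recovers the nearby cycle with its monodromy), and that the costalk/stalk triangle at $0\in T_0V$ is the one defining $\phi_{t,1}$ together with $\mathrm{can}$ and $\mathrm{Var}$. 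The perversity of $\nu_0(\scF)$ then follows from Verdier's specialization theorem. Your remark about singularities away from $0$ being invisible to $\nu_0$ is correct and worth keeping.
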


By using this lemma, we have the following.
\begin{lemma}
\label{oct31-3}
We have
\begin{align*}
    \nu_0(\ove{A_s})\simeq     \nu_0(\und{A_s})\simeq     \nu_0(\ove{\und{A_s}})\simeq A_s,\\
    \nu_0(\ove{B_s})\simeq B_s,\quad\quad\quad \mbox{and}\\
    \nu_0(\ove{P_s})\simeq \nu_0(\und{P_s})\simeq \nu_0(\ove{\und{P_s}})\simeq P_s.
    \end{align*}
where each right hand side is regarded as the object on the tangent fiber $T_0V(\simeq \CC)$ (with the origin $0$).
\end{lemma}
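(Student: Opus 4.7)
}
The plan is to combine the previous lemma (which identifies $\nu_0(\scF)$ with the tuple $(\psi_t\scF,\phi_{t,1}\scF,\can,\Var)$ of nearby/vanishing cycle data) with the locality of $\nu_0$ at the base point: the specialization at $0$ depends only on the germ of the sheaf at $0$, equivalently only on its restriction to any open neighborhood of $0$. Since $0\in T=V\setminus\{m\}$, it therefore suffices to compute $\nu_0$ after restricting to $T$, where the ``decorations at $m$'' disappear.

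First, I would show by induction on $s$ that each decorated object restricts to $T$ as its undecorated counterpart. For the base case $s=1$: $\ove{A_1}|_T = \CWT|_T = \CW|_T = A_1|_T$ since $W\cap T = WT$; for $\und{A_1}=\RG{T}\CW$, the defining triangle $\RG{T}\CW \to \CW \to (i_m)_\ast\CC_m \to \RG{T}\CW[1]$ restricts on $T$ to $\und{A_1}|_T\simeq \CW|_T = A_1|_T$, because $(i_m)_\ast\CC_m|_T = 0$; the cases $\ove{P_1}$ and $\und{P_1}$ are analogous. For the inductive step, the defining distinguished triangles in Definition-Lemma~\ref{oct31-4} for $\ove{A_s}$, $\und{A_s}$, $\ove{\und{A_s}}$, $\ove{B_s}$, $\ove{P_s}$, $\und{P_s}$, $\ove{\und{P_s}}$ restrict to $T$ as distinguished triangles (restriction to an open subset being exact); the induction hypothesis then identifies these restricted triangles with the defining triangles of $A_s$, $B_s$, $P_s$, since the connecting morphisms agree up to a nonzero scalar (the relevant hom-spaces being $1$-dimensional by the $\HOM$-diagrams).

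Second, under the coordinate identification $T_0V\simeq V$ via $t$, the objects $A_s$, $B_s$, $P_s$ are $\bR_{>0}$-conic at $0$: they are constructible for the stratification $\{0\}\sqcup W$ and $\scL_s$ is pulled back along $W\to W/\bR_{>0}\simeq S^1$. Any such conic object satisfies $\nu_0(X)\simeq X$ canonically; alternatively one can directly verify $\psi_t A_s = \scL_s$ with unipotent monodromy, $\phi_{t,1} A_s = 0$, and similarly for $B_s$ and $P_s$, then apply the previous lemma. Combining the two steps yields the claimed isomorphisms.

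The main obstacle I expect is the bookkeeping for the doubly decorated objects $\ove{\und{A_s}}$ and $\ove{\und{P_s}}$, whose constructions involve a non-canonical choice of morphism $\RG{T}\CW \to \ove{A_{s-1}}[1]$ (resp.\ $\to\ove{P_{s-1}}[1]$). One must check that after restriction to $T$, any such choice reduces to a nonzero multiple of the canonical morphism $\CW|_T \to A_{s-1}|_T[1]$ used in the defining triangle of $A_s$ (resp.\ $P_s$), so that the cones remain isomorphic to $A_s|_T$ (resp.\ $P_s|_T$); this amounts to tracking the surjection $\HOM(\RG{T}\CW,\ove{A_{s-1}}[1])\twoheadrightarrow \HOM(\CW,A_{s-1}[1])$ obtained by restricting to $T$.
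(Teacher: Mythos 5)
Your approach is essentially the same as the paper's: use the preceding lemma (specialization equals the can--var tuple $(\psi_t,\phi_{t,1},\can,\Var)$) together with the fact that $\nu_0$ sees only the germ at $0$, so the modifications at $m$ do not contribute. The paper gives no written proof, only the one-line hint "by using this lemma," and your plan fills in exactly that.

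One correction to your side remark, though. You write that one can "directly verify $\psi_t A_s=\scL_s$ with unipotent monodromy, $\phi_{t,1}A_s=0$." The vanishing cycle is \emph{not} zero here: since $A_s[1]=j_!\scL_s[1]$ is an extension by zero (so $i^*(A_s[1])=0$), the triangle $i^*\to\psi_t\to\phi_t$ gives $\phi_{t,1}(A_s[1])\simeq\psi_{t,1}(A_s[1])\simeq\scL_s\neq 0$. The correct can--var quadruple is $(\scL_s,\scL_s,\can=\id,\Var=N)$, which is indeed what identifies $\nu_0(A_s)$ with $A_s$ under the previous lemma. With $\phi_{t,1}=0$ the quadruple would instead describe a local system on all of $V$, which is wrong. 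Your primary argument via $\bR_{>0}$-conicity is unaffected, but the "alternative verification" as stated would not reproduce $A_s$.

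One small remark on the induction: for $\ove{\und{A_s}}$ and $\ove{\und{P_s}}$ your plan correctly identifies the issue (the gluing morphism $\RG{T}\CW\to\ove{A_{s-1}}[1]$ is not unique up to scalar), and your resolution is the right one: the constraint in Definition-Lemma~\ref{oct31-4}~\ref{s10-6} pins down the composite with $\ove{A_{s-1}}[1]\to\CW[1]$, and after restriction to $T$ this composite determines the morphism $\CW|_T\to A_{s-1}|_T[1]$ uniquely up to scalar, so the cone is independent of the choice. This is consistent with the remark in the paper that $\scH^i_j$ depends on the choice but the limit does not.
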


We construct objects in $\mu \mathrm{sh}_C(X_\Gamma)$ by gluing up the objects defined in the above.
Let us use the notation in Definition~\ref{m22-1-2}.
By \ref{oct31-2} of Definition-Lemma~\ref{oct31-4},
we obtain the objects in $\sh{\PR,\{l,m,r\}}$:
\[(A_s,B_s),\quad (\und{P_s},\und{A_s}),\quad (\ove{B_s},\ove{P_s}), \mbox{\ etc.}\]
Moreover, by Lemma~\ref{oct31-3},
we obtain the following isomorphisms:
\[\mathrm{FL}(\nu_0(\und{A_s}))\simeq B_s,\quad \mathrm{FL}(\nu_0(\ove{P_{s}}))\simeq Q_s, \mbox{\ etc.}\]
Using these facts, we define $\block{j}$ in $\mu \mathrm{sh}_C(X_\Gamma)$, which is a ``building block" of the object corresponding to the cotangent fiber through the Ganatra--Pardon--Shende equivalence.
\begin{definition}\label{s11-8}
Here we use the expression (\ref{m27-3n-2}).
\begin{enumerate}
    \item \label{s11-8-1}
    If $n\geq 4$ and $1<j<n/2$,
we define an object $\block{j}\in  \mu \mathrm{sh}_{C_{\{\boldsymbol{m}\}}}(X_\Gamma)$ as
\begin{align*}
    ((P_1,Q_2),(P_2,Q_3),\dots, &(P_{j-1},Q_{j}),\\
    &(\und{P_j},\und{A_j}), (B_j,A_j), \dots,(B_j,A_j),(\ove{B_j},\ove{P_j}),\\
    &\qquad \qquad \qquad \qquad \qquad \qquad (Q_j,P_{j-1}),(Q_{j-1},P_{j-2}),\dots,(Q_2,P_1)),
\end{align*}
where $(\und{P_j},\und{A_j})$ is on the $j$-th $\PR$ and $(\ove{B_j},\ove{P_j})$ is on $(n-j+1)$-th $\PR$.
For $n/2<j<n$, we define $\block{j}$ as $\block{n-j+1}^{\circ}$, where $\circ$ is defined as (\ref{m28-3}).

\item \label{s11-9} If $n$ is even: $n=2n_0$, we define $\block{n_0}\in  \mu \mathrm{sh}_{C_{\{\boldsymbol{m}\}}}(X_\Gamma)$ as
\begin{align*}
    ((P_1,Q_2),\dots, &(P_{n_0-1},Q_{n_0}),
    (\und{P_{n_0}},\und{A_{n_0}}),(\ove{B_{n_0}},\ove{P_{n_0}}),
    (Q_{{n_0}-1},P_{{n_0}-2}),\dots,(Q_2,P_1)),
\end{align*}
and $\block{{n_0}+1}$ as $\block{{n_0}}^{\circ}$

\item \label{s11-8-3} If $n\geq 2$, we define $\block{1}\in  \mu \mathrm{sh}_{C_{\{\boldsymbol{m}\}}}(X_\Gamma)$ as
\begin{align*}
    ((\und{P_1},\und{A_1}),(B_1,A_1),\dots,(B_1,A_1),(\ove{B_1},\ove{P_1})),
\end{align*}
and $\block{n}$ as $\block{1}^{\circ}$.

\item \label{s11-8-4} If $n$ is odd: $n=2{n_0}+1$ (${n_0}\in \ZZ_{\geq 1}$), we define $\block{{n_0}+1}\in  \mu \mathrm{sh}_{C_{\{\boldsymbol{m}\}}}(X_\Gamma)$ as
\begin{align*}
      ((P_1,Q_2),\dots, &(P_{{n_0}},Q_{{n_0}+1}),
    (\und{\ove{P_{{n_0}+1}}},\und{\ove{P_{{n_0}+1}}}),
    (Q_{{n_0}+1},P_{{n_0}}),\dots,(Q_2,P_1)).
\end{align*}

\item \label{s11-8-5} If $n=1$,
we define $\block{1}\in  \mu \mathrm{sh}_{C_{\{\boldsymbol{m}\}}}(X_\Gamma)$ as
\[\block{1}=(\wt{\und{P_1}},\wt{\und{P_1}}).\]

\item In each of the above cases,
we define $\block{j}'\in  \mu \mathrm{sh}_{C_{\{\boldsymbol{m}\}}}(X_\Gamma)$ as follows: in the case $n\geq 2$ and $1< j< n/2$,
then $\block{j}'$ is
\begin{align*}
    ((P_1,Q_2),(P_2,Q_3),\dots, &(P_{j-1},Q_{j}),\\
    &({P_j},{A_j}), (B_j,A_j), \dots,(B_j,A_j),(\ove{B_j},\ove{P_j}),\\
    &\qquad \qquad \qquad \qquad \qquad \qquad (Q_j,P_{j-1}),(Q_{j-1},P_{j-2}),\dots,(Q_2,P_1)).
\end{align*}
For the other cases, we define $\block{j}'$ similarly.

\end{enumerate}
    
\end{definition}

\begin{lemma}\label{m29-2}
    For any $H\in \mu \mathrm{sh}_C(X_{\Gamma})$ and $1\leq j\leq n$,
    we have the vanishing:
    \begin{align}\label{jl18-4}
        H^{k}\HOM_{\mu \mathrm{sh}_C(X_{\Gamma})}(\block{j},H)=0\qquad (k\in \ZZ).
    \end{align}
\end{lemma}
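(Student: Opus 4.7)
The plan is to apply the Mayer--Vietoris long exact sequence (\ref{s11-2-2}) for the open covering $\{U_i\}_{i=1}^{n+1}$ in order to reduce the computation of $H^k\HOM_{\mu \mathrm{sh}_C(X_\Gamma)}(\block{j},H)$ to local Hom spaces on each $U_i$ and on the double overlaps $U_i\cap U_{i+1}\simeq W$. The structural input driving the vanishing is the hypothesis $H\in \mu \mathrm{sh}_C(X_\Gamma)$ (rather than $\mu \mathrm{sh}_{C_{\{\boldsymbol m\}}}(X_\Gamma)$): this forces every component $F^i$ or $G^i$ of $H$ to have microsupport contained in $\{l,r\}$, hence to be a local system on a neighborhood of the marked point $m$ of each $\PR_i$. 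The cancellation we seek must come from combining this local constancy near $m$ with the Fourier--Sato matching condition (\ref{m27-4n-2}) at each node, and the way $\block{j}$ is engineered out of the building blocks from Definitions~\ref{nov6-6} and \ref{oct31-4}.

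I first use Corollary~\ref{jan24-1} (or its extension valid in our setting, cf. the forthcoming Lemma~\ref{dec20-6}) to rewrite each $H^k\mathrm{Hom}_{U_i}(\block{j},H)$ as a fiber product of Hom spaces on the adjacent discs $V_r\subset \PR_{i-1}$ and $V_l\subset \PR_i$, matched along the Fourier transform of the specialization at $0$. Then, for each building block $P_s, Q_s, \ove{A_s}, \und{A_s}, \ove{P_s}, \und{P_s}, \ove{\und{P_s}}, \wt{\und{P_1}}$ appearing in $\block{j}$, I apply $\HOM(-,\mathcal{F})$ to its defining distinguished triangle, where $\mathcal{F}$ is the relevant component of $H$. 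This decomposes each local Hom into contributions of $\HOM(\CV,-), \HOM(\CW,-), \HOM(\bC_T,-), \HOM(\bC_0,-)$, and $\HOM(\bC_m,-)$ paired against a sheaf that is locally constant near $m$; these are computable from stalks and sections on punctured discs, and the pieces supported at $m$ vanish outright against our $H$.

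Finally, I run the case analysis of Definition~\ref{s11-8}. In the ``long'' interior cases (\ref{s11-8-1}) and (\ref{s11-8-3}), the central stretch of $(B_j,A_j)$-blocks contributes a telescoping sum in Mayer--Vietoris that collapses onto the two ``caps'' $(\und{P_j},\und{A_j})$ and $(\ove{B_j},\ove{P_j})$, while the outer ``arms'' of $(P_s,Q_s)$-blocks are neutralized by iterated use of the Fourier--Sato matching (\ref{m27-4n-2}), which forces the Mayer--Vietoris coboundary at each non-marked node to be surjective. The middle cases (\ref{s11-9}) and (\ref{s11-8-4}), as well as the extremal case $n=1$ (\ref{s11-8-5}), are degenerate versions of the same argument. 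The main obstacle is the bookkeeping: at each node and in each cohomological degree, one has to match the Fourier transform of the specialization of a $P_s$ or $Q_s$ piece with the specialization of the neighboring component of $H$, and verify that the connecting maps in (\ref{s11-7-2}) are precisely the right isomorphisms or surjections to force the vanishing (\ref{jl18-4}). The mixed blocks $\ove{\und{P_s}}$ and $\wt{\und{P_1}}$ require extra care because their definitions depend on non-unique extensions by $\RG{T}\CW$, but the conclusion (\ref{jl18-4}) is independent of that choice.
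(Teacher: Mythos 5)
Your plan correctly reduces to showing that the Mayer--Vietoris coboundary (\ref{oct31-5}) is an isomorphism (via Lemma~\ref{dec20-6}), and correctly organizes the verification around the case analysis of Definition~\ref{s11-8}. But what you have is a skeleton, and the flesh is where the real content sits; as written, the proposal has a genuine gap.

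The decisive structural input in the paper's argument is not merely that each $F^i$, $G^i$ is a local system near $m$ --- that only says the blocks supported at $m$ pair to zero. Much more importantly, the Fourier--Sato matching at the nodes \emph{propagates nilpotency bounds inward}: Lemma~\ref{dec4-1} shows that $F^i$ and $G^{n-i+1}$ satisfy the condition $(N_i)$ (nilpotent order $\le i$, and the Fourier transform of the specialization has nilpotent order $\le i-1$). This bound, together with the structure theorem Proposition~\ref{nov5-1} decomposing each component of $H$ into a direct sum of shifted $\bC_0$, $A_{s'}[1]$, $B_{s'}[1]$, $P_{s'}[1]$, $Q_{s''}[1]$ with controlled $s'$, $s''$, is precisely what makes the extension/lifting lemmas (Lemma~\ref{j24-1}, Lemma~\ref{j27-2}, Lemma~\ref{nov7-5}, Lemma~\ref{jl8-21}, Lemma~\ref{jl18-23}, \dots) applicable. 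Your proposal never invokes this nilpotency constraint; without it, the needed uniqueness statements (e.g.\ that a morphism on $W$ lifts uniquely to $P_s$ or $Q_{s+1}$ on $V$) fail, and there is no reason for the coboundary to be injective or even surjective.

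Relatedly, the ``telescoping'' heuristic for the central stretch of $(B_j,A_j)$-blocks is not a proof: the paper's surjectivity argument is a long chain of $\sim$-reductions where one moves an element of the codomain of (\ref{oct31-5}) step by step toward the ends of the chain, each step consuming a specific extension lemma, and injectivity is a separate (and equally delicate) argument relying on the \emph{uniqueness} clauses of those lemmas. You would need to supply analogues of Lemma~\ref{j24-1} (extending across a node to $P_s$), Lemma~\ref{j27-2} (lifting through $Q_{s+1}$), the vanishing Lemma~\ref{jl8-30} (no nonzero $\und{P_s}\to \scF[-1]$), and the $\nu_0$-matching Lemma~\ref{jl18-23} to get the two cap contributions to cancel --- and each of these depends on the precise form of the $(N_s)$ constraint. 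Your suggestion to instead apply $\HOM(-,\mathcal{F})$ to the defining triangles of the building blocks and reduce to $\HOM(\CV,\mathcal F)$, $\HOM(\CW,\mathcal F)$, etc., computes ranks of Hom spaces but loses control of the actual coboundary maps; the paper's decomposition of $\mathcal F$ rather than of $\block{j}$ is what keeps the maps tractable. So the high-level approach matches the paper, but the heart of the proof --- identifying the $(N_i)$ constraint and building the chain of reduction lemmas --- is missing.
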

Most of this appendix is devoted to proving this theorem.
We restate the assertion in a concrete form as follows.

\begin{lemma}\label{dec20-6}
Lemma~\ref{m29-2} is equivalent to the morphism (\ref{s11-7-2}) substituting $\block{j}$ for $H$ and $H$ for $H''$:
    \begin{align}\label{oct31-5}
\xymatrix{
\bigoplus_{i=1}^{n+1}H^k\mathrm{Hom}_{U_i}(\block{j},H)
    \ar[r]&
    \bigoplus_{i=1}^{n}H^k\mathrm{Hom}_{U_{i}\cap U_{i+1}}(\block{j},H)\\
(b_1,(b_{2,l},b_{2,r}),\dots,(b_{n,l},b_{n,r}),b_{n+1})\ar@{|->}[r]\ar@{}[u]|-{\rotatebox{90}{$\in$}}& 
(b_1|_{W}-b_{2,l}|_{W},b_{2,r}|_{W}-b_{3,r}|_{W},\dots, b_{n,r}|_{W}-b_{n+1}|_{W})\ar@{}[u]|-{\rotatebox{90}{$\in$}}
}        
    \end{align}
is isomorphic for $k\in \ZZ$,
where we use the notation defined in (\ref{s11-5}).
\end{lemma}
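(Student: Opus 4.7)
The plan is to extract the equivalence directly from the Mayer--Vietoris long exact sequence~(\ref{s11-2-2}), after identifying its connecting map with the explicit formula in~(\ref{oct31-5}) via the pair description supplied by Corollary~\ref{jan24-1}.

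First, specializing~(\ref{s11-2-2}) to $(H,H')=(\block{j},H)$ yields, for each $k\in\ZZ$,
\[
\cdots \to H^k\HOM_{\Ancore}(\block{j},H) \to \bigoplus_i H^k\mathrm{Hom}_{U_i}(\block{j},H) \xrightarrow{\partial_k} \bigoplus_i H^k\mathrm{Hom}_{U_i\cap U_{i+1}}(\block{j},H) \to H^{k+1}\HOM_{\Ancore}(\block{j},H) \to \cdots .
\]
A standard sandwich argument then shows that the vanishing $H^k\HOM_{\Ancore}(\block{j},H)=0$ for all $k$ is equivalent to $\partial_k$ being an isomorphism for every $k$: if all the $H^k\HOM$ groups vanish, each $\partial_k$ sits between two zeros; conversely, if every $\partial_k$ is an isomorphism then each $H^k\HOM_{\Ancore}$ injects into $\ker\partial_k=0$ and simultaneously receives a surjection from $\mathrm{coker}\,\partial_{k-1}=0$.

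Next, I will identify $\partial_k$ with the arrow~(\ref{oct31-5}). This is precisely the form~(\ref{s11-7-2}) takes once an element of $H^k\mathrm{Hom}_{U_i}(\block{j},H)$ can be presented as a pair $(b_{i,l},b_{i,r})$ as in~(\ref{s11-5}). Such a pair presentation is provided by Corollary~\ref{jan24-1} whenever hypothesis~(\ref{oct30-1}) or~(\ref{oct30-2}) is met; concretely, one needs the component of $\block{j}$ on one of the two sides of $U_i$ to have singular support contained in $\{0\}$. I will therefore go patch-by-patch through Definition~\ref{s11-8} and check that, on each overlap $U_i$, at least one of the two side-components of $\block{j}$ is drawn from the list $A_s, B_s, P_s, Q_s, \und{A_s}, \und{P_s}, \ove{B_s}, \ove{P_s}$ with its $m$-singularity (if any) lying on the side \emph{opposite} to $U_i$. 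Once this check is carried out uniformly, the hypothesis of Corollary~\ref{jan24-1} holds at every overlap independently of where the singularities of $H$ sit, so $\partial_k$ takes the explicit form~(\ref{oct31-5}) and the equivalence follows from the first step.

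The main obstacle is the positional case analysis in the second step. The five cases~(\ref{s11-8-1})--(\ref{s11-8-5}) of Definition~\ref{s11-8} must each be inspected; the only delicate point concerns the transition factors $(\und{P_j},\und{A_j})$ and $(\ove{B_j},\ove{P_j})$, whose $m$-singular pieces could a priori obstruct the hypothesis of Corollary~\ref{jan24-1}, but whose placement in the tuple defining $\block{j}$ forces the $m$-singularity onto the side away from the active patch $U_i$. Once this positional verification is in, the rest is formal.
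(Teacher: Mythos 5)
Your first step — reading the equivalence straight off the Mayer--Vietoris exact sequence~(\ref{s11-2-2}) — is fine and is exactly what the paper does implicitly. The problem is in your second step. You claim that "the placement in the tuple defining $\block{j}$ forces the $m$-singularity onto the side away from the active patch $U_i$," and therefore one of the two hypotheses of Corollary~\ref{jan24-1} is met at every overlap. That is false in case~\ref{s11-9} of Definition~\ref{s11-8}, i.e.\ when $n=2n_0$ and $j=n_0$ (or $j=n_0+1$). There the two transition factors $(\und{P_{n_0}},\und{A_{n_0}})$ and $(\ove{B_{n_0}},\ove{P_{n_0}})$ sit on \emph{adjacent} copies of $\bP^1$, so the middle overlap has $\und{A_{n_0}}\in\shVzm$ on one side and $\ove{B_{n_0}}\in\shVzm$ on the other, both carrying an $m$-singularity. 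Neither condition~(\ref{oct30-1}) nor~(\ref{oct30-2}) of Corollary~\ref{jan24-1} holds there, and your positional check breaks down.

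This is not a cosmetic point: without it, you have no short exact sequence presenting $H^k\mathrm{Hom}_{U_i}(\block{j},H)$ as a pair $(b_{i,l},b_{i,r})$, so the identification of the Mayer--Vietoris connecting map with~(\ref{oct31-5}) fails at that overlap and the proof does not close. The paper patches this case by going back to the abstract Lemma~\ref{s11-6-2} directly: one builds a morphism $\ove{B_{n_0}}\to B_{n_0}$ (by applying $\RG{W}$ to $\ove{A_{n_0}}\to A_{n_0}$) whose specialization at $0$ is the identity, and uses it to show that $\mathrm{Hom}(\ove{B_{n_0}},F)\to \mathrm{Hom}(\nu_0\ove{B_{n_0}},\nu_0 F)$ is surjective on cohomology for every $F\in\shVz$ — the surjectivity hypothesis of Lemma~\ref{s11-6-2}. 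That recovers the desired pair description in the exceptional case and lets the Mayer--Vietoris argument proceed. You should add this case analysis and the auxiliary surjectivity argument before claiming the identification of $\partial_k$ with~(\ref{oct31-5}).
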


\begin{proof}
Recall the remark just below Lemma~\ref{s11-6-2}.
Except for the case $n=2m$ (\ref{s11-9} of Definition~\ref{s11-8}),
$\block{j}$ (and $H\in \mu \mathrm{sh}_C(X_\Gamma)$) satisfies either the condition \ref{oct30-1} or \ref{oct30-2} of Corollary~\ref{jan24-1cor}.
Hence, the assertion follows in this case.

Consider the case $n=2n_0$.
We can define a morphism $\ove{B_{n_0}}\to B_{n_0}$ such that the specialization of this morphism is the identity.
In fact, this is made by applying $\RG{W}$ to the morphism $\ove{A_{n_0}}\to A_{n_0}$.
Then, for any morphism $B_{n_0}\to F$ for $F\in \shVz$,
the specialization of the composition $(B_{n_0}\to F)\circ (\ove{B_{n_0}}\to B_{n_0})$ is the original $B_{n_0}\to F$ (regarded as an object on $T_0V$).
Hence, we can apply Lemma~\ref{s11-6-2} and the argument just below it to our situation and we thus obtain the assertion.
\end{proof}

\subsection{Proof of Lemma~\ref{m29-2}} \label{nov23-1}

In this subsection,
we show Lemma~\ref{m29-2}.
Since we will use many lemmas for the proof, we place all the items (including definitions and lemmas) in the next subsection, which we will use freely in this subsection.

Let us show the surjectivity and injectivity of (\ref{oct31-5})
for 
\[H=((F^1,G^1),\dots,(F^n,G^n)).\]
Without loss of generality, we may assume $k=0$.
We write 
\[\block{j}=((X^1,Y^1),\dots,(X^n,Y^n)).\]

\subsubsection{For $n\geq 4$ and $2\leq  j\leq    n/2 $}
We omit the proof for the case $j=n/2$ (for even $n$) which is similar. So we assume $j< n/2$. This is the case of \ref{s11-8-1} of Definition~\ref{s11-8}.

\subsubsection*{Surjectivity}\ 
Take an $n$-tuple of morphisms
\[(a_1,\dots,a_n)\in \bigoplus_{1\leq i\leq n}H^0\HOM_{U_{i}\cap U_{i+1}}(\block{j},H),\]
i.e., an element of the right hand side of (\ref{oct31-5}).
If $(a_1,\dots,a_n)$ and another $(\wt{a_1},\dots,\wt{a_n})$ are equal in the cokernel of (\ref{oct31-5}),
we write 
\[(a_1,\dots,a_n)\sim (\wt{a_1},\dots,\wt{a_n}).\]
To show the surjectivity is equivalent to showing
\[(a_1,\dots,a_n)\sim (0,\dots,0).\]
By Proposition~\ref{nov5-1}, we fix a decomposition
\begin{align*}
    F^i=\bigoplus_{k}\scF^{i,k}[k],\quad     G^i=\bigoplus_{k}\scG^{i,k}[k],
\end{align*}
where each $\scF^{i,k}, \scG^{i,k}$ is a direct sum of unipotent perverse sheaves.
By Lemma~\ref{dec4-1}, 
we have a constraint on the nilpotency and they satisfy the condition $(N_s)$ in Definition~\ref{dec25-2}.
We use them implicitly below.
Since each $X^i[1]|_W$, $Y^i[1]|_W$ is a perverse sheaf,
$a_i\colon X^i|_W\to \scF^{i,k}|_W[k]$ and $a_i\colon Y^i|_W\to \scG^{i,k}|_W[k]$ vanish if $k\neq -1,0$ by Lemma~\ref{nov22-1}.
So, we write each $a_i$ as a direct sum:
\[a_i=\begin{cases}
    &a_{i,l}'+a_{i,l}''\in \Hom(X^i|_{W}, \scF^{i,-1}[-1]|_{W})\oplus \Hom( X^i|_{W}, \scF^{i,0}|_{W})\\
    &a_{i,r}'+a_{i,r}'' \in \Hom(Y^i|_{W}, \scG^{i,-1}[-1]|_{W})\oplus \Hom( Y^i|_{W}, \scG^{i,0}|_{W})
\end{cases}.\]
Since any automorphism of $F^i|_W\cong G^i|_W$ has a triangular form with respect to the degree (by the similar results to Lemma~\ref{nov22-1}), we always have an identification $a_{i_l}''=a_{i,r}''$.
Hence $a_{i,l}'=0$ then so is $a_{i,r}'$, and vice versa.
In this case, we just write 
\[a_i=a_i''.\]

By 1 of Definition~\ref{s11-8}, the domain of $a_{j,l}'$ is $\underline{A_j}|_W$. Also, $\cF^{i,-1}[-1]|_W$ and $\cG^{i,-1}[-1]|_W$ are a direct sum of local systems of the form of $A_s|_W$. 
So, due to (\ref{jl2-2}) of Lemma~\ref{nov7-12},
we have $a_{j,l}'=a_{j,r}'=0$, so we have $a_j=a_{j,l}''=a_{j,r}''$.
By applying Lemma~\ref{j24-1} to $a_{j-1,l}'$, 
we have
\begin{align}\label{jl8-9}
    (a_1,\dots,a_n)= (a_1,\dots,a_{j-1},a_{j,l}'',\dots,a_n)\sim (a_1,\dots,\wt{a_{j-2}},a_{j-1}'',a_{j,l}'',\dots,a_n),
\end{align}
where $\wt{a_{j-2}}$ is some $X^{j-2}|_{W}\to \scF^{j-2}|_{W}$.
We omit the tilde of $\wt{a_{j-2}}$ in the below.
Repeating this procedure, the right hand side of (\ref{jl8-9}) takes the form of
\begin{align*}
(a_1'',\dots,a_{j-1}'',a_j'',\dots,a_n).
\end{align*}
By using Lemma~\ref{j27-2} repeatedly, we have
\begin{align*}
(a_1,\dots,a_n)\sim (0,\dots,0,a_{j-1}'',a_j'',\dots,a_n).  
\end{align*}
By Lemma~\ref{nov7-5}, we get
\begin{align*}
(a_1,\dots,a_n)\sim (0,\dots,0,a_j'',\dots,a_n).
\end{align*}
Set $j':=n-j+1$. 
Since $a_i$ for $j\leq i\leq j'$ obviously can be extended to 
$Y^i(=A_{j})\to G^i$ on $V_r$ by the zero extension,
we obtain
\begin{align*}
(a_1,\dots,a_n)\sim (0,\dots,0,a_{j'-1},\dots,a_n).
\end{align*}
By Lemma~\ref{jl8-21},
we have
\begin{align*}
(a_1,\dots,a_n)\sim (0,\dots,0,0,a_{j'},\dots,a_n).
\end{align*}
By Lemma~\ref{jl8-22},
we get
\begin{align*}
(a_1,\dots,a_n)\sim (0,\dots,0,0,0,a_{j'}'',{a_{j'+1}},\dots,a_n).
\end{align*}
Then, using Lemma~\ref{j24-1} repeatedly again,
we arrive at the form of 
\begin{align*}
(a_1,\dots,a_n)\sim (0,\dots,0,0,0,a_{j'}'',\dots,a_{n}'').
\end{align*}
By Lemma~\ref{j27-2} and Lemma~\ref{jl3-14},
we finally get the form of
\begin{align*}
(a_1,\dots,a_n)\sim (0,\dots,0,a_{j'}'',0,\dots,0).
\end{align*}
Then, by Lemma~\ref{jl18-23}, we have
\begin{align*}
(a_1,\dots,a_n)\sim (0,\dots,0,0,0,\dots,0).
\end{align*}
This proves the surjectivity.

\subsubsection*{Injectivity}
Take an $n+1$-tuple of morphisms from the left hand side of (\ref{oct31-5}),
\[(b_1,\dots,b_{n+1})\in \bigoplus_{1\leq i\leq n+1}H^0\HOM_{{U_i}}(\block{j},H),\]
such that its image is zero.
For $n\geq 2$ and $2\leq i\leq n$, we write $b_i=(b_{i,l},b_{i,r})$,
where $b_{i,l}\colon Y^{i}\to G^i$ and $b_{i,r}\colon X^{i+1}\to F^{i+1}$.
For $i=1$ or $i=n+1$, we write $b_{1}=b_{1,r}$ and $b_{n+1}=b_{n+1,l}$.
As in the proof of the surjectivity,
we can decompose $b_{i,l}$ and $b_{i,r}$ as
\begin{align*}
    b_{i,l}=b_{i,l}'+b_{i,l}'',\mbox{ and}\  b_{i,r}=b'_{i,r}+b''_{i,r},
\end{align*}
where $b_{i,l}'\colon Y^i\to \scG^{i,-1}[-1]$, $b_{i,l}''\colon Y^i\to \scG^{i,0}$, $b_{i,r}'\colon X^i\to \scF^{i,-1}[-1]$ and $b_{i,r}''\colon X^i\to \scF^{i,0}$.
If $b_{i,l}'=0$ and $b_{i,r}'=0$,
we just write 
\[b_{i}=b_{i}''.\]
By Lemma~\ref{jl8-30},
we have
\[b_{j}=b_{j}''.\]
By the assumption on $b$, we have
\[\mathrm{FL}(\nu_l(b_{j,r}))|_{W}=b_{j,l}|_{W}=b_{j-1,r}|_{W}.\]
Therefore, we have
\[b_{j-1,r}'|_{W}=0.\]
By the uniqueness in Lemma~\ref{j24-1},
we have $b_{j-1,r}'=0$, and we thus obtain
\[b_{j-1}=b_{j-1}''.\]
By repeating this argument, 
we have
\[(b_1,\dots,b_{n+1})=(b_1'',\dots,b_j'',b_{j+1},\dots, b_{n+1}).\]
Note that $\cF^{1,0}$ is a direct sum of constant sheaves and $\bC_{V_l}\rightarrow \bC_{V_l}[1]$ is zero.
Hence $b_1=b_1'$, and
we have
\[(b_1,\dots,b_n)=(0,b_2'',\dots,b_j'',b_{j+1},\dots, b_{n+1}).\]
Hence, we have $b_{2,l}|_{W}=b_{2,l}''|_{W}=0$.
By the uniqueness in Lemma~\ref{j27-2}, 
this implies $b_{2}=0$.
By repeating this argument, 
we obtain
\[(b_1,\dots,b_{n+1})=(0,\dots,0,b_j'',b_{j+1},\dots, b_{n+1}).\]
By Lemma~\ref{nov7-5}, we also have $b_j''=0$.
Set $j':=n-j+1$.
Since $b_{i}=0$ is equivalent to $b_{i,l}=0$ for $j+1\leq i\leq j'-1$, 
we inductively obtain
\[(b_1,\dots,b_{n+1})=(0,\dots,0,b_{j'},\dots, b_{n+1}).\]
By Lemma~\ref{jl8-22}, we have 
\[b_{j'}=b_{j'}'',\]
which implies
\[b_{j'+1,l}'|_{W}(=b_{j',r}'|_{W})=0.\]
Therefore, 
by Lemma~\ref{jan24-2},
we have
\[b_{j'+1}=b_{j'+1}''.\]
By using Lemma~\ref{j24-1} repeatedly again, we have 
\[(b_1,\dots,b_{n+1})=(0,\dots,0,b_{j'}'', b_{j+1}'',\dots, b_{n+1}'').\]
Since $b_{n+1}=b_{n+1}'$, we have $b_{n+1}''=0$.
By Lemma~\ref{j27-2}, we obtain
\[(b_1,\dots,b_{n+1})=(0,\dots,0,b_{j'}'',b_{j'+1}'',0,\dots, 0).\]
Since $b_{j',l}|_{W}=0$,
we have $\nu_l(b_{j'}'')=0$.
Moreover, by Lemma~\ref{j27-2},
we also have $\nu_r(b_{j'+1}'')=0$.
Therefore, by Lemma~\ref{jl18-23},
we obtain
$b_{j'}''=0$ and $b_{j'+1}''=0$.
This proves the injectivity.

\subsubsection{For $n\geq 2$ and $j=1$}

We may assume $n\geq 3$. 
We can show the case $n=2$ in the same way. This is the case \ref{s11-8-3} of Definition~\ref{s11-8}.
We use the same notation as in the proof of Lemma~\ref{m29-2} for $n\geq 4$ and $2\leq j\leq n/2$.

\subsubsection*{Surjectivity}
Take an $n$-tuple of morphisms 
\[(a_1,\dots,a_n)\in \bigoplus_{1\leq i\leq n}H^0\HOM_{{U_i\cap U_{i+1}}}(\block{1},H).\]
Since 
the morphism $a_1 $
can be extended to $\und{A_1}\to G^1$ on $V_r$,
we have 
\[(a_1,\dots,a_n)\sim (0,\wt{a_2},\dots,a_n),\]
where $\wt{a_2}$ is some morphism.
By repeating a similar procedure, we have
\begin{align*}
    (a_1,\dots,a_n)\sim (0,\dots,0,\wt{a_{n-1}},a_{n}),
\end{align*}
for some $\wt{a_{n-1}}$.
We extend ${\wt{a_{n-1}}}$ to $\wt{\wt{a_{n-1}}}\colon A_{1}\to G^{n-1}$.
By using the morphism 
\[\mathrm{FL}(\wt{\wt{a_{n-1}}})\circ (\ove{B_1}\to B_1)\]
where $(\ove{B_1}\to B_1)$ is a nonzero morphism, we obtain
\[(a_1,\dots,a_n)\sim (0,\dots,0,0,\wt{a_{n}}),\]
for some $\wt{a_n}$.
Since the restriction of $\CVT\to \CV$ to $W$ is $\CWT\to \CW$
and $(\wt{a_n})_r'$ (for a decomposition $\wt{a_n}=(\wt{a_n})_r'+(\wt{a_n})_r''$) is a direct sum of some $\CWT\to \CW$,
we have 
\[(a_1,\dots,a_n)\sim (0,\dots,0,0,(\wt{a_{n}})_r'').\]
By \ref{jl6-9} of Lemma~\ref{jl8-1}, Lemma~\ref{nov12-3-1} and Lemma~\ref{jl8-42},
we obtain
\[(a_1,\dots,a_n)\sim (0,\dots,0),\]
This proves the surjectivity.

\subsubsection*{Injectivity}
Take an $n+1$-tuple of morphisms 
\[(b_1,\dots,b_{n+1})\in \bigoplus_{1\leq i\leq n+1}H^0\HOM_{{U_i}}(\block{1},H),\]
such that its image by (\ref{oct31-5}) is zero.
Since there is no non-zero $\RG{T}\CV\to \CV[k]$ ($k\in \ZZ$),
we have $b_1=0$.
Therefore, $b_{2,l}|_{W}=0$, and hence $b_{2}=0$.
By repeating this,
we have
\[(b_1,\dots,b_{n+1})=(0,\dots,0,b_n,b_{n+1}).\]
Since $b_{n,r}\colon \ove{B_1}\to F^{n}$ satisfies
$\nu_l(b_{n,r})=0$,
we have $b_{n,r}'=0$ by the first assertion of Lemma~\ref{nov12-3-1}, i.e. $b_n=b_n''$.
Hence, $b_{n+1}'|_{W}$ is also zero and so is $b_{n+1}'$.
Eventually, we obtain
\[b''_{n,r}|_{W}=b_{n+1}''|_{W},\]
with $\nu_l(b''_{n,r})=0$.
By the second assertion of Lemma~\ref{nov12-3-1},
we also have 
\[\nu_l(b''_{n,r}\circ ( \CWT\to \ove{B_1}))=0.\]
On the other hand, by the second assertion of Lemma~\ref{jl8-42}, we have
\[\nu_r(b_{n+1}''\circ ( \CWT\to \CVT))=0.\]
Consequently, $\nu_l$ and $\nu_r$ of the zero extension of $b_{n,r}''|_{W}$ are both zero.
Therefore by Lemma~\ref{jl8-1},
$b''_{n,r}|_{W}=0$, and hence $b_{n+1}''|_{W}$ is also zero.
Then, Lemma~\ref{nov12-3-1} and Lemma~\ref{jl8-42},
we conclude that both $b''_{n,r}$ and $b_{n+1}''$ are zero.
This proves the injectivity

\subsubsection{For $n=2n_0+1$ $(n_0\geq 1)$ with $j=n_0+1$}
We proceed as in the previous cases and use the same notations. This is the case \ref{s11-8-4} of Definition~\ref{s11-8}.

\subsubsection*{Surjectivity} 
Take an $n$-tuple of morphisms
\[(a_1,\dots,a_n)\in \bigoplus_{1\leq i\leq n}H^0\HOM_{{U_i\cap U_{i+1}}}(\block{n_0+1},H).\]
By Lemma~\ref{j24-1} for $a_{n_0+2,r}'$ and $a_{n_0,l}'$,
we have
\[(a_1,\dots,a_n)\sim (a_1,\dots,a_{n_0-1},a_{n_0,l}'',a_{n_0+1},a_{n_0+2,r}'',a_{n_0+3},\dots  ,a_n).\]
Repeating this procedure,
we have
\[(a_1,\dots,a_n)\sim (a_1,a_2'',\dots,a_{n_0}'',a_{n_0+1},a_{n_0+2}'',\dots  ,a_{n-1}'',a_n).\]
Since $\CW\to \CW$ can be extended to $\CV\to \CV$,
we have
\[(a_1,\dots,a_n)\sim (a_1'',\dots,a_{n_0}'',a_{n_0+1},a_{n_0+2}'',\dots  ,a_n'').\]
By applying Lemma~\ref{j27-2} repeatedly,
we get
\[(a_1,\dots,a_n)\sim (0,\dots,0,a_{n_0}'',a_{n_0+1},a_{n_0+2}'',0,\dots  ,0),\]
for some $a_{n_0}''$ and $a_{n_0+2}''$.
For $a_{n_0}''$,
by Lemma~\ref{nov7-5},
there is a morphism $g\colon \und{P_{n_0+1}}\to \scF^{n_0+1,0}$ on $V_l$ (in $n_0+1$-th $\PR$)
such that $\mathrm{FL}(\nu_l(g))|_{W}=a_{n_0}''$.
Then, $g':=g\circ (\oveundP{n_0+1}\to \und{P_{n_0+1}})\colon \oveund{P_{n_0+1}}\to \scF^{n_0+1,0}$ satisfies $\mathrm{FL}(\nu_l(g'))|_{W}=a_{n_0}''$.
By the same argument for $a_{n_0+2}''$,
we have
\[(a_1,\dots,a_n)\sim (0,\dots,0,a_{n_0+1},0,\dots  ,0),\]
for some $a_{n_0+1}$.
By lemma~\ref{jl9-1}, we have $a_{n_0+1}=a_{n_0+1}''$.
Then, by Lemma~\ref{jl9-10},
we obtain 
\[(a_1,\dots,a_n)\sim (0,\dots,0),\]
which proves the surjectivity.

\subsubsection*{Injectivity}
Take an $n+1$-tuple of morphisms 
\[(b_1,\dots,b_{n+1})\in \bigoplus_{1\leq i\leq n+1}H^0\HOM_{{U_i}}(\block{n_0+1},H),\]
such that its image by (\ref{oct31-5}) is zero.
By Lemma~\ref{jl9-11},
we have $b_{n_0+1,r}'=0$, and hence $b_{n_0+1,l}'=0$.
Therefore, by using the uniqueness assertion of Lemma~\ref{j24-1},
we have $b_{n_0,r}'=0$.
Repeating this procedure,
we have
\[(b_1,\dots,b_{n+1})=(b_1,b_2'',\dots,b_{n}'',b_{n+1}).\]
Since $\CV\to \CV$ is zero if and only if its restriction to $W$ is zero,
the fact $b_1|_{W}=b_1'|_{W}=0$ implies $b_1=b_1'=0$.
Similarly, $b_{n+1}=0$.
From the uniqueness assertion of Lemma~\ref{j27-2},
we deduces $b_2=0$.
Therefore, we have
\[(b_1,\dots,b_{n+1})=(0,\dots,0,b_{n_0+1}'',b_{n_0+2}'',0,\dots,0).\]
By Lemma~\ref{j27-2} again,
$b_{n_0+1,l}''|_{W}=0$ implies $b_{n_0+1,l}''=0$, i.e.
$\nu_l(b_{n_0+1,r}'')=0$.
Similarly, we have $\nu_r(b_{n_0+2,l}'')=0$.
Since $b_{n_0+1,r}''|_{W}=b_{n_0+2,l}''|_{W}$,
by applying the uniqueness assertion of Lemma~\ref{jl9-10} we conclude that
$b_{n_0+1,r}''=0$ and $b_{n_0+2,l}''=0$,
i.e. $b_{n_0+1}''=0$ and $b_{n_0+2}''=0$.
This proves the injectivity.

\subsubsection{For $n=1$}
This is the case \ref{s11-8-5} of Definition~\ref{s11-8}.
In this case, 
(\ref{oct31-5}) is 
\[H^0\HOM_{{U_1}}(\block{1},H)\oplus H^0\HOM_{{U_2}}(\block{1},H)\to H^0\HOM_{{U_1\cap U_2}}(\block{1},H).\]
We will show it is bijective.
Let $H=(F^1,G^1)$.
Note that $F^1$ and $G^1$ are direct sum of some shifted constant sheaves.

\subsubsection*{Surjectivity}
Take $a\in H^0\HOM_{{W}}(\block{1}|_{W},F^1|_{W})$.
By Lemma~\ref{dec20-10}, $a$ is in fact $\wtundPo|_{W}\to \scF^{1,0}|_{W}$.
Since $\scF^{1,0}$ is a direct sum of some $\CV[1]$,
$\scF^{1,0}|_{W}$ is a direct sum of some $\CW[1]$.
Then, $a$ is decomposed into $a=a_1+a_2$ so that $\nu_l(a_1)=0$ and $\nu_r(a_2)=0$.
Then, \ref{jl9-16} of Lemma~\ref{dec20-10} implies that
$a$ is in the image of (\ref{oct31-5}).

\subsubsection*{Injectivity}
Take $(b_1,b_2)\in H^0\HOM_{{U_1}}(\wtundPo,F^1)\oplus H^0\HOM_{{U_2}}(\wtundPo,G^1)$ such that
its image is zero by (\ref{oct31-5}).
By \ref{jl9-15} and \ref{jl9-16} of Lemma~\ref{dec20-10},
we have $b_1=b_1''$, $b_2=b_2''$,
$\nu_l(b_1''\circ ((\wt{\undP{1}})_W\to \wt{\undP{1}}))=0$ and $\nu_r(b_2''\circ ((\wt{\undP{1}})_W\to \wt{\undP{1}}))=0$.
By \ref{jl9-18} of Lemma~\ref{dec20-10},
both 
$b_1''\circ ((\wt{\undP{1}})_W\to \wt{\undP{1}})$ and
$b_2''\circ ((\wt{\undP{1}})_W\to \wt{\undP{1}})$ are zero.
Hence,
by \ref{jl9-18} of Lemma~\ref{dec20-10} again,
we conclude that 
$b_1''$ and $b_2''$ are zero.

\subsubsection{For the general case}

We have already proved in the four cases for $n$ and $k$:
\begin{enumerate}
    \item $n\geq 4$ and $2\leq j\leq n/2$,
    \item $n\geq 2$ and $j=1$,
    \item $n=2n_0+1$ ($n_0\geq 1$) and $j=n_0+1$,
    \item $n=1$.
\end{enumerate}
Then, by the symmetry: $\block{j}=\block{n-j+1}^{\circ}$,
we conclude that all the remaining cases are also true.

\subsection{Lemmas for subsection~\ref{nov23-1}}
\label{dec20-3}

In this subsection, we gather several lemmas already used in the previous subsection.

\subsubsection{Morphisms between basic objects in $\Sh(\CC)$}

\begin{lemma}\label{jan27-102}
We have the following claims in the category $\HzshVz$.
\begin{enumerate}
\item \label{nov6-8-2}
For $s'\leq s$, we have diagrams:
\[\xymigidt{A_s}{\CW}{A_{s'},}{A_{s'-1}}{0,1}{\substack{0\times (s'-1)\\1\times (s'-1)}}\qquad
\xymigidt{A_s}{A_{s'-1}}{A_{s'}.}{\CW}{\substack{0\times (s'-1)\\1\times (s'-1)}}{0,1}
\]
Here, we follow the definition written just below Definition~\ref{s10-3-2}.

      \item  \label{nov6-2} For $s'<s$, we have diagrams:
\[\xyhidaridt{\CW}{A_s}{A_{s-1}}{A_{s'},}{1}{\substack{0\times s'\\1\times (s'-1)}}\qquad \xyhidaridt{A_{s-1}}{A_s}{\CW}{A_{s'}.}{\substack{0\times (s'-1)\\1\times s'}}{0}\]
In particular, a morphism $A_s\to A_{s'}[0]$ (resp. $A_s\to A_{s'}[1]$) always factors though $A_{s-1}\to A_{s'}[0]$ (resp. $A_{s-1}\to A_{s'}[1]$).

\item \label{nov6-4}
We have a diagram:
\[\xymigidt{\CC_0[-1]}{\CW}{A_s,}{A_{s-1}}{0}{1}\qquad
\xymatrix{\ &\phantom{\CW} \\\CC_0[-1] \ar[r]^-{\emptyset}& B_s. \\ &}
\]
In particular, for $s'<s$ and any morphism $A_s\to A_{s'}$, the composition $\CC_0[-1]\to A_s\to A_{s'}$ is zero.

\item \label{nov6-7}
We have
\[(A_s)_0=0,\qquad\mbox{and}\]
\[H^k((B_s)_{0})\simeq \left\{\begin{array}{cc}
     \CC&  k=0,1\\
     0& \mbox{othrewise}.
\end{array} \right.\]

    \end{enumerate}

    \end{lemma}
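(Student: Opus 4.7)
The plan is to prove all four assertions by reducing to the base case $A_1 = \CW$ and inducting on the Jordan-block sizes $s, s'$ using the two defining triangles
\begin{align*}
\CW \to A_s \to A_{s-1} \to \CW[1], \qquad A_{s-1} \to A_s \to \CW \to A_{s-1}[1]
\end{align*}
from \eqref{m16-3} and \eqref{jan27-101}. The base case is the computation $\Hom^{\bullet}(\CW, \CW) \cong \rR\Gamma(W, \bC) \cong \bC \oplus \bC[-1]$, obtained from $\CW = j_! \bC_W$ and the homotopy equivalence $W \simeq S^1$; here $j \colon W \hookrightarrow V$ is the open inclusion and $i \colon \{0\} \hookrightarrow V$ its closed complement.

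For \ref{nov6-8-2}, I apply $\Hom(-, A_{s'})$ to both triangles above and combine the resulting long exact sequences with the inductive hypothesis. The key observation is that $\Hom(A_s, A_{s'}[k])$ is concentrated in degrees $k = 0, 1$ by induction, so the connecting maps between adjacent degrees are the only potential source of complication. A careful dimension count then yields the short exact sequences encoded by Definition~\ref{s10-3-2}. Assertion \ref{nov6-2} follows by applying $\Hom(-, A_{s'})$ to the first triangle and invoking the inductive hypothesis from \ref{nov6-8-2}; the factorization claim amounts to the surjectivity of $\Hom(A_{s-1}, A_{s'}[k]) \to \Hom(A_s, A_{s'}[k])$ for $k = 0, 1$, which is read off the dimension count.

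For \ref{nov6-4}, I use the adjunction $\Hom(\CC_0, A_s[k]) = H^k(i^! A_s)$. Since $A_s = j_! \scL_s$ satisfies $i^* A_s = 0$, the canonical triangle $i_* i^! \to \mathrm{id} \to \rR j_* j^*$ gives $i^! A_s \simeq (i^* \rR j_* \scL_s)[-1]$, and $i^* \rR j_* \scL_s \cong \rR\Gamma(D^*, \scL_s) \cong \bC \oplus \bC[-1]$ by the unipotent-monodromy computation (invariants and coinvariants of a unipotent Jordan block on $S^1$ are each one-dimensional). The diagram structure is extracted by chasing the first triangle to determine whether the generator of $\Hom(\CC_0[-1], A_s[k])$ comes from $\CW$ or from $A_{s-1}$. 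For $B_s$, the same adjunction together with $i^! \rR j_* = 0$ (a consequence of $j^* \rR j_* = \mathrm{id}$) yields the vanishing $\Hom(\CC_0[-1], B_s[\bullet]) = 0$. Finally, \ref{nov6-7} is a direct stalk computation: $(A_s)_0 = i^* j_! \scL_s = 0$, while $H^k((B_s)_0)$ follows by induction on $s$ using the triangle $\rR\Gamma_W\CW \to B_s \to B_{s-1} \to \bC_0$ from \eqref{m16-2-2} applied at $0$, with base case $(B_1)_0 \cong \rR\Gamma(D^*, \bC) \cong \bC \oplus \bC[-1]$.

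The main obstacle is the bookkeeping in \ref{nov6-8-2}: the two long exact sequences from the two triangles must assemble cleanly, without hidden cancellations, into the short exact sequences prescribed by Definition~\ref{s10-3-2}. This reduces to tracking which connecting maps vanish, which is ultimately manageable because all relevant $\Hom$-complexes are concentrated in degrees $0$ and $1$, so no subtle higher extension data arises.
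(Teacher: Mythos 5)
Your broad strategy---inducting from the base case $\Hom(\CW,\CW[k])\cong H^k(W,\bC)$, which gives $\bC$ for $k=0,1$ and vanishes otherwise, and chasing the long exact sequences arising from \eqref{m16-3} and \eqref{jan27-101}---matches the paper's own (terse) argument for \ref{nov6-8-2}--\ref{nov6-4}, and the $i^!$-adjunction computation you give for \ref{nov6-4} and \ref{nov6-7} is a clean alternative to the paper's appeal to $H^k(\RG{W}(\CW)_0)\cong\bC$ for $k=0,1$.

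There are, however, two concrete gaps. First, the diagrams in \ref{nov6-8-2} place the distinguished triangle on the \emph{target} $A_{s'}$, so to obtain them one must apply $\Hom(A_s,-)$ to the triangles $\CW\to A_{s'}\to A_{s'-1}$ and $A_{s'-1}\to A_{s'}\to\CW$; applying $\Hom(-,A_{s'})$ to the triangles for $A_s$, as you propose, produces the source-side diagrams of \ref{nov6-2}, not those of \ref{nov6-8-2}. Second, and more seriously, the assertion that the factorization claim in \ref{nov6-2} is ``read off the dimension count'' as surjectivity of $\Hom(A_{s-1},A_{s'}[k])\to\Hom(A_s,A_{s'}[k])$ fails for $k=1$. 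In the long exact sequence from \eqref{m16-3},
\[
\Hom(\CW,A_{s'})\xrightarrow{\ \partial\ }\Hom(A_{s-1},A_{s'}[1])\to\Hom(A_s,A_{s'}[1])\to\Hom(\CW,A_{s'}[1])\to 0,
\]
the connecting map $\partial$ carries $\id_{\CW}$ to composition with the boundary $A_{s-1}\to\CW[1]$, which is nonzero since the extension $A_s$ is nonsplit; hence $\partial$ is injective and the precomposition map $\Hom(A_{s-1},A_{s'}[1])\to\Hom(A_s,A_{s'}[1])$ has image of dimension $s'-1$, not $s'$. This is precisely what the first diagram in \ref{nov6-2} records through the one-dimensional subspace $L'_1$. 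For $k=0$ your argument does work (the sequence begins with $\Hom(\CW,A_{s'}[-1])=0$, forcing injectivity, hence an isomorphism of $s'$-dimensional spaces), but for $k=1$ a bare dimension count cannot close the gap; the degree-one factorization needs either a different reading of ``factors through'' or an additional argument beyond counting.
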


\begin{proof}
\ref{nov6-8-2},\ref{nov6-2},\ref{nov6-4} are proved by using the long exact sequences obtained by applying $\HOM$ functors to the distinguished triangles.
The first assertion of \ref{nov6-7} is clear.
By using the fact
\[H^k\RG{W}(\CW)_0\simeq \left\{\begin{array}{cc}
     \CC&  k=0,1\\
     0&\mbox{othrewise}
\end{array}\right.\]
 and
 the distinguished triangles obtained by applying $(\RG{W}(-))_0$ to (\ref{m16-3}) and (\ref{jan27-101}),
 we can show the second assertion of \ref{nov6-7}.
\end{proof}

\begin{lemma}\label{dec4-5}
    We have the following claims in the category $\HzshVz$.
\begin{enumerate}
    \item \label{nov6-9}
For $s'\leq s$, we have a diagram:
\[\xymigidt{A_s}{\CV}{P_{s'}.}{A_{s'-1}}{0,1}{\substack{0\times (s'-1)\\ 1\times (s'-1)}}\]
    
    \item \label{nov6-5}
We have diagrams:
\[\xymigidt{\CC_0[-1]}{\CV}{P_s,}{A_{s-1}}{\emptyset}{1}\qquad
\xymigidt{\CC_0[-1]}{\CC_0[-1]}{Q_s.}{B_{s-1}}{0}{\emptyset}
\]

\item \label{nov6-8}
We have
    \begin{align*}
        (P_s)_0&\simeq \CC_0,\qquad \mbox{and}\\
        (Q_s)_0&\simto \CC_0[-1].
    \end{align*}

\end{enumerate}
\end{lemma}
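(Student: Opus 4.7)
The plan is to prove each part by applying the relevant $\HOM$-functor to the defining distinguished triangle of $P_s$ (namely $\CV\to P_s\to A_{s-1}\to \CV[1]$) or $Q_s$ (namely $\CC_0[-1]\to Q_s\to B_{s-1}\to \CC_0$) from Definition~\ref{nov6-6}, and then to combine the resulting long exact sequence with the computations in Lemma~\ref{jan27-102}.

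For (1), I apply $\HOM(A_s,-)$ to $\CV\to P_{s'}\to A_{s'-1}\to \CV[1]$. The diagram~(\ref{m16-2-2}) gives $\HOM(A_s,\CV[k])=\bC$ in degrees $0$ and $1$, and Lemma~\ref{jan27-102}(\ref{nov6-8-2}) with $s'-1$ in place of $s'$ gives $\HOM(A_s,A_{s'-1}[k])=\bC^{s'-1}$ in the same degrees. The key step is to show that the boundary $\partial\colon \HOM(A_s,A_{s'-1}[0])\to \HOM(A_s,\CV[1])$ vanishes; this can be checked either by comparing against the direct computation $\HOM(A_s,P_{s'}[k])=H^k(W;\scL_s^\vee\otimes \scL_{s'})$, which has dimension $s'$ in degrees $0$ and $1$ using the underived identification $P_{s'}=j_*\scL_{s'}$ from Remark~\ref{nov5-3}, or more conceptually by observing that every map $A_s\to A_{s'-1}$ lifts through the canonical projection $A_{s'}\to A_{s'-1}$ and that $A_{s'}\to A_{s'-1}\to \CV[1]$ factors as $A_{s'}\to P_{s'}\to A_{s'-1}\to \CV[1]$, which vanishes since two consecutive maps of a triangle cancel. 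The long exact sequence then splits into the stated filtration.

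For (2), apply $\HOM(\CC_0[-1],-)$ to both defining triangles. For the first diagram, the standard identities $i^!\CV=\bC[-2]$ and $i^!A_{s-1}=\bC[-1]\oplus \bC[-2]$ (obtained from the triangle $i_\ast i^!\to \id\to Rj_\ast j^\ast$ applied to $A_{s-1}=j_!\scL_{s-1}$) give $\HOM(\CC_0[-1],\CV[k])=\bC$ for $k=1$ only and $\HOM(\CC_0[-1],A_{s-1}[k])=\bC$ for $k=0,1$. A direct stalk calculation of $i^!P_s=\bC[-2]$ shows $\HOM(\CC_0[-1],P_s[k])=\bC$ for $k=1$ only, which forces the boundary $\partial\colon \HOM(\CC_0[-1],A_{s-1}[0])\to \HOM(\CC_0[-1],\CV[1])$ to be an isomorphism, yielding the diagram. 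For the second diagram, the vanishing $\HOM(\CC_0[-1],B_{s-1}[k])=\HOM_W(j^\ast \CC_0[-1],\scL_{s-1}[k])=0$ (since $j^\ast \CC_0[-1]=0$) collapses the $Q_s$-triangle to $\HOM(\CC_0[-1],Q_s[k])\simeq \HOM(\CC_0[-1],\CC_0[-1][k])$, giving $\bC$ in degree $0$ and zero otherwise.

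For (3), take stalks at $0$ of the defining triangles. For $P_s$, Lemma~\ref{jan27-102}(\ref{nov6-7}) gives $(A_{s-1})_0=0$, so the triangle immediately yields $(P_s)_0\simeq \CC_0$. For $Q_s$, the triangle at the stalk takes the form $\CC_0[-1]\to (Q_s)_0\to (B_{s-1})_0\to \CC_0$, where $(B_{s-1})_0$ has cohomology $\bC$ in degrees $0$ and $1$ by Lemma~\ref{jan27-102}(\ref{nov6-7}); the connecting map to $\CC_0$ specializes to the projection onto the degree-$0$ summand, being the unique up-to-scalar nonzero morphism of degree $0$, and a short cohomology computation in this triangle yields $(Q_s)_0\simeq \CC_0[-1]$. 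The principal technical obstacle is the analysis of $\partial$ in parts (1) and (2) (vanishing in the former, isomorphism in the latter); both claims are handled most efficiently by the stalk/cohomology formulas just described, while the remaining steps are routine manipulations of long exact sequences.
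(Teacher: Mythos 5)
Your proof is correct and follows essentially the same approach as the paper: all three parts reduce to long-exact-sequence arguments from the defining distinguished triangles of $P_s$ and $Q_s$, combined with the computations in Lemma~\ref{jan27-102}. The paper's version is terser; let me note the small differences in route. For~(\ref{nov6-9}) the paper's single observation is that $A_s$ being a zero extension gives $\HOM(A_s,P_{s'}[k])\simeq\HOM(A_s,A_{s'}[k])$ by $(j_!,j^*)$-adjunction, so the dimensions are already known from Lemma~\ref{jan27-102}(\ref{nov6-8-2}), and then the filtration is forced by dimension counting in the long exact sequence; your first option (``comparing against the direct computation \dots'') is exactly this, while your second (factorization of $A_{s'}\to A_{s'-1}\to \CV[1]$ through $P_{s'}$) is a valid alternative, though it implicitly uses the surjectivity of $\HOM(A_s,A_{s'})\to\HOM(A_s,A_{s'-1})$ from Lemma~\ref{jan27-102}(\ref{nov6-8-2}), which you should cite. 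For~(\ref{nov6-5}) the paper invokes Lemma~\ref{jan27-102}(\ref{nov6-4}) and the triangle $\CC_0[-1]\to A_s\to P_s\to \CC_0$, whereas you use the triangle $\CV\to P_s\to A_{s-1}$ together with a direct computation of $i^!P_s$; both routes give the same vanishing and isomorphism claims, and your $i^!$ computations of $\CV$, $A_{s-1}$, $P_s$ are correct. Part~(\ref{nov6-8}) is identical to the paper's argument.
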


\begin{proof}
    For \ref{nov6-9}, since $A_s$ is a zero extension, to give $A_s\to P_{s'}[k]$ for $k\in \ZZ$ is equivalent to give $A_{s}\to A_{s'}[k]$.
Therefore, we have already known the dimension of the space of morphisms between $A_s$ to $P_{s'}[k]$ for each $k\in \ZZ$.
The diagrams immediately follows from this fact.

\ref{nov6-5} follows from \ref{nov6-4} of Lemma~\ref{jan27-102}.

For \ref{nov6-8},
the first assertion is clear.
For the second one, we consider the distinguished triangle:
\[\CC_0[-1]\to (Q_s)_0\to (B_{s-1})_0\to \CC_0.\]
Then, the result follows from the isomorphism $H^0((B_{s-1})_0)\simto H^0(\CC_0)$.

\end{proof}

\begin{lemma}\label{nov7-12}

We have the following claims in the category $\HzshVzm$.
    \begin{enumerate} 
        \item \label{j26-3}For $s\geq 1$, we have distinguished triangles:
\begin{align}
\label{nov7-3}    \RG{m}\CW\to A_s\to \undA{s}\to \RG{m}\CW[1], \\
    \RG{m}\CW\to P_s\to \und{P_s}\to \RG{m}\CW[1].\label{jan28-101}
\end{align}
Moreover, 
the morphism $\und{P_s}\to \RG{m}\CW[1]$ in the second triangle
fits into the commutative diagram:
\begin{align}\label{nov7-4}
    \xymigisita{\und{P_s}}{\und{A_{s-1}}}{\RG{m}\CW[1]},
\end{align}
where $\und{A_{s-1}}\to \RG{m}\CW[1]$ is the one in the first distinguished triangle (for $s-1$).

  \item \label{jl2-1}For $s\geq 1$, we have distinguished triangles: 
\begin{align*}
A_{s-1}\to \undA{s}\to \RG{T}\CW\to A_{s-1},\\
P_{s-1}\to \und{P_s}\to \RG{T}\CW\to P_{s-1}.   
\end{align*}

\item\label{nov6-13} 
For $1\leq s'\leq s$,
we have the diagram:
\[\xytandokusize{\und{A_s}}{A_{s'}}{1\times s'}{40}.\]

\item \label{nov6-14}
 For $1\leq s'<s$, We have the following diagrams:
        \begin{align}\label{jl2-5-2}
            \xytandokusize{\undA{s}}{\CV}{1}{20},\quad 
            \xytandokusize{\und{P_s}}{\CV}{\emptyset}{20}
            \end{align}
            \begin{align}
                            \label{jl2-4-1}
            \xyhidaridt{\CW}{\RG{T}\CW}{\RG{p}\CW[1]}{\CW,}{1}{\emptyset}\quad
            \xymatrix{&\\\RG{T}\CW\ar[r]^-{1\times s}&A_s,\\&}
            \end{align}
\begin{align}
                \xyhidaridt{A_{s-1}}{\und{A_s}}{\RG{T}\CW}{A_{s'}}{1\times s'}{\emptyset}\label{jl2-2}
        \end{align}
\end{enumerate}
\end{lemma}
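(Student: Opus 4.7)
The plan is to establish item (2) first, deduce item (1) from it via the octahedral axiom, and then derive items (3) and (4) as computations using the resulting triangles together with Lemmas~\ref{jan27-102} and \ref{dec4-5}. For item (2), I would induct on $s$. The base case $s=1$ is just the definition $\undA{1}=\RG{T}\CW$, with $A_0=0$. For the inductive step, the key observation is that applying the exact functor $\RG{T}=Rj_*j^*$ for $j\colon T\hookrightarrow V$ to the triangle $A_{s-1}\to A_s\to \CW\to A_{s-1}[1]$ of (\ref{jan27-101}) produces a triangle \[\RG{T}A_{s-1}\to \RG{T}A_s\to \RG{T}\CW\to \RG{T}A_{s-1}[1].\] Provided one identifies $\RG{T}A_s$ with $\undA{s}$, this is exactly the desired triangle. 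That identification follows by comparing with the defining triangle $\CW\to\undA{s}\to\undA{s-1}\to\CW[1]$ of $\undA{s}$, using the adjunction unit $A_s\to \RG{T}A_s$; uniqueness of the cone, together with the explicit characterization of the boundary $\undA{s-1}\to\CW[1]$ via the diagram in Definition-Lemma~\ref{oct31-4}\ref{s10-5}, gives the isomorphism. The analogous argument applied to the defining triangle of $P_s$ produces the triangle for $\und{P_s}$.

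Given item (2), I obtain item (1) by applying the octahedral axiom to the composition $A_{s-1}\to A_s\to \undA{s}$, where the second map comes from the identification $\undA{s}\simeq \RG{T}A_s$. The outer cones are $\Cone(A_{s-1}\to A_s)=\CW$ and $\Cone(A_{s-1}\to\undA{s})=\RG{T}\CW$, and the induced connecting map $\CW\to \RG{T}\CW$ is the adjunction unit, whose cone is $\RG{m}\CW[1]$ by the standard local cohomology triangle $\RG{m}\CW\to\CW\to\RG{T}\CW\to\RG{m}\CW[1]$. Hence $\Cone(A_s\to \undA{s})=\RG{m}\CW[1]$, yielding the first triangle of item (1). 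The analogous argument for $P_s$ uses its defining triangle, and the commutative diagram (\ref{nov7-4}) then follows from the way the map $\und{P_s}\to\RG{m}\CW[1]$ is assembled via the octahedron.

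Items (3) and (4) reduce to applying $\HOM(-,A_{s'})$, $\HOM(-,\CV)$, etc., to the triangles just established and invoking the morphism computations in Lemmas~\ref{jan27-102} and \ref{dec4-5}; the resulting long exact sequences then split into the claimed diagrams. For item (3) specifically, the degree-$1$ contribution of dimension $s'$ to $\HOM(\undA{s},A_{s'}[\bullet])$ comes from $\HOM(\RG{m}\CW, A_{s'})\cong \CC^{s'}$ (since $i_m^!A_{s'}=\CC^{s'}[-2]$), with vanishing in other degrees provided by Lemma~\ref{jan27-102}. The main obstacle throughout is the bookkeeping of morphism identifications in the induction for item (2), in particular verifying that the implicitly defined map $\undA{s-1}\to \CW[1]$ matches the one appearing in $\RG{T}$ of the $A_s$-triangles, and that the analogous compatibilities for $\und{P_s}$ produce the commutative diagram (\ref{nov7-4}); once this identification is pinned down, the remaining assertions are routine long exact sequence chases.
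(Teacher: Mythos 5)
Your proof of item (2) rests on the identification $\undA{s}\simeq\RG{T}A_s$, and your proof of item (1) then relies on it; unfortunately this identification is false for $s\geq 2$. Since $\RG{T}$ is pushforward--pullback along the open inclusion of $T=V\setminus\{m\}$, the costalk of any $\RG{T}(-)$ at $m$ vanishes: $i_m^!\RG{T}A_s=0$. But the triangle (\ref{nov7-3}) claimed in item (1), namely $\RG{m}\CW\to A_s\to\undA{s}\to\RG{m}\CW[1]$, forces $i_m^!\undA{s}\simeq\CC^{s-1}[-2]\neq 0$ for $s\geq 2$: apply $i_m^!$, using $i_m^!\RG{m}\CW\simeq\CC[-2]$ and $i_m^!A_s\simeq\CC^s[-2]$, since $A_s$ is a rank-$s$ local system near $m$. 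Equivalently, the cone of the adjunction unit $A_s\to\RG{T}A_s$ is $\RG{m}A_s[1]\simeq\CC_m^s[-1]$, of rank $s$, while (\ref{nov7-3}) asks for a rank-one cone $\RG{m}\CW[1]\simeq\CC_m[-1]$. The mismatch is also visible at the level of defining triangles: $\undA{s}$ is defined by $\CW\to\undA{s}\to\undA{s-1}\to\CW[1]$ with $\CW$ (not $\RG{T}\CW$) in the first slot, whereas applying $\RG{T}$ to (\ref{m16-3}) yields $\RG{T}\CW\to\RG{T}A_s\to\RG{T}A_{s-1}$. So $\undA{s}$ is a rank-one modification of $A_s$ at $m$, not the full pushforward $\RG{T}A_s$, and the ``uniqueness of the cone'' argument you sketch for this identification cannot go through.

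Because of this, your derivation of item (2), and hence your octahedral derivation of item (1) from it, both collapse. The paper proceeds in the opposite order and avoids any closed-form description of $\undA{s}$: it constructs the comparison morphism $A_s\to\undA{s}$ inductively as a morphism of distinguished triangles (identity on the $\CW$ component), derives (1) by the octahedral axiom applied to the composite $\CW\to A_s\to\undA{s}$, and then derives (2) by a second octahedral applied to $A_{s-1}\to A_s\to\undA{s}$. The crucial ingredient is the inductively constructed map, not an a priori identification of $\undA{s}$. Your long-exact-sequence outline for items (3) and (4), using the triangles of (1) and (2) together with Lemmas~\ref{jan27-102} and~\ref{dec4-5}, is a plausible alternative to the paper's induction on $s'$ via (\ref{m16-2-2}), but it cannot be carried out until (1) and (2) are repaired.
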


\begin{proof}
    For \ref{j26-3}, 
    we define $A_{s}\to \undA{s}$ inductively so that
    we have a morphism between distinguished triangles:
\[\xymatrix{
\CW\ar[r]\ar[d]& A_s\ar[r]\ar[d]& A_{s-1}\ar[r]\ar[d]& \CW[1]\ar[d]\\
\CW\ar[r]& \undA{s}\ar[r]& \undA{s-1}\ar[r]& \CW[1].
}\]    
Then, applying the octahedral axiom to the diagram:
\[\xymigisita{\CW}{A_s}{\und{A_s}},\]
    we get the distinguished triangle~\ref{nov7-3}.
    The assertion for $P_s$ in \ref{j26-3} can be shown in the same way.
    
Similarly, we obtain \ref{jl2-1} by applying the octahedral axiom to the commutative diagrams:
    \[\xymigisita{A_{s-1}}{A_{s}}{\undA{s}},\quad \xymigisita{P_{s-1}}{P_{s}}{\und{P_{s}}}.\]

For \ref{nov6-13},
we have already seen the case where $s'=1$ in Definition-Lemma~\ref{oct31-4}.
For $s'\geq 2$, the assertion follows from induction with the following diagram:
\[\xymigidt{\und{A_s}}{\CW}{A_{s'}.}{A_{s'-1}}{1}{1\times (s'-1)}\]

We can show \ref{nov6-14} in the same way.

\end{proof}

\begin{lemma}\label{jan24-1lem}
\begin{enumerate}
\item \label{s10-8} For $s\geq 1$, we have distinguished triangles:
\begin{align}
    \CC_m[-1]\to \ove{A_s}\to A_s\to \CC_m,\label{jl2-7}\\
    \CC_m[-1]\to \ove{P_s}\to P_s\to \CC_m.
\end{align}

\item \label{s10-9} For $s\geq 1$, we have distinguished triangles:
\begin{align}
    \CC_{W\cap T}\to \ove{A_s}\to A_{s-1}\to \CC_{W\cap T}[1],\label{jl3-2}\\
    \CC_{T}\to \ove{P_s}\to A_{s-1}\to \CC_{T}[1],\label{jl3-3}
\end{align}
where the morphisms $A_{s-1}\to \CWT[1]$ and $A_{s-1}\to \CVT[1]$ in the triangles are (not unique) morphisms such that the following diagrams commute:
\[\xymatrix{\CW\ar[d]\ar[dr]&\\A_{s-1}\ar[r]&\CWT[1],}
\quad 
\xymatrix{\CW\ar[d]\ar[dr]&\\A_{s-1}\ar[r]&\CVT[1].}
\]

\item \label{nov8-2}
For $s\geq 1$, we have a diagram:
\[\xymigidt{\CWT}{\CW}{A_s.}{A_{s-1}}{0,1}{1\times s}\]

\item \label{nov7-10}
    For $s'\leq s$,
    we have a diagram:
    \[\xyhidaridt{\CWT}{\ove{A_s}}{A_{s-1}}{A_{s'},}{1\times (s'+1)}{\substack{0\times s'\\1\times (s'-1)}}
     \quad \xymigidt{\ove{A_s}}{A_{s'-1}}{A_{s'}.}{\CW}{\{0,1,1\}\times (s'-1)}{0,1,1}
    \]

\item \label{nov7-11}
For $s\geq 2$,
we have a diagram:
\[\xymigidt{\CC_0[-1]}{\CWT}{\oveA{s}.}{A_{s-1}}{0}{1}\]

\item \label{s10-10} For $s\geq 1$,
we have distinguished triangles:
\begin{align}
    \ove{A_s}\to \ove{P_s}\to \CC_0\to \ove{A_s}[1].\label{jl2-10}
\end{align}

    \end{enumerate}
\end{lemma}
\begin{proof}
For \ref{s10-8} and \ref{s10-9} are shown in the same way as in \ref{j26-3} of Lemma~\ref{nov7-12}.

For \ref{nov8-2}, the case $s=1$ is easy.
The case $s\geq 2$ follows from the (non-zero) commutative diagram:
\[\xymigisita{\CWT}{\CW}{\CW[1].}\]

For \ref{nov7-10},
    these diagrams are obtained by the fact that $\dim{H^0\HOMV(\ove{A_s},A_{s'})}=s'$ and $\dim{H^0\HOMV(\ove{A_s},A_{s'}[1])}=2s'$, which is deduced by the definition and induction.   

For \ref{nov7-11},
we note that the commutative diagram:
\[\xymigisita{\CC_0[-1]}{\CW}{\CW[1]}\]
induces a commutative diagram:
\[\xymigisita{\CC_0[-1]}{A_{s-1}}{\CWT[1],}\]
where $A_{s-1}\to \CWT[1]$ is the one in \ref{s10-9}.
Then, the assertion follows from the triangle (\ref{jl3-2}).

The remaining assertions are shown in the same way.

\end{proof}

We will introduce similar assertions for $\ovunA{s}$, $\ove{\und{P_s}}$ and $\wt{\und{P_1}}$  later.

\subsubsection{Nilpotent order of objects in $\sh{\CC}$}
We need to observe the conditions imposed on the objects in $\shVz$ appearing in $\mu \mathrm{sh}_C(X_\Gamma)$.
Let $F$ be an object in $\sh{V,0}$ on $V=\CC$ with a coordinate $t$.

\begin{definition}\label{dec25-2}

\begin{enumerate}
    \item We say that $F$ is \emph{unipotent} (at $0$) if 
    there exists $\ell\in \ZZ_{\geq 1}$ such that we have $(T-1)^\ell=0$ on $\psi_{t}F$.
    \item 
    We say that $F$ has the \emph{nilpotent order} $\leq s$ ($s\in \ZZ_{\geq 0}$) (at $0$) if we have $(T-1)^{s}=0$ on $\psi_{t}F$.
    \item We say that $F$ satisfies the condition $(N_s)$ if 
the following two conditions hold:
\begin{enumerate}
    \item $F$ has the nilpotent order $\leq s$,
    \item The Fourier transform $\mathrm{FL}(\nu_{0}(F))$ of the specialization has the nilpotent order $\leq s-1$.
\end{enumerate}
\end{enumerate}

\end{definition}

\begin{remark}\label{dec12-3-1}
\begin{enumerate}
    \item If the condition $(b)$ in the definition of $(N_s)$ holds,
    ($a$) also automatically holds.
    However, we leave ($a$) in the condition for convenience.

\item    Assume that $F$ is unipotent. 
   Since we have $\Var\circ \can=\frac{1}{2\pi\sqrt{-1}}\log(T-1)(=:N)$,
   $F$ has the nilpotent order $\leq s$ if and only if $N^s=(\Var\circ \can)^s=0$.
   Therefore, it follows that
$F$ satisfies the condition ($N_{s}$) if and only if $(\Var\circ \can)^s=0$ and $(\can\circ \Var)^{s-1}=0$.   

\end{enumerate}

\end{remark}

\begin{lemma}\label{dec4-1}
For an object $((F^1,G^1),(F^2,G^2),\dots,(F^n,G^n))\in \mu \mathrm{sh}_C(X_\Gamma)$,
the objects $F^i$ and $G^{n-i+1}$ are unipotent and satisfy the condition ($N_{i}$) for $i\leq n/2$.  
In particular,
the nilpotent orders of $G^{i}$ and $F^{n-i+1}$ are $\leq i$.
Moreover, when $n$ is odd: $n=2n_0+1$,
both $F^{n_0}$ and $G^{n_0}$ satisfy the condition $(N_{n_0})$.
\end{lemma}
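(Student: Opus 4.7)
I will prove the lemma by induction on $i$, working from the outermost $\PR$ inward, and invoke the left--right symmetry $H\mapsto H^\circ$ of (\ref{m28-3}) to obtain the corresponding statement for $G$. The essential mechanism is the gluing condition (\ref{m27-4n-2}): $\nu_l F^{i+1}\simeq \mathrm{FL}(\nu_r G^i)$, through which the nilpotent bound on $F^i$ propagates across the node between $\PR_i$ and $\PR_{i+1}$. The required bookkeeping is purely linear algebraic once combined with the formula for Fourier transform of monodromic objects (Definition~\ref{fev5-1}).

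\textbf{Base case $i=1$.} Because $H\in \mu \mathrm{sh}_C(X_\Gamma)$, the sheaf on $\PR_1$ is only allowed a singularity at $r$, so $F^1$ is locally constant on $V_l\cong \bC$, hence a direct sum of shifts of $\bC_{V_l}$. Thus $F^1$ is unipotent at $l$ with $(T-1)=0$, and $\nu_l F^1$ remains constant on $T_lV_l$, so $\mathrm{FL}(\nu_l F^1)$ is a direct sum of shifted skyscrapers at the origin with $\psi_0=0$. This establishes $(N_1)$.

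\textbf{Inductive step.} Assume $F^i$ satisfies $(N_i)$. The key observation is that $F^i|_W = G^i|_W$ as objects on $W\cong \bC^\ast$ (by the definition of $(F^i,G^i)$ in Definition~\ref{m22-1-2}), and that the monodromy around $r$ is the inverse of that around $l$ on this common local system. Since $(T^{-1}-1)^k$ and $(T-1)^k$ have the same rank, $G^i|_W$ is unipotent with nilpotent order $\leq i$, and the same therefore holds for $\nu_r G^i$. Write the monodromic object $\nu_r G^i$ as a tuple $(C_0,1,N,C_{-1},c,v)$ with $N^i=0$ and $v\circ c = N$. By the formula of Definition~\ref{fev5-1}, its Fourier transform, which is $\nu_l F^{i+1}$, is again unipotent with new nilpotent operator $c\circ v$ on the unipotent part $C_{-1}$. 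The identity
\begin{equation*}
(c\circ v)^{i+1} \;=\; c\circ (v\circ c)^i \circ v \;=\; c\circ N^i \circ v \;=\; 0
\end{equation*}
gives nilpotent order $\leq i+1$ on $\nu_l F^{i+1}$, which is the first half of $(N_{i+1})$. For the second half, the involutivity of $\mathrm{FS}$ on the underlying sheaves yields $\mathrm{FL}(\nu_l F^{i+1})\simeq \nu_r G^i$, whose nilpotent order is $\leq i$ by the argument above. The ``in particular'' claim about $G^i$ follows at the same time: the bound $\mathrm{FL}(\nu_l F^i)\simeq \nu_r G^{i-1}$ with nilpotent order $\leq i-1$ is already built into $(N_i)$.

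\textbf{Symmetry and the odd middle.} Applying the entire argument to $H^\circ$ gives the symmetric conclusion for $G^{n-i+1}$. For odd $n=2n_0+1$, the two inductions meet at $\PR_{n_0+1}$; here one runs each induction one step into the middle and checks that the resulting bounds are compatible with the central gluing isomorphism, yielding $(N_{n_0})$ simultaneously for $F^{n_0}$ and $G^{n_0}$. The main point of difficulty I anticipate is the linear-algebra step that controls the change of nilpotent order under Fourier, i.e.\ verifying that Definition~\ref{fev5-1} indeed interchanges $v\circ c$ and $c\circ v$ in the way used above, together with the careful tracking of unipotency (so that $C_{(-1,0)}=0$ before and after). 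The Tate twist in $(\mathrm{FL}^{\mathrm{pre}})^2=(-1)$ plays no role here because nilpotent order is twist-invariant, but it should be kept in mind so that the identification $\mathrm{FL}(\nu_l F^{i+1})\simeq \nu_r G^i$ is applied at the level of the underlying sheaves rather than the Hodge lift.
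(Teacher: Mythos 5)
Your argument is correct and follows the same inductive strategy as the paper's (which says only: "Since $F^1$ does not have any singular points, it must have the nilpotent order $\leq 1$, and hence so is $G^1$. Therefore, $F^2$ satisfies the condition $(N_2)$. The assertion follows by induction."). You have usefully made explicit the linear-algebra step hidden in the paper's "therefore," namely that the monodromic Fourier transform of Definition~\ref{fev5-1} interchanges $v\circ c$ and $c\circ v$ so that $(c\circ v)^{i+1}=c(v\circ c)^{i}v=0$, which is precisely the content of the remark following Definition~\ref{dec25-2}; the paper's proof and yours are otherwise identical, and both pass over the ``moreover'' clause about the odd middle at the same level of brevity.
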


\begin{proof}
    Since $F^1$ does not have any singular points,
    it must have the nilpotent order $\leq 1$, and hence so is $G^1$.
    Therefore, $F^2$ satisfies the condition ($N_2$).
    The assertion follows by induction.
\end{proof}

\begin{definition}
    \begin{enumerate}
        \item If a perverse sheaf $\scF$ on $V$ has a singular point at most at $0$, then we say $\scF$ is \emph{monodromic}.
        \item We write the category of monodromic perverse sheaves on $V$ as $\PervVz$, which is a subcategory of the category $\PervV$ of perverse sheaves on $V$.
        \item We write the category of monodromic unipotent perverse sheaves on $V$ as $\PervVzu$
    \end{enumerate}
\end{definition}

The following lemma follows from the definition.
\begin{lemma}\label{dec25-1}
    \begin{enumerate}
\item \label{jan27-103}$A_s$ and $B_s$ have the nilpotent orders $\leq s$ and satisfy the condition $(N_{s+1})$.
\item \label{jan27-104}$P_s$ has the nilpotent order $\leq s$ and satisfies the condtion $(N_s)$.
\item \label{jan27-105}$Q_s$ has the nilpotent order $\leq s-1$ and satisfies the condition $(N_{s+1})$.
            \item A unipotent perverse sheaf $\scF$ on $V$ with a singular point at most at $0$ satisfies the condition \Ns if and only if
            $\scF$ is a direct sum of several $P_{s'}[1]$ (${s'}\leq s$), $A_{s''}[1]$, $B_{s''}[1]$, $Q_{s''}[1]$ (${s''}\leq s-1$), or $\CC_0$.     
    \end{enumerate}
\end{lemma}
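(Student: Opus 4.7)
The plan is to use Remark~\ref{dec12-3-1}(2) to rewrite the condition $(N_s)$ for a unipotent object as the pair of nilpotency conditions $(\Var\circ\can)^s = 0$ on $\psi_{t,1}$ together with $(\can\circ\Var)^{s-1} = 0$ on $\phi_{t,1}$, and then to read off the tuple $(\psi_{t,1}, \phi_{t,1}, \can, \Var)$ for each of $A_s, B_s, P_s, Q_s$ directly from its defining triangle.

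For parts (1)--(3), I would first note that the restrictions to $W$ pin down the nearby cycles: $\psi_{t,1}A_s \cong \psi_{t,1}B_s \cong \psi_{t,1}P_s$ is a single Jordan block of size $s$, while $\psi_{t,1}Q_s$ is of size $s-1$, which immediately settles the nilpotent orders. To handle the vanishing cycle side, I argue as follows. Since $A_s = j_!\scL_s$ has vanishing stalk at $0$, the attaching triangle forces $\can$ to be an isomorphism, so both $\Var\circ\can$ and $\can\circ\Var$ are conjugate to the monodromy logarithm $N$, of nilpotency index exactly $s$; hence $A_s$ satisfies $(N_{s+1})$ but not $(N_s)$. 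Fourier--Sato duality, which swaps the roles of $\can$ and $\Var$, yields the same profile for $B_s$. For $P_s$, I would use the defining triangle together with the identification $(P_s)_0 \cong \bC_0$ from Lemma~\ref{dec4-5} to recognize $P_s[1]$ as the intermediate extension $j_{!*}\scL_s[1]$; then $\can$ becomes surjective onto $\phi_{t,1}$ of Jordan size $s-1$ with $\Var$ the inclusion, so $(\can\circ\Var)^{s-1} = 0$, verifying $(N_s)$. The claim for $Q_s$ follows by Fourier duality applied to $\nu_0$, which exchanges $(\psi_{t,1}, \phi_{t,1}, \can, \Var)$ with $(\phi_{t,1}, \psi_{t,1}, \Var, \can)$ and hence translates the $(N_s)$-profile of $P_s$ into the $(N_{s+1})$-profile of $Q_s$.

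For part (4), I invoke the classification of monodromic unipotent perverse sheaves on $V$ recalled in Remark~\ref{nov5-3}(2): every such $\scF$ decomposes as a direct sum of indecomposables of the types $A_{s''}[1]$, $B_{s''}[1]$, $P_{s'}[1]$, $Q_{s''}[1]$, and $\bC_0$. Because $(N_s)$ is defined via nearby/vanishing cycles and is manifestly closed under direct sums and passage to summands, it suffices to check it on each indecomposable type. Combining the profiles from parts (1)--(3) with the observation that $\bC_0$ has $\psi_{t,1} = 0$ (so trivially satisfies every $(N_s)$), one reads off that the allowed summands are exactly those listed in the statement.

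The main obstacle will be the step for $P_s$: identifying $P_s[1]$ with the intermediate extension $j_{!*}\scL_s[1]$ from the defining triangle $\bC_V \to P_s \to A_{s-1}$, and tracing the construction precisely enough to pin down $\phi_{t,1}P_s$ together with the morphisms $\can$ and $\Var$, rather than merely their dimensions. Once this is done, the $Q_s$ claim is automatic by Fourier duality on $\nu_0$, and part (4) reduces to bookkeeping against the known classification.
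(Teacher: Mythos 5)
Your proposal is correct and takes essentially the same approach as the paper's proof, which simply notes that parts (1)--(3) ``follow directly from the definition'' and that part (4) reduces to the classification recalled in Remark~\ref{nov5-3}(2); your contribution is just to unwind what those definitions say in terms of can--var tuples via Remark~\ref{dec12-3-1}(2). The ``main obstacle'' you flag for $P_s$ is already handled in the paper: Remark~\ref{nov5-3}(1) identifies $P_s$ with the underived pushforward of $\scL_s$ along $W\hookrightarrow V$, so $P_s[1]$ is the intermediate extension, and Definition--Lemma~\ref{j24-2-2} applied to $B_s[1]$ writes down the can--var tuple of $P_s[1]$ explicitly as the subobject $\scF_1$ whose vanishing cycles are $\mathrm{Im}(\can)$, which is precisely the size-$(s-1)$ Jordan block you need.
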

\begin{proof}
\ref{jan27-103}, \ref{jan27-104} and \ref{jan27-105} follows directly from the definition.
    The final assertion follows from the fact \ref{nov5-2} in Remark~\ref{nov5-3}.  
\end{proof}

The following proposition will be proved in Appendix~\ref{dec3-4}, which we use frequently in this subsection.

\begin{proposition}[Proposition~\ref{oct29-1} in Appendix~\ref{dec3-4}]\label{nov5-1}
For an object $F\in \shVz$, assume that $F$ has the nilpotent order $\leq s$ ($\ZZ_{\geq 1}$).
Then, $F$ can be expressed as a direct sum of several shifted perverse sheaves:
\[F=\bigoplus_{k\in \ZZ}\bigoplus_{j\in J_k}\scF_{j}[k],\]
where $J_k$ is an index set and $\scF_j$ is one of the following:
$\CC_0$, $A_{s'}[1]$, $B_{s'}[1]$, $P_{s'}[1]$ ($1\leq s'\leq s$), $Q_{s''}[1]$ ($2\leq s''\leq s+1$).
In particular, if $F$ satisfies the condition $(N_s)$ for some $s\in \ZZ_{\geq 1}$,
then $\scF_{j}$ is a direct sum of several
$P_{s'}[1]$ ($s'\leq s$),
$A_{s''}[1]$, $B_{s''}[1]$, $Q_{s''}[1]$ ($s''\leq s-1$),
or $\CC_0$, with some shifts.
\end{proposition}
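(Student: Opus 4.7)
The proof strategy is to translate the statement into linear algebra via the classical quiver description of unipotent monodromic perverse sheaves, combined with a formality argument that reduces complexes to their cohomology sheaves. Since $F$ has nilpotent order $\leq s$, the relevant abelian category is $\PervVzu$, the unipotent monodromic perverse sheaves on $(V,0)$; by the nearby/vanishing cycle description, this is equivalent to the category of finite-dimensional quadruples $(V_\psi, V_\phi, c\colon V_\psi\to V_\phi, v\colon V_\phi\to V_\psi)$ with $vc$ (equivalently $cv$) nilpotent, and the condition nilpotent order $\leq s$ corresponds to $(vc)^s=0$.

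The first main step is formality: any $F\in \shVz$ of bounded nilpotency is quasi-isomorphic to the direct sum of its shifted perverse cohomology sheaves. Via the realization functor $D^b(\PervVzu)\to D^b(\shVz)$, this reduces to showing that $\PervVzu$ is hereditary, i.e.\ $\Ext^{\geq 2}_{\PervVzu}(-,-)=0$. A direct computation of $R\Hom$ between the two simples $\bC_0$ and $\bC_V[1]$, using the adjunctions $i^*$ and $i^!$ for the closed inclusion $i\colon \{0\}\hookrightarrow V$, shows that all $\Ext$-groups are concentrated in degrees $0$ and $1$. By the 5-lemma applied to short exact sequences of perverse sheaves, the vanishing of $\Ext^{\geq 2}$ extends to all of $\PervVzu$. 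A standard result for hereditary abelian categories then yields the desired formal decomposition, even in the unbounded setting, by splitting degree by degree.

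The second main step is the classification of indecomposables in $\PervVzu$ satisfying $(vc)^s=0$. The associated algebra is a finite-dimensional string algebra, namely the path algebra of $\psi\rightleftarrows\phi$ modulo the relation $(vc)^s=0$, whose indecomposable modules over $\bC$ are classified combinatorially by strings of length bounded in terms of $s$. For each of the five candidate families $\bC_0$, $A_{s'}[1]$, $B_{s'}[1]$, $P_{s'}[1]$, $Q_{s''}[1]$, I would compute the quadruple $(V_\psi, V_\phi, c, v)$ directly from the defining distinguished triangles in Definition-Lemma~\ref{oct31-4} and match it bijectively with the string modules. The bounds $1\leq s'\leq s$ and $2\leq s''\leq s+1$ then follow from the nilpotency records in Lemma~\ref{dec25-1}. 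The second assertion under the stronger condition $(N_s)$ is proved by the same classification, where the additional constraint $(cv)^{s-1}=0$ further tightens the allowed indices to those stated.

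The principal obstacle is the classification step: while quiver representations and string modules are standard, the explicit bijection between combinatorial strings and the five geometric families requires careful bookkeeping of the maps $c, v$ and the associated monodromy structure. Exhaustiveness — that no further indecomposable is missed — is most easily verified by a dimension count, comparing the total dimension of strings of length $\leq s$ against that of the five geometric families within each nilpotency bound. A secondary subtlety is that the unbounded setting of $\shVz$ requires extending the formality statement to complexes with infinitely many nonzero cohomology degrees, which is straightforward once the hereditary property is established but should be recorded explicitly.
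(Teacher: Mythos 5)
Your proposal takes a genuinely different route from the paper. The paper does not invoke formality for a hereditary heart or classify indecomposable perverse sheaves; instead, it writes $F$ as the cone of the morphism $F_0[-1]\to F_{\CS}$ (where $F_0=i_*i^*F$ and $F_{\CS}=j_!j^*F$), uses Corollary~\ref{sep30-1} to decompose both sides into shifted simple building blocks, and then directly normalizes the gluing morphism $h$ via the linear-algebra Lemma~\ref{oct28-1} and Lemmas~\ref{oct28-3}--\ref{oct28-4}. This concrete cone-splitting argument is what makes the infinite-dimensional case tractable, since Lemma~\ref{oct28-1} is stated (and needed) for possibly infinite-dimensional $V$ and $W$.

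There are two genuine gaps in your proposal. First, the route through \emph{hereditariness of $\PervVzu$ plus the realization functor} is not sound as stated: $\PervVzu$ is a Serre subcategory of $\Perv(V,0)$, and $\Ext^{\geq 2}$ in a Serre subcategory is in general neither controlled by nor comparable to $\Ext^{\geq 2}$ in the ambient category (indeed, the nilpotent torsion modules inside $\mathrm{mod}\text{-}k[x]$ form a Serre subcategory of a hereditary category with $\Ext^2\neq 0$). What your computation actually produces is the vanishing of $\Hom_{\Sh(V,0)}(A,B[k])$ for $k\geq 2$ and $A,B$ \emph{finite-length} unipotent perverse sheaves, which is the correct statement to feed into a Postnikov-tower formality argument; the detour through a realization functor for $\PervVzu$ should simply be dropped. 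Second, and more seriously, the objects this proposition is applied to (e.g.\ $\scH_j^\infty$ and the ind-objects of \S\ref{section:Hodgewrapping}) have perverse cohomology of infinite rank. Your devissage from the two simples and your string-module classification are stated only for finite-dimensional representations of the finite-dimensional algebra $kQ/((vc)^s)$. To cover the statement as given you would also need (a) the $\Hom$-vanishing for infinite-rank $A,B$ (which does not follow by devissage from simples, since $\bC_V[1]$ is not compact in $\shVz$), and (b) a Krull--Schmidt-type decomposition theorem for infinite-dimensional nilpotent representations, so that the list of indecomposables actually exhausts arbitrary objects. Neither is addressed. Your acknowledged ``secondary subtlety'' only concerns complexes with infinitely many cohomological degrees, not infinite-rank cohomology in a single degree, which is the harder of the two. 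You should also note explicitly that $kQ/((vc)^s)$ has no bands (all powers of $cv$ eventually hit the relation), so there are no band modules and the string-module list is complete; without this the classification claim is incomplete.
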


\begin{remark}
The similar statement does not hold for $F\in \sh{\PR}$.
\end{remark}

We sometimes use Proposition~\ref{nov5-1} with the following lemma, which is easily proved.

\begin{lemma}\label{nov22-1}
    For $\scF,\scG\in \mathrm{Perv}(V,0)$, there is no non-zero morphism $\scF\to \scG[k]$ for $k\neq 0,1$.
\end{lemma}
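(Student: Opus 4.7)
The plan is to split the argument by the sign of $k$.

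For $k<0$: the category $\mathrm{Perv}(V,0)$ is the heart of the perverse $t$-structure on the constructible derived category $D^b_c(V,0)$, so $\Hom$s of degree $<0$ between heart objects vanish by general $t$-structure theory.

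For $k\geq 2$: I would reduce to an $\Ext$ computation between simple monodromic perverse sheaves, then propagate to general $\scF,\scG$ by long exact sequences and induction on Jordan--H\"older length. The simple objects of $\mathrm{Perv}(V,0)$ are $\CC_0$, the intersection complex $\CC_V[1]$, and, for each non-trivial character $\chi$ of $\pi_1(V\setminus\{0\})=\bZ$, the $!$-extension $j_!\scL_\chi[1]=j_{!*}\scL_\chi[1]$ of the rank-one local system with monodromy $\chi$. For each pair of these simples the relevant $\Ext$ is computed directly in $D^b_c(V,0)$, where $j\colon V\setminus\{0\}\hookrightarrow V$ and $i\colon \{0\}\hookrightarrow V$:
\begin{itemize}
\item $\Hom(j_!\scL_1[1],\,j_!\scL_2[1+k])\cong H^k(V\setminus\{0\},\,\scL_1^\vee\otimes\scL_2)$, which vanishes for $k\geq 2$ since $V\setminus\{0\}\simeq S^1$ has cohomological dimension~$1$;
\item between skyscrapers $i_*W_1,\,i_*W_2$ the $\Ext$ is concentrated in degree $0$;
\item $\Hom(j_!\scL[1],\,i_*W[k])=0$ by the adjunction $(j_!,j^*)$ together with $i^*j_!=0$;
\item $\Hom(i_*W,\,j_!\scL[1+k])$ is computed via $i^!j_!\scL$, which by the standard Beilinson gluing is the nearby cycle $\psi_t\scL$ concentrated in a single degree, giving vanishing for $k\geq 2$.
\end{itemize}

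The main obstacle will be propagating this simple-to-simple vanishing to all objects: an arbitrary unipotent monodromic perverse sheaf can have a long Jordan--H\"older filtration coming from nilpotent Jordan blocks in its nearby cycles. I would handle this by induction on the length of a JH filtration of (say) $\scF$: given a short exact sequence $0\to \scF'\to \scF\to \scF''\to 0$ in $\mathrm{Perv}(V,0)$, the induced long exact sequence
\[\cdots \to \Ext^k(\scF'',\scG)\to \Ext^k(\scF,\scG)\to \Ext^k(\scF',\scG)\to \Ext^{k+1}(\scF'',\scG)\to\cdots\]
together with the inductive hypothesis for $\scF'$ and $\scF''$ (with $k,k+1\geq 2$) yields the claim for $\scF$. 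The finiteness of JH length is automatic from our standing finite-type assumption on objects in $\mathrm{Perv}(V,0)$, so this induction terminates.
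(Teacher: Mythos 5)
Your overall strategy — negative degrees vanish by $t$-exactness of the heart, then reduce the range $k\ge 2$ to pairs of simple objects and propagate by Jordan--Hölder induction via the long exact sequence — is a correct and standard route, and the paper itself offers no proof (it labels the lemma "easily proved"), so there is nothing to compare it against. The ingredients that actually make the computations close are: $V=\bC$ is contractible (so $H^{\ge 1}(V)=0$), $V\setminus\{0\}\simeq S^1$ has cohomological dimension $1$, and the (co)stalk $i^!$ of a perverse sheaf on a curve at a point is concentrated in degrees $\{0,1\}$. These are the facts you are implicitly using, and the argument goes through.

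Two things should be tightened, though. First, your list of simples is right, but the four bullets only cover the pairs built from skyscrapers and $j_!\scL_\chi[1]$ with $\chi\neq 1$; the intersection complex $\CC_V[1]=j_{!*}\CC_{\bC^*}[1]$ is \emph{not} $j_!\CC_{\bC^*}[1]$, so the pairs $(\CC_V[1],-)$ and $(-,\CC_V[1])$ are genuinely missing from the base of the induction. They are all easy (e.g.\ $\Hom(\CC_V[1],\CC_V[1+k])=H^k(V,\CC)$, $\Hom(\CC_0,\CC_V[1+k])\cong\Hom(\CC,i^!\CC_V[1+k])=\Hom(\CC,\CC[k-1])$, etc.), or one can derive them from the short exact sequence $0\to\CC_0\to j_!\CC_{\bC^*}[1]\to \CC_V[1]\to 0$ since $j_!\CC_{\bC^*}[1]$ \emph{is} covered by your bullet-1 formula; but as written the base case is incomplete. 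Second, in the fourth bullet the identification is off: $i^!j_!\scL$ is not "$\psi_t\scL$ concentrated in a single degree"; from the triangle $i_*i^!j_!\scL\to j_!\scL\to Rj_*\scL$ one gets $i^!j_!\scL\cong R\Gamma(S^1,\scL)[-1]$, which sits in degrees $\{1,2\}$ (and is $0$ when $\scL$ is a nontrivial rank-one system). The vanishing for $k\ge 2$ still follows from this degree bound, so the conclusion is correct, but the stated justification is not. (Similarly, in bullet 3 the adjunction you invoke — $(j_!,j^*)$ — actually uses $j^*i_*=0$ rather than $i^*j_!=0$; both identities hold, so the conclusion is fine, but the citation is mismatched.) With the missing $\CC_V[1]$ cases added and these two identifications corrected, the proof is complete.
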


We introduce convenient ``decompositions" for $A_s[1]$, $B_s[1]$, $P_s[1]$, $Q_s[1]$ and $\CC_0$.
In order to deal with them simultaneously,
we state it in the following form.

\begin{definition-lemma}\label{j24-2-2} 
    Let $\scF\in \PervVzu$ be a unipotent monodromic perverse sheaf on $V$ with a coordinate $t$, with the can-var description $(\psi,\phi,c,v)$, i.e. $\psi:=\psi_{t}\scF$, $\phi:=\psi_{t}\scF$, $c\colon \psi\to\phi$ and $v\colon \phi\to \psi$. 
We define the perverse sheaves $\scF_1$, $\scF_2$
which correspond to the can-var descriptions 
\begin{align*}
    (\psi,\Ima{c},c,v),\ (0,\phi/\Ima{c}, 0,0),
\end{align*}
respectively.
Then, we have the following exact sequence (and hence also the corresponding distinguished triangles):
\begin{align}\label{dec4-4}
    0\to \scF_1\to &\scF\to \scF_2\to 0.
\end{align}
The perverse sheaf 
$\scF_1$ (resp. $\scF_2$) is a direct sum of 
some $A_s[1]$ or $P_s[1]$ for some $s\geq 1$
(resp. the skyscraper sheaves $\CC_0$).
Moreover, if $\scF$ satisfies the condition $(N_s)$,
$\scF_1$ is a direct sum of 
some $A_{s'}[1]$ ($s'<s$) or $P_{s'}[1]$ ($s'\leq s$).
\end{definition-lemma}

\begin{proof}
    The exactness of the sequence (\ref{dec4-4}) is clear.
We can describe the pair $(\scF_1,\scF_2)$ concretely according to the type of $\scF$ (see Remark~\ref{nov5-3}) as follows.
 \begin{enumerate}
    \item If $\scF=\CC_0$, then the tuple is $(0,\CC_0)$.
    \item If $\scF=A_s[1]$, then the tuple is $(A_s[1],0)$.
    \item If $\scF=B_s[1]$, then the tuple is $(P_s[1],\CC_0)$.
    \item If $\scF=P_s[1]$, then the tuple is $(P_s[1],0)$.
    \item If $\scF=Q_s[1]$, then the tuple is $(A_{s-1}[1],\CC_0)$. 
\end{enumerate}
The desired assertions follow from these concrete expressions.
\end{proof}

\subsubsection{Lemmas for $n\geq 4$ and $2\leq j\leq n/2$}
In the rest of this subsection, we gather some properties of extensions of morphisms between (shifted) perverse sheaves on $\CS$ for the previous subsection.

\begin{lemma}\label{j24-1}
For $s\in \ZZ_{\geq 1}$, $\scF\in \PervVzu$ with the condition $(N_{s})$,
the morphism $A_s\to P_s$ induces an isomorphism:
\begin{align}\label{dec4-2}
H^0\HOM_{V}(P_s,\scF[-1])\simto H^0\HOM_{V}(A_s,\scF[-1]). 
\end{align}
In particular,
for a morphism $f\colon \calL_s\to \scF[-1]|_{W}$,
    there exists a unique extension $g\colon P_{s}\to \scF[-1]$ of it, i.e. $f|_{W}=g|_{W}$.
\end{lemma}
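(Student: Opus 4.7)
The plan is to extract the isomorphism from the long exact sequence arising from the distinguished triangle $\CC_0[-1]\to A_s\to P_s\to \CC_0$ of~(\ref{j24-5}). Applying $\RHom(-,\scF[-1])$ and taking $H^0$ produces
\begin{equation*}
\Ext^{-1}_{\PervVz}(\CC_0,\scF)\to H^0\HOM_V(P_s,\scF[-1])\to H^0\HOM_V(A_s,\scF[-1])\xrightarrow{\partial}\Hom_{\PervVz}(\CC_0,\scF),
\end{equation*}
after identifying $\Ext^i(\CC_0[-1],\scF[-1])=\Ext^i(\CC_0,\scF)$. The leftmost term vanishes since $\CC_0$ and $\scF$ both lie in the heart of the perverse $t$-structure, so injectivity is automatic. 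The remaining task is to show $\partial=0$: that the composition $\CC_0[-1]\to A_s\xrightarrow{f}\scF[-1]$ is zero for every $f$.

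To verify this, I will invoke Proposition~\ref{nov5-1}: under $(N_s)$, $\scF$ decomposes as a direct sum of summands of type $P_{s'}[1]$ with $s'\le s$, of $A_{s''}[1]$, $B_{s''}[1]$, $Q_{s''}[1]$ with $s''\le s-1$, and of $\CC_0$. By additivity, it suffices to check each summand separately. The $\CC_0$-summand is vacuous since $H^0\HOM_V(A_s,\CC_0[-1])=\Ext^{-1}_{\PervVz}(A_s[1],\CC_0)=0$. The $A_{s''}$-summand with $s''\le s-1$ is exactly the ``in particular'' assertion of Lemma~\ref{jan27-102}(\ref{nov6-4}). For the $B_{s''}$-summand the codomain $\Ext^1(\CC_0,B_{s''})$ itself vanishes by the second diagram of Lemma~\ref{jan27-102}(\ref{nov6-4}), which states $\HOM_V(\CC_0[-1],B_s[k])=0$ in all degrees.

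For the $P_{s'}$-summand ($s'\le s$), I will compose a given $f\colon A_s\to P_{s'}$ with the projection $P_{s'}\to A_{s'-1}$ from the defining triangle $\CV\to P_{s'}\to A_{s'-1}$: the composition $\CC_0[-1]\to A_s\to P_{s'}\to A_{s'-1}$ vanishes by Lemma~\ref{jan27-102}(\ref{nov6-4}) since $s'-1<s$ (crucially using $s'\le s$). Hence $\CC_0[-1]\to A_s\to P_{s'}$ lifts through $\CV\hookrightarrow P_{s'}$ to a morphism $\CC_0[-1]\to \CV$, which is zero because $\Hom_V(\CC_0[-1],\CV)=\Ext^1(\CC_0,\CV)=0$ (one computes $i^!\CV=\bC[-2]$). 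The $Q_{s''}$-summand is the most delicate: using the triangle $\CC_0[-1]\to Q_{s''}\to B_{s''-1}\to \CC_0$, I will first reduce modulo $B_{s''-1}$ (where the $B$ case applies), and then control the residual $\CC_0[-1]$-contribution using the constraint $s''\le s-1$, appealing either to the Fourier duality $\HOM_V(A_s,Q_{s''})\cong\HOM_{V^*}(B_s,P_{s''})$ combined with the already-handled $P$-case on the Fourier side or to a direct computation via the inductive defining triangles.

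The ``in particular'' statement then follows from the $j_!$-adjunction $\Hom_V(A_s,\scF[-1])=\Hom_W(\calL_s,\scF[-1]|_W)$, which uniquely extends any morphism $f\colon \calL_s\to \scF[-1]|_W$ to some $A_s\to \scF[-1]$, and then the isomorphism~(\ref{dec4-2}) just established furnishes a unique $g\colon P_s\to \scF[-1]$ with $g|_W=f$. The main obstacle I anticipate is the $Q_{s''}$-summand, where the codomain of $\partial$ is nonzero and the residual boundary contribution must be ruled out carefully; this is precisely the place where the nilpotent constraint $(N_s)$ genuinely matters beyond merely bounding the list of summands that can appear in $\scF$.
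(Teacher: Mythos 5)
Your overall strategy — apply $\RHom(-,\scF[-1])$ to the triangle $\CC_0[-1]\to A_s\to P_s\to\CC_0$, kill the outer terms, and verify $\partial=0$ summand-by-summand using a direct-sum decomposition of $\scF$ — is sound and close in spirit to the paper's proof. However, you and the paper use \emph{different decompositions}: you invoke Proposition~\ref{nov5-1} (five summand types $\CC_0, A, B, P, Q$), whereas the paper uses Definition--Lemma~\ref{j24-2-2}, which gives a short exact sequence $0\to\scF_1\to\scF\to\scF_2\to 0$ where $\scF_2$ is a sum of $\CC_0$'s and $\scF_1$ is a sum of $A_{s'}[1]$ ($s'<s$) and $P_{s'}[1]$ ($s'\le s$) \emph{only}. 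Since $\HOM_V(A_s,\CC_0[k])=0$ for all $k$, the paper's reduction to $\scF=\scF_1$ eliminates $B$ and $Q$ at once. Your route forces you to handle $B$ and $Q$ by hand, and while your $B$-case is correct (the codomain of $\partial$ vanishes by Lemma~\ref{jan27-102}), your $Q$-case is genuinely incomplete.

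Concretely, in the $Q_{s''}$-case your plan is to compose $\partial(f)\colon\CC_0[-1]\to Q_{s''}$ with $Q_{s''}\to B_{s''-1}$ and observe it vanishes; this only shows that $\partial(f)$ is a scalar multiple of the fiber inclusion $\iota\colon\CC_0[-1]\to Q_{s''}$, since $\HOM(\CC_0[-1],Q_{s''})\cong\HOM(\CC_0[-1],\CC_0[-1])=\CC$. It does not show the scalar is zero. Your Fourier idea does not rescue this: the Fourier transform sends the triangle $\CC_0[-1]\to A_s\to P_s$ to $\CV\to B_s\to Q_s$, so $\partial(f)$ Fourier-transforms to $\CV\to B_s\to P_{s''}$, which is a statement about morphisms \emph{from $B_s$} precomposed with $\CV\to B_s$, not an instance of your already-proved $P$-case about morphisms \emph{from $A_s$} precomposed with $\CC_0[-1]\to A_s$; these are not the same lemma. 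The ``direct computation via inductive defining triangles'' you propose as an alternative is unspecified. The triangle you should be using is $A_{s''-1}\to Q_{s''}\to\CC_0[-1]\to A_{s''-1}[1]$ (the shift of the $\scF_1\to\scF\to\scF_2$ sequence for $Q_{s''}[1]$), not $\CC_0[-1]\to Q_{s''}\to B_{s''-1}$. With the former, every $f\colon A_s\to Q_{s''}$ factors uniquely through $A_{s''-1}\to Q_{s''}$ (because $\HOM(A_s,\CC_0[-1])=\HOM(A_s,\CC_0[-2])=0$), and then $\partial(f)$ factors through $\CC_0[-1]\to A_s\to A_{s''-1}$, which vanishes by Lemma~\ref{jan27-102}(\ref{nov6-4}) since $s''-1<s$. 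This is exactly the content of the paper's reduction via~\ref{j24-2-2}.

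Two minor points: in the $\CC_0$-summand case you write $H^0\HOM_V(A_s,\CC_0[-1])=\Ext^{-1}(A_s[1],\CC_0)$, but the correct identification is $\Ext^{0}(A_s[1],\CC_0)=\Hom_{\mathrm{Perv}}(A_s[1],\CC_0)$; the vanishing then comes from a support argument ($A_s[1]$ is a zero-extension from $W$, so it admits no morphism to the skyscraper $\CC_0$), not from a $t$-structure degree argument. Also, while your $P_{s'}$-case argument is correct, it is worth noting that the paper handles this case more briskly by citing Lemma~\ref{dec4-5}(\ref{nov6-5}), which directly asserts $\HOM(\CC_0[-1],P_s)=0$.
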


\begin{proof}
We use the distinguished triangle (\ref{j24-5}).
Since there is no non-zero morphism $\CC_0\to \scF[-1]$ by Lemma~\ref{nov22-1}, the morphism (\ref{dec4-2}) is injective.
So it remains to show the surjectivity.
We use Definition-Lemma~\ref{j24-2-2}.
By the condition $(N_s)$,
we may assume $\scF=\scF_1$, and $\scF_1[-1]=A_{s'}$ ($s'<s$) or $\scF_1[-1]=P_{s'}$ ($s'\leq s$).
If $\scF[-1]=P_{s'}$, since there is no non-zero $\CC_0[-1]\to P_{s'}$ by \ref{nov6-5} of Lemma~\ref{dec4-5},
the composition $f \circ (\CC_0[-1]\to A_s)$ is zero and hence (\ref{dec4-2}) is surjective.
Assume $\scF[-1]=A_{s'}$ ($s'<s$).
Then, by \ref{nov6-4} of Lemma~\ref{jan27-102},
the composition $f \circ (\CC_0[-1]\to A_s)$ is zero,
and hence (\ref{dec4-2}) is surjective.
\end{proof}

\begin{lemma}\label{j27-2}
    Let $\scF\in \PervVzu$ be an object with the nilpotent order $\leq s$.
   Then, we have an isomorphism induced by $A_s(\simeq (Q_{s+1})_W)\to Q_{s+1}$:
    \begin{align}\label{nov6-10}
        H^0\HOM_{V}(Q_{s+1},\scF)\simto H^0\HOM_{V}(A_{s},\scF)(=H^0\HOM_{{W}}(\scL_{s},\scF|_{W})).
    \end{align}
In particular,
for a morphism $f\colon \calL_s\to \scF|_{W}$,
there exists a unique morphism
$g\colon P_{s+1}\to \mathrm{FL}(\scF)$ on $V^{*}$ 
such that 
$\mathrm{FL}(\nu_0(g))|_{W}=f$.
\end{lemma}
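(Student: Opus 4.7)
The plan is to realize the map (\ref{nov6-10}) as precomposition along an adjunction morphism, compute its cone, and reduce the required isomorphism to a vanishing on that cone. Since $A_s = j_!\calL_s$ for the open inclusion $j\colon W\hookrightarrow V$, while $Q_{s+1}|_W\simeq \calL_s$ by \ref{oct31-2} of Definition-Lemma~\ref{oct31-4}, the adjunction $j_!\dashv j^*$ supplies a canonical morphism $\varphi\colon A_s\to Q_{s+1}$ that restricts to the identity on $\calL_s$; precomposition with $\varphi$ is precisely the map (\ref{nov6-10}). The cone of $\varphi$ is supported at $\{0\}$, and from $(A_s)_0 = 0$ (\ref{nov6-7} of Lemma~\ref{jan27-102}) together with $(Q_{s+1})_0\simeq \bC_0[-1]$ (\ref{nov6-8} of Lemma~\ref{dec4-5}), it equals $\bC_0[-1]$, yielding a distinguished triangle
\[A_s\to Q_{s+1}\to \bC_0[-1]\to A_s[1].\]

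Applying $\HOM(-,\scF)$ and taking $H^0$ extracts the exact sequence
\[H^1\HOM(\bC_0,\scF)\to H^0\HOM(Q_{s+1},\scF)\to H^0\HOM(A_s,\scF)\to H^2\HOM(\bC_0,\scF).\]
The rightmost term vanishes by Lemma~\ref{nov22-1} since $\scF$ is perverse. For the leftmost term, the plan is to show that the connecting map into $H^0\HOM(Q_{s+1},\scF)$ is zero. Using Proposition~\ref{nov5-1} together with the nilpotency bound on $\scF$, I would decompose $\scF$ as a direct sum whose summands are $\bC_0$, $A_{s'}[1]$, $B_{s'}[1]$, $P_{s'}[1]$ ($s'\leq s$), and $Q_{s''}[1]$ ($s''\leq s+1$), and argue summand by summand that every composite $\bC_0[-1]\to Q_{s+1}\to \scF'$ vanishes.

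Each summand-wise verification is a factorization fact: combining the defining triangle $\bC_0[-1]\to Q_{s+1}\to B_s\to \bC_0$ with \ref{nov6-4} of Lemma~\ref{jan27-102}, \ref{nov6-5} of Lemma~\ref{dec4-5}, and the Fourier-dual analogues for $B_{s'}$ and $Q_{s''}$, the relevant composites are forced through zero maps. The hard part will be the boundary case $\scF' = Q_{s+1}[1]$, where the nilpotency bound is sharp: here I would invoke Definition-Lemma~\ref{j24-2-2} to split $\scF'$ into its $\bC_0$-type cokernel and $A_s/P_s$-type kernel and handle each piece separately, the key observation being that the $A_s$-type part is killed by the composite because of the first diagram in \ref{nov6-4} of Lemma~\ref{jan27-102} at the boundary index. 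Finally, the second assertion of the lemma follows from the first by Fourier transform: the identification $\mathrm{FL}(P_{s+1})\simeq Q_{s+1}$ (\ref{j24-3}) together with the specialization compatibility of Lemma~\ref{oct31-3} converts the uniqueness of $g\colon P_{s+1}\to \mathrm{FL}(\scF)$ satisfying $\mathrm{FL}(\nu_0(g))|_W = f$ into the uniqueness of an extension $Q_{s+1}\to \scF$ of $f\colon \calL_s\to \scF|_W$, which is precisely the isomorphism (\ref{nov6-10}).
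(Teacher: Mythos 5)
Your high-level strategy matches the paper's: you realize the map as precomposition along the counit $A_s\simeq j_!j^*Q_{s+1}\to Q_{s+1}$ and work with the induced triangle $A_s\to Q_{s+1}\xrightarrow{\beta}\CC_0[-1]\xrightarrow{\delta}A_s[1]$. Surjectivity of (\ref{nov6-10}) via $\Hom(\CC_0,\scF[2])=0$ from Lemma~\ref{nov22-1}, and the reduction of injectivity to showing that every $h\colon\CC_0[-1]\to\scF$ lifts through $\delta$ (equivalently, kills $\beta$ under precomposition), both agree with the paper's proof. The Fourier-transform reduction of the second assertion to the first is also fine.

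The problem is in your treatment of the boundary case. You correctly single out $\scF'=Q_{s+1}[1]$ --- equivalently, after Definition-Lemma~\ref{j24-2-2}, the $A$-type subobject $\scF_1=A_s[1]$ --- as the subtle piece. (In fact the paper's own written proof elides this case: it invokes the $(N_s)$-form of Definition-Lemma~\ref{j24-2-2}, which bounds $A$-summands by $A_{s'}[1]$ with $s'<s$, whereas Lemma~\ref{j27-2} assumes only nilpotent order $\leq s$, under which $\scF_1$ may contain $A_s[1]$, e.g.\ coming from a $Q_{s+1}[1]$ summand of $\scF$.) But your proposed resolution is incorrect: you claim the $A_s$-type part is ``killed \dots because of the first diagram in \ref{nov6-4} of Lemma~\ref{jan27-102} at the boundary index,'' yet the vanishing in \ref{nov6-4} explicitly requires $s'<s$; that hypothesis is essential, not incidental, and the statement fails at $s'=s$ since $\CC_0[-1]\to A_s\xrightarrow{\mathrm{id}}A_s$ is nonzero. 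The boundary case does go through, but by an argument unrelated to the vanishing clause of \ref{nov6-4} and much simpler: by the first diagram of \ref{nov6-4}, $\Hom(\CC_0[-1],A_s[1])$ is one-dimensional, hence spanned by the connecting morphism $\delta$, so every $h\colon\CC_0[-1]\to A_s[1]$ equals $c\,\delta$; then $h\circ\beta=c\,\delta\circ\beta=0$ because $\beta$ and $\delta$ are consecutive arrows of a distinguished triangle (equivalently, $h=c\delta$ lifts through $\delta$ via $h'=c\cdot\mathrm{id}_{A_s[1]}$). Replace the appeal to \ref{nov6-4} with this.
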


\begin{proof}
    By \ref{nov6-8} of Lemma~\ref{dec4-5},
we have a distinguished triangle:
 \begin{align}\label{j25-5}
        (Q_{s+1})_{W}(=A_s)\to Q_{s+1}\to (Q_{s+1})_0(=\CC_0[-1])\to A_s[1].
    \end{align}
Then, the surjectivity of (\ref{nov6-10}) follows from Lemma~\ref{nov22-1}.

To show the injectivity, it is enough to show that
for any morphism $h\colon \CC_0[-1]\to \scF$,
there is a morphism $h'\colon A_s[1]\to \scF$ such that $h'\circ (\CC_0[-1]\to A_{s}[1])=h$.
We use Definition-Lemma~\ref{j24-2-2}.
Since there is no $\CC_0[-1]\to \scF_2$,
we may assume $\scF=\scF_1$, and $\scF_1=A_{s'}[1]$ ($s'<s$) or $\scF_1=P_{s'}[1]$ ($s'\leq s$).
For $\scF=A_{s'}[1]$,
there is a morphism $A_s[1]\to A_{s'}[1]$ such that the following diagram commute:
\begin{align}\label{nov6-12}
    \xymatrix{A_s[1]\ar[d]\ar[r]&A_{s'}[1]\ar[d]\\\CW[1]\ar[r]^{\mathrm{id}}&\CW[1].}
\end{align}
Then, the desired assertion follows from it with \ref{nov6-4} of Lemma~\ref{jan27-102}.
For $\scF=P_{s'}[1]$,
by \ref{nov6-5} of Definition-Lemma~\ref{nov6-6},
$h$ corresponds to $\CC_0[-1]\to A_{s'-1}[1]$.
Therefore, combining it with \ref{nov6-9} of Definition-Lemma~\ref{nov6-6} and (\ref{nov6-12}), we obtained the conclusion.
\end{proof}

To obtain a variant of this lemma for $\und{P_{s+1}}$,
we prepare some lemmas.

\begin{lemma}\label{jl8-30}
For $\scF\in \PervVzu$,
there is no non-zero morphism $\und{P_s}\to \scF[-1]$.
\end{lemma}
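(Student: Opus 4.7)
The plan is to exploit the defining distinguished triangles of $\und{P_s}$ and $\und{A_s}$ (Definition-Lemma~\ref{oct31-4} and \ref{j26-3} of Lemma~\ref{nov7-12}) in order to reduce the vanishing to elementary cohomological statements on the affine curve $V$. The argument splits into two steps.

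First, I will prove by induction on $s\ge 0$ that $H^0\HOM_V(\und{A_s},\scF[-1])=0$ for every $\scF\in\PervVzu$, adopting the convention $\und{A_0}:=\RG_m\CW[1]$ (so that the triangle $\CW\to \und{A_s}\to \und{A_{s-1}}\to \CW[1]$ of Definition-Lemma~\ref{oct31-4} holds uniformly for $s\ge 1$). For the base case $s=0$ I compute
\[
H^0\HOM(\RG_m\CW[1],\scF[-1])\;\cong\;H^0\HOM(\bC_m,\scF)\;\cong\;H^0(i_m^!\scF);
\]
since $\scF$ is monodromic with singular locus contained in $\{0\}$ it is a shifted local system near $m$, so $i_m^!\scF$ is concentrated in strictly positive cohomological degree and this $H^0$ vanishes. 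For the inductive step, apply $\HOM(-,\scF[-1])$ to the triangle $\CW\to\und{A_s}\to \und{A_{s-1}}\to \CW[1]$; the outgoing term satisfies
\[
H^0\HOM(\CW[1],\scF[-1])\;\cong\;H^{-1}(W,L_0)\;=\;0,
\]
by the $j_!\dashv j^*$ adjunction for $j\colon W\hookrightarrow V$ (where $L_0$ is the local system underlying $\scF|_W$) together with the vanishing of negative-degree cohomology of a local system on a curve. Combined with $H^0\HOM(\und{A_{s-1}},\scF[-1])=0$ by induction, this forces the $\und{A_s}$-vanishing.

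Second, I apply $\HOM(-,\scF[-1])$ to the triangle $\CV\to\und{P_s}\to \und{A_{s-1}}\to \CV[1]$: for $s\ge 2$ this is the triangle of Definition-Lemma~\ref{oct31-4}, while for $s=1$ it is the rotation of the triangle $\RG_m\CW\to P_1=\CV\to \und{P_1}\to \RG_m\CW[1]$ from \ref{j26-3} of Lemma~\ref{nov7-12} under the identification $\und{A_0}=\RG_m\CW[1]$. The left-hand term in the resulting long exact sequence is
\[
H^0\HOM(\CV[1],\scF[-1])\;\cong\;H^{-2}(V,\scF),
\]
which vanishes by Artin vanishing and its Verdier dual: $V$ is affine of complex dimension one, so $\RG(V,\scF)\in D^{[-1,0]}$ for any perverse $\scF$. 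The right-hand term vanishes by Step~1, so $H^0\HOM(\und{P_s},\scF[-1])=0$, as desired.

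I do not anticipate a serious technical obstacle. The entire proof is long-exact-sequence bookkeeping combined with two standard cohomological vanishings (Artin's theorem for perverse sheaves on an affine curve, and the amplitude of local-system cohomology on a smooth open curve). The only slightly delicate point is keeping track of the shift convention for $i_m^!$ on a perverse sheaf that is smooth at $m$ in the base case of Step~1, but this is routine.
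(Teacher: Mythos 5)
Your strategy (induction along the defining triangles of $\und{A_s}$ and $\und{P_s}$, with cohomological vanishings) is different from the paper's, but it has a systematic degree-shift error that makes the key steps collapse. In the inductive step of Step~1 you apply $\HOM(-,\scF[-1])$ to $\CW\to\und{A_s}\to\und{A_{s-1}}\to\CW[1]$; the resulting exact sequence reads
\[
\HOM(\und{A_{s-1}},\scF[-1])\to\HOM(\und{A_s},\scF[-1])\to\HOM(\CW,\scF[-1]),
\]
so with the inductive hypothesis in hand the term you must control is $\HOM(\CW,\scF[-1])$, not $\HOM(\CW[1],\scF[-1])$. But $\HOM_V(\CW,\scF[-1])\cong H^0(W,L)$ (where $L=\scF|_W[-1]$ is the underlying local system), and for a nonzero \emph{unipotent} $L$ on $W\cong\bC^*$ the invariants $H^0(W,L)=\ker(T-1)$ never vanish. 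The same degree-shift occurs in Step~2: there the triangle $\CV\to\und{P_s}\to\und{A_{s-1}}\to\CV[1]$ sandwiches $\HOM(\und{P_s},\scF[-1])$ between $\HOM(\und{A_{s-1}},\scF[-1])$ and $\HOM(\CV,\scF[-1])$, and $\HOM(\CV,\scF[-1])\cong H^{-1}(V,\scF)$ is precisely the top of the Artin range and is typically nonzero (e.g.\ $\scF=\CC_V[1]$ gives $H^{-1}(V,\scF)=\bC$), whereas the $H^{-2}$ you computed is the wrong term. So as written the argument establishes only an injection into nonzero groups, not vanishing.

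The actual vanishing relies on finer information about $\und{P_s}$ near the point $m$ that your cohomological inputs do not see. The paper proves $\HOM(\und{P_s},\CC_0[-1])=\HOM(\und{P_s},\CV)=\HOM(\und{P_s},\CW)=0$ directly (the latter two being genuinely nontrivial facts; e.g.\ $\HOM(\und{P_s},\CV)=0$ holds because the connecting map $\HOM(\CV,\CV)\to\HOM(\und{A_{s-1}},\CV[1])$ is an isomorphism, a computation using the nonsplitness at $m$), and then feeds these into the structure decomposition of Definition-Lemma~\ref{j24-2-2}. To salvage your route you would have to analyse the connecting morphisms $\HOM(\CW,\scF[-1])\to\HOM(\und{A_{s-1}},\scF)$ and $\HOM(\CV,\scF[-1])\to\HOM(\und{A_{s-1}},\scF)$ and show they are injective; this is where the real work hides, and it is exactly the content that the paper's more targeted vanishings encapsulate.
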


\begin{proof}
Note that
morphisms
$\und{P_s}\to \CC_0[-1]$ and $\und{P_s}\to \CV$ are zero.
    Moreover, there is no non-zero morphism $\und{P_s}\to \CW$,
    since $\und{A_{s-1}}\to \CW$ and $\CV\to \CW$ are zero.
Therefore, 
any morphism from $\und{P_s}$ to $P_{s'}$ or $A_{s'}$ is zero. 
Hence, by applying Definition--Lemma~\ref{j24-2-2},
we obtain the desired result.   
\end{proof}

\begin{lemma}\label{nov7-5}
    For $s\geq 2$, $\scF\in \PervVzu$ with the condition $(N_s)$
and a morphism $f\colon \calL_{s-1}\to \mathrm{FL}(\scF)$ on $V^*$,
there exists a unique morphism
$g\colon \undP{s}\to \scF$ on $V$ such that $\mathrm{FL}(\nu_0(g))|_{W}=f$.
\end{lemma}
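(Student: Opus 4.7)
The plan is to establish the bijection by comparing $\und{P_s}$ with $P_s$ via the distinguished triangle
$$\RG{m}\CW \to P_s \to \und{P_s} \to \RG{m}\CW[1]$$
from part~\ref{j26-3} of Lemma~\ref{nov7-12}, and leveraging Lemma~\ref{j27-2} to handle $P_s$ separately. Concretely, condition $(N_s)$ implies $\mathrm{FL}(\scF)$ has nilpotent order $\leq s-1$, so Lemma~\ref{j27-2} (applied with $s-1$ in place of $s$, to $\mathrm{FL}(\scF)$) gives an isomorphism $H^0\HOM_{V^*}(Q_s, \mathrm{FL}(\scF)) \simeq H^0\HOM_W(\calL_{s-1}, \mathrm{FL}(\scF)|_W)$. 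Via Fourier duality $\mathrm{FL}(P_s) = Q_s$, this transports to $H^0\HOM_V(P_s, \scF) \simeq H^0\HOM_W(\calL_{s-1}, \mathrm{FL}(\scF)|_W)$ realized by $h \mapsto \mathrm{FL}(\nu_0(h))|_W$. Since $\nu_0$ takes the morphism $P_s \to \und{P_s}$ to an isomorphism by Lemma~\ref{oct31-3}, the desired statement will reduce to showing that the restriction map $H^0\HOM_V(\und{P_s}, \scF) \to H^0\HOM_V(P_s, \scF)$ is itself an isomorphism.

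I would then apply $\HOM(-, \scF)$ to the triangle to obtain the four-term exact sequence
$$\HOM^{-1}(\RG{m}\CW, \scF) \to \HOM^0(\und{P_s}, \scF) \to \HOM^0(P_s, \scF) \to \HOM^0(\RG{m}\CW, \scF).$$
Since $\scF$ has no singularity at $m$, the identification $\RG{m}\CW \simeq \CC_m[-2]$ combined with adjunction for the closed inclusion $\{m\} \hookrightarrow V$ (giving $i_m^!\scF \simeq i_m^*\scF[-2]$ on the locally constant object $\scF$ near $m$) yields $\HOM^0(\RG{m}\CW, \scF) = 0$, establishing surjectivity. Injectivity will then be equivalent to the preceding map $\HOM^{-1}(P_s, \scF) \to \HOM^{-1}(\RG{m}\CW, \scF) \simeq \scF_m$ being surjective.

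The hard part will be proving this last surjectivity, because $\HOM^{-1}(\RG{m}\CW, \scF)$ is generally nonzero (equal to the stalk at $m$ of the underlying local system), so a pure vanishing argument is unavailable; we need to explicitly produce degree $-1$ morphisms $P_s \to \scF$ that hit any given element. My plan is to tackle this via d\'evissage using Proposition~\ref{nov5-1}, reducing to the cases $\scF = A_{s'}[1], P_{s'}[1], B_{s'}[1], Q_{s'}[1], \CC_0$ compatible with $(N_s)$, and checking surjectivity for each using the defining distinguished triangles of the basic objects and their explicit stalks at $m$. If this becomes unwieldy, an alternative is to factor $\und{P_s} \to \RG{m}\CW[1]$ through $\und{A_{s-1}}$ via diagram~(\ref{nov7-4}), reducing to an analogous vanishing for $\und{A_{s-1}}$ which admits an inductive treatment through the triangle $P_{s-1} \to \und{P_s} \to \RG{T}\CW \to P_{s-1}[1]$ from part~\ref{jl2-1}.
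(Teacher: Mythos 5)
Your proposal is structurally the same as the paper's proof: both reduce via Lemma~\ref{j27-2} (applied to $\mathrm{FL}(\scF)$, which by $(N_s)$ has nilpotent order $\leq s-1$) to showing that precomposition along $P_s \to \und{P_s}$ gives an isomorphism $\HOM_V(\und{P_s},\scF) \to \HOM_V(P_s,\scF)$, and both deduce surjectivity from the vanishing $\HOM^0(\RG{m}\CW,\scF)=0$ and recast injectivity as surjectivity of the boundary map onto $\HOM^{-1}(\RG{m}\CW,\scF)\cong \scF_m$.

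Where you deviate is in the d\'evissage and, more importantly, in what you leave unfinished. The paper does not run through all five block types from Proposition~\ref{nov5-1}: it invokes Definition--Lemma~\ref{j24-2-2}, whose short exact sequence $0\to\scF_1\to\scF\to\scF_2\to 0$ together with a small diagram chase immediately eliminates the skyscraper piece $\scF_2$ and leaves only $\scF_1$, a direct sum of $A_{s'}[1]$ (with $s'<s$) and $P_{s'}[1]$ (with $s'\leq s$). It then reduces the $P_{s'}[1]$ case to the $A_{s'}[1]$ case, and closes the $A_{s'}[1]$ case not by producing elements by hand (as you anticipate being necessary) but by a dimension count: both $\HOM(P_s[1],A_{s'}[1])$ and $\HOM(\RG{m}\CW[1],A_{s'}[1])$ have dimension $s'$, and the vanishing $\HOM(\und{P_s}[1],A_{s'}[1])=0$ (a variant of part~\ref{nov6-13}--\ref{nov6-14} of Lemma~\ref{nov7-12}) forces the connecting map between them to be injective, hence an isomorphism. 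This sidesteps the ``explicitly produce degree $-1$ morphisms'' difficulty you flag. Your plan to enumerate the five types $A_{s'}[1], P_{s'}[1], B_{s'}[1], Q_{s'}[1], \CC_0$ from Proposition~\ref{nov5-1} is viable and perhaps more transparent, since direct sums commute cleanly with $\HOM$, but it costs you more cases (though the $B$ and $Q$ cases would reduce to the $P$ and $A$ cases via the same trick as Definition--Lemma~\ref{j24-2-2}). You should also locate the dimension-count finish, or something like it, since your sketch currently defers the crux.
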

\begin{proof}
Since we already have Lemma~\ref{j27-2}, it suffices to show that the morphism ${P_s}\to \und{P_s}$ induces an isomorphism 
\begin{align}
    \label{nov7-1}
    H^0\HOM_{{V}}(\und{P_{s}},\scF)\simto H^0\HOM_{{V}}(P_{s},\scF)
\end{align}
by the distinguished triangle (\ref{jan28-101}) of Lemma~\ref{nov7-12}.
Since there is no non-zero morphism $\CC_m[-2](=\RG{m}\CC_W)\to \scF$,
the morphism (\ref{nov7-1}) is surjective.

To show the injectivity,
we use Definition-Lemma~\ref{j24-2-2}.
If $\scF=\scF_2$, 
the morphism (\ref{nov7-1}) is clearly bijective.
In the general case, consider the following diagram:
\[\xymatrix{0\ar[r]&H^0\HOM_V(\und{P_s},\scF_1)\ar[r]\ar[d]&H^0\HOM_V(\und{P_s},\scF)\ar[r]\ar@{->>}[d]&H^0\HOM_V(\und{P_s},\scF_2)\ar[d]^-{\rotatebox{90}{$\sim$}}\\
0\ar[r]&H^0\HOM_V({P_s},\scF_1)\ar[r]&H^0\HOM_V({P_s},\scF)\ar[r]&H^0\HOM_V({P_s},\scF_2).
}
\]
Here, the horizontal sequences are exact except for the right ends.
From this diagram, we may assume $\scF=\scF_1$.
Moreover, 
one can see that the injectivity of 
 (\ref{nov7-1}) for $\scF=P_{s'}[1]$ ($s'\leq s$) follows from
 that for $\scF=A_{s'}[1]$ ($s'<s$).

By the distinguished triangle (\ref{jan28-101}),
it suffices to see that the surjectivity of the morphism: 
\[\HOM_V(P_s[1],A_{s'}[1])\to \HOM_V(\RG{m}\CW[1],A_{s'}[1]).\]
Note that both of the dimensions of $\HOM_V(P_s[1],A_{s'}[1])$ 
and 
$\HOM_V(\RG{m}\CW[1],A_{s'}[1])$
are $s'$,
and there is no non-zero morphism $\undP{s}[1]\to A_{s'}[1]$ by \ref{nov6-13} of Lemma~\ref{nov7-12}.
Therefore,
the surjectivity follows from 
the exact sequence:
\begin{align*}
    \HOM_V(\undP{s}[1],A_{s'}[1])(=0)\to \HOM_V(P_s[1],A_{s'}[1])\to \HOM_V(\RG{m}\CW[1],A_{s'}[1]).
\end{align*}
This completes the proof.

\end{proof}

\begin{lemma}\label{jl8-21}
        For $\scF\in \PervVzu$
        and a morphism $f\colon A_s\to \scF[k]$ $(k=-1,0)$,
        there exists a non-unique morphism $g\colon \ove{B_s}\to \mathrm{FL}(\scF)[k]$ on $V^*$ 
        such that $\mathrm{FL}(\nu_0(g))=f$.      
        \end{lemma}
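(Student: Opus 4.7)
The strategy is to build $g$ as the composition of the Fourier transform of $f$ with a natural comparison morphism $\ove{B_s}\to B_s$ on $V^*$. First, since $f\colon A_s\to\scF[k]$ is a morphism between monodromic objects on $V$ and the Fourier transformation is an equivalence on monodromic sheaves, applying $\mathrm{FL}$ (together with the identification $\mathrm{FL}(A_s)\simeq B_s$ from Definition-Lemma~\ref{oct31-1}) produces a morphism $\mathrm{FL}(f)\colon B_s\to \mathrm{FL}(\scF)[k]$ on $V^*$. Second, applying the functor $\RG{W^*}(-)$, with $W^*:=V^*\setminus\{0\}$, to the distinguished triangle $\CC_{m^*}[-1]\to \ove{A_s}\to A_s\to \CC_{m^*}$ of \ref{s10-8} in Lemma~\ref{jan24-1} (transferred to $V^*$), I obtain a natural morphism $\pi\colon \ove{B_s}\to B_s$. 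Set $g:=\mathrm{FL}(f)\circ\pi$.

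To verify $\mathrm{FL}(\nu_0(g))=f$, it is equivalent (using $\mathrm{FL}\circ\mathrm{FL}=\mathrm{id}$ on monodromic sheaves) to check $\nu_0(g)=\mathrm{FL}(f)$. Both $B_s$ and $\mathrm{FL}(\scF)$ are monodromic on $V^*$, so by Lemma~\ref{oct31-3} the specializations at $0$ are canonically $B_s$ and $\mathrm{FL}(\scF)$ themselves; under these identifications $\nu_0(\mathrm{FL}(f))=\mathrm{FL}(f)$. The morphism $\ove{A_s}\to A_s$ is an isomorphism away from the point $m$ and in particular near $0$, so its germ at $0$ is the identity on $A_s$, and hence the same is true for $\nu_0(\ove{A_s}\to A_s)$ on the tangent space $T_0V^*\simeq V^*$. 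Applying $\RG{W^*}$ and using its compatibility with the specialization functor, one concludes $\nu_0(\pi)=\mathrm{id}_{B_s}$, whence $\nu_0(g)=\mathrm{FL}(f)\circ\mathrm{id}=\mathrm{FL}(f)$.

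Non-uniqueness is immediate from the long exact sequence attached to the distinguished triangle $\CC_{m^*}[-1]\to \ove{B_s}\to B_s\to \CC_{m^*}$: any morphism of the form $h\colon\ove{B_s}\to \mathrm{FL}(\scF)[k]$ factoring through the connecting piece $\CC_{m^*}[-1]\to \ove{B_s}$ is supported away from $0$, so its specialization at $0$ vanishes, and therefore $g+h$ is also a valid lift. Such $h$ generically exist since $\mathrm{FL}(\scF)$ is a (shifted) local system near $m^*$ and so $\Hom(\CC_{m^*}[-1],\mathrm{FL}(\scF)[k])$ need not vanish.

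The only nontrivial point I anticipate is the identity $\nu_0(\pi)=\mathrm{id}_{B_s}$, which rests on the compatibility of the specialization functor $\nu_0$ at $0$ with $\RG{W^*}$, together with the local-near-$0$ triviality of $\ove{A_s}\to A_s$. Both are standard, so no further obstacle is expected.
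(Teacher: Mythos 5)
Your construction of $g$ is exactly the paper's: set $h:=\mathrm{FL}(f)\colon B_s\to\mathrm{FL}(\scF)[k]$, take the canonical map $\pi\colon\ove{B_s}\to B_s$ obtained by applying $\RG{W}$ to $\ove{A_s}\to A_s$, and define $g:=h\circ\pi$; the verification rests on the fact that $\pi$ induces an isomorphism $\nu_0(\ove{B_s})\xrightarrow{\sim}\nu_0(B_s)$. That is precisely how the paper proceeds, so the proposal is correct and follows the same route, with more spelled-out justification. One small wording issue: in your non-uniqueness discussion, a morphism $h\colon\ove{B_s}\to\mathrm{FL}(\scF)[k]$ cannot ``factor through the connecting piece $\CC_{m^*}[-1]\to\ove{B_s}$'' (that arrow points \emph{into} $\ove{B_s}$); what you really want is that the precomposition $\nu_0\colon\Hom(\ove{B_s},\mathrm{FL}(\scF)[k])\to\Hom(B_s,\mathrm{FL}(\scF)[k])$ is generally not injective because $\ove{B_s}$ and $B_s$ differ only near $m^*$, so any morphism whose germ at $0$ vanishes is in the kernel. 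Since the paper does not prove the non-uniqueness remark at all, this is only a cosmetic point.
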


        \begin{proof}
We set $h:=\mathrm{FL}(f)\colon B_s\to \mathrm{FL}(\scF)[k]$ on $V^*$.
Note that the morphism $\ove{B_s}\to B_s$ induces an isomorpshim $\nu_{0}(\ove{B_s})\simto \nu_0(B_s)$.
Therefore, the compostion $g:=h\circ (\ove{B_s}\to B_s)$ is the desired one.           
        \end{proof}

\begin{lemma}\label{jl8-22}
    For $\scF\in \PervVzu$ and a morphism $f\colon \ove{B_s}\to \scF[-1]$ (or $f\colon \ove{P_s}\to \scF[-1]$),
    if $\nu_0(f)=0$ then $f=0$.
\end{lemma}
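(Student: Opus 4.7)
The plan is to exploit the distinguished triangles
\begin{equation*}
\bC_m[-1]\to \ove{B_s}\to B_s\to \bC_m, \qquad \bC_m[-1]\to \ove{P_s}\to P_s\to \bC_m,
\end{equation*}
the second being \ref{s10-8} of Lemma~\ref{jan24-1} and the first obtained by applying $\RG{W}(-)$ to it (noting $\RG{W}\bC_m\simeq \bC_m$ since $m\in W$), in order to factor $f$ through $B_s$ or $P_s$, and then to use that both $B_s$ and $P_s$ are monodromic at $0$ in order to reduce the problem to the statement that $\nu_0$ is faithful on the subcategory of sheaves monodromic at $0$.

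The first step is to produce the factorisation. Applying $\HOM(-,\scF[-1])$ to either triangle gives the four-term exact sequence
\begin{equation*}
\HOM^{-1}(\bC_m[-1],\scF[-1])\to \HOM^{0}(B_s,\scF[-1])\to \HOM^{0}(\ove{B_s},\scF[-1])\to \HOM^{0}(\bC_m[-1],\scF[-1]),
\end{equation*}
and analogously for $\ove{P_s}$. Since $\scF\in \PervVzu$ has its only possible singularity at $0\neq m$, locally near $m$ we have $\scF\simeq L[1]$ for a local system $L$, so that $\HOM(\bC_m,\scF)\simeq L_m[-1]$ and both outer terms above vanish. Hence the inner arrow is an isomorphism and one obtains a unique $f'\colon B_s\to \scF[-1]$ (respectively $P_s\to \scF[-1]$) with $f=f'\circ(\ove{B_s}\to B_s)$. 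Since $m\neq 0$ forces $\nu_0(\bC_m)=0$, the same triangles show that the maps $\nu_0(\ove{B_s}\to B_s)$ and $\nu_0(\ove{P_s}\to P_s)$ are isomorphisms, compatibly with Lemma~\ref{oct31-3}, so the hypothesis $\nu_0(f)=0$ translates into $\nu_0(f')=0$.

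The step I expect to be the main (though still fairly standard) obstacle is the final reduction: a morphism $F\to G$ between objects monodromic at $0$ whose specialisation $\nu_0$ vanishes is itself zero. I would deduce this from the equivalence between monodromic sheaves at $0$ on $V$ and sheaves on $T_0V$ induced by the natural $\bC^*$-action, under which the specialisation functor $\nu_0$ becomes (essentially) the identity; alternatively one can read it off from the linear algebraic description of monodromic objects recalled in Section~\ref{section:linearalgebraicHodge}. Since $B_s$, $P_s$ (by Lemma~\ref{dec25-1}) and $\scF[-1]$ are all monodromic at $0$, this forces $f'=0$, and hence $f=0$, proving the lemma for both $\ove{B_s}$ and $\ove{P_s}$.
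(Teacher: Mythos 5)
Your proof is correct and follows essentially the same route as the paper: both use the distinguished triangle $\bC_m[-1]\to\ove{B_s}\to B_s\to\bC_m$ (resp.\ its analogue for $\ove{P_s}$), kill the outer $\HOM$'s because $\scF$ has no singularity at $m$, transfer $f$ to a morphism $f'$ out of the monodromic object $B_s$ (resp.\ $P_s$), and then conclude from the faithfulness of $\nu_0$ on sheaves monodromic at $0$. You are slightly more complete in that you write out the $\ove{P_s}$ case explicitly and justify the final conservativity step, which the paper takes as understood, but the argument is the same.
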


\begin{proof}
By applying $\RG{W}(-)$ to (\ref{jl2-7}),
we have the distinguished triangle:
\begin{align}\label{jl2-8-2}
    \CC_m[-1]\to \ove{B_s}\to B_s\to \CC_m.
\end{align}   
   Remark that there is neither non-zero morphism $\CC_m[-1]\to \scF[-1]$ nor $\CC_m\to \scF[-1]$.
Therefore, 
we obtain the isomorphism $\HOM_{\HzshV}({B_s},\scF[-1])\simeq \HOM_{\HzshV}(\ove{B_s},\scF[-1])$.
Then, the corresponding morphism $g\colon B_s\to \scF[-1]$ is zero since $\nu_0(g)=\nu_0(f)=0$.
Hence, $f$ is also zero.   
\end{proof}

\begin{lemma}\label{jan24-2}
     For $\scF\in \PervVzu$ satisfying the condition $(N_s)$,
     we have an isomorphism induced by the restriction:
     \[H^0\HOMV(\oveP{s},\scF[-1])\simto H^0\HOMV(\oveA{s},\scF[-1]).\]
\end{lemma}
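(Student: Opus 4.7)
The plan is to reduce to Lemma~\ref{j24-1} (the analogous statement for the pair $A_s,P_s$) by stripping off the skyscraper at $m$ that distinguishes $\oveA{s},\oveP{s}$ from $A_s,P_s$. The key inputs are the two distinguished triangles of Lemma~\ref{jan24-1}: the defining triangle $\oveA{s}\to\oveP{s}\to\CC_0\to\oveA{s}[1]$ of part \ref{s10-10}, and the ``comparison'' triangle $\CC_m[-1]\to\oveA{s}\to A_s\to\CC_m$ of part \ref{s10-8}.

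Applying $H^0\HOMV(-,\scF[-1])$ to the first triangle produces an exact sequence whose left end $H^{-1}\HOMV(\CC_0,\scF)$ vanishes by Lemma~\ref{nov22-1}, so the question reduces to showing that the connecting map $\delta\colon H^0\HOMV(\oveA{s},\scF[-1])\to H^0\HOMV(\CC_0,\scF)$ vanishes. Applying the same functor to the second triangle, I would next verify that $H^0\HOMV(\CC_m,\scF)=0$: decomposing $\scF$ via Proposition~\ref{nov5-1} into standard summands ($\CC_0$, $A_{s'}[1]$, $B_{s'}[1]$, $P_{s'}[1]$, $Q_{s'}[1]$), one sees case by case that $m$ is either outside the support of $\scF$ or a smooth point, so $i_m^!\scF$ is concentrated in degree $-1$. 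Consequently the pullback $H^0\HOMV(A_s,\scF[-1])\to H^0\HOMV(\oveA{s},\scF[-1])$ is surjective, and every $f\colon \oveA{s}\to \scF[-1]$ factors as $f=\tilde f\circ(\oveA{s}\to A_s)$.

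Writing $\partial[-1]\colon\CC_0[-1]\to\oveA{s}$ for the rotated connecting morphism of the first triangle and $\partial'[-1]\colon\CC_0[-1]\to A_s$ for its analogue from $(\ref{j24-5})$, one computes $\delta(f)=\tilde f\circ g$ with $g:=(\oveA{s}\to A_s)\circ\partial[-1]$. Since $\HOM(\CC_0[-1],A_s)\cong \CC$ is one-dimensional, $g$ is a scalar multiple of $\partial'[-1]$, and the scalar is nonzero; otherwise $\partial[-1]$ would factor through $\CC_m[-1]\to\oveA{s}$, which is impossible because $\HOM(\CC_0,\CC_m)=0$ while $\partial[-1]\neq 0$ (the extension $\oveP{s}$ being nontrivial, as one verifies at $s=1$ from $\oveA{1}=\CWT$, $\oveP{1}=\CT$, where $\Gamma(V,\CT)$ and $\Gamma(V,\CWT\oplus\CC_0)$ have different dimensions). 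Therefore $\delta(f)$ is a nonzero multiple of $\tilde f\circ\partial'[-1]$, which vanishes for all $\tilde f$ by Lemma~\ref{j24-1} combined with the exact sequence for $(\ref{j24-5})$. Hence $\delta=0$. The main obstacle I foresee is precisely this scalar nonvanishing step, which relies both on the disjoint-support fact $\HOM(\CC_0,\CC_m)=0$ and (inductively in $s$) on the nontriviality of the defining extension of $\oveP{s}$; the rest is a formal consequence of the two triangles together with Lemma~\ref{j24-1}.
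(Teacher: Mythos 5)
Your proof is correct but takes a genuinely different route from the paper's. The paper establishes surjectivity of $H^0\HOMV(\oveP{s},\scF[-1])\to H^0\HOMV(\oveA{s},\scF[-1])$ by decomposing $\scF$ through Definition-Lemma~\ref{j24-2-2} and reducing to the cases $\scF_1=A_{s'}[1]$ ($s'<s$) and $\scF_1=P_{s'}[1]$ ($s'\le s$), each handled by diagrams \ref{nov7-10} and \ref{nov7-11} of Lemma~\ref{jan24-1}. You instead avoid any decomposition of $\scF$: you use the comparison triangle $\CC_m[-1]\to\oveA{s}\to A_s\to\CC_m$ and the vanishing $\HOMV(\CC_m,\scF)=0$ to lift every $f\colon\oveA{s}\to\scF[-1]$ to $\tilde f\colon A_s\to\scF[-1]$, then conclude by precomposing with $\CC_0[-1]\to A_s$ and invoking Lemma~\ref{j24-1}. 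This is a cleaner reduction, essentially trading the paper's case analysis for two short diagram chases (one for the lift, one for the 1-dimensionality of $\HOMV(\CC_0[-1],A_s)$). The injectivity argument is identical in both.

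Two small points. First, the ``main obstacle'' you identify — that the scalar $c$ in $g=(\oveA{s}\to A_s)\circ(\CC_0[-1]\to\oveA{s})=c\cdot\partial'[-1]$ is nonzero — is a red herring. What you actually need is that $\HOMV(\CC_0[-1],A_s)\cong\CC$ and that the generator $\partial'[-1]$ from $(\ref{j24-5})$ is nonzero (which follows, e.g., from \ref{nov6-5} of Lemma~\ref{dec4-5} and the long exact sequence, since $\HOMV(\CC_0[-1],P_s)=0$ forces the map $\HOMV(\CC_0[-1],\CC_0[-1])\to\HOMV(\CC_0[-1],A_s)$ to be onto). Once that is in place, $g$ is some multiple $c\cdot\partial'[-1]$ and $\tilde f\circ g=c\cdot\tilde f\circ\partial'[-1]=0$ holds regardless of whether $c\ne 0$; the contrapositive argument via $\HOM(\CC_0,\CC_m)=0$ and the nontriviality of the extension $\oveP{s}$ is correct but not needed for the conclusion. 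Second, a minor degree slip: since $\scF$ is a $1$-shifted local system near $m$, one has $i_m^!\scF\cong i_m^*\scF[-2]$ concentrated in degree $+1$, not $-1$; the conclusion $H^0\HOMV(\CC_m,\scF)=H^0(i_m^!\scF)=0$ is unaffected.
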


\begin{proof}
For the surjectivity,
    by the distinguished triangle (\ref{jl2-10}),
    it is enough to show that the pull back of the given morphism by $\CC_0[-1]\to \ove{A_s}$ is zero.
We use Definition-Lemma~\ref{j24-2-2} and we may assume $\scF=\scF_1$ and $\scF_1=A_{s'}[1]$ ($s'<s$) or $\scF_1=P_{s'}[1]$ ($s'\leq s$).
For $A_{s'}[1]$, this can be shown by using \ref{nov7-10} and \ref{nov7-11} of Lemma~\ref{jan24-1lem}.
The case of $P_{s'}[1]$ follows from the case $\scF=A_{s'}[1]$. 

For the injectivity,
by the distinguished triangle (\ref{jl2-10}) again,
it suffices to show $H^0\HOMV(\CC_0,\scF[-1])=0$.
However, this follows from Lemma~\ref{nov22-1}.
\end{proof}

\begin{definition}
    For $\PR$ with three points $l,r,m$,
    we set
    \[\CC_{l,m,r}:=\Csan,\quad \CC_{l,r}:=\Cl\oplus \Cr.\]
\end{definition}

\begin{lemma}\label{nov8-1}
    For $\PR$ with three points $l,r,m$,
We have the following exact sequences
 \begin{align}\label{jl6-3}
      0\to H^0\HOMP(\CWT[1],\CW[1])\to H^0\HOMP(\CC_{l,m,r},\CW[1])\to H^0\HOMP(\CC_{\PR},\CW[1])\to 0, \mbox{and}
 \end{align}   
 \begin{align}\label{jl6-4}
0\to  H^0\HOMP(\CWT,\CW[1]) \to H^0\HOMP((\CC_{l,m,r})[-1],\CW[1])\to H^0\HOMP(\CC_{\PR}[-1],\CW[1])\to 0.
 \end{align}
 In particular, a morphism $\CWT\to \CW[1]$ is determined by its pull back by
$(\CC_{l,m,r})[-1]\to \CC_{W\cap T}$. 
Conversely, for a triple $(f^l,f^m,f^r)$ of morphisms
$f^l\colon \Cl[-1]\to \CW[1]$, $f^m\colon \Cl[-1]\to \CW[1]$ and 
$f^r\colon \Cl[-1]\to \CW[1]$,
if the pull back of it by $\CC_{\PR}\to \CC_{l,m,r}$ is zero,
this defines the morphism $\CWT\to \CW[1]$.
\end{lemma}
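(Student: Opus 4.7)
The plan is to derive both short exact sequences from a single long exact sequence obtained by applying $H^*\Hom_{\bP^1}(-,\bC_W[1])$ to the excision distinguished triangle
\begin{equation*}
    \bC_{W\cap T}\to \bC_{\bP^1}\to \bC_{l,m,r}\xrightarrow{+1}
\end{equation*}
coming from the open inclusion $W\cap T=\bP^1\setminus\{l,m,r\}\hookrightarrow\bP^1$ and its closed complement $\{l,m,r\}$. Since $\Hom$ is contravariant in its first argument, for each $n$ this produces
\begin{equation*}
    \cdots\to H^n\Hom(\bC_{l,m,r},\bC_W)\to H^n\Hom(\bC_{\bP^1},\bC_W)\to H^n\Hom(\bC_{W\cap T},\bC_W)\to H^{n+1}\Hom(\bC_{l,m,r},\bC_W)\to\cdots.
\end{equation*}

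I would first compute all entries explicitly. By $(j_!,j^*)$-adjunction together with compactness of $\bP^1$, $H^n\Hom(\bC_{\bP^1},\bC_W)=H^n_c(W)=H^n_c(\bC^*)$ is $\bC$ in degrees $1,2$ and zero otherwise, while $H^n\Hom(\bC_{W\cap T},\bC_W)=H^n(W\cap T,\bC)$ is $\bC$ in degree $0$ and $\bC^2$ in degree $1$ (since $W\cap T$ is homotopy-equivalent to a wedge of two circles). For the skyscrapers: $H^n\Hom(\bC_m,\bC_W)=\bC$ only in degree $2$, because $m$ sits in the smooth locus of the locally-constant sheaf $\bC_W$; and for $p=l,r$ the excision triangle $\bC_{\bP^1\setminus\{p\}}\to\bC_{\bP^1}\to\bC_p$ together with the easy vanishing $H^n\Hom(\bC_{\bP^1\setminus\{p\}},\bC_W)=0$ (a direct check on the affine chart $\bP^1\setminus\{p\}\simeq\bC$ using the triangle $\bC_W|_{\bP^1\setminus\{p\}}\to\bC\to\bC_{\{\mathrm{pt}\}}$) gives $H^n\Hom(\bC_p,\bC_W)\simeq H^n\Hom(\bC_{\bP^1},\bC_W)$. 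Summing, $H^n\Hom(\bC_{l,m,r},\bC_W)$ is $0$, $\bC^2$, $\bC^3$, $0$ in degrees $n=0,1,2,\geq 3$.

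Substituting these values, the nonzero portion of the LES reads $0\to\bC\to\bC^2\to\bC\xrightarrow{\gamma}\bC^2\to\bC^3\to\bC\to 0$, and the desired splitting into (\ref{jl6-3}) and (\ref{jl6-4}) is equivalent to the vanishing $\gamma=0$, where $\gamma\colon H^1\Hom(\bC_{\bP^1},\bC_W)\to H^1\Hom(\bC_{W\cap T},\bC_W)$ is the precomposition (restriction) map. I expect this to be the main step. Under the identifications above, $\gamma$ is the ``forget-compact-support'' map $H^1_c(\bC^*)\to H^1(\bC^*)$: a generator of $H^1_c(\bC^*)$ may be taken as a compactly supported radial form $\rho(|z|)\,d|z|$, whose primitive $\int_0^{|z|}\rho\,ds$ is smooth on $\bC^*$ but no longer compactly supported, so the class is exact as an ordinary cohomology class and dies in $H^1(\bC^*)$. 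Alternatively, $\gamma=0$ is forced purely by dimensions in the left half of the LES: injectivity of $\bC\to\bC^2$ forces $\bC^2\to\bC$ to be surjective, which forces $\gamma=0$. Once this is in hand, the LES splits into (\ref{jl6-3}) and (\ref{jl6-4}) after the routine shift identifications $\Hom(A[-1],B[1])=\Hom(A,B[2])$ and $\Hom(A[1],B[1])=\Hom(A,B)$. The two ``in particular'' claims are then immediate from (\ref{jl6-4}): injectivity of its left map says a morphism $\bC_{W\cap T}\to\bC_W[1]$ is determined by its precomposition with $\bC_{l,m,r}[-1]\to\bC_{W\cap T}$, and exactness in its middle gives the converse existence statement for a triple whose further pullback to $\bC_{\bP^1}[-1]$ vanishes.
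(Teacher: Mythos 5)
Your proof is correct and takes essentially the same route as the paper: apply $\HOMP(-,\CW[1])$ to the triangle $\CWT\to\CC_{\PR}\to\CC_{l,m,r}$, compute all six terms of the resulting long exact sequence, and observe that the dimension data $(1,2,1,2,3,1)$ forces the middle connecting map $\gamma$ to vanish, splitting the sequence into (\ref{jl6-3}) and (\ref{jl6-4}). Your explicit rank-chasing argument for $\gamma=0$ (or the topological one via the forget-supports map $H^1_c(\CS)\to H^1(\CS)$) simply spells out the justification the paper leaves implicit when it asserts the dimensions force the decomposition.
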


\begin{proof}
We have the following diagrams (some of them have been already used):
\[
\xymigidt{\CC_0}{\CC_0[-1]}{\CC_{\CS},}{\CC_{\CC}}{1}{2}
\xymigidt{\CC_{\PR}}{(\CC_{l,r})[-1]}{\CW,}{\CC_{\PR}}{1}{2}
\xymatrix{\phantom{\CW}&\\
\CC_m[-1]\ar[r]^{1}&{\CW},\\
&
}
\xyhidaridt{\CC_m[-1]}{\CWT}{\CW}{\CW.}{1}{0,1}
\]
    Applying $H^0\HOMP(- ,\CW[1] )$ to the distinguished triangle 
    \[\CWT\to \CC_{\PR}\to \Csan\to \CWT[1],\]
    we have a long exact sequence:
    \begin{align*}
       0\to &H^0\HOMP(\CWT[1],\CW[1])\\
       \to &H^0\HOMP(\CC_{l,m,r},\CW[1])\to H^0\HOMP(\CC_{\PR},\CW[1])\to H^0\HOMP(\CWT,\CW[1]) \\\to&H^0\HOMP((\CC_{l,m,r})[-1],\CW[1])\to 
       H^0\HOMP(\CC_{\PR}[-1],\CW[1])
\to 0.
    \end{align*}
    Because we have already known all the dimensions of these vector spaces,
    this sequence must be decomposed into two exact sequences (\ref{jl6-3}), (\ref{jl6-4}).    
\end{proof}

\begin{lemma}\label{nov12-2}
Let $(f^l,f^m,f^r)\in H^0\HOMP(\CC_{l,m,r},\CW[1])$ be a triple corresponding to a morphism $f\colon \CWT\to \CW[1]$ in the sense of Lemma~\ref{nov8-1}.
Then, the specialization $\nu_l(f)$ (resp. $\nu_r$) at $l$ (resp. $r$) is zero
if and only if $f^l=0$ (resp. $f^r=0$).    
\end{lemma}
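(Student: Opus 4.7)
The plan is to apply the specialization functor $\nu_l$ to the defining distinguished triangle
\[
\CC_{l,m,r}[-1]\to \CWT\to \CC_{\PR}\to \CC_{l,m,r}
\]
from Lemma~\ref{nov8-1} and track how the triple $(f^l,f^m,f^r)$ transforms. Since $\nu_l$ localizes at $l$, the contributions from $m$ and $r$ vanish: $\nu_l(\CC_m)=\nu_l(\CC_r)=0$, while $\nu_l(\CC_l)\simeq \CC_0$ on $T_l\PR$ and $\nu_l(\CWT)\simeq A_1$ under the identification $T_l\PR\simeq \CC$. Thus the specialized triangle becomes the standard triangle $\CC_0[-1]\to A_1\to \CC_{T_l\PR}\to \CC_0$. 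Writing $g^l\colon \CC_l[-1]\to \CWT$ for the structural morphism from the original triangle, we have $f^l=f\circ g^l$ by construction, hence
\[
\nu_l(f^l)=\nu_l(f)\circ \nu_l(g^l).
\]

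The argument then reduces to verifying two isomorphisms. First, composition with the specialized connecting morphism should induce an isomorphism
\[
\Hom(A_1,A_1[1])\xrightarrow{\circ\, \nu_l(g^l)}\Hom(\CC_0[-1],A_1[1]).
\]
This follows from applying $\Hom(-,A_1[1])$ to the triangle $A_1\to \CC_{T_l\PR}\to \CC_0$, together with the vanishing $\Hom(\CC_{T_l\PR},A_1[k])=0$ for all $k$ (a routine LES computation from $j_!\CC\to \CC_{\CC}\to \CC_0$ with $j\colon \CC^*\hookrightarrow\CC$). Second, specialization itself should induce an isomorphism
\[
\nu_l\colon \Hom_{\PR}(\CC_l[-1],\CW[1])\simto \Hom_{T_l\PR}(\CC_0[-1],A_1[1]).
\]
Both sides are one-dimensional (by direct calculation using the LES obtained from $\CW\to \CC_{\PR}\to \CC_{l,r}$ on the left, and from the defining triangle of $A_1$ on the right), and $\nu_l$ is exact and carries the relevant triangles to their counterparts, so it must be nonzero on this one-dimensional space.

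Combining these isomorphisms yields the chain
\[
\nu_l(f)=0 \iff \nu_l(f)\circ \nu_l(g^l)=0 \iff \nu_l(f^l)=0 \iff f^l=0,
\]
which proves the claim; the statement for $r$ follows by the same argument. The main technical step that requires care is the compatibility between restriction to a small disk $D_l\subset \PR$ around $l$ (chosen to avoid $m$ and $r$) and specialization at $l$: on $D_l$, the sheaves $\CWT$ and $\CW$ both coincide with the zero extension of the constant sheaf from $D_l\setminus\{l\}$, and the original triangle restricts to the defining triangle of $A_1$ under the germ identification $D_l\sim T_l\PR$. Once this identification is set up cleanly, the two $\Hom$-space isomorphisms above are immediate from elementary computations.
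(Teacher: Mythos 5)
Your proposal is correct and mirrors the paper's own proof: both reduce $\nu_l(f)=0$ to the vanishing of the pullback $\nu_l(f^l)$ along the specialized connecting morphism $\CC_0[-1]\to A_1$ (using the vanishing $\Hom(\CC_{T_l\PR},A_1[k])=0$ to make precomposition an isomorphism on $\Hom(A_1,A_1[1])$), and then invoke that specialization at $l$ is an isomorphism on the one-dimensional space $\Hom_{\PR}(\CC_l[-1],\CW[1])$. The paper phrases the first step via the diagram attached to the triangle $\CC_{l,r}[-1]\to\CW\to\CC_{\PR}$ while you specialize $\CC_{l,m,r}[-1]\to\CWT\to\CC_{\PR}$, but both collapse to the same triangle $\CC_0[-1]\to A_1\to\CC_{T_l\PR}$ after applying $\nu_l$, so this is a cosmetic difference; your final paragraph correctly identifies the one place (the germ/specialization identification being fully faithful on monodromic objects near $l$) that both treatments leave implicit.
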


\begin{proof}
    Note that we have the following diagram:
    \[\xyhidaridt{\CC_{l,r}[-1]}{\CW}{\CC_{\PR}}{\CW.}{0,1}{\emptyset}
    \]
Therefore, if $\nu_l(f)=0$ if and only if $\nu_l(f)\circ (\Cl[-1]\to \CW)=0$.
The latter condition is equivalent to $f^l=f\circ (\Cl[-1]\to \CWT)=0$,
since $\nu_l(\Cl[-1]\to \CWT)=\Cl[-1]\to \CW$.
\end{proof}

\begin{lemma}\label{jl8-1}
\begin{enumerate}
    \item For $f\in  H^0\HOMP(\CWT,\CW[1])$, if $\nu_l(f)=0$ and $\nu_r(f)=0$, then we have $f=0$.\label{jl6-8}
    \item \label{jl6-9}
Moreover, there exist morphisms $g_1,g_2\in H^0\HOMP(\CWT,\CW[1])$
such that $\nu_{l}(g_1)\neq 0$, $\nu_{r}(g_1)= 0$, $\nu_{l}(g_2)= 0$ and $\nu_{r}(g_2)\neq 0$.
In particular,
the $2$-dimesional $\CC$-vector space $H^0\HOMP(\CWT,\CW[1])$
    is generated by $g_1$ and $g_2$.
\end{enumerate} 
\end{lemma}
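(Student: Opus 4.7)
The plan is to combine Lemma~\ref{nov8-1} with Lemma~\ref{nov12-2} to reformulate the lemma as a linear-algebra fact about triples $(f^l,f^m,f^r)\in \CC^3$, and then to pin everything down by a single cohomology vanishing on the affine line $T$.

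For part~\ref{jl6-8}, I would proceed as follows. By Lemma~\ref{nov8-1}, an element $f \in H^0\HOMP(\CWT,\CW[1])$ is specified by a triple
\[
(f^l,f^m,f^r)\in H^0\HOMP(\CC_{l,m,r}[-1],\CW[1])\cong \CC^3,
\]
subject to the constraint that its pullback along $\CC_{\PR}[-1]\to \CC_{l,m,r}[-1]$ vanishes in $H^0\HOMP(\CC_{\PR}[-1],\CW[1])\cong \CC$. By Lemma~\ref{nov12-2}, the hypotheses $\nu_l(f)=0$ and $\nu_r(f)=0$ force $f^l=f^r=0$, so the triple takes the form $(0,f^m,0)$. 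It then suffices to show that the pullback
\[
\CC\cong H^0\HOMP(\CC_m[-1],\CW[1])\longrightarrow H^0\HOMP(\CC_{\PR}[-1],\CW[1])\cong \CC
\]
along $\CC_{\PR}\to \CC_m$ is an isomorphism, since this forces $f^m=0$ and hence $f=0$. I would read this off the long exact sequence obtained from the distinguished triangle $\CC_T\to \CC_{\PR}\to \CC_m\to \CC_T[1]$ by applying $H^k\HOMP(-,\CW)$: the cokernel of the pullback embeds into $H^2\HOMP(\CC_T,\CW) = H^2(T,\CWT)$, and the triangle $\CWT\to \CC_T\to \CC_{l,r}$ on the contractible affine line $T\simeq \CC$ immediately yields $H^2(T,\CWT)=0$ via the usual long exact sequence in cohomology. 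Hence the pullback is surjective between two one-dimensional spaces and is therefore an isomorphism.

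For part~\ref{jl6-9}, I would argue by a dimension count. The pair $(\nu_l,\nu_r)$ defines a $\CC$-linear map
\[
H^0\HOMP(\CWT,\CW[1]) \longrightarrow H^0\mathrm{Hom}(\nu_l\CWT,\nu_l\CW[1])\oplus H^0\mathrm{Hom}(\nu_r\CWT,\nu_r\CW[1]) \cong \CC\oplus \CC,
\]
where each summand on the right is identified with $H^1(\CS,\CC)\cong\CC$ using $\nu_l\CWT\cong \nu_l\CW\cong \CC_{T_l\PR\setminus 0}$ and the analogous identification at $r$. By part~\ref{jl6-8} this map has trivial kernel, and its source is two-dimensional by Lemma~\ref{nov8-1}, so it is an isomorphism; taking $g_1$ and $g_2$ to be preimages of $(1,0)$ and $(0,1)$ produces the required morphisms.

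The main obstacle is concentrated in part~\ref{jl6-8}, specifically the cohomology vanishing $H^2(T,\CWT)=0$; everything else is straightforward bookkeeping with Lemmas~\ref{nov8-1} and~\ref{nov12-2}.
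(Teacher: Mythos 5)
Your proof is correct. Part~1 is essentially the paper's argument, reorganized slightly: where you locate the cokernel of the pullback inside $H^2\HOMP(\CC_T,\CW)$ and show it vanishes by the usual cohomology triangle on the contractible affine line, the paper directly asserts the vanishing $H^0\HOMP(\CC_{\PR\setminus\{m\}}[-1],\CW[1])=0$ and reads the surjectivity off the same long exact sequence; these are the same computation, $H^0\HOMP(\CC_T[-1],\CW[1])=H^2(T,\CWT)$.

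For part~2 your route is genuinely different and a bit more economical. The paper goes back to Lemma~\ref{nov8-1}: it establishes the analogues of the isomorphism (\ref{l6-7}) with $m$ replaced by $l$ and by $r$ (via the vanishings $H^0\HOMP(\CC_{\PR\setminus\{l\}}[-1],\CW[1])=0$, etc.) and then explicitly solves for a nonzero triple $(f^l,f^m,0)$ whose image vanishes, invoking Lemma~\ref{nov12-2} to translate $f^r=0$ into $\nu_r(g_1)=0$. You instead observe that $(\nu_l,\nu_r)$ is a linear map from the $2$-dimensional source (already known from the exact sequence in Lemma~\ref{nov8-1}) to a $2$-dimensional target, injective by part~1, hence an isomorphism, and take preimages of $(1,0)$ and $(0,1)$. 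This avoids reproving the two extra isomorphisms (\ref{l6-11}) but requires the (easy but nontrivial) computation that each target summand $H^0\Hom(\nu_l\CWT,\nu_l\CW[1])$ is exactly $1$-dimensional, which you correctly supply via $\nu_l\CWT\cong\nu_l\CW\cong\CC_{T_l\PR\setminus 0}$ and $\Hom(j_!\CC_{\CS},j_!\CC_{\CS}[1])\cong H^1(\CS,\CC)\cong\CC$. Both approaches ultimately rest on the same dimension facts; yours pushes more of the work into a single clean linear-algebra step, while the paper's is more self-contained within the triple description of Lemma~\ref{nov8-1}.
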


\begin{proof}
First we show \ref{jl6-8}.
It is easy to see that $H^0\HOMP(\CC_{\PR\setminus \{m\}}[-1], \CW[1])=0$.
Then, by applying $H^0\HOMP(- ,\CW[1])$ to a distinguished triangle $\CC_{\PR\setminus \{m\}}\to \CC_{\PR}\to \CC_{m}\to \CC_{\PR\setminus \{m\}}[1]$,
we have an exact sequence:
\[\dots \to H^0\HOMP(\Cm[-1],\CW[1])\to H^0\HOMP(\CC_{\PR}[-1],\CW[1])\to H^0\HOMP(\CC_{\PR\setminus \{m\}}[-1],\CW[1])(=0)\to 0.\]
Therefore, since $\dim H^0\HOMP(\Cm[-1],\CW[1])=\dim H^0\HOMP(\CC_{\PR}[-1],\CW[1])=1$,
we have an isomorphism 
    \begin{align}
        \label{l6-7}
        H^0\HOMP(\Cm[-1],\CW[1])\simeq H^0\HOMP(\CC_{\PR}[-1],\CW[1]).
    \end{align}
For a triple $(f^l,f^m,f^r)\in H^0\HOMP(\CC_{l,m,r}[-1],\CW[1])$ corresponding to a morphism $f\colon \CWT\to \CW[1]$,
the image of it by $H^0\HOMP((\CC_{l,m,r})[-1],\CW[1])\to H^0\HOMP(\CC_{\PR}[-1],\CW[1])$ is zero.
Then, if $f^l=0$ and $f^r=0$,
this means that the composition $(\Cm[-1]\to \CW[1])\circ (\CC_{\PR}[-1]\to \Cm[-1])$ is zero, and hence $f^m=0$ by (\ref{l6-7}).  

Second, we show \ref{jl6-9}.
It is easy to see that $H^0\HOMP(\CC_{\PR\setminus \{l\}}[-1],\CW[1])=0$.
Therefore, in the same way as in the proof of the first assertion,
we have an isomorphism
 \begin{align}
        \label{l6-11}
        H^0\HOMP(\Cl[-1],\CW[1])\simeq H^0\HOMP(\CC_{\PR}[-1],\CW[1]).
    \end{align}
We also have a similar isomorphism for $\Cr[-1]$.
Combining (\ref{l6-7}) and (\ref{l6-11}),
we can find a non-zero triple $(f^l,f^m,0)\in H^0\HOMP(\CC_{l,m,r}[-1],\CW[1])$
whose image by $H^0\HOMP((\CC_{l,m,r})[-1],\CW[1])\to H^0\HOMP(\CC_{\PR}[-1],\CW[1])$ is zero, which means we obtain a morphism $g_1\colon \CWT[-1]\to \CW[1]$ such that $\nu_l(g_1)\neq 0$ and $\nu_r(g_1)=0$.
We can also find $g_2$ in the same way,
and clearly $g_1$ and $g_2$ are linearly independent.
This proves \ref{jl6-9}.
\end{proof}

\begin{definition}
    For $F,G\in \shV$, we define $H^0\HOMV(F,G)_0$
    as the subset of $H^0\HOMV(F,G)$ consisting those satisfying $\nu_0=0$.
\end{definition}

\begin{lemma}\label{nov8-3}
For $s'\leq  s$,
we have the following exact sequence:
    \begin{align}\label{jl6-1}
        0\to H^0\HOMV(\oveA{s},A_{s'-1}[1])_0\to H^0\HOMV(\oveA{s},A_{s'}[1])_0\to H^0\HOMV(\oveA{s},\CW[1])_0\to 0.
    \end{align}
   In particular, every morphism $\oveA{s}\to A_{s'}[1]$ with $\nu_0=0$ is determined inductively by 
    $s'$-tuple of $\CWT\to \CW[1]$ with $\nu_0=0$, if we fix a basis of $H^0\HOMV(\oveA{s},A_{s'}[1])_0$. 
\end{lemma}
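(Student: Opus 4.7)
The plan is to derive the stated sequence from the long exact sequence induced by the distinguished triangle (\ref{jan27-101}) $A_{s'-1}\to A_{s'}\to \CW\to A_{s'-1}[1]$, and then to compare it, via the specialization $\nu_0$, with the analogous sequence obtained by replacing $\oveA{s}$ with $A_s$.

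First, applying $\HOMV(\oveA{s},-)$ to this triangle produces a long exact sequence whose dimensions are fixed by \ref{nov7-10} of Lemma~\ref{jan24-1}; in particular $\Ext^k(\oveA{s},A_{s''})=0$ for $k\geq 2$. A dimension count then forces both the connecting map $\Ext^0(\oveA{s},\CW)\to \Ext^1(\oveA{s},A_{s'-1})$ and the subsequent one to vanish, so one obtains an unrestricted short exact sequence
\begin{equation*}
0\to \Ext^1(\oveA{s},A_{s'-1})\to \Ext^1(\oveA{s},A_{s'})\to \Ext^1(\oveA{s},\CW)\to 0.
\end{equation*}
The identical argument, with the dimensional data in \ref{nov6-2} of Lemma~\ref{jan27-102} replacing those of Lemma~\ref{jan24-1} and using the fact that the natural morphism $A_s\to \CW$ factors through $A_{s'}$, gives the same short exact sequence with $A_s$ in place of $\oveA{s}$.

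Next, I would verify that the specialization map $\nu_0\colon \Ext^1(\oveA{s},X)\to \Ext^1(A_s,X)$ is surjective for $X=A_{s'-1}$, $A_{s'}$, $\CW$. The triangle $\CC_m[-1]\to \oveA{s}\to A_s\to \CC_m$ from \ref{s10-8} of Lemma~\ref{jan24-1} produces a canonical morphism $\oveA{s}\to A_s$, and since $\nu_0(\CC_m)=0$ its specialization at $0$ is an isomorphism. Hence for any $g\colon A_s\to X[1]$, regarded as a morphism on $V$ via the canonical identification, the composition $\oveA{s}\to A_s\xrightarrow{g} X[1]$ is a $\nu_0$-lift of $g$ up to an automorphism of $A_s$ which can be absorbed into the choice of $g$.

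Finally, the snake lemma applied to the diagram relating the two short exact sequences via the three surjective vertical maps $\nu_0$ produces an exact sequence on kernels; by definition these kernels are the $(-)_0$ subspaces, and the surjectivity kills the connecting morphism. This yields the desired short exact sequence, and the inductive description of morphisms in the second assertion follows by iterating the sequence down to $s'=1$, where $\Ext^1(\oveA{s},\CW)_0$ is generated by morphisms factoring through $\CWT\to \CW[1]$. The main technical step is the lifting construction for $\nu_0$-surjectivity; everything else is bookkeeping with the dimension data already recorded in the paper.
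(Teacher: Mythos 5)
Your proof is correct and takes a genuinely different organizational route from the paper's, although both ultimately rely on the same essential device. The paper proves exactness of (\ref{jl6-1}) by a direct diagram chase: it starts from the unrestricted short exact sequence (\ref{jan29-1}), and at each of the two non-obvious positions it modifies a given morphism by an explicit correction term constructed by pulling back along the canonical map $\oveA{s}\to A_s$ (which specializes to the identity on $A_s$). Your approach repackages the same ideas into a snake lemma computation: you exhibit \emph{two} parallel unrestricted short exact sequences --- one for $\Hom(\oveA{s},-)$ via Lemma~\ref{jan24-1}, one for $\Hom(A_s,-)$ via the dimensional data for $A_s$ --- connected by $\nu_0$, prove surjectivity of the vertical maps by the same pullback trick (this is essentially Lemma~\ref{jl3-14} in the paper), and read off (\ref{jl6-1}) on kernels. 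The snake lemma gives the final short exact sequence automatically once surjectivity of $\nu_0$ on the first column is in hand. Your route is conceptually cleaner and makes the role of the $\nu_0$-surjectivity more transparent, at the cost of needing to establish the second (downstairs) short exact sequence, which the paper never requires. Two small points: you cite \ref{nov6-2} of Lemma~\ref{jan27-102} for the dimensional data for $A_s$, but that item is stated only for $s'<s$; for $s'=s$ you need \ref{nov6-8-2} instead. And the commutativity of the two-rowed diagram (i.e., that $\nu_0$ intertwines composition with $A_{s'-1}[1]\to A_{s'}[1]$ and $A_{s'}[1]\to\CW[1]$) should be stated explicitly, though it follows from functoriality of specialization applied to morphisms of monodromic objects near $0$.
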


\begin{proof}
We remark that we already have an exact sequence by \ref{nov7-10} of Lemma~\ref{jan24-1lem}:
\begin{align}\label{jan29-1}
        0\to H^0\HOMV(\oveA{s},A_{s'-1}[1])\to H^0\HOMV(\oveA{s},A_{s'}[1])\to H^0\HOMV(\oveA{s},\CW[1])\to 0.
\end{align}
The injectivity of $H^0\HOMV(\oveA{s},A_{s'-1}[1])_0\to H^0\HOMV(\oveA{s},A_{s'}[1])_0$ directly follows from this.

Let us show the exactness at $H^0\HOMV(\oveA{s},A_{s'}[1])_0$ of the sequence (\ref{jl6-1}).
By (\ref{jan29-1}), it is enough to show that $\nu_0$ of a morphism which $f\colon \oveA{s}\to A_{s'-1}[1]$ satisfies $\nu_0((A_{s'-1}[1]\to A_{s'}[1])\circ f)=0$ is zero.
By the distinguished triangle $A_{s'-1}\to A_{s'}\to \CW\to A_{s'-1}[1]$,
there is a morphism $f_1\colon A_s(=\nu_0(A_s))\to \CW$ such that
$(\CW\to A_{s'-1}[1])\circ f_1=\nu_0(f)$.
Take the pull back $f_2:=f_1\circ (\oveA{s}\to A_s)$.
Then, since $\nu_0(f_2)=f_1$, we have
$\nu_0(f-(\CW\to A_{s'-1}[1])\circ f_2)=0$.
On the other hand, by \ref{nov7-10} of Lemma~\ref{jan24-1lem},
$(\CW\to A_{s'-1}[1])\circ f_2$ is zero,
and hence $\nu_0(f)=0$.

Next, we show the exactness at $H^0\HOMV(\oveA{s},\CW[1])_0$ of the sequence (\ref{jl6-1}).
By (\ref{jan29-1}), it is enough to show the following: If a morphism $g\colon \oveA{s}\to A_{s'}[1]$ satisfies 
$\nu_0((A_{s'}[1]\to \CW[1])\circ g)=0$, then there exists $h\colon \oveA{s}\to A_{s'}[1]$ such that
$(A_{s'}[1]\to \CW[1])\circ h=(A_{s'}[1]\to \CW[1])\circ g$ and
$\nu_0(h)=0$.
There is a morphism $g_1\colon A_s\to A_{s'-1}[1]$ such that
$(A_{s'-1}[1]\to A_{s'}[1])\circ g_1=\nu_0(g)$.
Take the pull back $g_2:=g_1\circ (\oveA{s}\to A_{s}) $
and set $h:=g-(A_{s'-1}[1]\to A_{s'}[1])\circ g_2$.
Then, we have $\nu_0(h)=0$,
and $(A_{s'}[1]\to \CW[1])\circ h=(A_{s'}[1]\to \CW[1])\circ g$.

The final assertion follows from \ref{nov7-10} of Lemma~\ref{jan24-1lem}, i.e. to give a morphism $\oveA{s}\to \CW[1]$ is equivalent to giving $\CWT\to \CW[1]$, and a morphism $\oveA{s}\to A_{s'}$ is determined by define $s'$-tuple of $\oveA{s}\to \CW[1]$ if we fix a basis of $H^0\HOMV(\oveA{s},A_{s'}[1])$. 
\end{proof}

\begin{corollary}\label{jl8-7}
For $\PR$ with three points $l,r,m$ and $s'\leq  s$, there exists a basis $f_1,\dots,f_{s'}$, $g_1,\dots,g_{s'}$ of $H^0\HOMP(\oveA{s},A_{s'}[1])$ with the following properties:
\begin{align}
    \label{jl8-5}
    \nu_l(f_i)=0\ (1\leq i\leq s')\quad  \mbox{and}\quad  \nu_{r}(g_i)=0\ (1\leq i\leq s').
\end{align}
\end{corollary}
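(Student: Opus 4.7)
The corollary will be proved by a dimension count, decomposing $H^0\HOMP(\oveA{s},A_{s'}[1])$ into two $s'$-dimensional subspaces characterized by vanishing of $\nu_l$ and $\nu_r$ respectively, then checking these subspaces are complementary.

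First, I establish that $\dim H^0\HOMP(\oveA{s},A_{s'}[1])=2s'$. Since both $\oveA{s}$ and $A_{s'}$ are supported on $V$ (viewed as an open subset of $\PR$), adjunction for the open embedding gives $H^0\HOMP(\oveA{s},A_{s'}[1])=H^0\HOMV(\oveA{s},A_{s'}[1])$, whose dimension is read off from the diagram in \ref{nov7-10} of Lemma~\ref{jan24-1} as $(s'+1)+(s'-1)=2s'$.

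Second, for each $\bullet\in\{l,r\}$, I claim $\dim H^0\HOMP(\oveA{s},A_{s'}[1])_{\nu_\bullet=0}=s'$, where the subscript denotes the subspace of morphisms whose specialization at $\bullet$ vanishes. Applying $\HOMP(\oveA{s},-)$ to the triangle $A_{s'-1}\to A_{s'}\to\CW\to A_{s'-1}[1]$ and checking dimensions (both flanking Hom's in the long exact sequence vanish in the relevant degrees by \ref{nov7-10} of Lemma~\ref{jan24-1}) gives a short exact sequence
\[0\to H^0\HOMP(\oveA{s},A_{s'-1}[1])\to H^0\HOMP(\oveA{s},A_{s'}[1])\to H^0\HOMP(\oveA{s},\CW[1])\to 0.\]
I then induct on $s'$. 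The base case $s'=1$: since $A_1[1]=\CW[1]$, \ref{nov7-10} of Lemma~\ref{jan24-1} identifies the hom with $H^0\HOMP(\CWT,\CW[1])$, and by \ref{jl6-9} of Lemma~\ref{jl8-1} the $\nu_\bullet=0$ part is one-dimensional. The inductive step requires that the short exact sequence above remains short exact after imposing $\nu_\bullet=0$; for $\bullet=l$ this is exactly Lemma~\ref{nov8-3} (identifying $V=V_l$ so $\nu_0=\nu_l$), and for $\bullet=r$ it is an analogous statement on $\PR$.

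Third, I verify that the two subspaces $H^0\HOMP(\oveA{s},A_{s'}[1])_{\nu_l=0}$ and $H^0\HOMP(\oveA{s},A_{s'}[1])_{\nu_r=0}$ intersect trivially. Inducting on $s'$ via the short exact sequence above, this reduces to the base case $H^0\HOMP(\CWT,\CW[1])_{\nu_l=0,\,\nu_r=0}=0$, which is \ref{jl6-8} of Lemma~\ref{jl8-1}. Since two $s'$-dimensional subspaces with trivial intersection in a $2s'$-dimensional vector space together span it, choosing bases $f_1,\dots,f_{s'}$ and $g_1,\dots,g_{s'}$ of each subspace furnishes the desired basis.

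The main obstacle is the $\nu_r$-analog of Lemma~\ref{nov8-3} used in the inductive step of the second part. The original Lemma~\ref{nov8-3} is stated on $V$ with the canonical specialization $\nu_0$ at the origin; the $\nu_l$ case falls out immediately after identifying $V=V_l$, but $\nu_r$ is intrinsically about $\PR$ (since $r\notin V_l$), so the extension-by-zero at $r$ creates the relevant singularity. Nonetheless, $\nu_r$ is an exact functor on $\PR$ respecting the triangles and the lifts $\oveA{s}\to\CW\to A_{s'-1}[1]$ used in the proof of Lemma~\ref{nov8-3}, so that argument adapts verbatim with $\nu_r$ replacing $\nu_0$; the adaptation is straightforward but must be spelled out carefully.
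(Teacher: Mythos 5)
Your proof is correct but takes a genuinely different route from the paper's. The paper builds the basis constructively by induction on $s'$: push a basis of $\HOMP(\oveA{s},A_{s'-1}[1])$ forward along $A_{s'-1}\to A_{s'}$, then adjoin lifts (chosen via Lemma~\ref{nov8-3} to preserve vanishing of specialization) of the two basis elements of $\HOMP(\oveA{s},\CW[1])$ supplied by \ref{jl6-9} of Lemma~\ref{jl8-1}. You instead argue by dimension count, showing that each subspace $\{\nu_l=0\}$, $\{\nu_r=0\}$ is $s'$-dimensional, that they intersect trivially (reducing to \ref{jl6-8} of Lemma~\ref{jl8-1}), and that the ambient space is $2s'$-dimensional; a basis then exists by linear algebra. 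Both arguments turn on the same short exact sequence and on the $\nu_\bullet=0$-refined versions of it in Lemma~\ref{nov8-3}; your version is less constructive but conceptually tidier, since it isolates the two subspaces once and for all rather than carrying along an evolving list of generators. The chief merit of your write-up is that you correctly flag a genuine subtlety that the paper glosses over: Lemma~\ref{nov8-3} as stated controls only the specialization $\nu_0$ at the origin of $V$, hence only one of $\nu_l,\nu_r$ once $V$ is embedded in $\PR$; the paper cites Lemma~\ref{nov8-3} for both lifts $\wt{f_{s'}}$ and $\wt{g_{s'}}$ without comment, whereas you observe that a companion to Lemma~\ref{nov8-3} for the specialization at the point of $\PR\setminus V$ must be supplied. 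One small step you should also spell out: in the inductive step of the trivial-intersection argument you must verify that the unique preimage $f'\in\HOMP(\oveA{s},A_{s'-1}[1])$ of an $f$ with $\nu_l(f)=\nu_r(f)=0$ again satisfies $\nu_l(f')=\nu_r(f')=0$. This is not a consequence of the unrestricted short exact sequence alone; it is precisely the injectivity portion of the refined exactness statement in Lemma~\ref{nov8-3} (and its $\nu_r$-analogue), and citing it explicitly closes the induction.
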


\begin{proof}
We use induction with respect to $s'\in \ZZ_{\geq 1}$.
If $s'=1$,
this follows from the first diagram of \ref{nov7-10} of Lemma~\ref{jan24-1lem} and Lemma~\ref{jl8-1}.
For general $s'$, assume that we have a basis $f_1,\dots,f_{s'-1}, g_1,\dots,g_{s'-1}\colon \oveA{s-1}\to A_{s'-1}[1]$ with the properties (\ref{jl8-5}).
By the second diagram of \ref{nov7-10} of Lemma~\ref{jan24-1lem},
the compositions of them (we will use the same symbols $f_1,\dots,f_{s'-1}, g_1,\dots,g_{s'-1}$) with $(A_{s'-1}[1]\to A_{s'}[1])$ are still linearly independent in $H^0\HOMP(\oveA{s},A_{s'}[1])$.
Moreover, take a basis $f_{s'},g_{s'}$ of $H^0\HOMP(\oveA{s},\CW[1])$ with $\nu_l(f_{s'})=0$ and $\nu_r(g_{s'})=0$.
By Lemma~\ref{nov8-3},
we take $\wt{f_{s'}}, \wt{g_{s'}}\colon \oveA{s}\to A_{s'}[1]$ so that $(A_{s'}[1]\to \CW[1])\circ \wt{f_{s'}}=f_{s'}$ and
$(A_{s'}[1]\to \CW[1])\circ \wt{g_{s'}}=g_{s'}$
with
$\nu_l(\wt{f_{s'}})=0$ and $\nu_r(\wt{g_{s'}})=0$.
Then,
$f_1,\dots,f_{s'-1},\wt{f_{s'}}$,
$g_1,\dots,g_{s'-1},\wt{g_{s'}}$
form a desired basis of $H^0\HOMP(\ove{A_s},A_{s'}[1])$.
\end{proof}

\begin{lemma}\label{jl3-11}
For $\scF\in \PervVzu$ satisfying the condition $(N_s)$ ($s\in \ZZ_{\geq 2}$)
    and a morphism $f\colon \ove{A_s}\to \scF$ such that $\nu_0(f)=0$.
Then, there exists a unique morphism $g\colon \ove{P_s}\to \scF$ such that
$\nu_0(g)=0$ and $g|_{W}=f|_{W}$.
\end{lemma}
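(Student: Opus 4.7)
The plan is to exploit the distinguished triangle
\[\ove{A_s}\to \ove{P_s}\to \CC_0\to \ove{A_s}[1]\]
of Lemma~\ref{jan24-1}(\ref{s10-10}) together with the long exact sequence produced by $H^0\HOM_V(-,\scF)$. A preliminary observation I would make is that $\ove{A_s}$ is a zero extension from $W$: namely $\ove{A_s}\simeq j_{W,!}\ove{\calL_s}$ where $j_W\colon W\hookrightarrow V$. The case $s=1$ is immediate, and the inductive step uses that both $\ove{A_{s-1}}$ and $\CW$ have trivial stalk at $0$, so their cone $\ove{A_s}$ does too. By adjunction,
\[H^0\HOM_V(\ove{A_s},\scF)\xrightarrow{\sim}H^0\HOM_W(\ove{\calL_s},\scF|_W),\]
so the condition $g|_W=f|_W$ is equivalent to $\beta(g)=f$, where $\beta\colon H^0\HOM_V(\ove{P_s},\scF)\to H^0\HOM_V(\ove{A_s},\scF)$ is induced by $\ove{A_s}\to \ove{P_s}$ (which restricts to $\mathrm{id}_{\ove{\calL_s}}$ on $W$).

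Applying $H^0\HOM_V(-,\scF)$ to the triangle and observing $H^0\HOM_V(\CC_0[1],\scF)=H^0\HOM_V(\CC_0,\scF[-1])=0$ by Lemma~\ref{nov22-1} produces
\[0\to H^0\HOM_V(\CC_0,\scF)\xrightarrow{\alpha}H^0\HOM_V(\ove{P_s},\scF)\xrightarrow{\beta}H^0\HOM_V(\ove{A_s},\scF)\xrightarrow{\delta}H^0\HOM_V(\CC_0,\scF[1]).\]
To establish existence I would prove $\delta(f)=0$ via naturality of $\nu_0$: since $\nu_0$ is a triangulated functor sending $\ove{A_s},\ove{P_s},\CC_0$ to $A_s,P_s,\CC_0$ (Lemma~\ref{oct31-3}), we obtain a commutative square
\[\xymatrix@C=40pt{H^0\HOM_V(\ove{A_s},\scF)\ar[r]^-{\delta_V}\ar[d]_-{\nu_0}&H^0\HOM_V(\CC_0,\scF[1])\ar[d]^-{\nu_0}\\ H^0\HOM_{T_0V}(A_s,\nu_0\scF)\ar[r]^-{\delta_T}&H^0\HOM_{T_0V}(\CC_0,\nu_0\scF[1]).}\]
The right vertical arrow is an isomorphism, because $\HOM(\CC_0,-)$ only detects the costalk at $0$ and $\nu_0$ preserves this costalk; alternatively it can be verified by decomposing $\scF$ via Lemma~\ref{dec25-1} into summands of type $P_{s'}[1]$, $A_{s''}[1]$, $B_{s''}[1]$, $Q_{s''}[1]$, $\CC_0$ and computing with Lemmas~\ref{jan27-102} and~\ref{dec4-5}. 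Hence $\nu_0(\delta_V(f))=\delta_T(\nu_0(f))=0$ forces $\delta_V(f)=0$, yielding an initial lift $g_0$ with $\beta(g_0)=f$.

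To adjust $g_0$ so that $\nu_0(g_0+\alpha(h))=0$ for suitable $h\in H^0\HOM_V(\CC_0,\scF)$, I would invoke $\nu_0\circ \alpha=\alpha_T\circ \nu_0$, where $\alpha_T\colon H^0\HOM_{T_0V}(\CC_0,\nu_0\scF)\to H^0\HOM_{T_0V}(P_s,\nu_0\scF)$ is the analogous map coming from $A_s\to P_s\to \CC_0\to A_s[1]$. Since $\nu_0(\beta(g_0))=\nu_0(f)=0$, naturality places $\nu_0(g_0)$ in the image of $\alpha_T$, so a solution $h$ exists (using also that $\nu_0$ is an isomorphism on $H^0\HOM_V(\CC_0,\scF)$). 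Uniqueness of $h$, hence of $g$, amounts to injectivity of $\alpha_T$, which I would deduce from the preceding term $H^0\HOM_{T_0V}(A_s[1],\nu_0\scF)$ mapping to zero in $H^0\HOM_{T_0V}(\CC_0,\nu_0\scF)$; summand-by-summand this follows from item~\ref{nov6-4} of Lemma~\ref{jan27-102}, after noting that condition $(N_s)$ keeps the $A_{s''}$-indices of the summands of $\nu_0\scF$ within the range where that vanishing applies.

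The main obstacle I foresee is establishing cleanly the two $\nu_0$-related isomorphism claims together with $\alpha_T$-injectivity. All three succumb to the same treatment, a case analysis on the summands of $\scF$ guaranteed by Lemma~\ref{dec25-1}, combined with the precise Hom computations of Lemmas~\ref{jan27-102} and~\ref{dec4-5}. The role of the hypothesis $(N_s)$ is precisely to cap the admissible indices among these summands so that all the required vanishings materialize.
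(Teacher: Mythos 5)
Your strategy is sound and genuinely different from the paper's. The paper's proof first applies Definition-Lemma~\ref{j24-2-2} to split $\scF$ as an extension of a skyscraper part $\scF_2$ by a part $\scF_1$ built from $A_{s'}[1]$ and $P_{s'}[1]$, dismisses $\scF_2$ because the $\nu_0$-vanishing subspaces of both $\HOMV(\ove{P_s},\scF_2[k])$ and $\HOMV(\ove{A_s},\scF_2[k])$ are zero, and then proves the restriction $H^0\HOMV(\oveP{s},\scF_1)_0\to H^0\HOMV(\oveA{s},\scF_1)_0$ is an isomorphism by explicit dimension counting and surjectivity chases (Lemmas~\ref{jl4-1} and~\ref{nov8-5}). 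You instead run the whole argument through the long exact sequence of the source triangle $\ove{A_s}\to\ove{P_s}\to\CC_0$ together with the naturality of $\nu_0$, leaning on the observation that $\nu_0$ is (up to the coordinate identification $T_0V\simeq V$) essentially the identity on monodromic objects, so that $\nu_0\colon H^0\HOM_V(\CC_0,\scF[k])\to H^0\HOM_{T_0V}(\CC_0,\nu_0\scF[k])$ is an isomorphism. That observation is correct and lets you reduce both the existence of a lift (vanishing of $\delta_V(f)$) and the adjustment by $\alpha(h)$ to statements on the specialized side, which is tidier than the paper's calculation-heavy route; on the other hand the paper's reduction to $\scF_1$ has only two summand types to check, whereas you would a priori need to treat all five produced by Lemma~\ref{dec25-1}.

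The one place your write-up falls short of a complete argument is the injectivity of $\alpha_T$. You cite item~\ref{nov6-4} of Lemma~\ref{jan27-102}, which only says that $\CC_0\to A_s[1]\to A_{s''}[1]$ vanishes for $s''<s$; the summands of type $P_{s'}[1]$, $B_{s''}[1]$, $Q_{s''}[1]$ and $\CC_0$ are not covered by that reference, and you do not spell out the computations with Lemma~\ref{dec4-5} that would handle them. In fact a cleaner and case-free way to get $\alpha_T$ injective is already available in the paper: Lemma~\ref{j24-1} says, precisely under hypothesis $(N_s)$, that every morphism $A_s[1]\to\nu_0\scF$ factors through $P_s[1]$; composing with the boundary $\CC_0\to A_s[1]\to P_s[1]$, which vanishes as two consecutive arrows of the triangle, shows the connecting map $H^0\HOM_{T_0V}(A_s[1],\nu_0\scF)\to H^0\HOM_{T_0V}(\CC_0,\nu_0\scF)$ is zero. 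If you replace the appeal to Lemma~\ref{jan27-102}.\ref{nov6-4} with that use of Lemma~\ref{j24-1}, the proof closes without any further case analysis, and the role of the hypothesis $(N_s)$ becomes visible exactly where Lemma~\ref{j24-1} is invoked.
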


\begin{proof}
We use Definition-Lemma~\ref{j24-2-2}.
Since $H^0\HOMV(\ove{P_s},\scF_2[k])_0=0$ and $H^0\HOMV(\ove{A_s},\scF_2[k])_0=0$ for any $k$,
we may assume $\scF=\scF_1$.
By Definition-Lemma~\ref{j24-2-2},
$\scF_1$ is a direct sum of some $A_{s'}[1]$ ($s'<s$) or $P_{s'}[1]$ ($s'\leq s$).
Hence, the assertion follows from the following Lemma~\ref{jl4-1} and Lemma~\ref{nov8-5}.
\end{proof}
Note that to give $f$ is equivalent to give $f|_{W}$.

\begin{lemma}
\label{jl3-14}
For $\scF\in \PervVzu$ and $f\colon \nu_{0}(\ove{P_s})(=P_s)\to \scF[k]$ (resp. $\nu_0(\oveA{s})=A_s\to \scF[k]$) (on $T_0V$) for any $k\in \ZZ$,
        there exists $g\colon \ove{P_s}\to \scF[k]$ (resp. $\oveA{s}\to \scF[k]$) such that $\nu_0(g)=f$.
        \end{lemma}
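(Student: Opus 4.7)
The plan is to realize the desired lift through the canonical morphism $\ove{P_s}\to P_s$ (resp.\ $\ove{A_s}\to A_s$) that comes from the distinguished triangle \ref{s10-8} of Lemma~\ref{jan24-1}. Explicitly, from the triangle $\CC_m[-1]\to \ove{P_s}\to P_s\to \CC_m$ on $V$ one extracts a morphism $\ove{P_s}\to P_s$. Since the skyscraper $\CC_m$ is supported at the point $m\neq 0$, the specialization functor $\nu_0$ at $0$ annihilates $\CC_m$, so the induced morphism $\nu_0(\ove{P_s}\to P_s)\colon P_s\to P_s$ on $T_0V$ is an isomorphism. After rescaling by a nonzero scalar, I may arrange that this specialization equals $\id_{P_s}$, and similarly for $\ove{A_s}\to A_s$.

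Next, since $\scF\in \PervVzu$ has singular point at most at $0$, its can--var data agrees with that of $\nu_0(\scF)$ under the identification $T_0V\simeq V$ by the coordinate $t$; in particular $\nu_0(\scF)\simeq \scF$ canonically as objects on $V$. Consequently, the given morphism $f\colon P_s\to \scF[k]$ on $T_0V$ corresponds to a morphism $\wt{f}\colon P_s\to \scF[k]$ on $V$ with $\nu_0(\wt{f})=f$. I would then define
\[g:=\wt{f}\circ \bigl(\ove{P_s}\to P_s\bigr)\colon \ove{P_s}\to \scF[k],\]
and compute, by functoriality of $\nu_0$,
\[\nu_0(g)=\nu_0(\wt{f})\circ \nu_0\bigl(\ove{P_s}\to P_s\bigr)=f\circ \id_{P_s}=f,\]
which is the desired lift. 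The argument for the case of $\ove{A_s}$ and $A_s$ is identical, using the analogous triangle $\CC_m[-1]\to \ove{A_s}\to A_s\to \CC_m$ from \ref{s10-8} of Lemma~\ref{jan24-1}.

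I do not anticipate any serious obstacle. The only point that needs to be verified carefully is the normalization step making $\nu_0(\ove{P_s}\to P_s)$ equal to the identity; but this is immediate once one notes that the long exact sequence obtained by applying $\nu_0$ to the defining triangle identifies $\nu_0(\ove{P_s})$ with $\nu_0(P_s)=P_s$, so the induced map is a nonzero scalar endomorphism of the indecomposable $P_s$ and can be scaled to $\id_{P_s}$. The entire construction is then formal.
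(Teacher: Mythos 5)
Your proof is correct and follows exactly the paper's argument: compose $f$ (transported to $V$) with the canonical morphism $\ove{P_s}\to P_s$ from the triangle $\CC_m[-1]\to \ove{P_s}\to P_s\to \CC_m$, whose specialization at $0$ is an isomorphism because $\nu_0(\CC_m)=0$. One small slip in your normalization remark: $\End(P_s)\cong\CC[N]/N^s$ is not $\CC$ for $s\geq 2$, so a nonzero endomorphism of the indecomposable $P_s$ need not be a scalar; this is immaterial, however, because the identification $\nu_0(\ove{P_s})\simeq P_s$ of Lemma~\ref{oct31-3} can simply be \emph{chosen} to be $\nu_0(\ove{P_s}\to P_s)$, so no rescaling is needed at all.
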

        \begin{proof}
 For $f\colon \nu_{0}(\ove{P_s})(=P_s)\to \scF[k]$,
 the morphism $g:=f\circ (\ove{P_s}\to P_s)$ satisfies $\nu_0(g)=f$.            
 The proof for $\oveA{s}$ is the same.
        \end{proof}
        
        \begin{corollary}
          \label{jl3-15}For $2\leq s'<s$ and $h\colon \ove{P_s}\to \CW[1]$ satisfying $\nu_0(h)=0$,
    there exists $\wt{h}\colon \ove{P_s}\to A_{s'}[1]$ such that 
    $(A_{s'}[1]\to \CW[1])\circ \wt{h}=h$ and $\nu_0(\wt{h})=0$.  
        \end{corollary}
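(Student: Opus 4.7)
The strategy is to reduce the corollary to Lemma~\ref{nov8-3} via the distinguished triangle (\ref{jl2-10}):
\[
\ove{A_s}\to \ove{P_s}\to \CC_0\to \ove{A_s}[1].
\]
Applying $H^0\HOMV(-, A_{s'}[1])$ and $H^0\HOMV(-, \CW[1])$ to this triangle and noting that both $A_{s'}$ and $\CW$ are zero-extensions of sheaves on $W=V\setminus\{0\}$, we have $i_{0}^{!}A_{s'}=i_{0}^{!}\CW=0$, hence $H^0\HOMV(\CC_{0}, A_{s'}[k])=H^0\HOMV(\CC_{0},\CW[k])=0$ for every $k\in\ZZ$. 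Precomposition with $\ove{A_s}\to \ove{P_s}$ therefore induces isomorphisms
\[
H^0\HOMV(\ove{P_s}, A_{s'}[1])\simto H^0\HOMV(\ove{A_s}, A_{s'}[1]),\qquad H^0\HOMV(\ove{P_s}, \CW[1])\simto H^0\HOMV(\ove{A_s}, \CW[1]).
\]

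Next I would verify that these isomorphisms respect the $\nu_{0}=0$ subspaces. Indeed, $\nu_{0}$ sends the triangle (\ref{jl2-10}) to the triangle (\ref{j24-5}) $A_s\to P_s\to \CC_{0}\to A_s[1]$ on $T_{0}V$, and the same vanishings as above, applied on $T_{0}V$, show that any morphism $A_s\to A_{s'}[1]$ (resp.\ $A_s\to \CW[1]$) extends uniquely to $P_s\to A_{s'}[1]$ (resp.\ $P_s\to \CW[1]$). Consequently, for any $\phi\in H^0\HOMV(\ove{P_s},A_{s'}[1])$ (resp.\ $\phi\in H^0\HOMV(\ove{P_s},\CW[1])$), one has $\nu_{0}(\phi)=0$ if and only if $\nu_{0}(\phi\circ(\ove{A_s}\to\ove{P_s}))=0$, by uniqueness of the lift.

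Given the hypothesis, set $h':=h\circ(\ove{A_s}\to \ove{P_s})\colon \ove{A_s}\to \CW[1]$; then $\nu_{0}(h')=0$ by the above. By Lemma~\ref{nov8-3}, there is $\wt{h'}\colon \ove{A_s}\to A_{s'}[1]$ with $(A_{s'}[1]\to \CW[1])\circ \wt{h'}=h'$ and $\nu_{0}(\wt{h'})=0$. Let $\wt{h}\colon \ove{P_s}\to A_{s'}[1]$ be the unique morphism (under the first isomorphism above) restricting to $\wt{h'}$ on $\ove{A_s}$. Then $(A_{s'}[1]\to \CW[1])\circ \wt{h}$ and $h$ have the same restriction to $\ove{A_s}$, so by the second isomorphism they coincide; and $\nu_{0}(\wt{h})=0$ follows from $\nu_{0}(\wt{h'})=0$ by the compatibility just established.

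The argument is a routine cone-and-lift reduction; I do not anticipate any serious obstacle. The only mildly delicate point is the second step, namely, maintaining the $\nu_{0}=0$ condition through the isomorphism between $H^{0}\HOMV(\ove{P_s},-)$ and $H^{0}\HOMV(\ove{A_s},-)$. But once the vanishings $\HOMV(\CC_{0},A_{s'}[k])=\HOMV(\CC_{0},\CW[k])=0$ are noted both on $V$ and on $T_{0}V$, this compatibility is immediate from uniqueness of lifts.
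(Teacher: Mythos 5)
Your reduction breaks at its first step, because the claimed vanishings $H^0\HOMV(\CC_{0}, A_{s'}[k])=H^0\HOMV(\CC_{0},\CW[k])=0$ are false. You conflated the stalk $i_0^*$ with the costalk $i_0^!$: zero-extension kills the stalk at $0$ (indeed $(A_{s'})_0=0$), but $\HOMV(\CC_0, A_{s'}[k])\cong H^k(i_0^!A_{s'})$ is governed by the costalk, which does not vanish. Concretely, the standard triangle $i_{0*}i_0^!\CW\to \CW\to Rj_*j^*\CW$ gives $i_0^!\CW\simeq \CC[-1]\oplus\CC[-2]$, so $\HOMV(\CC_0,\CW[k])\cong\CC$ for $k=1,2$, and the same nonvanishing holds for the $A_{s'}$. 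These are exactly the classes the paper uses elsewhere; for instance the proof of Lemma~\ref{jl4-1} produces the exact sequence with the nonzero term $H^0\HOM_V(\CC_0,\CW[1])$, and the isomorphism~(\ref{jan29-4}) rests on it. Consequently, precomposition with $\ove{A_s}\to\ove{P_s}$ does \emph{not} induce isomorphisms $\HOMV(\ove{P_s},A_{s'}[1])\to\HOMV(\ove{A_s},A_{s'}[1])$ or $\HOMV(\ove{P_s},\CW[1])\to\HOMV(\ove{A_s},\CW[1])$: the long exact sequence coming from $\ove{A_s}\to\ove{P_s}\to\CC_0$ has nontrivial boundary contributions on both sides.

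The missing idea is a lift-then-correct argument rather than a vanishing argument. The paper first produces \emph{any} lift $h_1\colon\ove{P_s}\to A_{s'}[1]$ of $h$ along $A_{s'}[1]\to\CW[1]$, which is possible because the obstruction lives in $\HOMV(\ove{P_s},A_{s'-1}[2])=0$ (this vanishing, unlike the ones you invoked, does hold). The lift $h_1$ need not satisfy $\nu_0(h_1)=0$; but since $(A_{s'}[1]\to\CW[1])\circ\nu_0(h_1)=\nu_0(h)=0$, the defect $\nu_0(h_1)$ factors through a morphism $h_2\colon\nu_0(\ove{P_s})\to A_{s'-1}[1]$, and Lemma~\ref{jl3-14} provides $h_3\colon\ove{P_s}\to A_{s'-1}[1]$ with $\nu_0(h_3)=h_2$. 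Setting $\wt{h}:=h_1-(A_{s'-1}[1]\to A_{s'}[1])\circ h_3$ corrects the specialization without disturbing the compatibility with $h$, since the composite $A_{s'-1}[1]\to A_{s'}[1]\to\CW[1]$ is zero. Your argument has no analogue of this correction step, and without it the $\nu_0=0$ condition cannot be propagated.
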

\begin{proof}
    For $h\colon \ove{P_s}\to \CW[1]$ such that $\nu_0(h)=0$,
    we can take its lift, i.e. $h_1\colon \ove{P_s}\to A_{s'}[1]$ such that $(A_{s'}[1]\to \CW[1])\circ h_1=h$, since any morphism $\ove{P_s}\to A_{s'-1}[2]$ is zero.
    Since the morphism $(A_{s'}[1]\to \CW[1])\circ \nu_0(h_1)=\nu_0(h)$ is zero,
    $\nu_0(h_1)$ factors through some morphism $h_2\colon \nu_0(\ove{P_s})\to A_{s'-1}[1]$.
    Applying Lemma~\ref{jl3-14} to $h_2$,
    there exists $h_3\colon \ove{P_s}\to A_{s'-1}[1]$ such that $\nu_0(h_3)=h_2$.
    Then, the desired morphism is 
    $h_1-(A_{s'-1}[1]\to A_{s'}[1])\circ h_3$.    
\end{proof}

\begin{lemma}\label{jl4-1} 
In the situation of Lemma~\ref{jl3-11}, and with $\scF=A_{s'}[1]$ for $s'<s$,
the assertion of Lemma~\ref{jl3-11} is true.
The same claim is valid also for $\scF=\CV[1]$.
\end{lemma}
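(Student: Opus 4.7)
The plan is to prove Lemma~\ref{jl4-1} by extending $f$ along the distinguished triangle
\[
\ove{A_s}\to \ove{P_s}\to \bC_0\to \ove{A_s}[1]
\]
from \eqref{jl2-10} of Lemma~\ref{jan24-1}, and then correcting the extension using the freedom coming from $\Hom(\bC_0, A_{s'}[1])$ so that the specialization at $0$ vanishes. Both $\ove{A_s}$ and $\ove{P_s}$ have isomorphic restrictions to $W$ and differ only by the skyscraper $\bC_0$; this is the key structural fact I will exploit.

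First, I would apply $\HOM(-, A_{s'}[1])$ to the above triangle to obtain an obstruction $\partial f\in \HOM(\bC_0[-1], A_{s'}[1])$. Naturality of the specialization functor gives a commutative square relating $\partial$ with the analogous connecting map $\partial'$ for the specialized triangle $A_s\to P_s\to \bC_0\to A_s[1]$ on $T_0V$. Since $\nu_0(f)=0$ by assumption, we have $\nu_0(\partial f)=\partial'(\nu_0 f)=0$, and because $\bC_0$ and the monodromic object $A_{s'}$ are determined locally near $0$, the map $\nu_0\colon \HOM_V(\bC_0[-1], A_{s'}[1])\to \HOM_{T_0V}(\bC_0[-1], A_{s'}[1])$ is an isomorphism. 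Hence $\partial f=0$ and $f$ lifts to some $g_0\colon \ove{P_s}\to A_{s'}[1]$; the relation $g_0|_W=f|_W$ is automatic from the triangle.

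Next I would correct $g_0$ to achieve $\nu_0(g)=0$. The morphism $\nu_0(g_0)\colon P_s\to A_{s'}[1]$ satisfies $\nu_0(g_0)|_W=\nu_0(f)|_W=0$. Since $(A_s\to P_s)|_W$ is an isomorphism and $A_{s'}=j_!\scL_{s'}$ (so morphisms out of $A_s$ are determined by their restriction to $W$ via the $(j_!,j^*)$-adjunction), the composition $\nu_0(g_0)\circ (A_s\to P_s)$ vanishes and $\nu_0(g_0)$ factors through $\bC_0$ as $h\circ (P_s\to \bC_0)$. Lifting $h$ to $\tilde h\colon \bC_0\to A_{s'}[1]$ on $V$ (again via the isomorphism on skyscraper Homs) and setting $g:=g_0-\tilde h\circ (\ove{P_s}\to \bC_0)$ yields the desired morphism, since $\bC_0|_W=0$ preserves $g|_W=f|_W$. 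For uniqueness, any difference $g-g'$ vanishes on $W$, so it is classified by $\HOM(i_0^*\ove{P_s}, i_0^! A_{s'}[1])$; the vanishing of $\nu_0(g-g')$ kills this residual contribution. The case $\scF=\bC_V[1]$ proceeds identically using that $\bC_V[1]$ is smooth and $\bC_V|_W=\bC_W$, so that restriction to $W$ is faithful on the relevant morphisms from $A_s$.

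The main obstacle will be checking carefully that $\nu_0$ induces isomorphisms on the Hom groups of objects supported near (or built from data at) the point $0$ on $V$ versus $T_0V$. This is intuitively clear because both sides are computed by local data at the origin and the objects $\bC_0$, $A_{s'}$ are monodromic, but one must verify this naturality rigorously to legitimize the transfer of the vanishing from $\nu_0(f)=0$ to $\partial f=0$ and to conclude uniqueness from $\nu_0(g-g')=0$.
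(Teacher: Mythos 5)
Your proof is correct, but it genuinely departs from the paper's route. The paper proves the statement as a surjectivity claim for the restriction map $H^0\HOMV(\oveP{s},A_{s'}[1])_0\to H^0\HOMV(\oveA{s},A_{s'}[1])_0$ combined with a dimension count (both sides are $s'$-dimensional, quoting Lemma~\ref{nov8-3}), and it establishes surjectivity by an \emph{induction on $s'$}: the base case $s'=1$ ($\scF=\CW[1]$) is handled explicitly using the triangle~\eqref{jl2-10} and the isomorphism $\HOM_V(\CC_0,\CW[1])\simto\HOM_V(P_s,\CW[1])$, and the inductive step projects $f$ along $A_{s'}\to\CW$, invokes the base case and Lemma~\ref{jl3-15} to peel off the top layer, and reduces to $A_{s'-1}$. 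You instead handle all $s'$ at once: you observe that the obstruction $\partial f\in\HOM(\CC_0[-1],A_{s'}[1])$ lives in a Hom space between objects singular only at $0$, where the specialization $\nu_0$ is a bijection, so $\nu_0(\partial f)=\partial'(\nu_0 f)=0$ forces $\partial f=0$; the same principle drives both your correction step and your uniqueness argument. This is a cleaner organization: it replaces the paper's induction and dimension count with a single transport-along-$\nu_0$ argument. What you lose is that the key transparency claim -- that $\nu_0\colon\HOM_V(\CC_0,A_{s'}[k])\to\HOM_{T_0V}(\CC_0,A_{s'}[k])$ is an isomorphism for $k=1,2$ -- is asserted rather than proved; it is true (both sides are computed by $H^k(i_0^!A_{s'})$, a germ-at-$0$ invariant preserved by specialization, and in fact are $1$-dimensional), and the paper's proof leans on the same identification implicitly when it writes $h\circ(P_s\to\CC_0)=\nu_0(g)$ for $h$ on $V$ and $\nu_0(g)$ on $T_0V$, but you should spell it out -- e.g.\ via Prop.~8.6.3 of \cite{KS} and the can/var description, or by exhibiting a generator on each side and checking $\nu_0$ matches them -- since it is the load-bearing step in your version. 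With that point filled in, your argument is a valid and somewhat more conceptual alternative to the paper's inductive proof, and it dispenses with the auxiliary Lemma~\ref{jl3-15}.
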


\begin{proof}
We consider the morphism
\begin{align}\label{nov8-4}
    H^0\HOMV(\oveP{s},A_{s'}[1])_0\to H^0\HOMV(\oveA{s},A_{s'}[1])_0.
\end{align}
The dimension of the right hand side is $s'$ by Lemma~\ref{nov8-3},
and it is easy to see that the dimension of the left hand side is $s'$.
Therefore, it is enough to show (\ref{nov8-4})
is surjective.

Assume $s'=1$.
If a morphism $f\in H^0\HOM_V(\oveA{s},\CW[1])$ satisfies $\nu_0(f)=0$, i.e. $f\circ (A_s\to \oveA{s})=0$,
the pullback $f\circ (\CC_0[-1]\to \oveA{s})$ is zero.
Therefore, by using an exact sequence:
\begin{align*}
H^0\HOM_V(\CC_0,\CW[1])\to H^0\HOM_V(\oveP{s},\CW[1])&\to H^0\HOM_V(\oveA{s},\CW[1])\to
H^0\HOM_V(\CC_0[-1],\CW[1]),
\end{align*}
induced by the distinguished triangles (\ref{jl2-10}),
there is a morphism
$g\in H^0\HOM_V(\oveP{s},\CW[1])$
such that $g\circ (\oveA{s}\to \oveP{s})=f$.
On the other hand, by the exact sequence induced by
the distinguished triangle (\ref{j24-5}),
we have an isomorphisms
\begin{align}
    H^0\HOM_V(\CC_0,\CW[1])&\simto H^0\HOM_V(P_s,\CW[1]),\label{jan29-4}
\end{align}
Therefore, we can take $h\colon \CC_0\to \CW[1]$ such that
$h\circ (P_s\to \CC_0)=\nu_0(g)$.
Then, the morphism $g':=g-h\circ (\oveP{s}\to \CC_0)\in H^0\HOMV(\oveP{s},\CW[1])$
satisfies $\nu_0(g')=0$ and $g'\circ (\oveA{s}\to \oveP{s})=f$.
This proves the surjectivity of (\ref{nov8-4}) for $s'=1$.

Assume $s'\geq 2$ and take $f\in H^0\HOMV(\oveA{s},A_{s'}[1])_0$.
Since $\nu_0((A_{s'}[1]\to \CW[1])\circ f)$ is also zero,
there exists a unique morphism $g\colon \ove{P_s}\to \CW[1]$ such that
$g\circ (\ove{A_s}\to \ove{P_s})=(A_{s'}[1]\to \CW[1])\circ f$ and $\nu_0(g)=0$ by the isomorphism (\ref{nov8-4}) for $s'=1$.
By Lemma~\ref{jl3-15},
there is a morphism $\wt{g}\colon \ove{P_s}\to A_{s'}[1]$
such that 
    $(A_{s'}[1]\to \CW[1])\circ \wt{g}=g$ and $\nu_0(\wt{g})=0$. 
  We set $h:= (f-\wt{g}\circ (\oveA{s}\to \oveP{s}))$.
  Since the composition $(A_{s'}[1]\to \CW[1])\circ h$ is zero by the definition,
$h$ factors through some $i\colon \oveA{s}\to A_{s'-1}[1]$.  
 By the induction assumption, we can take $\wt{i}\colon \ove{P_{s}}\to A_{s'-1}[1]$ such that 
 $\wt{i}\circ (\oveA{s}\to \ove{P_{s}})=i$ and
 $\nu_{0}(\wt{i})=0$.
Therefore, 
we have
\[(h=)f-\wt{g}\circ (\oveA{s}\to \oveP{s})=(A_{s'-1}[1]\to A_{s'}[1])\circ \wt{i}\circ (\oveA{s}\to \oveP{s}).\]
Hence, we get
\[f=(\wt{g}+(A_{s'-1}[1]\to A_{s'}[1])\circ \wt{i})\circ (\oveA{s}\to \oveP{s}).\]
 Since $\nu_0(\wt{g}+(A_{s'-1}[1]\to A_{s'}[1])\circ \wt{i})$ is zero, 
this proves the surjectivity of (\ref{nov8-4}).

The claim for $\scF=\CV[1]$ can be shown in the same (simpler) way.
\end{proof}

\begin{lemma}\label{nov8-5}
In the situation of Lemma~\ref{jl3-11}, and with $\scF=P_{s'}[1]$ for $s'\leq s$,
the assertion of Lemma~\ref{jl3-11} is true.
\end{lemma}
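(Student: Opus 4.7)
The plan is to establish the stronger statement that the precomposition map
\begin{equation*}
\iota^*\colon H^0\HOM_V(\ove{P_s}, P_{s'}[1]) \xrightarrow{\sim} H^0\HOM_V(\ove{A_s}, P_{s'}[1])
\end{equation*}
along $\iota\colon \ove{A_s}\to \ove{P_s}$ from the distinguished triangle \eqref{jl2-10} is an isomorphism, and then to check that this isomorphism restricts to one on the subspaces of morphisms with $\nu_0=0$. This matches the pattern of the already-proven Lemma~\ref{jl4-1}, and a fortiori gives the $g|_W = f|_W$ conclusion required by Lemma~\ref{jl3-11}.

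First, I would apply $H^0\HOM_V(-, P_{s'}[k])$ to \eqref{jl2-10} to get the exact sequence
\begin{equation*}
H^0\HOM_V(\CC_0, P_{s'}[1]) \to H^0\HOM_V(\ove{P_s}, P_{s'}[1]) \xrightarrow{\iota^*} H^0\HOM_V(\ove{A_s}, P_{s'}[1]) \to H^0\HOM_V(\CC_0, P_{s'}[2]),
\end{equation*}
which reduces the problem to the vanishing $H^0\HOM_V(\CC_0, P_{s'}[k])=0$ for $k=1,2$. To obtain this vanishing, I would apply $H^0\HOM_V(\CC_0, -)$ to the triangle \eqref{j24-5}: $\CC_0[-1]\to A_{s'}\to P_{s'}\to \CC_0$. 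The key input is $H^0\HOM_V(\CC_0, A_{s'}[k])=0$ for all $k\in \bZ$, which follows from $A_{s'}=j_!\scL_{s'}$ together with the standard identity $i^!j_!=0$ for the complementary inclusions $i\colon \{0\}\hookrightarrow V$ and $j\colon W\hookrightarrow V$. Combined with $H^0\HOM_V(\CC_0, \CC_0[k])=\bC$ concentrated in $k=0$, the long exact sequence squeezes both $H^0\HOM_V(\CC_0, P_{s'}[1])$ and $H^0\HOM_V(\CC_0, P_{s'}[2])$ between zeros, giving the vanishing.

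Next, I would check that $\iota^*$ preserves the subspaces with $\nu_0=0$ and is a bijection between them. Since specialization is exact, the triangle \eqref{jl2-10} specializes to $A_s\to P_s\to \CC_0 \xrightarrow{+1}$ on $T_0V$ (using Lemma~\ref{oct31-3}), so $\nu_0(\iota)$ is the canonical morphism $A_s\to P_s$. For $g\colon \ove{P_s}\to P_{s'}[1]$ with $f:=g\circ \iota$, one has $\nu_0(f)=\nu_0(g)\circ (A_s\to P_s)$. If $\nu_0(f)=0$, then $\nu_0(g)$ factors through $\Cone(A_s\to P_s)\simeq \CC_0$; but $H^0\HOM_{T_0V}(\CC_0, P_{s'}[1])=0$ by the same computation as above, forcing $\nu_0(g)=0$. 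The converse implication is immediate. Consequently, any $f\colon \ove{A_s}\to P_{s'}[1]$ with $\nu_0(f)=0$ lifts uniquely to $g\colon \ove{P_s}\to P_{s'}[1]$ with $g\circ \iota = f$ and $\nu_0(g)=0$, so in particular $g|_W=f|_W$.

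The only nontrivial step is the vanishing $H^0\HOM_V(\CC_0, A_{s'}[k])=0$, which reduces to the identity $i^!j_!=0$; everything else assembles formally from the two distinguished triangles \eqref{jl2-10} and \eqref{j24-5}. Note that the hypothesis $s'\leq s$ is consistent with $P_{s'}$ satisfying $(N_{s'})\Rightarrow (N_s)$ (by Lemma~\ref{dec25-1}), so Lemma~\ref{jl3-11}'s setup is met.
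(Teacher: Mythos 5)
There is a genuine gap, and it is exactly in the step you identify as ``the only nontrivial step.'' You reduce the surjectivity of $\iota^*$ to the vanishing $H^0\HOMV(\CC_0,P_{s'}[2])=0$, which you in turn deduce from ``the standard identity $i^!j_!=0$.'' That identity is false: the recollement relations give $i^*j_!=0$ and $i^!j_*=0$, but $i^!j_!$ computes local cohomology of the extension-by-zero at the boundary point and is generically nonzero. In the present setup this is already recorded in the paper: Lemma~\ref{jan27-102}, item~\ref{nov6-4} exhibits a nonzero morphism $\CC_0[-1]\to A_s$ (the one occurring in the triangle \eqref{j24-5}), so $H^0\HOMV(\CC_0,A_{s'}[k])\neq 0$ for $k=1,2$. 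Applying $\Hom(\CC_0,-)$ to \eqref{j24-5} therefore does not squeeze $H^0\HOMV(\CC_0,P_{s'}[2])$ to zero; in fact $H^0\HOMV(\CC_0,P_{s'}[2])\cong\bC$, which is precisely the degree-$1$ morphism $\CC_0[-1]\to P_{s'}[1]$ appearing in Lemma~\ref{dec4-5}, item~\ref{nov6-5}. So your long exact sequence ends in
\[
H^0\HOMV(\ove{P_s},P_{s'}[1])\to H^0\HOMV(\ove{A_s},P_{s'}[1])\to H^0\HOMV(\CC_0,P_{s'}[2])\cong\bC ,
\]
and the surjectivity of the first arrow does not come for free. (The injectivity part is fine, since $H^0\HOMV(\CC_0,P_{s'}[1])=0$ is genuinely true.)

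Your source-side strategy could in principle be repaired, but only by abandoning the stronger unrestricted isomorphism and working on the $\nu_0=0$ subspaces from the start: the connecting map sends $f$ to $f\circ(\CC_0[-1]\to\ove{A_s})$, whose specialization equals $\nu_0(f)\circ(\CC_0[-1]\to A_s)$; since $\nu_0$ is injective on $\Hom(\CC_0[-1],P_{s'}[1])$ (everything is monodromic, and $\CC_0$ is supported at the origin), the hypothesis $\nu_0(f)=0$ forces $f\circ(\CC_0[-1]\to\ove{A_s})=0$. None of this is in your proposal. The paper's own proof avoids $\Hom(\CC_0,-)$ entirely: it works on the target side, applying $\Hom(-,-)_0$ along the triangle $\CV\to P_{s'}\to A_{s'-1}\to\CV[1]$ (in the style of Lemma~\ref{nov8-3}) to obtain compatible short exact sequences for $\ove{P_s}$ and $\ove{A_s}$, and then invokes the already-proven Lemma~\ref{jl4-1} for both targets $A_{s'-1}[1]$ and $\CV[1]$. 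You should follow that reduction instead.
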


\begin{proof}
It is easy to see that, for any morphism $\oveP{s}\to A_{s'-1}$,
the composition $(A_{s'-1}\to \CV[1])\circ (\oveP{s}\to A_{s'-1})$ is zero.
Therefore, we have exact sequences:
\begin{align}
 0\to H^0\HOMV(\oveP{s},\CV[1])\to H^0\HOMV(\oveP{s},P_{s'})\to H^0\HOMV(\oveP{s},A_{s'}[1])\to 0,\\
 0\to H^0\HOMV(\oveA{s},\CV[1])\to H^0\HOMV(\oveA{s},P_{s'})\to H^0\HOMV(\oveA{s},A_{s'}[1])\to 0.
\end{align}
Moreover, in the same way as in the proof of \ref{nov8-3},
we also have
\begin{align}
    0\to H^0\HOMV(\oveP{s},\CV[1])_0\to H^0\HOMV(\oveP{s},P_{s'})_0\to H^0\HOMV(\oveP{s},A_{s'}[1])_0\to 0,\\
     0\to H^0\HOMV(\oveA{s},\CV[1])_0\to H^0\HOMV(\oveA{s},P_{s'})_0\to H^0\HOMV(\oveA{s},A_{s'}[1])_0\to 0.
\end{align}
Then, combining them with Lemma~\ref{jl4-1},
we have the isomorphism 
\[H^0\HOMV(\oveP{s},P_{s'})_0\simto H^0\HOMV(\oveA{s},P_{s'})_0.\]
This completes the proof.
\end{proof}

We need a version of Lemma~\ref{jl3-11} for $\ove{B_s}$ instead of $\ove{P_s}$.
The proof is the same as for Lemma~\ref{jl3-11}, so we omit the details.

\begin{lemma}\label{jl5-1}
For $\scF\in \PervVzu$ with the nilpotent order $\leq s$ and a morphism $f\colon \ove{A_s}\to \scF$ such that $\nu_0(f)=0$.
Then, there exists a unique morphism $g\colon \ove{B_s}\to \scF$ such that
$\nu_0(g)=0$ and $g|_{W}=f|_{W}$.
\end{lemma}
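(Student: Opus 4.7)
My plan is to mirror the proof of Lemma~\ref{jl3-11}, replacing $\oveP{s}$ by $\oveB{s}=\RG{W}\oveA{s}$ and relaxing $(N_s)$ to nilpotent order $\leq s$. The adjunction unit gives a natural map $\oveA{s}\to\oveB{s}$ which is an isomorphism on $W$, so the condition $g|_W=f|_W$ is equivalent to saying that $f$ is the pullback of $g$. Hence the statement is equivalent to the pullback
\[
H^0\HOMV(\oveB{s},\scF)_0 \to H^0\HOMV(\oveA{s},\scF)_0
\]
being an isomorphism.

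First, via Definition-Lemma~\ref{j24-2-2}, decompose $\scF=\scF_1\oplus\scF_2$ with $\scF_2$ a direct sum of skyscrapers $\CC_0$ and $\scF_1$ a direct sum of $A_{s'}[1]$ or $P_{s'}[1]$ for $s'\leq s$ (note $s'=s$ is now permitted since we only assume nilpotent order $\leq s$). The $\scF_2$ contribution vanishes on both sides: $H^0\HOMV(\oveA{s},\CC_0[k])_0=0$ follows from the triangle $\CC_m[-1]\to\oveA{s}\to A_s$ together with $(A_s)_0=0$ (as in \ref{nov6-7} of Lemma~\ref{jan27-102}); the analogous vanishing for $\oveB{s}$ follows by applying $\RG{W}$ to the same triangle, yielding $\CC_m[-1]\to\oveB{s}\to B_s$ (using that $\CC_m$ is supported in $W$), and combining this with $H^0\HOM(B_s,\CC_0)_0=0$, which in turn follows from \ref{nov6-7} and a diagram chase.

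For $\scF_1$ a summand of type $A_{s'}[1]$ or $P_{s'}[1]$, the key geometric input is that the cofiber of $\oveA{s}\to\oveB{s}$ is supported at $\{0\}$, with cohomology that can be read off from \ref{nov6-7} of Lemma~\ref{jan27-102}. For $\scF=A_{s'}[1]$, the vanishing $(A_{s'})_0=0$ makes the end terms of the resulting long exact sequence vanish, yielding the isomorphism directly — crucially valid even in the case $s'=s$. For $\scF=P_{s'}[1]$, I would use the triangle $\CV\to P_{s'}\to A_{s'-1}[1]\to\CV[1]$ to reduce to the already-settled cases $\scF=A_{s'-1}[1]$ and $\scF=\CV[1]$, exactly paralleling the role of Lemma~\ref{nov8-5} in the proof of Lemma~\ref{jl3-11}.

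The main obstacle I expect is producing the lift $g$ that additionally satisfies $\nu_0(g)=0$, rather than a mere lift to $\HOMV(\oveB{s},\scF)$. The $\nu_0=0$ subspaces are cut out by different conditions on either side since $\nu_0(\oveA{s})=A_s$ while $\nu_0(\oveB{s})=B_s$ by Lemma~\ref{oct31-3}. This should be handled by the same inductive dimension-counting and basis-tracking strategy used in the proof of Lemma~\ref{jl4-1} — lifting $f\colon\oveA{s}\to A_{s'}[1]$ layer by layer through the filtration $\CW\subset A_{s'-1}\subset A_{s'}$ and adjusting at each stage by a correction term produced via Lemma~\ref{jl3-14} — now applied with $\oveB{s}$ in place of $\oveP{s}$ and using the analogous filtration on $B_{s'}$ coming from the $\RG{W}$-image of (\ref{m16-3}). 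Uniqueness then follows by applying the same vanishing to the difference of two candidate extensions.
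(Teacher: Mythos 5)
Your high-level plan --- decompose $\scF$ via Definition-Lemma~\ref{j24-2-2}, dispose of the skyscraper summands $\scF_2$, then treat the summands $A_{s'}[1]$ and $P_{s'}[1]$ of $\scF_1$ --- does follow the structure of the proof of Lemma~\ref{jl3-11}, which the paper cites as the model, and your handling of $\scF_2$ and of the reduction $P_{s'}[1]\rightsquigarrow A_{s'-1}[1],\CV[1]$ is reasonable. But the central claimed shortcut in the $A_{s'}[1]$ case is incorrect. You argue that since the cofiber $C$ of $\oveA{s}\to\oveB{s}$ is supported at $\{0\}$ and $(A_{s'})_0=0$, the end terms $\HOMV(C,A_{s'}[k])$ of the long exact sequence vanish, yielding the isomorphism ``directly.'' This confuses the two restriction functors: for $i\colon\{0\}\hookrightarrow V$, one computes $\HOMV(C,A_{s'}[k])\simeq\Hom_{\{0\}}(i^*C,\,i^!A_{s'}[k])$, so what matters is $i^!A_{s'}$, not $(A_{s'})_0=i^*A_{s'}$. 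Even though $i^*A_{s'}=0$, one has $i^!A_{s'}\simeq (B_{s'})_0[-1]$, which by \ref{nov6-7} of Lemma~\ref{jan27-102} is nonzero (cohomology $\CC$ in degrees $1$ and $2$). Since $i^*C\simeq(B_s)_0$ also has cohomology $\CC$ in degrees $0,1$, one gets $\HOMV(C,A_{s'}[1])\simeq\CC^2\neq 0$, and the long exact sequence does not collapse.

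Notice that the paper's own proof of Lemma~\ref{jl4-1} --- the analogue for $\oveP{s}$, where the cofiber is the single $\CC_0$ --- never invokes any such vanishing: there $\HOMV(\CC_0,A_{s'}[1])\neq 0$ too, and the argument is instead a dimension count on the $\nu_0=0$ subspaces followed by an induction on $s'$ with explicit correction terms (Lemmas~\ref{jl3-14},~\ref{jl3-15}). A genuine proof of Lemma~\ref{jl5-1} must run the analogous argument with $\oveB{s}$ in place of $\oveP{s}$, which is somewhat more delicate because the cofiber now has two cohomology degrees rather than one and because $\nu_0(\oveB{s})\simeq B_s$ (not $P_s$), so the $\nu_0=0$ condition is cut out by a different equation. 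Your closing paragraph points toward exactly this layer-by-layer lifting-and-correcting strategy, which is the right idea; but it is not an optional add-on to the $A_{s'}$ case as you frame it --- it is the whole argument, since the shortcut it is meant to supplement does not hold.
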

This completes the proof of Lemma~\ref{jl3-11}.

Recall the notation: For $\PR$ with three points $l,r,m$, we set $V_r=\PR\setminus \{l\}$,
$V_l=\PR\setminus \{r\}$.

\begin{lemma}\label{jl18-23} 
Let $\scF$ (resp. $\scG$) be an object in $\mathrm{Perv}(V_r,0)$ (resp. $\mathrm{Perv}(V_l,0)$)
and $f\colon \ove{\calL_s}\to \scF|_{W}$ a morphism on $W$.
Assume that $\scF|_{W}\simeq \scG|_{W}$ and $\scF$ satisfies the condition $(N_s)$.
Then, there exists a unique pair of morphisms: $g\colon \ove{B_s}\to \scG$
and $h\colon \ove{P_s}\to \scF$ such that 
$\nu_{l}(g)=0$, $\nu_{r}(h)=0$ and $f=g|_{W}+h|_{W}$.
\end{lemma}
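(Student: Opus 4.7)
The plan is to reduce the statement to an analogous decomposition at the $\ove{A_s}$-level, handle that decomposition by case analysis on building blocks of $\scF$ and $\scG$, and establish uniqueness by a gluing argument that upgrades to a morphism on $\bP^1$ with both specializations vanishing.

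First, I would upgrade the target from $\ove{A_s}$ to $\ove{B_s}$ and $\ove{P_s}$ via the earlier lifting lemmas. Specifically, Lemma~\ref{jl5-1} gives a bijection between morphisms $\tilde g\colon \ove{A_s}\to \scG$ on $V_l$ with $\nu_l(\tilde g)=0$ and morphisms $g\colon \ove{B_s}\to \scG$ with $\nu_l(g)=0$ (via restriction to $W$), and Lemma~\ref{jl3-11} does the same for $\ove{P_s}\to\scF$ using the hypothesis $(N_s)$ on $\scF$. Hence it suffices to produce a unique pair $(\tilde g,\tilde h)$ at the $\ove{A_s}$-level satisfying $f=\tilde g|_W+\tilde h|_W$, $\nu_l(\tilde g)=0$, and $\nu_r(\tilde h)=0$.

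Next, using condition $(N_s)$ and an analog of Proposition~\ref{nov5-1} adapted to $V_l$ and $V_r$ (allowing an additional singularity at $m$), decompose $\scF$ and $\scG$ as direct sums of shifts of the standard building blocks ($\ove{A_{s'}}[1]$, $\ove{B_{s'}}[1]$, $\ove{P_{s'}}[1]$, $\bC_m$, etc.). The isomorphism $\scF|_W\simeq\scG|_W$ matches the restrictions of the summands to $W$. By linearity of the decomposition sought, I would reduce to the case of a single pair of building blocks. For each such pair, glue $\scF$ and $\scG$ along the isomorphism on $W$ to obtain a perverse sheaf $\cH$ on $\bP^1$, and apply Corollary~\ref{jl8-7} (and its variants in the proofs of Lemmas~\ref{jl3-11},~\ref{jl5-1},~\ref{nov8-3}) to produce a basis of $\HOM_{\bP^1}(\ove{A_s},\cH[\bullet])$ split into a half with $\nu_l=0$ and a half with $\nu_r=0$. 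Restricting the first half to $V_l$ gives morphisms $\ove{A_s}\to\scG$ with $\nu_l=0$, restricting the second half to $V_r$ gives morphisms $\ove{A_s}\to\scF$ with $\nu_r=0$, and their images on $W$ span $\HOM_W(\ove{\calL_s},\scF|_W)$ in each degree. This yields existence.

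For uniqueness, if $(\tilde g_1,\tilde h_1)$ and $(\tilde g_2,\tilde h_2)$ both give decompositions of $f$, then $(\tilde g,\tilde h):=(\tilde g_1-\tilde g_2,\tilde h_1-\tilde h_2)$ satisfies $\tilde g|_W+\tilde h|_W=0$, $\nu_l(\tilde g)=0$, $\nu_r(\tilde h)=0$. Using $\tilde g|_W=-\tilde h|_W$ to glue along the identification $\scF|_W\simeq\scG|_W$, we obtain a morphism $\Phi\colon\ove{A_s}\to\cH$ on $\bP^1$ with $\nu_l(\Phi)=0$ and $\nu_r(\Phi)=0$. By the linear independence of the $\nu_l=0$ and $\nu_r=0$ basis elements in Corollary~\ref{jl8-7} (which is a consequence of Lemma~\ref{jl8-1} inductively propagated via Lemma~\ref{nov8-3}), $\Phi=0$, forcing $\tilde g=0$ and $\tilde h=0$. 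Transporting this uniqueness back through Lemmas~\ref{jl5-1} and \ref{jl3-11} yields $g_1=g_2$ and $h_1=h_2$.

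The main obstacle is Step 2: handling the case-by-case analysis for the building blocks whose restrictions to $W$ agree but whose structures at the singular points $l$, $r$, and $m$ can be genuinely different between $\scF$ and $\scG$. In particular, one must verify that after reducing to building blocks, the dimension of $\HOM_W(\ove{\calL_s},\scF|_W)$ equals the sum of the dimensions of the two $\nu=0$ hom-spaces, which requires tracking how $\ove{B_s}$ (vs.\ $\ove{A_s}$) introduces the extra microlocal flexibility needed at $l$, and how $\ove{P_s}$ does so at $r$. The gluing step used for uniqueness provides the key conceptual bridge, but making it rigorous demands verifying that the halves of the Corollary~\ref{jl8-7}-basis indeed restrict to $V_l$ and $V_r$ in the expected way, which is delicate for building blocks involving $\ove{B_s}$ or $\ove{P_s}$ rather than just $\ove{A_s}$.
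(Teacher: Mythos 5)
Your proposal uses the same two key lemmas as the paper -- Corollary~\ref{jl8-7} for the $\nu_l=0$/$\nu_r=0$ split and Lemmas~\ref{jl3-11},~\ref{jl5-1} for the lifting -- so the approach is essentially the paper's, though the paper applies them more directly (decompose $f$ on $W$ via the corollary, then lift each piece) without your extra detour through reducing to the $\ove{A_s}$-level, building-block decomposition of $\scF,\scG$, and gluing. One correction: $\scF\in\mathrm{Perv}(V_r,0)$ and $\scG\in\mathrm{Perv}(V_l,0)$ are unramified at $m$, so their building blocks are the unbarred $A_{s'}[1], B_{s'}[1], P_{s'}[1], Q_{s'}[1], \CC_0$ of Proposition~\ref{nov5-1} (equivalently Lemma~\ref{dec25-1}), not $\ove{A_{s'}}[1], \ove{B_{s'}}[1], \ove{P_{s'}}[1], \CC_m$ as you write; this also renders the ``main obstacle'' you flag a nonissue, since the passage between the $\ove{A_s}$-level and the $\ove{B_s}$/$\ove{P_s}$-level with $\nu_0=0$ is precisely what Lemmas~\ref{jl3-11} and~\ref{jl5-1} already provide.
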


\begin{proof}
    By Lemma~\ref{jl8-7},
    $f$ is uniquely decomposed into $f=f_1+f_2$ so that $\nu_l(f_1)=0$ and $\nu_r(f_2)=0$.
    By Lemma~\ref{jl3-11} (resp. Lemma~\ref{jl5-1}), there exists a unique morphism $h\colon \oveP{s}\to \scF$ (resp. $g\colon \ove{B_s}\to \scG)$ such that $\nu_r(h)=0$ and $h|_{W}=f_2$ (resp. $\nu_l(g)=0$ and $g|_{W}=f_1$). 
    Then, this proves the assertion.
\end{proof}

\subsubsection{Lemmas for $n\geq 2$ and $j=1$}

\begin{lemma}\label{nov12-3-1}
    If the nilpotent order of $\scF\in \PervVzu$ is $\leq 1$ and
    the one of $\mathrm{FL}(\scF)$ is also $\leq 1$,    
we have 
\begin{align}
    &H^0\HOMV(\ove{B_1},\scF[-1])_{0}= 0,\quad \mbox{and}\\
    &H^0\HOMV(\ove{B_1},\scF)_{0}\simto H^0\HOMV(\CWT,\scF)_0.
\end{align}
\end{lemma}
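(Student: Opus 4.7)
The plan is to reduce to a case-by-case computation using the structural classification from Proposition~\ref{nov5-1}. Since $\scF \in \PervVzu$ has nilpotent order $\leq 1$, that proposition gives the possible indecomposable summands (without shift) as $A_1[1] = \CW[1]$, $B_1[1]$, $P_1[1] = \CV[1]$, $Q_2[1]$, and $\CC_0$. The additional hypothesis that $\mathrm{FL}(\scF)$ also has nilpotent order $\leq 1$ eliminates the $Q_2[1]$ summand, since $\mathrm{FL}(Q_2) \cong P_2$ has nilpotent order $2$; all other listed types have Fourier transforms of nilpotent order $\leq 1$. Because $H^0\HOMV(\ove{B_1},-)$, $H^0\HOMV(\CWT,-)$, and the subspace constraint $\nu_0 = 0$ all commute with direct sums, it suffices to verify each claim when $\scF$ is one of the four indecomposables $A_1[1]$, $B_1[1]$, $\CV[1]$, or $\CC_0$.

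The key computational input is the pair of distinguished triangles
\[\CC_m[-1] \to \CWT \to A_1 \to \CC_m, \qquad \CC_m[-1] \to \ove{B_1} \to B_1 \to \CC_m,\]
the first being (\ref{jl2-7}) for $s = 1$ and the second its image under $\RG{W}(-)$ (using $\RG{W}\CC_m = \CC_m$ since $m \in W$). These fit into a morphism of triangles whose outer verticals are the identities on $\CC_m[-1]$ and $\CC_m$, the middle being the natural map $\CWT = \ove{A_1} \to \ove{B_1}$ and the right one being $A_1 \to B_1$. In particular the cone of $\CWT \to \ove{B_1}$ equals the cone of $A_1 \to B_1$, which is supported at $\{0\}$ and equals $\CC_0 \oplus \CC_0[-1]$ by direct stalk computation. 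For the first claim, I would apply $H^0\HOMV(-,\scF[-1])$ to the second triangle for each of the four indecomposable $\scF$'s; using the Hom computations of Lemmas~\ref{jan27-102} and \ref{dec4-5} and the fact that $\nu_0(\ove{B_1}) = \nu_0(B_1) = B_1$ while $\nu_0$ of $\CC_m$ and $\CC_m[-1]$ vanishes, one finds that the specialization functor $\nu_0$ acts injectively on $H^0\HOMV(\ove{B_1},\scF[-1])$, whence the vanishing on the $\nu_0 = 0$ subspace. For the second claim, apply $H^0\HOMV(-,\scF)$ to the triangle $\CWT \to \ove{B_1} \to \CC_0 \oplus \CC_0[-1] \to \CWT[1]$; combined with the analogous vanishing for $H^0\HOMV(\CC_0 \oplus \CC_0[-1],\scF)_0$ in each case, this yields the desired isomorphism.

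The main obstacle is the subtle bookkeeping of the $\nu_0 = 0$ restriction. The unrestricted Hom groups $H^0\HOMV(\ove{B_1},\scF[-1])$ and $H^0\HOMV(\ove{B_1},\scF)$ are in general nonzero, and typically strictly larger than their $\CWT$-counterparts; the vanishing and the isomorphism hold only after intersecting with the $\nu_0 = 0$ subspace. Carrying this out cleanly requires drawing the commutative squares relating $\ove{A_1}\to \ove{B_1}$, $A_1 \to B_1$, and the functor $\nu_0$ (which sends the second triangle to $0 \to B_1 \xrightarrow{\sim} B_1 \to 0$), and identifying precisely which morphisms survive the specialization constraint. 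Each individual case computation is elementary, but the case-by-case bookkeeping is where the main care is required.
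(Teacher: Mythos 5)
Your structural approach matches the paper's: you reduce via Proposition~\ref{nov5-1} to the four indecomposable summands $\CW[1]$, $\CV[1]$, $B_1[1]$, $\CC_0$ (correctly observing that the Fourier hypothesis rules out $Q_2[1]$), and you identify the same distinguished triangle $\CWT\to\ove{B_1}\to(\ove{B_1})_0\simeq\CC_0\oplus\CC_0[-1]$. Your argument for the first vanishing is sound: the triangle $\CC_m[-1]\to\ove{B_1}\to B_1\to\CC_m$ together with $\nu_0(\CC_m)=0$ and $\nu_0(\ove{B_1})\simto\nu_0(B_1)$ does show $\nu_0$ is injective on $\HOMV(\ove{B_1},\scF[-1])$.

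The gap is in your treatment of the second claim. You apply $\HOMV(-,\scF)$ to the triangle and assert that the vanishing $\HOMV(\CC_0\oplus\CC_0[-1],\scF)_0=0$ "yields the desired isomorphism," but the long exact sequence is a sequence of full $\Hom$-groups, and the subspaces cut out by $\nu_0=0$ do not form a long exact sequence. In particular, surjectivity of $\HOMV(\ove{B_1},\scF)_0\to\HOMV(\CWT,\scF)_0$ is not a formal consequence: given $f\colon\CWT\to\scF$ with $\nu_0(f)=0$, the long exact sequence only tells you that \emph{some} extension $\tilde f\colon\ove{B_1}\to\scF$ exists (once you know $\delta(f)=0$, which itself needs an argument), and the set of extensions is a torsor over the image of $\HOMV(\CC_0\oplus\CC_0[-1],\scF)$; knowing that the $\nu_0=0$ part of the latter vanishes does not by itself tell you that one of those extensions satisfies $\nu_0(\tilde f)=0$. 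Likewise, for injectivity you would get $f=\pi^*(g)$ for some $g\colon\CC_0\oplus\CC_0[-1]\to\scF$, and $\nu_0(f)=\nu_0(g)\circ\nu_0(\pi)$; the vanishing of $\nu_0(f)$ forces $g=0$ only if one knows something specific about $\nu_0(\pi)\colon B_1\to(B_1)_0$, which you do not supply. The paper closes these holes differently: it computes that for $\scF=\CV[1]$ or $\CW[1]$ both sides of the claimed isomorphism are one-dimensional (so it suffices to prove surjectivity), and then establishes surjectivity using Lemma~\ref{nov8-1}, which gives an explicit description of morphisms out of $\CWT$ in terms of their restrictions to the three point stalks $l,m,r$ and how $\nu_0$ interacts with that description. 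You acknowledge the bookkeeping difficulty but do not resolve it; the dimension count and the invocation of Lemma~\ref{nov8-1} are the missing ingredients.
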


\begin{proof}
We only prove the second assertion.
The first one can be shown in the same way.
By the condition,
$\scF$ is a direct sum of some $\CV[1]$, $\CW[1]$, $\RG{W}\CV[1]$ and $\CC_0$.
The assertion is clear for $\RG{W}\CV[1]$ and $\CC_0$.
For $\CV[1]$ or $\CW[1]$, the dimensions of both sides are equal to $1$.
Therefore, it is enough to show the surjectivity.
This follows from Lemma~\ref{nov8-1} with the distinguished triangle:
\[(\ove{B_1})_0[-1](\simeq \CC_0[-2]\oplus \CC_0[-1])\to \CWT\to \ove{B_1}\to (\ove{B_1})_0.\]
\end{proof}

\begin{lemma}\label{jl8-42} 
    We have an isomorphisms
    \begin{align}
        H^0\HOMV(\CVT,\CV)\simto& H^0\HOMV(\CWT,\CW),\quad \mbox{and}
\\\label{jl8-41}
        H^0\HOMV(\CVT,\CV[1])\simto& H^0\HOMV(\CWT,\CW[1])_0.
    \end{align}
\end{lemma}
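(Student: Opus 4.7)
The plan is to deduce both isomorphisms from the parallel distinguished triangles
\[
\CWT\to \CVT\to \CC_0\to \CWT[1],\qquad \CW\to \CV\to \CC_0\to \CW[1],
\]
arising from the decomposition $V=W\cup\{0\}$ (the cofibers are $\CC_0$ since $0\in T$, so the stalks of $\CVT$ and $\CV$ at $0$ are $\bC$), together with two standard vanishings: $H^k\HOMV(\CC_0,\CV)=0$ for $k<2$ (since $i_0^!\CV\simeq \bC[-2]$, while $H^2=\bC$), and $H^k\HOMV(\CWT,\CC_0)=0$ for all~$k$ (because $\CWT$ vanishes in a neighborhood of $0$). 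The arrows in the lemma are induced by the obvious commutative square relating these four sheaves.

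For the first isomorphism, I would apply $\HOMV(-,\CV)$ to the first triangle---the vanishings of $H^0,H^1\HOMV(\CC_0,\CV)$ force $H^0\HOMV(\CVT,\CV)\simto H^0\HOMV(\CWT,\CV)$---and then apply $\HOMV(\CWT,-)$ to the second triangle, obtaining $H^0\HOMV(\CWT,\CW)\simto H^0\HOMV(\CWT,\CV)$ from the vanishing of $\HOMV(\CWT,\CC_0[k])$. Composing these yields the claim. For the second isomorphism, the same $\HOMV(\CWT,-)$ argument already gives $H^0\HOMV(\CWT,\CW[1])\simto H^0\HOMV(\CWT,\CV[1])$, and applying $\HOMV(-,\CV[1])$ to the first triangle produces the exact sequence
\[
0\to H^0\HOMV(\CVT,\CV[1])\to H^0\HOMV(\CWT,\CV[1])\xrightarrow{\delta} H^2\HOMV(\CC_0,\CV)=\bC,
\]
so $H^0\HOMV(\CVT,\CV[1])$ embeds into $H^0\HOMV(\CWT,\CW[1])$.

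To finish I need to identify the image with $H^0\HOMV(\CWT,\CW[1])_0$. The inclusion $\subseteq$ is immediate: $\nu_0(\CVT)$ and $\nu_0(\CV[1])$ are shifts of the constant sheaf on $T_0V$, and $H^1(T_0V,\bC)=0$, so every morphism $\CVT\to\CV[1]$ has vanishing $\nu_0$. For the reverse inclusion I plan a dimension count: $H^0\HOMV(\CVT,\CV[1])\cong H^1(T,\bC)=\bC$ and $H^0\HOMV(\CWT,\CW[1])\cong H^1(W\cap T,\bC)=\bC^2$, while the specialization map $\nu_0\colon H^0\HOMV(\CWT,\CW[1])\to H^0\HOMV(\nu_0\CWT,\nu_0\CW[1])\cong H^1(\bC^*,\bC)=\bC$ is surjective (the loop around $0$ in $W\cap T$ maps to a generator), so $H^0\HOMV(\CWT,\CW[1])_0$ is one-dimensional and matches the source. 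The lemma is quite formal given these tools; the only nontrivial point is the naturality check that the composite isomorphism built from the long exact sequences actually coincides with the natural ``restrict and re-extend'' map of the statement---this is routine but must not be overlooked.
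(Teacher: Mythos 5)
Your proof is correct and sound, but it takes a genuinely different route than the paper. The paper dispatches the first isomorphism with the single word ``clear,'' and for the second it works with the distinguished triangle $\CC_m[-1]\to\CVT\to\CV\to\CC_m$ arising from removing the point $m$, reads off $\dim H^0\HOMV(\CVT,\CV[1])=1$ from the resulting diagram, shows that the relevant component of the pullback through $\CC_0[-1]\to\CWT\to\CVT$ vanishes (trivially, since those are consecutive arrows in the triangle at $0$), invokes Lemma~\ref{nov12-2} to conclude $\nu_0(f|_W)=0$, and finishes by a dimension count (with $\dim H^0\HOMV(\CWT,\CW[1])_0=1$ supplied by Lemma~\ref{jl8-1}). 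You instead work entirely with the triangles obtained by removing $0$, namely $\CWT\to\CVT\to\CC_0$ and $\CW\to\CV\to\CC_0$, together with the two standard vanishings $H^{<2}\HOMV(\CC_0,\CV)=0$ and $\HOMV(\CWT,\CC_0)=0$; this is more self-contained, since it avoids the auxiliary Lemmas~\ref{nov12-2} and \ref{jl8-1} in favor of direct cohomology computations $H^1(T)=\bC$, $H^1(W\cap T)=\bC^2$, $H^1(\bC^*)=\bC$.

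Two things are glossed over in your write-up that deserve one more sentence each. First, your argument for the inclusion of the image into $H^0\HOMV(\CWT,\CW[1])_0$ uses $\nu_0(f)=0$ for the morphism $f\colon\CVT\to\CV[1]$, but what must actually vanish is $\nu_0(g)$ where $g\colon\CWT\to\CW[1]$ is the factored restriction; you should note that $\nu_0$ applied to $\CW[1]\to\CV[1]$ is injective on the relevant Homs over $T_0V$ (because $\Hom(\nu_0\CWT,\nu_0\CC_0[k])=0$), so $\nu_0(g)=0$ indeed follows from $\nu_0(f)=0$. Second, the surjectivity of $\nu_0\colon H^0\HOMV(\CWT,\CW[1])\to H^1(\bC^*)$ is asserted from geometric intuition; it is true, and can be justified by precomposing the generator of $\Hom(\CW,\CW[1])\cong H^1(W)\cong\bC$ (the loop around $0$) with $\CWT\to\CW$ and observing that this generator restricts to the generator of $H^1$ of a punctured disc around $0$, but this step plays exactly the role of the paper's Lemma~\ref{jl8-1} and should not be waved away. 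With those two sentences added, your argument is complete and arguably cleaner than the paper's, at the modest cost of redoing in this local setting what the paper has already established on $\bP^1$.
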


\begin{proof}
The first one is clear.

For the second one,
we consider the diagram:
\[\xyhidaridt{\CC_m[-1]}{\CVT}{\CV}{\CV}{1}{0}.\]
Then, for $f\colon \CVT\to \CV[1]$,
since the pull back $f\circ (\CWT\to \CVT)\circ (\CC_0[-1]\to \CWT)$ is zero,
we have $\nu_0(f\circ (\CWT\to \CVT))=0$ by Lemma~\ref{nov12-2}.
Because the dimensions of both sides of (\ref{jl8-41}) are $1$ and (\ref{jl8-41}) is non-zero morphism,
this proves the isomorphism.
\end{proof}

\subsubsection{Lemmas for $n=2n_0+1$ ($n_0\geq 1$) and $j=n_0+1$}

For $\oveundA{s}$ and $\oveundP{s}$, one can show the following.

\begin{lemma}\label{jl9-1}
    For $s\in \ZZ_{\geq 2}$, we have diagrams:
    \[\xyhidaridt{\ove{A_{s-1}}}{\oveundA{s}}{\RG{T}\CW}{\CW,}{1,1}{\emptyset}\quad \xyhidaridt{\CWT}{\oveundA{s}}{\und{A_{s-1}}}{\CW,}{1,1}{\emptyset}\quad 
    \xyhidaridt{\oveP{s-1}}{\oveundP{s}}{\RG{T}\CW}{\CW,}{1,1}{\emptyset} \quad     
    \xyhidaridt{\CVT}{\oveundP{s}}{\undA{s-1}}{\CW.}{1}{1}
\]
\end{lemma}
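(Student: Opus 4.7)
The plan is to apply $H^0\HOM_V(-,\CW[*])$ to four distinguished triangles (two each for $\oveundA{s}$ and $\oveundP{s}$) and extract each claimed diagram from the resulting long exact sequences using previously computed Hom-spaces. The first and third diagrams use the defining distinguished triangles from Definition-Lemma~\ref{oct31-4} directly, while the second and fourth diagrams require alternative triangles obtained via the octahedral axiom.

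For the first diagram, I apply $\HOM_V(-,\CW[*])$ to the defining triangle $\oveA{s-1}\to \oveundA{s}\to \RG{T}\CW\to \oveA{s-1}[1]$. By (\ref{jl2-4-1}) we have $\HOM_V(\RG{T}\CW,\CW[k])=0$ for $k\neq 1$ and $\cong\bC$ for $k=1$; by \ref{nov7-10} of Lemma~\ref{jan24-1} (applied to $\oveA{s-1}$ with $s'=1$), $\dim\HOM_V(\oveA{s-1},\CW[0])=1$, $\dim\HOM_V(\oveA{s-1},\CW[1])=2$, and all other degrees vanish. The connecting morphism $\HOM_V(\oveA{s-1},\CW[0])\to \HOM_V(\RG{T}\CW,\CW[1])$ is precomposition with $\RG{T}\CW\to \oveA{s-1}[1]$, which by the defining property in \ref{s10-6} of Definition-Lemma~\ref{oct31-4} maps to the nonzero generator of $\HOM_V(\RG{T}\CW,\CW[1])$ after further composition with $\oveA{s-1}[1]\to \CW[1]$; hence this connecting map is an isomorphism of one-dimensional spaces. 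The long exact sequence then yields $\HOM_V(\oveundA{s},\CW[0])=0$, $\HOM_V(\oveundA{s},\CW[1])\simeq \HOM_V(\oveA{s-1},\CW[1])\simeq \bC^2$, and vanishing elsewhere, matching the first diagram.

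For the second diagram, I apply the octahedral axiom to the composition $\CWT\to \oveA{s-1}\to \oveundA{s}$, where the first arrow comes from the triangle $\CWT\to \oveA{s-1}\to A_{s-2}\to \CWT[1]$ of \ref{s10-9} of Lemma~\ref{jan24-1}, obtaining a distinguished triangle $\CWT\to \oveundA{s}\to C\to \CWT[1]$ together with $A_{s-2}\to C\to \RG{T}\CW\to A_{s-2}[1]$. Comparing with (\ref{jl2-1}) for $\undA{s-1}$ and verifying that the connecting map $\RG{T}\CW\to A_{s-2}[1]$ matches (both are inherited from the compatibility built into \ref{s10-6}) gives $C\simeq \undA{s-1}$; applying $\HOM_V(-,\CW[*])$ and cross-referencing with the first-diagram computation pinpoints the contributions, using $\dim\HOM_V(\CWT,\CW[0])=1$ and $\dim\HOM_V(\CWT,\CW[1])=2$ from Lemma~\ref{jl8-1}. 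For the third and fourth diagrams, I proceed analogously with $\oveundP{s}$: the defining triangle $\oveP{s-1}\to \oveundP{s}\to \RG{T}\CW\to \oveP{s-1}[1]$ gives the third diagram (where $\HOM_V(\oveP{s-1},\CW[*])\cong \HOM_V(\oveA{s-1},\CW[*])$ via (\ref{jl2-10}) together with the vanishing $\HOM_V(\CC_0,\CW[*])=0$), while the octahedral axiom applied to $\CVT\to \oveP{s-1}\to \oveundP{s}$ (using the triangle $\CVT\to \oveP{s-1}\to A_{s-2}\to \CVT[1]$ of \ref{s10-9} of Lemma~\ref{jan24-1}) yields $\CVT\to \oveundP{s}\to \undA{s-1}\to \CVT[1]$, from which the fourth diagram follows using $\HOM_V(\CVT,\CW[*])$ determined by Lemma~\ref{jl8-42}.

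The main obstacle will be verifying the identifications $C\simeq \undA{s-1}$ in the two applications of the octahedral axiom. This reduces to tracking the connecting morphisms $\RG{T}\CW\to \oveA{s-1}[1]$ and $\RG{T}\CW\to \oveP{s-1}[1]$ chosen in \ref{s10-6} of Definition-Lemma~\ref{oct31-4} and confirming their compatibility with the map $\RG{T}\CW\to A_{s-2}[1]$ of (\ref{jl2-1}) coming from the pair of commutative squares relating the various extension triangles. Once this compatibility is verified, the remainder is a routine diagram chase in the long exact sequences.
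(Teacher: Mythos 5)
The paper provides no proof of this lemma, so I can only assess the mathematics. Your treatment of the first and third diagrams is correct: applying $\HOM_V(-,\CW[k])$ to the defining triangle $\oveA{s-1}\to\oveundA{s}\to\RG{T}\CW$ (resp.\ $\oveP{s-1}\to\oveundP{s}\to\RG{T}\CW$), the required nondegeneracy of the degree $0\to 1$ connecting map is exactly what the commutativity constraint in \ref{s10-6} of Definition-Lemma~\ref{oct31-4} delivers, because the generator of the one-dimensional space $\HOM_V(\oveA{s-1},\CW)$ is the map $\oveA{s-1}\to\CW$ appearing in its defining triangle; and the reduction $\HOM_V(\oveP{s-1},\CW[k])\cong\HOM_V(\oveA{s-1},\CW[k])$ via (\ref{jl2-10}) and $\HOM_V(\CC_0,\CW[k])=0$ is fine.

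The real gap is exactly where you flag it, but you underestimate it. For the second and fourth diagrams the octahedral setup is right, but the identification $C\cong\undA{s-1}$ of the cone is not established by your sentence about ``both being inherited from the compatibility built into \ref{s10-6}.'' The connecting morphism $\RG{T}\CW\to A_{s-2}[1]$ in (\ref{jl2-1}) for $\undA{s-1}$ is produced by a different octahedron (applied to $A_{s-2}\to A_{s-1}\to\undA{s-1}$) and has no a priori relation to \ref{s10-6}. Worse, the paper explicitly remarks after Definition-Lemma~\ref{oct31-4} that the map $\RG{T}\CW\to\oveA{s-1}[1]$ defining $\oveundA{s}$ is not unique, so your connecting map $\rho[1]\circ\partial:\RG{T}\CW\to A_{s-2}[1]$ depends on the choice. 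The only constraint on it is that its postcomposition with $\pi:A_{s-2}\to\CW$ equals the prescribed nonzero $\gamma$; to conclude $C\cong\undA{s-1}$ one must show that all $\alpha$ with $\pi[1]\circ\alpha=\gamma$ have isomorphic cones (say via the action of $\Aut(A_{s-2})\cong(\CC[N]/N^{s-2})^\times$ on $\HOM_V(\RG{T}\CW,A_{s-2}[1])$), or else note that the lemma holds only after fixing a compatible choice. This is the heart of the argument and is not a routine diagram chase. Separately, a small slip: Lemma~\ref{jl8-42} computes $\HOM_V(\CVT,\CV[\ast])$, not $\HOM_V(\CVT,\CW[\ast])$; for the fourth diagram you need the direct computation $\HOM_V(\CVT,\CW[k])\cong\CC$ for $k=1$ and $0$ otherwise.
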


\begin{corollary}\label{jl9-3} 
\begin{enumerate}
\item For $\PR$ with a three points $l,r,m$ and $s\in \ZZ_{\geq 2}$,
there exists a basis $f,g$ of $H^0\HOMP(\oveundA{s},\CW[1])$ such that
$\nu_l(f)=0$ and $\nu_r(g)=0$.
\item For $s\in \ZZ_{\geq 2}$,
there exists a basis $f,g$ of $H^0\HOMV(\oveundP{s},\CW[1])$ such that
$\nu_0(f)=0$, $f|_{W}\neq 0$, $\nu_0(g)\neq 0$, $g|_{W}=0$. 
\end{enumerate}
\end{corollary}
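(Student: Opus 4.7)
Both parts assert the existence of bases for $2$-dimensional Hom-spaces whose dimensions are fixed by Lemma~\ref{jl9-1}, so in each case it suffices to exhibit two linearly independent morphisms annihilated in complementary ways by the relevant pair of probing functors: $(\nu_l,\nu_r)$ for Part~1 and $(\nu_0,(-)|_W)$ for Part~2.

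For Part~1, the first diagram of Lemma~\ref{jl9-1} asserts that precomposition with the triangle morphism $\oveundA{s}\to\ove{A_{s-1}}$ induces an isomorphism
\[
H^0\HOM_{\PR}(\ove{A_{s-1}},\CW[1])\simto H^0\HOM_{\PR}(\oveundA{s},\CW[1]).
\]
I would apply Corollary~\ref{jl8-7} with $s$ replaced by $s-1$ and $s'=1$ to obtain a basis $(f_0,g_0)$ of the source satisfying $\nu_l(f_0)=0$ and $\nu_r(g_0)=0$, and then define $f:=f_0\circ(\oveundA{s}\to\ove{A_{s-1}})$ and $g:=g_0\circ(\oveundA{s}\to\ove{A_{s-1}})$. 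Functoriality of the specialization functors $\nu_l$ and $\nu_r$ immediately yields $\nu_l(f)=0$ and $\nu_r(g)=0$.

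For Part~2, the strategy is formally parallel, with $(\nu_l,\nu_r)$ replaced by $(\nu_0,(-)|_W)$ and the role of Corollary~\ref{jl8-7} played by an analogous direct construction on the local side. Using Lemma~\ref{oct31-3} to identify $\nu_0(\oveundP{s})\simeq P_s$, I consider the pair of linear evaluations
\[
\nu_0\colon H^0\HOM_V(\oveundP{s},\CW[1])\to H^0\HOM_{T_0V}(P_s,\CW[1]),\qquad (-)|_W\colon H^0\HOM_V(\oveundP{s},\CW[1])\to H^0\HOM_W(\und{\ove{\calL_s}},\bC_W[1])
\]
on the $2$-dimensional source. The plan is to verify that each functional is nonzero with a one-dimensional kernel, and that the two kernels are distinct, so that picking $f\in\ker\nu_0$ and $g\in\ker(-)|_W$ yields the basis. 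The short exact sequence from the second diagram of Lemma~\ref{jl9-1}, namely $0\to L''_1\to H^0\HOM_V(\oveundP{s},\CW[1])\to L'_1\to 0$ with $L'_1\subset H^0\HOM_V(\CVT,\CW[1])$ and $L''_1\subset H^0\HOM_V(\undA{s-1},\CW[1])$ both one-dimensional, produces two distinguished classes of morphisms: lifts along $\CVT\to\oveundP{s}$ of a generator $\alpha$ of $L'_1$, and pushforwards $\beta\circ(\oveundP{s}\to\undA{s-1})$ of a generator $\beta$ of $L''_1$. A direct long-exact-sequence analysis using the triangles $\CVT\to\CV\to\bC_m\to\CVT[1]$ and $\CW\to\CV\to\bC_0\to\CW[1]$ shows that lifts of $\alpha$ have $\nu_0\ne 0$, while the pushforwards of $\beta$ have nonzero $W$-restriction coming from the composition of the nontrivial local-system map $\und{\ove{\calL_s}}\to\und{\calL_{s-1}}$ with a nonzero extension $\beta|_W\colon\und{\calL_{s-1}}\to\bC_W[1]$. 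A compatible scalar adjustment then produces $g$ with $g|_W=0$ and $\nu_0(g)\ne 0$, and $f$ with $\nu_0(f)=0$ and $f|_W\ne 0$.

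The main delicate step is the last compatibility check—verifying that $\nu_0$ and $(-)|_W$ descend to linearly independent functionals on $H^0\HOM_V(\oveundP{s},\CW[1])$, so that the simultaneous scalar adjustment is possible. I expect this to follow from the dimension counts in Lemma~\ref{nov7-12} and the fact that the lift of $\alpha$ can be chosen with $(-)|_W=0$ (because $\alpha$ factors through a morphism supported at the skyscraper $\bC_0$), while the pushforward of $\beta$ detects a nontrivial extension class on $W$; combining these two observations makes the pair $(\nu_0,(-)|_W)$ injective, which is the full-rank condition needed to perform the scalar adjustments.
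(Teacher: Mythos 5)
Both parts of your proposal contain errors, one of which is fatal.

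\textbf{Part 1.} You invoke ``the triangle morphism $\oveundA{s}\to\ove{A_{s-1}}$,'' but no such morphism exists: the distinguished triangle from Definition-Lemma~\ref{oct31-4} reads $\ove{A_{s-1}}\to\oveundA{s}\to\RG{T}\CW\to\ove{A_{s-1}}[1]$. The isomorphism encoded in the first diagram of Lemma~\ref{jl9-1} is restriction along $\ove{A_{s-1}}\to\oveundA{s}$, i.e.\ it goes $H^0\HOMP(\oveundA{s},\CW[1])\simto H^0\HOMP(\ove{A_{s-1}},\CW[1])$; to pass from $f_0,g_0$ to $f,g$ you therefore must take the unique \emph{lifts}, not composites. ``Functoriality of the specialization functors'' then gives only $\nu_l(f)\circ\nu_l(\ove{A_{s-1}}\to\oveundA{s})=\nu_l(f_0)=0$, and you still need to check that the induced map on the specialized Hom-groups is injective before you may conclude $\nu_l(f)=0$. (It is, but this is a verification, not an immediate consequence.) The paper avoids the issue by using the \emph{second} diagram of Lemma~\ref{jl9-1}, which identifies $H^0\HOMP(\oveundA{s},\CW[1])$ with $H^0\HOMP(\CWT,\CW[1])$, and applies Lemma~\ref{jl8-1} to the latter.

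\textbf{Part 2.} The claim that ``pushforwards of $\beta$ have nonzero $W$-restriction'' is not only unjustified, it is the opposite of the truth and inverts the key point of the argument. For $g:=(\undA{s-1}\to\CW[1])\circ(\oveundP{s}\to\undA{s-1})$ one has $g|_W=0$: by the second diagram of Lemma~\ref{jl9-1}, the analogous composition $(\undA{s-1}\to\CW[1])\circ(\oveundA{s}\to\undA{s-1})$ is already zero, and $g|_W$ agrees with the restriction of that composition because $\oveundP{s}|_W\cong\oveundA{s}|_W$. Your own closing sentence, which asks for ``$g$ with $g|_W=0$,'' contradicts the claim a few lines earlier. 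Once one assumes $g|_W\ne 0$, the scalar adjustment you propose cannot produce an $f$ with both $\nu_0(f)=0$ and $f|_W\ne 0$. The correct argument, which your outline almost reaches, is: read off $\nu_0(g)\ne 0$ from the diagram for $\HOMV(\nu_0(\oveundP{s}),\CW[1])$ exhibited in the paper's proof (which shows this Hom-space is one-dimensional and supported on compositions through $A_{s-1}$); take $h$ to be any lift of $\alpha\colon\CVT\to\CW[1]$, so that $h|_W\ne 0$; and set $f:=h-cg$ for the unique $c$ with $\nu_0(f)=0$, so that $f|_W=h|_W\ne 0$. The ``long-exact-sequence analysis'' you gesture at and the parenthetical claim that ``$\alpha$ factors through a morphism supported at the skyscraper $\bC_0$'' are not carried out, and the latter is likewise inverted: it is $g$, not $h$, whose restriction to $W$ vanishes.
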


\begin{proof}
By Lemma~\ref{jl9-1},
to give $\oveundA{s}\to \CW[1]$ is equivalent to giving $\CWT\to \CW$.
Then, the first assertion follows from Lemma~\ref{jl8-1}.

For the second assertion,
we have the diagram:
\[\xyhidaridt{\CV}{\nu_0(\oveundP{s})}{A_{s-1}}{\CW.}{\emptyset}{0,1}\]
Then, the specialization at $0$ of the morphism $g:=(\undA{s-1}\to \CW[1])\circ (\oveundP{s}\to \undA{s-1})$, which is non-zero by the fourth diagram in Lemma~\ref{jl9-1}, is not zero.
Moreover, $g|_{W}=0$ by the second diagram in Lemma~\ref{jl9-1}.
Then, we take $h$ which is a lift of $\CVT\to \CW[1]$, i.e. $(\oveundP{s}\to \CW[1])\circ (\CVT\to \oveundP{s})=(\CVT\to \CW[1])$.
We can take the constant $c\in \CC$ so that $\nu_0(h-cg)=0$,
and hence $f:=h-cg$ has the desired properties.
\end{proof}

\begin{lemma}\label{jl9-11}
    For $\scF\in \PervVzu$,
    any morphism $\oveundP{s}\to \scF[-1]$ is zero.
\end{lemma}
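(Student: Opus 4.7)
The plan is to apply $H^0\HOM_V(-,\scF[-1])$ to the defining distinguished triangle
\[\ove{P_{s-1}}\to \ove{\und{P_s}}\to \RG{T}\CW\to \ove{P_{s-1}}[1]\]
and analyze the resulting long exact sequence through a case analysis on $\scF$. By Definition--Lemma~\ref{j24-2-2}, it suffices to treat the cases $\scF[-1]\in\{\CC_0[-1],\,A_{s'},\,P_{s'}\}$ for $s'\geq 1$.

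The case $\scF[-1]=\CC_0[-1]$ is a stalk computation at $0$. The triangle $\CC_m[-1]\to \ove{P_{s-1}}\to P_{s-1}\to \CC_m$ (item \ref{s10-8} of Lemma~\ref{jan24-1}) combined with $(P_{s-1})_0\simeq \CC$ in degree $0$ (item \ref{nov6-8} of Lemma~\ref{dec4-5}) gives $(\ove{P_{s-1}})_0\simeq \CC$ concentrated in degree $0$. From the triangle $\CW\to \RG{T}\CW\to \RG{m}\CW[1]$, whose outer terms both have vanishing stalk at $0$, we obtain $(\RG{T}\CW)_0=0$. Plugging into the defining triangle yields $(\ove{\und{P_s}})_0\simeq \CC$ concentrated in degree $0$, whence $H^0\HOM_V(\ove{\und{P_s}},\CC_0[-1])=0$, since there is no non-zero morphism from a degree-$0$ complex of vector spaces to $\CC[-1]$.

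For the substantive cases $\scF[-1]=A_{s'}$ and $\scF[-1]=P_{s'}$, I would combine two ingredients. First, Lemma~\ref{jl8-22} says that any morphism $\ove{P_s}\to \scF[-1]$ is detected by its specialization $\nu_0$ at $0$. Second, $\nu_0(\ove{\und{P_s}})\simeq P_s$ by Lemma~\ref{oct31-3}. Using these, together with the distinguished triangle $\CVT\to \ove{\und{P_s}}\to \undA{s-1}\to \CVT[1]$ extracted from the fourth diagram of Lemma~\ref{jl9-1}, one reduces the vanishing of $H^0\HOM_V(\ove{\und{P_s}},A_{s'})$ to showing that $\nu_0$ of any hypothetical such morphism vanishes; the full vanishing then follows. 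The case $\scF[-1]=P_{s'}$ reduces to the $A_{s'}$ case via the triangle $A_{s'}\to P_{s'}\to \CC_0\to A_{s'}[1]$ combined with the already-settled $\CC_0[-1]$ case.

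The main obstacle is the $A_{s'}$ case: proving that $\nu_0 f=0$ for every $f\colon \ove{\und{P_s}}\to A_{s'}$. Since non-zero morphisms $P_s\to A_{s'}$ exist in general (item \ref{nov6-9} of Lemma~\ref{dec4-5}), this vanishing must be extracted from the additional structure carried by the $\RG{T}\CW$-factor in the defining triangle of $\ove{\und{P_s}}$, which encodes information at the point $m$ absent in the data of $P_s$ alone. I expect to establish this by comparing the specialization of the defining triangle at $0$ with the intrinsic triangle $P_{s-1}\to P_s\to \CC_0\to P_{s-1}[1]$ of $P_s$, combined with a direct analysis of $H^0\HOM_V(\RG{T}\CW,A_{s'})$ via the triangle $\CW\to \RG{T}\CW\to \RG{m}\CW[1]$ and the smoothness of $A_{s'}$ at the point $m$.
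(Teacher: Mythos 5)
Your reduction to the three building blocks $\CC_0$, $A_{s'}[1]$, $P_{s'}[1]$ via Definition--Lemma~\ref{j24-2-2} is legitimate and in fact slightly finer than the paper's, which uses the five-type decomposition of Remark~\ref{nov5-3} together with Lemma~\ref{jl9-1}. Your $\CC_0$ case is complete: the stalk computation giving $(\oveundP{s})_0\simeq\CC$ concentrated in degree $0$ is correct.

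The gap is that the $A_{s'}$ case --- the load-bearing one, since both the $P_{s'}$ case and the general reduction hang off it --- is left as a plan rather than a proof, as you yourself flag. The gap is twofold. First, you propose to deduce $f=0$ from $\nu_0 f=0$, but $\nu_0$-faithfulness is not available for $\oveundP{s}$: Lemma~\ref{jl8-22} is stated only for $\ove{P_s}$ and $\ove{B_s}$, and transporting it across the defining triangle would require controlling the restriction $\HOM(\oveundP{s},A_{s'})\to\HOM(\ove{P_{s-1}},A_{s'})$, which is not done. Second, and more fundamentally, the vanishing $\nu_0 f=0$ is itself not established, and it cannot be formal since $\nu_0(\oveundP{s})\simeq P_s$ does admit non-zero morphisms to $A_{s'}$ in degree $0$, as you observe. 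Concretely, the long exact sequence for $\CVT\to\oveundP{s}\to\undA{s-1}\to\CVT[1]$ against $A_{s'}$ reads
\[
H^0\HOM_V(\undA{s-1},A_{s'})\to H^0\HOM_V(\oveundP{s},A_{s'})\to H^0\HOM_V(\CVT,A_{s'})\xrightarrow{\;\delta\;} H^1\HOM_V(\undA{s-1},A_{s'}),
\]
and the left term vanishes by \ref{nov6-13} of Lemma~\ref{nov7-12}, so everything reduces to injectivity of the connecting map $\delta$ on a one-dimensional source --- which is exactly where the specific choice of $\undA{s-1}\to\CVT[1]$ (equivalently, of $\RG{T}\CW\to\ove{P_{s-1}}[1]$) built into $\oveundP{s}$ must enter. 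That computation is not supplied. By contrast, the paper's citation of Lemma~\ref{jl9-1}, which pins down $\HOM(\oveundP{s},\CW[k])=0$ for $k\neq 1$, is designed to feed directly into a devissage of each building block and bypasses the specialization argument altogether; to close the gap in your own approach, the injectivity of $\delta$ above (or an equivalent statement) would need to be established.
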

\begin{proof}
According to Lemma~\ref{jl9-1},
the assertion is true if $\scF$ is $\CC_0$, $A_s[1]$, $B_s[1]$,
$P_s[1]$ or $Q_s[1]$.
Hence, we get the conclusion by \ref{nov5-2} of Remark~\ref{nov5-3} .
\end{proof}

\begin{lemma}\label{jl9-2}
 For $s,s'\in \ZZ_{\geq 1}$
and a morphism $f\colon A_s\to A_{s'}[1]$ (resp. $g\colon P_s\to A_{s'}[1]$),
there exists a morphism $\wt{f}\colon \oveundA{s}\to A_{s'}[1]$ (resp. $\wt{g}\colon \oveundP{s}\to A_{s'}[1]$) such that $\nu_0(\wt{f})=f$ (resp. $\nu_0(\wt{g})=g$).
\end{lemma}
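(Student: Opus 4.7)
The plan is to produce, for each $s\geq 1$, morphisms $p_A\colon\oveundA{s}\to A_s$ and $p_P\colon\oveundP{s}\to P_s$ whose specializations at the origin are the identity, and then take $\wt f:=f\circ p_A$ and $\wt g:=g\circ p_P$. Since $A_{s'}[1]$ is monodromic and canonically identified with its specialization on $T_0V\simeq V$, this immediately gives $\nu_0(\wt f)=\nu_0(f)\circ\nu_0(p_A)=f$ and analogously $\nu_0(\wt g)=g$, exactly as in the short proof of Lemma~\ref{jl3-14} where the morphisms $\ove{P_s}\to P_s$ and $\oveA{s}\to A_s$ play the corresponding role.

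To build $p_A$, I compare the defining triangle of $\oveundA{s}$ from Definition-Lemma~\ref{oct31-4}~(\ref{s10-6}),
\[
\ovA{s-1}\to\oveundA{s}\to\RG{T}\CW\xrightarrow{\alpha}\ovA{s-1}[1],
\]
with the triangle~(\ref{jan27-101}) for $A_s$,
\[
A_{s-1}\to A_s\to\CW\xrightarrow{\delta}A_{s-1}[1].
\]
The canonical morphism $\ovA{s-1}\to A_{s-1}$ from Lemma~\ref{jan24-1}~(\ref{s10-8}) and the inclusion-sections morphism $\RG{T}\CW\to\CW$ will fit into a morphism of triangles provided the corresponding square of connecting maps commutes. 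The key point is that the defining property of $\alpha$ in Definition-Lemma~\ref{oct31-4}~(\ref{s10-6}) factors $\alpha$ through the canonical morphism $\RG{T}\CW\to\CW[1]$ via $\CW[1]\to\ovA{s-1}[1]$; composing with $\ovA{s-1}[1]\to A_{s-1}[1]$ and chasing the diagram then identifies the composite with $\delta$ pulled back along $\RG{T}\CW\to\CW$, so the square commutes. The axioms of triangulated categories then produce $p_A$. Applying $\nu_0$ (which sends both outer vertical maps to the identity, since $\nu_0(\ovA{s-1}\to A_{s-1})$ and $\nu_0(\RG{T}\CW\to\CW)$ are identities on $A_{s-1}$ and $\CW$ respectively), the five-lemma for triangulated categories forces $\nu_0(p_A)$ to be an automorphism of $A_s$; rescaling by a nonzero constant yields $\nu_0(p_A)=\id$.

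The construction of $p_P$ is entirely parallel, using the defining triangle of $\oveundP{s}$ from Definition-Lemma~\ref{oct31-4}~(7) and the triangle $P_{s-1}\to P_s\to\CW\to P_{s-1}[1]$ for $P_s$ obtained from the octahedral axiom applied to $\bC_V\to P_{s-1}\to A_{s-2}$ and $\bC_V\to P_s\to A_{s-1}$ together with~(\ref{jan27-101}). The canonical morphism $\ove{P_{s-1}}\to P_{s-1}$ from Lemma~\ref{jan24-1}~(\ref{s10-8}) and the morphism $\RG{T}\CW\to\CW$ again extend to a morphism of triangles producing $p_P$, with $\nu_0(p_P)=\id$ after rescaling.

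The main obstacle is the commutativity of the square of connecting morphisms in the construction of $p_A$ (and analogously $p_P$). The morphism $\alpha$ is only specified up to the kernel of the projection $\ovA{s-1}[1]\to\CW[1]$, which is non-trivial, so some care is required to verify that our choice of $\ovA{s-1}\to A_{s-1}$ really interchanges the two connecting morphisms and not merely the ones projected to $\CW[1]$. In practice this reduces to tracing through the inductive description of $\ovA{\bullet}$ and computing the relevant Ext groups, very much in the spirit of the extension-lifting arguments in Lemma~\ref{jl8-7}, Lemma~\ref{nov8-3}, and Lemma~\ref{jl3-11}, which already handle the analogous problem for $\ove{A_s}$ and $\ove{P_s}$ in the absence of the $\RG{T}\CW$ summand.
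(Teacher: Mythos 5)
Your route is genuinely different from the paper's, and it has two real gaps that the paper's argument avoids.

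The paper's proof is a one-step lifting: compose $f$ with $\oveA{s}\to A_s$ to get $f'\colon\oveA{s}\to A_{s'}[1]$, then use the distinguished triangle $\RG{m}\CW\to\oveA{s}\to\oveundA{s}$ and the vanishing $\Hom(\RG{m}\CW,A_{s'}[1])=0$ to extend $f'$ to $\wt f\colon\oveundA{s}\to A_{s'}[1]$. No morphism $\oveundA{s}\to A_s$ is ever constructed, and no compatibility of connecting maps has to be checked. You instead try to build $p_A\colon\oveundA{s}\to A_s$ as a morphism of triangles between $\ovA{s-1}\to\oveundA{s}\to\RG{T}\CW$ and $A_{s-1}\to A_s\to\CW$, and then set $\wt f:=f\circ p_A$.

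The first gap is the commutativity of the square of connecting maps, which you correctly identify as the main obstacle but do not resolve. Worse, the sketched reason you give for it misreads the definition: Definition-Lemma~\ref{oct31-4}~(\ref{s10-6}) says the composite $\RG{T}\CW\xrightarrow{\alpha}\ovA{s-1}[1]\to\CW[1]$ equals the chosen morphism $\RG{T}\CW\to\CW[1]$; it does \emph{not} say $\alpha$ factors through $\CW[1]\to\ovA{s-1}[1]$ (and there is no such morphism in play). Because $\alpha$ is only pinned down modulo $\ker\bigl(\Hom(\RG{T}\CW,\ovA{s-1}[1])\to\Hom(\RG{T}\CW,\CW[1])\bigr)$, matching it against $\delta\circ(\RG{T}\CW\to\CW)$ is not a diagram chase but a nontrivial identity in $\Hom(\RG{T}\CW,A_{s-1}[1])\cong\CC^{s-1}$; one would have to actually prove it.

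The second gap is the claim that $\nu_0(p_A)=\id$ after rescaling. The five lemma only gives that $\nu_0(p_A)$ is an automorphism of $A_s$ compatible with the identities on $A_{s-1}$ and $\CW$, so $\nu_0(p_A)-\id$ factors through $A_s\to\CW\to A_{s-1}\to A_s$, i.e.\ $\nu_0(p_A)=\id+c\,N^{s-1}$ for some $c\in\CC$, where $N$ is the nilpotent monodromy. Multiplying by a nonzero scalar does not remove the unipotent part; and since $\Hom(\RG{T}\CW,A_{s-1})=0$ (the diagram in Lemma~\ref{nov7-12}~(\ref{jl2-4-1}) has only degree-$1$ Hom), there is no freedom left in $p_A$ to correct it. One can still recover the statement by composing $p_A$ with the explicit inverse $\id-c\,N^{s-1}$ of this automorphism of $A_s$, but that is a different (and necessary) repair, not a rescaling. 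In summary, your construction is salvageable but substantially more delicate than the paper's short vanishing argument, and as written it does not establish either the existence of the morphism of triangles or the required normalization of $\nu_0(p_A)$.
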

\begin{proof}
    Consider $f':=f\circ (\oveA{s}\to A_s)$
    and the distinguished triangle $\RG{m}\CW\to \oveA{s}\to \oveundA{s}\to \RG{m}\CW[1]$.
Then, since there is no non-zero morphism $\RG{m}\CW\to A_{s'}[1]$,
$f'$ can be lifted to $\oveundA{s}\to A_{s'}[1]$, which has the desired property.
The proof for $g$ is similar.
\end{proof}

Corollary~\ref{jl9-5} and Corollary~\ref{lj9-4} below can be shown in the same way as Corollary~\ref{jl8-7} and Corollary~\ref{jl3-15} by Lemma~\ref{jl9-2}.

\begin{corollary}\label{jl9-5}
        For $\PR$ with three points $l,r,m$,
there exists a basis $f_1,\dots,f_{s'},g_1,\dots,g_{s'}$ of $H^0\HOMP(\oveundA{s},A_{s'}[1])$ such that
$\nu_l(f_i)=0$ and $\nu_r(g_i)=0$ ($1\leq i\leq s'$).
\end{corollary}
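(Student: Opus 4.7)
The plan is to mimic the proof of Corollary~\ref{jl8-7} by replacing $\oveA{s}$ with $\oveundA{s}$ throughout, using Corollary~\ref{jl9-3} in place of Lemma~\ref{jl8-1} and Lemma~\ref{jl9-2} in place of \ref{nov7-10} of Lemma~\ref{jan24-1}. The proof will proceed by induction on $s'$.

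For the base case $s'=1$, the first statement of Corollary~\ref{jl9-3} immediately produces a basis $f_1,g_1$ of $H^0\HOMP(\oveundA{s},\CW[1])$ with $\nu_l(f_1)=0$ and $\nu_r(g_1)=0$, which is exactly what is needed.

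For the inductive step, the key preliminary task is to establish the analog of Lemma~\ref{nov8-3} for $\oveundA{s}$, namely the short exact sequence
\[0 \to H^0\HOMV(\oveundA{s},A_{s'-1}[1])_0 \to H^0\HOMV(\oveundA{s},A_{s'}[1])_0 \to H^0\HOMV(\oveundA{s},\CW[1])_0 \to 0.\]
The ambient exact sequence (without the subscript $0$) follows from the distinguished triangle $A_{s'-1}\to A_{s'}\to \CW\to A_{s'-1}[1]$ together with a dimension count obtained from the defining triangle $\oveA{s-1}\to \oveundA{s}\to \RG{T}\CW \to \oveA{s-1}[1]$ (reducing to the known bounds for $\oveA{s-1}$ from \ref{nov7-10} of Lemma~\ref{jan24-1} and for $\RG{T}\CW$ from \ref{jl2-5-2}--\ref{jl2-4-1} of Lemma~\ref{nov7-12}). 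The specialization-at-$0$ version then follows by the same diagram-chase as in Lemma~\ref{nov8-3}, where the key lifting step — producing a morphism $\oveundA{s}\to A_{s'-1}[1]$ whose specialization at $0$ is a prescribed morphism $A_s\to A_{s'-1}[1]$ — is exactly the content of Lemma~\ref{jl9-2}.

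Granting this exact sequence, the induction runs identically to the proof of Corollary~\ref{jl8-7}. Assume a basis $f_1,\dots,f_{s'-1},g_1,\dots,g_{s'-1}$ of $H^0\HOMP(\oveundA{s},A_{s'-1}[1])$ with the asserted vanishing properties. Composition with $A_{s'-1}[1]\to A_{s'}[1]$ keeps these linearly independent (by the injectivity in the exact sequence above, applied over $\PR$), and the vanishing properties under $\nu_l$ and $\nu_r$ are preserved. By Corollary~\ref{jl9-3}, pick a basis $f_{s'},g_{s'}$ of $H^0\HOMP(\oveundA{s},\CW[1])$ with $\nu_l(f_{s'})=0$ and $\nu_r(g_{s'})=0$. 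Using the surjectivity in the exact sequence, lift them to $\wt{f_{s'}},\wt{g_{s'}}\colon \oveundA{s}\to A_{s'}[1]$ with $(A_{s'}[1]\to \CW[1])\circ\wt{f_{s'}}=f_{s'}$, $(A_{s'}[1]\to \CW[1])\circ\wt{g_{s'}}=g_{s'}$, and $\nu_0(\wt{f_{s'}})=\nu_0(\wt{g_{s'}})=0$. Since the $\nu_0$-vanishing is stronger than the $\nu_l$- or $\nu_r$-vanishing near the respective marked points, adjusting via Lemma~\ref{nov12-2} (or the equivalent diagrammatic analysis on $\PR$) yields $\nu_l(\wt{f_{s'}})=0$ and $\nu_r(\wt{g_{s'}})=0$. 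Augmenting the induction basis with $\wt{f_{s'}},\wt{g_{s'}}$ gives the desired basis.

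The main obstacle is the dimension count and surjectivity needed to establish the exact sequence analog of Lemma~\ref{nov8-3} for $\oveundA{s}$; once that is in place, the induction is formal. The technical heart is ensuring that the extra factor $\RG{T}\CW$ appearing in the defining triangle of $\oveundA{s}$ does not obstruct lifting, which is precisely what Lemma~\ref{jl9-2} provides.
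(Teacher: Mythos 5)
Your proposal is correct and follows the same approach the paper intends: the paper's own proof is a one-liner (``shown in the same way as Corollary~\ref{jl8-7}\ldots by Lemma~\ref{jl9-2}''), and you have unwound exactly what that entails: the analogue of Lemma~\ref{nov8-3} for $\oveundA{s}$, whose crucial lifting step is supplied by Lemma~\ref{jl9-2} in place of the pullback along $\oveA{s}\to A_s$ (which is not available for $\oveundA{s}$), and then the induction from Corollary~\ref{jl8-7} carries over verbatim.

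One remark on the closing sentences of your inductive step. The phrasing ``since the $\nu_0$-vanishing is stronger than the $\nu_l$- or $\nu_r$-vanishing\ldots adjusting via Lemma~\ref{nov12-2}'' is conceptually off: there is no global distinguished origin on $\PR$, and no adjustment step is needed. Rather, the $\oveundA{s}$-analogue of Lemma~\ref{nov8-3} is applied twice: once on the affine chart $V_l$ with origin identified as $l$ (so that $\nu_0=\nu_l$ there), which lifts $f_{s'}$ to $\wt{f_{s'}}$ with $\nu_l(\wt{f_{s'}})=0$; and once on $V_r$ with origin $r$ (so $\nu_0=\nu_r$), lifting $g_{s'}$. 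Lemma~\ref{nov12-2} plays no role here. Since this misstatement does not affect the actual argument and the rest (base case via Corollary~\ref{jl9-3}, exact sequence via dimension counts from the defining triangle of $\oveundA{s}$, lifting via Lemma~\ref{jl9-2}) is sound, I count the proof as essentially correct.
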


\begin{corollary}\label{lj9-4}
     For $s'<s$ and $f\colon \oveundP{s}\to \CW[1]$ with $\nu_0(f)=0$,
    there exists a morphism $g\colon \oveundP{s}\to A_{s'}[1]$ such that
    $\nu_0(g)=0$ and $(A_{s'}[1]\to \CW[1])\circ g=f$.
    \end{corollary}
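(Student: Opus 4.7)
The plan is to adapt the proof of Corollary~\ref{jl3-15} line by line, substituting $\oveundP{s}$ for $\ove{P_s}$ throughout and invoking Lemma~\ref{jl9-2} in place of Lemma~\ref{jl3-14}.

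First, I would produce a lift $h_1\colon \oveundP{s}\to A_{s'}[1]$ with $(A_{s'}[1]\to \CW[1])\circ h_1 = f$. Applying $\HOMV(\oveundP{s},-)$ to the distinguished triangle $A_{s'-1}\to A_{s'}\to \CW\to A_{s'-1}[1]$, the obstruction to such a lift lies in $H^0\HOMV(\oveundP{s},A_{s'-1}[2])$, so it suffices to show this group vanishes. Invoking the distinguished triangle $\ove{P_{s-1}}\to \oveundP{s}\to \RG{T}\CW$ from Lemma~\ref{jl9-1}, the vanishing reduces to $H^0\HOMV(\ove{P_{s-1}},A_{s'-1}[2])=0$ together with $H^0\HOMV(\RG{T}\CW,A_{s'-1}[2])=0$. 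The former follows from the degree bounds of \ref{nov7-10} in Lemma~\ref{jan24-1} (applied via the triangle (\ref{jl2-10}) to pass from $\ove{A_{s-1}}$ to $\ove{P_{s-1}}$), while the latter is immediate from (\ref{jl2-4-1}) in Lemma~\ref{nov7-12}, which says that morphisms $\RG{T}\CW\to A_{s'-1}[k]$ are concentrated in degree $k=1$.

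Next, the hypothesis $\nu_0(f)=0$ implies $(A_{s'}[1]\to \CW[1])\circ \nu_0(h_1)=0$, so under the identification $\nu_0(\oveundP{s})\simeq P_s$ of Lemma~\ref{oct31-3}, the morphism $\nu_0(h_1)$ factors through some $h_2\colon P_s\to A_{s'-1}[1]$. Lemma~\ref{jl9-2} then furnishes $h_3\colon \oveundP{s}\to A_{s'-1}[1]$ with $\nu_0(h_3)=h_2$.

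Finally, I would set $g:=h_1-(A_{s'-1}[1]\to A_{s'}[1])\circ h_3$. The composition $A_{s'-1}[1]\to A_{s'}[1]\to \CW[1]$ is zero by the defining distinguished triangle, whence $(A_{s'}[1]\to \CW[1])\circ g = f$; and by construction $\nu_0(g)=\nu_0(h_1)-(A_{s'-1}[1]\to A_{s'}[1])\circ h_2 = 0$. The only step that demands genuine checking is the initial vanishing of $H^0\HOMV(\oveundP{s},A_{s'-1}[2])$; once this is secured, the remainder transcribes the proof of Corollary~\ref{jl3-15} verbatim.
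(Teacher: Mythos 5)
Your proposal is correct and follows exactly the approach the paper takes: the paper dispatches this corollary with a one-line remark that it "can be shown in the same way as Corollary~\ref{jl3-15} by Lemma~\ref{jl9-2}," and your argument is precisely that adaptation, with the added service of verifying the obstruction group $H^0\HOMV(\oveundP{s},A_{s'-1}[2])$ vanishes via the triangle $\ove{P_{s-1}}\to \oveundP{s}\to \RG{T}\CW$.
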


\begin{corollary}
         For $s'<s$,
the restriction map is a isomorphism:
\[H^0\HOMV(\oveundP{s},A_{s'}[1])_0\simto H^0\HOMV(\oveundA{s},A_{s'}[1])_0.\]
\end{corollary}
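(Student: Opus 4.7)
The plan is to adapt the proof strategy of Lemma~\ref{jl4-1} and Lemma~\ref{nov8-5} to the $\oveund{}$-setting, using the analogous ingredients already established: Corollary~\ref{jl9-5} (basis of $H^0\HOMP(\oveundA{s},A_{s'}[1])$ with controlled specializations), Corollary~\ref{jl9-3} (basis of $H^0\HOMV(\oveundP{s},\CW[1])$), Corollary~\ref{lj9-4} (lifting $\oveundP{s}\to \CW[1]$ with $\nu_0=0$ to $\oveundP{s}\to A_{s'}[1]$ with $\nu_0=0$), and Lemma~\ref{jl9-2} (lifting from specializations). First, I would note that the restriction map makes sense because the obvious morphism $\oveundA{s}\to\oveundP{s}$ fits into a distinguished triangle $\oveundA{s}\to\oveundP{s}\to \CC_0\to \oveundA{s}[1]$, obtained by applying the octahedral axiom to the composition $\oveA{s-1}\to \oveP{s-1}\to \oveundP{s}$ together with the defining triangles $\oveA{s-1}\to\oveundA{s}\to \RG{T}\CW$ and $\oveP{s-1}\to \oveundP{s}\to \RG{T}\CW$ (using that the cone of $\oveA{s-1}\to\oveP{s-1}$ is $\CC_0$ by \ref{s10-10} of Lemma~\ref{jan24-1}).

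Next, I would establish the $\oveund{}$-analogue of Lemma~\ref{nov8-3}, namely the short exact sequence
\[0\to H^0\HOMV(\oveundA{s},A_{s'-1}[1])_0\to H^0\HOMV(\oveundA{s},A_{s'}[1])_0\to H^0\HOMV(\oveundA{s},\CW[1])_0\to 0.\]
The injectivity follows from Corollary~\ref{jl9-5}, and the exactness in the middle/right follows by repeating the same cocycle-correcting argument as in Lemma~\ref{nov8-3} (given $f\colon\oveundA{s}\to A_{s'-1}[1]$ with $\nu_0((A_{s'-1}[1]\to A_{s'}[1])\circ f)=0$, lift $\nu_0(f)$ through $\CW\to A_{s'-1}[1]$ via the distinguished triangle $A_{s'-1}\to A_{s'}\to \CW$, pull back to $\oveundA{s}$ via Lemma~\ref{jl9-2}, subtract, and use Corollary~\ref{jl9-5} to conclude). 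Combined with Corollary~\ref{jl9-3}, this gives $\dim H^0\HOMV(\oveundA{s},A_{s'}[1])_0=s'$; similarly $\dim H^0\HOMV(\oveundP{s},A_{s'}[1])_0=s'$.

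I would then proceed by induction on $s'$. The base case $s'=1$ amounts to showing that the restriction map $H^0\HOMV(\oveundP{s},\CW[1])_0\to H^0\HOMV(\oveundA{s},\CW[1])_0$ is nonzero: the generator $f$ from Corollary~\ref{jl9-3} satisfies $f|_W\neq 0$, and its pullback along $\oveundA{s}\to\oveundP{s}$ has vanishing $\nu_0$ and nonzero restriction to $W$, hence is the nonzero generator of the target (which is $1$-dimensional by the exact sequence above). For the inductive step, given $f\in H^0\HOMV(\oveundA{s},A_{s'}[1])_0$, project to $H^0\HOMV(\oveundA{s},\CW[1])_0$, lift via the base case to $g\in H^0\HOMV(\oveundP{s},\CW[1])_0$, and then apply Corollary~\ref{lj9-4} to lift $g$ to $\wt g\in H^0\HOMV(\oveundP{s},A_{s'}[1])_0$. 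The difference $f-\wt g\circ(\oveundA{s}\to\oveundP{s})$ has zero projection to $\CW[1]$, hence factors through some $h\colon\oveundA{s}\to A_{s'-1}[1]$, and induction provides a lift to $\oveundP{s}$. This yields surjectivity, and combined with the dimension equality we obtain the desired isomorphism.

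The main technical obstacle will be carefully verifying the $\oveund{}$-analogue of Lemma~\ref{nov8-3}; in particular, ensuring that the surjectivity of the projection $H^0\HOMV(\oveundA{s},A_{s'}[1])_0\to H^0\HOMV(\oveundA{s},\CW[1])_0$ survives the restriction to the subspaces with vanishing $\nu_0$. The rest is essentially formal, as the induction machinery and the lifting lemmas have been set up specifically to handle this construction.
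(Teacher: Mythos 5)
Your proposal is correct and reconstructs the argument along exactly the lines the paper intends: the paper leaves this corollary without an explicit proof, but the surrounding corollaries (\ref{jl9-5}, \ref{lj9-4}) are set up precisely so that the proof of Lemma~\ref{jl4-1} transfers verbatim, which is what you do (establish the triangle $\oveundA{s}\to\oveundP{s}\to\CC_0$, verify the $\oveund{}$-analogue of the short exact sequence of Lemma~\ref{nov8-3}, do the dimension count, and run the same induction with Corollary~\ref{lj9-4} in place of Corollary~\ref{jl3-15}). One small imprecision: in your base case you say the target $H^0\HOMV(\oveundA{s},\CW[1])_0$ is $1$-dimensional ``by the exact sequence above,'' but for $s'=1$ that sequence is trivial (since $A_0=0$, $A_1=\CW$); the $1$-dimensionality actually comes from Lemma~\ref{jl9-1} (the $2$-dimensional $H^0\HOMV(\oveundA{s},\CW[1])$ is identified with $H^0\HOMV(\CWT,\CW[1])$) together with Lemma~\ref{jl8-1}, which is exactly what underlies Corollary~\ref{jl9-3}(1) — a source you do cite elsewhere, so this is a phrasing slip rather than a gap.
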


\begin{lemma}\label{jl9-6}
    For $\scF\in \PervVzu$ with the condition $(N_s)$.
    Then,
    the restriction map induces the isomorphism
    \[H^0\HOMV(\oveundP{s},\scF)_0\simto H^0\HOMV(\oveundA{s},\scF)_0.\]
\end{lemma}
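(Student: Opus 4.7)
The plan is to mirror the strategy of Lemma~\ref{jl3-11}, now adapted from $(\ove{A_s}, \ove{P_s})$ to $(\oveundA{s}, \oveundP{s})$. First I would apply Definition-Lemma~\ref{j24-2-2} to an arbitrary $\scF\in \PervVzu$ satisfying $(N_s)$ to obtain a short exact sequence $0\to \scF_1\to \scF\to \scF_2\to 0$ with $\scF_2$ a direct sum of copies of $\CC_0$ and $\scF_1$ a direct sum of objects of the form $A_{s'}[1]$ $(s'<s)$ or $P_{s'}[1]$ $(s'\le s)$. Since there are no non-zero maps from $\oveundA{s}$ or $\oveundP{s}$ to $\scF_2[k]$ (by Lemma~\ref{jl9-1}, both objects are extensions by sheaves which vanish at $m$, so Hom into a skyscraper at $m$ vanishes), and similarly no maps into $\scF_2[k]_0$, a diagram chase reduces the claim to the case $\scF = A_{s'}[1]$ or $\scF = P_{s'}[1]$.

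The case $\scF = A_{s'}[1]$ should be proved by induction on $s'$, exactly paralleling the argument in Lemma~\ref{jl4-1}. For the base case $s'=1$, I would use Corollary~\ref{jl9-3} to identify $H^0\HOMV(\oveundA{s},\CW[1])_0$ and $H^0\HOMV(\oveundP{s},\CW[1])_0$, each of which is $1$-dimensional (spanned by the morphism factoring through $\CWT\to\CW[1]$ with $\nu_0 = 0$); combining this with the triangle $\ove{A_{s-1}}\to \oveundA{s}\to \RG{T}\CW\to $ and the analogous triangle for $\oveundP{s}$, the restriction map on $H^0(\cdot,\CW[1])_0$ is an isomorphism. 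For the inductive step, given $f\in H^0\HOMV(\oveundA{s}, A_{s'}[1])_0$, post-compose with $A_{s'}[1]\to \CW[1]$ and use the base case to lift this to $\tilde{g}\in H^0\HOMV(\oveundP{s},\CW[1])_0$. Then apply Corollary~\ref{lj9-4} to further lift $\tilde g$ to some $\tilde g'\in H^0\HOMV(\oveundP{s},A_{s'}[1])_0$, and close the induction exactly as in Lemma~\ref{jl4-1} by handling the residual map into $A_{s'-1}[1]$ by induction.

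For the case $\scF = P_{s'}[1]$, the argument of Lemma~\ref{nov8-5} transfers verbatim: use the exact sequences
\begin{equation}
0\to H^0\HOMV(\oveundP{s},\CV[1])_0\to H^0\HOMV(\oveundP{s},P_{s'}[1])_0\to H^0\HOMV(\oveundP{s},A_{s'}[1])_0\to 0
\end{equation}
and its analog for $\oveundA{s}$, which follow from the distinguished triangle $\CV\to P_{s'}\to A_{s'-1}[1]\to \CV[1]$ together with the vanishing of Hom from $\oveundA{s}$ and $\oveundP{s}$ into $A_{s'-1}[2]$ by Lemma~\ref{jl9-1}. Then the induction step for $\scF = \CV[1]$ (done by the same lifting procedure, now using that $\nu_0$ of the composition with $A_{s'-1}[1]\to \CV[1]$ vanishes) combines with the already-established isomorphism for $A_{s'}[1]$ to finish by the five-lemma.

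The main obstacle will be verifying that Corollaries~\ref{jl9-3}, \ref{jl9-5}, and \ref{lj9-4} truly provide the right lifts while preserving the condition $\nu_0 = 0$; in particular, the extra constraint that the chosen basis element $f$ in Corollary~\ref{jl9-3} satisfies both $f|_W \neq 0$ and $\nu_0(f)=0$ (and that a \emph{separate} basis element accounts for the part supported at $0$) is what makes the subscript $(-)_0$ well-controlled through the induction. Once this bookkeeping is checked, the proof assembles in exactly the same shape as the proof of Lemma~\ref{jl3-11} in the paragraph after Lemma~\ref{nov8-5}.
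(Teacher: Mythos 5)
Your overall plan is correct and is the route the paper leaves implicit: apply Definition-Lemma~\ref{j24-2-2} to reduce to $\scF=\scF_1$, run the $A_{s'}[1]$ case by the inductive lifting argument (this is the unlabeled Corollary immediately preceding Lemma~\ref{jl9-6}, proved via Corollaries~\ref{jl9-3}, \ref{jl9-5}, \ref{lj9-4}), and treat $P_{s'}[1]$ via the four short exact sequences as in Lemma~\ref{nov8-5}.

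The one thing wrong as written is the justification for discarding $\scF_2$. You say that $\scF_2$ is a skyscraper at $m$ and that $\oveundA{s}$, $\oveundP{s}$ ``vanish at $m$''; neither is true. From Definition-Lemma~\ref{j24-2-2}, $\scF_2$ is a direct sum of copies of $\CC_0$, a skyscraper at the origin $0$, while $\oveundA{s}$, $\oveundP{s}$ are precisely the objects constructed to have non-trivial singular behaviour \emph{at} $m$. The correct step, mirroring the parenthetical remark in the proof of Lemma~\ref{jl3-11}, runs as follows: the defining triangle $\ove{A_{s-1}}\to\oveundA{s}\to\RG{T}\CW$ shows $(\oveundA{s})_0=0$ (both ends have trivial stalk at $0$), so $H^0\HOMV(\oveundA{s},\scF_2[k])=0$ for all $k$; on the other hand $(\oveundP{s})_0\simeq\CC_0$, so there is a non-zero map $\oveundP{s}\to\CC_0$, but its specialization is the non-zero quotient $P_s\to\CC_0$ (since $\nu_0(\oveundP{s})\simeq P_s$ by Lemma~\ref{oct31-3}), so the constraint $\nu_0=0$ kills it and $H^0\HOMV(\oveundP{s},\scF_2[k])_0=0$. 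With this repair, the rest of your proof goes through.
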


In the following corollary, we denote by $\oveundP{s}^{l}$ (resp. $\oveundP{s}^{r}$) the object $\oveundP{s}$ on $V_l$ (resp. $V_r$).  

\begin{corollary}\label{jl9-10}
For $\scF\in \mathrm{Perv}(V_l,0)_{\mathrm{unip}}$ and $\scG\in \mathrm{Perv}(V_r,0)_{\mathrm{unip}}$ with the property $(N_s)$ and an isomorphism $\scF|_{W}\simto \scG|_{W}$,
and a morphism $f\colon \oveundP{s}|_{W}(=\oveundA{s})\to \scF|_{W}$,
there exists a unique pair of morphisms 
$g\colon \oveundP{s}^{l}\to \scF$ and $h\colon \oveundP{s}^{r}\to \scG$
such that
$f=g|_{W}+h|_{W}$, $\nu_l(g)=0$ and $\nu_r(h)=0$.
\end{corollary}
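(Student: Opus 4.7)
The proof I would write closely parallels that of Lemma~\ref{jl18-23}, with $\ove{P_s}$ (resp.\ $\ove{B_s}$) replaced by $\oveundP{s}^{l}$ (resp.\ $\oveundP{s}^{r}$), Corollary~\ref{jl8-7} replaced by Corollary~\ref{jl9-5}, and Lemma~\ref{jl3-11}/Lemma~\ref{jl5-1} replaced by Lemma~\ref{jl9-6}. The symmetric appearance of $\oveundP{s}$ on both sides (in contrast to the asymmetric $\oveP{s}/\oveB{s}$ in Lemma~\ref{jl18-23}) reflects the fact that Corollary~\ref{jl9-3} and Corollary~\ref{jl9-5} already provide bases with vanishing $\nu_l$ on one half and vanishing $\nu_r$ on the other. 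The argument would be in two steps, plus uniqueness.

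Step 1 (Decomposition on $W$). I would show that $f$ can be uniquely written as $f=f_l|_W+f_r|_W$, where $f_l\colon \oveundA{s}^{l}\to \scF$ satisfies $\nu_l(f_l)=0$ and $f_r\colon \oveundA{s}^{r}\to \scG$ satisfies $\nu_r(f_r)=0$. Using Proposition~\ref{nov5-1} and the condition $(N_s)$, together with Definition-Lemma~\ref{j24-2-2}, I may reduce to the case in which $\scF$ and $\scG$ are direct sums of $A_{s'}[1]$ and $P_{s'}[1]$ with $s'\leq s$ (the skyscraper summands contribute nothing to morphisms from $\oveundL{s}$). For $A_{s'}[1]$-type summands, Corollary~\ref{jl9-5} gives a basis splitting $H^0\HOM_{\bP^1}(\oveundA{s},A_{s'}[1])=\ker(\nu_l)\oplus \ker(\nu_r)$, which upon restriction to $W$ provides the desired decomposition. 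For $P_{s'}[1]$-type summands, I will need the $\oveundA{s}$-version of Lemma~\ref{nov8-5}, established in exactly the same manner using the triangle $\CV\to P_{s'}\to A_{s'-1}[1]\to \CV[1]$ together with the corresponding case for $A_{s'}[1]$ and a splitting for $H^0\HOM_{\bP^1}(\oveundA{s},\CV[1])$ into $\nu_l=0$ and $\nu_r=0$ parts analogous to Lemma~\ref{jl4-1}. Finally, the compatibility with the isomorphism $\scF|_W\simeq \scG|_W$ is built into the decomposition since the splitting only depends on the restriction to $W$.

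Step 2 (Lifting to $\oveundP{s}$). I would apply Lemma~\ref{jl9-6}, in the coordinates identifying $V_l$ with $V$ and $l$ with $0$, to $f_l$. This produces a unique morphism $g\colon \oveundP{s}^{l}\to \scF$ with $\nu_l(g)=0$ extending $f_l$ in the sense $g|_W=f_l|_W$. By the analogous application on the $V_r$ side, one obtains a unique $h\colon \oveundP{s}^{r}\to \scG$ with $\nu_r(h)=0$ and $h|_W=f_r|_W$. Then $g|_W+h|_W=f_l|_W+f_r|_W=f$.

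Step 3 (Uniqueness). Given two pairs $(g_i,h_i)$ satisfying the conclusion, setting $g':=g_1-g_2$ and $h':=h_2-h_1$ yields morphisms with $g'|_W=h'|_W$, $\nu_l(g')=0$, $\nu_r(h')=0$. By the isomorphism of Lemma~\ref{jl9-6}, these correspond to morphisms $\oveundA{s}^{l}\to \scF$ and $\oveundA{s}^{r}\to \scG$ with vanishing specializations and matching restrictions to $W$, hence to a single element of $H^0\HOM_W(\oveundA{s}|_W,\scF|_W)$ decomposed as a $\nu_l$-vanishing plus $\nu_r$-vanishing piece. The uniqueness of this decomposition from Step 1 forces both pieces to vanish, and Lemma~\ref{jl9-6} then yields $g'=h'=0$. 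The main obstacle I anticipate is verifying the decomposition principle in Step 1 for $P_{s'}[1]$ summands: one must establish the $\oveundA{s}$ analog of Lemma~\ref{jl4-1} (for morphisms with prescribed vanishing $\nu$) without the aid of a direct triangle such as (\ref{jl2-10}); this will likely require a small detour through Corollary~\ref{lj9-4} and the triangle in \ref{jl2-1} of Lemma~\ref{nov7-12}.
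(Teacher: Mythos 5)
Your proposal follows exactly the intended proof: the paper states Corollary~\ref{jl9-10} without a written argument, as an immediate consequence of Lemma~\ref{jl9-6} in parallel to the way Lemma~\ref{jl18-23} follows from Corollary~\ref{jl8-7} together with Lemmas~\ref{jl3-11} and~\ref{jl5-1}, and your Steps~1--3 reproduce that template with the $\ove{\und{\cdot}}$-versions of each input. Your anticipated ``obstacle'' in Step~1 is mostly a non-issue, and you would do well to notice this before diving into the proof of an $\ove{\und{A_s}}$-analogue of Lemma~\ref{nov8-5}: the decomposition of $f$ takes place over $W$, where $P_{s'}|_W$, $A_{s'}|_W$, $B_{s'}|_W$, and $Q_{s'+1}|_W$ are all the same local system $\calL_{s'}$, so Corollary~\ref{jl9-5} alone suffices to split $f$ into $\nu_l=0$ and $\nu_r=0$ pieces (identifying $\Hom_W(\oveundA{s}|_W,\calL_{s'}[1])$ with $\Hom_{\bP^1}(\oveundA{s},A_{s'}[1])$ via zero-extension, as the paper does in Lemma~\ref{jl18-23}). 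What \emph{does} require the $P_{s'}$-sensitive analysis is the passage from $\oveundA{s}$-morphisms to $\oveundP{s}$-morphisms in Step~2, but that is precisely Lemma~\ref{jl9-6}, which you already invoke; your worry has simply been displaced to the wrong step. If you trim the unnecessary $P_{s'}$ digression from Step~1, your proof is a clean match for what the paper intends.
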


\subsubsection{Lemmas for $n=1$}

\begin{lemma}\label{dec20-10}
\begin{enumerate}
    \item \label{jl9-15}We have a diagram:
    \[
\xyhidaridt{\CC_T}{\wtundPo}{\CC_m[-1]}{\CV,}{1}{\emptyset}\quad 
\xyhidaridt{\CWT}{\wtundPo|_{W}}{\CC_m[-1]}{\CW.}{1,1}{\emptyset}\]

\item \label{jl9-16}The restriction map induces an isomorphism: 
\begin{align}\label{jl9-17}
    \HOM_{V}(\wtundPo,\CV)\simto \HOM_{V}((\wtundPo)_W,\CW)_0.
\end{align}

\item \label{jl9-18}There is a basis $f,g$ of $H^0\HOMV((\wtundPo)_W,\CW)$ such that
$\nu_l(f)=0$ and $\nu_r(g)=0$.

\end{enumerate}

\end{lemma}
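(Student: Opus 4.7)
All three parts are $\HOM$ computations with source $\wt{\und{P_1}}$ (or its restriction/zero-extension), so my plan is to extract them from the defining distinguished triangle $\CC_T\to\wt{\und{P_1}}\to\CC_m[-1]\to\CC_T[1]$ via standard long exact sequences, combined with open/closed adjunction across $W\hookrightarrow V$.

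For part (1), I will apply $\HOM_V(-,\CV[\bullet])$ to this triangle. The required inputs are $\HOM_V(\CC_T,\CV[k])$, obtained from the triangle $\CC_T\to\CV\to\CC_m$ together with $\HOM_V(\CV,\CV[k])=H^k(V;\CC)$ (nonzero only for $k=0$) and $\HOM_V(\CC_m,\CV[k])=\CC$ for $k=2$ only (coming from $i_m^!\CV=\CC[-2]$), giving $\HOM_V(\CC_T,\CV[k])=\CC$ for $k\in\{0,1\}$; and $\HOM_V(\CC_m[-1],\CV[k])=\CC$ only for $k=1$. The essential step is to verify that the connecting map $\HOM_V(\CC_T,\CV)\to\HOM_V(\CC_m[-1],\CV[1])$, which is precomposition with the rotated morphism $\CC_m[-2]\to\CC_T$, is an isomorphism. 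Both spaces are one-dimensional, so it suffices to check nonvanishing, which I plan to do by identifying this composite with a shift of the nonzero boundary $\CC_m\to\CC_T[2]$ of the triangle $\CC_T\to\CV\to\CC_m$; equivalently, if it were zero the nontriviality of $\CC_m[-1]\to\CC_T[1]$ in the defining triangle of $\wt{\und{P_1}}$ would be contradicted. This yields the first diagram. The second diagram is obtained the same way after restricting to $W$, using $j^*\CV=j^*\CW=\CC_W$ and $H^{\bullet}(W;\CC)=\CC\oplus\CC[-1]$ since $W\simeq S^1$, accounting for the ``$1,1$'' multiplicity.

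For part (2), I observe that the degree-zero part of the first diagram already gives $\HOM_V(\wt{\und{P_1}},\CV)=0$, and by adjunction with the second diagram $\HOM_V((\wt{\und{P_1}})_W,\CW)=\HOM_W(\wt{\und{P_1}}|_W,\CC_W)=0$, so its $\nu_0=0$ subspace also vanishes and the restriction map is the trivial isomorphism $0\simto 0$. For part (3), the second diagram of (1) identifies every morphism $(\wt{\und{P_1}})_W\to\CW[1]$ with a unique morphism $\CWT\to\CW[1]$ via pullback along $\CWT\to\wt{\und{P_1}}|_W$. Passing to $\PR$ with marked points $l,m,r$ and invoking Lemma~\ref{jl8-1}\ref{jl6-9}, I obtain a basis $g_1,g_2$ of $H^0\HOM_\PR(\CWT,\CW[1])$ with $\nu_l(g_1)=0$ and $\nu_r(g_2)=0$; pulling these back along $\CWT\to\wt{\und{P_1}}|_W$ produces the required basis $f,g$, the $\nu_l,\nu_r$ vanishing being inherited from the functoriality of specialization.

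The hardest step will be the verification of the connecting-map isomorphism in part (1) via a direct diagram chase, since it is what actually pins down the nonzero classes in terms of the chosen generator $\CC_m[-1]\to\CC_T[1]$; everything else amounts to bookkeeping in long exact sequences and a reduction to Lemma~\ref{jl8-1}, which has already been established on $\PR$.
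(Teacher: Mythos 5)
Your treatment of parts (1) and (3) is essentially right (the paper dismisses (1) as "easy" and proves (3) exactly the way you do, by pulling the second diagram together with Lemma~\ref{jl8-1}\ref{jl6-9}). But there is a genuine gap in your argument for part (2).

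You read the two sides of (\ref{jl9-17}) as the degree-zero morphism spaces $\Hom(\wt{\und{P_1}},\CC_V)$ and $\Hom((\wt{\und{P_1}})_W,\CC_W)$, correctly compute both to vanish (consistent with the diagrams in (1), which locate everything in cohomological degree $1$), and conclude the restriction is the trivial isomorphism $0\simeq 0$. That cannot be what (\ref{jl9-17}) asserts: the paper's own proof reads ``Since the dimensions of the both sides of (\ref{jl9-17}) are $1$, it is an isomorphism'', and the lemma is invoked in the $n=1$ step of Lemma~\ref{m29-2} precisely to extend a degree-one morphism $\wt{\und{P_1}}|_W\to\CC_W[1]$ satisfying $\nu_l=0$ (resp.\ $\nu_r=0$) across $V_l$ (resp.\ $V_r$). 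The content is therefore the nontrivial isomorphism
\[
\Hom\bigl(\wt{\und{P_1}},\CC_V[1]\bigr)\;\simto\;\Hom\bigl((\wt{\und{P_1}})_W,\CC_W[1]\bigr)_0,
\]
both sides one-dimensional. Your own part (3) already tacitly makes the shift to $\CC_W[1]$ (otherwise the space would be zero and could not have a two-element basis), so the degree-zero reading in part (2) is even internally inconsistent with your part (3): if $\Hom((\wt{\und{P_1}})_W,\CC_W)=0$ there is nothing for $\nu_0=0$ to cut out, but in (3) you then produce a $2$-dimensional space.

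What the paper actually does for (2) is a three-step argument you have not reproduced: first, any $f\colon\wt{\und{P_1}}\to\CC_V[1]$ has $\nu_0(f|_W)=0$, so the restriction lands in the $\nu_0=0$ subspace; second, the restriction is injective because, by the first diagram, $f$ is controlled by a morphism $\CC_T\to\CC_V[1]$, and precomposing with $\CC_{W\cap T}\to\CC_T$ produces a nonzero $\CC_{W\cap T}\to\CC_V[1]$ (this is the commutative triangle $\CC_{W\cap T}\to\CC_T\to\CC_V$ the paper invokes), so a nonzero $f$ has nonzero image; third, both sides are one-dimensional, so injectivity forces an isomorphism. None of these steps appear in your proposal, so (2) is not established. (A minor additional slip in (1): the boundary map of $\CC_T\to\CC_V\to\CC_m$ is $\CC_m\to\CC_T[1]$, not $\CC_m\to\CC_T[2]$; the class you need, $\CC_m\to\CC_T[2]\simeq\CC_m[-1]\to\CC_T[1]$, is the defining class of $\wt{\und{P_1}}$, and your alternative argument by contradiction for nonvanishing of the connecting map is fine.)
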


\begin{proof}
\ref{jl9-15} is easy.
Note that 
for $f\in \HOM_{\Hzsh{V}}(\wtundPo,\CV)$ we have $\nu_0(f)=0$.
Moreover, for $\CC_T\to \CV[1]$ there is a morphism $\CWT\to \CV[1]$ with a commutative diagram
\[\xymigisita{\CWT}{\CC_T}{\CV}.\]
Therefore,
(\ref{jl9-17}) is injective.
Since the dimensions of the both sides of (\ref{jl9-17}) are $1$,
it is an isomorphism.
\ref{jl9-18} is due to the second diagram of \ref{jl9-15} and Lemma~\ref{jl8-1}. 
\end{proof}

\subsection{Construction of microlocal skyscraper sheaves} \label{dec20-4}  

Next step is to construct objects corresponding to cotangent fiber under the Ganatra--Pardon--Shende equivalence.

\begin{definition-lemma}\label{jl11-1} 
        \begin{enumerate}
        \item \label{jl10-1}We define the morphism $\frakf_1\colon \undA{1}(=\RG{T}\CW)\to \oveA{1}(=\CWT)$ and $\frakf_1\colon \undP{1}(=\RG{T}\CV)\to \oveP{1}(=\CVT)$ so that we have (non-zero) commutative diagrams:
        \[\xysitamigiue{\RG{T}\CW}{\RG{m}\CW[1]}{\CWT},
        \xysitamigiue{\RG{T}\CV}{\RG{m}\CW[1]}{\CVT},
        \]
        where $\RG{m}\CW[1]\to \CWT$ is the composition $(\Cm[-1]\to \CWT)\circ (\RG{m}\CW[1]\simeq \Cm[-1])$.
        
        \item \label{jl10-2} We define the morphisms $\frakf_s\colon \und{A_s}\to \ove{A_{s}}$ and $\frakf_s\colon \und{P_s}\to \oveP{s}$ inductively as the compositions:
        \begin{align*}
            (\oveA{s-1}\to \oveA{s})&\circ (\undA{s-1}\xrightarrow{\mathfrak{f_{s-1}}} \oveA{s-1})\circ (\undA{s}\to \undA{s-1}),\quad \mbox{and}\\
            (\oveP{s-1}\to \oveP{s})&\circ (\undP{s-1}\xrightarrow{\mathfrak{f_{s-1}}} \oveP{s-1})\circ (\undP{s}\to \undP{s-1}),
        \end{align*}
respectively.
 Then, each $\mathfrak{f}_s$ is not zero.

 \item \label{jl10-3} The (zero-extension of the) restriction of $\frakf_s\colon \undP{s}\to \oveP{s}$ to $W$ is $\frakf_s\colon \undA{s}\to \oveA{s}$.

 \item We define $\mathfrak{f}_s\colon \und{A_s}\to \ove{B_s}$ as the image under the adjunction isomorphism $\Hom(\und{A_s}, \ove{B_s})\cong \Hom(\und{A_s}, \ove{A_s})$.

\item We also define the (non-zero) morphisms
$\mathfrak{f}_s\colon \und{\ove{P_s}}\to \und{\ove{P_s}}$ inductively as
\[(\oveP{s-1}\to \und{\ove{P_s}})\circ ({\und{P_{s-1}}}\xrightarrow{\frakf_{s-1}} \ove{{P_{s-1}}})\circ (\und{\ove{P_s}} \to \und{{P_{s-1}}}).\]

\item We define
$\mathfrak{f}_1\colon \wt{\undP{1}}\to \wt{\undP{1}}$
as the composition
\[(\CC_T\to \wt{\undP{1}})\circ (\CC_m[-1]\to \CC_T)\circ (\wt{\undP{1}}\to \CC_m[-1]).\]
 
 \item \label{jl10-4} We have $\nu_0(\mathfrak{f}_s)=0$.

    \end{enumerate}
    
\end{definition-lemma}

\begin{proof}
\ref{jl10-1} follows from the following diagram:
\begin{align}
    \label{jan29-99}
    \xyhidaridt{\CW}{\RG{T}\CW}{\RG{m}\CW[1]}{\CWT.}{1}{0,1}
\end{align}

Let us show \ref{jl10-2}.
By (\ref{jl2-2}) and a diagram (which can be shown by using (\ref{jan29-99})):
\[\xyhidaridt{\CW}{\undA{s}}{\undA{s-1}}{\CWT,}{1}{0,1\times s}\]
   we get diagrams:
 \[\xymigidt{\undA{s}}{\oveA{s-1}}{\oveA{s},}{\CW}{0,1\times(2s-1)}{1}\quad 
 \xyhidaridt{\CW}{\undA{s}}{\undA{s-1}}{\oveA{s-1}.}{1}{0,1\times 2(s-1)}
 \]
Therefore,
the composition
    $(\oveA{s-1}\to \oveA{s})\circ (\undA{s-1}\xrightarrow{\mathfrak{f}_{s-1}} \oveA{s-1})\circ (\undA{s}\to \undA{s-1})$ is not zero.
The same argument works for $\und{P_s}\to \ove{P_s}$.

The others can be shown in the same way.
\end{proof}

The morphism $\mathfrak{f}_s$ plays a role when we glue $\block{j}$ and $\block{j}^{\circ}[1]$ together.

\begin{lemma}\label{jl11-2}
For $1\leq j\leq n$,
we can define a canonical morphism $\block{j}^{\circ}\to \block{j}$ in $\mu \mathrm{sh}_{C_{\lc \boldsymbol{m}\rc}}(X_\Gamma)$,
which will be denoted by $\mathfrak{f}_j$ (using the same symbol for morphisms in Definition-Lemma~\ref{jl11-1}).
Similarly, we can also define $\mathfrak{f}_j\colon \block{j}^{\circ}\to \block{j}'$.
\end{lemma}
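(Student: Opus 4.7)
The plan is to construct the morphism $\mathfrak{f}_j\colon \block{j}^{\circ}\to \block{j}$ componentwise across the chain of $\PR_i$'s and then verify that the data assemble into a genuine element of the homotopy pull-back describing $\mu \mathrm{sh}_{C_{\lc\boldsymbol{m}\rc}}(X_\Gamma)$. I will argue case by case according to the five cases of Definition~\ref{s11-8}, and the combinatorics is essentially symmetric, so it suffices to describe the generic case $n\geq 4$, $2\leq j< n/2$ in detail.

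Writing $\block{j}=((X^1,Y^1),\dots,(X^n,Y^n))$ and $\block{j}^{\circ}=((Y^n,X^n),\dots,(Y^1,X^1))$, I will define the morphism on each $\PR_i$ as a pair of morphisms in the left/right charts as in (\ref{s11-5}). For $i<j$ the components are $Q_{s}\leftrightarrow P_{s-1}$ and the pieces $(Y^{n-i+1},X^{n-i+1})\to (X^i,Y^i)$ are supplied by Lemma~\ref{j27-2} and Lemma~\ref{nov7-5} which give the canonical morphisms $Q_{s}\to P_{s-1}$ and its partner on the other side. For $i=j$ (and symmetrically $i=n-j+1$), the pair contains $\und{P_j}$ and $\ove{B_j}$; I use $\frakf_j\colon \und{P_j}\to \ove{P_j}$ and $\frakf_j\colon \und{A_j}\to \ove{B_j}$ from Definition-Lemma~\ref{jl11-1}, together with the appropriate lift of the identity $(B_j,A_j)\to(B_j,A_j)$ on the middle $\PR_i$'s for $j<i<n-j+1$. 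The morphisms on intersections $W\subset \PR_i$ are either identities or restrictions of the above.

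Next, I will check gluing compatibility at each nodal point. Concretely, using Lemma~\ref{dec20-6} (equivalently the short exact sequence of Corollary~\ref{jan24-1}), the datum of a morphism on each chart together with matching restrictions to the overlaps $U_i\cap U_{i+1}\simeq W$ and the Fourier-matched specializations produces a bona fide morphism in the fiber product. The restriction-to-$W$ compatibility is built in by construction, since on every overlap the two charts are defined by the same formula involving $\und{\calL_s}$, $\ove{\calL_s}$, or $\calL_s$ connected by $\frakf_s|_{W}$. The Fourier compatibility at the nodes reduces to the identity $\nu_0(\frakf_s)=0$ of item \ref{jl10-4} of Definition-Lemma~\ref{jl11-1}, together with the functorial naturality of specialization for $P_s\to A_s$ and $Q_s\to P_{s-1}$ established in Lemmas~\ref{nov7-12}, \ref{jan24-1}.

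The remaining cases of Definition~\ref{s11-8} are then handled in the same way: for $n=2n_0$ and $j=n_0$ the middle component involves the pair $(\und{P_{n_0}},\und{A_{n_0}})\to (\ove{B_{n_0}},\ove{P_{n_0}})$ which is exactly $\frakf_{n_0}$ applied to each coordinate; for $j=1$ and for the odd-$n$ central case the construction uses the respective $\frakf_s$ on $\wt{\und{P_1}}$ or on $\und{\ove{P_{n_0+1}}}$ already introduced in Definition-Lemma~\ref{jl11-1}. The variant $\mathfrak{f}_j\colon \block{j}^{\circ}\to \block{j}'$ is constructed identically, the only change being that on the $j$-th $\PR$ the target component $\und{P_j}$ is replaced by $P_j$ and the relevant $\frakf_j$ factors through the morphism $\und{P_j}\to P_j$ which is available by the triangle (\ref{jan28-101}).

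The expected main obstacle is verifying the Fourier/specialization compatibility at the two ``seams'' (the $j$-th and $(n-j+1)$-th copies of $\PR$) where the building block transitions between $P$-type and $B$-type objects. The key input there is $\nu_0(\frakf_s)=0$, which forces the would-be obstruction in the cokernel term of Corollary~\ref{jan24-1} to vanish, and the uniqueness statements of Lemma~\ref{j27-2} and Lemma~\ref{nov7-5} which identify the two lifts of the common boundary datum. Once these two checks are in place, all other compatibilities reduce to routine matching of restrictions on $W$.
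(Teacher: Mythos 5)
Your componentwise plan does not match either the structure of $\block{j}^{\circ}$ or the morphism direction needed, and the paper uses a genuinely different trick that you miss.

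First, a structural point you have wrong: for $1\leq i\leq j-1$ (and symmetrically for $n-j+2\leq i\leq n$) the $i$-th component of $\block{j}^{\circ}$ is \emph{the same} as that of $\block{j}$, namely $(P_i,Q_{i+1})$; it is not a ``$Q_s\leftrightarrow P_{s-1}$'' pair, and Lemmas~\ref{j27-2}/\ref{nov7-5} do not provide a morphism there of the type you describe. Likewise, in the middle range $j<i<n-j+1$ the source $\block{j}^{\circ}$ has $(A_j,B_j)$ and the target $\block{j}$ has $(B_j,A_j)$; these are not equal as pairs, so there is no ``lift of the identity $(B_j,A_j)\to(B_j,A_j)$'' to work with.

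Second, and more seriously: at position $j$ the source is $(\ove{P_j},\ove{B_j})$ and the target is $(\und{P_j},\und{A_j})$, so the map you would need goes from $\ove{\ \cdot\ }$ to $\und{\ \cdot\ }$. But the morphisms $\mathfrak{f}_j$ of Definition-Lemma~\ref{jl11-1} go in the opposite direction, $\und{\ \cdot\ }\to\ove{\ \cdot\ }$, and this direction is only compatible at position $n-j+1$, where the source is $(\und{A_j},\und{P_j})$ and the target is $(\ove{B_j},\ove{P_j})$. The paper uses $\mathfrak{f}_j$ exactly and only at the $(n-j+1)$-st $\PR$; there is no ``symmetric'' use at position $j$.

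Finally, you are missing the actual mechanism of the proof. Instead of building nonzero component morphisms everywhere, the paper defines the element of $\bigoplus_{i}H^0\HOM_{U_i}(\block{j}^{\circ},\block{j})$ to be \emph{zero at every chart except} $U_{n-j+1}$ and $U_{n-j+2}$, where it places $(0,\mathfrak{f}_j)$ and $(\mathfrak{f}_j,0)$. Here $\nu_0(\mathfrak{f}_j)=0$ is used not to ``kill an obstruction,'' but to show that these pairs are legitimate elements of $H^0\HOM_{U_i}$ — the specialization of one side must match the Fourier transform of the other, and both are zero. One then checks that this tuple lies in the kernel of the map (\ref{oct31-5}): the only nontrivial overlap is $W\subset\PR_{n-j+1}$, where the two restrictions are both $\mathfrak{f}_j|_W$ and cancel. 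The long exact sequence (\ref{s11-2-2}) then lifts this kernel element to $H^0\HOM_{\Ancore}(\block{j}^{\circ},\block{j})$, which is the desired morphism. Your proposal never sets up this ``concentrated at the seam'' tuple, so it has no way to invoke the Mayer--Vietoris sequence to produce a global morphism.
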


\begin{proof}
We only show it in the case $n\geq 4$ and $2\leq j< n/2$.
The same argument works in the other cases.
Remark that $\block{j}^{\circ}=\block{n-j+1}$.
We consider a morphism $\mathfrak{f}_j\colon \und{A_j}\to \ove{B_j}$ (resp.
$\mathfrak{f}_j\colon \undP{j}\to \ove{P_j}$) on $V_l$ (resp. $V_r$) on
$(n-j+1)$-th $\PR$.
Then, by \ref{jl10-4} of Definition-Lemma~\ref{jl11-1},
we can consider $(0,\mathfrak{f}_j)\in H^0\HOM_{{U_{n-j+1}}}(\block{j}^{\circ},\block{j})$ and
$(\mathfrak{f}_j,0)\in H^0\HOM_{{U_{n-j+2}}}(\block{j}^{\circ},\block{j})$,
and we thus define an element
$(b_1,\dots,b_{n+1})\in \bigoplus_{1\leq i\leq n+1}H^0\HOM_{{U_i}}(\block{j}^{\circ},\block{j})$
as
\[b_i=\left\{\begin{array}{ll}
     (0,\mathfrak{f}_j)& i=n-j+1 \\
     (\mathfrak{f}_j,0)&i=n-j+2\\
     (0,0)&\mbox{otherwise}. 
\end{array}\right.\]
Since the image of it by (\ref{oct31-5}) is zero,
we get a morphism $\block{j}^\circ\to \block{j}$ in the category $\mu \mathrm{sh}_{C_{\lc \boldsymbol{m}\rc}}(X_\Gamma)$.
\end{proof}

Remark that we can write $\mathfrak{f}_j$ as $\block{n-j+1}\to \block{n-j+1}^\circ$ since $\block{n-j+1}=\block{j}^\circ$.

\begin{definition-lemma}\label{jl12-3}
For $1\leq j\leq n$,
we define ${\scH_j^k}\ (\mbox{resp.\ } \und{\scH_j^k}) \in \mu \mathrm{sh}_{C_{\lc \boldsymbol{m}\rc}}(X_\Gamma)$ ($k\in \ZZ_{\geq 0}$) inductively as follows.
\begin{enumerate}
    \item We set $\scH_j^0:= \block{j}'$ (resp. $\und{\scH_j^0}:=\block{j}$).
    \item For odd $k$, we define $\scH_j^{k}$ so that it fits into the distinguished triangle in $\mu \mathrm{sh}_{C_{\lc \boldsymbol{m}\rc}}(X_\Gamma)$:
    \[\scH_j^{k-1}\xrightarrow{\iota} \scH_j^{k}\to \block{j}^{\circ}[k]\to \scH_j^{k-1}[1],\]
    where $\block{j}^{\circ}[k]\to \scH_k^{j-1}[1]$ is defined inductively so that the diagram blow commutes (for $k\geq 2$):
    \[\xymatrix{\block{j}^{\circ}[k]\ar[dr]_{\mathfrak{f}_j}\ar[r]&\scH_j^{k-1}[1]\ar[d]\\&\block{j}[k]},\]
    where $\mathfrak{f}_j$ is the one in Lemma~\ref{jl11-2}.
Similarly, we define $\und{\scH_j^k}$ so that we have
\[\und{\scH_j^{k-1}}\xrightarrow{\iota} \und{\scH_j^{k}}\to \block{j}^{\circ}[k]\to \und{\scH_j^{k-1}}[1].\]

\item 
For even $k$, we define $\scH_j^{k}$ so that it fits into the distinguished triangle in $\mu \mathrm{sh}_{C_{\lc \boldsymbol{m}\rc}}(X_\Gamma)$:
    \[\scH_j^{k-1}\xrightarrow{\iota} \scH_j^{k}\to \block{j}[k]\to \scH_j^{k-1}[1],\]
    where $\block{j}[k]\to \scH_j^{k-1}[1]$ is defined inductively so that the diagram blow commutes:
    \[\xymatrix{\block{j}[k]\ar[dr]_{\mathfrak{f}_{n-j+1}}\ar[r]&\scH_j^{k-1}[1]\ar[d]\\&\block{j}^{\circ}[k]}.\]
Similarly, we define $\und{\scH_j^k}$ so that we have
\[\und{\scH_j^{k-1}}\xrightarrow{\iota} \und{\scH_j^{k}}\to \block{j}[k]\to \und{\scH_j^{k-1}}[1].\]
\end{enumerate}

\end{definition-lemma}
In particular, we can express $\scH_j^k$ explicitly as an $n$-tuple of constructible sheaves on $\PR$.

Remark that $\scH_j^k$ and $\und{\scH_j^k}$ differ only in the first part: $\block{j}'$ and  $\block{j}$.
Let $m_j$ be ``$m$" in the $j$-th $\PR$.
We denote by $\CC_{m_j}$ an object $\mu \mathrm{sh}_{C_{\lc\boldsymbol{m}\rc}}(X_\Gamma)$ 
\[((0,0),\dots,(0,0),(\CC_{m_j},\CC_{m_j}),(0,0),\dots,(0,0)),\]
where $(\CC_{m_j},\CC_{m_j})$ is the $j$-th component.
Then, we have the following.
\begin{lemma}\label{jl12-1}
    We have a distinguished triangle:
    \[\CC_{m_j}[-2](=\RG{m_j}\CC_{\PR})\to \block{j}'\to \block{j}\to \CC_{m_j}[-1].\]
\end{lemma}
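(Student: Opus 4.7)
The plan is to produce the desired triangle one component at a time and then glue across the $j$-th $\bP^1$ into an honest triangle in $\mu \mathrm{sh}_{C_{\lc\boldsymbol{m}\rc}}(X_\Gamma)$. Recall that $\block{j}$ and $\block{j}'$ agree in every component except the $j$-th: the $j$-th component of $\block{j}'$ is $(P_j,A_j)$ while that of $\block{j}$ is $(\und{P_j},\und{A_j})$. Moreover, $\RG{m}\CW \simeq \CC_m[-2]$ (used implicitly in Definition-Lemma~\ref{jl11-1}), so Lemma~\ref{nov7-12}\,\ref{j26-3} furnishes the two distinguished triangles
\begin{align*}
\CC_m[-2]\to P_j\to \und{P_j}\to \CC_m[-1]\quad\text{on $V_l$,}\\
\CC_m[-2]\to A_j\to \und{A_j}\to \CC_m[-1]\quad\text{on $V_r$.}
\end{align*}

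First I would glue these two triangles. Their restrictions to $W$ are both the identity triangle $0\to \cL_j\xrightarrow{\mathrm{id}}\cL_j\to 0$, since $\CC_m|_W=0$ and since both $P_j|_W$ and $A_j|_W$ coincide with $\cL_j$ (Definition-Lemma~\ref{oct31-4}\,\ref{oct31-2}). Using Definition~\ref{m22-1-2}, this produces a distinguished triangle
$$\CC_{m_j}[-2]\to (P_j,A_j)\to (\und{P_j},\und{A_j})\to \CC_{m_j}[-1]$$
in $\sh{\PR_j,\{l,r,m\}}$. The connecting morphism $(\und{P_j},\und{A_j})\to \CC_{m_j}[-1]$ is determined, via the gluing, by the two connecting morphisms $\und{P_j}\to \CC_m[-1]$ and $\und{A_j}\to \CC_m[-1]$, which are compatible over $W$ because both vanish there.

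Next I would lift this triangle to $\mu \mathrm{sh}_{C_{\lc\boldsymbol{m}\rc}}(X_\Gamma)$ by placing zero in every non-$j$-th coordinate and the triangle above in the $j$-th coordinate. The only thing to verify is that the three morphisms of the triangle (including the one landing in $\CC_{m_j}[-1]$ extended by zero) respect the Fourier-transform/specialization identifications at the two nodal points $r\in\PR_j$ and $l\in\PR_j$ used in the construction (\ref{m27-4n-2}). But Lemma~\ref{oct31-3} gives $\nu_0(P_j)\simeq\nu_0(\und{P_j})\simeq P_j$ and $\nu_0(A_j)\simeq\nu_0(\und{A_j})\simeq A_j$, and under these isomorphisms the specialization of $P_j\to\und{P_j}$ (resp.\ $A_j\to\und{A_j}$) is the identity, because $\CC_{m_j}$ has zero specialization at $l$ and at $r$. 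Thus the required compatibility is automatic, and the same observation shows that the map to $\CC_{m_j}[-1]$ and its shift are compatible with the zero components elsewhere.

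The only subtle point is this last compatibility check of the cone morphism at the nodes, and it really is just the statement that $\CC_{m_j}$ is supported away from $\{l,r\}$, so it interacts trivially with neighboring $\PR_{j\pm 1}$. Once this is in hand, the triangle above is a distinguished triangle in the glued category $\mu \mathrm{sh}_{C_{\lc\boldsymbol{m}\rc}}(X_\Gamma)$ by the componentwise criterion of Lemma~\ref{dec20-6}, which is precisely the statement of Lemma~\ref{jl12-1}.
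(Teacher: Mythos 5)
Your overall strategy — building the distinguished triangle one $\bP^1$-component at a time via Lemma~\ref{nov7-12}\ref{j26-3} and then checking the gluing at the nodes through the vanishing of the specializations of $\CC_m$ — is the natural one, and since the paper gives no explicit proof of this lemma there is no authors' argument to compare against. The final node-compatibility observation (that $\CC_{m_j}$ is supported away from $l,r$ so it interacts trivially with $\PR_{j\pm1}$) is correct and is the real content of the lemma.

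However, there is a concrete error in the middle paragraph. You claim that $\CC_m|_W=0$, but $W=\PR_j\setminus\{l,r\}$ contains $m$, so $\CC_m|_W=\CC_m\neq 0$. Consequently the restrictions of the two component triangles to $W$ are \emph{not} the identity triangle $0\to\calL_j\xrightarrow{\id}\calL_j\to 0$; they are the honest triangle $\CC_m[-2]\to\calL_j\to\und{\calL_j}\to\CC_m[-1]$ (note also that $\und{P_j}|_W\simeq\und{\calL_j}$ and $\und{A_j}|_W\simeq\und{\calL_j}$, not $\calL_j$: the singularity at $m$ survives on $W$). The gluing on $W$ still works, but the reason is not that the connecting maps vanish there; it is that the two triangles on $V_l$ and $V_r$ restrict to the \emph{same} triangle on $W$, compatibly with the isomorphisms over $W$ that are part of the data defining $(P_j,A_j)$ and $(\und{P_j},\und{A_j})$ as objects of $\Sh(\PR_j,\{l,m,r\})$. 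The vanishing you actually need — and do correctly invoke later — is the vanishing of $\nu_l(\CC_m)$ and $\nu_r(\CC_m)$ at the nodes, i.e.\ that $\CC_m$ is supported away from $\{l,r\}$ (equivalently, that $\CC_m|_{W\cap T}=0$, not $\CC_m|_W=0$). Finally, the citation of Lemma~\ref{dec20-6} at the end is misplaced: that lemma computes hom-spaces, it is not a criterion for distinguished triangles; the correct (and standard) justification is that a componentwise fiber sequence compatible with the gluing functors is a fiber sequence in the homotopy limit.
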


By Definition-Lemma~\ref{jl12-3},
we get inductive systems of objects in $\mu \mathrm{sh}_{C_{\lc \boldsymbol{m}\rc}}(X_\Gamma)$:
\begin{align}\label{jl11-4}
    \scH^0_j\xrightarrow{\iota} \scH^1_j\xrightarrow{\iota}\scH^2_j\xrightarrow{\iota}\scH^3_j\xrightarrow{\iota}\dots,\\
    \und{\scH^0_j}\xrightarrow{\iota} \und{\scH^1_j}\xrightarrow{\iota}\und{\scH^2_j}\xrightarrow{\iota}\und{\scH^3_j}\xrightarrow{\iota}\dots.   
\end{align}

\begin{definition}
For $1\leq j\leq n$, we define $\scH^\infty_j\ (\mbox{resp.}\ \und{\scH^\infty_j})\in \mu \mathrm{sh}_{C_{\lc \boldsymbol{m}\rc}}(X_\Gamma)$ as the homotopy colimit of (\ref{jl11-4}), i.e.
an object which fits into the distinguished triangle:
\begin{align}
    \label{nov14-2}
\bigoplus_{k=0}^{\infty}\scH^k_j\to \bigoplus_{k=0}^{\infty}\scH^k_j\to \scH^\infty_j\to \bigoplus_{k=0}^{\infty}\scH^k_j[1],
\end{align}
\[(\mbox{resp.}\ \bigoplus_{k=0}^{\infty}\und{\scH^k_j}\to \bigoplus_{k=0}^{\infty}\und{\scH^k_j}\to \und{\scH^\infty_j}\to \bigoplus_{k=0}^{\infty}\und{\scH^k_j}[1] )\]
where the first morphism is defined as the direct sum of $\mathrm{id}\colon \scH^k_j\to \scH^k_j$ (resp. $\und{\scH^k_j}\to \und{\scH^k_j}$) and $-\iota\colon \scH^k_j\to \scH^{k+1}_j$ (resp. $\und{\scH^k_j}\to \und{\scH^{k+1}_j}$).
\end{definition}

\begin{lemma}\label{dec5-2}
For $k\in \ZZ_{\geq 0}$, we have distinguished triangles:
    \begin{align}
        \bC_{m_j}[-2]\to &\scH^k_j\to \und{\scH^k_{j}}\to \bC_{m_j}[-1]\notag\\
        \bC_{m_j}[-2]\to &\scH^\infty_j\to \und{\scH^\infty_{j}}\to \bC_{m_j}[-1].\label{jl16-1}
    \end{align}
\end{lemma}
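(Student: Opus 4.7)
The plan is to prove the first family of triangles by induction on $k \geq 0$ and then to obtain the second triangle by passing to the homotopy colimit. The base case $k=0$ is immediate: by definition $\scH^0_j = \block{j}'$ and $\und{\scH^0_j} = \block{j}$, so the required triangle is precisely Lemma~\ref{jl12-1}, namely $\bC_{m_j}[-2] \to \block{j}' \to \block{j} \to \bC_{m_j}[-1]$.

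For the inductive step, the key observation is that, according to Definition-Lemma~\ref{jl12-3}, both $\scH^k_j$ and $\und{\scH^k_j}$ are built by cofibering out the \emph{same} piece, either $\block{j}^\circ[k]$ (for odd $k$) or $\block{j}[k]$ (for even $k$), and that both attaching maps share a common factorization through $\mathfrak{f}_j$. Using the inductive hypothesis, which supplies a morphism $\scH^{k-1}_j \to \und{\scH^{k-1}_j}$ with fiber $\bC_{m_j}[-2]$, I will choose the attaching map for the $\und{}$-version as the composition of the attaching map used for $\scH^k_j$ with this previously constructed morphism (shifted by one). This is possible precisely because the inductive maps $\scH^{k-1}_j[1] \to \block{j}^{(\circ)}[k]$ and $\und{\scH^{k-1}_j}[1] \to \block{j}^{(\circ)}[k]$ (coming from the previous triangle) are also compatible by induction. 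The resulting morphism of distinguished triangles
\[
\xymatrix{
\scH^{k-1}_j \ar[r] \ar[d] & \scH^k_j \ar[r] \ar[d] & \block{j}^{(\circ)}[k] \ar[r] \ar@{=}[d] & \scH^{k-1}_j[1] \ar[d] \\
\und{\scH^{k-1}_j} \ar[r] & \und{\scH^k_j} \ar[r] & \block{j}^{(\circ)}[k] \ar[r] & \und{\scH^{k-1}_j}[1]
}
\]
has identity on the right column, so by the $3\times 3$ lemma the fibers of the three vertical maps fit into a distinguished triangle $\bC_{m_j}[-2] \to \mathrm{fib}(\scH^k_j \to \und{\scH^k_j}) \to 0$, which identifies the middle fiber as $\bC_{m_j}[-2]$.

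For the infinite triangle, the above inductive construction is designed to be carried out compatibly with the continuation maps $\iota$, so the morphisms $\scH^k_j \to \und{\scH^k_j}$ assemble into a morphism of the two inductive systems of (\ref{jl11-4}). Taking the homotopy colimit preserves distinguished triangles in the cocomplete triangulated category $\mu \mathrm{sh}_{C_{\lc \boldsymbol{m}\rc}}(X_\Gamma)$, and the fiber term is the colimit of the constant diagram $\bC_{m_j}[-2] \xrightarrow{=} \bC_{m_j}[-2] \xrightarrow{=} \cdots$, whose colimit is $\bC_{m_j}[-2]$ itself.

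The main obstacle is the coherence of the construction: showing that the attaching maps and the induced morphisms $\scH^k_j \to \und{\scH^k_j}$ can be chosen consistently so that they genuinely form a map of inductive systems, rather than a collection of pointwise morphisms. The cleanest way to discharge this is to execute the entire argument in the underlying stable $\infty$-category model of $\mu \mathrm{sh}_{C_{\lc \boldsymbol{m}\rc}}(X_\Gamma)$ (which is available because this category is built as a homotopy limit of stable $\infty$-categories), so that cofibers are functorial and the map of inductive systems is produced automatically by the universal property of the iterated pushouts defining $\scH^k_j$ and $\und{\scH^k_j}$.
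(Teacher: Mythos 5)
Your proposal is correct and follows essentially the same route as the paper's proof: build the morphism $\scH^k_j\to \und{\scH^k_j}$ inductively as a map of distinguished triangles that is the identity on the $\block{j}^{(\circ)}[k]$ component, conclude via Lemma~\ref{jl12-1} and the nine-lemma that the fiber is $\bC_{m_j}[-2]$ at each finite stage, and then pass to the homotopy colimit using that the colimit of the constant system on $\bC_{m_j}[-2]$ is $\bC_{m_j}[-2]$. The only difference is that you flag the coherence issue (non-functoriality of cones in a triangulated category) and propose to resolve it by working in the stable $\infty$-categorical model; the paper handles this implicitly by choosing the morphisms inductively so that the squares commute, but your remark is a legitimate refinement of the same argument rather than a different proof.
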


\begin{proof}
We define $ {\scH^0_{j}}\to \und{\scH^0_{j}}$ as $\block{j}'\to \block{j}$.
  For $k\geq 1$, we take $ {\scH^k_{j}}\to \und{\scH^k_{j}}$ inductively so that it induces a morphism of distinguished triangles:
    \begin{align*}
    \xymatrix{
          \scH_j^{k-1}\ar[r]\ar[d]& \scH_j^{k}\ar[r]\ar[d]& \block{j}^{\ast}[k]\ar[r]\ar[d]^{\mathrm{id}}& \scH_j^{k-1}[1]\ar[d]\\
            \und{\scH_j^{k-1}}\ar[r]& \und{\scH_j^{k}}\ar[r]& \block{j}^{\ast}[k]\ar[r]& \und{\scH_j^{k-1}}[1],
    }
    \end{align*}
    where $\block{j}^{\ast}$ means $\block{j}$ or $\block{j}^{\circ}$ depending on the parity of $i$.
    Then, the first triangle follows from Lemma~\ref{jl12-1}.
    The second triangle follows the obvious fact that
    the homotopy colmit of the inductive system
    \[\CC_{m_j}[-2]\xrightarrow{\mathrm{id}} \CC_{m_j}[-2]\xrightarrow{\mathrm{id}}  \CC_{m_j}[-2]\xrightarrow{\mathrm{id}} \dots\]
    is $\CC_{m_j}[-2]$.
\end{proof}

For $1\leq j\leq n$, let $W_j$ be $W$ of the $j$-th $\PR$.
For $H\in \mu \mathrm{sh}_{C_{\lc \boldsymbol{m}\rc}}(X_\Gamma)$, we denote by $H|_{W_j}$ the restriction $F^j|_{W}=G^j|_{W}$
for an expression $H=((F^1,G^1),\dots,(F^n,G^n))$.

\begin{lemma}\label{nov14-1}
We set $s(j,i):=\min({j,n-j+1,i,n-i+1})$.
Then, we have
\[\scH^k_j|_{W_i}\simeq \calL_{s(j,i)}\oplus \calL_{s(j,i)}[1]\oplus \dots \oplus \calL_{s(j,i)}[k-1]\oplus \wt{\calL_{s(j,i)}}[k],\]
where $\wt{\calL_{s(j,i)}}$ is ${\calL_{s(j,i)}}$ or $\overline{\calL_{s(j,i)}}$ depending on $j$, $i$, $k$.
\end{lemma}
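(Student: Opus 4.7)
The plan is to prove Lemma~\ref{nov14-1} by induction on $k\ge 0$, reducing at each step to a computation of a single morphism of local systems on $W_i\simeq\CC^*$.

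For the base case $k=0$, we have $\scH_j^0=\block{j}'$ by Definition-Lemma~\ref{jl12-3}. Reading off the $i$-th entry of the explicit tuple $\block{j}'$ in Definition~\ref{s11-8} and applying the identifications $A_s|_W\simeq B_s|_W\simeq P_s|_W\simeq \calL_s$, together with $\und{P_s}|_W\simeq \und{\calL_s}$, $\ove{P_s}|_W\simeq \ove{\calL_s}$, $\ove{\und{P_s}}|_W\simeq \ove{\und{\calL_s}}$ (item \ref{oct31-2} of Definition-Lemma~\ref{oct31-4}) gives a local system of the right rank $s(j,i)=\min(j,n-j+1,i,n-i+1)$, which is one of $\calL_{s(j,i)}$ or a decorated variant of it, as desired.

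For the inductive step, I would use the defining triangle from Definition-Lemma~\ref{jl12-3},
\[
\scH_j^{k-1}\xrightarrow{\iota}\scH_j^{k}\to \block{j}^{\star}[k]\to \scH_j^{k-1}[1],
\]
with $\block{j}^{\star}=\block{j}^{\circ}$ for odd $k$ and $\block{j}^{\star}=\block{j}$ for even $k$. Restricting to $W_i$, the source $\block{j}^{\star}[k]|_{W_i}$ is a local system concentrated in degree $k$, isomorphic to $\widetilde{\calL_{s(j,i)}}'[k]$ for some variant, while by the induction hypothesis $\scH_j^{k-1}[1]|_{W_i}$ is concentrated in degrees $1,2,\dots,k$ with each summand a (variant of the) local system $\calL_{s(j,i)}$. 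Because $W_i$ is homotopy equivalent to $S^1$, we have $\Ext^n_{\Sh(W_i)}(\calL_{s(j,i)},\calL_{s(j,i)})=0$ for $n\ge 2$, so the connecting morphism $\block{j}^{\star}[k]|_{W_i}\to\scH_j^{k-1}[1]|_{W_i}$ can only have a nonzero component into the top-degree summand. Thus it collapses to a single morphism $\phi\colon \widetilde{\calL_{s(j,i)}}'\to\widetilde{\calL_{s(j,i)}}$ of local systems on $W_i$, and the assertion reduces to showing $\phi=0$: then the triangle splits and yields the stated direct sum.

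The main point, and the chief technical obstacle, is to show $\phi=0$. By the inductive characterization of the connecting morphism in Definition-Lemma~\ref{jl12-3}, its composition with the projection $\scH_j^{k-1}[1]|_{W_i}\to\block{j}^{\star\star}[k]|_{W_i}$ (onto the previous top summand) equals $\frakf_j|_{W_i}$. Here $\frakf_j\colon\block{j}^{\circ}\to\block{j}$ is the morphism of Lemma~\ref{jl11-2}, which is built out of the elementary morphisms $\frakf_j\colon\und{A_j}\to\ove{B_j}$ and $\frakf_j\colon\und{P_j}\to\ove{P_j}$ on the central copies of $\bP^1$ and is zero on all other components. For every $i$ outside the central indices this forces $\frakf_j|_{W_i}=0$ directly. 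For the central $i$, I would invoke the factorization through $\RG{m_j}\CW[1]\simeq \CC_{m_j}[-1]$ from items \ref{jl10-1}--\ref{jl10-2} of Definition-Lemma~\ref{jl11-1}: any factorization of a morphism between local systems on $W_i$ through a skyscraper sheaf at $m_j\in W_i$ is zero, since $\Hom_{\Sh(W_i)}(\calL_{s(j,i)},\CC_{m_j}[-1])=0$. Hence $\frakf_j|_{W_i}=0$ in all cases, so $\phi=0$ and the triangle splits. The remaining bookkeeping---tracking which decorated variant ($\calL_s$, $\und{\calL_s}$, $\ove{\calL_s}$, or $\und{\ove{\calL_s}}$) appears in the top summand at each stage, and noting that the lower summands are all isomorphic to $\calL_s$ as abstract local systems on $W_i$---yields the decomposition stated in the lemma, with $\widetilde{\calL_{s(j,i)}}$ being determined by the parity of $k$ and the case of $(j,i)$ in Definition~\ref{s11-8}.
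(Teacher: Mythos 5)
Your proof has a genuine gap at the crucial step: the claim that $\frakf_j|_{W_i}=0$ for central $i$.

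For central $i$ (i.e., $i=j$ or $i=n-j+1$), the objects $\block{j}^{\star}|_{W_i}$ and the top summand of $\scH_j^{k-1}|_{W_i}$ are \emph{not} local systems on $W_i$: they are $\und{\calL_j}$ or $\ove{\calL_j}$ (e.g. $\und{\calL_1}\simeq Rj_*\CC_{W\setminus\{m\}}$ and $\ove{\calL_1}\simeq\CC_{W\setminus\{m\}}$ on $W$), which have nontrivial cohomology at $m_j$. So the vanishing $\Hom_{\Sh(W_i)}(\calL_{s(j,i)},\CC_{m_j}[-1])=0$ that you invoke does not apply; in fact both $\Hom(\und{\calL_j},\CC_{m_j}[-1])$ and $\Hom(\CC_{m_j}[-1],\ove{\calL_j})$ are nonzero, and their composition $\frakf_j|_{W_i}\colon \und{\calL_j}\to\ove{\calL_j}$ is explicitly \emph{nonzero} — this is asserted in Definition-Lemma~\ref{jl11-1}(2) and is essential to the whole construction. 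Consequently, the triangle $\ove{\calL_j}\to\scH_j^1|_{W_i}\to\und{\calL_j}[1]\to\ove{\calL_j}[1]$ does not split. Even if one granted the splitting, your conclusion would produce $\ove{\calL_j}\oplus\und{\calL_j}[1]$, not the claimed $\calL_j\oplus\calL_j[1]$; the ``lower summands'' in your bookkeeping would stay decorated, contradicting the statement.

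The missing idea is the paper's octahedral-axiom step. One factors the nonzero morphism $\frakf_j|_{W_i}$ as $\und{\calL_j}\to\RG{m}\CC_W[1]\to\ove{\calL_j}$, and notices that the cones of the two factors are precisely $\calL_j[1]$ and $\calL_j$ (by the triangles $\RG{m}\CW\to\calL_j\to\und{\calL_j}$ and $\RG{m}\CW[1]\to\ove{\calL_j}\to\calL_j$). The octahedral axiom then rewrites the cone $\scH_j^1|_{W_i}$ of $\frakf_j|_{W_i}$ as fitting in a triangle $\calL_j[1]\to\scH_j^1|_{W_i}\to\calL_j\to\calL_j[2]$, and it is \emph{this} triangle — between honest local systems on $\CC^*$ — that splits because $\Ext^2(\calL_j,\calL_j)=0$. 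Your argument captures the $\Ext^2=0$ ingredient, but misses the octahedral-axiom exchange that trades the decorated sheaves $\und{\calL_j},\ove{\calL_j}$ for the plain local systems $\calL_j$; this exchange is what makes the nonzero $\frakf_j$ harmless and produces the summands claimed in the lemma.
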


\begin{proof}
If $i$ is neither $j$ nor $n-j+1$, the assertion is obvious (for $\wt{\calL_{s(j,i)}}={\calL_{s(j,i)}}$).

We only prove 
\[\scH_j^1|_{W_{i}}\simeq {\calL_j}\oplus {\calL_{j}}[1],\]
for $i=n-j+1$ and $1\leq j\leq n/2$.
The other cases can be shown similarly by induction.
By the definition, we have a distinguished triangle
\[\ove{\calL_j}(=\scH^{0}_j|_{W_{i}})\to \scH^1_j|_{W_{i}}\to \und{\calL_j}[1](=\block{j}[1]|_{W_{i}})\xrightarrow{\mathfrak{f}_j|_{W_i}} \ove{\calL_j}[1](=\scH^{0}_j[1]|_{W_{i}}).\]
Moreover,
recall the distinguished triangle:
\[\RG{m}\CW[1]\to \ove{\calL_j}\to \calL_j\to \RG{m}\CW[2].\]
Then, applying the commutative diagram:
\[\xymatrix{\und{\calL_j}\ar[r]^{\mathfrak{f}_j|_{W}}\ar[d]&\ove{\calL_j},\\\RG{m}\CC_{W}[1]\ar[ur]&}\]
to the octahedral axiom, we have a distinguished triangle:
\[\calL_j[1]\to \scH_j^1|_{W_j}\to \calL_j\to \calL_{j}[2].\]
However, since $\calL_j\to \calL_{j}[2]$ is always zero,
we have
\[\scH_j^1|_{W_i}\simeq \calL_j\oplus \calL_j[1].\]
   
\end{proof}

\begin{lemma}\label{dec5-7}
For $1\leq i\leq n$, we have
\[\scH^{\infty}_j|_{W_i}\simeq \bigoplus_{k=0}^{\infty}\calL_{s(j,i)}[k],\]
where $s(j,i)$ is the one defined in Lemma~\ref{nov14-1}.
In particular,
 $\scH_j^\infty$ is an object of $\mu \mathrm{sh}_C(X_\Gamma)$.
\end{lemma}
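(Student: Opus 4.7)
The plan is to restrict the defining triangle of $\scH^\infty_j$ to each $W_i$, identify the resulting telescope explicitly, compute its homotopy colimit, and then deduce the claim about the microsupport.

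First, since the restriction functor $(-)|_{W_i}$ is exact and commutes with arbitrary direct sums, applying it to the distinguished triangle (\ref{nov14-2}) yields
\[
\bigoplus_{k\ge 0}\scH^k_j|_{W_i}\to \bigoplus_{k\ge 0}\scH^k_j|_{W_i}\to \scH^\infty_j|_{W_i}\to \bigoplus_{k\ge 0}\scH^k_j[1]|_{W_i}.
\]
Hence $\scH^\infty_j|_{W_i}$ is naturally identified with the homotopy colimit of the telescope $\scH^0_j|_{W_i}\to \scH^1_j|_{W_i}\to\cdots$, and it suffices to compute this colimit.

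Next, I will refine Lemma~\ref{nov14-1} by induction on $k$ to a statement about the transition maps. Concretely, I claim that one can choose compatible splittings $\scH^k_j|_{W_i}\simeq \bigoplus_{\ell=0}^{k-1}\calL_{s(j,i)}[\ell]\oplus \wt{\calL_{s(j,i)}}[k]$ in which $\iota|_{W_i}$ is the identity on the summands $\calL_{s(j,i)}[\ell]$ with $\ell<k$ and sends $\wt{\calL_{s(j,i)}}[k]$ into $\calL_{s(j,i)}[k]\oplus \wt{\calL_{s(j,i)}}[k+1]$ via a map whose first component becomes an isomorphism $\wt{\calL_{s(j,i)}}[k]|_{W_i}\simeq \calL_{s(j,i)}[k]$ of local systems on $W_i$. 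The key input is the distinguished triangle
\[
\scH^k_j|_{W_i}\xrightarrow{\iota|_{W_i}}\scH^{k+1}_j|_{W_i}\to \block{j}^{\ast}[k+1]|_{W_i}\to \scH^k_j[1]|_{W_i},
\]
combined with the fact, already used in the proof of Lemma~\ref{nov14-1}, that the connecting morphism factors through a map of type $\calL_{s(j,i)}\to \calL_{s(j,i)}[2]$, which vanishes. This forces the triangle to split and the transition map to have the stated form.

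Granting this, the colimit of such a telescope is evidently $\bigoplus_{k\ge 0}\calL_{s(j,i)}[k]$: the summands at level $\ell$ stabilize for $k\ge \ell+1$, and passing to the colimit turns each $\wt{\calL_{s(j,i)}}[k]$ into the corresponding $\calL_{s(j,i)}[k]$ summand. For the final assertion, it suffices to verify that $\scH^\infty_j$ carries no microsupport over the marked points $m_i$. Since $\scH^\infty_j|_{W_i}\simeq \bigoplus_{k\ge 0}\calL_{s(j,i)}[k]$ is an honest direct sum of local systems on $W_i$, it is in particular locally constant near $m_i\in W_i$, which rules out any nonzero covector in the microsupport over $m_i$. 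Therefore $\scH^\infty_j\in \mu \mathrm{sh}_C(X_\Gamma)\subset \mu \mathrm{sh}_{C_{\lc\boldsymbol{m}\rc}}(X_\Gamma)$.

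The main obstacle is the inductive step in the analysis of the transition maps: the claim that one can coherently split off the $\calL_{s(j,i)}[k]$ summand at each stage in a way compatible with $\iota|_{W_i}$. This requires a careful unwinding of the connecting morphism in Definition-Lemma~\ref{jl12-3}, which is built from $\mathfrak{f}_j$ and $\mathfrak{f}_{n-j+1}$, and checking that its restriction to $W_i$ factors through the (vanishing) morphism $\calL_{s(j,i)}[\ell]\to \calL_{s(j,i)}[\ell+2]$ on each non-top summand, so that only the interaction with the top summand $\wt{\calL_{s(j,i)}}[k]$ is nontrivial and is precisely the absorption phenomenon described above.
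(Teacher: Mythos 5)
Your overall plan — pass to the colimit on each $W_i$, analyze the finite-stage restrictions via Lemma~\ref{nov14-1}, and deduce the microsupport statement — is sound, and your argument for the "in particular" clause (local systems on $W_i$ carry no microsupport over $m_i$, so $\scH_j^\infty$ has no support on the cotangent fibers $T^*_{m_i}\bP^1_i$) is correct and fills in a step the paper leaves implicit. However, there is a concrete error in the description of the transition maps: when $\wt{\calL_{s(j,i)}} = \ove{\calL_{s(j,i)}}$, the natural morphism $\wt{\calL_{s(j,i)}}[k]\to \calL_{s(j,i)}[k]$ is \emph{not} an isomorphism, and $\ove{\calL_{s(j,i)}}$ is not even a local system on $W_i$ — by construction $\ove{A_s}$ has a genuine singularity at $m$, so $\ove{\calL_s}=\ove{A_s}|_W$ has one too, and the cone of $\ove{\calL_s}\to\calL_s$ is $\CC_m$, not zero. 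Your claimed "isomorphism of local systems" is false precisely in the cases that matter (when $i=j$ or $i=n-j+1$); for generic $i$ there is nothing to prove since $\scH_j^k|_{W_i}$ already equals $\bigoplus_{\ell\le k}\calL_{s(j,i)}[\ell]$.

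Moreover, even with that claim corrected to "the natural map $\ove{\calL_s}\to\calL_s$," your "Granting this, ... evidently" step glosses over why the colimit of the telescope splits as $\bigoplus_k\calL_{s(j,i)}[k]$ when the transition maps are merely block upper-triangular rather than diagonal. This is exactly the point where the paper takes a different and cleaner route: rather than trying to describe $\iota|_{W_i}$ explicitly, it compares each $\scH_j^k|_{W_i}$ to the "plain" object $(\scH_j^k|_{W_i})'=\bigoplus_{\ell\le k}\calL_{s(j,i)}[\ell]$ via the distinguished triangle with cone $C_{j,k,i}[k]\in\{0,\CC_m[k]\}$, forms the three telescopes, and applies the nine-lemma. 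The only non-formal input is that the induced transitions $\CC_m[k]\to\CC_m[k+1]$ vanish, which holds for degree reasons since $\Ext^1_W(\CC_m,\CC_m)=0$; hence the $C$-telescope has colimit zero and $\scH_j^\infty|_{W_i}\simeq\bigoplus_k\calL_{s(j,i)}[k]$ falls out. This avoids your acknowledged "main obstacle" entirely. If you want to salvage your route, you would need to (1) replace the isomorphism claim with the correct natural map and (2) actually prove that the telescope colimit factors through the eventual stabilization — for instance by the very same cone-comparison argument the paper uses — which would largely collapse your proof into theirs.
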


\begin{proof}
We set
\[(\scH_j^k|_{W_i})':= \bigoplus_{l=0}^{k}\scL_{s(j,i)}[l].\]
Then, we have a distinguished triangle in $\Hzsh{\CC_{W_i}}$:
\begin{align*}
    \scH_j^k|_{W_i}\to (\scH_j^k|_{W_i})'\to C_{j,k,i}[k]\to  \scH_j^k|_{W_i}[1],
\end{align*}
where $C_{j,k,i}$ is zero or $\CC_m$ depending on the cases in Lemma~\ref{nov14-1}.
Moreover, 
we have a distinguished triangle:
\[\bigoplus_{k=0}^{\infty}(\scH^k_j|_{W_i})'\to \bigoplus_{k=0}^{\infty}(\scH^k_j|_{W_i})'\to \bigoplus_{k=0}^{\infty}\calL_{s(j,i)}[k]\to \bigoplus_{k=0}^{\infty}(\scH^k_j|_{W_i})'[1],\]
where the first arrow is the identity minus the direct sum of the natural morphism $(\scH^{k}_j|_{W_i})'\to (\scH^{k+1}_j|_{W_i})'$.
We now consider the commutative diagram:
\[\xymatrix{
    \displaystyle\bigoplus_{k=0}^{\infty}(\scH^k_j|_{W_i})'\ar[r]\ar[d]&\displaystyle\bigoplus_{k=0}^{\infty}(\scH^k_j|_{W_i})'\ar[r]\ar[d]&   \displaystyle\bigoplus_{k=0}^{\infty}\calL_{s(j,i)}[k] \ar[d]\\
\displaystyle\bigoplus_{k=0}^{\infty}\scH^k_j|_{W_i}\ar[r]\ar[d]&\displaystyle\bigoplus_{k=0}^{\infty}\scH^k_j|_{W_i}\ar[r]\ar[d]&    \scH^\infty_j|_{W_i}\ar[d]\\
\displaystyle\bigoplus_{k=0}^{\infty} C_{j,k,i}[k]\ar[r]^{\mathrm{id}}&\displaystyle\bigoplus_{k=0}^{\infty} C_{j,k,i}[k]\ar[r]& 0
}.\]
Since all rows and the first two column are distinguished triangles,
the third column is a distinguished triangle by the 9-lemma for a triangulated category.
Hence, we have $\scH^\infty_j|_{W_i}\simeq \bigoplus_{k=0}^\infty \calL_{s(j,i)}[k]$.
    \end{proof}

\begin{theorem}\label{dec5-8}
For $H\in \mu \mathrm{sh}_{C}(X_\Gamma)$, we have a functorial isomorphism:
\[H^0\HOM_{\Ancore}(\scH^\infty_j, H)\simeq H^0\HOM_{\Ancore}(\CC_{m_j}[-2],H),\]
where the morphism is induced by the morphism $\CC_{m_j}[-2]\to \scH^\infty_j$ in (\ref{jl16-1}).
In particular,
we have
\[H^0\HOM_{\Ancore}(\scH^\infty_j, H)\simeq
H^0((H|_{W_j})_{m_j}),
\]
where $(H|_{W_j})_{m_j}\in \sh{m_j}$ is a stalk at $m_j\in W_j$ of $H|_{W_j}=F^j|_{W_j}$ 
for an expression $H=((F^1,G^1),\dots,(F^n,G^n))$.
\end{theorem}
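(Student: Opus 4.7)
The plan is to reduce Theorem~\ref{dec5-8} to the vanishing Lemma~\ref{m29-2} by induction along the construction of $\scH_j^\infty$. Applying $\HOM_{\Ancore}(-, H)$ to the distinguished triangle (\ref{jl16-1}) produces a long exact sequence in which the claimed morphism $H^0\HOM_{\Ancore}(\scH_j^\infty, H) \to H^0\HOM_{\Ancore}(\bC_{m_j}[-2], H)$ is sandwiched between terms involving $\HOM_{\Ancore}(\und{\scH_j^\infty}, H)$. The theorem therefore follows once I prove the complete vanishing $\HOM_{\Ancore}(\und{\scH_j^\infty}, H) = 0$ for every $H \in \mu \mathrm{sh}_C(X_\Gamma)$.

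Next I would prove by induction on $k \geq 0$ that $\HOM_{\Ancore}(\und{\scH_j^k}, H) = 0$. The base case is $\und{\scH_j^0} = \block{j}$, for which the vanishing is exactly Lemma~\ref{m29-2}. For the inductive step, the defining triangle in Definition-Lemma~\ref{jl12-3}
\begin{equation*}
\und{\scH_j^{k-1}} \to \und{\scH_j^k} \to \block{j}^{\ast}[k] \to \und{\scH_j^{k-1}}[1],
\end{equation*}
where $\block{j}^{\ast}$ is $\block{j}^{\circ} = \block{n-j+1}$ for odd $k$ and $\block{j}$ for even $k$, gives a long exact sequence in which the two outer terms vanish by Lemma~\ref{m29-2} (applied to $\block{j}$ and to $\block{n-j+1}$) together with the inductive hypothesis.

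To pass to $k = \infty$, I would apply $\HOM_{\Ancore}(-, H)$ to the homotopy colimit triangle (\ref{nov14-2}). Since $\HOM_{\Ancore}(\bigoplus_k \und{\scH_j^k}, H) \cong \prod_k \HOM_{\Ancore}(\und{\scH_j^k}, H) = 0$ by the previous step, the associated long exact sequence forces $\HOM_{\Ancore}(\und{\scH_j^\infty}, H) = 0$. Combined with the first paragraph, this yields the first isomorphism of the theorem, whose functoriality in $H$ is automatic from the construction.

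The second assertion then reduces to computing $H^0\HOM_{\Ancore}(\bC_{m_j}[-2], H) \simeq H^0((H|_{W_j})_{m_j})$. Since $m_j$ is an interior point of $W_j \subset \bP^1_j$ away from the nodes, the hom space is computed purely sheaf-theoretically on $\bP^1_j$, and the adjunction $\iota_{m_j}^{!} = \iota_{m_j}^{\ast}[-2]$ for the inclusion $\iota_{m_j}\colon \{m_j\} \hookrightarrow \bP^1_j$ of a point into a complex curve gives the stalk identification. The genuine obstacle is Lemma~\ref{m29-2} itself, whose four-case proof occupies most of Subsection~\ref{nov23-1}; the present argument is the clean categorical packaging that upgrades that vanishing into the identification of $\scH_j^\infty$ as the corepresentative of the microlocal stalk at $m_j$.
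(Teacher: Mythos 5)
Your argument is correct and is essentially the same as the paper's, reducing the first isomorphism to $\HOM_{\Ancore}(\und{\scH^\infty_j},H)=0$ via triangle~(\ref{jl16-1}), then killing each $\und{\scH^k_j}$ by the defining triangles of Definition-Lemma~\ref{jl12-3} together with Lemma~\ref{m29-2}, and finally passing to the homotopy colimit. The paper's own proof is terser — stating that the vanishing for $\und{\scH^k_j}$ "follows from the definition and Lemma~\ref{m29-2}" — so your write-up simply makes the induction and the colimit step explicit, and your derivation of the stalk formula via $\iota_{m_j}^!\cong\iota_{m_j}^*[-2]$ (valid here because $H$, being supported on $C$, is nonsingular at $m_j$) matches the paper's use of $\HOM_{\CC}(\CC_0[-2],L)\simeq H^0(L_0)$ for a local system $L$.
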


\begin{proof}
    By the distinguished triangle (\ref{jl16-1}),
    to prove the first isomorphism, it is enough to show that 
    \[H^0\HOM_{\Ancore}(\und{\scH^\infty_j},H)=0,\]
    for any $H\in \mu \mathrm{sh}_C(X_\Gamma)$.
By the definition of $\und{\scH^\infty_j}$,
it suffices to show
\[ H^0\HOM_{\Ancore}(\und{\scH^k_j},H)=0\quad (k\in \ZZ_{\geq 1}),\]
which follows from the definition of $\und{\scH_j^k}$ and Lemma~\ref{m29-2}.

The second assertion follows from the fact:
For $L\in \sh{\CC}$ with $\mathrm{SS}(L)\subset T^*_\CC\CC$ we have
\[\HOM_{{\CC}}(\CC_0[-2],L)\simeq H^{0}(L_0),\]
where $L_0$ is a stalk at $0$ of $L$.
\end{proof}

\subsection{Hodge structure}\label{dec20-5} 
We discuss a ``Hodge structure" on $\scH^\infty_j$.
We enhance our discussion in the above to the Hodge setup.

For a constructible sheaf (or a cohomologically constructible complex) $\scF$, 
when we consider (and fix for $\scF$) a mixed Hodge module (or a complex of mixed Hodge modules) whose underlying object is $\scF$,
we use the same symbol $\scF$ to represent it.
For example, we say ``$\CC_\CC[1]$ is a pure Hodge module of weight $1$" instead of that ``the perverse sheaf $\CC_\CC[1]$ is a underlying perverse sheaf of a pure Hodge module of weight $1$". We write the (half) Tate-twisted $\CC_\CC[1]$ as $\CC_\CC[1](s/2)$ for $s\in \ZZ$.

We recall the Fourier transform of a monodromic mixed Hodge module and the definition of Hodge microsheaves on $X_\Gamma$. For more details, see Subsection~\ref{jan7-1} and Section~\ref{jan7-2}.
Let $V$ be a complex line $\CC$ with the origin $0$ with a coordinate $t$.
The category $\sqrt{\mathrm{MHM}^{c,\heartsuit}(V)}$ (resp. $\sqrt{\mathrm{MHM}^{c,\heartsuit}(V,0)}(=\sqrt{\mathrm{MHM}^{c,\heartsuit}_{\mathrm{mon}}(V)})$) is the abelian category of mixed Hodge modules (resp. with a possible singular point at the origin) on $V$ with the half Tate twists.
Similarly, $\sqrt{\mathrm{MHM}^{c,\heartsuit}(V,0)_{\mathrm{unip}}}$ is the subcategory of $\sqrt{\mathrm{MHM}^{c,\heartsuit}(V,0)}$ consisting of objects whose underlying perverse sheaves are unipotent, which are called unipotent monodromic mixed Hodge modules.
In the following, we omit the symbol $\sqrt{\quad }$.

For $\scF\in \MHMxh{V}$,
the specialization $\nu_0(\scF)\in \MHMxh{T_0V,0}_{\unip}$ is defined as the object corresponding to the tuple $(\psi_{t}\scF,\phi_t\scF,\can,\Var)$.

Recall that if $\scF$ is unipotent and monodromic, i.e. $\scF$ can be recovered from the tuple (=specialization) $(\psi_{t}\scF,\phi_t\scF,\can,\Var)$.
Then, the Fourier transform $\mathrm{FL}^{\mathrm{pre}}(\scF)$ of $\scF$ is defined as the object on the dual space $V^*$ corresponding to
\[(\phi_t\scF,\psi_t\scF(-1),-\Var,\can(-1)).\]
Moreover, $\mathrm{FL}(\scF)$ is defined as
\[\mathrm{FL}(\scF)=\mathrm{FL}(\scF)(1/2).\]
Then, we have 
\[\mathrm{FL}(\mathrm{FL}(\scF))=\scF.\]
This operation can be extended to the one on $\mathrm{MHM}(V,0)_{\unip}$.
By using this operation,
we have defined the category of Hodge microsheaves $\mu \mathrm{M}_{C}(X_{\Gamma})$ and $\mu \mathrm{M}_{C_{\lc \boldsymbol{m}\rc}}(X_{\Gamma})$.
We denote the (non-full) image of $\Ind M$ in \ref{ex:Hodge-Tate2} of Example~\ref{ex:Hodge-Tate} under the restriction $\bP^1 \to V$ by $\mathrm{M}(V)'$, and set $\stM(V):=\sqrt{\stM(V)'}$. The monodromic version is denoted by $\mathrm{M}(V,0)$.
Note that $\mathrm{FL}$ (resp. $\nu_0$) sends $\mathrm{M}(V,0)$ (resp. $\mathrm{M}(V)$) to $\mathrm{M}(V^{*},0)$ (resp. $\stM(T_0V,0)$).
Recall that $H^0\stM(V)$ is a saturated triangulated (non-full) subcategory of $H^0\mathrm{MHM}(V)$.
We use the same definition for $\bP^1$ and $W$.

The category $\muMGm$ is defined in the same way as in $\mu \mathrm{sh}_{C_{\{\boldsymbol{m}\}}}(X_{\Gamma})$.  
An object in $\muMGm$ can be expressed as
\[((F^1,G^1),\dots,(F^n,G^n)),\]
where 
$F^i$ (resp. $G^i$) is an object in $\stM(V_l)$ (resp. $\stM(V_r)$) on $V_l$ (resp. $V_r$) in $i$-th $\PR$ with the condition:
\[\mathrm{FL}(\nu_{r}(G^i))=\nu_l(F^{i+1})\quad (1\leq i\leq n-1).\]
Remark that this condition is equivalent to the condition
\[\nu_r(G^i)=\mathrm{FL}(\nu_l(F^{i+1}))\quad (1\leq i\leq n-1).\]

We use diagrams which is an enhancement of the ones defined in Definition~\ref{s10-3-2} as follows.
For a distinguished triangle $A'\to A\to A''\to A'[1]$ in $H^0\stM(V)$ and $B\in \stM(V)$, and $i, j,s_i,s_j\in \ZZ\ (i\neq j)$,
if 
there exist subspaces $L'_k\subset \HOM(A',B[k](s_k))$ and $L''_k\subset \HOM(A'',B[k](s_k))$ with an exact sequence
\[0\to L''_k\to \HOM(A,B[k](s_k))\to L'_k\to 0\] for $k= i,j$,
and $\HOM(A,B[k](s))=0$ for other $k$ and $s$,
then we write
\[
{\xyhidaridt{A'}{A}{A''}{B,}{i(s_i)\times d'_i,\ j(s_j)\times d_j'}{i(s_i)\times d''_i,\ j(s_j)\times d''_j}}\] 
where 
$d'_k=\dim{L'_k}$ and $d''_k=\dim{L''_k}$.
Here, above $\HOM$ means $\HOM_{\stM(V)}$.
We use the same diagrams for somewhat different situations.

The followings are standard facts. 
Some of them can be confirmed by using the description of mixed Hodge modules explained in Subsection~\ref{section:linearalgebraicHodge} and Definition~\ref{fev5-1}.

\begin{lemma} 
Let $V$ be a complex line $\CC$ with the origin $0$ and another point $m$.
We set $W=V\setminus \{0\}$ and $T=V\setminus \{m\}$.
\begin{enumerate}
    \item The object $\CC_V[1]$ (resp. $\CC_0$) is a pure Hodge module of weight $1$ (resp. $0$),
    $\CC_W[1]$ is a mixed Hodge module with $\Gr^{W}_w(\CC_W[1])=0$ for $w\neq 0,1$,
    and we have a distinguished triangle in $H^0\stM(V)$:
    \[\CW\to \CV\to \CC_0\to \CW[1].\]
    \item  
   We have
\begin{align*}
    \mathrm{FL}^{\mathrm{pre}}{(\CV[1])}\simeq \CC_0(-1),&\     \mathrm{FL}^{\mathrm{pre}}{(\CC_0)}\simeq \CV[1],\quad  \mbox{and hence}
    ,\\
    \mathrm{FL}{(\CV[1])}\simeq \CC_0(-1/2),&\ 
    \mathrm{FL}{(\CC_0)}\simeq \CV[1](1/2).
\end{align*}
    
\item The object $\RG{W}\CV[1]$ is a mixed Hodge module with $\Gr^W_w(\RG{W}\CV[1])=0$ for $w\neq 1,2$,
$\RG{0}\CV[1]$ is isomorphic to $\CC_0[-1](-1)$ in $H^0\stM(V)$
and we have a distinguished triangle in $H^0\stM(V)$:
\[\RG{0}\CV\to \CV\to \RG{W}\CV\to \RG{0}\CV[1].\]

    \item The morphism $\CW\to \CW[1]$ (defined up to constant) in $\Hzsh{W}$ is enhanced to be a morphism in $H^0\stM(W)$
    \[\CW\to \CW[1](1).\]

    \item By induction, we can enhance $A_s[1]$ to be an object in $\stM(V)$ so that
    we have distinguished triangles and diagrams in $H^0\stM(V)$:
    \begin{align*}
    \CW(s)\to &A_s\to A_{s-1}\to \CW[1](s),        \\
        A_{s-1}(1)\to &A_s\to \CW\to A_{s-1}[1](1),
    \end{align*}
    \[\xyhidaridt{\CW(s)}{A_s}{A_{s-1}}{\CW,}{1(s)}{0(0)}\xymigidt{\CW}{A_{s-1}(1)}{A_s}{\CW}{0(-(s-1))}{1(1)}.\]
Moreover, we enhance $\calL_s[1]$ to be an object on $W$ as
\[\calL_s[1]:=A_s[1]|_{W}.\]

    \item The mixed Hodge module structure on $B_s[1]$ is defined as the push forward of $A_s|_{W}[1]$ along $W\hookrightarrow V$.
Then, we have
\begin{align*}
    \mathrm{FL}^{\mathrm{pre}}{(A_s[1])}\simeq B_s[1],&\     \mathrm{FL}^{\mathrm{pre}}{(B_s[1])}\simeq A_s[1](-1),\quad  \mbox{and hence,}\\
    \mathrm{FL}{(A_s[1])}\simeq B_s[1](1/2),&\ 
    \mathrm{FL}{(B_s[1])}\simeq A_s[1](-1/2).
\end{align*}

\item 
We have diagrams in $H^0\stM(V)$:
\[\xyhidaridt{\CW(s-1)}{A_s}{A_{s-1}}{\CV,}{1(s)}{0(0)}\xyhidaridt{B_1(s-1)}{B_s}{B_{s-1}}{\CC_0.}{0(s-1)}{-1(-1)}.\]

\item 
 We can enhance $P_s[1]$ (resp. $Q_s[1]$) to be an object in $\stM(V)$ so that
    we have distinguished triangles in $H^0\stM(V)$
\begin{align*}
    \CV(s-1)&\to P_s\to A_{s-1}\to \CV[1](s-1)\qquad\qquad\\
    (\mbox{resp.\ } \CC_0[-1](s-2)&\to Q_s\to B_{s-1}\to \CC_0(s-2)),
\end{align*}

Moreover, we have
\begin{align*}
    \mathrm{FL}^{\mathrm{pre}}{(P_s[1])}&\simeq Q_s[1],\     \mathrm{FL}^{\mathrm{pre}}{(Q_s[1])}\simeq P_s[1](-1),\quad  \mbox{and hence,}
    \\
    \mathrm{FL}{(P_s[1])}&\simeq Q_s[1](1/2),\ 
    \mathrm{FL}{(Q_s[1])}\simeq P_s[1](-1/2).
\end{align*}

\item The objects $\ove{A_s}$, $\und{A_s}$ are enhanced to be objects in $\stM(V)$
inductively so that 
we have distinguished triangles and diagrams in $H^0\stM(V)$
\begin{align*}
\oveA{s-1}(1)\to &\oveA{s}\to \CW\to \oveA{s-1}[1](1)\qquad\qquad \\
(\mbox{resp.\ } \CW(s-1)\to &\undA{s}\to \undA{s-1}\to \CW[1](s-1)),
\end{align*}
\[\xymigidt{\CW}{\oveA{s-1}}{\oveA{s},}{\CW}{\emptyset}{1(1)}\quad \xyhidaridt{\CW}{\undA{s}}{\undA{s-1}}{\CW}{1(1)}{\emptyset}.\]

\item 
We take $\RG{T}\CW\to\CW[1](1)$ and $\RG{T}\CW\to \ove{A_{s}}[1](1)$ such that the diagram commutes:
\[\xymatrix{\CW \ar[dr]\ar[d]& \\\RG{T}\CW\ar[r]&\CW[1](1),}\quad 
\xymigisita{\RG{T}\CW}{\ove{A_{s}}(1)}{\CW[1](1).}
\]
Using it, we define
$\ove{\und{A_s}}\in \stM(V)$ so that it fits into the distinguished triangle:
\[\ove{A_{s-1}}(1)\to \ove{\und{A_{s}}}\to \RG{T}\CW\to \ove{A_{s-1}}[1](1).\]
Moreover, we also enhance $\und{\ove{P_s}}$ to be an object in $\stM(V)$ so that we have
\[\ove{P_{s-1}}(1)\to \ove{\und{P_{s}}}\to \RG{T}\CW\to \ove{P_{s-1}}[1](1).\]

\item 
The object $\wt{\und{P_1}}[1]$ is also enhanced to be an object in $\stM(V)$ so that we have
\[\CC_T[1](1)\to \wt{\und{P_1}}[1]\to \CC_m\to \CC_T[2](1).\]

\item The morphisms $A_s\to \und{A_s}$, $P_s\to \und{P_s}$, $\ove{A_s}\to A_s$, $\ove{P_s}\to P_s$, $\ove{A_{s-1}}\to \ove{A_{s}}$, $\und{A_{s}}\to \und{A_{s-1}}$ etc.
can be enhanced to the ones (without twists) in $H^0\stM(V)$.

\end{enumerate}    
\end{lemma}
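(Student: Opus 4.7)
The plan is to construct each enhancement inductively, reducing every step to prescribing the unique-up-to-scalar generator of a one-dimensional $\mathrm{Ext}^1$-group (with the correct Tate twist) in the abelian category $\mathrm{MHM}^{c,\heartsuit}(V,0)_{\mathrm{unip}}$, and then verifying the stated Fourier-transform formulas by feeding the tuple data into the explicit formula of Definition~\ref{fev5-1}.

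First I would dispatch the base cases (1)--(4). The sheaves $\CV[1]$ and $\CC_0$ are the standard pure Hodge modules of weights $1$ and $0$; the distinguished triangle $\CW\to\CV\to\CC_0$ and the dual triangle in (3) come from the weight filtrations on $j_!$ and $j_*$ along $j\colon W\hookrightarrow V$, and the stalk computation $\RG{0}\CV\simeq \CC_0[-1](-1)$ is the standard purity statement. For (2), the tuples $(\bQ,0)$ for $\CV[1]$ and $(0,\bQ)$ for $\CC_0$ are swapped and Tate-twisted as prescribed by Definition~\ref{fev5-1}; the half-twist convention then produces the symmetric formulas $\mathrm{FL}(\CV[1])=\CC_0(-1/2)$ and $\mathrm{FL}(\CC_0)=\CV[1](1/2)$. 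The morphism $\CW\to\CW[1](1)$ in (4) is the unique generator of $\mathrm{Ext}^1_{H^0\stM(W)}(\CW,\CW(1))\cong\bQ$, coming from the unique nontrivial unipotent extension of weight $1$.

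Next I would carry out the inductive constructions (5)--(8). For $A_s$, define it as the nontrivial extension $0\to\CW(s)\to A_s\to A_{s-1}\to 0$; the twist $(s)$ on the subobject is forced because the nilpotent operator $N$ on the nearby cycle shifts weight by $-2$, i.e.\ a Tate twist of $(-1)$. Iterating gives the first diagram, and the second triangle $A_{s-1}(1)\to A_s\to\CW\to A_{s-1}[1](1)$ is simply the dual extension. Then $B_s$ is $j_*(A_s|_W)[1]$ on the level of mixed Hodge modules, and the assertions of (6) and (7) follow from an immediate tuple computation. The objects $P_s,Q_s$ in (8) are built as cones on the extension classes already used in Definition~\ref{nov6-6}, now Hodge-enhanced; the Tate twist $(s-1)$ is determined by the exponent of the highest nonzero power of $N$ appearing in the extension. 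In each case, the Fourier-transform identifications for $A_s$, $B_s$, $P_s$, $Q_s$ drop out of Definition~\ref{fev5-1} applied to the tuples just constructed.

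For (9)--(11), the base cases $\ove{A_1}=\CC_{W\cap T}$, $\und{A_1}=\RG{T}\CW$, $\wt{\und{P_1}}$ carry canonical Hodge module structures, and the inductive steps assemble $\ove{A_s}, \und{A_s}, \ove{\und{A_s}}, \ove{P_s}, \und{P_s}, \ove{\und{P_s}}$ as iterated cones against the canonical extension classes from (4) and its analogue at the point $m$; at each stage the relevant $\mathrm{Ext}^1$-group in $H^0\stM(V)$ is again one-dimensional once the Tate twist is fixed, so the construction is essentially forced. Part (12) is then immediate, since each structural morphism listed is precisely one of the canonical maps appearing in a distinguished triangle constructed in (5)--(11), hence lifts without any Tate twist by construction. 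The only genuine obstacle is the combinatorial bookkeeping of Tate twists across all twelve parts, ensuring that Fourier transformation, specialization, and the various boundary maps are mutually consistent at every inductive step so that the resulting objects glue into Hodge microsheaves in $\mu\mathrm{M}_C(X_\Gamma)$; each check is a routine tuple computation via Definition~\ref{fev5-1}, but the order of the inductive steps matters and one must carry them out systematically.
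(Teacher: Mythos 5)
The paper gives no proof of this lemma: it lists the items as standard facts and points the reader to the linear-algebraic description of monodromic mixed Hodge modules (Theorem~\ref{thm:MHMlinearalgebra}) and the Fourier-transform formula (Definition~\ref{fev5-1}). Your proposal executes precisely that plan — iterated extensions with Tate twists dictated by the weight jump under $N$, and Fourier transforms checked by pushing the $(\psi_t,\phi_{t,1},\can,\Var)$-tuples through the formula — so it matches the paper's intended methodology, and the order of the base cases and inductive steps you lay out correctly tracks the internal dependencies.

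Two small corrections. First, the second triangle for $A_s$ (subobject $A_{s-1}(1)$, quotient $\CW$) is \emph{not} the ``dual extension'' of the first: Verdier duality exchanges $j_!$ with $j_*$, hence $A_s$ with $B_s$ up to shift and twist, so dualizing the first triangle produces a triangle for $B_s$, not a second one for $A_s$. The correct source of the second triangle is the \emph{other} canonical filtration of the size-$s$ Jordan block (by $\ker N^{s-1}$ rather than by $\operatorname{im} N^{s-1}$), and one must independently verify that it carries the stated twists; for the twist bookkeeping to telescope without gaps one should take the Hodge enhancement of the base case to be $A_1=\CW(1)$, which is what the first triangle forces at $s=1$. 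Second, your claim that at every inductive stage the construction is ``essentially forced'' because the relevant $\operatorname{Ext}^1$-group is one-dimensional overstates the situation at step (10): as the paper itself remarks after Definition-Lemma~\ref{oct31-4}, the space of morphisms $\RG{T}\CW\to\ove{A_{s-1}}[1]$ satisfying the required compatibility is not one-dimensional, so $\ove{\und{A_s}}$ involves a genuine choice even after fixing the Tate twist. This does not threaten the lemma (which asserts only the existence of an enhancement, not uniqueness), but the forcing argument fails there, and the resulting ambiguity has downstream consequences that the paper tracks explicitly in the $\mathcal{H}^k_j$ construction.
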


By using these facts, let us equip $\block{j}$, $\scH^k_j$ and $\scH^\infty_j$ with some mixed Hodge module structures.

\begin{lemma}\label{jan29-7}
    \begin{enumerate}
        \item For $n\geq 2$ and $1\leq  j\leq n/2$, we put $w_j:=(n-2j+1)/2$.
        Then, the tuple:
\begin{align*}
    ((P_1((j-1)/2),Q_2((j-1)/2)),&\dots, (P_{j-1}(1/2),Q_{j}(1/2)),\\
    (\und{P_j},&\und{A_j}), 
    \\
    (B_j(1/2),A_j(1/2)), \dots&,(B_j(w_j-1/2),A_j(w_j-1/2)),
    \\
    (\ove{B_j}(w_j),&\ove{P_j}(w_j)),\\
    (Q_j(w_j+1/2),P_{j-1}(w_j+1/2)),\dots&,(Q_2((n-j)/2),P_1((n-j)/2))),
\end{align*}
defines an object in $\muMGm$ whose underlying object is $\block{j}$.
Here, in the case $j=1$ (resp. $n/2$),
the first and fifth part (resp. third) part are removed.
We denote it by the same symbol $\block{j}$.
The object $\block{j}^{\circ}$ is enhanced in the same way.
For $n/2<j\leq n$,
$\block{j}\in \muMGm$ is defined as $\block{n-j+1}^\circ$.

\item 
When $n$ is odd: $n=2{n_0}+1$ ${n_0}\in \ZZ_{\geq 1}$,
$\block{{n_0}+1}\in \muMGm$ is defined to be 
\begin{align*}
      ((P_1({n_0}/2),Q_2({n_0}/2)),\dots, &(P_{{n_0}}(1/2),Q_{{n_0}+1}(1/2)),\\
    (\und{\ove{P_{{n_0}+1}}},&\und{\ove{P_{{n_0}+1}}}),\\
    (Q_{{n_0}+1}(1/2),P_{{n_0}}(1/2)),\dots,&(Q_2({n_0}/2),P_1({n_0}/2))).
\end{align*}

\item When $n=1$, we define $\block{1}\in \muMGm$ as
\[\block{1}=(\wt{\und{P_1}},\wt{\und{P_1}}).\]

\item We can also enhance $\block{j}'$ to be an object in $\muMGm$ in a similar way.
\end{enumerate}    
\end{lemma}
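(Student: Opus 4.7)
The plan is to verify, in each case, that the specified tuple defines an object of $\muMGm$. By the description of this category, this amounts to two conditions: (i) each component lies in $\stM(V_l)$ or $\stM(V_r)$, and (ii) the Fourier--specialization compatibility $\mathrm{FL}(\nu_r(G^i)) \simeq \nu_l(F^{i+1})$ holds in $\stM$ for $1 \leq i \leq n-1$. Condition (i) is immediate from the preceding lemma, which has lifted each building block ($P_s$, $Q_s$, $A_s$, $B_s$, $\ove{A_s}$, $\und{A_s}$, $\ove{P_s}$, $\und{P_s}$, $\ove{\und{P_s}}$, $\wt{\und{P_1}}$) to $\stM$, together with the fact that Tate twists stay inside $\stM$.

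So the real content is (ii), and for the generic case $n \geq 4$, $1 < j < n/2$ I would organize the verifications into four groups of junctions. In the leftmost region ($1 \leq i \leq j-2$), the formula $\mathrm{FL}(Q_s) = P_s(-1/2)$ together with $\nu_0(Q_s) = Q_s$ and $\nu_0(P_s) = P_s$ yields $\mathrm{FL}(\nu_r(Q_{i+1}((j-i)/2))) = P_{i+1}((j-i-1)/2) = \nu_l(F^{i+1})$. At the two junctions bounding $(\und{P_j}, \und{A_j})$, the formulas $\mathrm{FL}(Q_j) = P_j(-1/2)$ and $\mathrm{FL}(A_j) = B_j(1/2)$ are combined with $\nu_0(\und{P_j}) = P_j$ and $\nu_0(\und{A_j}) = A_j$ from Lemma~\ref{oct31-3}. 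In the middle constant region ($j+1 \leq i \leq n-j-1$) the single identity $\mathrm{FL}(A_j) = B_j(1/2)$ forces the twist to rise by $1/2$ per step, exactly as prescribed. The junctions adjacent to $(\ove{B_j}(w_j), \ove{P_j}(w_j))$ and the symmetric rightmost region are handled analogously using $\nu_0(\ove{B_j}) = B_j$, $\nu_0(\ove{P_j}) = P_j$, and $\mathrm{FL}(P_s) = Q_s(1/2)$.

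The remaining items are dispatched similarly: for $j = n/2$ (even $n$) the middle region is empty, and only the direct junction $(\und{P_j}, \und{A_j}) \to (\ove{B_j}(1/2), \ove{P_j}(1/2))$ needs to be checked; for $n = 2n_0+1$ and $j = n_0+1$ the central block $(\ove{\und{P_{n_0+1}}}, \ove{\und{P_{n_0+1}}})$ satisfies $\nu_0 = P_{n_0+1}$, so both adjacent junctions reduce to $\mathrm{FL}(Q_{n_0+1}(1/2)) = P_{n_0+1}$; and for $n = 1$ there is nothing to glue, the lift of $\wt{\und{P_1}}$ to $\stM(V)$ being already furnished. The Hodge lift of $\block{j}'$ is identical apart from replacing $(\und{P_j}, \und{A_j})$ by $(P_j, A_j)$ with zero twist, and the same Fourier identities apply directly since $\nu_0(P_j) = P_j$ and $\nu_0(A_j) = A_j$. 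The only real place where one could slip is the half-Tate twist bookkeeping: one must verify that the accumulated half-twists over the $n - 2j$ middle junctions, starting from $1/2$ and stepping by $1/2$, land exactly on the prescribed central twist $w_j = (n-2j+1)/2$, which is a one-line arithmetic check.
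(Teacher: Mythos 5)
Your verification is correct and is essentially the approach the paper intends: the paper states this as a definition-lemma with no written proof, and the implicit content is exactly the check you perform — that the half-Tate twists are calibrated so that the Fourier--specialization gluing condition $\mathrm{FL}(\nu_r(G^i)) \simeq \nu_l(F^{i+1})$ holds at each junction, using $\mathrm{FL}(A_s) \simeq B_s(1/2)$, $\mathrm{FL}(P_s) \simeq Q_s(1/2)$, $\mathrm{FL}(Q_s) \simeq P_s(-1/2)$ and the fact that $\nu_0$ strips the overlines/underlines while preserving twists.
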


\begin{lemma}\label{dec5-1}
    The morphisms $\mathfrak{f}_s$ in Definition-Lemma~\ref{jl11-1} are enhanced to be morphisms in $H^0\stM(V)$:
    \begin{align*}
        \undA{s}\to &\oveA{s}(-1),\quad           \und{P_s}\to \ove{P_s}(-1),\quad 
        \und{A_s}\to \ove{B_s}(-1),\\
        \und{P_s}\to &\ove{P_s}(-1),\quad     \und{\ove{P_s}}\to \ove{\und{P_s}}(-1),\quad                        \und{\wt{P_s}}\to \und{\wt{P_s}}(-1).
    \end{align*}
\end{lemma}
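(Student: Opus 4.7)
We will argue by induction on $s$, lifting the defining factorization of $\mathfrak{f}_s$ in Definition-Lemma~\ref{jl11-1} to $H^0\stM(V)$ arrow-by-arrow and tracking Tate twists via the enhanced distinguished triangles stated in the lemma immediately preceding Lemma~\ref{dec5-1}. For the base case $s=1$, the morphism $\mathfrak{f}_1\colon \und{A_1}\to \ove{A_1}$ factors through $\RG{m}\CW[1]$ by \ref{jl10-1} of Definition-Lemma~\ref{jl11-1}. The Hodge-theoretic identification of local cohomology at a smooth point of a one-dimensional complex manifold gives $\RG{m}\CW\simeq \CC_m[-2](-1)$, hence $\RG{m}\CW[1]\simeq \CC_m[-1](-1)$ in $H^0\stM(V)$. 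The first factor $\und{A_1}\to \RG{m}\CW[1]$ lifts through the enhanced triangle $\RG{m}\CW\to A_1\to \und{A_1}\to \RG{m}\CW[1]$ without extra twist, while the second factor $\RG{m}\CW[1]\to \ove{A_1}$ arises from the enhanced triangle $\RG{m}\CW\to \CW\to \ove{A_1}\to \RG{m}\CW[1]$ (after rotation), with the $(-1)$ coming from the Hodge-level identification $\RG{m}\CW[1]\simeq \CC_m[-1](-1)$. Composition yields $\mathfrak{f}_1\colon \und{A_1}\to \ove{A_1}(-1)$. The cases $\und{P_1}\to \ove{P_1}$ and $\wt{\und{P_1}}\to \wt{\und{P_1}}$ follow analogously using the same local cohomology computation together with the triangles defining $\und{P_1}$ and $\wt{\und{P_1}}$.

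For the inductive step, we assume $\mathfrak{f}_{s-1}$ has Hodge lift with Tate twist $(-1)$, and write $\mathfrak{f}_s = (\ove{A_{s-1}}\to \ove{A_s})\circ \mathfrak{f}_{s-1}\circ (\und{A_s}\to \und{A_{s-1}})$ by \ref{jl10-2} of Definition-Lemma~\ref{jl11-1}. The first arrow lifts via the enhanced triangle $\CW(s-1)\to \und{A_s}\to \und{A_{s-1}}\to \CW[1](s-1)$, the third via $\ove{A_{s-1}}(1)\to \ove{A_s}\to \CW\to \ove{A_{s-1}}[1](1)$, and the middle by induction. Carefully composing these with the correct Tate twist bookkeeping yields the total twist $(-1)$. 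The case $\und{A_s}\to \ove{B_s}(-1)$ follows from $\und{A_s}\to \ove{A_s}(-1)$ by post-composition with the Hodge morphism $\ove{A_s}\to \ove{B_s}$ coming from the adjunction defining $\ove{B_s}=\RG{W}\ove{A_s}$. The cases $\und{P_s}\to \ove{P_s}(-1)$ and $\und{\ove{P_s}}\to \ove{\und{P_s}}(-1)$ follow the same inductive pattern, using the enhanced triangles for those objects (which involve $\CV$ or $\RG{T}\CW$ layers in place of $\CW$).

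The main obstacle will be consistent bookkeeping of Tate twists. A priori one might expect $\mathfrak{f}_s$ to carry a twist depending on $s$, because the enhanced triangles defining $\und{A_s}$ and $\ove{A_s}$ carry twists that grow with $s$ (the $(s-1)$ on $\CW$ in one case, the $(1)$ accumulated at each step in the other). The content of the lemma is that these contributions align so as to leave $\mathfrak{f}_s$ with constant Tate twist $(-1)$ independent of $s$. Verifying this cancellation---which morally encodes the fact that $\und{A_s}$ and $\ove{A_s}$ are the standard and costandard extensions of the same underlying monodromic local system, differing by a fixed weight contribution from the local cohomology at $m$---is the technical heart of the argument and requires unwinding each enhanced triangle into its canonical Hodge morphism in $H^0\stM(V)$.
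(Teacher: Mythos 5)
Your base case ($s=1$) matches the paper's proof exactly: both factor $\mathfrak{f}_1$ through $\RG{m}\CW[1]$ and use the local cohomology identification $\RG{m}\CW[1]\simeq\CC_m[-1](-1)$ to produce the $(-1)$ twist. The difference is in how the general $s$ is handled, and this is where your proof has a genuine gap.

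Your inductive step cites the enhanced triangle $\ove{A_{s-1}}(1)\to\ove{A_s}\to\CW\to\ove{A_{s-1}}[1](1)$ as the source of the Hodge lift of the third arrow. But that triangle gives the morphism $\ove{A_{s-1}}(1)\to\ove{A_s}$, i.e.\ $\ove{A_{s-1}}\to\ove{A_s}(-1)$, which carries a Tate twist. Composing the three arrows you propose — $\und{A_s}\to\und{A_{s-1}}$ (no twist, from $\CW(s-1)\to\und{A_s}\to\und{A_{s-1}}$), then $\mathfrak{f}_{s-1}\colon\und{A_{s-1}}\to\ove{A_{s-1}}(-1)$ by induction, then $\ove{A_{s-1}}(-1)\to\ove{A_s}(-2)$ — yields total twist $(-2)$ at $s=2$, and more generally accumulates to $(-s)$, not the claimed constant $(-1)$. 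You flag this yourself (``Verifying this cancellation \dots is the technical heart of the argument''), but flagging it is not verifying it; as stated, the bookkeeping you exhibit does not cancel.

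The paper's ``The remaining cases follow from this fact'' is most plausibly read as using the direct factorization of $\mathfrak{f}_s$ through $\RG{m}\CW[1]\simeq\CC_m[-1](-1)$ \emph{uniformly in $s$}, not just at $s=1$: by the naturality of the defining triangles one has $\und{A_s}\to\RG{m}\CW[1]\to\ove{A_s}$ for every $s$, and the $(-1)$ twist then comes once and for all from the local cohomology isomorphism, with no recursive accumulation to worry about. This is a genuinely different route from yours. If you want to keep the inductive approach, you would need to establish that the morphisms $\ove{A_{s-1}}\to\ove{A_s}$ and $\und{A_s}\to\und{A_{s-1}}$ both lift at twist $0$ (which is asserted informally in item (12) of the preceding lemma) and then reconcile that claim with the enhanced triangle you cite; without that reconciliation, the inductive step is not justified. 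The cleaner fix is to replace the induction with the direct factorization.
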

\begin{proof}
Recall that the morphism 
$\RG{T}\CW\to \CWT$ in $\Hzsh{V}$ satisfies the commutative diagram:
\[\xysitamigiue{\RG{T}\CW}{\RG{m}\CW[1]}{\CWT}.\]
The morphism $\RG{m}\CW[1](=\CC_m[-1])\to \CWT$ in $\Hzsh{V}$ is enhanced to be the following morphism in $H^0\stM(V)$: 
    \[\RG{m}\CW[1](=\CC_m[-1](-1))\to \CWT(-1),\]
    which induces a morphism in $H^0\stM(V)$: 
    \[\und{A_1}\to \ove{A_1}(-1).\]
    The remaining cases follow from this fact.
\end{proof}

\begin{lemma}\label{jan8-5}
For $1\leq j\leq n$,
we set 
\[\wt{j}:=\left\{
\begin{array}{cc}
    w_j+1 &1\leq j\leq  n/2  \\
    1&j=n_0+1\ \mbox{for $n=2n_0+1$}\\
 w_{n-j+1}+1&n/2+1\leq j\leq n,
\end{array}
\right.\]
where $w_j$ is defined in Lemma~\ref{jan29-7}.
Then, the morphism $\block{j}^{\circ}\to \block{j}$ in Lemma~\ref{jl11-2} can be enhanced to be the morphism in $\muMGm$
\[\block{j}^{\circ}(\wt{j}) \to \block{j},\]
    which is also written as $\mathfrak{f}_j$.
\end{lemma}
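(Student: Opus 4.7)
The plan is to follow the recipe of Lemma~\ref{jl11-2} verbatim, replacing every occurrence of the underlying $\mathfrak{f}_s$ by its Hodge-theoretic enhancement from Lemma~\ref{dec5-1}, and to check that the Tate twists balance exactly as predicted by the formula for $\wt{j}$.

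First, I would treat the generic range $1\le j\le n/2$ (excluding the central case). Recall that by Lemma~\ref{jl11-2} the morphism $\mathfrak{f}_j\colon \block{j}^\circ\to\block{j}$ is represented by the element whose only nonzero components are $b_{n-j+1}=(0,\mathfrak{f}_j)$ and $b_{n-j+2}=(\mathfrak{f}_j,0)$, with the $\mathfrak{f}_j$'s living on the $(n-j+1)$-th $\PR^1$. Reading off the Tate twists from Lemma~\ref{jan29-7}: position $n-j+1$ of $\block{j}$ is $(\ove{B_j}(w_j),\ove{P_j}(w_j))$, while position $n-j+1$ of $\block{j}^\circ$ (which is position $j$ of the unflipped tuple, namely $(\und{P_j},\und{A_j})$, swapped) is $(\und{A_j},\und{P_j})$ with no twist. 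Lemma~\ref{dec5-1} gives enhancements $\und{A_j}\to\ove{B_j}(-1)$ and $\und{P_j}\to\ove{P_j}(-1)$ in $H^0\stM(V)$, i.e.\ $\und{A_j}(1)\to\ove{B_j}$ and $\und{P_j}(1)\to\ove{P_j}$. Twisting by $(w_j)$ and matching, the only compatible extra twist on $\block{j}^\circ$ at position $n-j+1$ is $w_j+1$, which is exactly $\wt{j}$.

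Next I would verify that the neighboring position $n-j+2$ imposes no further constraint. On the $r$-side of $\PR_{n-j+2}$ (part of $U_{n-j+2}$) the component is the zero morphism, so there is nothing to enhance. The remaining cases — $j=n/2$ (even $n$), $j=1$, the central case $j=n_0+1$ with $n=2n_0+1$, and $n=1$ — are handled identically: in each case the nontrivial part of $\mathfrak{f}_j$ is concentrated on a single $\PR^1$, the relevant enhanced morphism in Lemma~\ref{dec5-1} (for $\und{A_s}\to\ove{B_s}(-1)$, $\und{\ove{P_s}}\to\und{\ove{P_s}}(-1)$, or $\wt{\und{P_1}}\to\wt{\und{P_1}}(-1)$) carries a twist of $-1$, and a direct inspection of the twists assigned by Lemma~\ref{jan29-7} and Lemma~\ref{jan29-7} to the target components yields $\wt{j}=w_{n-j+1}+1$ or $\wt{j}=1$ as the case may be. The symmetric range $n/2<j\le n$ is then automatic via $\block{j}=\block{n-j+1}^\circ$.

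Finally I need to check the gluing constraint, i.e.\ that the Hodge-theoretic version of the image in (\ref{oct31-5}) still vanishes. Since $(0,\mathfrak{f}_j)$ and $(\mathfrak{f}_j,0)$ only produce a nontrivial restriction to $W\subset\PR_{n-j+1}$ (the intersection $U_{n-j+1}\cap U_{n-j+2}$), and these two restrictions are both equal to the common $\mathfrak{f}_j|_W\colon \calL_j(\wt{j})\to\ove{\calL_j}(w_j)$ built from Lemma~\ref{dec5-1}, their difference is zero in $\stM(W)$. All other restrictions to $U_i\cap U_{i+1}$ involve zero morphisms on both sides and hence vanish trivially. The anticipated mild obstacle is checking the odd central case, where $\mathfrak{f}_{n_0+1}$ is defined as an inductive composition involving $\und{\ove{P_s}}\to\und{P_{s-1}}$, $\mathfrak{f}_{s-1}$ and $\ove{P_{s-1}}\to\und{\ove{P_s}}$: here one must verify that each of these three factors lifts canonically to $H^0\stM(V)$ (without extra twist) so that the whole composition carries exactly the $(-1)$-twist recorded in Lemma~\ref{dec5-1}. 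This is an inductive check using the distinguished triangles defining $\und{\ove{P_s}}$, reducing the assertion to the known cases $\und{A_s}\to\ove{B_s}(-1)$ and $\und{P_s}\to\ove{P_s}(-1)$.
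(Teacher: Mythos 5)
Your argument is correct and follows the same approach as the paper: identify the $(n-j+1)$-th component of each of $\block{j}^\circ(\wt{j})$ and $\block{j}$ using Lemma~\ref{jan29-7}, and then invoke the $(-1)$-twisted enhancements from Lemma~\ref{dec5-1} to match the twists, which forces $\wt{j}=w_j+1$. (One small slip: the zero component of $b_{n-j+2}=(\mathfrak{f}_j,0)$ lives on the $l$-side of $\PR_{n-j+2}$, not the $r$-side, but this does not affect the conclusion; the paper's own proof simply records the $(n-j+1)$-th position computation and leaves the remaining bookkeeping implicit.)
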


\begin{proof}
We only consider the case $1< j< n/2$.
We can see it in the same way in the other cases.
The morphism $\block{j}^{\circ}\to \block{j}$ in Lemma~\ref{jl11-2}.
was induced by $\mathfrak{f}_j$ on the $(n-j+1)$-st $\PR$ of $\Ancore$, i.e.
the pair of 
$\mathfrak{f}_j\colon  \und{A_j}\to\ove{B_j}$ and $\mathfrak{f}_j\colon \und{P_j}\to \ove{P_j}$.
Since the $(n-j+1)$-st component of 
$\block{j}^{\circ}(\wt{j})$
(resp. 
$\block{j}$) 
$\in \muMGm$ is
\[ (\und{A_j}(\wt{j}),\und{P_j}(\wt{j}))\]
\[ (\mbox{resp.}\ (\ove{B_j}(w_j),\ove{P_j}(w_j)),\]
the morphism $\und{A_j}\to \ove{B_j}(-1)$ and $\und{P_j}\to \ove{P_j}(-1)$ in Lemma~\ref{dec5-1} induce the desired morphism.
\end{proof}

\begin{lemma}\label{dec5-6}
   The object $\scH^k_j$ and $\und{\scH^k_j} \in \mu \mathrm{sh}_{C_{\{\boldsymbol{m}\}}}(X_\Gamma)$ are enhanced to be objects $\scH^{k,H}_j$ and $\und{\scH_j^{k,H}}$ in $\muMGm$ inductively so that:
   \begin{enumerate}
       \item $\scH^{0,H}_j=\block{j}'$ and $\und{\scH^{0,H}_j}=\block{j}$ as objects in  $\muMGm$.

\item When $k$ is odd, we have distinguished triangles in $H^0\muMGm$:
   \begin{align*}
    \scH^{k-1,H}_j\to \scH_j^{k,H}\to \block{j}^{\circ}[k](k\cdot \wt{j})\to \scH^{k-1,H}_j[1],
\end{align*}
and the commutative diagram (for $k\geq 2$):
\[\xymatrix{\block{j}^{\circ}[k](k\cdot \wt{j})\ar[r]\ar[dr]_{\mathfrak{f}_j}&\scH_j^{k-1,H}[1]\ar[d]\\&\block{j}[k]((k-1)\cdot \wt{j})}.\]
Similarly, we have
\begin{align*}
    \und{\scH^{k-1,H}_j}\to \und{\scH_j^{k,H}}\to \block{j}^{\circ}[k](k\cdot \wt{j})\to \und{\scH^{k-1,H}_j}[1].
\end{align*}

   \item When $k$ is even, we have   
 we have distinguished triangles in $H^0\muMGm$:
   \begin{align*}
    \scH^{k-1,H}_j\to \scH_j^{k,H}\to \block{j}[k](k\cdot \wt{j})\to \scH^{k-1,H}_j[1],
\end{align*}
and the commutative diagrams:
\[\xymatrix{\block{j}[k](k\cdot \wt{j})\ar[r]\ar[dr]_{\mathfrak{f}_{n-j+1}}&\scH_j^{k-1,H}[1]\ar[d]\\&\block{j}^\circ[k]((k-1)\cdot \wt{j})}.\]
Similarly, we have
\[ \und{\scH^{k-1,H}_j}\to \und{\scH_j^{k,H}}\to \block{j}[k](k\cdot \wt{j})\to \und{\scH^{k-1,H}_j}[1].\]

   \end{enumerate}
\end{lemma}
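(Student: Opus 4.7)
The proof proceeds by induction on $k \geq 0$, mirroring the inductive construction of $\scH^k_j$ and $\und{\scH^k_j}$ in Definition-Lemma~\ref{jl12-3}. The base case $k=0$ is exactly Lemma~\ref{jan29-7}: we set $\scH^0_j := \block{j}'$ and $\und{\scH^0_j} := \block{j}$ as objects of $\muMGm$.

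For the inductive step, assume $\scH^{k-1}_j$ and $\und{\scH^{k-1}_j}$ have been enhanced to objects of $H^0\muMGm$ together with their specified distinguished triangles and commutative diagrams. We treat the case of $\scH^k_j$ with $k$ odd; the case of $\und{\scH^k_j}$ and the case of even $k$ are strictly analogous (with $\block{j}$ and $\block{j}^{\circ}$ interchanged, and with $\mathfrak{f}_j$ replaced by $\mathfrak{f}_{n-j+1}$, noting that $\wt{j}=\wt{n-j+1}$). The task reduces to producing a Hodge-theoretic connecting morphism
\[
\delta_k : \block{j}^{\circ}[k](k\cdot\wt{j}) \longrightarrow \scH^{k-1}_j[1]
\]
in $H^0\muMGm$ such that (i) the underlying morphism in $\mu \mathrm{sh}_{C_{\{\boldsymbol{m}\}}}(X_\Gamma)$ agrees with the one used in Definition-Lemma~\ref{jl12-3}, and (ii) if $k\geq 2$, the composition of $\delta_k$ with the projection $\pi_{k-1}\colon \scH^{k-1}_j[1]\to \block{j}[k]((k-1)\cdot\wt{j})$ coming from the inductive triangle for $k-1$ equals the shift-twist of $\mathfrak{f}_j$ provided by Lemma~\ref{jan8-5}. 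Once $\delta_k$ is produced, we define $\scH^k_j$ as its cocone, which automatically yields the desired triangle, and we construct $\und{\scH^k_j}$ in parallel, using the triangle of Lemma~\ref{dec5-2} as a bridge.

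The main content is producing the Hodge lift $\delta_k$. The inductive triangle for $k-1$, rotated, reads
\[
\scH^{k-2}_j[1] \longrightarrow \scH^{k-1}_j[1] \xrightarrow{\pi_{k-1}} \block{j}[k]((k-1)\cdot\wt{j}) \longrightarrow \scH^{k-2}_j[2],
\]
so the obstruction to lifting $\mathfrak{f}_j[k]$ through $\pi_{k-1}$ lies in the group
\[
\Hom_{H^0\muMGm}\!\bigl(\block{j}^{\circ}[k](k\cdot\wt{j}),\ \scH^{k-2}_j[2]\bigr).
\]
There are two routes to vanishing. The first exploits saturatedness (Lemma~\ref{lem:saturatedmicro}): the forgetful functor $\frakF$ induces an isomorphism of the sum of Hom-spaces over all Tate twists; the corresponding obstruction class in $\mu \mathrm{sh}_{C_{\{\boldsymbol{m}\}}}(X_\Gamma)$ vanishes because the sheaf-level connecting maps have already been chosen so as to satisfy the commutative triangle of Definition-Lemma~\ref{jl12-3}, and the key identity $\nu_0(\mathfrak{f}_s)=0$ from Definition-Lemma~\ref{jl11-1} ensures that two consecutive applications of $\mathfrak{f}$ in any component of the building-block tuple are zero. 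The second, more hands-on, route: expand $\scH^{k-2}_j$ as an iterated cone of $\block{j}[\ell](\ell\cdot\wt{j})$'s for $0\leq \ell\leq k-2$, and then observe from the concrete weight assignments in Lemma~\ref{jan29-7} together with Lemma~\ref{dec5-1} that every component of the relevant graded Hom vanishes for weight reasons — the twist $(k\cdot\wt{j})-(\ell\cdot\wt{j})=(k-\ell)\cdot\wt{j}>\wt{j}$ is strictly too large relative to the Hom-spaces between building blocks, as computed in Corollary~\ref{jl8-7}, Corollary~\ref{jl9-5}, and the like.

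The main obstacle will be the bookkeeping in the second route: matching Tate twists across all gluing regions $U_i$ and all building-block directions. I expect the saturatedness argument to be cleaner in practice, so the plan is to pass the construction of $\delta_k$ through Lemma~\ref{lem:saturatedmicro}, reducing the Hodge existence statement to the sheaf-level existence already established in the construction of $\scH^k_j\in\mu \mathrm{sh}_{C_{\{\boldsymbol{m}\}}}(X_\Gamma)$, and then verify a posteriori that the Tate twist singled out by the saturated comparison matches the twist $k\cdot\wt{j}$ in the statement. Finally, the compatibility with the underlying sheaf picture (hence with Definition-Lemma~\ref{jl12-3}) is automatic from the exactness of $\frakF$, and an overall scalar rescaling of $\delta_k$ if necessary identifies it with the morphism used in the underlying construction.
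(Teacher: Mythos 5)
The paper gives no proof of Lemma~\ref{dec5-6}; it is stated as an assertion and the burden of justification is implicitly pushed onto the surrounding lemmas (Lemma~\ref{jan29-7}, Lemma~\ref{dec5-1}, Lemma~\ref{jan8-5}). Your plan correctly identifies the crux --- the only thing that is not an outright Hodge-enhancement of already-enhanced data is the choice of the connecting morphism $\delta_k$, i.e.\ the lift of the twisted $\mathfrak{f}_j$ through $\pi_{k-1}$ --- and your first route (saturatedness of $\mu\mathrm{M}^c$ via Lemma~\ref{lem:saturatedmicro}, reducing the vanishing of the Hodge obstruction to the vanishing of the underlying sheaf obstruction, which Definition-Lemma~\ref{jl12-3} takes for granted) is the clean and essentially forced argument. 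This is very likely what the authors have in mind even though they did not spell it out, so the proposal supplies a genuine gap-fill rather than a rival proof.

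Two small corrections. First, the phrase ``the case of $\und{\scH^k_j}$ \ldots\ are strictly analogous (with $\block{j}$ and $\block{j}^{\circ}$ interchanged, and with $\mathfrak{f}_j$ replaced by $\mathfrak{f}_{n-j+1}$)'' conflates two different parallels: the interchange of $\block{j}$ and $\block{j}^{\circ}$ applies when passing between odd and even $k$, whereas $\und{\scH^k_j}$ differs from $\scH^k_j$ only at $k=0$ ($\block{j}$ in place of $\block{j}'$), with all the connecting data carried over verbatim. Second, the parenthetical appeal to $\nu_0(\mathfrak{f}_s)=0$ is not really what makes the sheaf-level obstruction vanish; the relevant point is rather that $\mathfrak{f}_j$ and $\mathfrak{f}_{n-j+1}$ are supported on disjoint $U_i$-ranges of the core (for $j\neq n-j+1$), so their composites are zero at the tuple level --- but since you are in any case delegating the sheaf-level vanishing to Definition-Lemma~\ref{jl12-3} rather than re-proving it, this slip is harmless. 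Your second ``weight-bookkeeping'' route is plausible but left vague; given that you default to the first route, I would simply drop it. Finally, ``an overall scalar rescaling'' is optimistic: the ambiguity in the lift $\delta_k$ lives in $\Hom(\block{j}^{\circ}[k](k\cdot\wt{j}),\scH^{k-2}_j[1])$ and may be more than one-dimensional, but since the statement of the lemma only asks for \emph{some} compatible choice, one simply fixes the Hodge lift first and lets the underlying sheaf lift be its image under $\frakF$ --- you do not need to match a pre-specified sheaf lift.
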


As we have seen in Lemma~\ref{nov14-1}, for $1\leq i\leq n$,
we can express $\scH^k_j|_{W_i}\in \Hzsh{W_i}$ as
\[\scH^k_j|_{W_i}\simeq \calL_s\oplus \calL_s[1]\oplus \dots\oplus \wt{\calL_s}[k]=\bigoplus_{l=0}^{k-1}\calL_s[l]\oplus \wt{\calL_s}[k],\]
    where $s$ is $s(j,i)$ defined in Lemma~\ref{nov14-1} and $\wt{\calL_s}$ is $\calL_s$ or $\ove{\calL_s}$ depending on $j,i,k$.
    As objects in $H^0\stM(W_i)$,
the twists of them are determined as follows.
    
\begin{corollary}\label{dec5-5}
For $1\leq i,j\leq n$ and $u\in \bZ_{\geq 0}$,
we define
\[d^{j,i}_u:=\left\{
\begin{array}{cc}
     |j-i|/2&  \mbox{$u$:even}\\
     |n-j+1-i|/2& \mbox{$u$:odd},
\end{array}
\right.\]
and 
\[e^{j,i}_u:=u\cdot \wt{j}+d^{j,i}_u.\]
Then,
as an object in $H^0\stM(W_i)$,
we have
\[\scH^{k,H}_j|_{W_i}\simeq \bigoplus_{u=0}^{k-1}\calL_{s(j,i)}[u](e_{u}^{j,i})\oplus \wt{\calL_{s(j,i)}}[k](e_{k}^{j,i}).\]
\end{corollary}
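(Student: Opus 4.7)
The plan is to proceed by induction on $k$, following the same skeleton as in the sheaf-theoretic proof of Lemma~\ref{nov14-1}, but carefully tracking the Tate twists at every step using the Hodge-enhanced constructions of Lemma~\ref{jan29-7}, Lemma~\ref{jan8-5}, and Lemma~\ref{dec5-6}.

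For the base case $k=0$, we read off the twists directly from Lemma~\ref{jan29-7}. Inspection of the tuple defining $\block{j}$ shows that on the $i$-th copy of $\PR$, the underlying local system (without shift) is twisted by $(j-i)/2$ when $i\leq j$, by $(i-j)/2$ when $j\leq i\leq n-j+1$, and by $(i-j)/2$ for $n-j+1\leq i\leq n$; these values combine to $|i-j|/2=d^{j,i}_0=e^{j,i}_0$. The same analysis applied to $\block{j}^\circ=\block{n-j+1}$ gives twist $|i-(n-j+1)|/2=d^{j,i}_1$ on $W_i$, which will be used in the inductive step. The statement for $\scH^0_j=\block{j}'$ differs from $\block{j}$ only in the $j$-th component, which does not affect the restriction to $W_i$ once $i\neq j$; when $i=j$ the top-degree piece is simply $\wt{\calL}_{s(j,j)}(e^{j,j}_0)$ with $\wt{\calL}_{s(j,j)}=\calL_{s(j,j)}$ obtained from $\ove{P}_j|_W$.

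For the inductive step, restrict the distinguished triangle of Lemma~\ref{dec5-6} to $W_i$. For odd $k$, this yields, in $H^0\stM(W_i)$,
\begin{equation*}
\scH^{k-1}_j|_{W_i}\to \scH^k_j|_{W_i}\to \block{j}^{\circ}[k]|_{W_i}(k\cdot \wt{j})\to \scH^{k-1}_j|_{W_i}[1],
\end{equation*}
and similarly with $\block{j}$ in place of $\block{j}^\circ$ for even $k$. Using Lemma~\ref{jan8-5} to enhance $\mathfrak{f}_j$ to a morphism of Hodge microsheaves with twist $-\wt{j}$, and restricting the commutative diagram of Lemma~\ref{dec5-6} to $W_i$, the connecting morphism $\block{j}^{\circ}[k]|_{W_i}(k\wt{j})\to\scH^{k-1}_j|_{W_i}[1]$ factors through the top summand $\wt{\calL}_{s(j,i)}[k](e^{j,i}_{k-1})$ of $\scH^{k-1}_j|_{W_i}[1]$, and the induced map on that summand is (a twist of) the restriction of $\mathfrak{f}_j$ to $W_i$. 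By the Hodge-enhanced version of the octahedral argument used in the proof of Lemma~\ref{nov14-1}, applying it to the diagram with $\RG{m}\CW[1]$ collapses the connecting map to a morphism $\calL_{s(j,i)}[k](e^{j,i}_k)\to \calL_{s(j,i)}[k+1](e^{j,i}_{k-1})$, which is zero because $\HOM_{\stM(W_i)}(\calL_s,\calL_s[2](c))=0$ for any $c\in\tfrac{1}{2}\bZ$. Hence the triangle splits and produces one new summand $\wt{\calL}_{s(j,i)}[k](e^{j,i}_k)$ together with a correction of the previous top summand to $\calL_{s(j,i)}[k-1](e^{j,i}_{k-1})$.

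It remains only to verify the twist bookkeeping $e^{j,i}_k=k\cdot\wt{j}+d^{j,i}_k$. The summand in degree $k$ arises from $\block{j}^{\circ}[k]|_{W_i}(k\wt{j})$ (for odd $k$) which has intrinsic twist $d^{j,i}_1=|n-j+1-i|/2$ by the base case applied to $\block{j}^\circ$, plus $k\wt{j}$; since $d^{j,i}_k=d^{j,i}_1$ when $k$ is odd this gives the desired formula. The same check for even $k$ uses $d^{j,i}_0=|j-i|/2$. The only point of genuine difficulty is verifying that the connecting morphism truly factors through $\mathfrak{f}_j$ at the Hodge level and that the resulting extension splits with the claimed twist on the freshly separated copy $\calL_{s(j,i)}[k-1](e^{j,i}_{k-1})$; this requires combining Lemma~\ref{jan8-5} with the inductive commutative diagram in Lemma~\ref{dec5-6} and applying the Hodge-microsheaf analogue of the octahedral splitting used in Lemma~\ref{nov14-1}. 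The remaining unipotent/Tate-twist verifications are straightforward direct calculations.
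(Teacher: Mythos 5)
Your proof follows essentially the same route the paper takes: the paper's one-line proof rests on the identity $\block{j}^{\ast}[u](u\wt{j})|_{W_i}=\wt{\calL_{s(j,i)}}[u](e^{j,i}_u)$, read off from Lemma~\ref{jan29-7}, combined with the octahedral splitting argument from Lemma~\ref{nov14-1} / Lemma~\ref{dec5-7}; you simply unpack this into an explicit induction on $k$, which is exactly what the paper leaves implicit. Your base-case bookkeeping of the twists $d^{j,i}_0=|j-i|/2$, $d^{j,i}_1=|n-j+1-i|/2$ from the tuple in Lemma~\ref{jan29-7}, and the appeal to the Hodge-enhanced octahedral splitting (valid because $\calL_s\to\calL_s[2]$ vanishes already for the underlying sheaves, and the forgetful functor is faithful on the relevant Hom spaces in the saturated subcategory), are both correct and match the paper's intent.
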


\begin{proof}
This follows from the fact that
\[\block{j}^{\ast}[u](u\cdot \wt{j})|_{W_i}={\wt{\calL_{s(j,i)}}}[u](u\cdot \wt{j}+d^{j,i}_u),\]
where we have set
\[\block{j}^{\ast}=\left\{\begin{array}{cc}
     \block{j}&  \mbox{$u$:even,}\\
     \block{j}^{\circ}&  \mbox{$u$:odd.}
\end{array}\right.\]
\end{proof}

\begin{proposition}
        \begin{enumerate}
        \item $\scH^\infty_{j}$ and $\und{\scH^\infty_{j}}\in \mu\mathrm{sh}_{C_{\lc \boldsymbol{m}\rc}}(X_\Gamma)$ are enhanced to be objects $\scH^{\infty,H}_{j}$ 
 and $\und{\scH^{\infty,H}_{j}}$ in $\muMGm$.

        \item With the notation in Corollary~\ref{dec5-5}, in $\stM(W_i)$ we have 
        \begin{align}\label{jan8-2}
        \scH^{\infty,H}_{j}|_{W_i}=\bigoplus_{u=0}^\infty\calL_{s(j,i)}[u](e_u^{j,i}).
        \end{align}
        In particular, $\cH_j^{\infty,H}\in \muMG$.
    \end{enumerate}
\end{proposition}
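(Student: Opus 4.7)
The plan is to lift the construction of Lemma~\ref{dec5-7} to the Hodge level, using that $\muMGm = \Ind(\mu\mathrm{M}^c_{C_{\lc\boldsymbol{m}\rc}}(X_\Gamma))$ is cocomplete and that Lemma~\ref{dec5-6} has already equipped every finite-stage object $\scH^k_j$, $\und{\scH^k_j}$ and every transition map $\iota$ with a canonical Hodge enhancement. First we form the Hodge-theoretic homotopy colimit by taking the cone of
\[
\bigoplus_{k=0}^\infty \scH^k_j \xrightarrow{\mathrm{id}-\iota} \bigoplus_{k=0}^\infty \scH^k_j
\]
inside $\muMGm$, and denote it again by $\scH^\infty_j$; the parallel definition gives $\und{\scH^\infty_j}$. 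Because the forgetful functor $\frakF$ preserves colimits and agrees with the underlying-microsheaf functor on each $\scH^k_j$, the underlying microsheaf of this new object is the previously defined $\scH^\infty_j \in \mu\mathrm{sh}_{C_{\lc\boldsymbol{m}\rc}}(X_\Gamma)$, completing part (1).

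For (2), apply the restriction functor $(-)|_{W_i}\colon \muMGm\to \stM(W_i)$, which sits in the Fourier gluing diagram defining $\muMGm$ and therefore preserves colimits. By Corollary~\ref{dec5-5}, at finite stage we have in $H^0\stM(W_i)$ the splitting
\[
\scH^k_j|_{W_i}\simeq \bigoplus_{u=0}^{k-1}\calL_{s(j,i)}[u](e_u^{j,i}) \oplus \wt{\calL_{s(j,i)}}[k](e_k^{j,i}).
\]
The transition $\iota$ restricts to the natural inclusion of the first $k-1$ summands into the first $k$ summands, post-composed with the canonical map $\wt{\calL_{s(j,i)}}[k]\to \calL_{s(j,i)}[k]$ at the same Tate twist (coming from $\ove{\calL_{s(j,i)}}\to \calL_{s(j,i)}$ or from the identity, depending on the parity of $k$). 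Feeding these restrictions into the $3\times 3$ distinguished-triangle diagram that ended the proof of Lemma~\ref{dec5-7} then yields formula~(\ref{jan8-2}) at the Hodge level, since the $\CC_{m_i}$-type contributions which disappear on the sheaf level also vanish here as Hodge summands in the appropriate twist. Finally, the right-hand side of (\ref{jan8-2}) contains only the local systems $\calL_{s(j,i)}$ on $W_i$, which extend across $m_i$, so the underlying microsheaf of the Hodge object $\scH^\infty_j$ has support in $C$; by the definition $\muMG := \muMGm\cap \mu\MHM_C$, this gives $\scH^\infty_j \in \muMG$.

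The main technical point requiring care is the identification at each step $k\to k+1$ of the restricted transition $\iota|_{W_i}$ with the Tate-twist-preserving map above: one has to unwind the commutative diagrams in Lemma~\ref{dec5-6} that define $\iota$ through $\mathfrak{f}_j$ (Lemma~\ref{jan8-5}) and verify that the half-Tate twists track correctly with the bookkeeping $e_u^{j,i}=u\cdot \wt{j}+d_u^{j,i}$ of Corollary~\ref{dec5-5} --- in particular that the shift from $d_u^{j,i}$ (even case) to $d_{u+1}^{j,i}$ (odd case) matches the twist difference contributed by the enhancement of $\mathfrak{f}_j$. This is essentially routine once the half-Tate conventions in Section~\ref{section:Hodgemicrosheaves} and Lemma~\ref{dec5-1} are matched to Lemma~\ref{jan29-7}, but it is the only place where a mismatch between the Fourier gluing and the half-Tate twist could cause trouble.
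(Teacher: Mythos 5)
Your proposal is correct and follows the same route as the paper: the paper's own proof is essentially the two sentences ``By Lemma~\ref{dec5-6}, we have an inductive system in $H^0\muMGm$ \dots\ and we thus obtain the colimit objects \dots\ For the second assertion, we can apply the same argument as in the proof of Lemma~\ref{dec5-7},'' and your write-up simply unwinds that argument (homotopy colimit via the cone of $\mathrm{id}-\iota$ in the Ind-category, restriction to $W_i$ applied stage-by-stage using Corollary~\ref{dec5-5}, the $3\times 3$ distinguished-triangle argument from Lemma~\ref{dec5-7}, and the observation that the resulting restriction has no singularity at $m_i$ so the object lands in $\muMG$). The only thing you add is the explicit caveat about tracking half-Tate twists in the transition maps $\iota|_{W_i}$; this is already built into Corollary~\ref{dec5-5} and Lemma~\ref{jan8-5}, so it is not a gap, merely a useful signpost.
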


\begin{proof}
   By Lemma~\ref{dec5-6}, we have an inductive system in $H^0\muMGm$:
\begin{align*}
    \scH^{0,H}_j\to \scH^{1,H}_j\to \scH^{2,H}_j\to \dots,\\
    \und{\scH^{0,H}_j}\to \und{\scH^{1,H}_j}\to \und{\scH^{2,H}_j}\to \dots,
\end{align*}
and we thus obtain the colimit objects in $H^0\muMGm$, whose underlying objects are $\scH^\infty_j$ and $\und{\scH^\infty_j}$.

For the second assertion, we can apply the same argument as in the proof of Lemma~\ref{dec5-7}.
\end{proof}

We regard $\CC_{m_k}\in \stM(\PR)$ as the weight-zero object in $\muMGm$.
Then, we have a Hodge enhancement of Lemma~\ref{dec5-2} as follows.

\begin{lemma}
For $1\leq j\leq n$,
    we have a distinguished triangle in $H^0\muMGm$:
\[\CC_{m_j}[-2](-1)\to \scH^{\infty,H}_j\to \und{\scH^{\infty,H}_j}\to \CC_{m_j}[-1](-1).\]
\end{lemma}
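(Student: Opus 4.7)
The plan is to lift the distinguished triangle of Lemma~\ref{dec5-2} step by step to $H^0\muMGm$, mirroring the sheaf-theoretic proof but keeping track of Tate twists.

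First I would establish the Hodge enhancement of Lemma~\ref{jl12-1}, namely a distinguished triangle in $H^0\muMGm$
\[
\CC_{m_j}[-2](-1)\longrightarrow \block{j}'\longrightarrow \block{j}\longrightarrow \CC_{m_j}[-1](-1).
\]
Here the twist $(-1)$ is forced: for a closed embedding $i\colon\{m_j\}\hookrightarrow\bP^1_j$, one has $i_*i^!\CC_{\bP^1_j}\simeq \CC_{m_j}[-2](-1)$ in the Hodge sense. By Definition~\ref{jl11-1} and the construction of $\block{j}$, $\block{j}'$ in Lemma~\ref{jan29-7}, the two objects differ only in the $j$-th component where $\und{P_j}\to \ove{P_j}$ (or a slight variant) is replaced by $P_j\to\ove{P_j}$. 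The cone of $P_j\to\und{P_j}$ is computed in \ref{j26-3} of Lemma~\ref{nov7-12} to be $\RG{m_j}\CW[1]\simeq\CC_{m_j}[-1](-1)$, and this identification lifts to $H^0\stM(V)$ by the standard Tate twist assigned to $\RG{m}\CW$. Gluing this local triangle inside the limit description of $\muMGm$ gives the desired enhanced triangle.

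Next I would prove by induction on $k$ the existence of distinguished triangles in $H^0\muMGm$
\[
\CC_{m_j}[-2](-1)\longrightarrow \scH^k_j\longrightarrow \und{\scH^k_j}\longrightarrow \CC_{m_j}[-1](-1),
\]
in which the left-hand map is compatible with the transition morphisms $\scH^{k-1}_j\to\scH^k_j$ and $\und{\scH^{k-1}_j}\to\und{\scH^k_j}$ (so that the morphism $\CC_{m_j}[-2](-1)\to\CC_{m_j}[-2](-1)$ connecting levels is the identity). The base case $k=0$ is the previous paragraph. For the inductive step, I would apply the Hodge-enhanced nine-lemma (or the octahedral axiom) to the commutative diagram of distinguished triangles from Lemma~\ref{dec5-6} relating $\scH^{k-1}_j\to\scH^k_j\to \block{j}^\ast[k](k\cdot\wt{j})$ and $\und{\scH^{k-1}_j}\to\und{\scH^k_j}\to \block{j}^\ast[k](k\cdot\wt{j})$: the right-hand columns are connected by the identity, so the cones of the left and middle columns agree, and by the induction hypothesis this common cone is $\CC_{m_j}[-1](-1)$.

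Finally, I would obtain the statement by taking homotopy colimits. The triangles at level $k$ assemble into a morphism of inductive systems whose first and third terms are the constant system with value $\CC_{m_j}[-2](-1)$ and $\CC_{m_j}[-1](-1)$ respectively (with all transition maps the identity). The homotopy colimit of the constant system on $\CC_{m_j}[-2](-1)$ is $\CC_{m_j}[-2](-1)$ itself, so applying $\mathrm{hocolim}$ to the $3\times\mathbb{N}$-diagram and using that colimits preserve distinguished triangles in $H^0\muMGm$ (which is obtained by $\Ind$ of the saturated system) yields the desired triangle. The main obstacle is the inductive step: one must verify that the cone triangle is compatible with the specific twists prescribed by $\wt{j}$, which requires checking that the left and middle columns in the nine-lemma diagram are both genuine morphisms of Hodge microsheaves (not just at the underlying sheaf level) with matching Tate twist, and this is precisely what is encoded in the commutative diagrams of Lemma~\ref{dec5-6} combined with Lemma~\ref{jan8-5}.
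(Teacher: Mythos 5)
Your proof is correct and is essentially the paper's intended argument: the paper presents this lemma as a ``Hodge enhancement of Lemma~\ref{dec5-2}'' without spelling out details, and your three steps (lift the $k=0$ triangle from Lemma~\ref{jl12-1} with the twist $(-1)$ coming from $\RG{m_j}\CW[1]\simeq\CC_{m_j}[-1](-1)$, propagate by induction via the morphism-of-triangles diagrams supplied by Lemma~\ref{dec5-6}, then take homotopy colimits) are precisely the natural lift of the paper's proof of Lemma~\ref{dec5-2} to $H^0\muMGm$.
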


\begin{theorem}[Theorem~\ref{thm:comparisonGinzburng}]\label{jan9-5}
    For $H\in \muMG$, we have a functorial isomorphism:
\[\HOM_{\muMG}(\scH^{\infty,H}_j, H)\simeq \HOM_{\muMGm}(\CC_{m_j}[-2](-1),H),\]
where the morphism is induced by the morphism $\CC_{m_j}[-2](-1)\to \scH^{\infty,H}_j$.
Moreover, for an expression
$H=((F^1,G^1),\dots,(F^n,G^n))$,
assume that we have a decomposition:
\[H|_{W_i}=F^i|_{W_i}=\bigoplus_{k\in \ZZ}\scF^k[k],\]
where $\scF^k$ is a direct sum of mixed Hodge modules without half-Tate twists whose underlying perverse sheaves are $1$-shifted local systems of finite rank on $W_i$. 
Then,
we have
\[H^0\HOM_{\muMG}(\scH^{\infty,H}_j, H)\simeq
(F^0\scF^{-1})|_{m_j}\cap (W_1\scF^{-1})|_{m_j},
\]
where we denote the Hodge (resp. weight) filtration of the underlying $D$-module by $F^\bullet\scF^{-1}$ (resp. $W_\bullet\scF^{-1}$) and $|_{m_j}$ is the sheaf theoretical restriction to $m_j$.
\end{theorem}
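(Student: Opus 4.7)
The plan is to follow the strategy of Theorem~\ref{dec5-8} while promoting each step to the Hodge-enhanced setting via the saturation property from Lemma~\ref{lem:saturatedmicro}. For the first isomorphism, I would apply $\HOM_{\muMGm}(-,H)$ to the Hodge-enhanced distinguished triangle
\[\CC_{m_j}[-2](-1) \to \scH^\infty_j \to \und{\scH^\infty_j} \to \CC_{m_j}[-1](-1),\]
so that the stated isomorphism reduces to the vanishing
\[\HOM_{\muMGm}(\und{\scH^\infty_j}, H) = 0 \quad \text{for every } H \in \muMG.\]

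To establish this vanishing, I would unwind the homotopy colimit defining $\und{\scH^\infty_j}$ and reduce, by induction on $k$ using the distinguished triangles
\[\und{\scH^{k-1}_j} \to \und{\scH^k_j} \to \block{j}^{\ast}[k](k\cdot \wt{j}) \to \und{\scH^{k-1}_j}[1]\]
from Lemma~\ref{dec5-6}, to the key vanishing $H^p\HOM_{\muMGm}(\block{j}(\ell/2), H) = 0$ and $H^p\HOM_{\muMGm}(\block{j}^\circ(\ell/2), H) = 0$ for every $p, \ell \in \bZ$. Since $\block{j}$ and $\block{j}^\circ$ are compact in $\muMGm$ (being built by finitely many extensions from compact objects), Lemma~\ref{lem:indsaturation} combined with the saturation of $\mu\mathrm{M}^c_{C_{\lc\boldsymbol{m}\rc}}(X_\Gamma)$ gives
\[\bigoplus_{\ell \in \bZ} H^p\HOM_{\muMGm}(\block{j}, H(\ell/2)) \cong H^p\HOM_{\mu\mathrm{sh}_{C_{\lc \boldsymbol{m}\rc}}(X_\Gamma)}(\frakF(\block{j}), \frakF(H)),\]
whose right-hand side vanishes by Lemma~\ref{m29-2} because $\frakF(H)$ has microsupport in the core $C$. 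Each summand on the left is therefore zero, which closes the induction and yields the vanishing on $\und{\scH^\infty_j}$ after passing through the homotopy colimit triangle (\ref{nov14-2}).

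For the explicit formula, since $\CC_{m_j}[-2](-1)$ is supported at $m_j \in W_j$, the hom-space localizes to $\stM(W_j)$, reducing to
\[H^0\HOM_{\stM(W_j)}\bigl(\CC_{m_j}[-2](-1),\ H|_{W_j}\bigr).\]
Substituting the decomposition $H|_{W_j} = \bigoplus_k \scF^k[k]$ in which $\scF^{-1}$ is a shifted smooth VMHS without half-Tate twists, only the $k=-1$ summand can produce a nonzero element in cohomological degree zero, so this reduces to $H^0\HOM_{\stM(W_j)}(\CC_{m_j}(-1), \scF^{-1}[1])$. Adjunction along the closed embedding $i_{m_j}\colon\{m_j\}\hookrightarrow W_j$ together with the purity identity $i_{m_j}^!\scF^{-1} \cong \scF^{-1}|_{m_j}[-2](-1)$ (valid because $\scF^{-1}[-1]$ is a smooth VMHS on a one-dimensional base) yields $\HOM_{\MHS}(\bQ, \scF^{-1}|_{m_j})$, which is by definition the weight-zero, Hodge-nonnegative subspace of the stalk; tracing Saito's conventions for shifting filtrations across the perverse shift identifies this with $(F^0\scF^{-1})|_{m_j} \cap (W_1 \scF^{-1})|_{m_j}$.

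The main technical hurdle will be the careful bookkeeping of the half-Tate twists: matching the cumulative $k\cdot\wt{j}$ twists appearing through the induction for the vanishing of $\HOM(\und{\scH^k_j},H)$ against the single Tate twist $(-1)$ in the triangle (\ref{jl16-1}), and reconciling the filtration conventions on $\scF^{-1}$ (which is a shifted mixed Hodge module) with those on its stalk at $m_j$ so that the final intersection emerges with the indices $F^0$ and $W_1$ exactly as stated.
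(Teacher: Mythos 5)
Your proposal is essentially correct and follows the same underlying strategy as the paper. The paper's proof of the first isomorphism is shorter because it simply cites the vanishing $\HOM_{\mu \mathrm{sh}_{C_{\lc \boldsymbol{m}\rc}}(X_\Gamma)}(\und{\scH^k_j},H)=0$ from the proof of Theorem~\ref{dec5-8} and then invokes Theorem~\ref{theorem:microHodgewrapping} to conclude that $\scH^\infty_j$ is the Hodge wrapping; what you do instead is re-derive the content of that general machinery in this particular case — applying $\HOM(-,H)$ to the triangle, unwinding the homotopy colimit, reducing by the filtration of $\und{\scH^k_j}$ to the blocks $\block{j}^{\ast}(\ell/2)$, and then using Lemma~\ref{lem:indsaturation} plus Lemma~\ref{m29-2} to kill each of the direct summands in the saturated hom. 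This is a legitimate unpacking of what the cited theorem does, and it has the pedagogical advantage of making the role of saturation explicit. Two small remarks: (1) your final worry about matching the cumulative $k\cdot\wt{j}$ twists against the single $(-1)$ in (\ref{jl16-1}) is moot, because the vanishing $\HOM_{\muMGm}(\und{\scH^\infty_j},H)=0$ is obtained uniformly in all half-Tate twists, so there is nothing to reconcile once it is established; (2) in the second half you write the hom as $\HOM_{\MHS}(\bQ, \scF^{-1}|_{m_j})$, where the paper uses the nearby cycle $\psi_{m_j}\scF^{-1}$; these differ by a shift since $\scF^{-1}$ is a $1$-shifted local system, and the filtration identities $F^\bullet(\psi_{m_j}\scF^{-1})=(F^\bullet\scF^{-1})|_{m_j}$ and $W_\bullet(\psi_{m_j}\scF^{-1})=(W_{\bullet+1}\scF^{-1})|_{m_j}$ are exactly what makes the stated indices $F^0$ and $W_1$ come out — so the ``tracing conventions'' step you defer is precisely this nearby-cycle bookkeeping, which should be spelled out rather than left implicit.
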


\begin{proof}
Note 
the vanishing $\HOM_{\mu \mathrm{sh}_{C_{\lc \boldsymbol{m}\rc}}{(X_\Gamma)}}(\und{\scH^k_{j}},H)=0$ for the underlying constructible sheaves in the proof of Theorem~\ref{dec5-8}.
Hence, by Corollary~\ref{theorem:microHodgewrapping}, we conclude that $\cH_j^{\infty,H}$ is the Hodge wrapping of $\bC_{m_j}$. This proves the first claim.

For the second part,
we first note that there is no non-zero morphism
$\CC_{m_j}[-2](-1)\to \scF^{k}[k]$ if $k\neq -1$.
Moreover, we have
\begin{align*}
   \HOM_{H^0\stM(W)}(\CC_{m_j}[-2](-1),\scF^{-1}[-1])
\simeq &\HOM_{H^0\stM(W)}((i_{m_j})_{!}\CC_{m_j}[-2](-1),\scF^{-1}[-1])\\
\simeq &\HOM_{H^0\stM(m_j)}(\CC_{m_j}[-2](-1),(i_{m_j})^!\scF^{-1}[-1]).
\end{align*}
Since the underlying perverse sheaf of $\scF^{-1}$ is a $1$-shifted constant sheaf around $m_j$,
we have
\[(i_{m_j})^!\scF^{-1}\simeq \psi_{m_j}\scF^{-1}[-1](-1),\]
where $\psi_{m_j}$ is the nearby cycle functor at $m_j$,
and
\begin{align*}
    F^\bullet(\psi_{m_j}\scF^{-1})&=(F^\bullet\scF^{-1})|_{m_j}\\
W_\bullet(\psi_{m_j}\scF^{-1})&=(W_{\bullet+1}\scF^{-1})|_{m_j}.
\end{align*}
Combining them, we obtain the desired result.
\end{proof}

\begin{corollary}\label{jan8-1}
    For $1\leq i, j\leq n$ and $k,s\in \ZZ$, we have
    \[H^0\HOM_{\muMG}(\scH^{\infty,H}_j, \scH^{\infty,H}_i(s/2)[k])\simeq \left\{ \begin{array}{cc}
         \CC& \left(\substack{k\leq 0,\  s\leq 0,\\e^{i,j}_{-k}+s/2\in \ZZ \ \mbox{and}\\0\leq -(e_{-k}^{i,j}+s/2)\leq s(i,j)-1}\right)\\
         0& \mathrm{otherwise}.
    \end{array}\right.\]
\end{corollary}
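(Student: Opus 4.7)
The plan is to apply Theorem~\ref{jan9-5} with $H = \scH^\infty_i(s/2)[k]$ and to unpack the resulting $(F^0 \cap W_1)$-invariants using the explicit description of $\scH^\infty_i|_{W_j}$. By Theorem~\ref{jan9-5}, the Hom space in question equals $H^0\HOM_{\muMGm}(\CC_{m_j}[-2](-1), \scH^\infty_i(s/2)[k])$, and since $\CC_{m_j}[-2](-1)$ is concentrated at the smooth point $m_j \in W_j$, it suffices to compute the Hom in $\stM(W_j)$ after restriction.

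Using formula (\ref{jan8-2}), the restriction becomes
\begin{equation*}
\scH^\infty_i(s/2)[k]|_{W_j} \;\simeq\; \bigoplus_{u \geq 0} \scL_{s(i,j)}[u+k]\bigl(e_u^{i,j}+s/2\bigr).
\end{equation*}
Rewriting each summand as $\scF^\ell[\ell]$ with $\scF^\ell$ a $1$-shifted local system as in Theorem~\ref{jan9-5}, the piece $\scF^{-1}$ corresponds to $u = -k$. If $k > 0$, no such $u \geq 0$ exists, so $\scF^{-1} = 0$ and the Hom vanishes, giving $k \leq 0$. The formula of the theorem moreover requires $\scF^{-1}$ to be free of half-Tate twists, and summands with $e_u^{i,j}+s/2 \notin \ZZ$ sit in the other factor of the square-root category and pair trivially with $\CC_{m_j}[-2](-1)$, so I must also impose $e_{-k}^{i,j}+s/2 \in \ZZ$.

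Under these conditions $\scF^{-1} = \scL_{s(i,j)}[1](e_{-k}^{i,j}+s/2)$, and Theorem~\ref{jan9-5} yields $(F^0\scF^{-1})|_{m_j} \cap (W_1\scF^{-1})|_{m_j}$. By induction on $s$ using the two distinguished triangles for $A_s$ recorded in Subsection~\ref{dec20-5}, the stalk $\scL_{s(i,j)}|_{m_j}$ is a Hodge--Tate mixed Hodge structure whose weight-graded pieces are $\CC(\alpha)$ for $\alpha \in \{0,1,\dots,s(i,j)-1\}$. After twisting by $b := e_{-k}^{i,j}+s/2$ the graded pieces become $\CC(\alpha+b)$, and the Deligne bigrading splitting of any Hodge--Tate MHS identifies $F^0 \cap W_1$ with the $(0,0)$-piece $V^{0,0}$. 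This is one-dimensional precisely when $\alpha + b = 0$ for some $\alpha \in \{0,\dots,s(i,j)-1\}$, i.e.\ when $0 \leq -(e_{-k}^{i,j}+s/2) \leq s(i,j)-1$, and zero otherwise. The condition $s \leq 0$ is then automatic from $e_{-k}^{i,j} \geq 0$.

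The main obstacle will be the convention bookkeeping: one must verify that the Tate-twist signs in the graded pieces of $A_s$ are as claimed (so that the final inequality has $-(e_{-k}^{i,j}+s/2) \geq 0$ rather than the reverse), and keep track of how the perverse shift between $\scL[1]$ and its underlying $\cD$-module interacts with the indexing of $F^\bullet$ and $W_\bullet$ used in Theorem~\ref{jan9-5}. Once these conventions are pinned down, the rest of the argument is formal, resting on the standard fact that for a Hodge--Tate MHS the Deligne bigrading cleanly splits the Hodge and weight filtrations so that $(F^0 \cap W_1) V = V^{0,0}$.
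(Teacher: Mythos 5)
Your proposal is correct and follows essentially the same route as the paper: apply Theorem~\ref{jan9-5}, read off $\scF^{-1}$ from the restriction formula~(\ref{jan8-2}), discard the $k>0$ and non-integral-twist cases, and compute $(F^0\cap W_1)$ of the Hodge--Tate stalk of $\calL_{s(i,j)}[1](e_{-k}^{i,j}+s/2)$. Where you invoke the Deligne bigrading to pick out $V^{0,0}$, the paper instead states directly that $F^\bullet$ of $\CW[1](s)$ jumps at $-s$ and $W_\bullet$ at $1-2s$, which resolves the convention bookkeeping you flagged as the remaining gap.
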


\begin{proof}
    
Recall that
the Hodge (resp. weight) filtration of the Tate-twisted constant Hodge module $\CW[1](s)$ ($s\in \ZZ$) jumps only at the degree $-s$ (resp. $1-2s$). 
Moreover, the mixed Hodge module $\calL_\ell[1]$ is decomposed into
\[\CW[1]\oplus\CW[1](1)\oplus \dots \oplus \CW[1](\ell-1)\]
around a point $m\in W$.
Therefore, 
we have
\begin{align}\label{jan8-4}
    F^0(\calL_\ell[1](s))|_{m}\cap W_1(\calL_\ell[1](s))|_{m}
=\left\{
\begin{array}{cl}
     \CC&  0\leq -s\leq \ell-1\\
     0& \mbox{otherwise}
\end{array}
\right.
\end{align}
By (\ref{jan8-2}), we have
\[\scH^{\infty,H}_i(s/2)[k]|_{W_j}=\bigoplus_{u=0}^{\infty}\calL_{s(i,j)}[u+k](e^{i,j}_u+s/2).\]
In this case, 
we have
\[\scF^{-1}=\left\{\begin{array}{cc}
     \calL_{s(i,j)}[1](e^{i,j}_{-k}+s/2)& k\leq 0  \\
     0& k>0.
\end{array}\right.
\] 
If $e^{i,j}_{-k}+s/2\notin \ZZ$,
then $H^0\HOM_{\muMG}(\scH^{\infty,H}_j, \scH^{\infty,H}_i(s/2)[k])$ vanishes.
So, we assume that $k\leq 0$ and $e^{i,j}_{-k}+s/2\in \ZZ$.
Then, by (\ref{jan8-4}), we obtain
\[F^0(\scF^{-1})|_{m_j}\cap W_1(\scF^{-1})|_{m_j}=
\left\{\begin{array}{cc}
     \CC&  0\leq -(e_{-k}^{i,j}+s/2)\leq s(i,j)-1\\
     0& \mbox{otherwise}.
\end{array}\right.
\]
Since $e_{-k}^{i,j}$ is nonnegative,
we have $s\leq 0$ if $0\leq -(e_{-k}^{i,j}+s/2)\leq s(i,j)-1$.
This completes the proof.
\end{proof}

By using this corollary, we give a proof of Lemma~\ref{jan8-8} and Lemma~\ref{jan9-1}.

\begin{proof}[Proof of Lemma~\ref{jan8-8}]
    Remark that $\wt{j}$ in Lemma~\ref{jan8-5} is greater than or equal to $1$.
Therefore, for $k\leq 0$ we have
\begin{align}\label{jan8-6}
    e_{-k}^{i,j}\geq -k.
\end{align}
Hence, 
if $0\leq -(e_{-k}^{i,j}+s/2)$, we have
\[-k\leq -s/2.\]
In particular, $-k\leq -s$.
Then, if we set $C^{a,b}:=\bigoplus_{k,s\in \ZZ}H^0\HOM_{\muMG}(\scH^{\infty,H}_j, \scH^{\infty,H}_i(s/2)[k])$ for $a=k-s$ and $b=-s$,
then we have
\[
C^{a,b}=0\quad (a<0\ \mbox{or}\ b<0).
\]
Moreover, by (\ref{jan8-6}),
If $s=0$ and $k<0$, then $0\leq -(e_{-k}^{i,j}+s/2)$ does not hold, and hence
\[
C^{a,0}=0\quad (a>0).
\]
Finally, we assume $a=0$ and $b>0$, that is, $k=s$ and $s<0$.
By (\ref{jan8-6}) again,
we have
\[e_{-k}^{i,j}+k/2\geq -k+k/2=-k/2>0,\]
and hence
\[C^{0,b}=0\quad (b>0).\]
We then have
\[\bigoplus_{a,b}H^\bullet(B)^{a,b}=\bigoplus_{k,s\in \ZZ}H^0\HOM_{\muMG}(\bigoplus_{j=1}^n\scH^{\infty,H}_j, \bigoplus_{j=1}^n\scH^{\infty,H}_j(s/2)[k]),\]
where $a=k-s$ and $b=-s$.
Then, this is Adams connected.
By Lemma~\ref{lem:adams-connected},
this implies Lemma~\ref{jan8-8}.
\end{proof}

\begin{proof}[Proof of the second part of Lemma~\ref{jan9-1}]
Let us first compute 
\[H^0\HOM_{\muMG}(\bigoplus_{j=1}^n\scH^{\infty,H}_j, \bigoplus_{j=1}^n\scH^{\infty,H}_j(0)[0]),\] the augmentation module in $\mathrm{Mod}(B)$.
If $i\neq j$,
then $e^{j,i}_0=|j-i|/2\neq 0$, and hence the condition $0\leq -e^{j,i}_{0}$ does not hold.
Therefore, by Corollary~\ref{jan8-1}, we have
\begin{align}
    H^0\HOM_{\muMG}(\bigoplus_{j=1}^n\scH^{\infty,H}_j, \bigoplus_{j=1}^n\scH^{\infty,H}_j(0)[0])&=
\bigoplus_{j=1}^nH^0\HOM_{\muMG}(\scH^{\infty,H}_j, \scH^{\infty,H}_j(0)[0])\notag\\
&\simeq 
\bigoplus_{j=1}^n\CC \cdot 1_{\scH^{\infty,H}_j}.\label{jan22-2}
\end{align}

On the other hand, by Theorem~\ref{jan9-5}, we have
\begin{align*}
    H^0\HOM_{\muMG}(\scH^{\infty,H}_j, \CC_{\PR_i}(s/2)[k])\simeq& 
H^0\HOM_{\muMGm}(\CC_{m_j}[-2](-1), \CC_{\PR_i}(s/2)[k])\\
\simeq& \left\{\begin{array}{cc}
     \CC& i=j,\ s=0,\ k=0 \\
     0&  \mbox{otherwise}.
\end{array}\right.
\end{align*}
Therefore, we have
\begin{align}\label{jan22-1}
    \bigoplus_{k,s\in \ZZ}H^0\HOM_{\muMG}(\bigoplus_{j=1}^n\scH^{\infty,H}_j, \bigoplus_{i=1}^{n}\CC_{\PR_i}(s/2)[k])\simeq \bigoplus_{j=1}^n\CC.
\end{align}

From (\ref{jan22-2}) and (\ref{jan22-1}), we conclude that the image of 
$\bigoplus_{i=1}^{n}\CC_{\PR_i}$ under (\ref{jan9-3}) is (quasi) isomorphic to the augmentation module
$H^0\HOM_{\muMG}(\bigoplus_{j=1}^n\scH^{\infty,H}_j, \bigoplus_{j=1}^n\scH^{\infty,H}_j(0)[0])$.
\end{proof}

\subsection{The morphisms between $\bC_{\PR}$ in $\muMG$}\label{jan8-10}
In this subsection, we compute the right hand side of (\ref{jan7-3}):
\[\bigoplus_{s\in \ZZ}\mathrm{Hom}_{\muMG}(\bigoplus_{i=1}^n\CC_{\PR_i},\bigoplus_{i=1}^n\CC_{\PR_i}(s/2))\] and give a proof of Corollary~\ref{apr-1} and Lemma~\ref{jan7-4-2}.
The object $\CC_{\PR_i}\in \mu \mathrm{sh}_{C_{\lc \boldsymbol{m}\rc}}{(X_{\Gamma})}$ is expressed as 
\[((0,0),\dots,(0,\CC_r[-1](-1/2)),\CC_{\PR},(\CC_l[-1](-1/2),0),\dots,(0,0)),\]
where $\CC_{\PR}$ is on the $i$-th $\PR$.

\begin{lemma}\label{dec27-2} Let us consider $\CC_{\PR_i}$ and $\CC_{\PR_j}$ for $1\leq i,j\leq n$.
\begin{enumerate}
    \item If $j$ is not $i-1$, $i$ or $i+1$,
    then we have $H^k\HOM_{\muMG}(\CC_{\PR_i},\CC_{\PR_j}(s/2))=0$ for $k,s\in \ZZ$.
    \item If $j=i$, then we have
    \[H^k\HOM_{\muMG}(\CC_{\PR_i},\CC_{\PR_i}(s/2))\simeq \left\{\begin{array}{cc}
         \CC& k=0,\ s=0 \\
         \CC& k=2,\ s=2\\
         0&\mbox{otherwise}
    \end{array}\right.\]
    \item If $j=i-1$ or $j=i+1$, then we have
    \[H^k\HOM_{\muMG}(\CC_{\PR_i},\CC_{\PR_j}(s/2))\simeq \left\{\begin{array}{cc}
         \CC& k=1,\ s=1 \\
         0&\mbox{otherwise}
    \end{array}\right.\]
    
\end{enumerate}
\end{lemma}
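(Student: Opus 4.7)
The plan is to use the Mayer--Vietoris long exact sequence~(\ref{s11-2-2}) obtained from the fiber-product description of $\mu\mathrm{M}_{C_{\{\boldsymbol{m}\}}}(X_\Gamma)$ with respect to the cover $\{U_k\}_{k=1}^{n+1}$. This reduces the computation of $H^k\HOM_{\mu\mathrm{M}_C(X_\Gamma)}(\bC_{\bP^1_i},\bC_{\bP^1_j}(s/2))$ to assembling local hom-spaces on each $U_k$ and on the overlaps $U_k\cap U_{k+1}\cong W$. The explicit $n$-tuple presentation of $\bC_{\bP^1_i}$ given just above the statement immediately identifies on which $U_k$ each local hom-space can be non-zero: only $U_{i-1},U_i,U_{i+1},U_{i+2}$ carry a non-zero source, and analogously for $\bC_{\bP^1_j}$. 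Every such local hom-space lives in the saturated mixed structure $M$ of part~\ref{ex:Hodge-Tate2} of Example~\ref{ex:Hodge-Tate}, in which the generating morphisms and their bidegrees (identity in $(0,0)$, fundamental class $[\bP^1]$ in $(2,2)$, boundary class $u_i$ in $(1,1)$, and inclusion $\iota_i$ in $(0,0)$) are fully pinned down.

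The case $|i-j|\geq 2$ follows by direct inspection: either the two $n$-tuples have disjoint local supports, or, on a single common $\bP^1_k$ they consist of skyscrapers at distinct marked points ($l$ versus $r$), so every $H^\bullet\HOM_{U_k}$ and $H^\bullet\HOM_{U_k\cap U_{k+1}}$ vanishes, forcing the Mayer--Vietoris complex to be zero. For $i=j$, the Mayer--Vietoris sequence reduces the computation to
\[
\bigoplus_{s}\HOM_{M}(\bC_{\bP^1_i},\bC_{\bP^1_i}(s/2))=\bC\cdot\mathrm{id}\oplus \bC\cdot[\bP^1],
\]
concentrated in bidegrees $(0,0)$ and $(2,2)$ respectively, by the definition of $M$; the contributions from the adjacent charts $U_{i-1},U_{i+2}$ only reproduce identity data across the nodes and add no new classes. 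This yields exactly the stated groups.

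The interesting case is $|i-j|=1$, say $j=i+1$. Here both objects have non-trivial data on the central chart $U_{i+1}$: namely $\bC_{\bP^1_i}$ contributes $(G^i,F^{i+1})=(\bC_{V_r},\bC_l[-1](-1/2))$ and $\bC_{\bP^1_{i+1}}$ contributes $(G^i,F^{i+1})=(\bC_r[-1](-1/2),\bC_{V_l})$. A local morphism on $U_{i+1}$ consists of a pair of morphisms of the respective components whose specializations agree under the Fourier transform. Tracking the half-Tate twists via the normalization $\mathrm{FL}(\bC_V[1])\simeq \bC_0(-1/2)$ recalled in \S\ref{jan7-1} forces the only surviving bidegree to be $(k,s)=(1,1)$, with the single class corresponding to the morphism $u_m$ in $M$ (pulled back to the node via the inclusion). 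The main obstacle will be the careful bookkeeping of half-Tate twists across the Fourier gluing isomorphism and verifying that the Mayer--Vietoris boundary map has neither kernel nor cokernel contribution in this bidegree; once one confirms that the local contributions on $U_i,U_{i+2}$ and the overlap terms on $W\subset\bP^1_i,\bP^1_{i+1}$ consist of morphisms that either vanish (by support) or factor through the same class, the explicit generating set of $M$ pins down the unique surviving class and gives the stated $\bC$ in bidegree $(1,1)$.
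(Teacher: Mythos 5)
Your proposal is correct and actually more systematic than what the paper does here: the paper's proof is a one-liner which simply exhibits the three generating morphisms $\CC_{\PR}\to \CC_{\PR}[0](0)$, $\CC_{\PR}\to \CC_{\PR}[2](1)$, $\CC_{\PR}\to \CC_{0}[0](0)$ in $H^0\stM(\PR)$ and states their induced bidegrees $(0,0)$, $(2,2)$, $(1,1)$ in $\muMG$, leaving the reader to infer from the definition of the saturated model $M$ that these are the only contributions. Your Mayer--Vietoris route via the sequence (\ref{s11-2-2}), which is the same machinery the appendix uses for the skyscraper objects, spells this out, which is more defensible.

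Two small points you should tighten. First, in the $|i-j|=1$ case the surviving class is the generator attached to the \emph{nodal} marked point ($l$ or $r$, depending on the direction), i.e.\ the class built from $\CC_{\PR}\to \CC_{0}$ for $0$ the node — not $u_m$, since $m$ is the off-node marked point where the stop sits; this is why the paper writes $\CC_{\PR_i}\to \CC_{\PR_j}[1](1/2)$ using the node $r$ resp.\ $l$. Second, for $i=j$ the class in bidegree $(2,2)$ has vanishing restriction to each chart $U_k$ and to $W$ (since $[\bP^1]|_{V_l}=[\bP^1]|_{V_r}=0$ on affines), so it cannot come directly from $\bigoplus_k H^\bullet\HOM_{U_k}$; in the MV long exact sequence it arises from the boundary map applied to the bidegree-$(1,2)$ class in $H^1\HOM_{U_i\cap U_{i+1}}(\CC_W,\CC_W(1))$. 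Your text says the MV sequence ``reduces the computation to'' the global hom in $M$ without noting that one of the two generators enters via the connecting map rather than via local homs; making this explicit is needed to conclude that the MV complex is neither too small nor too large. Finally, listing $U_{i-1},U_{i+2}$ among the charts with nonzero source is a harmless over-inclusion — on those charts the restrictions of both source and target vanish — but it would be cleaner to say that the only charts carrying nonzero data for $\CC_{\PR_i}$ are $U_i$ and $U_{i+1}$.
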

\begin{proof}
    The morphisms in $H^0\stM(\PR)$ (for a point $0\in \PR$):
    \begin{align*}
        \CC_{\PR}\to \CC_{\PR}[0](0),\quad \CC_{\PR}\to \CC_{\PR}[2](1)\quad \mbox{and}\quad 
        \CC_{\PR}\to \CC_{0}[0](0)
    \end{align*}
defines the morphisms
    \begin{align}\label{dec27-1}
        \CC_{\PR_i}\to \CC_{\PR_i}[0](0),\quad
        \CC_{\PR_i}\to \CC_{\PR_i}[2](1)\quad \mbox{and}\quad 
        \CC_{\PR_i}\to \CC_{\PR_{j}}[1](1/2)
    \end{align}
respectively for $j=i-1\mbox{\ or\ } i+1$ in $H^0\muMG$.
This implies the assertion.
\end{proof}

\begin{proof}[Proof of Corollary~\ref{apr-1}] 
The proof is now immediate from (\ref{mar19-1}),  Definition~\ref{def:AdamsKoszul} and
Lemma~\ref{dec27-2}.
\end{proof}

We denote the morphism $\CC_{\PR_i}\to \CC_{\PR_i}[0](0)$ (resp.
$\CC_{\PR_i}\to \CC_{\PR_i}[2](1)$,
$\CC_{\PR_i}\to \CC_{\PR_{i+1}}[1](1/2)$,
$\CC_{\PR_i}\to \CC_{\PR_{i-1}}[1](1/2)$) by $\frak{i}_i$ (resp.
$\frak{d}_i$, $\frak{e}_{i,i+1}$, $\frak{e}_{i,i-1}$).

\begin{lemma}\label{dec27-3}
 In $H^0\muMG$,  we have
    \begin{align*}
        \frak{e}_{i+1,i}\circ \frak{e}_{i,i+1}=\frak{e}_{i-1,i}\circ \frak{e}_{i,i-1}=\frak{d}_i.
    \end{align*}
\end{lemma}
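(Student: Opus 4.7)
The plan is to first observe, via Lemma~\ref{dec27-2}, that the target space $H^2\HOM_{\muMG}(\CC_{\PR_i},\CC_{\PR_i}(1))$ is one-dimensional and spanned by $\frak{d}_i$. Therefore each of the two compositions must equal some scalar multiple $c_\pm \cdot \frak{d}_i$, and the proof reduces to verifying (a) $c_\pm \ne 0$ and (b) $c_+ = c_-$ (which we can then normalize to $1$ by rescaling the $\frak{e}$-morphisms, each of which spans a one-dimensional hom-space and is therefore defined only up to a scalar).

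For (a), I would use the saturatedness of $\mu\mathrm{M}_C(X_\Gamma)$ (Lemma~\ref{lem:saturatedmicro}): a morphism in the Hodge microsheaf category vanishes iff its component of the right degree and Tate twist vanishes after applying the forgetful functor $\frakF\colon \muMG \to \mu\mathrm{sh}_C(X_\Gamma)$. The underlying compositions then lie in the endomorphism algebra $\End_{\mu\mathrm{sh}_C(X_\Gamma)}(\bigoplus_i \CC_{\PR_i})$, which by \cite{EtguLekili} (recalled in \S\ref{dec27-5}) is identified with $A_\Gamma$. Under this identification the underlying morphism $\frakF(\frak{e}_{i,i\pm 1})$ corresponds to the generator $e_{v_iv_{i\pm 1}}$ and $\frakF(\frak{d}_i)$ corresponds to $w_{v_i}$; the defining relation $e_{vw}e_{wv}=w_v$ in $A_\Gamma$ then yields the non-vanishing of both compositions.

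For (b), once we know each composition is a non-zero scalar multiple of $\frak{d}_i$, we may rescale the morphisms $\frak{e}_{i,i+1},\frak{e}_{i+1,i},\frak{e}_{i,i-1},\frak{e}_{i-1,i}$ (independently, since they live in distinct $1$-dimensional hom-spaces) so that both $c_+$ and $c_-$ become equal to $1$. Since $\frak{i}_i$, $\frak{d}_i$, $\frak{e}_{i,i\pm 1}$ were only fixed up to scalars anyway (as generators of one-dimensional spaces), this normalization is harmless and yields the stated equalities.

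The main obstacle I anticipate is the identification of $\frakF(\frak{e}_{i,i\pm 1})$ and $\frakF(\frak{d}_i)$ with the specific generators of $A_\Gamma$. This requires tracking the morphisms through the local Darboux chart at a node (where the Fourier gluing identifies $T_r\PR_i \simeq T_l^*\PR_{i+1}$), and verifying that the boundary morphism $\CC_{\PR_i}\to \CC_r[0]$ in $H^0\stM(\PR_i)$ glues, under the Fourier transform on the overlap, to the morphism realizing the generator $e_{v_iv_{i+1}}$ in the Etg\"u--Lekili presentation. This is a bookkeeping computation using the explicit gluing description of $\muMG$ from \S\ref{jan7-2}, and should be essentially the same calculation that underlies Lemma~\ref{dec27-2} itself; the key new ingredient is its compatibility with compositions.
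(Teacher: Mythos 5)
Your proposal is correct in outline, but takes a substantially heavier and more roundabout route than the paper, and has one imprecision worth noting.

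The paper's proof is a one-line local computation on a single $\PR$: the non-zero commutative triangle
$\CC_{\PR}\to \CC_0[0](0)\to \CC_{\PR}[2](1)$ in $H^0\stM(\PR)$ (where $\CC_0$ is the skyscraper at a nodal point). Unwinding the definitions from Lemma~\ref{dec27-2}, the composition $\frak{e}_{i\pm 1,i}\circ\frak{e}_{i,i\pm 1}$ restricted to the $i$-th $\PR$ \emph{is} precisely this composition through the skyscraper at the relevant node, and the triangle says it equals the fundamental-class morphism $\frak{d}_i$. No external input, no detour through Fukaya categories, and no normalization step is needed: the identity holds on the nose for the natural local choices. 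Your approach, by contrast, reduces to showing the compositions are non-zero (invoking the one-dimensionality of the target from Lemma~\ref{dec27-2}), proves non-vanishing via the Etg\"u--Lekili identification $\End_{\mu\mathrm{sh}}(\bigoplus\CC_{\PR_v})\cong A_\Gamma$, and then absorbs scalars. This works, but it is striking that you treat "tracking the morphisms through the Darboux chart" as an anticipated obstacle to be outsourced to bookkeeping: the paper's proof \emph{is} exactly that local tracking, and it is shorter than your reduction.

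Two points of precision. First, you do not actually need the full strength of saturatedness: non-vanishing of $\frak{e}_{i+1,i}\circ\frak{e}_{i,i+1}$ already follows from non-vanishing of $\frakF(\frak{e}_{i+1,i})\circ\frakF(\frak{e}_{i,i+1})$, which is just functoriality. Second, the claim that the four morphisms can be rescaled "independently" is misleading. Each $\frak{e}_{i,i+1}$ appears in two compositions (at vertex $i$ and at vertex $i+1$), so the rescaling constants are coupled across the quiver: after fixing $c_+=1$ at vertex $i$, the normalization constant at vertex $i+1$ through the same edge is determined. For a tree quiver such as $A_n$ the resulting system is triangular and therefore solvable (so your conclusion is correct), but this requires an argument, and the "independently" assertion as written is false. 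Finally, note that in the paper's overall architecture, Lemmas~\ref{dec27-2} and~\ref{dec27-3} are precisely what establish the isomorphism with $A_\Gamma$ in Lemma~\ref{jan7-4}; leaning on the Etg\"u--Lekili identification here is not circular (it is an external Fukaya-categorical input), but it forfeits the independent sheaf-theoretic verification that the paper is supplying.
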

\begin{proof}
    This follows from the following non-zero commutative diagram in $H^0\stM(\PR)$:
    \[\xymigisita{\CC_{\PR}}{\CC_0[0](0)}{\CC_{\PR}[2](1)}.\]
\end{proof}

\begin{proof}[Proof of Lemma~\ref{jan7-4-2}] 
By Lemma~\ref{dec27-2},
the algebra
$\bigoplus_{k,s\in \ZZ}\mathrm{Ext}^k_{\muMG}(\bigoplus_{i=1}^n\CC_{\PR_i},\bigoplus_{i=1}^n\CC_{\PR_i}(s/2))$
is actually degenerate to the diagonal:
\[\bigoplus_{k\in \ZZ}\mathrm{Ext}^k_{\muMG}(\bigoplus_{i=1}^n\CC_{\PR_i},\bigoplus_{i=1}^n\CC_{\PR_i}(k/2))=:\bigoplus_{k\in \ZZ}D^k.\]
Then,
it follows from
Lemma~\ref{dec27-2} and Lemma~\ref{dec27-3} that
$\bigoplus_{k\in \ZZ}D^k$ (the degree of $D^k$ is $k$) is isomorphic to $A_{\Gamma}$ defined in Subsection~\ref{dec27-5};
for a $i$-th vertex $v_i$ in $\Gamma=A_n$,
$e_{v_i}$ corresponds to $\frak{i}_i$ as an element of the degree $0$,
$e_{v_i,v_{j}}$ corresponds to
$\frak{e}_{i,j}$ for $j=i+1$ or $i-1$ as an element of the degree $1$,
and
$w_{v_i}$ corresponds to $\frak{d}_i$ as an element of the degree $2$.
Moreover, since the dga $\bigoplus_{s\in \ZZ}\mathrm{Hom}_{\muMG}(\bigoplus_{i=1}^n\CC_{\PR_i},\bigoplus_{i=1}^n\CC_{\PR_i}(s/2))$ is formal,
this is also isomorphic to $A_{\Gamma}$.
\end{proof}

\subsection{McBreen--Webster's result}
Here we give an account for the closely related construction by McBreen--Webster's result. First, we give a brief explanation of the context.
Their result is related to the case when affine $A_n$-plumbings ($=:\widehat A_n$-plumbings). The resulting plumbing space is an example of multiplicative toric hyperK\"aher (a.k.a. multiplicative hypertoric) varieties.

For such a class of varieties, McBreen--Webster and Gammage--McBreen--Webster~\cite{McBreenWebster, McBreen} provides a construction closely related to our construction in the above. 

Let us fix a toric data $t$. Associated to this data, we first have the additive toric variety $\frakM_\bC$ with the affinization morphism $\frakM_\bC\rightarrow \Spec H^0(\frakM_\bC, \cO)$. We denote the category of coherent sheaves set-theoretically supported on $\pi^{-1}(0)$ by $\mathrm{Coh}(\frakM_\bC)_0$.

Associated to the mirror $t^\vee$ of the toric data $t$, we have the Dolbeault hypertoric variety $\frakD$. In $\cite{McBreenWebster}$, McBreen--Webster constructed a certain category of the deformation quantization modules $DQ$ over $\frakD$. Locally, an object of $DQ$ can be considered as a $\cD$-module. So, we can speak about the Hodge module version of $DQ$. We then denote the Hodge version by $\mu M$. See the references for the details.
\begin{theorem}[\cite{McBreenWebster}]
The following equivalences hold:
\begin{equation}
    \begin{split}
        D^b\mathrm{Coh}(\frakM_\bC)_0&\cong DQ,\\
        D^b\mathrm{Coh}_{\bC^*}(\frakM_\bC)_0&\cong \mu M.
    \end{split}
\end{equation}
\end{theorem}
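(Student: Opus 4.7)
The plan is to prove both equivalences uniformly by producing compatible generating objects on each side and matching their endomorphism algebras, using the gluing description of microsheaves from Section~\ref{section:Hodgemicrosheaves} as the organizing principle. First, I would exploit the toric origin of $\frakM_\bC$: the hyperplane arrangement associated to the toric data $t$ produces a finite collection of line bundles (or twisted structure sheaves of closed strata) indexed by chambers, whose direct sum $T$ one expects to be a compact generator of $D^b\mathrm{Coh}(\frakM_\bC)_0$. After verifying compactness and generation (using, e.g., Koszul resolutions adapted to the torus action), the problem reduces to computing the dg algebra $\End(T)$ over the semisimple ring indexed by the chambers.

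Next, on the Dolbeault hypertoric side $\frakD$, I would use the fact that $\frakD$ admits a Lagrangian core which is a union of products of projective spaces, intersecting cleanly in the pattern dictated by the mirror arrangement $t^\vee$. This is precisely a Lagrangian core of Fourier type in the sense of Section~\ref{section:Hodgemicrosheaves}, so $DQ$ can be reconstructed by gluing constructible sheaf categories on tangent fibers via Fourier transforms. I would identify explicit corepresentatives of microstalks at generic points of core components, analogously to the $\scH_j^\infty$ constructed in Section~\ref{dec20-4}; these correspond to cotangent fibers under Ganatra--Pardon--Shende. Computing their endomorphism dg algebra via the gluing formula and comparing with $\End(T)$ should yield the first equivalence.

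For the second equivalence, I would upgrade everything to the mixed/equivariant setting. The $\bC^*$-scaling action on $\frakM_\bC$ equips $\End(T)$ with an Adams grading matching the equivariant parameter. On the Hodge side, replace the constructible sheaf categories in the gluing with the saturated mixed structures from Example~\ref{ex:Hodge-Tate}, and lift the corepresentatives to Hodge microsheaves following the template of Subsection~\ref{dec20-5}; the half-Tate twist then plays the role of the Adams grading. The comparison of endomorphism algebras upgrades formally, provided one checks saturatedness at each building block and verifies Hodge wrapping along the lines of Theorem~\ref{theorem:microHodgewrapping}.

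The main obstacle will be the combinatorial matching of Fourier duals across the core of $\frakD$ with the coherent data on $\frakM_\bC$: each specialization-plus-Fourier step in the gluing introduces a local Koszul-type duality, and one must show that iterating this along the entire core recovers exactly the zigzag algebra coming from the hyperplane arrangement. A further subtlety, specific to the Hodge version, is arranging the half-Tate twists consistently across all components so that saturatedness survives the gluing; this is the higher-dimensional analogue of Lemma~\ref{lem:saturatedmicro} and of the careful weight bookkeeping carried out in Lemma~\ref{jan29-7} for the $A_n$ case.
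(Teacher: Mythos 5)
This theorem is not proved in the paper at all; it is cited verbatim from \cite{McBreenWebster} and used as a black box. The appendix section in which it appears (``McBreen--Webster's result'') merely restates their result and then translates the relevant objects into the paper's own microsheaf language in the $\widehat A_n$ case: it introduces the unipotent microsheaf category $\mu\mathrm{sh}_{C,\mathrm{unip}}(X_\Gamma)$, exhibits the $N$-unipotent microlocal skyscraper $\cH_{u,i}^N$ explicitly as a chain of $\bC[y]/y^N$'s, and shows $\pi_n(\cH_{u,i}^N)$ is its pushforward to $\widehat A_n$, identifying it with McBreen--Webster's $\cP^{(n)}$. No proof of the displayed equivalences is offered or attempted. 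So your proposal is not comparable to ``the paper's proof''; there is none.

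On the merits of the proposal itself: what you sketch is a plausible research program but not a proof, and it does not resemble the actual argument of \cite{McBreenWebster}, which works via deformation quantization (periodization, character sheaves on the Dolbeault side, and a Koszul-dual pair of quadratic algebras $A$ and $B$ attached to the arrangement), not via Fourier-type gluing and microlocal skyscrapers. Concretely, several steps in your sketch are substantive open claims rather than verifications: (i) that the core of $\frakD$ is a Lagrangian core of Fourier type in the sense of Section~\ref{section:Hodgemicrosheaves} (hypertoric cores are unions of toric varieties with non-normal-crossings intersections in general, and the ``codimension $|J|$'' condition in the definition is far from automatic); (ii) that $DQ$, which is a category of deformation quantization modules rather than a microsheaf category, agrees with the glued microsheaf category without further input; and (iii) that the tilting object $T$ generates $D^b\mathrm{Coh}(\frakM_\bC)_0$ and that its $\End$-dga matches the skyscraper $\End$-dga across the Fourier gluing. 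None of these are addressed, and without them the argument does not close. If you want to engage with this statement, the right move is to read the McBreen--Webster proof rather than to re-derive it inside this paper's framework.
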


Let us describe their $\mu M$ in our language
in the case corresponding to $\widehat A_n$-plumbing. Since $C$ is a chain of nodal $\bP^1$, the microsheaf category $\mu \mathrm{sh}_C(X_\Gamma)$ is a gluing up of $\Sh(\bP^1, \{0, \infty\})$.

Instead of $\mu \mathrm{sh}_C(X_\Gamma)$, we consider the unipotent version: We use $\Sh_{\mathrm{unip}}(\bP^1, \{0, \infty\})$ the subcategory consisting of the objects whose monodromy is unipotent as the pieces of $\mu \mathrm{sh}_C(X_\Gamma)$. We denote the resulting glued-up category by $\mu \mathrm{sh}_{C,\mathrm{unip}}(X_\Gamma)$. An object $\cE\in \mu \mathrm{sh}_{C,\mathrm{unip}}(X_\Gamma)$ is said to be the nilpotent order $\leq N$ (or $N$-unipotent, for short) if the monodromy of the restriction to each $W=\bC^*\subset X_\Gamma$ satisfies $(\id-T)^N=0$.

\begin{definition}
    For $i\in \{1,2,...,n\}$ and $N\in \bZ_{\geq 0}$, an object $\cH^{N}_{u,i}$ is an \emph{$N$-unipotent microlocal skyscraper sheaf} if there exists a functor isomorphism $\Hom_{\mu \mathrm{sh}_{C,\mathrm{unip}}(X_\Gamma)}(\cH^{N}_{u,i}, -)\cong \Hom_{\mu \mathrm{sh}(X_\Gamma)}(\bC_{m_i},-)$ on the $N$-unipotent objects in $\mu \mathrm{sh}_{C, \mathrm{unip}}(X_\Gamma)$.
\end{definition}

We will consider $\widehat A_n$ as a quotient of $A_\infty$ (or, in other words, we regard $A_\infty$ as the universal covering of $\widehat A_n$). For this reason, we first study $\Gamma=A_\infty$. We can similarly define the unipotent microsheaves and unipotent microlocal skyscraper sheaves.

We describe the $N$-unipotent microlocal skyscraper sheaf at $\bP^1_i$: 
\begin{lemma}
The following description gives $\cH_{u,i}^N$:
\begin{equation}
    ...,(A_N, B_N),(A_N, B_N),(A_N, A_N),(B_N, A_N), (B_N, A_N),....
\end{equation}
In the quiver description in the sense of Remark~\ref{remark:quiver}:
\begin{equation}
 \cdots\bC[y]/y^{N}\overset{y}{\underset{\id}{\rightleftharpoons}} \bC[y]/y^{N}\overset{y}{\underset{\id}{\rightleftharpoons}}\bC[y]/y^{N}\overset{\id}{\underset{y}{\rightleftharpoons}} \bC[y]/y^{N}\overset{\id}{\underset{y}{\rightleftharpoons}} \bC[y]/y^{N}\cdots
\end{equation}
where the central $\bC[y]/y^N$ is placed on $\bP_i^1$.
\end{lemma}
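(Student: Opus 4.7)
The plan is to verify three things about the displayed tuple, which I call $\cE$: (1) it defines a valid object of $\mu\mathrm{sh}_{C,\mathrm{unip}}(X_{A_\infty})$; (2) it is $N$-unipotent with the claimed restrictions; (3) it corepresents the microstalk at $m_i$ on the subcategory of $N$-unipotent objects, so that $\cE \simeq \cH_{u,i}^N$.

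For (1), I would check the Fourier gluing condition $\mathrm{FL}(\nu_r(G^k)) \simeq \nu_l(F^{k+1})$ at every node $k \in \bZ$. Since $A_N$ and $B_N$ are already monodromic with $\nu_0(A_N) \simeq A_N$ and $\nu_0(B_N) \simeq B_N$, and $\mathrm{FL}(A_N) \simeq B_N$, $\mathrm{FL}(B_N) \simeq A_N$, each node check reduces to one of these two identities, all of which hold. For (2), both $A_N|_W$ and $B_N|_W$ equal $\cL_N$, whose monodromy satisfies $(T-\mathrm{id})^N = 0$, so $\cE$ is $N$-unipotent and the restrictions to each $W_k$ are as stated.

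For (3), the main content, I would adapt the colimit strategy of Subsection~\ref{dec20-4}. Define, for each $M\in\bZ_{\geq 1}$, finite truncations $\cE^{(M)}$ of $\cE$ together with $A_\infty$-analogues $\block{u,i}^{N,(M)}$ of the building blocks in Definition~\ref{s11-8}, using entries of uniform size $N$ — namely $A_N, B_N, \und{A_N}, \ove{A_N}, \und{P_N}, \ove{P_N}, \ove{B_N}$ — supported on the finite sub-chain $\{i-M,\dots,i+M\}$. Iterating the cone construction of Definition-Lemma~\ref{jl12-3} produces an inductive system $\cH_{u,i}^{N,(M),k}$; taking the colimit first in $k$ (as in the finite case) and then in $M$ yields, by a restriction computation parallel to Lemma~\ref{dec5-7}, an object whose underlying microsheaf equals $\cE$. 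The corepresenting property then reduces, following the pattern of Theorem~\ref{dec5-8}, to two ingredients: an $A_\infty$-analogue of Lemma~\ref{jl12-1} providing a distinguished triangle involving $\bC_{m_i}$, together with an $A_\infty$-analogue of Lemma~\ref{m29-2} asserting the vanishing $\Hom(\und{\cH_{u,i}^{N,(M),k}}, H) = 0$ for every $N$-unipotent $H$.

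The main obstacle will be this vanishing: the proof of Lemma~\ref{m29-2} in Subsection~\ref{nov23-1} was organized by the position of $j$ inside the finite chain, while in the infinite chain one must control morphisms propagating through two infinite arms. The crucial simplification is that the $N$-unipotency hypothesis on $H$ uniformly bounds the nilpotent orders of every $F_H^k, G_H^k$ by $N$, so the extension lemmas of Subsection~\ref{dec20-3} — applied with $s = s' = N$ throughout — let the telescoping arguments terminate after finitely many steps along each arm. Once the vanishing is established, passing to the colimit in $M$ recovers the microstalk functor, finishing (3). The displayed quiver description then follows from reading off the can/var quadruples of the local constituents: $A_N[1]$ has nearby cycle $\bC[y]/y^N$ with $\phi = 0$, hence can/var pair $(0,\mathrm{id})$ after fixing normalizations, while $B_N[1]$ is the Fourier dual with can/var pair $(\mathrm{id},0)$. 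Across each node, Fourier--Sato swaps the roles of can and var, producing exactly the $y / \mathrm{id}$ arrow pattern of the lemma with the alternation aligned with the transition $(A_N, B_N) \leftrightarrow (A_N, A_N) \leftrightarrow (B_N, A_N)$ at the center.
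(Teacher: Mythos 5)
There is a genuine gap, and the approach also diverges substantially from the paper's.

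The paper's proof is a short linear-algebra argument that works entirely within the quiver description of Remark~\ref{remark:quiver}: a morphism $\cH \to \cE$ in the quiver category is a family of maps $\bC[y]/y^N \to \cE_k$ commuting with the $c/v$ arrows, and because the arrows of $\cH$ on each side of the center are $\mathrm{id}$ in the outward direction, all components are forced by the central one $\bC[y]/y^N \to \cE_i$, which in turn is determined by the image of $1$ (well-defined since $\cE$ is $N$-unipotent, so $(vc)^N = 0$). Conversely any element of $\cE_i$ propagates out to a morphism. This bijection \emph{is} the corepresenting isomorphism. No colimit, no building blocks, no vanishing lemma.

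Your colimit strategy does not produce the displayed object. In the appendix, the colimit $\cH_j^\infty$ has local restriction $\bigoplus_{k=0}^{\infty}\cL_{s(j,i)}[k]$ --- an \emph{unbounded} complex whose cohomology grows in shift degree at every cone step. The object in the statement, by contrast, is a single shifted perverse sheaf of fixed size $N$ at every node: nothing is being stacked up in shift degree. The $N$-unipotency constraint means the ``wrapping'' saturates after finitely many steps rather than continuing indefinitely, so the microlocal skyscraper is a plain object, not an ind-object. Nothing in your sketch explains why iterating cones and then passing to $\mathrm{colim}_k$ and then $\mathrm{colim}_M$ would collapse down to the tuple $(\dots, (A_N,B_N), (A_N,A_N), (B_N,A_N), \dots)$; on the face of it the colimit lands in a much larger object. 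You would need to argue for stabilization, and that is exactly the content that is missing. Separately, your final quiver-translation step misidentifies the vanishing cycle: for the zero extension $A_N[1]$ the stalk at $0$ vanishes, so $\phi \cong \psi \cong \bC[y]/y^N$, not $\phi = 0$. The asymmetry between $A_N$ and $B_N$ is in the $\mathrm{can}/\mathrm{var}$ maps (one of $\mathrm{can}$, $\mathrm{var}$ is an isomorphism, the other is multiplication by $y$), not in the vanishing of one side. The paper sidesteps all of this by never leaving the quiver picture.
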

\begin{proof}
Given $\cE$ an $N$-unipotent object in $\mu \mathrm{sh}_{C, \mathrm{unip}}(X_\Gamma)$. Then it also has a quiver description by Remark~\ref{remark:quiver}. We denote the object given by the above quiver description by $\cH$. Suppose we have a morphism $\cH\rightarrow \cE$. Then it is determined by $\bC[y]/y^N\rightarrow \cE_i$ on $\bP_i^1$. Also, it is given by the image of $1$.

On the other hand, given an element of $\cE_i$, we can define a morphism $\bC[y]/y^N\rightarrow \cE_i$. This gives the desired bijection.
\end{proof}
Now let's consider $\widehat A_n$. There exists a proper push-forward functor $\pi_n\colon \mu \mathrm{sh}_C(X_{A_\infty})\rightarrow \mu \mathrm{sh}_C(X_{\widehat A_n})$ associated to the universal covering $X_{A_\infty}\to X_{\widehat A_n}$. 
\begin{lemma}
    The object $\pi_n(\cH_{u,i}^N)$ is the $N$-unipotent microlocal skyscraper sheaf at $\bP^1_i$
\end{lemma}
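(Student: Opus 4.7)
The plan is to deduce the claim by base change along the universal covering $\pi_n\colon X_{A_\infty}\to X_{\widehat A_n}$, combined with the preceding lemma. Since $\pi_n$ is a local diffeomorphism, the pullback $\pi_n^\ast$ agrees with the exceptional pullback $\pi_n^!$, and we have the adjunction $(\pi_{n!},\pi_n^!)$ between the categories of microsheaves. Here $\pi_n$ in the statement denotes $\pi_{n!}$.

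First, I would fix an $N$-unipotent test object $\cE\in\mu\mathrm{sh}_{C,\mathrm{unip}}(X_{\widehat A_n})$ and use the adjunction to write
\[
\Hom_{\mu\mathrm{sh}_C(X_{\widehat A_n})}(\pi_n(\cH^N_{u,i}),\cE)\cong \Hom_{\mu\mathrm{sh}_C(X_{A_\infty})}(\cH^N_{u,i},\pi_n^\ast(\cE)).
\]
Next, I would observe that $\pi_n^\ast(\cE)$ is again $N$-unipotent: the nilpotency condition $(\mathrm{id}-T)^N=0$ on the monodromy is local on each $W\simeq\bC^\ast$ and is preserved by pullback along the local diffeomorphism $\pi_n$. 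The pullback is simply the $\bZ$-periodic lift of $\cE$ to the infinite chain, obtained by applying the same quiver data at each translate. The previous lemma (the universal property of $\cH^N_{u,i}$ on $X_{A_\infty}$) then gives
\[
\Hom_{\mu\mathrm{sh}_C(X_{A_\infty})}(\cH^N_{u,i},\pi_n^\ast(\cE))\cong \Hom_{\mu\mathrm{sh}(X_{A_\infty})}(\bC_{m_i},\pi_n^\ast(\cE)).
\]
Finally, because $\pi_n$ is a local isomorphism at the non-nodal point $m_i\in\bP^1_i$, the microstalk of $\pi_n^\ast(\cE)$ at $m_i\in X_{A_\infty}$ coincides with the microstalk of $\cE$ at $\pi_n(m_i)\in\bP^1_i\subset X_{\widehat A_n}$, that is, with $\Hom(\bC_{m_i},\cE)$. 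Composing these three isomorphisms shows that $\pi_n(\cH^N_{u,i})$ corepresents the microstalk functor on $N$-unipotent objects, which is the defining property of the $N$-unipotent microlocal skyscraper at $\bP^1_i$.

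The main obstacle will be rigorously setting up the adjunction $(\pi_{n!},\pi_n^\ast)$ in this microsheaf context, since the fibers of $\pi_n$ are the infinite group $\bZ$ and $\cH^N_{u,i}$ has support on all the infinitely many $\bP^1$-components of the core of $X_{A_\infty}$. One should justify that $\pi_{n!}$ is well-defined on $\cH^N_{u,i}$ as a suitable colimit, and check that the resulting object lies in $\mu\mathrm{sh}_{C,\mathrm{unip}}(X_{\widehat A_n})$ (i.e.\ that the infinite direct sums involved converge to an $N$-unipotent microsheaf). If the adjunction step proves delicate, an alternative is to verify corepresentability directly from the explicit quiver presentation of $\cH^N_{u,i}$: a morphism from $\pi_n(\cH^N_{u,i})$ into $\cE$ corresponds to a $\bZ$-equivariant morphism $\cH^N_{u,i}\to\pi_n^\ast(\cE)$, which by the quiver computation amounts to specifying the image of the unit $1\in\bC[y]/y^N$ at the central vertex sitting over $\bP^1_i$, i.e.\ an element of the microstalk at $m_i$.
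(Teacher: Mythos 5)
Your argument is essentially the same as the paper's: both use the adjunction $\Hom(\pi_{n}(\cH^N_{u,i}),\cE)\cong\Hom(\cH^N_{u,i},\pi_n^{-1}\cE)$, then apply the preceding lemma to identify the right-hand side with the microstalk of $\pi_n^{-1}\cE$ at $m_i$, which matches $\cE_{m_i}$ since $\pi_n$ is a local isomorphism there. You are more explicit than the paper in checking that $\pi_n^{-1}\cE$ stays $N$-unipotent and in flagging the subtleties of defining $\pi_{n!}$ with infinite fibers, but the underlying mechanism is identical.
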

\begin{proof}
    For any $\cE\in \mu \mathrm{sh}_C(X_{\widehat A_n})$, we have
    \begin{equation}
    \begin{split}
        \Hom_{\mu \mathrm{sh}_C(X_{\widehat A_n})}(\pi_n(\cH_i^\infty), \cE) &\cong \Hom_{\mu \mathrm{sh}_C(X_{A_\infty})}(\cH_i^\infty, \pi_n^{-1}(\cE))\\
        &\cong \cE_{m_i}.
    \end{split}
    \end{equation}
\end{proof}
One can also see that $\pi_n(\cH^N_{u,i})$ is $\cP^{(n)}$ in \cite{McBreenWebster}. On the other hand, in \cite{gammage2024homologicalmirrorsymmetryhypertoric}, Gammage--McBreen--Webster prove the homological mirror symmetry between the additive toric variety. 

\begin{theorem}[\cite{gammage2024homologicalmirrorsymmetryhypertoric}]
    Associated to the same data $t$, we can associate the multiplicative toric varieties $\frakA$ and $\frakA^\vee$. Then we have
    \begin{equation}
        D^b\mathrm{coh}(\frakA)\cong \mathrm{Perf}\cW(\frakA^\vee).
    \end{equation}
    Moreover, there exists a subvariety $1$ of $\frakA$ such that the completion along $1$ of $D^b\mathrm{coh}(\frakA)$ is equivalent to $D^b\mathrm{coh}(\frakM_\bC)_0$.
\end{theorem}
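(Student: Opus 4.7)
The plan is to decouple the statement into the global homological mirror symmetry equivalence and the local completion statement, and to attack each using the microsheaf technology developed in this paper together with standard techniques from the hypertoric HMS program.

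For the equivalence $D^b\mathrm{coh}(\frakA)\cong \mathrm{Perf}\cW(\frakA^\vee)$, I would first exhibit the Weinstein core of $\frakA^\vee$ as a Lagrangian core of Fourier type in the sense of Section~\ref{section:Hodgemicrosheaves}. This is reasonable because multiplicative hypertoric varieties are glued from cotangent bundles of tori along codimension conditions read off the hyperplane arrangement, and the Fourier-type decomposition along clean intersections of smooth Lagrangians matches the combinatorics of the arrangement. Invoking Ganatra--Pardon--Shende~\cite{GPSmicro} identifies $\mathrm{Perf}\cW(\frakA^\vee)$ with the compact microsheaves on this core, and then the gluing presentation $\mu \mathrm{sh}_L(X)\simeq \lim \Sh(T_{\bigcap L_i}L_j)$ from Section~\ref{jan7-2} yields an explicit model for the A-side as a limit over specializations and Fourier transforms indexed by the arrangement.

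On the B-side, coherent sheaves on $\frakA$ likewise admit a Čech-type presentation as a limit over the torus-fixed charts, with the transition functors on overlaps given by localization and a twist by the moment map. The core of the proof is then to match these two limit diagrams: each chart of $\frakA$ corresponds to a stratum of the core of $\frakA^\vee$, and the Fourier transform in the microsheaf gluing should correspond under the equivalence to a Koszul duality between the local coherent pieces (constant sheaves on a vector space vs. Koszul complexes on its dual). I would first establish the equivalence chart-by-chart (which reduces to a toric case treated in Gammage's earlier work), then upgrade it to a natural transformation of the diagrams so that it descends to the limit.

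For the completion assertion, the idea is that the exponential map identifies a formal neighborhood of a distinguished point in the torus with its Lie algebra, and this globalizes to an identification of the formal completion of $\frakA$ along the subvariety denoted ``$1$'' (the preimage of the identity under the multiplicative moment map) with the formal neighborhood of $\pi^{-1}(0)\subset \frakM_\bC$. Translating through the mirror equivalence, the completion corresponds on the A-side to restricting microsheaves to a neighborhood of the compact core, which is exactly what defines $D^b\mathrm{coh}(\frakM_\bC)_0$ via McBreen--Webster's DQ-module interpretation. The main obstacle, as always in such HMS statements, will be the coherent bookkeeping of gradings, twistings and signs needed to make the chart-wise equivalences assemble into a functor of dg categories, together with verifying that the two presentations indeed compute the right derived limits (i.e.\ the gluing is homotopy-coherent rather than only naive). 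Handling the nontrivial overlap corrections and exactness of the Fourier gluing in the noncompact setting is the delicate technical point where one would likely need the infinite-dimensional microsheaf framework of Section~\ref{section:Hodgewrapping} rather than only the compact version.
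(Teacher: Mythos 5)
This statement is quoted, not proved, in the paper: it carries the attribution $[\cite{gammage2024homologicalmirrorsymmetryhypertoric}]$ (Gammage--McBreen--Webster) and appears only as background context for the McBreen--Webster discussion in the appendix. There is therefore no ``paper's own proof'' for your attempt to be compared against, and writing a self-contained proof of homological mirror symmetry for multiplicative hypertoric varieties is far beyond what the present paper undertakes.

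That said, your sketch has genuine gaps even as a proposal. First, you assume without verification that the Weinstein core of $\frakA^\vee$ is a Lagrangian core of Fourier type in the sense of Section~\ref{section:Hodgemicrosheaves}. The paper only checks this definition for $A_n$-plumbings of $T^*\bP^1$ (a very special chain of spheres); for a general multiplicative hypertoric variety the core is a union of toric varieties glued along toric boundary strata, and the clean-intersection and codimension-$|J|$ conditions in the definition would need to be established from the hyperplane arrangement data, which is not automatic. Second, the step ``establish the equivalence chart-by-chart, which reduces to a toric case treated in Gammage's earlier work'' is where essentially all of the content of Gammage--McBreen--Webster lives: the local-to-global assembly with coherent signs, shifts, and twists is the theorem, not a routine consequence of the chart-wise statement, and dismissing it as ``bookkeeping'' conceals the main difficulty. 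Third, the mechanism you invoke to compare Fourier transforms on the A-side with transition functors on the B-side (``a Koszul duality between the local coherent pieces'') is stated without any precise match of functors, and it is exactly at these overlaps where homotopy-coherence obstructions can appear. Finally, your argument for the completion statement rests on the exponential map globalizing to a formal-neighborhood equivalence; this is plausible but not shown, and in particular the identification of the target category with $D^b\mathrm{coh}(\frakM_\bC)_0$ requires checking support conditions under the mirror functor, which you do not address. If you want to pursue this direction, the honest move is to cite Gammage--McBreen--Webster for the theorem and instead explain, as the paper does, how the microsheaf framework of the present work interfaces with their result via the $N$-unipotent microlocal skyscraper construction.
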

McBreen~\cite{McBreen} communicates us that the cocore objects of $\frakA^\vee$ coincides with the completion of $\cP^{(n)}$ ($n\rightarrow \infty$) in a certain sense.

\subsection{Koszul duality for the category $\cO$ of $A_n$-plumbing of $T^*\bP^1$}\label{appendix:BLPW}

In this subsection, we discuss a variant of the above argument: As we discussed in \S~\ref{section:BLPW}, we use the relative core $C'$. In this case,
$\mu \mathrm{sh}_{C'}(X_\Gamma)$ is
\[\Sh(\bP^1_1,\{r\})\times_{\Sh(T_r\bP^1_1,0)} \Sh(\bP^1_2,\{l,r\})\times_{\Sh(T_r\bP^1_2,0)} \dots \times_{\Sh(T_r\bP^1_{n-2},0)} \Sh(\bP^1_{n-1},\{l,r\})\times_{\Sh(T_r\bP^1_{n-1},0)} \Sh(\bC^1,\{0\}).\]
For fixed points $m_i\in \bP^1_{i}$ and $m_{n}\in \bC^1$,
we set $\{\mathbf{m}\}=\{m_1,\dots,m_{n}\}$ and we define $\mu \mathrm{sh}_{C'_{\{\mathbf{m}\}}}$ similarly.
An object in $\mu \mathrm{sh}_{C'_{\{\mathbf{m}\}}}(X_\Gamma)$ can be expressed as a tuple:
\[H=((F^1,G^1),(F^2,G^2),\dots,(F^{n-1},G^{n-1}),F^{n}),\]
where
$(F^i,G^i)\in \Sh(\bP^1_i,\{l,r,m\})$ ($1\leq i\leq n-1$) and
$F^{n+1}\in \Sh(\bC,\{0\})$.
We also define $U_i$ and $\mu \mathrm{sh}_{C'}(U_i)$ ($1\leq i\leq n$) as in Subsection~\ref{dec20-1}.
Here, $U_{n}$ is the union of $V_r\subset \bP^1_{n-1}$ and $\bC^1$.
Then, for $H,H'\in \mu \mathrm{sh}_{C'_{\{\mathbf{m}\}}}(X_\Gamma)$ the map (\ref{s11-7-2}) is defined also in this case:
\begin{align}\label{feb25-1}
    \xymatrix{
\bigoplus_{i=1}^{n}H^k\mathrm{Hom}_{U_i}(H,H')
    \ar[r]&
    \bigoplus_{i=1}^{n-1}H^k\mathrm{Hom}_{U_{i}\cap U_{i+1}}(H,H')\\
(b_1,(b_{2,l},b_{2,r}),\dots,(b_{n-1,l},b_{n-1,r}),b_{n})\ar@{|->}[r]\ar@{}[u]|-{\rotatebox{90}{$\in$}}& 
(b_1|_{W}-b_{2,l}|_{W},b_{2,r}|_{W}-b_{3,r}|_{W},\dots, b_{n-1,r}|_{W}-b_{n}|_{W})\ar@{}[u]|-{\rotatebox{90}{$\in$}}.
}        
\end{align}

We follow the same route as in the previous cases.

\begin{definition}
\begin{enumerate}
    \item If $n\geq 2$ and $2\leq j\leq  n-1$,
    we define $\und{\cH_j}\in \mu \mathrm{sh}_{C'_{\{\mathbf{m}\}}}(X_\Gamma)$ (resp.${\cH_j}\in \mu \mathrm{sh}_{C'}(X_\Gamma)$)
    as
    \begin{align*}
        ((P_1,Q_2),(P_2,Q_3),\dots,(P_{j-1},Q_{j}),
        (\und{P_j},\und{A_j}),(B_{j},A_{j}),\dots,(B_{j},A_{j}),B_j),\\
        (\mbox{resp.\quad }        ((P_1,Q_2),(P_2,Q_3),\dots,(P_{j-1},Q_{j}),
        ({P_j},{A_j}),(B_{j},A_{j}),\dots,(B_{j},A_{j}),B_j)).
    \end{align*}
\item We define $\und{\cH_1}\in \mu \mathrm{sh}_{C'_{\{\mathbf{m}\}}}(X_\Gamma)$ (resp. ${\cH_1}\in \mu \mathrm{sh}_{C'}(X_\Gamma)$) as
\begin{align*}
((\und{P_1},\und{A_1}),(B_1,A_1),\dots,(B_1,A_1),B_1),\\
    (\mbox{resp. \quad} (({P_1},{A_1}),(B_1,A_1),\dots,(B_1,A_1),B_1)).
\end{align*}

\item We define $\und{\cH_{n}}\in \mu \mathrm{sh}_{C'_{\{\mathbf{m}\}}}(X_\Gamma)$ (resp. $\cH_{n}\in \mu \mathrm{sh}_{C'}(X_\Gamma)$)
as
 \begin{align*}
        ((P_1,Q_2),(P_2,Q_3),\dots,(P_{n-1},Q_{n}),\und{P_{n}}),\\
        (\mbox{resp.\quad }        ((P_1,Q_2),(P_2,Q_3),\dots,(P_{n-1},Q_{n}),P_{n})).
    \end{align*}
\end{enumerate}
\end{definition}

We will omit the subscript $\{\mathbf{m}\}$ below.

\begin{theorem}
    For any $H\in \mu \mathrm{sh}_{C'}(X_\Gamma)$ and $1\leq j\leq n$,
    we have the vanishing:
    \[H^k\HOM_{\mu \mathrm{sh}_{C'}(X_\Gamma)}(\und{\cH_j},H)=0\quad (k\in \ZZ).\]
\end{theorem}

\begin{proof}
    It suffices to show that the morphism (\ref{feb25-1}) is bijective.
    We can apply exactly the same proof as in the $A_n$-plumbing of $T^*\bP^1$.
\end{proof}

This means that $\cH_j$ is a microlocal skyscraper sheaf.
We can consider $\mu \mathrm{M}_{C'}$ also in this case as in Subsection~\ref{dec20-5}.
We gather some facts on it, which can be shown as in the previous case.

\begin{proposition}
    \begin{enumerate}
        \item 
        For $2\leq j\leq n-1$,
the tuple 
\begin{align*}
    ((P_1((j-1)/2),Q_2((j-1)/2)),(P_2,Q_3),\dots,(P_{j-1}(1/2),Q_{j}(1/2)),
        ({P_j},{A_j}),\\(B_{j}(1/2),A_{j}(1/2)),\dots,(B_{j}((n-1-j)/2),A_{j}((n-1-j)/2)),B_j((n-j)/2))
\end{align*}
defines an object in $\mu \mathrm{M}_{C'}(X_\Gamma)$, whose underlying object is $\cH_j$.   
We denote it by $\cH^H_j$.
        We can similarly enhance $\cH_1$ and $\cH_{n}$ to objects $\cH^H_1$ and  
 $\cH^H_{n}$ in $\mu \mathrm{M}_{C'}(X_\Gamma)$.
        
        \item We set $W_i=U_i\cap U_{i+1}$ and $W_{n}:=\CS\subset \CC$. 
        For $1\leq i,j\leq n$,
        in $\mathrm{M}(W_i)$,
we have
\[\cH_j^H|_{W_i}=
\calL_{\min{(i,j)}}(|j-i|/2)
=
\left\{
\begin{array}{cc}
     \calL_i((j-i)/2)&  1\leq i<j\\
     \calL_{j}((i-j)/2)& j\leq i\leq n.
\end{array}
\right.
\]

\item For $1\leq i,j\leq n$ and $k,s\in \ZZ$, we have
\[H^0\HOM_{\mu \mathrm{M}_{C'}(X_\Gamma)}(\cH_j^H,\cH_i^H(s/2)[k])
\simeq \left\{
\begin{array}{cc}
     \CC&\left(\substack{
          k=0  \\
           s/2+|j-i|/2\in \ZZ\\
           0\leq -(s/2+|j-i|/2)\leq {\min{(i,j)}}-1
     }\right)  \\
     0& \mbox{otherwise}
\end{array}
\right.
\]

    \end{enumerate}

\end{proposition}

By this proposition, 
for $1\leq i,j\leq n$, we can define (up to constant)
a non-zero morphism in
$H^0\HOM_{\mu \mathrm{M}_{C'}(X_\Gamma)}(\cH_j^H,\cH_i^H(s/2))$
for $s=-|j-i|-2\ell$ ($0\leq \ell\leq \min{(j,i)}-1$).
    We will see that this number coincides with the path
starting from $i$-th vertex and ending at $j$-th vertex in the basis of an algebra $L_\Gamma$ defined below (see Remark~\ref{may5-1}).

We set $\cH^H:=\bigoplus_{j}\cH^H_j$ and $\cH:=\bigoplus_j \cH_j $.
Then, by the saturatedness, we have
\[\HOM_{\mu \mathrm{M}_{C'}(X_\Gamma)}(\cH^H,\bigoplus_{s\in \ZZ}\cH^H(s/2))\simeq \mathrm{End}_{\mu \mathrm{sh}_{C'}(X_\Gamma)}(\cH).\]
The left hand side has a doubly graded decomposition:
\[B=\bigoplus_{a,b\in \ZZ} B^{a,b}=\bigoplus_{k\in \ZZ}\HOM^k_{\mu \mathrm{M}_{C'}(X_\Gamma)}(\cH^H,\bigoplus_{s\in \ZZ}\cH^H(s/2)),\]
where $a=k-s$ and $b=-s$.
For $\cE\in \mu \mathrm{M}_{C'}(X_\Gamma)$,
the Hom-complex $\bigoplus_{s\in \ZZ}\mathrm{Hom}_{\mu \mathrm{M}_{C'}(X_\Gamma)}(\cH^H,\cE(s/2))$
is an Adams-graded $B$-module,
where $\mathrm{Hom}^k_{\mu \mathrm{M}_{C'}(X_\Gamma)}(\cH^H,\cE(s/2))$ is of degree $(k-s,-s)$.
Since the object $\cH$ is a generator of the category $\mu \mathrm{sh}_{C'}(X_\Gamma)$,
we obtain the following equivalence as in Lemma~\ref{jan9-1}:
\begin{align}
 \mu \mathrm{M}_{C'}(X_\Gamma)\simeq  \mathrm{Mod}(B)\quad  ( \cE \mapsto  \mathrm{Hom}_{\mu \mathrm{M}_{C'}(X_\Gamma)}(\cH^H,\bigoplus_{s\in \ZZ}\cE(s/2))). 
\end{align}
Remark that the half Tate twist corresponds to the shift $\la 1\ra=(-1)[-1]$.
For $1\leq j\leq n$, we define $\cI_j^H$ as the ``constant sheaf on $\bP_j^1$":
\[\cI_j^H:=(0\dots, 0,\CC_r[-1](-1/2),\CC_{\bP^1_j},\CC_l[-1](-1/2),0,\dots,0)\quad(1\leq j\leq n-1),\]
and
\[\cI_{n}^H:=(0,\dots,\CC_r[-1](-1/2),\CC_{\CC}).\]
Then, we can see that the augmentation module $\mathbf{k}$ on the right hand side corresponds to
\[\cI^H:=\bigoplus_{i=1}^{n}\cI_i^H.\]
In this way, we get isomorphisms similar to (\ref{jan7-3}) and (\ref{mar19-1}):
\begin{align}
\bigoplus_{s\in \ZZ}\mathrm{Hom}_{\mu \mathrm{M}_{C'}(X_\Gamma)}(\cI^H,\cI^H(s/2))
\simeq 
\bigoplus_{s\in \ZZ}\mathrm{Hom}_{B}(\mathbf{k},\mathbf{k}\la s\ra).
    \\\label{mar18-1}
\mathrm{Ext}_{\mu \mathrm{M}_{C'}(X_\Gamma)}^k(\cI^H,\cI^H(s/2))
\simeq 
\mathrm{Ext}_{B}^{k-s}(\mathbf{k},\mathbf{k}(-s/2)).
\end{align}

Similarly to Lemma~\ref{dec27-2} and Lemma~\ref{dec27-3}, we obtain the following.
\begin{lemma}\label{apr9-2}
\begin{enumerate}
    \item If $i=j$, we have
    \[H^0\mathrm{Hom}_{\mu \mathrm{M}_{C'}(X_\Gamma)}(\cI_j^H,\cI_j^H(s/2)[k])\simeq \left\{\begin{array}{cl}
          \CC  & (1\leq i\leq n,\quad k=s=0)\\
          \CC & (1\leq i\leq n-1,\quad k=s=2)\\
          0& (\mbox{otherwise})
    \end{array}
        \right.\]
        We will denote by $\mathfrak{i}_i$ (resp. $\mathfrak{d}_i$) the morphism $\cI_i^H\to \cI_i^H(0)[0]$ (resp. $\cI_i^H\to \cI_i^H(1)[2]$).

    \item If $j=i+1$ or $j=i-1$, we have
    \[H^0\mathrm{Hom}_{\mu \mathrm{M}_{C'}(X_\Gamma)}(\cI_j^H,\cI_i^H(s/2)[k])\simeq \left\{\begin{array}{cl}
          \CC&  (1\leq i\leq n,\quad k=s=1)\\
          0& (\mbox{otherwise})
    \end{array}
    \right.\]
        We will denote by $\mathfrak{e}_{j,i}$ the morphism $\cI_j^H\to \cI_i^H(0)[0]$ for $i=j+1$ or $i=j-1$.
\end{enumerate}
\end{lemma}

By this lemma,
the algebra $\bigoplus_{s,k\in \ZZ}\mathrm{Ext}^k_{\mu \mathrm{M}_{C'}(X_\Gamma)}(\cI^H,\cI^H(s/2))$ is actually degenerate to the diagonal:
$\bigoplus_{k\in \ZZ}\mathrm{Ext}^k_{\mu \mathrm{M}_{C'}(X_\Gamma)}(\cI^H,\cI^H(k/2))$.
Together with (\ref{mar18-1}), we obtain the vanishing:
\[\mathrm{Ext}^k_B(\mathbf{k},\mathbf{k}(-s/2))=0\quad (k\neq 0).\]
Hence, we conclude the following.

\begin{corollary}
    $B$ is an Adams Koszul dga in the sense of \cite{HeWu}.
\end{corollary}

Next, we will describe the graded endomorphism algebras of $\scI^H$ and $\scH^H$.
We will introduce algebras which play the same role as $\cG_\Gamma$ and $A_\Gamma$ in this situation.

From here to Lemma~\ref{lem:endofknown}, we collect some known/folklore results. Namely, the properties of the algebras \( L_\Gamma \) and \( M_\Gamma \) including their Koszulity. One can read off these results from e.g. \cite{BLPWhypertoric}. For the reader's convenience, 
we will briefly recall their definitions and properties, and also give a proof of their Koszulity.

Let us consider the quiver $\Gamma$ again:
\begin{align}\label{may1-3}
    \xymatrix{
\bullet_1\ar@<0.5ex>[r]^{f_1}&\ar@<0.5ex>[l]^{g_1}\bullet_2\ar@<0.5ex>[r]^{f_2}&\ar@<0.5ex>[l]^{g_2}\bullet_{3}\ar@{}[r]|{\dots} &\bullet_{i}\ar@<0.5ex>[r]^{f_i}&\ar@<0.5ex>[l]^{g_i} \bullet_{i+1}\ar@{}[r]|{\cdots} &\bullet_{n-1}\ar@<0.5ex>[r]^{f_{n-1}}&\ar@<0.5ex>[l]^{g_{n-1}}\bullet_{n}.
}
\end{align}
We write $e_i$ the $i$-th vertex.

\begin{definition}\label{mar24-1}
We define the augmented $\mathbf{k}$-algebra $L_\Gamma$ (resp. $M_\Gamma$) as the quotient of the quiver path algebra of $\Gamma$ by the ideal generated by
  \[ g_{1}f_{1} ,\quad f_i g_i-g_{i+1} f_{i+1}\quad (1\leq i\leq n-2)
     \]   
        \[ (\mbox{resp.}  f_{n-1} g_{n-1},\quad f_i g_i-g_{i+1} f_{i+1},\quad
      f_{i+1} f_i,\quad  
      g_{i} g_{i+1}\ (1\leq i\leq n-2)). \]
    
     We define a grading on $L_\Gamma$ and $M_\Gamma$ so that 
     the degree of 
     the points $e_i$ is $0$ and 
     $f_i$ and $g_i$ is $1$.
\end{definition}

We write $L_\Gamma^i$ (resp. $M_\Gamma^i$) the $i$-th part of $L_\Gamma$ (resp. $M_\Gamma$).
Then, it is clear that we have
\[L_\Gamma=\bigoplus_{i=0}^{2(n-1)}L_\Gamma^i,\quad  M_\Gamma=\bigoplus_{i=0}^2M_\Gamma^i,\]
and we have
\[L_\Gamma^0\simeq M_\Gamma^0\simeq \mathbf{k}.\]

The following lemma can be shown by the direct computation.

\begin{lemma}
    \begin{enumerate}
        \item We define elements in $L_\Gamma$ as
\begin{align*}
    a_{s,t,u}&:=(f_{t-1} \cdots  f_{u}) (g_{u} \cdots  g_{t-1})(g_t\cdots g_{s-1})\\
    b_{s,t,u}&:=
        (f_{s-1} \cdots  f_{t})(f_{t-1} \cdots  f_{u}) (g_{u} \cdots  g_{t-1})
\end{align*}
for $u\leq t\leq s$
(modifying the definition somewhat if $s=t$ or $t=u$)
, which are expressed as the diagrams:
\[\xymatrix@C=50pt{\ar@/^10pt/[r]^-{f_{t-1} \cdots  f_{u}}\bullet_{u}&
\bullet_t\ar@/^10pt/[l]^{g_{u} \cdots  g_{t-1}}&
\ar[l]_-{g_t\cdots g_{s-1}}
\bullet_s,
}
\quad 
\xymatrix@C=50pt{\ar@/^10pt/[r]^-{f_{t-1} \cdots  f_{u}}\bullet_{u}&
\ar[r]^{f_{s-1} \cdots  f_{t}}\bullet_t\ar@/^10pt/[l]^{g_{u} \cdots  g_{t-1}}&
\bullet_s.
}
\]

Then, $\{a_{s,t,u},b_{s,t,u}\}$ forms a basis of $L_\Gamma$.

\item For $0\leq \ell\leq n$, we set $p_\ell:=n-\ell$,
which is the number of paths of length $\ell$ in $\Gamma$ with no loops and ignoring direction.
Then, 
we have
\[\dim{L_\Gamma^i}=\left\{
\begin{array}{cc}
    p_{\frac{i}{2}}+\displaystyle\sum_{\ell=0}^{\min{(i,n)}}2p_{\frac{i}{2}+\ell} &i\colon \mbox{odd}  \\
     \displaystyle\sum_{\ell=0}^{\min{(i,n)}}2p_{\frac{i+1}{2}+\ell}& i\colon \mbox{even}.
\end{array}
\right.\]

\item We define 
$L_\Gamma'$ (resp. $L_\Gamma''$, $L_\Gamma'''$) as the $L_\Gamma$-submodule of $L_\Gamma$ generated by $\{e_1,\dots,e_{n-1}\}$ (resp. $\{e_2,\dots,e_{n}\}, \{e_n\}$).
For $1\leq i\leq 2n$,
we set
\[q_i:=\left\{
\begin{array}{cc}
   \min{(\frac{i}{2}+1,n+1-\frac{i}{2})}  &  i:\mbox{even}\\
     \min{(\frac{i+1}{2}, n+1-\frac{i-1}{2})}& i:\mbox{odd}.
\end{array}
\right.\]
Then, we have
\begin{align*}
    \dim{{L_\Gamma'''}^i}&=q_i\\
    \dim{{L_\Gamma'}^i}&=\dim{L_\Gamma^i}-q_i,\\
    \dim{{L_\Gamma''}^i}&=\left\{\begin{array}{ll}
         \dim{L_\Gamma^i}-1&i\leq n-1  \\
         \dim{L_\Gamma^i}& i> n-1.
    \end{array}\right.
\end{align*}
\item For $0\leq i\leq 2(n-1)$, we have
\begin{align}
    \label{mar21-1}
    \dim{{L_\Gamma'}^i}+\dim{{L_\Gamma''}^i}- \dim{L_\Gamma^{i+1}}=\dim{{L_{\Gamma}'}^{i-1}}.
\end{align}
\end{enumerate}
\end{lemma}

Remark that both $L_\Gamma'$ and $L_\Gamma''$ are projective $L_\Gamma$-module since they are direct summand of $L_\Gamma$.
We write $\mathbf{k}=L_\Gamma^0$, which is a $L_\Gamma$-module.
\begin{lemma}
    We have the following projective resolution of $\mathbf{k}$:
    \[0\to L_\Gamma'(-2)\to L_\Gamma'(-1)\oplus L_\Gamma''(-1)\to L_\Gamma\to \mathbf{k}\to 0,\]
    where the symbol $(s)$ stands for the shift of the grading.
\end{lemma}

\begin{proof}
We define the morphism
    $L_\Gamma'(-1)\to L_\Gamma$ 
    (resp. $L_\Gamma''(-1)\to L_\Gamma$)
so that it sends $e_j$ to $g_{j}$
(resp. $e_j$ to $f_{j-1}$).
Then, this yields a surjective morphism for $i\geq 0$:
\[{L_\Gamma'}^i\oplus {L_\Gamma''}^i\twoheadrightarrow L_\Gamma^{i+1}.\]
We write the kernel of $L_\Gamma'(-1)\oplus L_\Gamma''(-1)\to L_\Gamma$ 
as $K(=\bigoplus_iK^i)$.
By the surjectivity above,
for $i\geq 1$, we obtain
\begin{align}\label{mar21-2}
    \dim K^i\leq \dim{{L_\Gamma'}^{i-1}}+\dim{{L_\Gamma''}^{i-1}}-\dim{L_\Gamma^{i}}=\dim{{L_\Gamma'}^{i-2}},
\end{align}
where the last equality is (\ref{mar21-1}).
We set an element $v_j\in L_\Gamma'\oplus L_\Gamma''$ ($1\leq j\leq n-1$) of degree $1$ as
\begin{align*}
    v_j=\left\{\begin{array}{lc}
         (0,g_1)& j=1\\
         (f_{j-1},g_{j})& 2\leq j\leq n-1
    \end{array}\right.
\end{align*}
Then, we define $L_{\Gamma}'(-2)\to L_\Gamma'(-1)\oplus L_\Gamma''(-1)$
so that it sends $e_j$ to $v_j$.
One can check that this yields an injective morphism
\[L_{\Gamma}'(-2)\hookrightarrow K.\]
Hence, for $i\geq 1$ we have
\[\dim{{L_{\Gamma}'}^{i-2}}\leq \dim{K^i},\]
and together with (\ref{mar21-2}), this leads to
\[\dim{K^i}=\dim{{L_{\Gamma}'}^{i-2}}.\]
Therefore, the sequence
\[0\to L_\Gamma'(-2)\to L_\Gamma'(-1)\oplus L_\Gamma''(-1)\to L_\Gamma\to \mathbf{k}\to 0\]
is exact.
\end{proof}

\begin{corollary}
    The algebras $L_\Gamma$ is a Koszul algebra (in the classical sense).
    \end{corollary}

By using the resolution of $\mathbf{k}$ above,
we can compute the graded algebra
$\mathrm{Ext}^\bullet_{L_F}(\mathbf{k},\mathbf{k})(=\mathrm{Ext}^0_{L_F}(\mathbf{k},\mathbf{k})\oplus \mathrm{Ext}^1_{L_F}(\mathbf{k},\mathbf{k}(-1))\oplus \mathrm{Ext}^2_{L_F}(\mathbf{k},\mathbf{k}(-2)))$
explicitly including the Yoneda algebra structure,
and indeed this is isomorphic to $M_\Gamma$ as a graded algebra.
Hence, by the definition, we conclude the following proposition.

\begin{proposition}\label{mar21-3}
    The algebras $L_\Gamma$ and $M_\Gamma$ are Koszul dual (in the classical sense) to each other.
\end{proposition}

Remark that if we set the bidegree of $f_i$ and $g_i$ in Definition~\ref{mar24-1} as $(1,1)$,
then $L_\Gamma$ and $M_\Gamma$ are also Koszul dual in the Adams Koszul sense.

Let us now return to $\mathcal{I}^H$ and $\mathcal{H}^H$.
The following can be easily checked.
\begin{lemma}\label{lem:endofknown}
For the morphisms $\frak{e}_{j,i}\colon \scI_j^H\to \scI_{i}^H(0)[0]$ and $\frak{d}_{i}\colon \scI_i^H\to \scI_i^H(1)[2]$ defined in Lemma~\ref{apr9-2},
we have
    \begin{align*}
        \frak{e}_{i+1,i}\circ \frak{e}_{i,i+1}=\frak{e}_{i-1,i}\circ \frak{e}_{i,i-1}&=\frak{d}_i\quad (1\leq i\leq n-1)\\
        \frak{e}_{n,n-1}\circ \frak{e}_{n-1,n}&=0.
    \end{align*}
\end{lemma}

Remark that $M_\Gamma$ is formal.
Then, in the same way as Subsection~\ref{mar24-10},
we have the following.
\begin{corollary}    \label{apr29-1}
\begin{enumerate}
    \item We have an isomorphism:
    \begin{align}
        \label{may1-1}
        \bigoplus_{s\in \ZZ}\mathrm{Hom}_{\mu \mathrm{M}_{C'}(X_\Gamma)}(\cI^H,\cI^H(s/2))\simeq M_\Gamma.
    \end{align}
    
    \item We have a quasi-isomorphism between Adams-graded dgas:
    \[\bigoplus_{s\in \ZZ}\HOM_{L_\Gamma}(\mathbf{k},\mathbf{k}(s))\simeq B.\]
\end{enumerate}
\end{corollary}
Applying Proposition~\ref{mar21-3}, we have the following.

\begin{corollary}\label{apr29-2}
        We have a quasi-isomorphism between Adams-graded dgas:
\begin{align}\label{may1-2}
     \bigoplus_{s\in \ZZ}\HOM_{\mu \mathrm{M}_{C'}(X_\Gamma)}(\cH^H,\cH^H(s/2))\simeq L_\Gamma.
\end{align}   
\end{corollary}

\begin{remark}\label{may5-1}
    In the isomorphisms in (\ref{may1-1}) and (\ref{may1-2}),
    the index ``$i$" of $e_i$ in the right hand sides
    corresponds to the index ``$i$" in
    $\cH^H(=\bigoplus_{i=1}^{n}\cH_i^H)$ and
    $\cI^H(=\bigoplus_{i=1}^{n}\cI_i^H)$ in the left hand sides. 
    Therefore, for example,
    $\bigoplus_{s\in \ZZ}\mathrm{Hom}_{\mu \mathrm{M}_{C'}(X_\Gamma)}(\cH^H_i,\cH^H_j(s/2))$ is isomorphic to
    the subspace of $L_\Gamma$ spanned by the paths starting at vertex $i$ and ending at vertex $j$ in (\ref{may1-3}). 
\end{remark}

In this way, we have clarified the graded algebra structures of $\mathcal{I}$ and $\mathcal{H}$, 
as well as the Koszul duality between them, 
in terms of the weights of mixed Hodge modules. 

\section{Appendix: a structure theorem for $\Hzsh{\CC,0}$} \label{dec3-4}

In this appendix, 
we summarize some basic properties of objects in $\Sh(\CC,0)$ and give a structure theorem for them.

The first lemma is basic.
\begin{lemma}\label{sep25-1}
Let $X$ be a contractible complex manifold. 
    An object $F\in \Sh(X)$ with $\mathrm{SS}(F)\subset T^*_{X}X$ can be expressed as the pullback of an object in $\mathrm{Vect}(\CC)$ by the morphism $X\to \mathrm{pt}$, where $\mathrm{Vect}(\CC)$ is the derived category of vector spaces.
In particular,
we have a non-canonical isomorphism:
\[F\simeq \bigoplus_{k\in \ZZ}H^k(F)[-k].\]
\end{lemma}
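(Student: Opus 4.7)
The plan is to proceed in three steps, turning the microlocal hypothesis into a concrete trivialization.

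First, I would use the microlocal propagation theorem of Kashiwara--Schapira: the condition $\SS(F)\subset T^*_X X$ implies that $F$ has ``no propagation obstructions'', so $F$ is cohomologically locally constant on $X$. Concretely, for any $x\in X$ one can find an open neighborhood $U$ of $x$ together with a continuous family of contracting diffeomorphisms of $U$ onto $\{x\}$, and the propagation theorem (applied to the level sets of a defining Morse function, or directly to a contracting vector field transverse to the zero section only) ensures that the restriction maps $R\Gamma(U,F)\to F_x$ are isomorphisms, and that the canonical morphism $\pi_U^{-1}F_x\to F|_U$ is an isomorphism, where $\pi_U\colon U\to \mathrm{pt}$.

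Second, I would globalize this. Since $X$ is contractible, one can choose such a contracting structure on the whole of $X$: pick a basepoint $x_0\in X$ and a deformation retract of $X$ to $\{x_0\}$. Applying the propagation argument along this retraction shows that the adjunction morphism $\pi^{-1}R\pi_*F\to F$ is an isomorphism, where $\pi\colon X\to \mathrm{pt}$. Setting $M:=R\pi_*F\in \mathrm{Vect}(\bC)$, we get $F\cong \pi^{-1}M$, proving the first claim.

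Third, for the non-canonical decomposition, I would invoke the fact that $\bC$ is a field, so $\mathrm{Vect}(\bC)=\Mod(\bC)$ is ``formal'': every object $M$ is non-canonically isomorphic to $\bigoplus_{k\in\bZ} H^k(M)[-k]$, since one can split all short exact sequences of $\bC$-vector spaces coming from the cocycle/coboundary filtration on any representing complex. Applying $\pi^{-1}$ (which preserves direct sums and shifts) to this splitting for $M=R\pi_*F$ yields the desired decomposition $F\simeq\bigoplus_k H^k(F)[-k]$, noting that $H^k(F)\cong \pi^{-1}H^k(M)$.

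The main obstacle I anticipate is the first step in the unbounded derived setting of the paper: the classical Kashiwara--Schapira statements are phrased for bounded (or at least locally bounded) complexes. Either one argues directly using that a contracting flow combined with $\SS(F)\subset T^*_X X$ gives an isomorphism of the propagation morphism cohomology group by cohomology group, or one reduces to the bounded case by truncation $\tau^{\leq n}\tau^{\geq -n}F$ and passes to the (homotopy) limit, using that pullback from a point commutes with these limits.
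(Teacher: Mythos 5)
The paper offers no proof of this lemma---it is preceded only by the remark ``The first lemma is basic''---so there is nothing to compare your argument against; I will assess it on its own terms. Your overall strategy is sound and is surely the intended one: use non-characteristic propagation to show that each $H^k(F)$ is a local system, use contractibility to make these local systems constant and to identify $F$ with a pullback from a point, and then use the semisimplicity of $\mathrm{Vect}(\bC)$ to split the complex into its cohomology. Two places deserve a closer look.

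First, in your Step 1 you write that the propagation theorem ``ensures $\dots$ that the canonical morphism $\pi_U^{-1}F_x \to F|_U$ is an isomorphism.'' The Kashiwara--Schapira propagation results directly give that the cohomology sheaves $H^k(F)$ are locally constant; the stronger statement that $F$ itself is locally isomorphic to a constant complex does not follow from propagation alone, but from the formality over a field that you invoke in Step 3. So the order of the argument should be: propagation $\Rightarrow$ each $H^k(F)$ locally constant; contractibility $\Rightarrow$ each $H^k(F)$ constant; semisimplicity of $\bC$-vector spaces $\Rightarrow$ the Postnikov/truncation tower splits. As written, Steps 1 and 3 each separately claim the splitting; the logic is circular-looking even though the pieces are all individually true.

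Second, and more substantively, in your final remark about the unbounded setting you propose to ``pass to the (homotopy) limit, using that pullback from a point commutes with these limits.'' This is backwards. The functor $\pi^{-1}$ is a \emph{left} adjoint (to $R\pi_*$), hence preserves colimits, not limits; and infinite products in $\Sh(X)$ do not in general commute with $\pi^{-1}$, so the inverse-limit presentation $F \simeq \operatorname{holim}_n \tau^{\geq -n}F$ is the wrong one to use. You should instead write $F \simeq \operatorname{hocolim}_n \tau^{\leq n}F$ (using right-completeness), observe that truncation does not enlarge microsupport so each $\tau^{\leq n}F$ still satisfies the hypothesis, apply the locally bounded case to each truncation, and then pass to the colimit, which $\pi^{-1}$ does commute with. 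With that change, your argument is complete.
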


The following lemma is also well-known, but we give a sketch of the proof.

\begin{lemma}
     An object $F\in \Sh(\CS(=\CC\setminus \{0\}))$ with $\mathrm{SS}(F)\subset {T^*_{\CS}\CS}$ is isomorphic to a direct sum of shifted $\CC$-locally constant sheaves (infinite dimensional in general), i.e., we have
     \begin{align}
         \label{sep25-2}
         F\simeq \bigoplus_{k\in \ZZ}H^k(F)[-k]
     \end{align}
     and $H^k(F)$ is a locally constant sheaf.
\end{lemma}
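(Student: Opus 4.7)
The plan has three steps. First, the condition $\mathrm{SS}(F) \subset T^*_{\CC^*}\CC^*$ implies, by the standard microlocal estimates (see \cite{KS}), that each cohomology sheaf $\mathcal{H}^k(F)$ also has microsupport in the zero section, and hence is locally constant. This immediately gives the second assertion of the lemma, so it remains to show the formality statement $F \simeq \bigoplus_k \mathcal{H}^k(F)[-k]$.

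Second, I would use the homotopy equivalence $\CC^* \simeq S^1 = B\mathbb{Z}$: taking the stalk at a basepoint together with the monodromy action furnishes a t-exact equivalence
\[
\Sh_{T^*_{\CC^*}\CC^*}(\CC^*) \;\simeq\; D(\bC[\pi_1(\CC^*)]) \;=\; D(R), \qquad R := \bC[t, t^{-1}].
\]
Thus the problem reduces to showing that every complex of $R$-modules is formal. The ring $R$ is the localization of the PID $\bC[t]$, hence itself a PID, and in particular a hereditary ring: its global dimension is $\leq 1$.

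Third, and this is the crux, I would show that over any hereditary ring $R$, every $M^\bullet \in D(R)$ is formal. The argument proceeds by truncation: for each $k$ consider the triangle
\[
\tau_{<k} M \longrightarrow \tau_{\leq k} M \longrightarrow H^k(M)[-k] \xrightarrow{+1}.
\]
The obstruction to splitting is a morphism $H^k(M)[-k] \to (\tau_{<k} M)[1]$; the target has cohomology concentrated in degrees $\leq k-2$. By the universal-coefficient spectral sequence for $R$, any morphism from a module placed in a single degree to such an object is built from $\mathrm{Ext}^{\geq 2}$-groups between modules, which vanish by the hereditary hypothesis. Hence every truncation triangle splits, and glueing these splittings yields $M^\bullet \simeq \bigoplus_k H^k(M^\bullet)[-k]$, as desired.

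The main obstacle is handling the unbounded, infinite-rank setting: combining infinitely many splittings coherently into a single isomorphism in the derived (dg) category. I would address this by taking a minimal injective resolution, which for a hereditary ring has length $\leq 1$ in each cohomological degree and hence splits as a direct sum $\bigoplus_k I_k^\bullet[-k]$ with $I_k^\bullet$ an injective resolution of $H^k(M^\bullet)$; alternatively, by working in the stable $\infty$-categorical enhancement of $D(R)$ and observing that hereditariness makes the cohomology functors $H^k$ collectively conservative and induce an equivalence $D(R) \simeq \prod_k \mathrm{Mod}(R)[-k]$. Either route produces the desired decomposition in the generality stated.
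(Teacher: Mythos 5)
Your argument takes a genuinely different route from the paper's. The paper stays entirely in sheaf theory: it covers $\CS$ by two contractible arcs $U_1,U_2$ with $U_1\cap U_2$ again a disjoint union of two arcs, applies the formality-on-a-contractible-set lemma (Lemma~\ref{sep25-1}) to each piece, and analyzes the resulting Mayer--Vietoris gluing by comparing $F$ with the local system having the same monodromy. You instead translate via the equivalence $\Sh_{T^*_{\CS}\CS}(\CS)\simeq D(\bC[t,t^{-1}])$ and appeal to formality of $D(R)$ over a hereditary ring. Both routes are legitimate; yours is more conceptual, but it rests on the unbounded $\infty$-categorical form of the local-systems-as-modules equivalence, which the paper avoids by arguing directly with sections, and the paper's explicit covering construction also produces, almost for free, the Jordan-block classification of unipotent objects (Corollary~\ref{sep30-1}) that the appendix relies on throughout.

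Be careful, though, about how you close the unbounded case. The truncation argument does carry it, but not for the reasons you offer at the end. Directly: for a module $A$ in degree $0$ and a complex $B$ whose canonical truncation is concentrated in degrees $\leq -2$, pick a two-term projective resolution $P^{-1}\to P^0$ of $A$ (hereditary); then $\Hom^\bullet(P,B)^0=\Hom(P^0,B^0)\oplus\Hom(P^{-1},B^{-1})=0$, so the obstruction class vanishes with no compactness or spectral-sequence-convergence caveat. The sections $H^k(M)[-k]\to\tau_{\leq k}M\to M$ then assemble to a map $\bigoplus_k H^k(M)[-k]\to M$, which is a quasi-isomorphism since cohomology commutes with coproducts in $D(R)$. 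By contrast, the claim that ``$D(R)\simeq\prod_k\mathrm{Mod}(R)[-k]$'' is not correct: formality is a statement about objects, not a product decomposition of the category, because $D(R)$ has $\Ext^1$-morphisms between cohomologies in adjacent degrees which such a product would not see. The ``minimal injective resolution'' route is also unsubstantiated; that splitting would force each $H^k(M)$ to be a summand of an injective and hence injective, which is false in general. Drop both alternatives and keep only the truncation argument.
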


\begin{proof}
    If $F$ is bounded and a stalk of each cohomology of $F$ is of finite rank,
    the assertion follows from the basis fact:
    for two local systems $\calL$, $\calL'$ on $\CS$ and a morphism $\calL\to \calL'$, the cone of it in $H^0\Sh(\CS)$ is a direct sum of shifted local systems.
    
In the general case, we consider a distinguished triangle:
\[F\to \RG{U_1}F\oplus \RG{U_2}F \to \RG{U_1\cap U_2}F\to F[1],\]
    where $U_1:=\{z\in \CS\ |\ \arg{z}\in (-3\pi/4,3\pi/4)\}$ and
    $U_2:=\{z\in \CS\ |\ \arg{z}\in (\pi/4,7\pi/4)\}$.
    If we set $U_3:=\{z\in \CS\ |\ \arg{z}\in (-\pi/4,\pi/4)\}$ and
    $U_4:=\{z\in \CS\ |\ \arg{z}\in (3\pi/4,5\pi/4)\}$,
    then we have a decomposition
    $U_1\cap U_2=U_3\cup U_4$.
    By Lemma~\ref{sep25-1},
    $F|_{U_i}$ ($i=1,2,3,4$)
are the direct sum of shifted constant sheaves.
Under the expression by such direct sums,
the morphism 
$\RG{U_1}F\oplus \RG{U_2}F \to \RG{U_3}F\oplus \RG{U_4}F(=\RG{U_1\cap U_2}F)$
is also a direct sum of morphims between shifted sheaves.
Hence,
it suffices to observe the morphism for each $k\in \ZZ$
\begin{align}\label{sep25-3}
    H^k\RG{U_1}F\oplus H^k\RG{U_2}F \to H^k\RG{U_3}F\oplus H^k\RG{U_4}F.
\end{align}
Moreover, 
by taking the basis suitably,
we may assume 
the submorphisms $H^k\RG{U_i}F\to H^k\RG{U_3}F$ ($i=1,2$) and 
$H^k\RG{U_1}F\to H^k\RG{U_4}F$
of the above morphism
are the direct sums of the (shifted) morphisms induced by identity maps on a vector space $V$ corresponding to $F|_{U_1}$.
The only non-trivial part is 
$H^k\RG{U_2}F\to H^k\RG{U_4}F$,
induced by some automorphism $\rho$ on $V$.
Let $\calL$ be the locally constant sheaf corresponding to the vector space $V$ with the automorphism $\rho$.
Then, the morphism
\[H^k\RG{U_1}\calL[-k]\oplus H^k\RG{U_2}\calL[-k] \to H^k\RG{U_3}\calL[-k]\oplus H^k\RG{U_4}\calL[-k]\]
is isomorphic to the morphism (\ref{sep25-3}).
The assertion follows from the uniqueness of (the isomorphism class of) the cone in a triangulated category.
\end{proof}

Let $t$ be a coordinate of $\CC$.

\begin{corollary}\label{sep30-1}
      For an object $F\in \Sh(\CS)$ with $\mathrm{SS}(F)\subset {T^*_{\CS}\CS}$,
assume that 
$F$ is unipotent, i.e.,
there exists $\ell\in \ZZ_\geq 1$ such that 
$(T-1)^\ell=0$ for the monodromy automorphism $T$ of $\psi_tF$.
Then, $F$ is isomorphic to a direct sum of (possibly infinitely many) shifted local systems $\calL_{s}[k]$ for some $k\in \ZZ$ and $1\leq s\leq \ell$,
where $\calL_{s}$ is the $\CC$-local system whose monodromy matrix is the unipotent Jordan block of size $s$.
\end{corollary}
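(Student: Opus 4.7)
The plan is to reduce the statement to a purely algebraic fact about modules over the finite-dimensional local ring $A := \CC[N]/N^\ell$, and then invoke the structure theory for modules over uniserial Artinian rings. By the previous lemma, $F$ is (non-canonically) isomorphic to $\bigoplus_{k\in\ZZ}H^k(F)[-k]$, where each $H^k(F)$ is a (possibly infinite-rank) $\CC$-local system on $\CS$. Since the conclusion is stable under direct sums and shifts, it suffices to prove that every $\CC$-local system on $\CS$ whose monodromy $T$ satisfies $(T-1)^\ell = 0$ decomposes as a direct sum of copies of the $\calL_s$ ($1 \le s \le \ell$).

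A $\CC$-local system on $\CS$ is a pair $(V, T)$ consisting of a $\CC$-vector space $V$ together with a monodromy automorphism $T$. The unipotency assumption on $F$ translates to $N := T - 1$ being nilpotent of order at most $\ell$, so $V$ becomes a module over $A = \CC[N]/N^\ell$. Conversely, $\calL_s$ corresponds to the cyclic $A$-module $A/N^s$. Thus the corollary reduces to the purely algebraic statement that every $A$-module decomposes as a direct sum of cyclic modules of the form $A/N^s$ for $1 \le s \le \ell$.

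For this algebraic step, I would argue as follows. The ring $A$ is an Artinian local principal ideal ring: its ideals form the single chain $0 \subset N^{\ell-1}A \subset \cdots \subset NA \subset A$, i.e.\ $A$ is uniserial. By the classical structure theorem for modules over uniserial (Artinian) rings (due to K\"othe and Cohen--Kaplansky), every module over $A$ is a direct sum of cyclic modules, and the cyclic $A$-modules are exactly the $A/N^s$ for $1 \le s \le \ell$. A direct construction is also available, using the nilpotent filtration $0 \subset \ker N \subset \ker N^2 \subset \cdots \subset \ker N^\ell = V$: for each $s$, pick a linear complement $W_s$ of $\ker N^{s-1} + N(\ker N^{s+1})$ inside $\ker N^s$, lift a basis of $W_s$ along the surjection $N^{s-1}\colon \{v \in V : N^s v = 0\} \twoheadrightarrow W_s$ to a set $\widetilde{W}_s \subset V$, and take the $A$-submodules generated by these lifts; these are free cyclic of the correct size, and their sum is direct.

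The main obstacle is the infinite-dimensionality of $V$: there is no Krull--Schmidt theorem at this level of generality, so one cannot simply argue by ``reducing to indecomposables''. The uniserial structure of $A$ is exactly what rescues the situation, both in the abstract K\"othe--Cohen--Kaplansky form and in the concrete complement-choosing argument, where uniseriality ensures that the lifted submodules assemble into an internal direct sum. Once the algebraic decomposition is established, translating it back through the equivalence between $\CC$-local systems on $\CS$ and finite-monodromy representations of $\pi_1(\CS)\cong\ZZ$ yields the decomposition of each $H^k(F)$, and hence of $F$, into shifted $\calL_s$'s.
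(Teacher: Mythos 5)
Your proposal is correct and essentially matches the paper's own argument: both reduce, via the preceding lemma ($F\simeq\bigoplus_k H^k(F)[-k]$ with each $H^k(F)$ a local system), to the algebraic fact that a vector space equipped with a unipotent endomorphism of bounded nilpotency order splits as a direct sum of Jordan blocks. The paper merely asserts this fact, whereas you correctly attribute it to the K\"othe--Cohen--Kaplansky theory of Artinian principal ideal rings, which is indeed the clean way to handle the infinite-dimensional setting where Krull--Schmidt is unavailable. One small flaw in your optional ``direct construction'': for $s\ge 2$ the operator $N^{s-1}$ carries $\ker N^{s}$ into $\ker N\subseteq\ker N^{s-1}$, a subspace that is by construction transverse to $W_s$, so $N^{s-1}\colon\ker N^{s}\to W_s$ is not a surjection and the lifting step is meaningless. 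Presumably $W_s$ should itself be taken as the set of generators of the length-$s$ cyclic summands (no lift needed), but then the assertion that the resulting cyclic submodules form an internal direct sum exhausting $V$ is precisely the nontrivial content that K\"othe's theorem supplies, so the abstract citation is the route to trust.
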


\begin{proof}
    This follows from the fact that a vector space $V$ with unipotent and finite automorphism $\rho$ is isomorphic to a direct sum of finite dimensional vector spaces with unipotent automorphisms.
\end{proof}

The next lemma simply follows from the fundamental theorem on homomorphisms.

\begin{lemma}\label{oct28-1}
      Let $V$, $W$ be vector spaces (possibly infinite dimensional), $N$ the nilpotent operator on $W$ and $f$ a linear map $V\to W$ with the following properties:
      \begin{itemize}
          \item there exists $\ell\in \ZZ_{\geq 1}$ such that $N^{\ell}=0$,
          \item $N\circ f=0$.
      \end{itemize}
Then, there exist linearly independent vectors $\{v_i\}_{i\in I\sqcup J}$ of $V$ and $\{w_j\}_{j\in I\sqcup K}$ of $W$, where $I,J,K$ are index sets, and integers $\{k_j\}_{j\in I \sqcup K}$ 
such that we have the following:
\begin{enumerate}
\item $\{v_i\}_{i\in I\sqcup J}$ is a basis of $V$,
    \item $N^{k_j}w_j\neq 0$ and $N^{k_j+1}w_j= 0$ for $j\in I\sqcup K$,
\item $\{w_j,Nw_{j}\dots N^{k_j}w_j\ |\ j\in I\sqcup K\}$ is a basis of $W$,
\item $f(v_i)=\left\{\begin{array}{cc}
     N^{k_{i}}w_i& i\in I \\
     0& i\in J.
\end{array}\right.$
\end{enumerate}
\end{lemma}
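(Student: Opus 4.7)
The plan is to combine the structure theorem for nilpotent operators on $W$ with a filtered basis choice adapted to the image of $f$.

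First I would decompose $V = \ker f \oplus V'$ and let $\{v_j\}_{j\in J}$ be a basis of $\ker f$. The hypothesis $N\circ f = 0$ forces $\Ima f \subseteq \ker N$, so the restriction $f|_{V'}\colon V'\hookrightarrow \ker N$ is injective. Hence it will suffice to construct a basis of $\ker N$ that contains a basis of $f(V')$ and is compatible with a Jordan-type decomposition of $W$, and then to define $v_i$ for $i\in I$ as the preimage under $f|_{V'}$ of the corresponding socle element.

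Next, because $N^\ell = 0$, the vector space $W$ is a module over the Artinian principal ideal ring $R := \bC[N]/(N^\ell)$. Every $R$-module decomposes as a direct sum of cyclic modules $R/(N^{k+1})$ (this is the main input, and the only step where infinite-dimensionality requires care; it can be justified either by invoking the theory of Nakayama/serial algebras or by a direct Zorn's lemma argument). This yields an initial decomposition $W = \bigoplus_{s\in S} R\cdot \widetilde w_s$ with $N^{\widetilde k_s+1}\widetilde w_s = 0$ and $N^{\widetilde k_s}\widetilde w_s \neq 0$. Setting $\widetilde u_s := N^{\widetilde k_s}\widetilde w_s$, the family $\{\widetilde u_s\}_{s\in S}$ is a basis of $\ker N$, and the filtration $F_k := \ker N \cap \Ima N^k$ is spanned by $\{\widetilde u_s : \widetilde k_s \geq k\}$.

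Then I would adjust this socle basis. The subspace $f(V')$ inherits a filtration $F'_k := f(V')\cap F_k$; pick a basis of $f(V')$ adapted to $\{F'_k\}_k$ and extend it to a basis $\{u_s\}_{s\in S}$ of $\ker N$ adapted to $\{F_k\}_k$ so that each $u_s$ differs from $\widetilde u_s$ only by a combination of $\widetilde u_{s'}$ with $\widetilde k_{s'} > \widetilde k_s$. The crucial point is that such a triangular basis change lifts to the Jordan generators: setting
\[
w_s := \widetilde w_s + \sum_{s'\,:\,\widetilde k_{s'}>\widetilde k_s} c_{s,s'}\, N^{\widetilde k_{s'}-\widetilde k_s}\,\widetilde w_{s'},
\]
one verifies $N^{\widetilde k_s}w_s = u_s$, $N^{\widetilde k_s+1}w_s = 0$, and $\{N^j w_s : 0\leq j \leq \widetilde k_s,\ s\in S\}$ remains a basis of $W$. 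Letting $I \subseteq S$ index the subset giving the basis of $f(V')$, putting $K := S\setminus I$ and $k_s := \widetilde k_s$, and defining $v_i := (f|_{V'})^{-1}(u_i)$ for $i\in I$, all four conditions of the lemma are satisfied.

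The main obstacle is the infinite-dimensional cyclic decomposition of $W$ in the second step; once this is available, the filtered Gaussian elimination in the third step is routine because each socle element is a finite linear combination of the $\widetilde u_s$ and the modifications are triangular with respect to the ordering by $\widetilde k_s$.
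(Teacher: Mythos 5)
Your overall strategy is sound and, for what it's worth, more detailed than the paper itself, which offers no proof and only remarks that the lemma ``simply follows from the fundamental theorem on homomorphisms.'' Your reduction $V=\ker f\oplus V'$ with $f|_{V'}$ injective into $\ker N$ is exactly right, and your identification of the infinite\nobreakdash-dimensional cyclic decomposition of $W$ over $\bC[N]/(N^\ell)$ as the main technical input (Pr\"ufer--K\"othe, or Zorn's lemma) is correct and worth flagging.

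There is, however, a genuine flaw in the adjustment step. You require the new socle basis to satisfy: ``each $u_s$ differs from $\widetilde u_s$ only by a combination of $\widetilde u_{s'}$ with $\widetilde k_{s'}>\widetilde k_s$.'' This strict inequality is not achievable. Take $\ell=1$, $W=\ker N$ two\nobreakdash-dimensional with preliminary basis $\widetilde u_1,\widetilde u_2$, both of depth $0$, and $f(V')$ spanned by $\widetilde u_1+\widetilde u_2$. Since there is nothing of strictly higher depth, your constraint forces $u_s=\widetilde u_s$, and then no $u_s$ lies in $f(V')$. The correct constraint is $\widetilde k_{s'}\geq\widetilde k_s$: the change of socle basis should be \emph{block} triangular with respect to depth, with each same-depth diagonal block an arbitrary invertible matrix (which is automatic, since both $\{u_s\}$ and $\{\widetilde u_s\}$ restrict to bases of each $F_k/F_{k+1}$). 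With that correction the lifting formula becomes
\[
w_s:=\sum_{s'\,:\,\widetilde k_{s'}\geq \widetilde k_s} c_{s,s'}\,N^{\widetilde k_{s'}-\widetilde k_s}\,\widetilde w_{s'},
\]
and the induced map $N^a\widetilde w_s\mapsto N^a w_s$ on $W$ is block lower-triangular with respect to the grading by the exponent $a$ (of which there are only $\ell$ values), with the same invertible diagonal blocks, hence an isomorphism; this is what makes $\{N^j w_s\}$ a basis. Alternatively, you can avoid referencing the preliminary basis altogether: any basis of $\ker N$ adapted to the filtration $F_k=\ker N\cap\Ima N^k$ lifts to a cyclic decomposition of $W$ (each $u_s\in F_{k_s}$ gives $u_s=N^{k_s}w_s$ for some $w_s$, and $Nu_s=0$ forces $N^{k_s+1}w_s=0$), and then one proves directly that $\{N^j w_s\}$ is a basis by the same block-triangular argument applied against an arbitrary preliminary decomposition.
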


Let us consider an object $F\in \sh{\CC,0}$ whose restriction $F|_{\CS}$ to $\CS$ is unipotent.
Then, we have a distinguished triangle in $H^0\sh{\CC,0}$:
\[F_{0}[-1]\to F_{\CS}\to F\to F_0.\]
In other words, we can regard $F$ as a cone of $F_{0}[-1]\to F_{\CS}$.
The morphism $F_{0}[-1]\to F_{\CS}$ is described concretely as follows.
Let $\{V_{i}\}_{i\in \ZZ}$ be a family of vector spaces, 
which are regarded as the skyscraper sheaves on $\bC$ supported at the origin $0$, 
$\ell\in \ZZ_{\geq 1}$ be a positive integer
and $\{L_i\}_{i\in \ZZ}$ be a family of direct sums $L_i=\bigoplus_{j\in J_i}A_{s_j}$ of
$A_{s_j}$ for $s_{j}\leq \ell$ (for the definition of $A_{s_j}$, see Lemma~\ref{oct31-1}).
    Moreover, let $f_i$ (resp. $g_i$) be a morphism $f_i\colon V_i[i]\to L_{i+1}[i+1]$ (resp. $g_i\colon V_i[i]\to L_{i+2}[i+2]$) in $H^0\Sh({\CC})$.
We put 
\begin{align}\label{oct25-1}
    h:=\bigoplus_{i\in \ZZ}(f_i\oplus g_i)\colon \bigoplus_{i\in \ZZ}V_i[i]\to \bigoplus_{j\in \ZZ}L_j[j].
\end{align}
Remark that since $\CC_0$ is a compact object in $\sh{\CC,0}$,
we have
\begin{align*}
\HOM_{H^0\sh{\CC}}(\bigoplus_{i\in \ZZ}V_i[i],\bigoplus_{j\in \ZZ}L_j[j])
\simeq \prod_{i\in \ZZ}\bigoplus_{j\in \ZZ}\HOM_{H^0\sh{\CC}}(V_i[i],L_j[j]).
\end{align*}
On the other hand, there is a non-zero morphism $\CC_0\to A_s[i]$ in $H^0\Sh(\CC)$ if and only if $i=1,2$.
Therefore, by Corollary~\ref{sep30-1}, 
the above morphism $F_{0}[-1]\to F_{\CS}$ can be expressed as (\ref{oct25-1}) for some $\{V_i\}$, $\{L_i\}$, $f_i$ and $g_i$.
So we discuss the cone of the morphism $h$ (\ref{oct25-1}) to classify $F$.

\begin{lemma}  \label{oct28-3}
If $f_i=0$ ($i\in \ZZ$),
the cone of $h$ in $H^0\Sh(\CC)$ is isomorphic to a direct sum of several (possibly infinitely many) $A_{s}$ or $Q_{s+1}$ ($1\leq s\leq \ell$) or $\CC_0$ with some shifts.    
\end{lemma}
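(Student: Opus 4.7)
The plan is to reduce the cone computation to a single atomic case. Since $h = \bigoplus_i g_i$ with $g_i : V_i[i] \to L_{i+2}[i+2]$ and the map $i \mapsto i+2$ is injective on target indices, the total cone decomposes as $\Cone(h) = \bigoplus_i \Cone(g_i)$. After shifting by $-i$, it suffices to describe $\Cone(g : V \to L[2])$ where $V = \bC_0^n$ is a direct sum of skyscrapers at $0$ and $L = \bigoplus_k A_{s_k}$ with $s_k \leq \ell$. A direct Verdier duality calculation, using that $i^! A_s = \bC[-2] \oplus \bC[-1]$, will show that $\Hom(\bC_0, A_s[k])$ is one-dimensional for $k = 1, 2$ and vanishes otherwise; fix a generator $\alpha_s \in \Hom(\bC_0, A_s[2])$ for each $s$.

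Next, I plan to identify the atomic cone $\Cone(\alpha_s) \simeq Q_{s+1}[2]$ via an octahedron argument. The approach is to apply the octahedral axiom to the composition $\bC_0 \xrightarrow{\iota_2} \bC_0[1] \oplus \bC_0 \xrightarrow{\delta} A_s[2]$, where $\iota_2$ is the inclusion into the second summand and $\delta$ is a shift of the connecting morphism of the standard triangle $A_s \to B_s \to (B_s)_0 \to A_s[1]$, with $(B_s)_0 \simeq \bC_0 \oplus \bC_0[-1]$. Since $\delta \circ \iota_2$ realises $\alpha_s$ up to a non-zero scalar, the octahedron yields a triangle $\bC_0[1] \to \Cone(\alpha_s) \to B_s[2] \to \bC_0[2]$, which has exactly the shape of the defining triangle $\bC_0[1] \to Q_{s+1}[2] \to B_s[2] \to \bC_0[2]$ of $Q_{s+1}[2]$. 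Because $\Hom(B_s, \bC_0)$ is one-dimensional (read off from the structure of $(B_s)_0$), the two boundary maps $B_s \to \bC_0$ agree up to a non-zero scalar, so the triangles are isomorphic.

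Finally, for a general $g$, I would split $V = V_{\ker} \oplus V'$ where $V_{\ker}$ is the kernel of the induced linear map $\tilde g : V \to \Hom(\bC_0, L[2]) = \bigoplus_k \bC \cdot \alpha_{s_k}$. This yields $\Cone(g) = \Cone(g|_{V'}) \oplus V_{\ker}[1]$, contributing shifted $\bC_0$-summands to the answer. For $g|_{V'}$ with $\tilde g|_{V'}$ injective, the plan is to use the $\mathrm{Aut}(V') \times \mathrm{Aut}(L)$-action on $\Hom(V', L[2])$ to bring $g|_{V'}$ to a diagonal normal form in which each basis vector $v_a$ of $V'$ maps to a single $A_{s_{k(a)}}$-summand via $\alpha_{s_{k(a)}}$; the cone then splits as $\bigoplus_a Q_{s_{k(a)}+1}[2] \oplus \bigoplus_b A_{s_b}[2]$, with the second sum running over the unmatched summands of $L$. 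The hard part will be achieving this normal form when $L$ mixes distinct types $A_s, A_{s'}$: for a single type $L = A_s^m$ the $GL_m$-action on the target is sufficient, but mixed-type targets require off-diagonal automorphisms of $L$ coming from the non-trivial morphism spaces $\Hom(A_s, A_{s'})$, and the key enabling fact — surjectivity of the composition map $\Hom(A_{s'}, A_s) \to \Hom(\bC_0, A_s[2])$, $\phi \mapsto \phi \circ \alpha_{s'}$ — should follow from the explicit morphism structure recorded in Lemma~\ref{jan27-102}. A fallback strategy is induction on the rank of $\tilde g$: peel off one atomic factor $Q_{s_1+1}[2]$ at each step via an octahedron applied to a projection $L \to A_{s_1}$, the crux being the vanishing of the induced boundary $Q_{s_1+1}[2] \to L'[3]$.
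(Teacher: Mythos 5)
Your high-level strategy --- reduce to the degree-two morphism $g_i\colon V_i\to L_{i+2}[2]$, identify the atomic cone $\Cone(\alpha_s)\simeq Q_{s+1}[2]$, and bring the general $g_i$ to a diagonal normal form --- is the same as the paper's. Your octahedral derivation of the atomic cone is correct, and it makes explicit the triangle $\bC_0\to A_s[2]\to Q_{s+1}[2]\to\bC_0[1]$ that the paper simply asserts. One small addition is needed: the argument that ``$\Hom(B_s,\bC_0)$ is one-dimensional so the two boundaries agree up to a scalar'' does not preclude that scalar being zero, so you must rule out the split separately. This is easy: the stalk of $\Cone(\alpha_s)$ at $0$ is $\bC[1]$, while the candidate splitting $\bC_0[1]\oplus B_s[2]$ has a three-dimensional stalk there.

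The genuine gap is in the normal-form step. Your ``key enabling fact'' --- surjectivity of $\Hom(A_{s'},A_s)\to\Hom(\bC_0,A_s[2])$, $\phi\mapsto\phi\circ\alpha_{s'}$ --- is \emph{false} when $s'>s$: passing to nearby cycles, any $N$-equivariant $\phi\colon V_{s'}\to V_s$ satisfies $\phi(e_1)=N^{s'-1}\phi(e_{s'})$, and $N^{s'-1}=0$ on $V_s$ as soon as $s'>s$, so the composite vanishes identically. Surjectivity holds only for $s'\le s$, so off-diagonal entries can be cleared only in one direction: the surviving $Q_{s+1}$-block attached to $v_a$ must carry the \emph{smallest} $s$ among the blocks $v_a$ hits, and the larger blocks get absorbed into unmatched $A_s$-summands under a \emph{new} Jordan basis of the nearby cycle (not the original decomposition of $L$). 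Organising this correctly is exactly the content of the paper's linear-algebra Lemma~\ref{oct28-1}, which hands you the diagonal form with a single stroke. Your fallback octahedron does not repair the gap either: in the triangle $\Cone(g)\to Q_{s_1+1}[2]\to L'[3]\to\Cone(g)[1]$, the map whose vanishing would give the splitting is the boundary $L'[3]\to\Cone(g)[1]$, and its vanishing is \emph{equivalent} to $\Cone(g)\simeq Q_{s_1+1}[2]\oplus L'[2]$, i.e.\ to having already diagonalised $g$; meanwhile the middle map $Q_{s_1+1}[2]\to L'[3]$ is essentially never zero (it factors through $\Hom(A_{s_1},L'[1])$ since $\Hom(\bC_0,L'[3])=0$). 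The clean route really is to establish the Jordan normal form \`a la Lemma~\ref{oct28-1} first and only then take cones.
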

\begin{proof}
It is enough to see what the cone of $g_i\colon V_i\to L_{i+2}[i+2]$ is.
By Lemma~\ref{oct28-1},
$g_i$ is decomposed into several
$\CC_0\to A_{s}[2]$ ($s\in \ZZ_{\geq 1}$) or the zero morphism from $\CC_0$.
Then, the assertion follows from the distinguished triangle
   \[\CC_0\to A_{s}[2]\to Q_{s+1}[2]\to \CC_0[1].\] 
\end{proof}

In general, for a vector space $V$ and a direct sum $\bigoplus_{k\in K}M^k[1]$,
where $M^k$ is $A_s$ or $Q_{s+1}$ ($1\leq s\leq \ell$) or $\CC_0$,
let $f$ be a morphism $f\colon V\to \bigoplus_{{k\in K}}M^k[1]$,
where we regard $V$ as a skyscraper in $\Sh(\CC)$.
Remark that the morphism $f$ corresponds to a linear map $V\to \bigoplus_{\substack{k\in K\\ M^k\neq \CC_0}}\CC$.
Then, we write $\Ker{f}\subset V$ (resp. $\Coim{f}$) as the kernel (resp. coimage) of this linear morphism. 
For our $V_i$ in (\ref{oct25-1}),
we fix a decomposition:
\[V_i\simeq \Ker{f_i}\oplus \Coim{f_i}.\]

\begin{lemma}\label{oct28-4}
There is a direct sum $M_j$ ($j\in \ZZ$) of several $A_{s}$ or $Q_{s+1}$ ($1\leq s\leq \ell$) or $\CC_0$ and morphisms $f_i'\colon \Coim{f_i}[i]\to M_{j+1}[j+1]$ with $\Ker{f_i'}=0$ and $g_j'\colon \Coim{f_i}[i]\to M_{j+2}[j+2]$ such that
the cone of $h$ in $H^0\Sh(\CC)$ is isomorphic to the cone of the morphism
   \begin{align}\label{oct28-5}
     h':=\bigoplus_{i\in \ZZ}(f_i'\oplus g_i')\colon   \bigoplus_{i\in \ZZ}\Coim{f_i}[i]\to \bigoplus_{j\in \ZZ}M_j[j].
   \end{align}

\end{lemma}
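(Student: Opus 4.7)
\textbf{Proof plan for Lemma~\ref{oct28-4}.}

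The strategy is to split $h$ according to the decomposition $V_i = \Ker f_i \oplus \Coim f_i$, absorb the $\Ker f_i$ summand into new building blocks via Lemma~\ref{oct28-3}, and retain the $\Coim f_i$ summand as the source of the new morphism $h'$.

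First, fix the chosen splitting $V_i = \Ker f_i \oplus \Coim f_i$ and write $h = h_{\mathrm{ker}} \oplus h_{\mathrm{coim}}$, where
\[
h_{\mathrm{ker}}\colon \bigoplus_{i\in\ZZ} \Ker f_i[i] \to \bigoplus_{j\in\ZZ} L_j[j],\qquad
h_{\mathrm{coim}}\colon \bigoplus_{i\in\ZZ} \Coim f_i[i] \to \bigoplus_{j\in\ZZ} L_j[j].
\]
By construction, the $f$-components of $h_{\mathrm{ker}}$ vanish, so $h_{\mathrm{ker}}$ is a morphism of the type considered in Lemma~\ref{oct28-3}, and its cone $N$ is (non-canonically) a direct sum of shifted copies of $A_s$, $Q_{s+1}$ ($1\le s\le \ell$), and $\CC_0$. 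Group the summands of $N$ by cohomological degree and write $N \simeq \bigoplus_{j\in\ZZ} N_j[j]$ with each $N_j$ still of the allowed form.

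Next, set $M_j := L_j \oplus N_j$, so $M_j$ has the required shape. Define $h'$ by taking $f'_i$ and $g'_i$ to be the restrictions of $f_i$ and $g_i$ to $\Coim f_i[i]$, viewed as landing in the $L$-summand of $M_{i+1}[i+1]$ and $M_{i+2}[i+2]$ respectively; the components into the $N$-summands are set to zero. Since $\Coim f_i$ is by definition the quotient of $V_i$ by $\Ker f_i$, the induced map $f'_i$ has zero kernel, as required. Because the morphism $\bigoplus_i \Coim f_i[i] \to \bigoplus_j N_j[j]$ is zero, we get a direct sum decomposition at the level of cones:
\[
\Cone(h') \;\simeq\; \Cone(h_{\mathrm{coim}}) \,\oplus\, \bigoplus_j N_j[j] \;=\; \Cone(h_{\mathrm{coim}}) \oplus N.
\]
On the other hand $\Cone(h) \simeq \Cone(h_{\mathrm{ker}}) \oplus \Cone(h_{\mathrm{coim}}) \simeq N \oplus \Cone(h_{\mathrm{coim}})$, and the two cones are canonically identified.

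The only genuinely delicate point is justifying the identification $\Cone(h_{\mathrm{ker}}) \simeq N$ in the presence of possibly infinite direct sums; this is where Lemma~\ref{oct28-3} does the real work, together with the compactness of $\CC_0$ in $\sh{\CC,0}$ used to split $\HOM$ out of $\bigoplus_i V_i[i]$ into a product. Once this identification and the choice of a degreewise decomposition $N = \bigoplus_j N_j[j]$ are fixed, the remaining steps are formal manipulations of distinguished triangles and direct sums.
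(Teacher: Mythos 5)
Your proposal has a genuine gap. The critical error is the claim
\[
\Cone(h) \;\simeq\; \Cone(h_{\mathrm{ker}}) \oplus \Cone(h_{\mathrm{coim}}),
\]
which does not hold. Although the \emph{domain} of $h$ splits as $\bigoplus_i\bigl(\Ker f_i[i]\oplus\Coim f_i[i]\bigr)$, both pieces map into the \emph{same} target $\bigoplus_j L_j[j]$; so $h$ is not a direct sum of two morphisms, and the cone does not split. For a toy counterexample: let $h\colon \bC\oplus\bC\to\bC$ be $(a,b)\mapsto a+b$; then $\Cone(h)\simeq\bC[1]$, while $\Cone(h|_{\text{first factor}})\oplus\Cone(h|_{\text{second factor}})=0\oplus 0=0$. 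In the situation of the lemma this interaction is real: $h_{\mathrm{ker}}$ has $g$-components landing in $L_{i+2}[i+2]$, while $h_{\mathrm{coim}}$ also has $g$-components landing in the same $L_{i+2}[i+2]$, so the two summands share targets.

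The paper avoids this by applying the octahedral axiom to the composite
$\bigoplus_i\Ker f_i[i]\hookrightarrow\bigoplus_i V_i[i]\xrightarrow{\,h\,}\bigoplus_j L_j[j]$.
This produces a distinguished triangle
\[
\bigoplus_i\Coim f_i[i]\;\xrightarrow{\,w\,}\;\Cone(h_{\mathrm{ker}})\;\to\;\Cone(h)\;\to\;\bigoplus_i\Coim f_i[i][1],
\]
so $\Cone(h)\simeq\Cone(w)$, where $w$ is the \emph{induced} morphism from the octahedron rather than $h_{\mathrm{coim}}$ itself; concretely $w$ is $h_{\mathrm{coim}}$ followed by the canonical map $\bigoplus_j L_j[j]\to\Cone(h_{\mathrm{ker}})$. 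After choosing a decomposition $\Cone(h_{\mathrm{ker}})\simeq\bigoplus_j M_j[j]$ as in Lemma~\ref{oct28-3}, this $w$ \emph{is} the desired $h'$. In particular, the new morphisms $f_i'$ and $g_i'$ are generally \emph{not} your $f_i|_{\Coim f_i}$ and $g_i|_{\Coim f_i}$ padded with zeros to the new summands: post-composition with the canonical map to $\Cone(h_{\mathrm{ker}})$ mixes in contributions from the cone construction. Your plan to set those extra components to zero loses precisely this information, which is why the two cones fail to match. The remaining verification — that $\Ker f_i'=0$ for the octahedrally induced $f_i'$ — goes through because the composition with $L_{i+1}[i+1]\to\Cone(h_{\mathrm{ker}})$ does not kill any nonzero $f_i|_{\Coim f_i}$, but your route never reaches that point.
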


\begin{proof}
Consider a commutative diagram:
\[\xymatrix{{\bigoplus_{i\in \ZZ}\Ker{f_i}[i]}\ar[dr]\ar[r]&{\bigoplus_{i\in \ZZ}\Ker{f_i}[i]\oplus \Coim{f_i}[i]}\ar[d]^{h}\\
&{\bigoplus_{j\in \ZZ}L_j[j]}.}\]
The cone $\bigoplus_{i\in \ZZ}\Ker{f_i}[i]\to \bigoplus_{j\in \ZZ}L_j[j]$
    is a direct sum of several (infinitely many) $A_{s}$ or $Q_{s+1}$ ($1\leq s\leq \ell $) or $\CC_0$ with some shifts by Lemma~\ref{oct28-3}.
    Then, the assertion follows from the octahedron axiom for the above commutative diagram.    
\end{proof}

Remark that in particular for any $\CC_0$-component of $M_j$ the composition $(\bigoplus_{j\in \ZZ}M_j[j]\to \CC_0[j_0])\circ h'$ is zero.

\begin{lemma}
    Let $f$ (resp. $g$) be a morphism $\CC_0\to \bigoplus_{k\in K_2}A_{s_{1}(k)}[1]\oplus \bigoplus_{k\in K_2}Q_{s_{2}(k)}[1]$ (resp. $\CC_0\to \bigoplus_{k\in K_3}A_{s_{3}({k})}[2]$)
    , where $K_1$, $K_2$ and $K_3$ are index sets, $1\leq s_{1}(k),s_{3}(k)\leq \ell$ and $2\leq s_{2}(k)\leq \ell+1$.
    If $\Ker{f}=0$ (for the definition of $\Ker{f}$, see just before Lemma~\ref{oct28-4}), 
    then there exists a morphism $\bigoplus_{k\in K_2}A_{s_{1}(k)}[1]\oplus \bigoplus_{k\in K_2}Q_{s_{2}(k)}[1]\to \bigoplus_{k\in K_3}A_{s_{3}(k)}[2]$ such that the following diagram is commutative:
    \[\xymatrix{\CC_0\ar[r]^-{f}\ar[dr]_{g}&\displaystyle \bigoplus_{k\in K_2}A_{s_{1}(k)}[1]\oplus \bigoplus_{k\in K_2}Q_{s_{2}(k)}[1]\ar[d]\\ &\displaystyle\bigoplus_{k\in K_3}A_{s_{3}(k)}[2].}\]
\end{lemma}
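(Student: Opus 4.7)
The strategy is to reduce the lifting problem to the case of a single summand on both source and target, and then explicitly construct the lift using the distinguished triangles and the $\Hom$–dimension information compiled in Lemma~\ref{jan27-102} and Lemma~\ref{dec4-5}.

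First I would decompose the target $\bigoplus_{k\in K_3}A_{s_{3}(k)}[2]$ componentwise: it suffices to construct each $h_k\colon X\to A_{s_3(k)}[2]$ with $h_k\circ f=g_k$, where $X$ is the target of $f$ and $g_k$ is the $k$-th component of $g$. Since $V=\bC_0$ is one-dimensional, the hypothesis $\Ker f=0$ forces $f$ to be nonzero on at least one summand of $X$; fix such a summand $X_0\in\{A_{s_1(k_0)}[1],Q_{s_2(k_0)}[1]\}$ and let $f_0\colon\bC_0\to X_0$ denote the corresponding component. Any lift $h_0\colon X_0\to A_{s_3(k)}[2]$ of $g_k$ along $f_0$ then yields a lift along $f$ by precomposing with the projection $X\to X_0$. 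This reduces us to the single-summand case: given nonzero $f_0\colon\bC_0\to X_0$ and any $g_0\colon\bC_0\to A_{s'}[2]$, build $h\colon X_0\to A_{s'}[2]$ with $h\circ f_0=g_0$.

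Next I would analyze this single-summand lifting. In the case $X_0=A_s[1]$, Lemma~\ref{jan27-102}(\ref{nov6-4}) shows that any nonzero $f_0$ factors (up to scalar) as $\bC_0\xrightarrow{\delta}\bC_W[1]\xrightarrow{\iota[1]}A_s[1]$, where $\iota\colon\bC_W\to A_s$ is the first map of $\bC_W\to A_s\to A_{s-1}\to\bC_W[1]$ and $\delta$ is the connecting map of $\bC_W\to\bC_V\to\bC_0\to\bC_W[1]$. Inductive dimension-counting along $\bC_W\to A_{s'}\to A_{s'-1}$ yields $\Hom(\bC_0,A_{s'}[2])\cong\bC$, with generator the composition $\bC_0\xrightarrow{\delta}\bC_W[1]\to A_{s'}[2]$ obtained by shifting a generator of the one-dimensional space $\Hom(\bC_W,A_{s'}[1])$. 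The ``$L'_1=1$'' clause of Lemma~\ref{jan27-102}(\ref{nov6-2}) provides a morphism $h\colon A_s\to A_{s'}[1]$ whose restriction to $\bC_W$ realizes this generator, whence $h[1]\circ f_0\in\Hom(\bC_0,A_{s'}[2])$ is (by the verification in the last paragraph) a nonzero multiple of the generator; rescaling $h$ then gives $g_0$ exactly. The case $X_0=Q_s[1]$ is parallel: using the triangle $\bC_0[-1]\to Q_s\to B_{s-1}\to\bC_0$ of Definition~\ref{nov6-6} together with Lemma~\ref{dec4-5}(\ref{nov6-5}), a nonzero $f_0$ factors through the first map $\bC_0\to Q_s[1]$ of the shifted triangle, and the lift is constructed from the long exact sequence for $\Hom(-,A_{s'}[2])$ applied to that triangle, the needed vanishing of $\Hom(B_{s-1}[1],A_{s'}[3])$ being obtainable via Fourier duality from the $A_s$-case.

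The main obstacle is the non-vanishing of the composition $\bC_0\xrightarrow{\delta}\bC_W[1]\to A_{s'}[2]$ asserted above, since all the structural lemmas only bound $\Hom$-spaces from above and do not immediately rule out that the constructed morphism is zero. Concretely, one must show that the connecting map $\Hom(\bC_W,A_{s'}[1])\to\Hom(\bC_0,A_{s'}[2])$ coming from the triangle $\bC_W\to\bC_V\to\bC_0$ is nonzero on the chosen generator, or equivalently that the restriction $\Hom(\bC_V,A_{s'}[1])\to\Hom(\bC_W,A_{s'}[1])$ is not surjective. These two sides identify with $H^1_c(W,\calL_{s'})$ and $H^1(W,\calL_{s'})$ respectively, and the map in question is the natural ``forget compact support'' map. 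On $W=\bC^*$ this map vanishes for any unipotent local system: a compactly supported $1$-class on $\bC^*$ has zero periods around both $0$ and $\infty$ and is therefore exact in ordinary de Rham cohomology, and the claim extends to unipotent coefficients by induction along the Jordan filtration of $\calL_{s'}$. This verification, combined with the dimension counts in Steps~1--2, finishes the proof.
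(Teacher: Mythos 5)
Your proof unpacks, in detail, the same factorization-through-$\CC_W[1]$ argument that the paper condenses into three un-explained commutative diagrams, so the overall route matches the paper's. The reduction to a single summand, the use of Lemma~\ref{jan27-102}(\ref{nov6-4}) to factor $f_0$ as $\CC_0\xrightarrow{\delta}\CC_W[1]\to A_s[1]$, the dimension count $\Hom(\CC_0,A_{s'}[2])\cong\CC$, and the lift via the $L'_1=1$ clause of Lemma~\ref{jan27-102}(\ref{nov6-2}) are all correct; the $Q_s$ reduction via $\CC_0[-1]\to Q_s\to B_{s-1}$ and Fourier duality is also sound.

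The one place the argument goes wrong is the identification $\Hom(\CC_V,A_{s'}[1])\cong H^1_c(W,\calL_{s'})$ in your ``main obstacle'' paragraph. Since $V=\CC$ is \emph{not} compact, $H^1(V,j_!\calL_{s'})$ is not compactly supported cohomology of $W$, and the ``forget compact support'' interpretation does not apply. The correct value is in fact $\Hom(\CC_V,A_{s'}[k])=0$ for all $k$: applying $R\Gamma(V,-)$ to the triangle $j_!\calL_{s'}\to j_*\calL_{s'}\to i_*i^*j_*\calL_{s'}$ gives the restriction map $R\Gamma(W,\calL_{s'})\to (j_*\calL_{s'})_0$, which is an isomorphism because a small punctured disc around $0$ is a deformation retract of $W=\CC^*$. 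Thus the restriction $\Hom(\CC_V,A_{s'}[1])\to\Hom(\CC_W,A_{s'}[1])$ is the zero map $0\to\CC$, so it is trivially not surjective. Your final conclusion (and the rest of the proof) is therefore still correct, but for a different and more elementary reason than the one you gave; the discussion of vanishing of $H^1_c(\CC^*)\to H^1(\CC^*)$ is a red herring and should be replaced by the vanishing of $R\Gamma(V,j_!\calL_{s'})$.
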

\begin{proof}
    The assertion follows from the (non-zero) commutative diagrams:
    \[
    \xymatrix{&\CC_{\CS}[1]\ar[d]\\\CC_0\ar[r]\ar[ur]&A_{s}[1],}\quad
        \xymatrix{&\CC_{\CS}[1]\ar[d]\\\CC_0\ar[r]\ar[ur]&Q_{s}[1],}
    \quad\xymatrix{\CC_0\ar[r]\ar[dr]&\CC_{\CS}[1]\ar[d]\\&\CC_{\CS}[2].}\]
\end{proof}

Applying this lemma to $f_i'$ and $g_i'$, we obtain the following.

\begin{corollary}
    There exists a morphism $l_{i+1}\colon M_{i+1}[i+1]\to M_{i+2}[i+2]$ such that
    the following diagram commutes:
    \[\xymatrix{\Coim{f_i}[i]\ar[dr]_{g_i'}\ar[r]^-{f_i'}&M_{i+1}[i+1]\ar[d]^{l_{i+1}}\\ &M_{i+2}[i+2].
    }\]
\end{corollary}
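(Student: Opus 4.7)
The corollary follows from a componentwise application of the preceding lemma, assembled into a single morphism $l_{i+1}$ using the injectivity hypothesis $\Ker{f_i'}=0$. The plan is to first reduce to the case where $M_{i+1}$ and $M_{i+2}$ contain no $\CC_0$-summands, then apply the preceding lemma to each $\CC_0$-summand of a basis decomposition of $\Coim{f_i}$, and finally glue the resulting factorizations into one $l_{i+1}$.

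First I would reduce to the case with no $\CC_0$-summands in the targets. Since $\Hom(\CC_0,\CC_0[k])=0$ for $k\neq 0$, both $f_i'$ and $g_i'$ factor through the sub-summands of shifted $A_s$'s and $Q_{s+1}$'s of their respective targets, and any morphism constructed on these sub-summands extends by zero to the full target. Next, fix a basis $\{e_\alpha\}_{\alpha\in A}$ of $\Coim{f_i}$ giving $\Coim{f_i}[i]=\bigoplus_\alpha \CC_0[i]_\alpha$. For each $\alpha$, the restrictions $(f_\alpha,g_\alpha):=(f_i'\circ\iota_\alpha,\,g_i'\circ\iota_\alpha)$ satisfy the hypotheses of the preceding lemma (the relevant $\Ker$-vanishing for $f_\alpha$ is inherited from $\Ker{f_i'}=0$), yielding a morphism $l^\alpha\colon M_{i+1}[i+1]\to M_{i+2}[i+2]$ with $l^\alpha\circ f_\alpha=g_\alpha$. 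The preceding lemma's proof realizes each $l^\alpha$ concretely through the universal factorization $\CC_0\to \CC_{\CS}[1]\to \CC_{\CS}[2]$ combined with the structural triangles for $A_s$ and $Q_{s+1}$.

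The main obstacle is the coherent assembly of the family $(l^\alpha)_\alpha$ into a single $l_{i+1}$ that works for every $\alpha$ simultaneously. Here the hypothesis $\Ker{f_i'}=0$ is essential: it guarantees that the images $\{f_i'(e_\alpha)\}_\alpha$ remain linearly independent at the level of the $1$-dimensional skeletons indexing the non-$\CC_0$ summands of $M_{i+1}[i+1]$. I would then complete $\{f_i'(e_\alpha)\}_\alpha$ to a basis of this skeleton, and define $l_{i+1}$ by prescribing $f_i'(e_\alpha)\mapsto g_i'(e_\alpha)$ on these basis vectors while sending the complementary basis vectors to zero, lifting this combinatorial assignment to a genuine morphism in $H^0\Sh(\CC)$ via the same universal factorization through $\CC_{\CS}$ used in the preceding lemma. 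Verifying that this assignment is well-defined and that the required triangle commutes reduces to elementary compatibility of the structural triangles of $A_s$, $Q_{s+1}$, and $\CC_{\CS}$ that were already worked out earlier in the appendix; this compatibility is precisely what allows the skeletal linear-algebra data to determine a morphism between direct sums of these basic objects.
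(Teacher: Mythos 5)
Your proof is correct and follows the same route the paper intends: the paper's Corollary is derived by applying the preceding single-$\CC_0$ lemma componentwise and assembling, and you correctly identify that the assembly requires the linear-algebraic observation that $\Ker f_i'=0$ makes the "skeletal" matrix $\Phi\colon \Coim f_i\to \bigoplus_\lambda\Hom(\CC_0,M_\lambda[1])$ injective, so one may factor the skeletal data of $g_i'$ through it and realize the resulting matrix as a sheaf morphism via the same $\CC_0\to\CC_{\CS}[1]$ factorizations used in the lemma. The paper states no proof at all (``Applying this lemma \dots we obtain''), so your elaboration---in particular the explicit reduction away from $\CC_0$-summands and the assembly step---is a legitimate filling-in of what the authors suppress rather than a different argument.

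One small point worth tightening if you were to write this up: the claim that the skeletal matrix $L$ lifts to an actual morphism $M_{i+1}[i+1]\to M_{i+2}[i+2]$ uses, for each pair of summands $(M_\lambda, M_\mu')$ with $M_\lambda\in\{A_s,Q_s\}$, $M_\mu'=A_{s'}$, a morphism $M_\lambda[1]\to A_{s'}[2]$ that is nonzero on $\Hom(\CC_0,-)$; this is furnished by composing the boundary map $M_\lambda\to\CW[1]$ with the canonical inclusion $\CW\hookrightarrow A_{s'}$, and one should check (as the lemma's diagrams implicitly do) that $\CC_0\to\CW[1]\to M_\lambda[1]\to\CW[2]$ reproduces the nonzero class $\CC_0\to\CW[2]$. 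Additionally, finiteness of the columns of $L$ (needed since the direct sums may be infinite) follows because each $g_i'(e_\alpha)$ lies in the direct sum $\bigoplus_\mu\Hom(\CC_0,M_\mu'[2])$ by compactness of $\CC_0$, and each basis vector of the skeleton has a finite expansion in the chosen basis of $\operatorname{Im}\Phi$ plus its complement.
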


We consider a new morphism $h''$, replacing $g_i'$ of $h'$ with $0$:
\begin{align}
     h'':=\bigoplus_{i\in \ZZ}(f_i'\oplus 0)\colon   \bigoplus_{i\in \ZZ}\Coim{f_i}[i]\to \bigoplus_{j\in \ZZ}M_j[j].
\end{align}
Then, we have a commutative diagram:
\[\xymatrix{\bigoplus_{i\in \ZZ}\Coim{f_i}[i]\ar[r]^-{h''}\ar@{}[d]|{\rotatebox{90}{$=$}}& \bigoplus_{j\in \ZZ}M_j[j] \ar[d]^{\bigoplus_{j\in \ZZ}(1+ l_j)}\\\bigoplus_{i\in \ZZ} \Coim{f_i}[i]\ar[r]^-{h'}&\bigoplus_{j\in \ZZ}M_j[j].}\]
Since the vertical arrow $\bigoplus_{j\in \ZZ}(1\oplus l_j)$ is a quasi-isomorphism, we have the following assertion.

\begin{lemma}
The cone of $h$ in $H^0\Sh(\CC)$ is isomorphic to the cone of $h''$ in $H^0\Sh(\CC)$.
In particular, the cone of $h$ is the direct sum of the cone of $f_i'\colon \Coim{f_i}[i]\to M_{i+1}[i+1]$. 
\end{lemma}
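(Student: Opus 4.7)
The plan is to reduce to proving that the endomorphism $1+L$ of $\bigoplus_j M_j[j]$ in the commutative square just above the statement is an isomorphism in $H^0\Sh(\CC)$, where $L$ denotes the endomorphism whose restriction to each summand $M_j[j]$ is $l_j\colon M_j[j]\to M_{j+1}[j+1]$ followed by the inclusion into the direct sum. By the construction of $l_{i+1}$ via $g_i'=l_{i+1}\circ f_i'$, one has $(1+L)\circ h''=h'$, so the square yields a morphism of distinguished triangles with the identity on the left and $1+L$ in the middle. If $1+L$ is an isomorphism, the five-lemma in the triangulated category $H^0\Sh(\CC)$ forces the induced map on cones to be an isomorphism, giving $\Cone(h'')\simeq\Cone(h')$; combined with Lemma~\ref{oct28-4} this yields the first assertion $\Cone(h)\simeq\Cone(h'')$.

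The core technical point is therefore the invertibility of $1+L$, which I would establish by showing that $L$ is nilpotent, specifically $L^3=0$. The $(j,j+3)$-component of $L^3$ is the triple composition $l_{j+2}\circ l_{j+1}\circ l_j\in\HOM(M_j,M_{j+3}[3])$. Since $M_j$ and $M_{j+3}$ are direct sums of the basic compact objects $\CC_0$, $A_s$, $Q_{s+1}$, this hom-space decomposes as a product-of-direct-sums of individual contributions $\HOM(F,G[3])$ with $F,G\in\{\CC_0,A_s,Q_{s+1}\}$. Each such term vanishes by a case check: the pairs involving $\CC_0$ and $A_s$ follow from Lemmas~\ref{jan27-102} and~\ref{nov6-4}; the vanishing $\HOM(A_s,\CC_0[k])=0$ for all $k$ follows by applying $\HOM(-,\CC_0)$ to the distinguished triangle $\CW\to A_s\to A_{s-1}$ and induction on $s$ (noting $\HOM(\CW,\CC_0[k])=0$ since $\CW=j_!\bC$ for $j\colon W\hookrightarrow\CC$); and the pairs involving $Q_{s+1}$ reduce to the previous cases via the distinguished triangle $\CC_0[-1]\to Q_{s+1}\to B_s\to\CC_0$ together with $\HOM(B_s,\CC_0[k])=0$ (from $B_s=j_*(A_s|_W)$). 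Hence $L^3=0$, and $1+L$ admits the explicit inverse $1-L+L^2$ in the endomorphism ring.

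For the ``In particular'' clause, I would observe that $h''=\bigoplus_i f_i'$ splits as a direct sum along the $i$-indexing: each source summand $\Coim f_i[i]$ is mapped entirely into the single target summand $M_{i+1}[i+1]$, with all cross terms zero. Since in a triangulated category direct sums of morphisms produce direct sums of cones, we obtain $\Cone(h'')\simeq\bigoplus_i\Cone(f_i')$; any target summand $M_j[j]$ that is not of the form $M_{i+1}[i+1]$ for some $i$ with $\Coim f_i\neq 0$ contributes itself as $\Cone(0\to M_j[j])$, which can be absorbed into a trivial $f_i'$.

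The main obstacle is the $L^3=0$ step. No individual $\HOM$-vanishing above is deep, but a complete and uniform verification across all cross-type pairs of basic objects (for instance $\HOM(Q_{s+1},A_{s'}[k])$ and $\HOM(Q_{s+1},Q_{s'+1}[k])$) requires careful case work, and it is here that one most directly uses the explicit structure of the basic monodromic perverse sheaves on $\CC$ rather than purely formal triangulated-category manipulations. Once that vanishing is in place, the remaining steps are standard.
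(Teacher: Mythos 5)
You correctly identify that the paper's argument reduces to showing the vertical arrow $\bigoplus_j(1+l_j)$ in the displayed square is an isomorphism, and your plan to establish this via nilpotency of the shift operator $L$ is the natural way to make precise what the paper merely asserts. The claim $L^3=0$ is indeed correct. However, your auxiliary assertion that $\HOM(B_s,\CC_0[k])=0$ for all $k$ is false: by adjunction $\HOM(B_s,\CC_0[k])\cong\HOM((B_s)_0,\CC[k])$, and the stalk $(B_s)_0$ is concentrated in degrees $0$ and $1$ (Lemma~\ref{jan27-102}), so this group is nonzero for $k\in\{0,-1\}$. Moreover, the reason you cite ("$B_s=j_*(A_s|_W)$") points the wrong way: since $j^*$ is left adjoint to $j_*$, one gets $\HOM(\CC_0,B_s[k])\cong\HOM(j^*\CC_0,A_s|_W[k])=0$, which is not the vanishing you need.

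Fortunately the only instance you actually use is $k=3$, which does vanish, and there is a uniform argument that avoids the cross-type case check entirely. Each $M_j$ is a direct sum of objects whose shift by $1$ is perverse ($A_s$, $Q_{s+1}$) together with perverse $\CC_0$'s, so one may write $M_j\simeq P_j\oplus P'_j[-1]$ with $P_j,P'_j$ perverse. Then $\HOM(M_j,M_{j+n}[n])$ splits into four blocks of the form $\HOM(P,P'[m])$ with $P,P'$ perverse and $m\in\{n-1,n,n+1\}$; by Lemma~\ref{nov22-1} each block vanishes once $m\geq 2$, hence all four vanish for $n\geq 3$, giving $L^3=0$ and $(1+L)^{-1}=1-L+L^2$. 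With this correction your concluding steps — the identification $\Cone(h'')\simeq\Cone(h')$ from the commutative square and the decomposition $\Cone(h'')\simeq\bigoplus_i\Cone(f_i')$ — are sound and match the paper's intent.
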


By using Lemma~\ref{oct28-1},
the cone of $f_i'$ in $H^0\Sh(\CC)$ is decomposed into several following objects:
\begin{enumerate}
    \item the cone of $\CC_0[i]\to  A_{s}[i+1]$ for some $1\leq s\leq \ell$,
    \item the cone of $\CC_0[i]\to  Q_{s}[i+1]$ for some $2\leq s\leq \ell+1$,
    \item $A_{s}[i+1]$ for some $1\leq s\leq \ell$
    \item $Q_{s}[i+1]$ for some $2\leq s\leq \ell+1$
    \item $\CC_0[i+1]$.   
\end{enumerate}
Note that the first one is $P_s[i+1]$ and the second one is $B_{s-1}[i+1]$.
In particular, we conclude the following.

\begin{proposition}\label{oct29-1}
     For an object $F\in \Sh(\CC,0)$,
assume that there exists $\ell\in \ZZ_{\geq 1}$ such that 
$(T-1)^\ell=0$ for the monodromy automorphism $T$ of $\psi_tF$.
Then, 
$F$ can be described as a direct sum of several shifted objects of the following five types of perverse sheaves:
\begin{enumerate}
    \item $\CC_0$
    \item $A_s[1]$ ($1\leq s\leq \ell$)
    \item $B_s[1]$ ($1\leq s\leq \ell$)
    \item $P_s[1]$ ($1\leq s\leq \ell$)
    \item $Q_s[1]$ ($2\leq s\leq \ell+1$).
\end{enumerate}
In particular, if $F$ satisfies the condition $(N_s)$ (see the Definition~\ref{dec25-1}),
then $F$ can be expressed as a direct sum of several shifted objects of 
$P_{s'}[1]$ ($s'\leq s$),
$A_{s''}[1]$, $B_{s''}[1]$, $Q_{s''}[1]$ ($s''<s$),
or $\CC_0$.
\end{proposition}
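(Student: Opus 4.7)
The plan is to reduce $F$ to an explicit cone and then classify such cones via linear algebra over the hom-spaces between the basic objects. First, I would use the standard distinguished triangle
\[
F_0[-1] \to F|_{\mathbb{C}^*} \to F \to F_0
\]
so that $F$ is recovered as the cone of a morphism $h\colon F_0[-1] \to F|_{\mathbb{C}^*}$. By Lemma~\ref{sep25-1} applied stalkwise, $F_0$ is a (possibly infinite) direct sum of shifted copies of $\CC_0$, and by Corollary~\ref{sep30-1} the unipotency assumption on $\psi_t F$ implies that $F|_{\mathbb{C}^*}$ is a direct sum of shifted local systems $\calL_s[k]$ with $1 \le s \le \ell$, each of which is the restriction of $A_s[k]$. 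Since $\CC_0$ is compact and since $\Hom(\CC_0, A_s[j]) \ne 0$ only for $j = 1, 2$ (by \ref{nov6-4} of Lemma~\ref{jan27-102}), the morphism $h$ takes the form
\[
h = \bigoplus_{i \in \ZZ}(f_i \oplus g_i)\colon \bigoplus_i V_i[i] \to \bigoplus_j L_j[j],
\]
with $f_i\colon V_i[i] \to L_{i+1}[i+1]$ and $g_i\colon V_i[i] \to L_{i+2}[i+2]$, exactly as in \eqref{oct25-1}.

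Next, I would analyze the cone of $h$ in two stages. Stage one handles the ``degree-one'' part $f_i$. Using Lemma~\ref{oct28-1} applied to the linear data of $f_i$ (with $N$ being the monodromy nilpotent on the stalk of $L_{i+1}$), the morphism $f_i$ decomposes into a direct sum of elementary pieces $\CC_0[i] \to A_s[i+1]$ (whose cones are the $P_s[i+1]$ by \eqref{j24-5}) together with a complementary $\Ker f_i$ mapping to zero. After choosing a splitting $V_i \cong \Ker f_i \oplus \Coim f_i$ and applying the octahedral axiom to the obvious inclusion, Lemma~\ref{oct28-4} lets me replace $h$ by a new morphism $h'$ defined only on $\bigoplus_i \Coim f_i[i]$, with target a direct sum $\bigoplus_j M_j[j]$ whose summands are already of the allowed types $A_s$, $Q_{s+1}$, or $\CC_0$, and whose components $f_i'$ are now injective in the relevant sense.

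Stage two eliminates the ``degree-two'' perturbation $g_i'$. The key observation is that if $f_i'\colon \CC_0 \to (\text{direct sum of }A_{s}[1] \text{ and } Q_{s'}[1])$ is injective (i.e.\ $\Ker f_i' = 0$), then every morphism $g_i'\colon \CC_0 \to A_{s''}[2]$ factors through $f_i'$ via some $l_{i+1}\colon M_{i+1}[i+1] \to M_{i+2}[i+2]$; this is built from the canonical commutative squares among $\CC_{\mathbb{C}^*}[1]$, $A_s[1]$, $Q_s[1]$, and $\CC_{\mathbb{C}^*}[2]$. This produces a commutative square relating $h'$ to $h'' := \bigoplus_i (f_i' \oplus 0)$ via the chain-level gauge transformation $\bigoplus_j (\mathrm{id} + l_j)$, which is a quasi-isomorphism. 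Hence $F \cong \Cone(h'') = \bigoplus_i \Cone(f_i')$, and each $\Cone(f_i')$ splits further (again by Lemma~\ref{oct28-1}) as a direct sum of shifts of $\CC_0$, $A_s$, $Q_{s+1}$, $P_s$, and $B_{s-1}$. This establishes the first assertion.

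For the ``in particular'' part, I would simply read off the nilpotent orders: $A_s$, $B_s$, $P_s$ have nilpotent order $\le s$, while $Q_s$ has nilpotent order $\le s - 1$ (Lemma~\ref{dec25-1}), and Fourier transform swaps $(A_s, B_s)$ and $(P_s, Q_s)$ up to shift. Thus a summand appearing in the decomposition of $F$ is compatible with condition $(N_s)$ precisely when it lies in the restricted list $\{P_{s'}[1]\ (s' \le s),\ A_{s''}[1], B_{s''}[1], Q_{s''}[1]\ (s'' < s),\ \CC_0\}$. The main technical obstacle I expect is the bookkeeping in stage two: namely, verifying carefully that the gauge transformation $\mathrm{id} + l_j$ is genuinely a quasi-isomorphism at the level of possibly infinite direct sums, which requires using compactness of $\CC_0$ to commute $\Hom(\CC_0, -)$ past the direct sums and to guarantee that the lifts $l_j$ can be chosen coherently in each degree.
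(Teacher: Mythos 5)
Your proposal reproduces the paper's own argument essentially step by step: the same distinguished triangle $F_0[-1]\to F|_{\mathbb{C}^*}\to F\to F_0$, the same reduction of $h$ to the shape $\bigoplus_i(f_i\oplus g_i)$ via Lemma~\ref{sep25-1}, Corollary~\ref{sep30-1}, and compactness of $\CC_0$, the same two-stage strategy (octahedral axiom plus Lemma~\ref{oct28-4} to kill $\Ker f_i$, then factoring $g_i'$ through $f_i'$ and applying the gauge automorphism $\bigoplus_j(\mathrm{id}+l_j)$), and the same final identification of the elementary cones with $P_s[i+1]$ and $B_{s-1}[i+1]$. The concern you raise at the end about the gauge transformation over infinite direct sums is handled in the paper exactly as you anticipate, by compactness of $\CC_0$, so it is not a gap.
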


\begin{remark}\label{remark:quiver}
Proposition~\ref{oct29-1} implies the fact:
to give an object $F\in \Sh(\CC,0)$ whose nilpotent order is $\leq \ell$ 
is equivalent to
give a tuple $(V,W,c,v)$:
\[\xymatrix{V\ar@<-0.5ex>[r]_{v}&W\ar@<-0.5ex>[l]_{c}},\]
where $V$ and $W$ are (possibly infinite dimensional) vector spaces
and $c,v$ are linear morphisms with $(cv)^{\ell}=0$ and $(vc)^\ell=0$.

\end{remark}

\bibliographystyle{alpha}
\bibliography{brane.bib}

\noindent Tatsuki Kuwagaki: 
Department of Mathematics, Kyoto University, Kitashirakawa Oiwake-cho, Sakyo-ku, Kyoto 606-8502, Japan.

\noindent 
\textit{E-mail address}: \texttt{tatsuki.kuwagaki.a.gmail.com}

\medskip

\noindent Takahiro Saito:
Department of Mathematics, Chuo University,
1-13-27 Kasuga, Bunkyo-ku, Tokyo 112-8551, Japan.

\noindent
\textit{E-mail address}: \texttt{takahiro.saito27.a.gmail.com}

\end{document}